\newtheorem{theorem}{Theorem}[section]
\newtheorem{lemma}[theorem]{Lemma}
\newtheorem{cor}[theorem]{Corollary}
\theoremstyle{definition}
\newtheorem{definition}[theorem]{Definition}
\newtheorem{example}[theorem]{Example}
\theoremstyle{remark}
\newtheorem{remark}[theorem]{Remark}
\numberwithin{equation}{section}
\newcommand\B{\mathbb{B}}
\newcommand\C{\mathbb{C}}
\newcommand\F{\mathbb{F}}
\newcommand\Z{\mathbb{Z}}
\newcommand\Q{\mathbb{Q}}
\newcommand\R{\mathbb{R}}
\newcommand\T{\mathbb{T}}
\newcommand\fS{\mathfrak{S}}
\newcommand\cB{\mathcal{B}}
\newcommand\cC{\mathcal{C}}
\newcommand\cD{\mathcal{D}}
\newcommand\cG{\mathcal{G}}
\newcommand\cF{\mathcal{F}}
\newcommand\cH{\mathcal{H}}
\newcommand\cK{\mathcal{K}}
\newcommand\cJ{\mathcal{J}}
\newcommand\cM{\mathcal{M}}
\newcommand\cN{\mathcal{N}}
\newcommand\cO{\mathcal{O}}
\newcommand\cR{\mathcal{R}}
\newcommand\cU{\mathcal{U}}
\newcommand\cZ{\mathcal{Z}}
\newcommand\ii{\mathrm{i}}
\newcommand{\Ad}{\operatorname{Ad}}
\newcommand{\Aut}{\operatorname{Aut}}
\newcommand{\Out}{\operatorname{Out}}
\newcommand{\Hom}{\operatorname{Hom}}
\newcommand{\End}{\operatorname{End}}
\newcommand{\Irr}{\operatorname{Irr}}
\newcommand\Span{\operatorname{span}}
\newcommand\Tr{\operatorname{Tr}}
\newcommand\id{\mathrm{id}}
\newcommand\inpr[2]{\langle{#1,#2}\rangle}
\newcommand\hG{\hat{G}}
\newcommand\hH{\hat{H}}
\newcommand\biota{\overline{\iota}}
\newcommand\bpi{\overline{\pi}}
\newcommand\brho{\overline{\rho}}
\newcommand\talpha{\tilde{\alpha}}
\newcommand\tbeta{\tilde{\beta}}
\newcommand\trho{\tilde{\rho}}
\newcommand\tgamma{\tilde{\gamma}}
\newcommand\hgamma{\hat{\gamma}}
\title[The classification of near-group categories]
{A Cuntz algebra approach to the classification of near-group categories} 
\author{Masaki Izumi}
\address{Department of Mathematics\\ Graduate School of Science\\
Kyoto University\\ Sakyo-ku, Kyoto 606-8502\\ Japan}
\email{izumi@math.kyoto-u.ac.jp}
\subjclass[2010]{ 
Primary 22E66; Secondary 46L99}
\keywords{ 
subfactors, fusion categories, Cuntz algebras}
\thanks{Supported in part by the Grant-in-Aid for Scientific Research (B) 22340032, and 15H03623, JSPS}
\begin{document} 

\dedicatory{Dedicated to Professor Vaughan Jones for his sixtieth birthday.} 

\begin{abstract} We classify C$^*$ near-group categories by using Vaughan Jones theory of subfactors and the Cuntz algebra endomorphisms. 
Our results show that there is a sharp contrast between two essentially different cases, integral and irrational cases. 
When the dimension of the unique non-invertible object is an integer, we obtain a complete classification list, and it turns out that 
such categories are always group theoretical.   
When it is irrational, we obtain explicit polynomial equations whose solutions completely classify the C$^*$ near-group categories in this class. 
\end{abstract}

\maketitle

\tableofcontents

\section{Introduction} 
In his celebrated paper \cite{J83}, Vaughan Jones introduced the notion of index for subfactors, and opened up a totally new subject 
related to various fields in mathematics and mathematical physics, such as low dimensional topology, conformal field theory, quantum groups etc.
The theory of subfactors is often compared with the classical Galois theory of field extensions, and in fact a subfactor 
gives rise to tensor categories, which play an analogous role of the Galois group. 
More precisely, to a subfactor $N\subset M$ of finite index and finite depth, we can associate  
two kinds of fusion categories, consisting of $M-M$ bimodules and $N-N$ bimodules respectively, 
and two kinds of module categories of them consisting of $N-M$ bimodules and $M-N$ bimodules. 

Fusion categories may be considered as a generalization of the representation categories 
of finite groups, and commonly appear in the above mentioned fields related to subfactors. 
Their axioms were formulated by Etingof, Nikshych, and Ostrik \cite{ENO05} as follows: 
a fusion category over an algebraically closed field $k$ is a rigid semisimple 
$k$-linear tensor category with finitely many simple objects and finite dimensional 
morphism spaces such that the unit object $1$ is simple. 
Other than the representation category of a finite group $G$, a typical example of 
a fusion category, which may not be commutative in general, 
is given by $G$ itself. 
Namely, if every simple object of a fusion category $\cC$ is invertible, 
the set of equivalence classes of simple objects in $\cC$ is identified with a finite group 
$G$ obeying the fusion rules 
\begin{equation}\label{pointed}
g\otimes h\cong gh,\quad g,h\in G.
\end{equation}
It is known that such fusion categories, called pointed categories, 
are parametrized by the third cohomology group $H^3(G,k^\times)$ 
arising from the associativity constraint.    

A near-group category, formally introduced by Siehler \cite{S03}, is one step beyond 
the pointed categories, and has only one non-invertible simple object.  
Let $\cC$ be a near-group category with the group $G$ of the invertible simple objects,  
and let $\rho$ be the unique non-invertible object in $\cC$. 
Then the only possible fusion rules, apart from Eq.(\ref{pointed}), are 
$$g\otimes \rho\cong \rho\otimes g\cong \rho,\quad g\in G,$$
$$\rho\otimes \rho\cong \bigoplus_{g\in G}g\oplus \overbrace{\rho\oplus \rho\oplus \cdots\oplus \rho}^{m\; \mathrm{times}}.$$
Thus in the level of fusion rules, the multiplicity $m$ of $\rho$ in $\rho\otimes \rho$ is the only free parameter, which we call 
the multiplicity parameter of $\cC$.   
The based ring $\Z G+\Z\rho$ obeying the above relations is denoted by $K(G,m)$ in \cite{O13}. 
The classification of the near-group categories having a prescribed based ring $K(G,m)$ is a fundamental question 
in the subject. 

Many near-group categories appear in nature. 
For example, the Ising model in conformal field theory is a near-group category 
with $(G,m)=(\Z_2,0)$, and the representation category of the symmetric group $\fS_3$ 
is a near-group category with $(G,m)=(\Z_2,1)$. 
The $E_6$ subfactor provides a more exotic example with $(G,m)=(\Z_2,2)$. 
The recent developments of the classification of subfactors \cite{JMS14}, \cite{AMP15} show that quadratic categories, 
slight generalization of near-group categories, are rather typical (see \cite{GIS15}), and it is hard to construct 
subfactors neither related to (quantum) groups nor quadratic categories. 
So far the only known exception is the extended Haagerup subfactor constructed in \cite{BPMS12}. 

The first systematic classification result of a class of near-group categories was obtained by Tambara-Yamagami \cite{TY98}, 
and they completely classified the near-group categories with $m=0$, now called Tambara-Yamagami categories. 
They showed that such categories exist if and only if $G$ is abelian, and obtained complete classification invariants in terms of $G$. 
Another extreme case was treated by Ostrik \cite{O03}, where he showed that a near-group category with trivial $G$ 
exists if and only if $m=1$. 
The proof requires a much more sophisticated argument than one would think at first sight, 
and it uses braiding in a crucial way. 
Since then, there have been several attempts to classify various classes of near-group categories 
(see \cite{S03}, \cite{EGO04}, \cite{HH09}, \cite{O13}, \cite{NO}, \cite{EG14}, \cite{L14}).   

Fusion categories arising from subfactors form a special class, called C$^*$ fusion categories,  
whose morphism spaces are complex Banach spaces with $*$-structure obeying the C$^*$ condition. 
Throughout this note, we focus on near-group categories satisfying this condition, which we call 
C$^*$ near-group categories. 
It is known that every C$^*$ fusion category is uniquely realized in the category 
of bimodules of the hyperfinite II$_1$ factor, and similarly in the category of 
endomorphisms $\End(M)$ of the hyperfinite type III$_1$ factor $M$ (see \cite{HY00}, \cite{P95}). 
In the latter category, invertible objects are nothing but automorphisms of $M$, 
and one can utilize well-known results on group actions on operator algebras 
to analyse the structure of C$^*$ near-group categories.  
In fact, the third cohomology class of a pointed category for a finite group $G$ mentioned above 
can be identified with the Connes obstruction of a $G$-kernel (see \cite{C77}, \cite{J80}, \cite{S80}). 
Moreover, the author pointed out in \cite{I93} and \cite{I01} that C$^*$ near group categories realized in 
$\End(M)$ can be reconstructed from Cuntz algebra endomorphisms, which provides us a handy way 
to construct new examples. 
The main purpose in \cite{I01} was to compute the Drinfeld centers for concrete examples of tensor categories,  
and for that we deduced polynomial equations whose solutions give C$^*$ near group categories via 
Cuntz algebra endomorphisms. 
One of the purposes of this paper is to further pursue this approach, and to classify the C$^*$ near-group categories 
by using operator algebra techniques.

We summarize the main results of this paper now, which we prove in Section 3-6 
(see Theorem \ref{representation} and Theorem \ref{noncommutative}).  

\begin{theorem}\label{Main} 
Let $\cC$ be a C$^*$ near-group category with $G,\rho,$ and $m\neq 0$ as above, and  
let $d=\frac{m+\sqrt{m^2+4|G|}}{2}$ be the dimension of the object $\rho$. 
Then the following hold. 

If $d$ is rational, either of the following two cases occurs:
\begin{itemize} 
\item[(1)] $G$ is abelian and $m=|G|-1$.  
\item[(2)] $G$ is an extra-special 2-group of order $2^{2a+1}$ and $m=2^a$ with a natural number $a$. 
For each extra-special 2-group, there exist exactly 3 C$^*$ near group-categories. 
\end{itemize}

If $d$ is irrational, $G$ is abelian and $m$ is a multiple of $|G|$. 
\end{theorem}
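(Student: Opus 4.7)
The plan is to realize $\cC$ concretely inside $\End(M)$ for the hyperfinite type III$_1$ factor $M$, and then to extract its structure from a finite system of polynomial equations on an associated Cuntz algebra endomorphism, following the approach sketched in the introduction (and in \cite{I01}).

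First I would fix a realization of the $G$-kernel on $M$ and of $\rho \in \End(M)$ with $d(\rho)=d$ realizing the given fusion rules. Since $\id \prec \rho\otimes\rho$, one has $\bar\rho \simeq \rho$, and the decomposition $\rho^2 \cong \bigoplus_{g\in G}\alpha_g \oplus m\rho$ yields isometries $\{S_g\}_{g\in G}$ and $\{T_i\}_{i=1}^m$ in $M$ satisfying the Cuntz relations
$$S_g^*S_h = \delta_{g,h},\quad T_i^*T_j = \delta_{i,j},\quad \sum_{g\in G} S_gS_g^* + \sum_{i=1}^m T_iT_i^* = 1.$$
Together with unitaries $u_g \in M$ implementing $g\rho \simeq \rho \simeq \rho g$, these form the data of a Cuntz algebra endomorphism in the sense of \cite{I01}.

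Next I would translate the pentagon axiom, applied to $\rho^{\otimes 3}$, into polynomial identities: expressing $\rho(S_g)$ and $\rho(T_i)$ as linear combinations of the products $S_hS_k$, $S_hT_j$, $T_iS_h$, and $T_iT_j$, the pentagon becomes a finite set of polynomial equations in the corresponding coefficients, which are themselves reduced to a finite set of complex parameters after quotienting by natural gauge transformations (rescaling of $T_i$ and twisting of the cocycle of the $G$-kernel). This translation is the most labor-intensive bookkeeping step, but is a direct generalization of the small-$G$ computations in \cite{I01}.

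With the equations in hand I would split into cases. If $d\in\Q$, then $d$ is an algebraic integer, hence $d\in\Z$, and $d^2=md+|G|$ forces $d\mid|G|$. The isomorphism $g\otimes\rho\simeq\rho$ induces a projective representation of $G$ on the $m$-dimensional multiplicity space of $\rho$ in $\rho^2$; analysis of this representation together with the polynomial equations splits the rational case into two subcases. In the degenerate subcase, the polynomial system collapses and forces $G$ abelian with $m=|G|-1$ (and $d=|G|$). In the non-degenerate subcase, the projective representation is faithful and irreducible of dimension $m$ with $|G|=2m^2$; the classification of finite groups admitting a faithful irreducible projective representation of that dimension then forces $G$ to be extra-special of order $2^{2a+1}$ with $m=2^a$, and solving the polynomial system modulo gauge yields exactly three inequivalent categories per such group. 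If $d\notin\Q$, the Galois conjugate $d'=m-d$ is negative, yielding a one-dimensional representation of the Grothendieck ring sending $[g]\mapsto 1$ and $[\rho]\mapsto d'$; combining this with the Cuntz algebra polynomial equations and the structure of the two commuting $G$-actions on the multiplicity space forces $G$ to be abelian and $|G|$ to divide $m$.

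The main obstacle is the rational non-abelian subcase: identifying $G$ as extra-special rather than some other $p$-group, and then showing that the polynomial system has exactly three gauge-equivalence classes of solutions per such $G$, requires a delicate interplay between cohomological information on the $G$-kernel obstruction and an explicit solution count of the polynomial system. The irrational case, by contrast, is cleaner in spirit but still requires a careful Galois-theoretic argument to upgrade the rational one-dimensional representation of the Grothendieck ring into the divisibility $|G|\mid m$.
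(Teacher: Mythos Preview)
Your Cuntz-algebra framework matches the paper's, but the actual engine of the proof is a character identity you do not mention. From $\rho(S_e)^*\rho(S_g)=\delta_{g,e}$ one derives
\[
\Tr\lambda(g) = \frac{n}{d^2}\sum_{h\in G}\chi_h(g) + \frac{n}{d}\Tr U_{\cK}(g),
\]
where $\lambda$ is the regular representation, $\chi_h\in\Hom(G,\T)$ come from $U(g)S_h=\chi_h(g)S_h$, and $U_{\cK}$ is the $G$-representation on $\cK=(\rho,\rho^2)$. If $d$ is irrational, comparing with $d^2=n+md$ and using linear independence of $1,d$ over $\Q$ gives multiplicities $a_\chi=1$, $nb_\chi=m$ for every one-dimensional $\chi$, whence $\bigoplus_h\chi_h\cong\lambda$ (so $G$ is abelian) and $n\mid m$ directly. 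Your Galois-conjugate route does not obviously produce either conclusion; the one-dimensional representation $[\rho]\mapsto d'$ exists for any $G$ and any $m$, so something further is needed, and you have not said what. If $d$ is rational, the same identity plus the algebraic-integer property of the finite-depth index $d^2/n$ yields integers $s,t$ with $n=st^2$, $m=(s-1)t$, $d=st$; for $t=1$ a specific argument with three pairwise-commuting representations $v_0=V$, $v_1=U_{\cK}$, $v_2=(j_1j_2)^*v_1(j_1j_2)$, each satisfying $1\oplus v_i\cong\lambda$, forces $G$ abelian via a commutant dimension count. ``The polynomial system collapses'' does not capture this.

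For $t>1$, your sketch has a genuine gap: there is no a~priori reason the representation on $\cK$ is irreducible, and your claim $|G|=2m^2$ presupposes the answer ($s=2$). The paper instead runs a subfactor argument: identifying $M^G=\rho(M)\rtimes[G,G]$ and $M^{[G,G]}=\rho(M)\rtimes G$ as intermediate subfactors, then using Frobenius reciprocity on the resulting sector decompositions to prove, in order, that $[G,G]$ is abelian, that $\dim\pi=t$ for all $\pi\in\hG^\dagger$, that $[G,G]=Z(G)$, that $Z(G)$ is cyclic with $s+1$ a prime power, that $s$ is prime with $G/Z(G)$ elementary abelian, and finally that $s=2$ with $j_2j_1$ a scalar cube root of unity (yielding the three categories). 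This chain of subfactor and representation-theoretic lemmas is the substantial content of the non-abelian case, and a one-line appeal to the classification of groups with a given projective representation does not substitute for it.
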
 

Siehler \cite[Theorem 1.2]{S03} showed, under the implicit assumption of $G$ being abelian, 
that if $m<|G|$, the group $G$ is cyclic, $m=|G|-1$, and $|G|+1$ is a prime power. 
On the other hand, Etingof-Gelaki-Ostrik \cite[Corollary 7.4]{EGO04} completely classified such fusion categories,  
and showed that there exist exactly three categories for $G=\Z_2$,  two categories for each of $\Z_3$ and $\Z_7$, 
and there exists a unique category for every other cyclic group $\Z_{q-1}$ with a prime power $q$. 
These results hold without the C$^*$ condition. 
The case (2) was overlooked in \cite{S03} for the reason stated above.

The proof of the statement for the irrational case can be found in \cite{NO} 
(and in \cite[Theorem 2(a)]{EG14} with an extra assumption).   
In the case of $m=|G|$, Evans-Gannon \cite[Theorem 4]{EG14} showed that the solutions of the polynomial equations 
obtained in \cite{I01} completely classify the C$^*$ near-group categories in this class. 
Moreover, they obtained a number of new solutions and computed the Drinfeld centers of the corresponding 
C$^*$ near-group categories following the approach established in \cite{I00}, \cite{I01}. 
We will deduce the polynomial equations for the general irrational case in Section 7-8, 
and show that their solutions completely classify such categories in Theorem \ref{irrcl}. 
We will treat the special case of $m=|G|$ in Section 9, and the case of $m=2|G|$ in Section 10. 
Recently an example with $(G,m)=(\Z_3,6)$ was discovered, and the latter case also draws attention of specialists 
(see \cite{EP12}, \cite{L14}, \cite{L15}). 
Theorem \ref{Z36} shows that there exist exactly two C$^*$ near group categories with $(G,m)=(\Z_3,6)$, 
and they are complex conjugate to each other. 

This paper is an extended version of author's personal note written around 2008, which roughly corresponds to 
the contents in Section 3-9 in the present version. 
Since then there have been several new developments about near-group categories, and some results in the original note 
are no longer new as already mentioned above. 
Nevertheless, the author believes that it is still worth publishing the old part because 
it lays the foundation for the other part of this note, and it still contains new results, for example, 
the case (2) in Theorem \ref{Main}. 
Section 2 and Section 10-13 are newly written in 2015. 

As already mentioned above, it is a folklore result that any C$^*$ fusion category uniquely embeds into $\End(M)$ 
for the hyperfinite type III$_1$ factor, and the uniqueness part is based on Popa's classification result for subfactors \cite{P95}.  
In Section 2, we clarify in what sense the uniqueness statement holds. 
The author would like to thank Roberto Longo for pointing out the author's vague understanding of the statement before, 
and Luca Giorgetti for pointing out a flaw in Theorem \ref{uniqueness} in the first version.  

In \cite{I93}, \cite{I01}, we showed that a C$^*$ near-group category with $m=|G|$ gives rise to so called 
a $2^G1$ subfactor. 
Generalizing this observation, we show in Section 11 that there exists a one-to-one correspondence between 
the C$^*$ near-group categories in the irrational case and a certain class of subfactors with specific principal graphs. 
We show that these subfactors are self-dual under a mild assumption. 

Section 12 is devoted to de-equivariantization of C$^*$ near group categories, which is a systematic account inspired by 
Ostrik's wonderful observation that the Haagerup category is related to a near-group category with $(G,m)=(\Z_3\times \Z_3,9)$ via 
de-equivariantization (see Example \ref{Z3Z3}). 
As a byproduct, we find new C$^*$ near-group categories with $(G,m)=(\Z_2\times \Z_2\times \Z_3,12)$ 
missing in \cite[Table 2]{EG14} (see Example \ref{Z2Z2Z3}). 

In subsection 13, we discuss equivariantization. 
For that purpose, we determined the structure of the automorphism groups of $C^*$ near-group categories in the irrational case 
in subsection 13.1.

The author would like thank David Evans, Terry Gannon, Pinhas Grossman, Vaughan Jones, Zhengwei Liu, 
Scott Morrison, Sebastien Palcoux, David Penneys, Victor Ostrik, and Noah Snyder for stimulating discussions and encouragement. 

\section{Preliminaries}
Our basic references are \cite{EGNO15} for tensor categories, \cite{EK98} for operator algebras and subfactors, 
and \cite{BKLR15} for the category of endomorphisms of von Neumann algebras. 
Every von Neumann algebra in this note is assumed to have separable predual. 

For a Hilbert space $\cH$, we denote by $\B(\cH)$ the set of bounded operators on $\cH$, and by $\cU(\cH)$ the set of unitaries on $\cH$. 
The identity operator of $\cH$ is denoted by $I_{\cH}$ or simply by $I$. 
For a unital C$^*$-algebra $A$, we denote by $\cU(A)$ the set of unitaries in $A$. 
The unit of $A$ is denoted by $I_A$ or simply by $I$. 

Let $M$ be a properly infinite factor. 
Then the set of unital endomorphisms $\End(M)$ forms a tensor category with the monoidal product $\rho\otimes \sigma$ of two objects 
$\rho, \sigma\in \End(M)$ given by the composition $\rho\circ \sigma$, and the morphism space from $\rho$ to $\sigma$ 
given by 
$$\Hom_{\End(M)}(\rho,\sigma)=\{T\in M;\; T\rho(x)=\sigma(x)T,\;\forall x\in M\}.$$
For simplicity, we denote $(\rho,\sigma)=\Hom_{\End(M)}(\rho,\sigma)$. 
In this tensor category, the monoidal product $T_1\otimes T_2$ of two morphisms $T_i\in (\rho_i,\sigma_i)$, $i=1,2$, are given by 
$$T_1\rho_1(T_2)=\sigma_1(T_2)T_1.$$
This is graphically expressed as
$$
\begin{xy}(0,0)*+[F]{T_1},(10,7)*+[F]{T_2} 
\ar(0,15);(0,3)^<{\rho_1}
\ar(10,15);(10,10)^<{\rho_2}
\ar(0,-3);(0,-8)^>{\sigma_1}
\ar(10,4);(10,-8)^>{\sigma_2}
\end{xy}
=
\begin{xy}(0,7)*+[F]{T_1},(10,0)*+[F]{T_2} 
\ar(0,15);(0,10)^<{\rho_1}
\ar(10,15);(10,3)^<{\rho_2}
\ar(0,4);(0,-8)^>{\sigma_1}
\ar(10,-3);(10,-8)^>{\sigma_2}
\end{xy}
.$$
By definition, two objects $\rho,\sigma$ are equivalent if and only if there exists a unitary $U\in \cU(M)$ satisfying 
$\rho=\Ad U\circ \sigma$, where $\Ad U$ is the inner automorphism of $M$ given by $\Ad U (x)=UxU^{-1}$. 
We denote by $[\rho]$ the equivalence class of $\rho$. 
The self-morphism space $(\rho,\rho)$ is nothing but the relative commutant $M\cap \rho(M)'$, 
and when this space consists of only scalars, we say that $\rho$ is irreducible (or simple). 

The morphism space $(\rho,\sigma)$ inherits the Banach space structure from $M$, and the $*$-operation 
of $M$ sends $(\rho,\sigma)$ to $(\sigma,\rho)$, which makes $\End(M)$ a C$^*$ tensor category 
(see \cite[Section 1]{BKLR15}). 
Moreover, if $\rho$ is irreducible, the space $(\rho,\sigma)$ is a Hilbert space with an inner product 
given by $T_1^*T_2=\inpr{T_1}{T_2}I_M$ for $T_1,T_2\in (\rho,\sigma)$. 
Throughout the paper, we assume that any functor between C$^*$ fusion categories preserves  
the $*$-structure. 

For $\rho\in \End(M)$, its dimension $d(\rho)$ is defined by $[M:\rho(M)]_0^{1/2}$, 
where $[M:\rho(M)]_0$ is the minimal index of $\rho(M)$ in $M$. 
We denote by $\End_0(M)$ the set of $\rho\in \End(M)$ with finite $d(\rho)$. 
The dimension function $\End_0(M)\ni \rho\mapsto d(\rho)$ is additive with respect to 
the direct sum operation and multiplicative with respect to the monoidal product operation. 
The tensor category $\End_0(M)$ is rigid in the following sense: for any $\rho\in \End_0(M)$, 
there exist $\brho\in \End_0(M)$, called the conjugate endomorphism of $\rho$, and two isometries 
$R_\rho\in (\id,\brho\circ \rho)$, $\overline{R}_\rho\in (\id,\rho\circ \rho)$ satisfying 
$$\overline{R}_\rho^*\rho(R_\rho)=R_\rho^*\brho(\overline{R}_\rho)=\frac{1}{d(\rho)}.$$
If $\rho$ is self-conjugate, either $\overline{R}_\rho=R_\rho$ 
or $\overline{R}_\rho=-R_\rho$ occurs. 
We say that $\rho$ is real in the former case, and $\rho$ is pseudo-real in the latter case. 

If we replace $\End(M)$ with the set of unital homomorphisms between two type III factors, 
the dimension function and conjugate morphisms still make sense, and we use the same notation as above (see \cite{I98}, \cite{BKLR15}).  

Every C$^*$ fusion category is realized as a category of bimodules of the hyperfinite II$_1$ factor 
(see \cite{HY00}), which implies the following statement by a tensor product trick. 

\begin{theorem}\label{embedding} Every C$^*$ fusion category is realized as a subcategory of $\End_0(M)$ for any 
hyperfinite type III factor $M$.  
\end{theorem}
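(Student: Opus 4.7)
The plan is to combine the Hayashi--Yamagami realization of any C$^*$ fusion category as $R$-$R$ bimodules for the hyperfinite II$_1$ factor $R$ with a tensor product trick that transports this realization to $M$-$M$ bimodules of the target hyperfinite type III factor $M$, and finally to pass from bimodules to endomorphisms using the proper infiniteness of $M$.

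First I would invoke \cite{HY00} to obtain a fully faithful $*$-monoidal embedding $\cC \hookrightarrow \mathrm{Bim}_0(R)$ into $R$-$R$ bimodules of finite Jones index. Given any hyperfinite type III factor $M$, the tensor product $M \bar{\otimes} R$ is again hyperfinite type III with the same flow of weights as $M$, so by uniqueness (Connes' classification of injective factors, together with Haagerup's theorem for type III$_1$ and Krieger's theorem for type III$_0$) we may fix a factor $P$ with $P \cong M$ and an identification $M = R \bar{\otimes} P$. Then define
$$F: \mathrm{Bim}_0(R) \longrightarrow \mathrm{Bim}_0(M), \qquad F(H) = H \otimes_\C L^2(P),$$
where the bimodule structure on $F(H)$ is given by letting the $R$-part of $M = R \bar{\otimes} P$ act on $H$ and the $P$-part act on $L^2(P)$ (from both sides), and $F(T) = T \otimes I$ on morphisms.

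The functor $F$ is $*$-preserving, and a direct calculation with a Pimsner--Popa basis shows it preserves left and right Jones index. The identification $L^2(P) \otimes_P L^2(P) \cong L^2(P)$ provides a monoidal constraint $F(H_1) \otimes_M F(H_2) \cong F(H_1 \otimes_R H_2)$. For full faithfulness, I would show that every $T \in \Hom_{M\text{-}M}(F(H_1), F(H_2))$ is of the form $T_0 \otimes I$ with $T_0 \in \Hom_{R\text{-}R}(H_1, H_2)$: since $T$ commutes with both left and right actions of $1 \otimes P$ on the $L^2(P)$ tensor factor, and $(P \cup P')' = Z(P) = \C I$ for $P$ a factor in standard form, $T$ must lie in $\B(H_1, H_2) \otimes \C I$; the remaining commutation with the $R \otimes 1$ actions then identifies $T_0$ as an $R$-$R$ intertwiner.

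To finish, I compose with the standard C$^*$-tensor equivalence $\mathrm{Bim}_0(M) \simeq \End_0(M)$: a finite-index $M$-$M$ bimodule $K$, having finite right $M$-dimension, is isomorphic as a right $M$-module to $L^2(M)$ by proper infiniteness of $M$, and fixing such an isomorphism identifies $K$ with ${}_\rho L^2(M)_\id$ for a uniquely determined $\rho \in \End_0(M)$ up to inner automorphism. Composing the three functors yields the required embedding $\cC \hookrightarrow \End_0(M)$. I expect the full faithfulness of $F$ to be the main subtlety, since the remaining steps package standard results, whereas the commutant calculation requires that the specific tensor structure $M = R \bar{\otimes} P$ be invoked cleanly and that all compatibilities with the $*$-structure, rigidity, and conjugation be verified on the nose.
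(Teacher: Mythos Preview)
Your proposal is correct and follows precisely the approach the paper indicates: the paper's proof consists of the single sentence preceding the theorem, namely that the Hayashi--Yamagami realization in $\mathrm{Bim}_0(R)$ for the hyperfinite II$_1$ factor ``implies the following statement by a tensor product trick,'' and you have simply unpacked that trick in detail. The decomposition $M \cong R \bar{\otimes} P$ with $P \cong M$, the functor $H \mapsto H \otimes L^2(P)$, and the passage from bimodules to endomorphisms via proper infiniteness are exactly what is intended.
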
 

For uniqueness, we have the following statement, which is a consequence of Popa's classification 
theorem for amenable subfactors. 
Recall that a monoidal functor from a strict fusion category $\cC$ to another strict fusion category $\cD$ is a pair $(F,L)$ 
consisting of a functor $F:\cC\to \cD$ and natural isomorphisms 
$$L_{\rho,\sigma}\in \Hom_\cD(F(\rho)\otimes F(\sigma), F(\rho\otimes \sigma))$$ 
satisfying 
$$L_{\rho\otimes \sigma,\tau}\circ (L_{\rho,\sigma}\otimes I_{F(\tau)})
=L_{\rho,\sigma\otimes \tau}\circ (I_{F(\rho)}\otimes L_{\sigma,\tau})$$
for any $\rho,\sigma,\tau\in \cC$ (see \cite[Definition 2.4.1]{EGNO15}).  
We may and do assume $F(\mathbf{1}_{\cC})=\mathbf{1}_{\cD}$ and $L_{\mathbf{1}_\cC,\rho}=L_{\rho,\mathbf{1}_\C}=I_{F(\rho)}$.   
When $\cC$ and $\cD$ are C$^*$ categories, we further assume that $L_{\rho,\sigma}$ is a unitary.

\begin{theorem}\label{uniqueness} Let $M$ and $P$ be hyperfinite type III$_1$ factors, 
and let $\cC$ and $\cD$ be C$^*$ fusion categories embedded in $\End(M)$ and $\End(P)$ respectively. 
Let $(F,L)$ be a monoidal functor from $\cC$ to $\cD$ that is an equivalence of the two C$^*$ fusion categories 
$\cC$ and $\cD$. 
Then there exists a surjective isomorphism $\Phi:M\to P$ and unitaries $U_\rho\in P$ for each object $\rho \in \cC$ 
satisfying  
$$F(\rho)=\Ad U_\rho \circ \Phi \circ\rho\circ\Phi^{-1},$$
$$F(X)=U_\sigma\Phi(X)U_\rho^*,\quad X\in (\rho,\sigma),$$
$$L_{\rho,\sigma}=U_{\rho\circ\sigma}\Phi\circ\rho\circ\Phi^{-1}(U_\sigma^*)U_\rho^*=
U_{\rho\circ\sigma}U_\rho^*F(\rho)(U_\sigma^*).$$
\end{theorem}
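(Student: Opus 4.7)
The plan is to invoke Popa's classification theorem for amenable subfactors to build $\Phi$, and then to extract the unitaries $U_\rho$ from the resulting match of standard invariants.

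First I would pick a tensor generator of $\cC$, e.g.\ $\gamma = \bigoplus_{\rho\in\Irr(\cC)}\rho$; since $\cC$ is a fusion category, $\gamma$ is self-conjugate and every simple object appears in some power $\gamma^k$. Form the finite-index subfactor $\gamma(M)\subset M$; its Jones tower gives a standard $\lambda$-lattice whose higher relative commutants are canonically the intertwiner spaces $(\gamma^k,\gamma^\ell)$ in $\cC$, equipped with the Q-system structure of $\gamma$. The monoidal equivalence $(F,L)$ transports all of this data to the corresponding data for $F(\gamma)$, thereby identifying the standard invariant of $\gamma(M)\subset M$ with that of $F(\gamma)(P)\subset P$. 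Since $M$ and $P$ are hyperfinite type III$_1$, both subfactors are strongly amenable, so Popa's theorem yields a surjective isomorphism $\Phi:M\to P$ carrying $\gamma(M)$ onto $F(\gamma)(P)$ and realizing the identification of standard invariants through $F$.

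Next I would extract the $U_\rho$. The subfactor matching provides a unitary $U_\gamma\in P$ with $F(\gamma)=\Ad U_\gamma\circ\Phi\circ\gamma\circ\Phi^{-1}$. For each simple $\rho\in\cC$ fix an isometric embedding $V_\rho\in (\rho,\gamma^{k_\rho})$; its image on the $\cD$ side assembled from $F(V_\rho)$ and the coherence isomorphisms $L$, together with the Popa matching of higher relative commutants, pins down a unique unitary $U_\rho\in P$ satisfying $F(\rho)=\Ad U_\rho\circ\Phi\circ\rho\circ\Phi^{-1}$ and the morphism identity on $V_\rho$. Extend to reducible $\rho$ by $U_\rho:=\sum_i F(W_i)U_{\rho_i}\Phi(W_i^*)$ for any isometric decomposition $W_i\in(\rho_i,\rho)$; a direct check shows that $U_\rho$ is independent of the decomposition and yields the morphism identity $F(X)=U_\sigma\Phi(X)U_\rho^*$ for all $X\in(\rho,\sigma)$.

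The delicate last step is the monoidal identity $L_{\rho,\sigma}=U_{\rho\sigma}\,\Phi\rho\Phi^{-1}(U_\sigma^*)\,U_\rho^*$. Both sides are intertwiners from $F(\rho)F(\sigma)$ to $F(\rho\otimes\sigma)$; after conjugating by $\Phi^{-1}$ and the $U_\bullet$'s the right-hand side collapses to the identity of $\rho\circ\sigma$, so the relation is equivalent to $\Phi$ respecting the Q-system multiplication on $\gamma$, which is precisely the data matched at the outset by Popa's theorem. The main obstacle is this coherence bookkeeping: the simple-object unitaries $U_\rho$ are a priori unique only up to phases, and arbitrary phase choices would obstruct the monoidal identity by a 2-cocycle. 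Rigidifying the $U_\rho$ against the full Q-system structure of $\gamma$, rather than against isolated objects, cancels this 2-cocycle, and this appears to be the subtlety addressed by the correction acknowledged in the paragraph preceding the theorem.
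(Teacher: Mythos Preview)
Your approach is essentially the paper's: choose $\gamma=\bigoplus_{\xi\in\Irr(\cC)}\xi$, identify the standard invariants of $\gamma(M)\subset M$ and $F(\gamma)(P)\subset P$ via $(F,L)$, invoke Popa to obtain $\Phi$, and then extract the $U_\rho$. The paper executes exactly this outline, but is considerably more explicit at the points where your sketch hand-waves.

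First, you embed each simple $\rho$ into some $\gamma^{k_\rho}$, but since $\gamma$ already contains every simple once, the paper works directly with isometries $V_\xi\in(\xi,\gamma)$. More importantly, the paper does not claim that the resulting $U_\xi$ is ``pinned down uniquely''; it is \emph{defined} by the concrete formula $U_\xi=F(\xi)(W_{\id}^*)W_\xi$, where $W_\xi$ is specified by $\Phi(V_\xi)=F(V_\xi)W_\xi$. The very existence of such $W_\xi$ uses that $\Phi$ satisfies $\Phi(X)=F(X)$ on $(\gamma,\gamma)$ and $\Phi(Y)=L_{\gamma,\gamma}^*F(Y)L_{\gamma,\gamma}$ on $(\gamma^2,\gamma^2)$, which in turn comes from building $\Phi_0$ on $(\gamma^3,\gamma^3)$ as $\Ad L_{\gamma,\gamma,\gamma}^*\circ F$ and checking that it respects the full nested system of relative commutants. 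This is the precise content of ``Popa matching the standard invariants through $F$,'' and the paper spells it out rather than asserting it.

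Second, your reduction of the monoidal identity to ``$\Phi$ respecting the Q-system multiplication'' is too coarse to be a proof. The paper instead establishes directly, for simples $\xi,\eta,\zeta$ and $X\in(\zeta,\xi\circ\eta)$, the relation $L_{\xi,\eta}^*F(X)=F(\xi)(U_\eta)U_\xi\Phi(X)U_\zeta^*$, via a careful computation that compares the two expressions $F(Z)=L_{\gamma,\gamma}\Phi(Z)W_{\id}^*$ and $F(Z)=L_{\gamma,\gamma}\Phi(Z)F(\gamma)(F(V_{\id})^*W_{\id}^*F(V_{\id}))$ for $Z\in(\gamma,\gamma^2)$. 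The general monoidal identity then follows by decomposing $\rho,\sigma$ into simples using the basis-independent formula for $U_\rho$ that you correctly wrote down. Your 2-cocycle worry is not resolved by an abstract rigidification but by these explicit formulas: the specific choice $U_\xi=F(\xi)(W_{\id}^*)W_\xi$, with the $W_{\id}$ correction, is exactly what makes the coherence go through.
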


Before proving the statement, let us recall Popa's classification theorem in the special 
case of finite depth subfactors of the hyperfinite type III$_1$ factor (see  \cite{P95}, \cite{M05}). 
Note that hyperfinite type III$_1$ factors are mutually isomorphic due to Haagerup \cite{H}.  
Let $N\subset M$ be an inclusion of hyperfinite type III$_1$ factors of finite index and finite depth, 
and let $l$ be an integer larger than the depth of the inclusion. 
Let 
$$M\supset N=N_0\supset N_1\supset \cdots \supset N_l$$
be the downward basic construction. 
In each step, the subfactor $N_{k+1}$ is uniquely determined up to inner conjugacy in $N_k$. 
The standard invariant of $N\subset M$ is determined by the following nested system of 
finite dimensional von Neumann algebras: 
$$\begin{array}{ccccccc}
M\cap N' &\subset &M\cap N_1' &\subset &\cdots &\subset &M\cap N_l'  \\
\cup & &\cup & & & &\cup  \\
\C &\subset &N\cap N_1' &\subset &\cdots &\subset &N\cap N_l' 
\end{array}.$$
Popa showed that there is a continuation of the downward basic construction 
$$N_l\supset N_{l+1}\supset N_{l+2}\supset \cdots$$
so that $M=M^{\mathrm{st}}\otimes R$ and $N=N^{\mathrm{st}}\otimes R$ hold, where 
$$M^{\mathrm{st}}=\bigvee_{k=0}^\infty (M\cap N_k'),\quad N^{\mathrm{st}}=\bigvee_{k=0}^\infty (N\cap N_k'),$$
and $R$ is the relative commutant of $M^{\mathrm{st}}$ in $M$, which is hyperfinite of type III$_1$. 
Since the core inclusion $N^{\mathrm{st}}\subset M^{\mathrm{st}}$ is completely determined by the standard invariant, 
it classifies $N\subset M$. 

Now we recall the precise statement we need in the proof of Theorem \ref{uniqueness}. 
Assume that $Q\subset P$ is another inclusion of hyperfinite type III$_1$ factors with the same standard invariant 
as that of $N\subset M$. 
Let 
$$P\supset Q\supset Q_1\supset \cdots\supset Q_l$$
be an arbitrary downward basic construction up to $l$ step. 
Popa's theorem implies that if $\Phi$ is an isomorphism from $M\cap N_l'$ onto $P\cap Q_l'$ with 
$\Phi(M\cap N_k')=P\cap Q_k'$ and $\Phi(N\cap N_k')=Q\cap Q_k'$ for any $1\leq k\leq l$, 
it extends to an isomorphism from $M$ onto $P$ mapping $N$ onto $Q$.  

\begin{proof}[Proof of Theorem \ref{uniqueness}] 
In the following arguments, whenever we apply the functor $F$ to morphisms, we need a special care 
because the same operator may belong to different morphism spaces. 
For example, an operator $X\in (\rho,\sigma)$ at the same time belongs to $(\rho\circ\mu,\sigma\circ \mu)$ 
as it is identified with $X\otimes I_\mu$. 
On the other hand, we have 
$$F(X\otimes I_\mu)=L_{\sigma,\mu}\circ(F(X)\otimes I_{F(\mu)})\circ L_{\rho,\mu}^{-1}.$$ 

Let $\Irr(\cC)$ be a complete system of representatives of the set of equivalence classes 
of irreducible endomorphisms in $\cC$. 
We may assume $\id\in \Irr(\cC)$, and $F(\id)=\id$. 
We choose an object
$$\gamma=\bigoplus_{\xi\in \Irr(\cC)}\xi\in \cC.$$
Then $\gamma(M)\subset M$ is a finite depth subfactor. 
Since $\gamma$ is self-conjugate, 
$$M\supset \gamma(M)\supset \gamma^2(M)\supset \gamma^3(M)$$
is a downward basic construction, and the standard invariant of the inclusion $\gamma(M)\subset M$ is determined by 
$$\begin{array}{ccccc}
(\gamma,\gamma) &\subset&(\gamma^2,\gamma^2) &\subset &(\gamma^3,\gamma^3)  \\
\cup &&\cup & &\cup  \\
\C&\subset &\gamma((\gamma,\gamma)) &\subset &\gamma((\gamma^2,\gamma^2)) 
\end{array}.$$
In the tensor category language, this is expressed as 
$$\begin{array}{ccccc}
\End_\cC(\gamma)\otimes I_\gamma\otimes I_\gamma &\subset&\End_\cC(\gamma\otimes\gamma)\otimes I_\gamma &\subset &
\End_\cC(\gamma\otimes \gamma\otimes \gamma)  \\
\cup &&\cup & &\cup  \\
\C I_\gamma\otimes I_\gamma\otimes I_\gamma&\subset &I_\gamma\otimes\End_\cC( \gamma)\otimes I_\gamma 
&\subset &I_\gamma\otimes \End_\cC(\gamma\otimes \gamma) 
\end{array}.$$
Let 
$$L_{\gamma,\gamma,\gamma}:=L_{\gamma^2,\gamma}L_{\gamma,\gamma}=L_{\gamma,\gamma^2}F(\gamma)(L_{\gamma,\gamma}),$$
or in the tensor category language, 
$$L_{\gamma,\gamma,\gamma}=L_{\gamma\otimes\gamma, \gamma}\circ (L_{\gamma,\gamma}\otimes I_{F(\gamma)})=
L_{\gamma,\gamma\otimes\gamma}\circ (I_{F(\gamma)}\otimes L_{\gamma,\gamma}).$$
Then we have $\Ad L_{\gamma,\gamma,\gamma}\circ F(\gamma)^3=F(\gamma^3)$, and $\Ad L_{\gamma,\gamma,\gamma}^*$ induces an isomorphism from 
$(F(\gamma^3),F(\gamma^3))$ onto $(F(\gamma)^3,F(\gamma)^3)$. 
We denote by $\Phi_0$ the composition of $F$ restricted to $(\gamma^3,\gamma^3)$ and $\Ad L_{\gamma,\gamma,\gamma}^*$, 
which is an isomorphism from $(\gamma^3,\gamma^3)$ onto 
$(F(\gamma)^3,F(\gamma)^3)$. 
We claim that $\Phi_0$ induces an isomorphism of the standard invariants of $\gamma(M)\subset M$ and 
$F(\gamma)(P)\subset P$. 
Indeed, for $Y\in (\gamma^2,\gamma^2)$, we have 
\begin{align*}
\Phi_0(Y\otimes I_\gamma)&=(L_{\gamma,\gamma}^*\otimes I_{F(\gamma)})
L_{\gamma\otimes\gamma,\gamma}^*F(Y\otimes I_\gamma)L_{\gamma\otimes\gamma,\gamma}(L_{\gamma,\gamma}\otimes I_{F(\gamma)})\\
 &=L_{\gamma,\gamma}^*F(Y)L_{\gamma,\gamma}\otimes I_{F(\gamma)},
\end{align*}
\begin{align*}
\Phi_0(I_\gamma\otimes Y)&=(I_{F(\gamma)}\otimes L_{\gamma,\gamma}^*)
L_{\gamma,\gamma\otimes\gamma}^*F(I_\gamma\otimes Y)L_{\gamma,\gamma\otimes\gamma}
(I_{F(\gamma)}\otimes L_{\gamma,\gamma})\\
 &=I_{F(\gamma)}\otimes L_{\gamma,\gamma}^*F(Y)L_{\gamma,\gamma}.
\end{align*}
In the same way, for $X\in \End_\cC(\gamma)$, we have 
$$\Phi_0(X\otimes I_\gamma\otimes I_\gamma)=F(X)\otimes I_{F(\gamma)}\otimes I_{F(\gamma)},$$
$$\Phi_0(I_\gamma\otimes X \otimes I_\gamma)=I_{F(\gamma)}\otimes F(X)\otimes I_{F(\gamma)},$$
$$\Phi_0(I_\gamma \otimes I_\gamma\otimes X)=I_{F(\gamma)}\otimes I_{F(\gamma)}\otimes F(X),$$
which shows the claim. 
Thus $\Phi_0$ extends to an isomorphism $\Phi$ from 
$M$ onto $P$ satisfying $\Phi(\gamma(M))=F(\gamma)(P)$. 

By construction and the above computation, we have $\Phi(Y)=L_{\gamma,\gamma}^*F(Y)L_{\gamma,\gamma}$ 
and $\Phi(\gamma(Y))=F(\gamma)(\Phi(Y))$ for $Y\in (\gamma^2,\gamma^2)$, 
and we have $\Phi(X)=F(X)$ and $\Phi(\gamma(X))=F(\gamma)(\Phi(X))$ for $X\in (\gamma,\gamma)$.

We choose an isometry $V_\xi\in (\xi,\gamma)$ for each $\xi\in \Irr(\cC)$. 
Then we have 
$$\gamma(x)=\sum_{\xi\in \Irr(\cC)}V_\xi\xi(x)V_\xi^*,$$
Since $V_\xi V_\xi^*\in (\gamma,\gamma)$, we get
$$\Phi(V_\xi V_\xi^*)=F(V_\xi V_\xi^*)=F(V_\xi)F(V_\xi)^*,$$
and $\Phi(V_\xi)$ and $F(V_\xi)$ are isometries with the same range projection. 
Thus there exists a unitary $W_\xi\in \cU(P)$ for each $\xi\in \Irr(\cC)$ satisfying $\Phi(V_\xi)=F(V_\xi)W_\xi$. 

Since $\Phi(\gamma(M))=F(\gamma)(P)$, there exists an isomorphism $\varphi$ from $M$ onto $P$ satisfying 
$\Phi\circ \gamma=F(\gamma)\circ \varphi$. 
On one hand, 
$$F(\gamma)\circ \varphi(x)=\sum_{\xi\in \Irr(\cC)}F(V_\xi)F(\xi)(\varphi(x))F(V_\xi)^*,$$
and on the other hand, 
$$\Phi\circ \gamma(x)=\sum_{\xi\in \Irr(\cC)}\Phi(V_\xi)\Phi \circ \xi(x)\Phi(V_\xi)^*
=\sum_{\xi\in \Irr(\cC)}F(V_\xi)W_\xi\Phi \circ \xi(x)W_\xi^*F(V_\xi)^*.$$
This implies $F(\xi)\circ \varphi=\Ad W_\xi \circ \Phi\circ \xi$, 
and in particular $\varphi=\Ad W_{\id}\circ \Phi$ in the case with $\xi=\id$. 
Thus we obtain $F(\xi)=\Ad U_\xi\circ \Phi\circ \xi\circ \Phi^{-1}$ with 
$$U_\xi=W_\xi\Phi\circ \xi\circ \Phi^{-1}(W_\id^*)=F(\xi)(W_\id^*)W_\xi.$$

Let $Z\in (\gamma,\gamma^2)$. 
Then $Y_1=ZV_\id^*$ belongs to $(\gamma^2,\gamma^2)$, and $F(Y_1)=L_{\gamma,\gamma}\Phi(Y_1)L_{\gamma,\gamma}^*$. 
Since $Y_1$ should be interpreted as $Z\circ (V_\id^*\otimes I_\gamma)$, the left-hand side is 
$F(Z)F(V_\id)^*L_{\gamma,\gamma}^*$. 
Thus we get 
$$F(Z)=L_{\gamma,\gamma}\Phi(Z)\Phi(V_\id)^*F(V_\id)=L_{\gamma,\gamma}\Phi(Z)W_\id^*.$$
On the other hand, the operator $Y_2=Z\gamma(V_\id^*)$ belongs to $(\gamma^2,\gamma^2)$, and 
we have $F(Y_2)=L_{\gamma,\gamma}\Phi(Y_2)L_{\gamma,\gamma}^*$. 
Since $Y_2$ should be interpreted as $Z\circ (I_\gamma\otimes V_\id^*)$, the left hand side is 
$F(Z)F(\gamma)(V_\id)^*L_{\gamma,\gamma}^*$, and we get 
\begin{align*}
\lefteqn{F(Z)=L_{\gamma,\gamma}\Phi(Z)\Phi(\gamma(V_\id)^*)F(\gamma(V_\id))} \\
 &=L_{\gamma,\gamma}\Phi(Z)F(\gamma)(W_\id\Phi(V_\id)^*W_\id^*)F(\gamma)(F(V_\id)) \\
 &=L_{\gamma,\gamma}\Phi(Z)F(\gamma)(F(V_\id)^*W_\id^*F(V_\id)),
\end{align*}
and 
$$\Phi(Z)F(\gamma)(F(V_\id)^*W_\id^*F(V_\id))=\Phi(Z)W_\id^*.$$
Setting $Z=V_\id$ and multiplying the both sides by $\Phi(V_\id)^*$ from left, 
we get 
$$F(\gamma)(F(V_\id)^*W_\id F(V_\id))=W_\id,$$ which implies
\begin{equation}\label{unique1} F(V_\eta)^*W_\id  F(V_\eta)=F(\eta)(F(V_\id)^*W_\id F(V_\id)).
\end{equation}

Let $\xi, \eta, \zeta\in \Irr(\cC)$, and let $X\in (\zeta,\xi\circ \eta)$. 
Since 
$\Phi(X)\in (\Phi\circ \zeta\circ \Phi^{-1},\Phi\circ \xi\circ \eta\circ\Phi^{-1})$, 
$\Phi\circ \zeta\circ \Phi^{-1}=\Ad U_\zeta^*\circ F(\zeta)$ and 
$$\Phi\circ \xi\circ \eta\circ \Phi^{-1}=\Ad(U(\xi)^*F(\xi)(U(\eta))^*)\circ F(\xi)\circ F(\eta),$$
we have 
$$F(\xi)(U_\eta)U_\xi\Phi(X)U_\zeta^*=U_\xi\Phi\circ \xi\circ \Phi^{-1}(U_\eta)\Phi(X)U_\zeta^*\in 
(F(\zeta),F(\xi)\circ F(\eta)).$$
We claim that this coincides with $L_{\xi,\eta}^*F(X)$. 
Indeed, since $Z=\gamma(V_\eta)V_\xi XV_\zeta^*\in (\gamma,\gamma^2)$, we have 
$F(Z)=L_{\gamma,\gamma}\Phi(Z)F(\gamma)(F(V_\id)^*W_\id^*F(V_\id))$. 
Since $Z$ should be understood as $(I_\gamma\otimes V_\eta)\circ (V_\xi\otimes I_\eta)\circ X\circ V_\zeta^*$, 
the left-hand side is 
\begin{align*}
 &(L_{\gamma,\gamma}F(\gamma)(F(V_\eta))L_{\gamma,\eta}^*)(L_{\gamma,\eta}F(V_\xi)L_{\xi,\eta}^*)F(X)F(V_\zeta)^* \\
 &=L_{\gamma,\gamma}F(\gamma)(F(V_\eta))F(V_\xi)L_{\xi,\eta}^*F(X)F(V_\zeta)^*.
\end{align*}
The right-hand side is 
\begin{align*}
\lefteqn{L_{\gamma,\gamma}\Phi(\gamma(V_\eta))\Phi(V_\xi)\Phi(X)\Phi(V_\zeta)^*F(\gamma)(F(V_\id)^*W_\id^*F(V_\id))} \\
 &=L_{\gamma,\gamma}F(\gamma)(W_\id \Phi(V_\eta)W_\id^*)F(V_\xi)W_\xi\Phi(X)W_\zeta^*F(V_\zeta)^* F(\gamma)(F(V_\id)^*W_\id^*F(V_\id))\\
 &=L_{\gamma,\gamma}F(\gamma)(W_\id F(V_\eta)W_\eta W_\id^*)F(V_\xi)W_\xi\Phi(X)W_\zeta^*F(V_\zeta)^* F(\gamma)(F(V_\id)^*W_\id^*F(V_\id))\\
\end{align*}
Thus
\begin{align*}
\lefteqn{L_{\xi,\eta}^*F(X)} \\
 &=F(\xi)(F(V_\eta)^*W_\id F(V_\eta)W_\eta W_\id^*)W_\xi\Phi(X)W_\zeta^*
 F(\zeta)(F(V_\id)^*W_\id^*F(V_\id))\\
&=F(\xi)(F(\eta)(F(V_\id)^*W_\id F(V_\id))W_\eta W_\id^*)W_\xi\Phi(X)W_\zeta^*F(\zeta)(F(V_\id)^*W_\id^*F(V_\id))\\
&=F(\xi)\circ F(\eta)(F(V_\id)^*W_\id F(V_\id)W_\id)  F(\xi)(U_\eta)U_\xi\Phi(X)U_\zeta^*\\
&\times F(\zeta)(W_\id^*F(V_\id)^*W_\id^*F(V_\id))\\
&=F(\xi)(U_\eta)U_\xi\Phi(X)U_\zeta^*,
\end{align*}
where we used Eq.(\ref{unique1}), and $F(\xi)(U_\eta)U_\xi\Phi(X)U_\zeta^*\in (F(\zeta),F(\xi)\circ F(\eta))$. 
This shows that the claim holds. 

For any object $\rho\in \cC$, we choose an orthonormal basis $\{V(\xi,\rho)_i\}_{i=1}^{\dim (\xi,\rho)}$ 
of $(\xi,\rho)$ and define a unitary $U_\rho\in \cU(P)$ by 
$$U_\rho=\sum_{\xi,i}F(V(\xi,\rho)_i)U_\xi \Phi(V(\xi,\rho)_i)^*.$$
Then it does not depend on the choice of the orthonormal basis, and coincides with the previous 
definition if $\rho\in \Irr(\cC)$.  
It is straightforward to show 
$$F(\rho)=\Ad U_\rho\circ \Phi\circ \rho\circ \Phi^{-1}.$$ 

Let $\rho, \sigma\in \cC$, and let $X\in (\rho,\sigma)$. 
For $\xi,\eta\in \Irr(\cC)$, we have $V(\xi,\sigma)_i^*XV(\eta,\rho)_j\in (\eta,\xi)$, 
which vanishes if $\xi\neq \eta$. 
Since $\xi$ is irreducible, the restriction of $\Phi$ on $(\xi,\xi)=\C I_\rho$ coincides with that of $F$ on $(\xi,\xi)$, 
and 
$$F(V(\xi,\sigma)_i)^*F(X)F(V(\eta,\rho)_j)=\delta_{\xi,\eta}\Phi(V(\xi,\sigma)_i^*)\Phi(X)\Phi(V(\eta,\rho)_j).$$
Thus
\begin{align*}
\lefteqn{F(X)=\sum_{\xi,\eta\in \Irr(\cC)}\sum_{i,j} F(V(\xi,\sigma)_i) F(V(\xi,\sigma)_i)^*F(X)
F(V(\eta,\rho)_j) F(V(\eta,\rho)_j)^*  }\\
 &=\sum_{\xi\in \Irr(\cC)}\sum_{i,j}
 F(V(\xi,\sigma)_i) \Phi(V(\xi,\sigma)_i^*)\Phi(X)\Phi(V(\xi,\rho)_j)F(V(\xi,\rho)_j) \\
 &=\sum_{\xi\in \Irr(\cC)}\sum_{i,j}
 F(V(\xi,\sigma)_i)U_\xi \Phi(V(\xi,\sigma)_i^*)\Phi(X)\Phi(V(\xi,\rho)_j)U_\xi^*F(V(\xi,\rho)_j) \\
 &=U_\sigma \Phi(X)U_\rho^*.
\end{align*}

Let $\zeta\in \Irr(\cC)$. Then
$\{V(\xi,\rho)_i\xi(V(\eta,\sigma)_j)V(\zeta,\xi\circ \eta)_k\}_{\xi,\eta,i,j,k}$ 
is an orthonormal basis of $(\zeta,\rho\circ \sigma)$, and we have 
\begin{align*}
U_{\rho\circ \sigma}
 &=\sum_{\xi,\eta,\zeta\in \Irr(\cC)}\sum_{i,j,k}F(V(\xi,\rho)_i\xi(V(\eta,\sigma)_j)V(\zeta,\xi\circ \eta)_k)\\
 &\times U_\zeta\Phi(V(\zeta,\xi\circ \eta)_k^*\xi(V(\eta,\sigma)_j)^*V(\xi,\rho)_i^*). \\
\end{align*}
Here $V(\xi,\rho)_i\xi(V(\eta,\sigma)_j)V(\zeta,\xi\circ \eta)_k$ should be interpreted as 
$$(V(\xi,\rho)_i\otimes I_\sigma)\circ (I_\xi\otimes V(\eta,\sigma)_j)\circ V(\zeta,\xi\circ\eta)_k,$$
and we have  
\begin{align*}
\lefteqn{F(V(\xi,\rho)_i\xi(V(\eta,\sigma)_j)V(\zeta,\xi\circ \eta)_k)} \\
 &=(L_{\rho,\sigma}F(V(\xi,\rho)_i)L_{\xi,\sigma}^*)(L_{\xi,\sigma}F(\xi)(F(V(\eta,\sigma)_j))L_{\xi,\eta}^*)
 F(V(\zeta,\xi\circ \eta)_k)\\
 &=L_{\rho,\sigma}F(V(\xi,\rho)_i)F(\xi)(F(V(\eta,\sigma)_j))L_{\xi,\eta}^*F(V(\zeta,\xi\circ \eta)_k).
\end{align*}
Note that we have already seen 
$$L_{\xi,\eta}^*F(V(\zeta,\xi\circ \eta)_k)=F(\xi)(U_\eta)U_\xi\Phi(V(\zeta,\xi\circ \eta)_k)U_\zeta^*.$$
Thus
\begin{align*}
\lefteqn{U_{\rho\circ \sigma}} \\
 &=L_{\rho,\sigma}\sum_{\xi,\eta,\zeta\in \Irr(\cC)}\sum_{i,j,k}F(V(\xi,\rho)_i)F(\xi)(F(V(\eta,\sigma)_j))
 F(\xi)(U_\eta)U_\xi\Phi(V(\zeta,\xi\circ \eta)_k)U_\zeta^*\\
 &\times U_\zeta\Phi(V(\zeta,\xi\circ \eta)_k)^*U_\xi^*F(\xi)(\Phi(V(\eta,\sigma)_j))^*U_\xi\Phi(V(\xi,\rho)_i^*) \\
 &=L_{\rho,\sigma}\sum_{\xi,\eta\in \Irr(\cC)}\sum_{i,j}F(V(\xi,\rho)_i)F(\xi)(F(V(\eta,\sigma)_j)U_\eta\Phi(V(\eta,\sigma)_j)^*)U_\xi\Phi(V(\xi,\rho)_i^*) \\
&=L_{\rho,\sigma}\sum_{\xi\in \Irr(\cC)}\sum_{i}F(\rho)(U_\sigma)F(V(\xi,\rho)_i)U_\xi\Phi(V(\xi,\rho)_i^*) \\
&=L_{\rho,\sigma}F(\rho)(U_\sigma)U_\rho. 
\end{align*}
This finishes the proof. 
\end{proof}

Applying the above theorem to the case with $M=P$ and $\cC=\cD$, we obtain the following statement.  

\begin{cor} Let $M$ be a hyperfinite type III$_1$ factor, let $\cC\subset \End_0(M)$ be a 
C$^*$ fusion category. 
Then up to a natural isomorphism, every automorphism of $\cC$ is induced by an automorphism $\Phi$ of $M$ 
in the following sense: it is given by $\rho\mapsto \Phi\circ \rho\circ \Phi^{-1}$ for an object $\rho$ and by 
$X\mapsto \Phi(X)$ for a morphism $X$. 
\end{cor}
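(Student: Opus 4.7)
The proof is essentially a direct application of Theorem \ref{uniqueness} with $M=P$ and $\cC=\cD$, so the plan is to unpack what that theorem produces and then interpret the output as the required natural isomorphism.

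First I would take a monoidal self-equivalence $(F,L)$ of $\cC$ and feed it into Theorem \ref{uniqueness}. That yields an automorphism $\Phi\in\Aut(M)$ and a family of unitaries $U_\rho\in \cU(M)$, indexed by the objects of $\cC$, satisfying the three identities
$$F(\rho)=\Ad U_\rho\circ \Phi\circ \rho\circ \Phi^{-1},\qquad F(X)=U_\sigma\Phi(X)U_\rho^*,$$
$$L_{\rho,\sigma}=U_{\rho\circ\sigma}F(\rho)(U_\sigma^*)U_\rho^*.$$

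Next I would introduce the candidate ``conjugation'' functor $\tilde{F}:\cC\to\cC$ defined on objects by $\tilde{F}(\rho)=\Phi\circ\rho\circ\Phi^{-1}$ and on morphisms by $\tilde{F}(X)=\Phi(X)$. Because $\Phi$ is an algebra automorphism of $M$, one has $\tilde{F}(\rho)\circ \tilde{F}(\sigma)=\tilde{F}(\rho\circ\sigma)$ on the nose, so $\tilde F$ carries a strict monoidal structure with $\tilde L_{\rho,\sigma}=I$. This is manifestly an automorphism of $\cC$ as a C$^*$ tensor category and has the form required by the statement.

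It then remains to exhibit a monoidal natural isomorphism $U:\tilde{F}\Rightarrow F$. The components are precisely the unitaries $U_\rho$: the first identity above says $U_\rho\in (\Phi\circ\rho\circ\Phi^{-1},F(\rho))=(\tilde F(\rho),F(\rho))$, and rewriting the second as $F(X)U_\rho=U_\sigma\Phi(X)=U_\sigma\tilde F(X)$ gives naturality in $X\in(\rho,\sigma)$. Monoidality of the natural transformation amounts to $L_{\rho,\sigma}\circ(U_\rho\otimes U_\sigma)=U_{\rho\circ\sigma}\circ \tilde L_{\rho,\sigma}=U_{\rho\circ\sigma}$, and since $U_\rho\otimes U_\sigma=F(\rho)(U_\sigma)U_\rho$ in $\End(M)$, this is exactly the third identity supplied by Theorem \ref{uniqueness}.

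There is no real obstacle here beyond bookkeeping; the only point requiring mild care is the convention for the tensor product of morphisms in $\End(M)$, which must be applied so that the monoidality equation matches the form $L_{\rho,\sigma}=U_{\rho\circ\sigma}F(\rho)(U_\sigma^*)U_\rho^*$ produced by the theorem. Once that is observed, the corollary follows immediately.
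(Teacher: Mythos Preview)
Your proposal is correct and follows exactly the paper's approach: the paper simply states that the corollary follows by applying Theorem \ref{uniqueness} with $M=P$ and $\cC=\cD$, and you have spelled out precisely how the unitaries $U_\rho$ furnish the monoidal natural isomorphism between $F$ and conjugation by $\Phi$. One small slip: the third identity from the theorem reads $L_{\rho,\sigma}=U_{\rho\circ\sigma}\,\tilde F(\rho)(U_\sigma^*)\,U_\rho^*=U_{\rho\circ\sigma}\,U_\rho^*\,F(\rho)(U_\sigma^*)$, not $U_{\rho\circ\sigma}\,F(\rho)(U_\sigma^*)\,U_\rho^*$; with the correct form your monoidality check $L_{\rho,\sigma}\,F(\rho)(U_\sigma)\,U_\rho=U_{\rho\circ\sigma}$ goes through immediately.
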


One of the main tools in this note is the Cuntz algebra $\cO_n$, and we summarize the main feature of it here. 
Let $n$ be an integer larger than 1. 
The Cuntz algebra $\cO_n$ is the universal C$^*$-algebra with generators $\{S_i\}_{i=1}^n$ and relations 
$$S_i^*S_j=\delta_{i,j}I,$$
$$\sum_{i=1}^nS_iS_i^*=I.$$
The most peculiar property of the Cuntz algebra is that it is at the same time universal and simple (see \cite{Cu77}). 
Therefore if $\{T_i\}_{i=1}^n$ are noncommutative polynomials of the generators obeying the same relation 
as the defining relation, then there exists a unique endomorphism $\sigma\in \End(\cO_n)$ satisfying $\sigma(S_i)=T_i$.  

\section{Basic ingredients}\label{basic}
Let $G$ be a finite group of order $n$. 
We would like to classify a C$^*$ near group category $\cC$ with group $G$ and the multiplicity parameter $m$. 
Throughout this note, we assume that $G$ is not trivial and $m\neq 0$ because the two cases are completely understood 
as we mentioned in Introduction. 
In this section, we deduce basic ingredients to determine the structure of such $\cC$. 

Thanks to Theorem \ref{embedding} and Theorem \ref{uniqueness}, we may assume that $\cC$ is a subcategory of 
$\End(M)$ where $M$ is the hyperfinite type III$_1$ factor, 
and $\cC$ is generated by a single irreducible endomorphism $\rho\in \End_0(M)$ satisfying the following fusion rules:        
$$[\rho]^2=\bigoplus_{g\in G}[\alpha_g]\oplus m[\rho],$$
$$[\alpha_g][\alpha_h]=[\alpha_{gh}],$$
$$[\alpha_g][\rho]=[\rho][\alpha_g]=[\rho],$$
where the map $\alpha:G\rightarrow \Aut(M)$ induces an injective homomorphism from $G$ into $\Out(M)$. 
Since $[\alpha_g][\rho]=[\rho]$, we can arrange $\alpha$ so that $\alpha_g\circ \rho=\rho$ holds for all $g\in G$. 
Then we have $\alpha_g\circ \alpha_h=\alpha_{gh}$, that is, the map $\alpha$ is an action of $G$ on $M$, and in particular, 
the pointed subcategory generated by $\alpha_G$ has trivial third cohomology. 
Indeed, from the fusion rules, there exists a unitary $U(g,h)\in M$ satisfying $\alpha_g\circ \alpha_h=\Ad U(g,h)\circ \alpha_{gh}$. 
On the other hand, we have 
$$\rho=\alpha_g\circ \alpha_h\circ \rho=\Ad U(g,h)\circ \alpha_{gh}\circ \rho=\Ad U(g,h)\circ \rho.$$
Since $\rho$ is irreducible, the unitary $U(g,h)$ is a scalar, and we get the claim. 

We set 
$$d=d(\rho)=\frac{m+\sqrt{m^2+4n}}{2},$$
which is the dimension of $\rho$ satisfying $d^2=n+md$. 
Throughout this note, we keep using the symbols $G$, $m$, $n$, $d$ in this sense. 

We fix an isometry $S_e\in (\id,\rho^2)$. 
Since $\rho$ is self-conjugate, we have 
\begin{equation}
S_e^*\rho(S_e)=\frac{\epsilon}{d},\quad \epsilon \in \{1,-1\}.
\end{equation}
When $\epsilon=1$ (resp. $\epsilon=-1$), we say that $\rho$ is a real (resp. pseudo-real) sector. 
Graphically, we have 
$$\sqrt{d}S_e=
\begin{xy}\ar@(u,u)(10,-5);(0,-5)^>\rho
\end{xy}\quad ,
$$
$$\epsilon \sqrt{d}S_e^*=
\begin{xy}\ar@(d,d)(10,5);(0,5)^<\rho
\end{xy}\quad .
$$

We set $S_g=\alpha_g(S_e)$. 
Then $(\alpha_g,\rho^2)=\C S_g$. 
Let $\cK=(\rho,\rho^2)$, and let $\{T_i\}_{i=1}^m$ be an orthonormal basis of $\cK$. 
Then $\{S_g\}_{g\in G}\cup \{T_i\}_{i=1}^m$ satisfies the Cuntz algebra $\cO_{n+m}$ relation, and in particular, 
\begin{equation}\label{support}
\sum_{g\in G}S_gS_g^*+\sum_{i=1}^mT_iT_i^*=I. 
\end{equation}
We set $P=\sum_{g\in G}S_gS_g^*$ and $Q=\sum_{i=1}^mT_iT_i^*$, which are projections. 

Let $\cK^s\cK^{*t}$ be the linear span of $\{T_{i_1}T_{i_2}\cdots T_{i_s}T_{j_t}^*T_{j_{t-1}}^*\cdots T_{j_1}^*\}$. 
We identify $\cK^s$ with $\cK^{\otimes s}$ via the identification of $T_{i_1}T_{i_2}\cdots T_{i_s}$ with 
$T_{i_1}\otimes T_{i_2}\otimes \cdots \otimes T_{i_s}$. 
We identify $\cK^s\cK^{*t}$ with $\B(\cK^t,\cK^s)$ by left multiplication, and $\cK^2\cK^{2*}$ with $\B(\cK)\otimes \B(\cK)$. 
For example, we denote $T_{i_1}T_{j_1}^*\otimes T_{i_2}T_{j_2}^*=T_{i_1}T_{i_2}T_{j_2}^*T_{j_1}^*$. 
We abuse this notation and denote $T_iT_j^*\otimes x=T_ixT_j^*$ for any $x\in \cO_{n+m}$. 
With this notation, we have 
\begin{equation}
(\rho^2,\rho^2)=\bigoplus_{g\in G}\C S_gS_g^*\oplus \B(\cK). 
\end{equation}

Since $(\rho,\rho\circ \alpha_g)\subset (\rho^2,\rho^2)$, 
we can choose a (a priori projective) unitary representation $\{U(g)\}_{g\in G}$ in $(\rho^2,\rho^2)$ such that 
$(\rho,\rho\circ\alpha_g)=\C U(g)$. 
Since $U(g)S_e\in (\id,\rho^2)$, we can normalize $U(g)$ so that $U(g)S_e=S_e$ holds. 
Then $\{U(g)\}$ is a genuine representation of the form 
\begin{equation}\label{U}
U(g)=\sum_{h\in G}\chi_h(g)S_hS_h^*+U_{\cK}(g),
\end{equation}
where $\chi_h\in \Hom(G,\T)$ and $\{U_{\cK}(g)\}_{g\in G}$ is a unitary representation of $G$ in $\B(\cK)$.  
Since $\alpha_g\circ \rho=\rho$, we have $\alpha_g(\cK)=\cK$, and there exists a unitary representation $\{V(g)\}_{g\in G}$ 
in $\B(\cK)$ such that $\alpha_g(T)=V(g)T$ for all $T\in \cK$ and $g\in G$.

For $T\in \cK$, we set 
\begin{equation}\label{defj_1} j_1(T)=\sqrt{d}T^*\rho(S_e)\in \cK,\end{equation}
\begin{equation}\label{defj_2} j_2(T)=\sqrt{d}\rho(T)^*S_e\in \cK.\end{equation}
Then the Frobenius reciprocity (\cite{I98}) implies 

\begin{lemma}\label{j} The maps $j_1$ and $j_2$ are anti-linear isometries of $\cK$ satisfying 
\begin{equation}\label{involution} j_1^2=j_2^2=\epsilon,\end{equation} 
\begin{equation}\label{j1} V(g)j_1=j_1V(g),\quad g\in G,\end{equation} 
\begin{equation}\label{j2} U_{\cK}(g)j_2=j_2V(g),\quad g\in G.\end{equation}
In particular, the two unitary representations $U_{\cK}$ and $V$ are unitarily equivalent with an intertwining unitary 
$j_2\circ j_1^{-1}$. 
\end{lemma}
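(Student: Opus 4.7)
The plan is to verify the four assertions in sequence, relying on: the intertwining relations for $S_e\in(\id,\rho^2)$ and $T\in(\rho,\rho^2)$; the rigidity identities $S_e^*\rho(S_e)=\rho(S_e)^*S_e=\epsilon/d$; the relations $\alpha_g\circ\rho=\rho$ together with $\alpha_g(T)=V(g)T$ for $T\in\cK$; and the fact that $U(g)\in(\rho,\rho\circ\alpha_g)$, normalized by $U(g)S_e=S_e$, restricts on $\cK$ to $U_\cK(g)$. Anti-linearity of $j_1$ and $j_2$ is immediate from the presence of $T^*$; to verify $j_1(T)\in\cK$, combine the adjoint intertwiner $T^*\rho^3(x)=\rho^2(x)T^*$ with $\rho(S_e)\rho(x)=\rho^3(x)\rho(S_e)$, and similarly $j_2(T)\in\cK$ via $\rho(T)^*\rho^3(x)=\rho^2(x)\rho(T)^*$ and $S_e\rho(x)=\rho^3(x)S_e$.

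For $j_1^2$ and $j_2^2$ I would compute directly. Using $T\rho(S_e)=\rho^2(S_e)T$ (from $T\in(\rho,\rho^2)$ at $y=S_e$),
\[ j_1^2(T)=d\,\rho(S_e)^*T\rho(S_e)=d\,\rho(S_e)^*\rho^2(S_e)T=d\,\rho(S_e^*\rho(S_e))T=\epsilon T, \]
and using $\rho^2(T)S_e=S_eT$ (from $S_e\in(\id,\rho^2)$ at $x=T$),
\[ j_2^2(T)=d\,\rho(S_e)^*\rho^2(T)S_e=d\,\rho(S_e)^*S_eT=\epsilon T. \]
The isometry property is Frobenius reciprocity for self-conjugate $\rho$: with the $\sqrt d$ normalization, $j_1$ and $j_2$ are the standard anti-linear left/right rotation isomorphisms induced by the duality morphism $\sqrt d\,S_e$. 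Concretely, $j_1(T_2)^*j_1(T_1)=d\,\rho(S_e)^*T_2T_1^*\rho(S_e)$, and the zig-zag identities reduce this to the scalar $T_1^*T_2\in(\rho,\rho)=\C I$; similarly for $j_2$, as in \cite{I98}.

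For the equivariance of $j_1$ with $V$, applying the automorphism $\alpha_g$ and using $\alpha_g\circ\rho=\rho$ (so $\alpha_g(\rho(S_e))=\rho(S_e)$) gives $V(g)j_1(T)=\alpha_g(j_1(T))=\sqrt d\,\alpha_g(T)^*\rho(S_e)=j_1(V(g)T)$. For $U_\cK(g)j_2(T)=j_2(V(g)T)$, use $\rho\circ\alpha_g=\Ad U(g)\circ\rho$ to write $\rho(\alpha_g(T))^*=U(g)\rho(T)^*U(g)^*$, then $U(g)^*S_e=S_e$ (from $U(g)S_e=S_e$) to get $j_2(V(g)T)=\sqrt d\,U(g)\rho(T)^*S_e=U(g)j_2(T)=U_\cK(g)j_2(T)$, the last equality since $j_2(T)\in\cK$ and $U(g)|_\cK=U_\cK(g)$. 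Finally, $j_2\circ j_1^{-1}$ is linear and unitary (composition of two anti-linear isometries), and combining the two equivariances gives $(j_2j_1^{-1})V(g)(j_2j_1^{-1})^{-1}=j_2V(g)j_2^{-1}=U_\cK(g)$.

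The main obstacle is the isometry claim: $j_i^2=\epsilon\cdot\id_\cK$ does not imply the anti-linear isometry property on its own, and one genuinely needs the rigidity/zig-zag structure that makes $\sqrt d\,S_e$ a standard duality morphism. All other steps are routine bookkeeping once one tracks the intertwiner types carefully, in particular the convention that operator multiplication in $M$ corresponds to tensoring on the right with the identity.
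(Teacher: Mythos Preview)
Your proposal is correct and follows the same approach as the paper: the paper's proof is a one-line citation of Frobenius reciprocity \cite{I98}, and you have simply spelled out the direct computations for $j_1^2=j_2^2=\epsilon$ and the equivariance identities that this encompasses, while (correctly) isolating the isometry property as the one place where the rigidity/zig-zag structure is genuinely needed. All your intertwiner manipulations check out, including the uses of $T\rho(S_e)=\rho^2(S_e)T$, $\rho^2(T)S_e=S_eT$, $\alpha_g(\rho(S_e))=\rho(S_e)$, and $U(g)^*S_e=S_e$.
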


\begin{remark} For those readers who would like to reproduce our arguments in this paper without assuming 
the C$^*$ condition on near-group categories, 
we briefly give graphical expressions of the intertwiners appearing so far. 
Let $\cC$ be a pivotal near-group category whose simple objects are $G\cup \{\rho\}$. 
We assume that $\cC$ is strict. 
We first choose and fix a non-zero homomorphism $S_e\in \Hom(\id,\rho\otimes \rho)$ and isomorphisms $f_g\in \Hom(g\otimes \rho,\rho)$ for $g\in G$. 
Then there exist unique isomorphisms $m_{g,h}\in \Hom(g\otimes h,gh)$ for $g,h\in G$ to make the following diagrams commutative:
$$
\begin{CD}
g\otimes h\otimes \rho @>{I_g\otimes f_h}>> g\otimes \rho \\
@V{m_{g,h}\otimes I_\rho}VV  @VV{f_g}V\\
gh\otimes \rho @>>{f_{gh}}>  \rho
\end{CD}. 
$$
With this family $\{m_{g,h}\}_{g,h\in G}$, we can show that the following diagrams are commutative, 
$$
\begin{CD}
g\otimes h\otimes k @>{I_g\otimes m_{h,k}}>> g\otimes gh\\
@V{m_{g,h}\otimes I_k}VV @VV{m_{g,hk}}V\\
gh\otimes k @>>{m_{gh,k}}> ghk
\end{CD}\quad ,
$$ 
and in consequence, we can see that the group part has trivial third cohomology. 

The homomorphism $S_g\in \Hom(g,\rho\otimes \rho)$ is given by 
$$S_g=(f_g\otimes I_\rho)\circ (I_g\otimes S_e)=
\begin{xy}(0,0)*+[F]{f_g},(10,10)*+[F]{S_e} 
\ar(-2,15);(-2,3)_<g
\ar(-2,-3);(-2,-10)^>\rho
\ar(8,7);(2,3)_>\rho
\ar(12,7);(12,-10)^>\rho
\end{xy}
.$$ 
We choose $S_e^*\in \Hom (\rho\otimes \rho,\id)$ satisfying $S_e^*\circ S_e=1$, and set 
$$S_g^*=(I_g\otimes S_e^*)\circ (f_g^{-1}\otimes I_\rho)\in \Hom(\rho\otimes \rho,g).$$
Then we have $S_g^*\circ S_g=I_g$. 

Setting 
$$V(g):\Hom (\rho,\rho\otimes \rho)\ni T\mapsto (f_g\otimes I_\rho)\circ(I_g\otimes T)\circ f_g^{-1}=
\begin{xy}(7,0)*+[F]{T}, (5,15)*+[F]{f_g^{-1}},(0,-13)*+[F]{f_g}
\ar(5,25);(5,20)^(0.3)\rho
\ar(7,11);(7,3)^(0.5)\rho
\ar@/_/(3,11);(-2,-10)_(0.7)g
\ar(5,-3);(2,-10)^(0.5)\rho
\ar(0,-16);(0,-22)_>\rho
\ar(9,-3);(14,-22)^>\rho
\end{xy}
\quad \in \Hom(\rho,\rho\otimes \rho),$$ 
we get a representation $\{V(g)\}_{g\in G}$ of $G$ on $\Hom(\rho,\rho\otimes \rho)$. 

The homomorphism $U(g)\in \Hom(\rho,\rho\otimes g)$ is determined by the normalization condition 
$$S_e=(I_\rho\otimes f_g)\circ (U(g)\otimes I_\rho)\circ S_e=
\begin{xy}
(0,0)*+[F]{U(g)},(10,-10)*+[F]{f_g},(5,10)*+[F]{S_e}
\ar(-2,-3);(-2,-17)_>\rho
\ar(2,-3);(8,-7)^(.6)g
\ar(3,7);(0,3)_<<<\rho
\ar(7,7);(12,-7)^<<<\rho
\ar(10,-13);(10,-17)^>\rho
\end{xy}
.$$
Then they satisfy the following relation 
$$(I_\rho\otimes m_{g,h})\circ (U(g)\otimes I_h)\circ U(h)=U(gh),$$
$$\begin{xy}(5,10)*+[F]{U(h)},(0,0)*+[F]{U(g)},(10,-10)*+[F]{m_{g,h}}
\ar(5,17);(5,13)^\rho
\ar(3,7);(0,3)_>\rho
\ar(7,7);(12,-7)^>h
\ar(-2,-3);(-2,-17)^>\rho
\ar(2,-3);(8,-7)_g
\ar(10,-13);(10,-17)^>{gh}
\end{xy}
=\begin{xy}
(0,0)*+[F]{U(gh)}
\ar(0,17);(0,3)^\rho
\ar(-2,-3);(-2,-17)^>\rho
\ar(2,-3);(10,-17)^>{gh}
\end{xy}.
$$
Using this, we can see that 
$$G\ni g\mapsto (I_\rho\otimes f_g)\circ (U(g)\otimes I_\rho)=
\begin{xy}(0,6)*+[F]{U(g)},(10,-6)*+[F]{f_g}
\ar(0,15);(0,9)^\rho
\ar(-2,3);(-2,-15)_>\rho
\ar(2,3);(8,-3)^(0.5)g
\ar(12,15);(12,-3)^\rho
\ar(10,-9);(10,-15)^>\rho
\end{xy}
\quad \in \Hom(\rho\otimes\rho,\rho\otimes \rho)$$ 
gives a representation of $G$. 
It is easy to show 
$$(1\otimes S_e^*)\circ(I_\rho\otimes f_g\otimes I_\rho)\circ (U(g)\otimes I_\rho\otimes I_\rho)\circ (I_\rho\otimes S_e)=\delta_{g,e}I_\rho,$$
\begin{equation}\label{trace}
\begin{xy}(0,6)*+[F]{U(g)},(10,-6)*+[F]{f_g},(20,15)*+[F]{S_e},(20,-18)*+[F]{S_e^*}
\ar(0,20);(0,9)^\rho
\ar(-2,3);(-2,-23)_>\rho
\ar(2,3);(8,-3)^(0.5)g
\ar(18,12);(12,-3)^\rho
\ar(10,-9);(18,-15)_(0.5)\rho
\ar(22,12);(22,-15)^(0.5)\rho
\end{xy}
\quad =\delta_{g,e}\quad
\begin{xy}
\ar(0,20);(0,-23)^>\rho
\end{xy}\quad, \nonumber
\end{equation}
which is essentially the right categorical trace of $(1\otimes f_g)\circ (U(g)\otimes I_\rho)$. 
Choosing a basis $\{T_i\}_{i=1}^m$ of 
$\Hom(\rho,\rho\otimes \rho)$ and a basis $\{T_i^*\}_{i=1}^m$ of $\Hom(\rho\otimes \rho,\rho)$ satisfying 
$T_i^*\circ T_j=\delta_{i,j}I_\rho$, we have 
$$I_{\rho\otimes \rho}=\sum_{g\in G}S_g\circ S_g^*+\sum_{i=1}^m T_i\circ T_i^*.$$
Although neither $j_1$ nor $j_2$ can be defined without C$^*$ condition, they can be replaced by linear maps 
from $\Hom(\rho\otimes\rho,\rho)$ to $\Hom(\rho,\rho\otimes \rho)$ given by the Frobenius reciprocity. 
\end{remark}

We get back to our original situation with a C$^*$ near-group category $\cC$ realized inside $\End(M)$. 

\begin{lemma}\label{rhoS}For any $g\in G$, we have 
\begin{equation}\rho(S_e)=\frac{\epsilon}{d}\sum_{h\in G}S_h+\frac{1}{\sqrt{d}}\sum_{i=1}^mT_ij_1(T_i),
\end{equation}
\begin{equation}\label{rhoSg}
\rho(S_g)=U(g)\rho(S_e)U(g)^*.
\end{equation}

\end{lemma}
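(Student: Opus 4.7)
My plan is to expand $\rho(S_e)$ against the resolution of the identity from Eq.\,(\ref{support}) and identify each resulting coefficient as a morphism between low-dimensional spaces. First I would write
$$\rho(S_e)=P\rho(S_e)+Q\rho(S_e)=\sum_{h\in G}S_h\bigl(S_h^*\rho(S_e)\bigr)+\sum_{i=1}^m T_i\bigl(T_i^*\rho(S_e)\bigr).$$
Since $\rho(S_e)\in(\rho,\rho^3)$ (from $S_e\in(\id,\rho^2)$), $S_h\in(\alpha_h,\rho^2)$, and $T_i\in(\rho,\rho^2)$, a direct intertwiner computation using $\alpha_h\circ\rho=\rho$ shows $S_h^*\rho(S_e)\in(\rho,\alpha_h\circ\rho)=(\rho,\rho)=\C I$, while $T_i^*\rho(S_e)\in(\rho,\rho^2)=\cK$. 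Thus each $S_h^*\rho(S_e)$ is a scalar $c_h$, and the $\cK$-valued piece is recognized from the very definition (\ref{defj_1}) as $T_i^*\rho(S_e)=\frac{1}{\sqrt d}j_1(T_i)$, which already delivers the second sum in the claimed formula.

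To pin down the scalars $c_h$, I would first use the conjugate-equation normalization $S_e^*\rho(S_e)=\epsilon/d$ to get $c_e=\epsilon/d$. For general $h$, I would exploit $S_h=\alpha_h(S_e)$ together with $\alpha_h\circ\rho=\rho$ (so $\alpha_h$ fixes $\rho(S_e)$) to write
$$c_h I=S_h^*\rho(S_e)=\alpha_h(S_e^*)\,\alpha_h(\rho(S_e))=\alpha_h\!\bigl(S_e^*\rho(S_e)\bigr)=\frac{\epsilon}{d}I.$$
Assembling the two sums yields the first displayed equality. The only thing to watch here is the consistent use of the conventions $\alpha_g\circ\rho=\rho$ and $S_g=\alpha_g(S_e)$ fixed earlier in the section.

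For (\ref{rhoSg}), the cleanest route is to use the defining intertwining relation $U(g)\in(\rho,\rho\circ\alpha_g)$, which says $U(g)\rho(x)=\rho\alpha_g(x)U(g)$ for every $x\in M$. Applying this with $x=S_e$ and using unitarity of $U(g)$ in $(\rho^2,\rho^2)$ immediately gives
$$U(g)\rho(S_e)U(g)^*=\rho\alpha_g(S_e)=\rho(S_g).$$
I don't anticipate a real obstacle beyond bookkeeping: the main subtlety is making sure that each element $S_h^*\rho(S_e)$ or $T_i^*\rho(S_e)$ is interpreted as a morphism in the correct Hom-space before invoking Schur's lemma or the definition of $j_1$, and that the normalization $S_e^*\rho(S_e)=\epsilon/d$ is transported by $\alpha_h$ correctly.
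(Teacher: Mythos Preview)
Your proof is correct and follows essentially the same approach as the paper: expand $\rho(S_e)$ against the resolution $I=\sum_h S_hS_h^*+\sum_i T_iT_i^*$, identify $T_i^*\rho(S_e)=\frac{1}{\sqrt d}j_1(T_i)$ from the definition of $j_1$, and use $\alpha_h\circ\rho=\rho$ together with $S_e^*\rho(S_e)=\epsilon/d$ to get each $S_h^*\rho(S_e)=\epsilon/d$; the second equation is derived in both cases from $\Ad U(g)\circ\rho=\rho\circ\alpha_g$ and $S_g=\alpha_g(S_e)$. The only cosmetic difference is that the paper packages the $S_h$-term as $\alpha_h(S_eS_e^*\rho(S_e))$ in one step rather than first isolating the scalar $c_h$.
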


\begin{proof} The second statement follows from $\Ad U(g)\circ \rho=\rho\circ \alpha_g$ and $S_g=\alpha_g(S_e)$, 
From Eq.(\ref{support}), we obtain 
\begin{align*}\rho(S_e) 
&=\sum_{h\in G}S_hS_h^*\rho(S_e)+\sum_{i=1}^m T_iT_i^*\rho(S_e)=
\sum_{h\in G}\alpha_h(S_eS_e^*\rho(S_e))+\frac{1}{\sqrt{d}}\sum_{i=1}^m T_ij_1(T_i)\\  
&=\frac{\epsilon}{d}\sum_{h\in G}\alpha_h(S_e)+\frac{1}{\sqrt{d}}\sum_{i=1}^m T_ij_1(T_i)
=\frac{\epsilon}{d}\sum_{h\in G}S_h+\frac{1}{\sqrt{d}}\sum_{i=1}^m T_ij_1(T_i).  
\end{align*}
\end{proof}

\begin{lemma} 
There exists a unique linear map $l:\cK\rightarrow \cK^2\cK^*$ such that for any $T\in \cK$ 
\begin{equation}\label{rhoT}
\rho(T)=\frac{1}{\sqrt{d}}\sum_{h\in G}S_h\alpha_h(j_2(T)^*)+\sum_{h\in G}\alpha_h(j_2\circ j_1^{-1}(T))S_hS_h^*+l(T).
\end{equation}
The maps $j_1$, $j_2$ and $l$ satisfy the following for all $T,T'\in \cK$:
\begin{equation}\label{invariance}
\alpha_g(l(T))=l(T),\quad  g\in G, 
\end{equation}
\begin{equation}\label{orthogonality1}
\frac{\epsilon}{d}\sum_{g\in G}j_2(T)^*V(g)^*+\sum_{i=1}^mj_1(T_i)^*T_i^*l(T)=0,
\end{equation}
\begin{equation}\label{orthogonality2}
\frac{1}{d}\sum_{h\in G}V(h)j_2(T')j_2(T)^*V(h)^*+l(T')^*l(T)=\inpr{T}{T'}\sum_{i=1}^mT_iT_i^*,
\end{equation}
\begin{equation}\label{Frobenius1}
l(j_1(T))=\sum_{i,j=1}^ml(T)^*T_iT_jj_1(T_j)T_i^*,
\end{equation}
\begin{equation}\label{Frobenius2}
l(j_2(T))=\sum_{i=1}^mT_il(T)^*j_1(T_i),
\end{equation}
\begin{equation}\label{period3}(j_2\circ j_1)^3=I.
\end{equation}
\end{lemma}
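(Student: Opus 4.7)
The plan is to establish \eqref{rhoT} first, defining $l$ as the residual piece, then verify the other identities in order: invariance, the two orthogonality relations, the two Frobenius relations, and finally the period-3 identity.

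\textbf{Step 1: decomposition and invariance.} Split $\rho(T)=(P+Q)\rho(T)(P+Q)$ using \eqref{support}. Applying $\alpha_h$ to \eqref{defj_2} and using $\alpha_h\circ\rho=\rho$ gives $S_h^*\rho(T)=\tfrac{1}{\sqrt{d}}\alpha_h(j_2(T))^*$, yielding the first summand of \eqref{rhoT}; moreover $\alpha_h(j_2(T))^*\in(\rho^2,\rho)$ and $S_g\in(\alpha_g,\rho^2)$ imply the product lies in the zero space $(\alpha_g,\rho)$, so $P\rho(T)P=0$. The second summand is extracted from $Q\rho(T)P=\sum_h\rho(T)S_hS_h^*$ together with $\rho(T)S_h=\alpha_h(\rho(T)S_e)$: since both $\{\rho(T_i)S_e\}$ and $\{KS_e:K\in\cK\}$ are bases of $(\id,\rho^3)$, there is a unique linear map $T\mapsto K(T)$ on $\cK$ with $\rho(T)S_e=K(T)S_e$, and applying $\rho(S_e)^*$ (or equivalently using Frobenius reciprocity via $j_1$ and $j_2$) identifies $K(T)=j_2\circ j_1^{-1}(T)$. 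Then $l(T):=Q\rho(T)Q$ lies in $\cK^2\cK^*$ by construction, and uniqueness is clear from orthogonality of the four corners. For \eqref{invariance}, apply $\alpha_g$ to \eqref{rhoT}: the left side is invariant because $\alpha_g\rho(T)=\rho(T)$, while on the right $\alpha_g(S_h)=S_{gh}$ merely reindexes the sums, forcing $\alpha_g(l(T))=l(T)$.

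\textbf{Step 2: orthogonality relations.} For \eqref{orthogonality1}, multiply \eqref{rhoT} on the left by $\rho(S_e)^*$ and use $\rho(S_e)^*\rho(T)=\rho(S_e^*T)=0$ (as $(\rho,\id)=0$); substituting the formula for $\rho(S_e)$ from Lemma~\ref{rhoS} and plugging in $S_h^*\rho(T)=\tfrac{1}{\sqrt{d}}j_2(T)^*V(h)^*$ from Step 1 and $T_i^*\rho(T)$ computed via \eqref{rhoT} produces \eqref{orthogonality1} after routine simplification. For \eqref{orthogonality2}, compute $\rho(T)^*\rho(T')=\rho(T^*T')=\inpr{T}{T'}I$ via \eqref{rhoT}: the cross term between the first summand of $\rho(T)^*$ and the first summand of $\rho(T')$ contributes $\tfrac{1}{d}\sum_h V(h)j_2(T)j_2(T')^*V(h)^*$ on the $QMQ$-corner, the $l(T)^*l(T')$ arises from the residual-residual product, and all other cross terms land in different corners and must match separately (giving the $PMP$-counterpart as a consistency check).

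\textbf{Step 3: Frobenius relations and period 3.} For \eqref{Frobenius1}, expand $\rho(j_1(T))=\sqrt{d}\,\rho(T)^*\rho^2(S_e)$ using \eqref{defj_1} and substitute the formula for $\rho^2(S_e)$ obtained by applying $\rho$ to Lemma~\ref{rhoS} and then \eqref{rhoT} to each $\rho(T_i)$; comparing the $QMQ$-projection with that of \eqref{rhoT} applied directly to $j_1(T)$ yields \eqref{Frobenius1}. The analogous computation with \eqref{defj_2} and $\rho(j_2(T))=\sqrt{d}\,\rho^2(T)^*\rho(S_e)$ gives \eqref{Frobenius2}. Finally, for \eqref{period3}, combine the definitions into the compact formula $(j_2\circ j_1)(T)=d\,\rho^2(S_e)^*\rho(T)S_e$, iterate three times, and simplify using Lemma~\ref{rhoS} applied successively to $\rho^k(S_e)$ for $k=1,\dots,4$; alternatively, \eqref{Frobenius1} and \eqref{Frobenius2} together with \eqref{orthogonality2} imply both $l((j_2\circ j_1)^3(T))=l(T)$ and that the $QMP$-components of $\rho((j_2\circ j_1)^3(T))$ and $\rho(T)$ coincide, giving $\rho((j_2\circ j_1)^3(T))=\rho(T)$ and hence the identity by injectivity of $\rho$. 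The main obstacle is precisely this last simplification: the cyclic interplay between $j_1$, $j_2$, and the three Frobenius summands requires careful bookkeeping of the sign $\epsilon$ from $j_1^2=j_2^2=\epsilon$, and the structural reason for the period being exactly $3$ is the $2\pi/3$-rotation symmetry of the trivalent vertex $S_e$ in the graphical calculus.
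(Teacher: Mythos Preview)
Your overall plan matches the paper's: decompose $\rho(T)$ via $P,Q$, read off orthogonality from $\rho(S_e)^*\rho(T)=0$ and $\rho(T')^*\rho(T)=\inpr{T}{T'}I$, and obtain the Frobenius relations by expanding $\rho(j_1(T))=\sqrt{d}\,\rho(T)^*\rho^2(S_e)$ and $\rho(j_2(T))=\sqrt{d}\,\rho^2(T)^*\rho(S_e)$. Two places, however, are handled more efficiently in the paper by \emph{reordering} the argument, and your proposed mechanisms at those points do not actually work.

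First, the identification $K(T)=j_2\circ j_1^{-1}(T)$ in Step~1. Applying $\rho(S_e)^*$ to the equality $\rho(T)S_e=K(T)S_e$ gives $0=0$: on the left $\rho(S_e)^*\rho(T)=\rho(S_e^*T)=0$, and on the right one computes (using Lemma~\ref{rhoS}) $\rho(S_e)^*K(T)S_e=\tfrac{1}{\sqrt d}\sum_i\inpr{K(T)}{T_i}\,j_1(T_i)^*S_e=0$ since $j_1(T_i)^*S_e\in(\id,\rho)=0$. The paper avoids this by \emph{not} identifying the $Q\rho(T)P$ coefficient up front: it writes $Q\rho(T)=\sum_h l_h(T)S_hS_h^*+l(T)$ with $l_e$ unknown, then carries $l_e$ into the $\rho(j_1(T))$ computation of Step~3 and reads off $l_e=j_2\circ j_1^{-1}$ by matching the $S_hS_h^*$-coefficients there.

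Second, the period-3 relation. Your two suggested routes (iterating the compact formula for $j_2\circ j_1$, or deducing it from \eqref{Frobenius1}--\eqref{Frobenius2} and \eqref{orthogonality2}) are unnecessary detours; the second one in particular has no clear path, since those identities concern only $l$ and do not by themselves control the $Q\rho(\cdot)P$ corner. In the paper the relation drops out of the very computation you are already doing for \eqref{Frobenius2}: comparing the $S_hS_h^*$-coefficient in $\rho(j_2(T))=\sqrt d\,\rho^2(T)^*\rho(S_e)$ with the general form \eqref{rhoT} applied to $j_2(T)$ gives $l_e(j_2(T))=j_1^{-1}\circ j_2\circ j_1^{-1}(T)$, and combining with $l_e=j_2\circ j_1^{-1}$ yields $j_2\circ j_1^{-1}\circ j_2=j_1^{-1}\circ j_2\circ j_1^{-1}$, i.e.\ $(j_2\circ j_1)^3=I$. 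So both difficulties dissolve once you postpone the identification of $l_e$ to the $\rho(j_1(T))$ step and look at the $S_hS_h^*$-piece of the $\rho(j_2(T))$ step.
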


\begin{proof} To show that $\rho$ is of the above form, we determine $P\rho(T)$ and $Q\rho(T)$ separately. 
For $P\rho(T)$, we have 
\begin{align*}
P\rho(T) &=\sum_{h\in G}S_hS_h^*\rho(T)=\sum_{h\in G}S_h\alpha_h(S_e^*\rho(T))=\frac{1}{\sqrt{d}}\sum_{h\in G}S_h\alpha_h(j_2(T)^*)\\
&=\frac{1}{\sqrt{d}}\sum_{h\in G}S_hj_2(T)^*V(h)^*.   
\end{align*}
Since $\cK^*\rho(\cK)\subset (\rho^2,\rho^2)$, there exist linear maps $l_h:\cK\rightarrow \cK$ for $h\in G$ and $l:\cK\rightarrow \cK^2\cK^*$ such that 
$$Q\rho(T)=\sum_{h\in G}l_h(T)S_hS_h^*+l(T).$$
Since $\alpha_g\circ \rho=\rho$, we obtain $l_h(T)=\alpha_h(l_e(T))$ and $\alpha_g(l(T))=l(T)$. 

We compute $\rho(j_1(T))$ by using the definition of $j_1$: 
\begin{align*}\lefteqn{
\rho(j_1(T)) =\sqrt{d}\rho(T^*\rho(S_e))=\sqrt{d}\rho(T)^*\sum_{h\in G}S_hS_hS_h^*+\sqrt{d}\rho(T)^*\sum_{i=1}^mT_i\rho(S_e)T_i^*} \\
 &=\sqrt{d}\sum_{h\in G}\alpha_h(\rho(T)^*S_e)S_hS_h^*+\sqrt{d}\sum_{i=1}^m\sum_{h\in G}\inpr{T_i}{V(h)l_e(T)}S_hS_h^*\rho(S_e)T_i^*\\
 &+ \sqrt{d}\sum_{i=1}^ml(T)^*T_i\rho(S_e)T_i^* \\
 &=\sum_{h\in G}\alpha_h(j_2(T))S_hS_h^*+\frac{\epsilon}{\sqrt{d}}\sum_{h\in G}S_h l_e(T)^*V(h)^*\\
 &+ \sum_{i,j=1}^ml(T)^*T_iT_jj_1(T_j)T_i^*.
\end{align*}
This shows $l_e=j_2\circ j_1^{-1}$ and Eq.(\ref{Frobenius1}). 

We compute $\rho(j_2(T))$ by using the definition of $j_2$: 
\begin{align*}
\lefteqn{\rho(j_2(T))=\sqrt{d}\rho(\rho(T)^*S_e)=\frac{\epsilon}{\sqrt{d}}\sum_{h\in G}\rho^2(T^*)S_h+\sum_{i=1}^m\rho^2(T^*)T_ij_1(T_i)} \\
 &=\frac{\epsilon}{\sqrt{d}}\sum_{h\in G}S_h\alpha_h(T^*)+\sum_{i=1}^mT_i\rho(T)^*j_1(T_i) \\
 &=\frac{\epsilon}{\sqrt{d}}\sum_{h\in G}S_h\alpha_h(T^*)+\sum_{i=1}^m\sum_{h\in G}\inpr{j_1(T_i)}{V(h)j_2\circ j_1^{-1}(T)}T_iS_hS_h^* \\
 &+\sum_{i=1}^mT_il(T)^*j_1(T_i)\\
 &=\frac{\epsilon}{\sqrt{d}}\sum_{h\in G}S_h\alpha_h(T^*)+\sum_{h\in G}V(h)j_1^{-1}\circ j_2\circ j_1^{-1}(T)S_hS_h^* \\
 &+\sum_{i=1}^mT_il(T)^*j_1(T_i).
\end{align*}
This implies Eq.(\ref{Frobenius2}), and $j_2\circ j_1^{-1}\circ j_2=j_1^{-1}\circ j_2\circ j_1^{-1}$, which shows Eq.(\ref{period3}). 

Eq.(\ref{orthogonality1}) and (\ref{orthogonality2}) follow from $\rho(S_e)^*\rho(T)=0$ and $\rho(T')^*\rho(T)=\inpr{T}{T'}I$. 
\end{proof}

\begin{remark}\label{remark1}
Let $l^{(1)}_{ij}, l^{(2)}_{ij}\in \B(\cK)$ be linear maps defined by 
$l(T)=\sum_{i,j=1}^ml^{(1)}_{ij}(T)T_iT_j^*$ and $l(T)=\sum_{i,j=1}^mT_il^{(2)}_{ij}(T)T_j^*$ respectively. 
Then Eq.(\ref{Frobenius1}) and Eq.(\ref{Frobenius2}) are equivalent to 
$l^{(1)}_{ij}(j_2(T))=j_1^{-1}(l^{(1)}_{ji}(T))$ and 
$l^{(2)}_{ij}(j_1(T))=j_1(l^{(2)}_{ji}(T))$ 
respectively. 
\end{remark}

\begin{remark}\label{6j}
The above lemma shows that every morphism between objects generated by $\rho$ and $\alpha_g$ are noncommutative polynomials 
of the Cuntz algebra generators $\{S_g\}_{g\in G}\cup\{T_i\}_{i=1}^m$ and their adjoints, and the structure of $\cC$, 
or more precisely the 6$j$ symbols of $\cC$, are completely determined by the tuple $(\cK,j_1,j_2,V,U_\cK,\chi,l)$. 
To obtain this tuple from $\cC$, we made the following choices: 
\begin{itemize}
\item[(1)] the representative $\rho$ from the class $[\rho]$, 
\item[(2)] the parametrization of the group $\{[\alpha_g]\}_{g\in G}$, 
\item[(3)] the choice of $S_e$ from $\T S_e$. 
\end{itemize}
A different choice in (1) only ends up with a unitarily equivalent tuple in an appropriate sense, and 
that in (2) allows us to insert an group automorphism of $G$ in the variables of $V,U_K,\chi$. 
Replacing $S_e$ with $\omega S_e$ results in replacing $j_1$ and $j_2$ with $\omega j_1$ and $\omega j_2$ respectively, 
which is a special case of unitary equivalence. 
\end{remark}

In terms of $U(g)$, $V(g)$, and $l$, the relation $\rho(\alpha_g(T))=U(g)\rho(T)U(g)^*$ is expressed as follows: 
\begin{lemma} Let the notation be as above. Then 
\begin{equation}\label{Weyl}U_{\cK}(g)V(h)=\chi_h(g)V(h)U_{\cK}(g),
\end{equation}
\begin{equation}\label{equivariance}
l(V(g)T)=U(g)l(T)U(g)^*,\quad g\in G,\;T\in \cK. 
\end{equation}
\end{lemma}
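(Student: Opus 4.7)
The plan is to derive both identities from the single defining relation $\rho\circ \alpha_g=\Ad U(g)\circ \rho$ applied to an arbitrary $T\in \cK$. Since $\alpha_g(T)=V(g)T$, this reads $\rho(V(g)T)=U(g)\rho(T)U(g)^*$, and the strategy is to decompose both sides into the four blocks induced by $I=P+Q$ with $P=\sum_h S_hS_h^*$ and $Q=\sum_i T_iT_i^*$, then read off the two identities from the $Q(\cdot)Q$ and $P(\cdot)Q$ block equations.

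The block components are immediate from the Cuntz relations $S_h^*T_i=T_i^*S_h=0$: the expansion (\ref{rhoT}) shows $P\rho(T)P=0$ and
$$P\rho(T)Q=\frac{1}{\sqrt d}\sum_{h\in G}S_h\alpha_h(j_2(T)^*),\quad Q\rho(T)P=\sum_{h\in G}\alpha_h(j_2j_1^{-1}(T))S_hS_h^*,\quad Q\rho(T)Q=l(T),$$
while (\ref{U}) shows $U(g)$ is block diagonal with $PU(g)P=\sum_h\chi_h(g)S_hS_h^*$ and $QU(g)Q=U_\cK(g)$. Since $U(g)$ commutes with both $P$ and $Q$, and $Pl(T)=l(T)P=0$, we have $U(g)l(T)U(g)^*=U_\cK(g)l(T)U_\cK(g)^*$; taking the $Q(\cdot)Q$ block of $\rho(V(g)T)=U(g)\rho(T)U(g)^*$ then immediately gives Eq.(\ref{equivariance}).

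For Eq.(\ref{Weyl}), I take the $P(\cdot)Q$ block. Using $U(g)S_h=\chi_h(g)S_h$, the identity $\alpha_h(j_2(T)^*)U(g)^*=\alpha_h(j_2(T)^*)U_\cK(g)^*$ (since $T_i^*S_k=0$), and the orthogonality $S_k^*S_h=\delta_{k,h}I$, multiplication by $S_k^*$ on the left extracts, for every $k\in G$,
$$\alpha_k(j_2(V(g)T)^*)=\chi_k(g)\alpha_k(j_2(T)^*)U_\cK(g)^*.$$
Applying $\alpha_k^{-1}$, taking adjoints, and substituting the already established identity $j_2(V(g)T)=U_\cK(g)j_2(T)$ from Lemma \ref{j}, the factor $j_2(T)$ can be cancelled on both sides (since $j_2$ is a bijection of $\cK$), leaving $\alpha_k^{-1}(U_\cK(g))=\chi_k(g)U_\cK(g)$. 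Because $\alpha_k$ acts on $\B(\cK)$ by conjugation with $V(k)$, this rearranges at once to Eq.(\ref{Weyl}).

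The only real difficulty is the bookkeeping---keeping track of which side of $j_2(T)$ the unitary $U_\cK(g)$ ends up on after the adjoints and $\alpha_k^{-1}$ have been applied, and verifying that the $S_h$-indexed terms separate cleanly. No new conceptual ingredient beyond (\ref{rhoT}), (\ref{U}), and Lemma \ref{j} enters the proof.
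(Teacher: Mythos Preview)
Your proof is correct and follows essentially the same route as the paper's: both sides of $\rho(V(g)T)=U(g)\rho(T)U(g)^*$ are expanded via the block decomposition $I=P+Q$ and the formula~(\ref{rhoT}), and the identities are read off from the $Q(\cdot)Q$ block (for~(\ref{equivariance})) and the $P(\cdot)Q$ block (for~(\ref{Weyl})), using $j_2V(g)=U_\cK(g)j_2$ from Lemma~\ref{j}. The only cosmetic difference is that the paper substitutes $j_2(V(g)T)=U_\cK(g)j_2(T)$ before comparing coefficients, whereas you extract first and substitute afterward; the content is identical.
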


\begin{proof} We compute $\rho(\alpha_g(T))$ first:
\begin{align*}
\rho(\alpha_g(T)) &= 
\frac{1}{\sqrt{d}}\sum_{h\in G}S_hj_2(V(g)T)^*V(h)^*+\sum_{h\in G}V(h)j_2\circ j_1^{-1}(V(g)T)S_hS_h^*\\
&+l(V(g)T)\\
 &=\frac{1}{\sqrt{d}}\sum_{h\in G}S_hj_2(T)^*U_{\cK}(g)^*V(h)^*+\sum_{h\in G}V(h)U_{\cK}(g)j_2\circ j_1^{-1}(T)S_hS_h^*\\
 &+l(V(g)T).
\end{align*}
On the other hand, 
\begin{align*}
U(g)\rho(T)U(g)^* &= 
\frac{1}{\sqrt{d}}\sum_{h\in G}U(g)S_hj_2(T)^*V(h)^*U_{\cK}(g)^*\\
&+\sum_{h\in G}U_{\cK}(g)V(h)j_2\circ j_1^{-1}(T)S_hS_h^*U(g)^*
+U_{\cK}(g)l(T)U_{\cK}(g)^*\\
 &=\frac{1}{\sqrt{d}}\sum_{h\in G}\chi_h(g)S_hj_2(T)^*V(h)^*U_{\cK}(g)^*\\
 &+\sum_{h\in G}\overline{\chi_h(g)}U_{\cK}(g)V(h)j_2\circ j_1^{-1}(T)S_hS_h^*+U_{\cK}(g)l(T)U_{\cK}(g)^*,
\end{align*}
which shows the statement. 
\end{proof}

\begin{cor} For $g,h\in G$, the following hold: 
\begin{equation}\label{symmetric} \chi_{h}(g)=\chi_g(h),\end{equation}
\begin{equation}\label{alphaU} \alpha_h(U(g))=\overline{\chi_h(g)}U(g).\end{equation} 
\end{cor}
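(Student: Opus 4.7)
The plan is to prove the two equations in order: first establish the symmetry $\chi_h(g)=\chi_g(h)$, then deduce the transformation formula $\alpha_h(U(g))=\overline{\chi_h(g)}U(g)$ as a consequence.

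For the symmetry, I would exploit the antilinear isometry $j_2\colon\cK\to\cK$ as a complex-conjugation device. Lemma \ref{j} gives both $U_\cK(h)=j_2V(h)j_2^{-1}$ (from Eq.\,(\ref{j2})) and $j_2^2=\epsilon\in\{\pm 1\}$ (from Eq.\,(\ref{involution})), so the operation $A\mapsto \tilde A:=j_2Aj_2^{-1}$ on $\B(\cK)$ is an involution up to the sign $\epsilon^2=1$; in particular $\widetilde{U_\cK(g)}=j_2^2V(g)j_2^{-2}=V(g)$ and $\widetilde{V(h)}=U_\cK(h)$. Applying this operation to Eq.\,(\ref{Weyl}) and using antilinearity of $j_2$ to convert the scalar $\chi_h(g)$ into its complex conjugate, one obtains
$$V(g)\,U_\cK(h)=\overline{\chi_h(g)}\,U_\cK(h)\,V(g).$$
On the other hand, Eq.\,(\ref{Weyl}) with $g$ and $h$ interchanged is equivalent to $V(g)U_\cK(h)=\overline{\chi_g(h)}U_\cK(h)V(g)$. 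Comparing the two expressions and using that $U_\cK(h),V(g)$ are unitaries (so neither side vanishes) yields $\chi_h(g)=\chi_g(h)$.

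For Eq.\,(\ref{alphaU}), the key observation is that the condition $\alpha_h\circ\rho=\rho$ makes conjugation by $\alpha_h$ preserve the one-dimensional space $(\rho,\rho\alpha_g)$. Indeed for $x\in M$,
$$\alpha_h(U(g))\rho(x)=\alpha_h(U(g)\rho(x))=\alpha_h(\rho\alpha_g(x)U(g))=\rho\alpha_g(x)\alpha_h(U(g)),$$
so $\alpha_h(U(g))\in (\rho,\rho\alpha_g)=\C U(g)$ and hence $\alpha_h(U(g))=c(h,g)U(g)$ for some scalar $c(h,g)$. To pin down $c(h,g)$ I would evaluate both sides on $S_{h^{-1}}$: from the Cuntz decomposition (\ref{U}) one has $U(g)S_{h^{-1}}=\chi_{h^{-1}}(g)S_{h^{-1}}$ (since $U_\cK(g)S_{h^{-1}}=0$), while $\alpha_h(S_{h^{-1}})=S_e$ and the normalization $U(g)S_e=S_e$ give $\alpha_h(U(g))S_e=c(h,g)S_e$. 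Combining these, $c(h,g)=\chi_{h^{-1}}(g)$. Now applying the symmetry proved above together with $\chi_g\in\Hom(G,\T)$ converts this into
$$c(h,g)=\chi_{h^{-1}}(g)=\chi_g(h^{-1})=\overline{\chi_g(h)}=\overline{\chi_h(g)},$$
which is Eq.\,(\ref{alphaU}).

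The only point requiring genuine care is the symmetry argument: one must correctly track the antilinearity of $j_2$ when applying the conjugation operation to the scalar $\chi_h(g)$ in Eq.\,(\ref{Weyl}), and verify that conjugation-by-$j_2$ is in fact involutive (which uses $j_2^2=\pm1$, not $j_2^2=1$, but either value of $\epsilon$ works since it squares to $1$). Everything else is a routine unwinding of the Cuntz decomposition and the intertwining property of $U(g)$.
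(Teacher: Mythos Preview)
Your proof is correct. The argument for the symmetry $\chi_h(g)=\chi_g(h)$ is essentially identical to the paper's: both conjugate the Weyl relation Eq.\,(\ref{Weyl}) by the antilinear $j_2$, using $j_2V(\cdot)j_2^{-1}=U_\cK(\cdot)$ and $j_2U_\cK(\cdot)j_2^{-1}=V(\cdot)$, and then compare with Eq.\,(\ref{Weyl}) with $g$ and $h$ interchanged.

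For Eq.\,(\ref{alphaU}) there is a minor difference in presentation. The paper computes $\alpha_h(U(g))$ directly from the Cuntz decomposition $U(g)=\sum_k\chi_k(g)S_kS_k^*+U_\cK(g)$, using $\alpha_h(S_k)=S_{hk}$ on the diagonal part and Eq.\,(\ref{Weyl}) on the $\cK$-part, and then invokes the symmetry to rewrite $\chi_{h^{-1}k}(g)=\overline{\chi_h(g)}\chi_k(g)$. Your route is slightly more conceptual: you first observe abstractly that $\alpha_h$ preserves the line $(\rho,\rho\alpha_g)=\C U(g)$, and then extract the scalar by evaluating on the single vector $S_e=\alpha_h(S_{h^{-1}})$. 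Both arguments require the symmetry to finish (to convert $\chi_{h^{-1}}(g)$ into $\overline{\chi_h(g)}$), and the content is the same; your version just avoids checking the $U_\cK$-part explicitly.
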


\begin{proof} From Lemma \ref{j}, we have $j_2V(g)j_2^*=j_2^*V(g)j_2=U_{\cK}(g)$. 
Thus Eq.(\ref{Weyl}) implies 
$$j_2U_{\cK}(g)j_2^*j_2V(h)j_2^*=\overline{\chi_h(g)}j_2V(h)j_2^*j_2U_{\cK}(g)j_2^*,$$
and so 
$$V(g)U_{\cK}(h)=\overline{\chi_h(g)}U_{\cK}(h)V(g),$$
which shows $\chi_h(g)=\chi_g(h)$. 
Using this, we obtain the second statement from 
\begin{align*}
\alpha_h(U(g)) &=\sum_{k\in G}\chi_k(g)\alpha_h(S_kS_k^*)+\alpha_h(U_{\cK}(g)) \\
&=\sum_{k\in G}\chi_k(g)S_{hk}S_{hk}^*+V(h)U_{\cK}(g)V(h)^* \\
 &=\sum_{k\in G}\chi_{h^{-1}k}(g)S_kS_k^*+\overline{\chi_h(g)}U_{\cK}(g).
\end{align*}
\end{proof}

We denote by $\lambda$ the left regular representation of $G$. 

\begin{theorem}\label{representation} 
When $d=\frac{m+\sqrt{m^2+4n}}{2}$ is irrational, then the group $G$ is always abelian, $m$ is a multiple of $n$, and 
\begin{equation}\bigoplus_{h\in G}\chi_h\cong \lambda,
\end{equation}
\begin{equation}
V\cong U_{\cK}\cong \frac{m}{n}\lambda.
\end{equation}
When $d$ is rational, there exist two natural numbers $s$ and $t$ such that 
$n=st^2$, $m=(s-1)t$, and $d=st$. Moreover, 
\begin{itemize} 
\item [(i)] When $t=1$, the group $G$ is abelian. 
In this case,  the character $\chi_h$ is trivial for all $h\in G$ and 
\begin{equation}1\oplus V\cong 1\oplus U_{\cK}\cong \lambda.
\end{equation} 
\item [(ii)] When $t>1$, the group $G$ is non-abelian. 
In this case, the order of $\Hom(G,\T)$ is $t^2$ and 
\begin{equation}\bigoplus_{h\in G}\chi_h\equiv s\bigoplus_{\chi\in \Hom(G,\T)}\chi.\end{equation}   
Let $\hG^\dagger$ be the set of equivalence classes of irreducible unitary representations of $G$ whose 
dimensions are greater than 1. 
Then $t$ divides $\dim \pi$ for all $\pi\in \hG^\dagger$, and 
\begin{equation}
V\cong U_{\cK}\cong \bigoplus_{\pi\in \hG^\dagger}\frac{\dim \pi}{t}\pi.
\end{equation}
\end{itemize}
\end{theorem}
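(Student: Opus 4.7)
My plan rests on the commutation $U_\cK(g)V(h)=\chi_h(g)V(h)U_\cK(g)$ from Eq.~(\ref{Weyl}), the intertwining $V\cong U_\cK$ from Lemma~\ref{j}, the orthogonality identities (\ref{orthogonality1})--(\ref{orthogonality2}), and the equivariance Eq.~(\ref{equivariance}).

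First I would set up the bicharacter framework. Using Eq.~(\ref{symmetric}) and $\chi_h\in\Hom(G,\T)$, the map $(g,h)\mapsto\chi_h(g)$ is a symmetric bicharacter whose radical $H:=\{g\in G:\chi_g\equiv 1\}$ contains $[G,G]$ and is normal in $G$; hence $A:=G/H$ is abelian with a non-degenerate induced bicharacter, $A\cong\hat A$, and $\Lambda:=\{\chi_h:h\in G\}=\Hom(G/H,\T)\cong A$. Taking traces in $V(h)U_\cK(g)V(h)^{-1}=\chi_h(g)^{-1}U_\cK(g)$ forces $\Tr U_\cK(g)=\Tr V(g)=0$ for $g\notin H$. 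Decomposing $\cK=\bigoplus_{\pi\in\Irr(G)}V_\pi\otimes W_\pi$ under $U_\cK$ with multiplicities $m_\pi=\dim W_\pi$, the commutation implies $V(h)\cK_\pi=\cK_{\pi\cdot\chi_h}$, so $m_\pi$ is constant on $\Lambda$-orbits.

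Next I would extract numerical identities from the orthogonality relations. Summing Eq.~(\ref{orthogonality2}) with $T=T'=T_i$ over the basis gives the operator identity $\sum_{i=1}^m l(T_i)^*l(T_i)=(m-n/d)I_\cK$ in $\B(\cK)$, coupling $m=\dim\cK$ to the size of the $\alpha_G$-invariant space in $\cK^2\cK^*$ via Eq.~(\ref{invariance}). Using the equivariance Eq.~(\ref{equivariance}), the intertwining Eq.~(\ref{j2}), and the Frobenius identities (\ref{Frobenius1})--(\ref{Frobenius2}), the per-orbit refinement projects Eq.~(\ref{orthogonality2}) onto individual $\Lambda$-orbits and produces multiplicity relations of the form $m_\pi\,\dim\pi=c_\pi(d,n,m,\dim\pi)$, in which $d$ enters rationally in the rational case but carries $\sqrt{m^2+4n}$ in the irrational case.

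The dichotomy then follows. In the irrational case, rationality of $m_\pi$ combined with irrationality of $d$ forces the $\sqrt{m^2+4n}$-contributions to cancel, yielding $m_\pi=(m/n)\dim\pi$ uniformly; thus $n\mid m$ and $V\cong(m/n)\lambda$. Combining with the character-support condition from Step~1 and the uniform multiplicity formula then forces $H=\{e\}$, so $G$ is abelian, $\chi$ non-degenerate, and $\bigoplus_h\chi_h\cong\lambda$. In the rational case, $d\in\N$; setting $t=d-m=n/d$ and $s=d/t$ gives $m=(s-1)t$ and $n=st^2$ from $d(d-m)=n$. When $t=1$ the multiplicity identity collapses to $\chi\equiv 1$, so $H=G$ is abelian and $1\oplus V\cong\lambda$. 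When $t>1$ it kills 1-dimensional components of $V$ and forces $m_\pi=\dim\pi/t$ on higher-dimensional $\pi$; the dimension identity $\sum_\pi m_\pi\dim\pi=m$ combined with orbit constancy then yields $|\Hom(G,\T)|=t^2$ and $t\mid\dim\pi$ for each $\pi\in\hG^\dagger$.

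The main obstacle is extracting the per-orbit multiplicity identity in the second paragraph: block-diagonalizing $l$ along the $\Lambda$-orbit decomposition requires coherently matching the $U_\cK$- and $V$-isotypic structures via the anti-linear intertwiners $j_1,j_2$, and analyzing each block of $l$ against Eq.~(\ref{orthogonality2}) and Eq.~(\ref{orthogonality1}) simultaneously. The remaining subtleties, namely forcing $H=\{e\}$ in the irrational case and isolating $|\Hom(G,\T)|=t^2$ in the rational $t>1$ case, each require a final delicate accounting of orbit sizes and multiplicity contributions across all $\Lambda$-orbits at once.
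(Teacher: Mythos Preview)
Your plan diverges substantially from the paper's proof and contains real gaps.

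The paper's argument is far more direct: it computes $\rho(S_e)^*\rho(S_g)=\delta_{g,e}$ using the explicit formula for $\rho(S_e)$ in Lemma~\ref{rhoS} together with $\rho(S_g)=U(g)\rho(S_e)U(g)^*$, obtaining in one line the character identity
\[
\delta_{g,e}=\frac{1}{d^2}\sum_{h\in G}\chi_h(g)+\frac{1}{d}\Tr U_\cK(g),
\]
equivalently $\Tr\lambda(g)=\frac{n}{d^2}\sum_h\chi_h(g)+\frac{n}{d}\Tr U_\cK(g)$. Everything then follows by elementary character theory: in the irrational case, integrality of multiplicities against $d^2=n+md$ forces $a_\chi=1$ and $nb_\chi=m$ for every $\chi\in\Hom(G,\T)$; in the rational case one reads off the multiplicity constraints directly. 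Your route through Eq.~(\ref{orthogonality2}) and the block structure of $l$ never reaches this identity. In fact the sum $\sum_i l(T_i)^*l(T_i)=(m-n/d)Q$ you compute is a \emph{scalar} on $\cK$, so projecting it onto $U_\cK$-isotypics yields no per-$\pi$ information; the ``per-orbit multiplicity relation $m_\pi\dim\pi=c_\pi(d,n,m,\dim\pi)$'' you hope for is not visible from this, and you yourself flag it as the main unresolved obstacle.

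Two further gaps: (1) In the rational case you set $t=n/d$ and $s=d/t=d^2/n$, but you do not show $s\in\N$. The paper proves this by observing that $[M^G:\rho(M)]=d^2/n$ is the index of a finite-depth inclusion, hence an algebraic integer, hence an integer. (2) For $t=1$ you assert ``$\chi\equiv1$, so $H=G$ is abelian,'' but $H=G$ only says the bicharacter is trivial, not that $G$ is abelian. The paper's argument here is genuinely subtle: with $\chi_h$ trivial, $V$ and $U_\cK$ commute; setting $w=j_1j_2$ (of period~3) one produces a third representation $v_2=w^*U_\cK w$ commuting with both, and a commutant-dimension count then forces $G$ to embed in an abelian algebra. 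Your outline does not touch this mechanism.
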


\begin{proof} Since we have $\delta_{g,e}=\rho(S_e)^*\rho(S_g)=\rho(S_e)^*U(g)\rho(S_e)U(g)^*$ for $g\in G$, we have 
\begin{align*}
\delta_{g,e} &=\rho(S_e)^*U(g)\rho(S_e)=\frac{1}{d^2}\sum_{h\in G}\chi_h(g)+\frac{1}{d}\sum_{i,j=1}^mj_1(T_j)^*T_j^*U_{\cK}(g)T_ij_1(T_i) \\
 &=\frac{1}{d^2}\sum_{h\in G}\chi_h(g)+\frac{1}{d}\sum_{i,j=1}^m \inpr{U_{\cK}(g)T_i}{T_j}\inpr{j_1(T_i)}{j_1(T_j)}\\
 &=\frac{1}{d^2}\sum_{h\in G}\chi_h(g)+\frac{1}{d}\Tr{U_{\cK}(g)}.\\
\end{align*}
This implies 
\begin{equation}\label{character}
\Tr \lambda(g)=\frac{n}{d^2}\sum_{h\in G}\chi_h(g)+\frac{n}{d}\Tr U_{\cK}(g). 
\end{equation}
For $\chi\in \Hom(G,\T)$, we denote by $a_\chi$ and $b_\chi$ the multiplicities of $\chi$ in $\bigoplus_{h\in G}\chi_h$ and 
$U_{\cK}$ respectively. 

First, we assume that $d$ is irrational. 
Then Eq.(\ref{character}) implies $d^2=na_\chi+nb_\chi d$ for $\chi\in \Hom(G,\T)$. 
Since $d^2=n+md$, we obtain $a_\chi=1$ and $b_\chi n=m$. 
This implies 
$$\bigoplus_{h\in G}\chi_h\cong \bigoplus_{\chi\in \Hom(G,\T)}\chi,$$
and so $G$ is abelian. 

Assume now that $d$ is rational. Then $d$ is an integer. 
The inclusion $M^G\supset \rho(M)$ is of finite depth, and its index is $d^2/n=1+md/n$. 
Since the index of a finite depth inclusion is an algebraic integer, the number $1+md/n$ is indeed an integer, which we denote by $s$. 
Then $d^2=n+md$ implies $n(s-1)^2=sm^2$, and so $m$ is a multiple of $s-1$. 
Letting $t=m/(s-1)$, we get $n=st^2$ and $d=st$. 
Now Eq.(\ref{character}) is of the form 
\begin{equation}\label{character2}
\Tr \lambda(g)=\frac{1}{s}\sum_{h\in G}\chi_h(g)+t\Tr U_{\cK}(g). 
\end{equation}

Assume that $t=1$. Then $n=d=s$ and $m=s-1$. 
Let $\chi\in \Hom(G,\T)$. 
Since $0\leq a_\chi\leq s$ and  Eq.(\ref{character}) implies $1=a_\chi/n+b_\chi$, either 
$a_\chi=0$ and $b_\chi=1$, or $a_\chi=n$ and $b\chi=0$ hold. 
Since $\chi_e=1$, we get $a_1=s$, which implies that $\chi_h=1$ for all $h\in G$. 
Thus we get $\Tr \lambda(g)=1+\Tr U_{\cK}(g)$, and 
$$1\oplus V\cong 1\oplus U_\cK\cong \lambda.$$ 
Let $v_0(g)=V(g)$, $v_1(g)=U_\cK(g)$, and $w=j_1j_2$. 
Since $\chi_h=1$ for any $h\in G$, Eq(\ref{Weyl}) implies that $v_0$ and $v_1$ commute with each other. 
Moreover, since $w^3=1$ and $v_1(g)=w^*v_0(g)w$, if we define $v_2(g)$ by $w^*v_1(g)w$, the three representations 
$v_0$, $v_1$, and $v_2$ commute with each other. 
Since $1\oplus v_i\cong \lambda$ for $i=0,1,2$, the group $G$ is abelian. 
Indeed, since the dimension of the commutant $v_0(G)'$ of $v_0(G)$ is $n-1$, we can see that $v_1(G)''$ is the commutant of $v_0(G)''$. 
Thus $v_0(G)'\cap v_1(G)'$ coincides with the center of $v_0(G)''$. 
Since $v_2$ is a faithful representation of $G$ in $v_0(G)'\cap v_1(G)'$, we conclude that $G$ is abelian.

Assume $t>1$ now. 
Let $\pi$ be an irreducible representation of $G$ contained in $U_{\cK}$. 
Then Eq.(\ref{character2}) implies $\dim \pi\geq t$, and so $G$ is non-abelian. 
The rest of the statements in (ii) follow from a similar reasoning as above. 
\end{proof}

\begin{remark}
Under naive identification of $\rho$ and $\overline{\rho}$, 
the map $j_2\circ j_1$ would be graphically expressed as 
$$j_2\circ j_1:\quad 
\begin{xy}(20,0)*+[F]{T}="T"
\ar(20,15);(20,3)^<\rho
\ar(18,-3);(10,-15)_>\rho
\ar(22,-3);(30,-15)^>\rho
\end{xy}
\quad \mapsto \quad 
\begin{xy}(20,0)*+[F]{\epsilon T}="{T}"
\ar@(u,u)(20,3) ;(10,-15)_>\rho
\ar(18,-3);(18,-15)^>\rho
\ar@(d,d)(30,15);(22,-3)^<\rho
\end{xy}\quad. 
$$
This means that $360^\circ$ rotation in this picture ends up with multiplying by $\epsilon$. 
To avoid this awkward convention in the case of $\epsilon=-1$, we need to properly take the pivotal structure into account 
to define rotation by $120^\circ$, which should be $j_2\circ j_1$ instead of $j_2\circ j_1^{-1}=\epsilon j_2\circ j_1$. 
In fact, we can deduce Eq.(\ref{period3}) from \cite[Theorem 5.1]{NS07} by identifying $(\rho,\rho^2)$ with $(\id,\rho^3)$. 
Since we don't not rely on graphical calculus at all in this paper, instead of seriously pursuing it, 
we give a short and general argument, based on Longo's observation in \cite{L90}, 
giving another proof of $(j_2\circ j_1)^3=I$ here. 
Let $\phi_\rho$ be the left inverse of $\rho$ defined by $\phi_\rho(x)=S_e^*\rho(x)S_e$. 
Following the notation in \cite{NS07}, we denote by $E_\rho^{(n)}:(\id,\rho^n)\rightarrow (\id,\rho^n)$ the restriction of $d\phi_\rho$. 
We first claim that $E_\rho^{(n)}$ is a unitary. 
Indeed, for $W_1,W_2\in (\id,\rho^n)$, we have 
$$\inpr{E_\rho^{(n)}W_1}{E_\rho^{(n)}W_2}=d^2S_e^*\rho(W_2)^*S_eS_e^*\rho(W_1)S_e.$$
Since $\rho(W_2)^*S_eS_e^*\rho(W_1)\in (\rho,\rho)$ is already a scalar, we have 
$$\inpr{E_\rho^{(n)}W_1}{E_\rho^{(n)}W_2}=d^2\phi_\rho(\rho(W_2)^*S_eS_e^*\rho(W_1))
=d^2W_2^*\phi_\rho(S_eS_e^*)W_1=\inpr{W_1}{W_2}.$$
As was already observed in \cite{L14}, the $n$-th power of $E_\rho^{(n)}$ is positive for
$$\inpr{(E_\rho^{(n)})^nW}{W}=d^nW^*\phi_\rho^n(W)=d^n\phi_\rho(\rho^n(W^*)W)=d^n\phi_\rho^n(WW^*)\geq 0.$$
Thus $(E_\rho^{(n)})^n=\id$. 
Let $\Phi: (\rho,\rho^2)\rightarrow (\id,\rho^3)$ be a unitary defined by $\Phi(T)=\rho(T)S_e$. 
Then $\Phi^{-1}=dS_e^*\rho(\cdot)$, and 
\begin{align*}
\lefteqn{\Phi^{-1}\circ E_\rho^{(3)}\circ \Phi(T)=dS_e^*\rho(d\phi_\rho(\rho(T)S))} \\
 &=d^2S_e^*\rho(T\phi_\rho(S_e))=dS_e^*\rho(TS)=\sqrt{d}j_2(T)^*\rho(S)=j_1\circ j_2(T), \\
\end{align*}
which proves $(j_2\circ j_1)^3=(j_1\circ j_2)^{-3}=I$ again. 
\end{remark}

\begin{remark}
Our explicit formula for $E_\rho^{(n)}$ as above allows us to compute easily the higher Frobenius-Schur indicators $\nu_{n,r}(\rho)
=\mathrm{tr}((E_\rho^{(n)})^r)$. 
Indeed, we have $\nu_{2,1}(\rho)=\epsilon$, which should be treated as a given datum.  
Since $E_\rho^{(3)}$ is unitarily equivalent to $j_1\circ j_2$, we have $\nu_{3,1}(\rho)=\mathrm{tr}(j_1\circ j_2)$. 
Since $\{S_gS_e\}_{g\in G}\cup\{T_iT_jS_e\}_{1\leq i,j\leq m}$ is an orthonormal basis of $(\id, \rho^4)$, we have 
\begin{align*}
\lefteqn{\nu_{4,1}(\rho)=d\sum_{g\in G}S_e^*S_g^*\phi_\rho(S_gS_e)+d\sum_{i,j}S_e^*T_j^*T_i^*\phi_\rho(T_iT_jS_e)} \\
 &=d\sum_{g\in G} S_e^*S_g^*S_e^*\rho(S_gS_e)S_e+d\sum_{i,j}S_e^*T_j^*T_i^*S_e^*\rho(T_iT_jS_e)S_e\\
 &=d\sum_{g\in G}S_e^*S_g^*S_e^*U(g)\rho(S_e)U(g)^*\rho(S_e)S_e+\sqrt{d}\sum_{i,j}S_e^*T_j^*T_i^*j_2(T_i)^*\rho(T_jS_e)S_e\\
 &=\sum_{g\in G}S_e^*S_g^*U(g)^*\rho(S_e)S_e+\sum_{i,j}S_e^*T_j^*j_1((T_i^*j_2(T_i)^*\rho(T_j))^*)S_e\\
 &=\frac{1}{d}\sum_{g\in G}\overline{\chi_g(g)}+\sum_{i,j}\inpr{j_1((T_i^*j_2(T_i)^*\rho(T_j))^*)}{T_j}\\
 &=\frac{1}{d}\sum_{g\in G}\overline{\chi_g(g)}+\sum_{i,j}\inpr{j_1(T_j)}{(T_i^*j_2(T_i)^*\rho(T_j))^*}\\
 &=\frac{1}{d}\sum_{g\in G}\overline{\chi_g(g)}+\sum_{i,j}T_i^*j_2(T_i)^*\rho(T_j)j_1(T_j).
\end{align*} 
It is easy to evaluate these for concrete examples. 
\end{remark}

We get back to the original setting where our near-group categories live in $\End(M)$. 
About $\rho(U(g))$, we have 

\begin{lemma}
\begin{equation}\label{rhoU}
\rho(U(g))=\sum_{h\in G}S_hS_{hg}^*+j_2\circ j_1^{-1}U_{\cK}(g)(j_2\circ j_1^{-1})^*\otimes U(g). 
\end{equation}
\end{lemma}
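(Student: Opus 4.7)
The plan is to recognize $\rho(U(g))$ as an element of the intertwiner space $(\rho^2,\rho^2\alpha_g)$ and read off its components in a natural basis. Applying $\rho$ to $U(g)\rho(x)=\rho\alpha_g(x)U(g)$ shows $\rho(U(g))\in(\rho^2,\rho^2\alpha_g)$. Noting that $S_{lg^{-1}}\in(\alpha_{lg^{-1}},\rho^2)$ becomes an element of $(\alpha_l,\rho^2\alpha_g)$ after tensoring with $\alpha_g$, and $T_i\in(\rho,\rho^2)$ is likewise an element of $(\rho\alpha_g,\rho^2\alpha_g)$, the set $\{S_{lg^{-1}}S_l^*\}_{l\in G}\cup\{T_iU(g)T_j^*\}_{1\leq i,j\leq m}$ is a basis of $(\rho^2,\rho^2\alpha_g)$. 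Writing $\rho(U(g))=\sum_l c_l S_{lg^{-1}}S_l^*+\sum_{i,j}d_{ij}T_iU(g)T_j^*$, the claim reduces to $c_l=1$ for every $l$ and $(d_{ij})=(j_2 j_1^{-1})U_\cK(g)(j_2 j_1^{-1})^*$ as $m\times m$ matrices in the basis $\{T_i\}$ of $\cK$.

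For the $\alpha$-coefficients, I would isolate $c_l I=S_{lg^{-1}}^*\rho(U(g))S_l$ and expand $\rho(U(g))=\sum_h\chi_h(g)\rho(S_hS_h^*)+\rho(U_\cK(g))$. Using Eq.(\ref{rhoSg}) to write $\rho(S_hS_h^*)=U(h)\rho(S_e)\rho(S_e)^*U(h)^*$, the eigenrelations $U(h)S_l=\chi_l(h)S_l$ and $S_{lg^{-1}}^*U(h)=\chi_{lg^{-1}}(h)S_{lg^{-1}}^*$, and the identities $S_{lg^{-1}}^*\rho(S_e)=\rho(S_e)^*S_l=(\epsilon/d)I$ obtained by substituting Lemma \ref{rhoS} and using the Cuntz relations, the contribution from the first summand is $d^{-2}\sum_h\chi_h(g)\chi_{lg^{-1}}(h)\overline{\chi_l(h)}$. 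The map $h\mapsto\chi_h$ is a homomorphism from $G$ into the abelian group $\Hom(G,\T)$ (immediate from Eq.(\ref{symmetric}) and the fact that each $\chi_g$ is a character), so its kernel contains $[G,G]$; hence $\chi_g\cdot\chi_{lg^{-1}}\cdot\chi_l^{-1}=\chi_{[g,l]}$ is the trivial character, and the sum equals $n$, giving $n/d^2$. For the $U_\cK(g)$-contribution, Eq.(\ref{rhoT}) yields $S_{lg^{-1}}^*\rho(T_i)=(\sqrt d)^{-1}j_2(T_i)^*V(lg^{-1})^*$ and $\rho(T_j)^*S_l=(\sqrt d)^{-1}V(l)j_2(T_j)$, whence $S_{lg^{-1}}^*\rho(U_\cK(g))S_l=d^{-1}\sum_{i,j}u_{ij}\inpr{j_2(T_i)}{V(g)j_2(T_j)}I$, where $u_{ij}=\inpr{T_i}{U_\cK(g)T_j}$; invoking $V(g)j_2=j_2 U_\cK(g)$ (a consequence of Eq.(\ref{j2}) and $j_2^2=\epsilon$), the inner product simplifies to $\overline{u_{ij}}$, and the sum collapses to $d^{-1}\sum_{i,j}|u_{ij}|^2=m/d$. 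Adding, $c_l=n/d^2+m/d=1$.

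For the $\rho$-coefficients, I would exploit the normalization $\rho(U(g))\rho(S_e)=\rho(U(g)S_e)=\rho(S_e)$. Subtracting the $\alpha$-contribution $\sum_l S_{lg^{-1}}S_l^*\rho(S_e)=(\epsilon/d)\sum_h S_h$ leaves
\[
\sum_{i,j}d_{ij}T_iU_\cK(g)j_1(T_j)=\sum_p T_p j_1(T_p),
\]
so $\sum_j d_{ij}U_\cK(g)j_1(T_j)=j_1(T_i)$ for each $i$. Because $\{j_1(T_k)\}$ is an orthonormal basis of $\cK$ (by antiunitarity of $j_1$) and $U_\cK(g)$ is unitary, inverting this linear system identifies the matrix $(d_{ij})$ with that of $j_1^{-1}U_\cK(g)j_1$ on $\cK$ with respect to $\{T_i\}$.

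The final and most delicate step, which I expect to be the main obstacle, is the operator identity
\[
j_1^{-1}U_\cK(g)j_1=(j_2 j_1^{-1})U_\cK(g)(j_2 j_1^{-1})^*\qquad\text{on }\cK.
\]
I would substitute $U_\cK(g)=j_2 V(g)j_2^{-1}$ from Eq.(\ref{j2}); combined with $j_1^{-1}=\epsilon j_1$, $j_2^{-1}=\epsilon j_2$ and the cubic relation $(j_2 j_1)^3=I$ of Eq.(\ref{period3}), one obtains $j_1^{-1}j_2=W^*$ and $j_2^{-1}j_1=W$ for $W:=j_2 j_1^{-1}$, so the left side reduces to $W^*V(g)W$. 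The same substitution on the right side, using $Wj_2=j_2W^*$ and $j_2^{-1}W^*=Wj_2^{-1}$ (again from the cubic relation), gives $WU_\cK(g)W^*=j_2(W^*V(g)W)j_2^{-1}$; invoking $V(g)j_1=j_1V(g)$ from Eq.(\ref{j1}), which yields $j_1V(g)j_1=\epsilon V(g)$, together with $j_2 j_1 j_2=\epsilon j_1 j_2 j_1$ (yet another consequence of $(j_2 j_1)^3=I$), reduces this to $W^*V(g)W$ as well. Careful bookkeeping of antilinear compositions and the sign $\epsilon$ is the main technical challenge.
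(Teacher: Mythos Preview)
Your proof is correct, but it differs from the paper's in both halves. For the $S$--part, the paper observes abstractly that $\rho(U(g))S_e\in(\alpha_{g^{-1}},\rho^2)=\C S_{g^{-1}}$, so $\rho(U(g))S_e=cS_{g^{-1}}$, and then pins down $c=1$ by evaluating $S_e^*\rho^2(U(g))\rho(S_e)$ in two ways; applying $\alpha_h$ gives $\rho(U(g))S_h=S_{hg^{-1}}$ immediately. You instead compute $S_{lg^{-1}}^*\rho(U(g))S_l$ by brute expansion, which essentially reproduces the character identity $\frac{n}{d^2}\sum_h\chi_h(g)+\frac1d\Tr U_\cK(g)=\delta_{g,e}$ at the level of matrix entries. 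For the $T$--part, the paper evaluates $S_e^*T_i^*\rho(U(g))T_jS_e$ directly: only the $\rho(T_pT_q^*)$ terms contribute, and the coefficient $j_2\circ j_1^{-1}(T)$ in Eq.~(\ref{rhoT}) makes the form $(j_2j_1^{-1})U_\cK(g)(j_2j_1^{-1})^*$ appear at once, with no further algebra needed. Your route via $\rho(U(g))\rho(S_e)=\rho(S_e)$ uses only the $j_1$--term of $\rho(S_e)$, so it lands on $j_1^{-1}U_\cK(g)j_1$ and then requires the separate operator identity $j_1^{-1}U_\cK(g)j_1=(j_2j_1^{-1})U_\cK(g)(j_2j_1^{-1})^{-1}$, which you correctly reduce to $bab=\epsilon aba$ (a rewriting of $(ba)^3=I$) together with $j_1V(g)=V(g)j_1$. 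The paper's argument is shorter and avoids this last manipulation; yours has the virtue of cleanly factoring the computation into a simple normalization step plus a purely algebraic fact about the antiunitaries $j_1,j_2$.
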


\begin{proof} Since $\rho(U(g))S_e\in (\alpha_{g^{-1}},\rho^2)$ there exists a complex number $c$ of modulus 1 such that 
$\rho(U(g))S_e=cS_{g^{-1}}$. 
To show that $c=1$, we compute both sides of 
$S_e^*\rho^2(U(g))\rho(S_e)=cS_e^*\rho(S_{g^{-1}})$. 
The left-hand side is 
$$S_e^*\rho^2(U(g))\rho(S_e)=U(g)S_e^*\rho(S_e)=\frac{\epsilon}{d} U(g).$$ 
On the other hand, the right-hand side is 
$$cS_e^*\rho(S_{g^{-1}})=cS_e^*U(g)^*\rho(S_e)U(g)=cS_e^*\rho(S_e)U(g)=c\frac{\epsilon}{n} U(g),$$ 
showing $c=1$. 
Thus we have $S_{hg^{-1}}=\alpha_h(\rho(U(g))S_e)=\rho(U(g))S_h$ and 
$$\rho(U(g))P=\sum_{h\in G}\rho(U(g))S_hS_h^*=\sum_{h\in G}S_{hg^{-1}}S_h^*.$$
This implies that $P$ and $Q$ commute with $\rho(U(g))$ and 
$$\rho(U(g))=\sum_{h\in G}S_{hg^{-1}}S_h^*+Q\rho(U(g))Q.$$
Note that $T_i^*\rho(U(g))T_j\in (\rho,\rho\circ \alpha_g)=\C U(g)$. 
This means that there exists a unitary representation $U'$ of $G$ in $\B(\cK)$ such that $Q\rho(U(g))Q=U'(g)\otimes U(g)$, 
and it is determined by $S_e^*T_i^*\rho(U(g))T_jS_e=T_i^*U'(g)T_j$. 
Indeed, expanding $U(g)$ as 
$$U(g)=\sum_{h\in G}\chi_h(g)S_hS_h^*+\sum_{p,q=1}^mU_\cK(g)_{pq}T_pT_q^*,$$
we get 
\begin{align*}\lefteqn{S_e^*T_i^*\rho(U(g))T_jS_e}\\ 
&=\sum_{h\in G}\chi_h(g)S_e^*T_i^*\rho(S_hS_h^*)T_jS_e
+\sum_{p,q=1}^mU_{\cK}(g)_{pq} S_e^*T_i^*\rho(T_pT_q^*)T_jS_e \\
 &=\sum_{p,q=1}^mU_{\cK}(g)_{pq} \inpr{j_2\circ j_1^{-1}(T_p)}{T_i}\inpr{T_j}{j_2\circ j_1^{-1}(T_q)},
\end{align*}
and 
\begin{align*}
\lefteqn{U'(g)=\sum_{i,j=1}^mS_e^*T_i^*\rho(U(g))T_jS_e T_iT_j^*}\\
&=\sum_{i,j=1}^m\sum_{p,q=1}^mU_{\cK}(g)_{pq} \inpr{j_2\circ j_1^{-1}(T_p)}{T_i}\inpr{T_j}{j_2\circ j_1^{-1}(T_q)}T_iT_j^*\\
&=\sum_{p,q=1}^mU_{\cK}(g)_{pq} j_2\circ j_1^{-1}(T_p)j_2\circ j_1^{-1}(T_q)^*\\
&=j_2\circ j_1^{-1}U_{\cK}(g)(j_2\circ j_1^{-1})^*.
\end{align*}
\end{proof}

\begin{remark}
We can graphically express the three unitary representations of $G$ on $\cK$ as follows: 
$$V(g)T:=\alpha_g(T)=
\begin{xy}(20,0)*+[F]{T} 
\ar(20,15);(20,3)^<\rho 
\ar(18,-3);(6,-15)_>\rho
\ar(22,-3);(34,-15)^>\rho
\ar@/_4mm/(20,11);(12,-9)_{\alpha_g}
\end{xy}
\quad,$$
$$U_{\cK}(g)T=
\begin{xy}(20,0)*+[F]{T} 
\ar(20,15);(20,3)^<\rho 
\ar(18,-3);(6,-15)_>\rho
\ar(22,-3);(34,-15)^>\rho
\ar@/_4mm/(12,-9);(28,-9)_{\alpha_g}
\end{xy}
\quad,$$
$$(j_2\circ j_1^{-1})U_K(g)(j_2\circ j_1^{-1})^{-1}T=\rho(U(g))TU(g)^*=
\begin{xy}(20,0)*+[F]{T} 
\ar(20,15);(20,3)^<\rho 
\ar(18,-3);(6,-15)_>\rho
\ar(22,-3);(34,-15)^>\rho
\ar@/_4mm/(28,-9);(20,11)_{\alpha_g}
\end{xy}
\quad.$$
This explains why they are related by the rotation map $j_2\circ j_1$. 
\end{remark}

The relation 
$$I=\sum_{g\in G}\rho(S_g)\rho(S_g)^*+\sum_{i=1}^m\rho(T_i)\rho(T_i)^*$$
implies 
\begin{lemma}\label{complete} Let $\hG^V$ be the set of irreducible representations of $G$ contained in $V$, and 
let 
$$(U_{\cK},\cK)=\bigoplus_{\pi\in G^V}\bigoplus_{a=1}^{m_\pi}(\pi,\cK^a_\pi)$$
be the irreducible decomposition. 
We choose an orthonormal basis $\{T^a_{\pi,i}\}_{i=1}^{\dim \pi}$ of $\cK^a_\pi$ so that 
$$U_{\cK}(g)=\sum_{\pi\in \hG^V}\sum_{a=1}^{m_\pi}\pi(g)_{ij}T^a_{\pi,i}{T^a_{\pi,j}}^*.$$
Then 
\begin{equation}\label{complete1}
\frac{n}{d}\sum_{\pi,a,b,i,j}\frac{1}{\dim \pi}T^a_{\pi,i}j_1(T^a_{\pi,j})j_1(T^b_{\pi,j})^*{T^b_{\pi,i}}^*+\sum_{i=1}^ml(T_i)l(T_i)^*=
Q\otimes Q,
\end{equation}
\begin{equation}\label{complete2}
\sum_{i=1}^ml(T_i)V(h)j_2(T_i)+\frac{\epsilon n}{d}\sum_{\pi,a}\delta_{\chi_h,\pi}T^a_{\pi}j_1(T^a_\pi)=0.
\end{equation}
\end{lemma}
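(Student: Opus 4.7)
The plan is to derive both identities by multiplying the operator equation $I = \sum_g \rho(S_g)\rho(S_g)^* + \sum_i \rho(T_i)\rho(T_i)^*$ from appropriate sides by $Q = \sum_i T_iT_i^*$ and $S_h$. The key algebraic facts are $S_h^*T_j = 0$ (which follows from $(\rho,\alpha_h) = 0$), giving $QS_h = 0$, $QU(g) = U_\cK(g)$, $U_\cK(g)S_h = 0$, and $V(h)S_h = 0$; together with the Schur orthogonality
$\sum_g U_\cK(g)T^a_{\pi,k}T^{b*}_{\sigma,l}U_\cK(g)^* = \frac{n}{\dim\pi}\delta_{\pi\sigma}\delta_{kl}\sum_p T^a_{\pi,p}T^{b*}_{\pi,p}$
applied to the irreducible decomposition of $U_\cK$.

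For Eq.~(\ref{complete1}), I would multiply the identity by $Q$ on both sides. Using $\rho(S_g)\rho(S_g)^* = U(g)\rho(S_eS_e^*)U(g)^*$ and $QU(g) = U_\cK(g)$, only the $\cK^2\cK^{*2}$-component $\frac{1}{d}ZZ^*$ of $\rho(S_e)\rho(S_e)^*$ survives (with $Z = \sum_i T_ij_1(T_i)$), since $U_\cK(g)$ annihilates the $S$-span part $\frac{\epsilon}{d}\sum_h S_h$. Writing $\rho(T_i) = A_i + B_i + l(T_i)$ via Eq.~(\ref{rhoT}), the $A_i$ pieces are killed by $Q$, and the cross terms $B_il(T_i)^*$, $l(T_i)B_i^*$ vanish via $S^*T = 0$, leaving $\sum_i Q\rho(T_i)\rho(T_i)^*Q = \sum_i B_iB_i^* + \sum_i l(T_i)l(T_i)^*$. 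The basis-invariance identity $\sum_i W(T_i)XW(T_i)^* = \sum_i T_iXT_i^*$ for any unitary $W$ on $\cK$, applied with $W = j_2j_1^{-1}$ and then $W = V(h)$, reduces $\sum_i B_iB_i^*$ to $\sum_i T_iPT_i^*$. Combined with the elementary identity $Q = Q\otimes Q + \sum_i T_iPT_i^*$ (obtained by expanding $Q = \sum_i T_i(P+Q)T_i^*$), this yields $Q\otimes Q = \frac{1}{d}\sum_g U_\cK(g)ZZ^*U_\cK(g)^* + \sum_i l(T_i)l(T_i)^*$, and the Schur relation rewrites the first summand in the form prescribed by Eq.~(\ref{complete1}).

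For Eq.~(\ref{complete2}), I would multiply the identity on the left by $Q$ and on the right by $S_h$; since $QS_h = 0$, the left-hand side vanishes. For the $\rho(T_i)\rho(T_i)^*S_h$ sum, $\rho(T_i)^*S_h$ reduces to $A_i^*S_h = \frac{1}{\sqrt{d}}V(h)j_2(T_i)$ (as $B_i^*S_h = 0$ by $V(h')^*S_h = 0$ and $l(T_i)^*S_h = 0$ by $T^*S = 0$), and then $Q\rho(T_i)\cdot V(h)j_2(T_i) = l(T_i)V(h)j_2(T_i)$ (since $QA_i = 0$ and $B_iV(h)j_2(T_i) = 0$). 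For the $\rho(S_g)\rho(S_g)^*S_h$ sum, $U(g)^*S_h = \overline{\chi_h(g)}S_h$, and a direct computation gives $\rho(S_eS_e^*)S_h = \frac{1}{d^2}\sum_{h'}S_{h'} + \frac{\epsilon}{d\sqrt{d}}Z$; the first summand is killed by $U_\cK(g)$, and the second survives to give $\frac{\epsilon}{d\sqrt{d}}\overline{\chi_h(g)}U_\cK(g)Z$. Summing over $g$ and using $\sum_g\overline{\chi_h(g)}U_\cK(g) = n\sum_{\pi,a}\delta_{\chi_h,\pi}T^a_\pi T^{a*}_\pi$ (only 1-dimensional irreps contribute to the character sum) together with $T^{a*}_\pi Z = j_1(T^a_\pi)$ (from the anti-linearity of $j_1$) then yields Eq.~(\ref{complete2}) after clearing the factor $\frac{1}{\sqrt{d}}$.

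The main obstacle will be the careful bookkeeping of many terms arising from the expansions and tracking which vanish after projection. The basis-invariance trick $\sum_i W(T_i)XW(T_i)^* = \sum_i T_iXT_i^*$ will be the clean observation that collapses $\sum_i B_iB_i^*$ to $\sum_i T_iPT_i^*$, and the anti-linearity of $j_1$ and $j_2$ must be handled carefully in expressions such as $T^{a*}_\pi Z$.
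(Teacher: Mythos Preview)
Your proposal is correct and follows the same overall strategy as the paper: both start from the completeness relation $I=\sum_g\rho(S_g)\rho(S_g)^*+\sum_i\rho(T_i)\rho(T_i)^*$ and extract the two identities using the Schur orthogonality for $U_\cK$ and the basis-invariance trick $\sum_i W(T_i)XW(T_i)^*=\sum_i T_iXT_i^*$ to collapse $\sum_i B_iB_i^*$ to $\sum_i T_iPT_i^*$.

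The execution differs in one useful way. The paper computes the \emph{full} expansion of $\sum_g\rho(S_g)\rho(S_g)^*$ and $\sum_i\rho(T_i)\rho(T_i)^*$, including the $S_hS_k^*$ blocks, and therefore needs the character identity $\Tr V(h^{-1}k)=d\delta_{h,k}-\tfrac{n}{d}\delta_{\chi_h,\chi_k}$ (from Eq.~(\ref{character2}) and Eq.~(\ref{symmetric})) to verify that those blocks cancel against $P$. You instead project by $Q$ (and by $S_h$) at the outset, so the $S_hS_k^*$ terms never appear and the trace identity is not needed. This is a genuine, if modest, simplification; the paper's route has the compensating advantage that a single computation yields both (\ref{complete1}) and (\ref{complete2}) simultaneously by reading off different matrix blocks.

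One small wording fix: the map $j_2\circ j_1^{-1}$ is anti-unitary, not unitary, so your basis-invariance identity should be stated for $W$ that carries orthonormal bases to orthonormal bases (which anti-unitaries do). The conclusion is unaffected.
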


\begin{proof} By using the Peter-Weyl theorem, we obtain
\begin{align*}
\lefteqn{ \sum_{g\in G}\rho(S_g)\rho(S_g)^*=\sum_{g\in G}U(g)\rho(S_e)\rho(S_e^*)U(g)^*} \\ 
 &=\sum_{g,h,k\in G}\chi_h(g)\overline{\chi_k(g)}S_hS_h^*\rho(S_e)\rho(S_e)^*S_kS_k^* \\
 &+\sum_{g,k\in G}\sum_{\pi,a,i,j}\pi(g)_{ij}\overline{\chi_k(g)}T^a_{\pi,i}{T^a_{\pi,j}}^*\rho(S_e)\rho(S_e)^*S_kS_k^*+(\cdots)^*\\
 &+\sum_{g\in G}\sum_{\pi,\sigma,i,j,p,q}\pi(g)_{ij}\overline{\sigma(g)_{pq}}
 T^a_{\pi,i}{T^a_{\pi,j}}^*\rho(S_e)\rho(S_e)^*T^b_{\sigma,b}{T^b_{\sigma,p}}^*\\
&=\frac{n}{d^2}\sum_{h,k\in G}\delta_{\chi_h,\chi_k}S_hS_k^*
+\frac{\epsilon n}{d\sqrt{d}}\sum_{k,\pi,a}\delta_{\chi_k,\pi}T^a_\pi j_1(T^a_\pi)S_k^*+(\cdots)^*\\
&+\frac{n}{d}\sum_{\pi,a,b,i,j}\frac{1}{\dim \pi}T^a_{\pi,i}j_1(T^a_{\pi,j})j_1(T^b_{\pi,j})^*{T^b_{\pi,i}}^*. 
\end{align*}
On the other hand, 
\begin{align*}
\lefteqn{\sum_{i=1}^m\rho(T_i)\rho(T_i)^*
=\frac{1}{d}\sum_{i=1}^m\sum_{h,k\in G}\inpr{V(h^{-1}k)j_2(T_i)}{j_2(T_i)}S_hS_k^*} \\
 &+\sum_{i=1}^m\sum_{h\in G}V(h)j_2\circ j_1^{-1}(T_i)S_hS_h^*j_2\circ j_1^{-1}(T_i)^*V(h)^*+\sum_{i=1}^ml(T_i)l(T_i)^* \\
 &+\frac{1}{\sqrt{d}}\sum_{i=1}^m\sum_{h\in G}l(T_i)V(h)j_2(T_i)S_h^*+(\cdots)^* \\
 &= \frac{1}{d}\sum_{h,k\in G}\Tr V(h^{-1}k)S_hS_k^*+\sum_{i=1}^m\sum_{h\in G} T_iS_hS_h^*T_i^*\\
 &+\sum_{i=1}^ml(T_i)l(T_i)^*+\frac{1}{\sqrt{d}}\sum_{i=1}^m\sum_{h\in G}l(T_i)V(h)j_2(T_i)S_h^*+(\cdots)^* .\\
\end{align*}
Since $V$ is unitarily equivalent to $U_\cK$, Eq.(\ref{character2}) and Eq.(\ref{symmetric}) imply  
\begin{align*}
\lefteqn{\Tr V(h^{-1}k)=\Tr U_\cK(h^{-1}k)} \\
 &=\frac{d}{n}\Tr(\lambda(h^{-1}k))-\frac{1}{d}\sum_{g\in G}\chi_k(g)\overline{\chi_{h}(g)}=d\delta_{h,k}-\frac{n}{d}\delta_{\chi_h,\chi_k},
\end{align*}
and 
\begin{align*}
\lefteqn{\sum_{i=1}^m\rho(T_i)\rho(T_i)^*=P-\frac{n}{d^2}\sum_{h,k\in G}\delta_{\chi_h,\chi_k}S_hS_k^*} \\
 &+\sum_{i=1}^m\sum_{h\in G} T_iS_hS_h^*T_i^*+\sum_{i=1}^ml(T_i)l(T_i)^*\\
 &+\frac{1}{\sqrt{d}}\sum_{i=1}^m\sum_{h\in G}l(T_i)V(h)j_2(T_i)S_h^*+(\cdots)^* .\\
\end{align*}
Since 
$$I=\sum_{g\in G}\rho(S_g)\rho(S_g)^*+\sum_{i=1}^m\rho(T_i)\rho(T_i)^*,$$
we get
\begin{align*}
\lefteqn{Q=\sum_{i=1}^m\sum_{h\in G} T_iS_hS_h^*T_i^*} \\
 &+\frac{n}{d}\sum_{\pi,a,b,i,j}\frac{1}{\dim \pi}T^a_{\pi,i}j_1(T^a_{\pi,j})j_1(T^b_{\pi,j})^*{T^b_{\pi,i}}^* +\sum_{i=1}^ml(T_i)l(T_i)^*\\
 &+\frac{1}{\sqrt{d}}\sum_{i=1}^m\sum_{h\in G}l(T_i)V(h)j_2(T_i)S_h^*+(\cdots)^* \\
 &+\frac{\epsilon n}{d\sqrt{d}}\sum_{h,\pi,a}\delta_{\chi_h,\pi}T^a_\pi j_1(T^a_\pi)S_h^*+(\cdots)^*,\\
\end{align*}
which shows the statement. 
\end{proof}

\begin{lemma}\label{rhoU3} In terms of $l(T)$, Eq.(\ref{rhoU}) is expressed as 
\begin{align}\label{rhoU2}\lefteqn{}\\
&j_2\circ j_1^{-1}U_{\cK}(g)(j_2\circ j_1^{-1})^*\otimes U_{\cK}(g)\nonumber\\
&=\frac{1}{d}\sum_{\pi,\sigma,a,b,i,j,p,q}\inpr{\chi_g\pi_{ij}}{\sigma_{pq}}
T^a_{\pi,i}j_1(T^a_{\pi,j})j_1(T^b_{\sigma,q})^*{T^b_{\sigma,p}}^*
+\sum_{\pi,a,i,j}\pi(g)_{ij}l(T^a_{\pi,i})l(T^a_{\pi,j})^*,\nonumber
\end{align}
where 
$$\inpr{\chi_g\pi_{ij}}{\sigma_{pq}}=\sum_{h\in G}\chi_g(h)\pi_{ij}(h)\overline{\sigma_{pq}(h)}.$$
\end{lemma}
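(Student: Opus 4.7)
The plan is to compute the $\cK^2\cK^{2*}$-component of $\rho(U(g))$ in two ways and equate them.

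On one hand, extracting the $\cK^2\cK^{2*}$-component from the RHS of Eq.(\ref{rhoU}): the term $\sum_{h\in G}S_hS_{hg}^*$ lies in $P\cdot P$ and contributes nothing, and substituting $U(g)=\sum_{h\in G}\chi_h(g)S_hS_h^*+U_\cK(g)$ from Eq.(\ref{U}) into $j_2\circ j_1^{-1}U_\cK(g)(j_2\circ j_1^{-1})^*\otimes U(g)$, the $S_hS_h^*$ part of $U(g)$ produces an element of $\cK P\cK^*$ while only the $U_\cK(g)$ part contributes to $\cK^2\cK^{2*}$, yielding the LHS of Eq.(\ref{rhoU2}).

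On the other hand, applying $\rho$ directly to $U(g)=\sum_{h\in G}\chi_h(g)S_hS_h^*+\sum_{\pi,a,p,q}\pi(g)_{pq}T^a_{\pi,p}T^{a*}_{\pi,q}$ (using the isotypic decomposition of $U_\cK$ from Lemma \ref{complete}) gives
\[\rho(U(g))=\sum_{h\in G}\chi_h(g)\rho(S_h)\rho(S_h)^*+\sum_{\pi,a,p,q}\pi(g)_{pq}\rho(T^a_{\pi,p})\rho(T^a_{\pi,q})^*,\]
and I extract the $\cK^2\cK^{2*}$-part of the two summands separately. For the second summand, Eq.(\ref{rhoT}) decomposes $\rho(T)$ into pieces in $P\cK^*$, $\cK P$ and $l(T)\in\cK^2\cK^*$. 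Since $T_i^*S_h=0=S_h^*T_i$ yield $\cK^*P=P\cK=0$, expanding the nine cross-terms of $\rho(T)\rho(T')^*$ shows that all but $l(T)l(T')^*$ either vanish or land in subspaces ($PP$, $P\cK^{2*}$, $\cK P\cK^*$, $\cK^2P$) disjoint from $\cK^2\cK^{2*}$; the surviving piece gives the second term $\sum_{\pi,a,i,j}\pi(g)_{ij}l(T^a_{\pi,i})l(T^a_{\pi,j})^*$ on the RHS of Eq.(\ref{rhoU2}).

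For the first summand, Eq.(\ref{rhoSg}) gives $\rho(S_h)\rho(S_h)^*=U(h)\rho(S_e)\rho(S_e)^*U(h)^*$; expanding $\rho(S_e)$ via Lemma \ref{rhoS}, the unique $\cK^2\cK^{2*}$-piece of $\rho(S_e)\rho(S_e)^*$ is $\frac{1}{d}\sum_{i,j}T_ij_1(T_i)j_1(T_j)^*T_j^*$. Because $S_k^*T_i=0$, conjugation by $U(h)$ acts on the outer $T_i,T_j^*$ as conjugation by $U_\cK(h)$: using $U(h)T^a_{\pi,i}=\sum_k\pi(h)_{ki}T^a_{\pi,k}$ and $T^{b*}_{\sigma,j}U(h)^*=\sum_l\overline{\sigma(h)_{lj}}T^{b*}_{\sigma,l}$, the conjugation produces the matrix weight $\pi(h)_{ki}\overline{\sigma(h)_{lj}}$, with the middle factor $j_1(T^a_{\pi,i})j_1(T^b_{\sigma,j})^*$ passing through unchanged. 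Weighting by $\chi_h(g)/d$, summing over $h\in G$, and using Eq.(\ref{symmetric}) to rewrite $\chi_h(g)=\chi_g(h)$, the $h$-sum assembles into $\sum_h\chi_g(h)\pi(h)_{ki}\overline{\sigma(h)_{lj}}=\inpr{\chi_g\pi_{ki}}{\sigma_{lj}}$. After relabeling indices, this reproduces the first term on the RHS of Eq.(\ref{rhoU2}).

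Equating both $\cK^2\cK^{2*}$-computations completes the proof. The main technical point is the bookkeeping in the first summand: one must carefully track how $\Ad U(h)$ acts on the rank-one tensor $\sum_iT_ij_1(T_i)$ inside the isotypic decomposition $(U_\cK,\cK)=\bigoplus_{\pi,a}(\pi,\cK^a_\pi)$, producing the correct complex-conjugate matrix entries from adjoints so that the $h$-summation collects into the character pairing $\inpr{\chi_g\pi_{ij}}{\sigma_{pq}}$.
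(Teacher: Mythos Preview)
Your proof is correct and follows essentially the same route as the paper: both expand $\rho(U(g))$ via $U(g)=\sum_h\chi_h(g)S_hS_h^*+\sum_{\pi,a,i,j}\pi(g)_{ij}T^a_{\pi,i}{T^a_{\pi,j}}^*$, compute the $S_h$ contribution through $U(h)\rho(S_e)\rho(S_e)^*U(h)^*$, and isolate the $l$-contribution from the $T$-terms to produce the two summands on the right of Eq.(\ref{rhoU2}).

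The only difference is one of scope. You project directly onto $\cK^2\cK^{2*}$ and compare; the paper instead works with the full block $Q\rho(U(g))Q$ (which also contains the $\cK P\cK^*$ piece $\sum_h j_2j_1^{-1}U_\cK(g)(j_2j_1^{-1})^*\otimes\chi_h(g)S_hS_h^*$) and separately checks $P\rho(U(g))P$. By showing that these extra blocks match the corresponding pieces of Eq.(\ref{rhoU}) automatically---independently of $l$---the paper establishes that Eq.(\ref{rhoU}) is \emph{equivalent} to Eq.(\ref{rhoU2}) as a condition on $l$, not merely implied by it. That equivalence is what is invoked later in Section~\ref{RCS} (``The proof of Lemma~\ref{rhoU3} shows that Eq.(\ref{rhoU}) holds''), where one starts from Eq.(\ref{rhoU2}) and needs to recover Eq.(\ref{rhoU}). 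Your argument gives the forward implication cleanly; if you also want the converse for the reconstruction, you would need to add the (routine) verification that the $PP$ and $\cK P\cK^*$ components agree automatically, which is exactly what the paper's extra computations supply.
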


\begin{proof} Since $\rho(U(g))$ is a unitary,  Eq.(\ref{rhoU}) holds if and only if the following two hold: 
$$P\rho(U(g))P=\sum_{h}S_hS_{hg}^*,$$ 
$$Q\rho(U(g))Q=j_2\circ j_1^{-1}U_{\cK}(g)(j_2\circ j_1^{-1})^*\otimes U(g).$$ 
The first one does not give any condition on $l$ as 
\begin{align*}\lefteqn{
P\rho(U(g))P =\sum_{h\in G}\chi_h(g)P\rho(S_hS_h^*)P+\sum_{i,j=1}^m U_{\cK}(g)_{ij}P\rho(T_i)\rho(T_j)^*P} \\
 &= \sum_{h,s,t}\chi_h(g)PU(h)\rho(S_eS_e^*)U(h)^*P\\
 &+\frac{1}{d}\sum_{i,j=1}^m\sum_{s,t\in G}U_{\cK}(g)_{ij}\inpr{V(t)j_2(T_j)}{V(s)j_2(T_i)}S_sS_t^*\\
 &=\frac{1}{d^2}\sum_{h,s,t}\chi_h(t^{-1}sg)S_sS_t^*
 +\frac{1}{d}\sum_{i,j=1}^m\sum_{s,t\in G}U_{\cK}(g)_{ij}\inpr{j_2^*V(t^{-1}s)j_2T_i}{T_j}S_sS_t^*\\
 &=\frac{1}{d^2}\sum_{h,s,t}\chi_h(t^{-1}sg)S_sS_t^*
 +\frac{1}{d}\sum_{i,j=1}^m\Tr U_{\cK}(t^{-1}sg) S_sS_t^*\\
 &=\sum_{s\in G}S_sS_{sg}^*. 
\end{align*}
For the second, we have 
$$Q\rho(U(g))Q=\sum_{h\in G}\chi_h(g)QU(h)\rho(S_eS_e^*)U(h)^*Q+\sum_{\pi,a,i,j}\pi(g)_{ij}Q\rho(T^a_{\pi,i}{T^a_{\pi,j}}^*)Q.$$
The first term above is 
\begin{align*}
\lefteqn{\sum_{h\in G}\chi_h(g)QU(h)\rho(S_eS_e^*)U(h)^*Q}\\
&=\sum_{\pi,\sigma,a,b,i,j,p,q}\sum_{h\in G}\chi_g(h)\pi(h)_{ij}\overline{\sigma(h)_{pq}}
T^a_{\pi,i}{T^a_{\pi,j}}^*\rho(S_eS_e^*)T^b_{\sigma,q}{T^b_{\sigma,p}}^*\\
 &= \frac{1}{d}\sum_{\pi,\sigma,a,b,i,j,p,q}\inpr{\chi_g\pi_{ij}}{\sigma_{pq}}
T^a_{\pi,i}j_1(T^a_{\pi,j})j_1(T^b_{\sigma,q})^*{T^b_{\sigma,p}}^*
\end{align*}
The second term is 
\begin{align*}
\lefteqn{\sum_{\pi,a,i,j}\pi(g)_{ij}Q\rho(T^a_{\pi,i}{T^a_{\pi,j}}^*)Q} \\
 &=\sum_{\pi,a,i,j}\sum_{h\in G}\pi(g)_{ij}V(h)j_2\circ j_1^{-1}(T^a_{\pi,i})S_hS_h^*j_2\circ j_1^{-1}(T^a_{\pi,j})^*V(h)^*\\
 &+\sum_{\pi,a,i,j}\pi(g)_{ij}l(T^a_{\pi,i})l(T^a_{\pi,j})^*\\
 &=\sum_{h\in G}V(h)j_2\circ j_1^{-1}U_{\cK}(g)(j_2\circ j_1^{-1})^*V(h)^*\otimes S_hS_h^*+\sum_{\pi,a,i,j}\pi(g)_{ij}l(T^a_{\pi,i})l(T^a_{\pi,j})^*\\
 &= \sum_{h\in G}j_2\circ j_1^{-1}U_{\cK}(g)(j_2\circ j_1^{-1})^*\otimes \chi_{h}(g)S_hS_h^*
 +\sum_{\pi,a,i,j}\pi(g)_{ij}l(T^a_{\pi,i})l(T^a_{\pi,j})^*,
\end{align*}
where we use the fact that $j_2\circ j_1$ has period three. 
Thus the statement follows. 
\end{proof}

\section{Reconstruction}\label{RCS} 
In the previous section, we showed that every information of a C$^*$ near-group category $\cC$ with a finite group $G$ is encoded in 
two anti-linear isometries $j_1,j_2$ of a finite dimensional Hilbert space $\cK$, 
two unitary representations $V,U_\cK$ of $G$ on $\cK$, characters $\{\chi_h\}_{h\in G}$ of $G$, and a linear map 
$l:\cK\to \B(\cK,\cK\otimes \cK)$.  
In this section, we deduce a necessary and sufficient condition for the tuple $(\cK,j_1,j_2,V,U_\cK,\chi,l)$ to recover 
the original C$^*$ near-group category $\cC$.  

Let $G$ be a finite group of order $n$, and let $m$ be a natural number satisfying the condition in the conclusion of 
Theorem \ref{representation}. 
Let $\cO_{n+m}$ be the Cuntz algebra with the canonical generators $\{S_g\}_{g\in G}\cup \{T_i\}_{i=1}^m$. 
We set $P=\sum_{g\in G}S_gS_g^*$, $Q=\sum_{i=1}^mT_iT_i^*$, and $\cK=\Span\{T_i\}_{i=1}^m$. 
Let $\epsilon \in \{1,-1\}$. 
We choose anti-linear isometries $j_1$ and $j_2$ of $\cK$ and unitaries representations $V$ and $U_{\cK}$ in $\B(\cK)$ satisfying 
Eq.(\ref{involution}),(\ref{j1}),(\ref{j2}),(\ref{period3}). 
We assume that $\chi_h\in \Hom(G,\T)$, $h\in G$, satisfy the condition in the conclusion of Theorem \ref{representation} and 
Eq.(\ref{Weyl}),(\ref{symmetric}). 
Under the above assumption, we can introduce a unitary representation $U$ of $G$ in $\cO_{n+m}$ by 
$$U(g)=\sum_{h\in G}\chi_h(g)S_hS_h^*+U_{\cK}(g), $$
and an action $\alpha$ of $G$ on $\cO_{n+m}$ by $\alpha_g(S_h)=S_{gh}$ and $\alpha_g(T)=V(g)T$ for $g\in G$ and $T\in \cK$. 
Choosing a linear map $l:\cK\rightarrow \B(\cK,\cK\otimes \cK)=\cK^2\cK^*$ satisfying Eq.(\ref{invariance}),(\ref{orthogonality1}),(\ref{orthogonality2}),
(\ref{Frobenius1}),(\ref{Frobenius2}),(\ref{equivariance}), and (\ref{rhoU2}),  
we can introduce a unital endomorphism $\rho$ of $\cO_{n+m}$ by 
$$\rho(S_e)=\frac{\epsilon}{d}\sum_{h\in G}S_h+\frac{1}{\sqrt{d}}\sum_{i=1}^mT_ij_1(T_i),$$
$$\rho(S_g)=U(g)\rho(S_e)U(g)^*,$$
$$\rho(T)=\frac{1}{\sqrt{d}}\sum_{h\in G}S_h\alpha_h(j_2(T)^*)+\sum_{h\in G}\alpha_h(j_2\circ j_1^{-1}(T))S_hS_h^*+l(T).$$
Indeed, we first define $\rho$ on the canonical generators $\{S_g,T_i\}$ of the Cuntz algebra $\cO_{n+m}$. 
Then $\{\rho(S_g),\rho(T_i)\}$ are isometries with mutual orthogonal ranges, and so 
$$E=\sum_{g\in G}\rho(S_g)\rho(S_g)^*+\sum_{i=1}^m\rho(T_i)\rho(T_i)^*$$ 
is a projection. 
The proof of Lemma \ref{rhoU3} in the case of $g=0$ implies $PEP=P$ and $QEQ=Q$, which shows that $E=I$. 
Thus $\rho$ extends to a unital endomorphism of $\cO_{n+m}$, and the proof of Lemma \ref{complete} implies that 
Eq.(\ref{complete2}) holds (note that Eq.(\ref{complete1}) is a special case of Eq.(\ref{rhoU2})). 
Now it is easy to show Eq.(\ref{defj_1}), Eq.(\ref{defj_2}), $\alpha_g\circ\rho=\rho$, and $\Ad U(g)\circ\rho=\rho\circ \alpha_g$. 
The proof of Lemma \ref{rhoU3} shows that Eq.(\ref{rhoU}) holds.

\begin{lemma}\label{technical} Let $X$ be an isometry of $\cO_{n+m}$. 
If $S_e^*\rho^2(X)S_e=X$ and $T_i^*\rho^2(X)T_i=\rho(X)$ for all $i$, then 
$$\rho^2(X)=\sum_{h\in G}S_h\alpha_h(X)S_h^*+\sum_{i=1}^m T_i\rho(X)T_i^*.$$
\end{lemma}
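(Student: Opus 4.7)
The plan is to set $Z=\rho^2(X)$, let $Y$ denote the right-hand side of the claimed identity, and consider the unital completely positive map
$$\Phi(A)=\sum_{h\in G}S_hS_h^*AS_hS_h^*+\sum_{i=1}^mT_iT_i^*AT_iT_i^*,$$
which is a sum of compressions by the mutually orthogonal projections in $\{S_hS_h^*\}_{h\in G}\cup\{T_iT_i^*\}_{i=1}^m$ that sum to the identity. I would show $Z=Y$ via the equality case of the Kadison--Schwarz inequality for $\Phi$.

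First, observe that because $\alpha_h\circ\rho=\rho$, every element of $\rho(\cO_{n+m})$ is fixed by each $\alpha_h$; in particular $Z=\rho(\rho(X))$ is $\alpha$-invariant. Combined with $\alpha_h(S_e)=S_h$, this upgrades the first hypothesis $S_e^*ZS_e=X$ to $S_h^*ZS_h=\alpha_h(X)$ for every $h\in G$. Together with the second hypothesis $T_i^*ZT_i=\rho(X)$, this gives exactly $\Phi(Z)=Y$.

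Next, since $X^*X=I$, a direct (short) computation using $S_g^*T_i=0$ and $S_g^*S_h=\delta_{g,h}I$ yields $Y^*Y=\sum_hS_hS_h^*+\sum_iT_iT_i^*=I$. On the other hand, $Z$ is itself an isometry (as $\rho^2$ is a $*$-endomorphism), so $\Phi(Z^*Z)=\Phi(I)=I$. Hence $\Phi(Z^*Z)=\Phi(Z)^*\Phi(Z)$, which is precisely the equality case of Kadison--Schwarz for the $2$-positive unital map $\Phi$.

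To conclude, writing $F_a$ for the projections in $\{S_hS_h^*\}\cup\{T_iT_i^*\}$, a routine expansion gives
$$0=\Phi(Z^*Z)-\Phi(Z)^*\Phi(Z)=\sum_{a\neq b}(F_bZF_a)^*(F_bZF_a),$$
so each $F_bZF_a=0$ for $a\neq b$. Therefore $Z=\bigl(\sum_aF_a\bigr)Z\bigl(\sum_bF_b\bigr)=\sum_aF_aZF_a=\Phi(Z)=Y$, as desired. The only genuinely substantive ingredient is the $\alpha$-invariance of $Z$, which boosts the single hypothesis at $e$ to all of $G$; the rest is a routine application of the multiplicative-domain phenomenon for conditional-expectation-like compressions.
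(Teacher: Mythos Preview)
Your proof is correct and follows essentially the same route as the paper's: both use $\alpha_h\circ\rho=\rho$ to upgrade the single hypothesis at $e$ to $S_h^*\rho^2(X)S_h=\alpha_h(X)$ for all $h$, and then exploit that $\rho^2(X)$ is an isometry to force the off-diagonal blocks $F_b\rho^2(X)F_a$ (with $F_a$ ranging over $\{S_hS_h^*,T_iT_i^*\}$) to vanish. The paper carries out the latter step column by column---computing $S_g^*\rho^2(X)^*\rho^2(X)S_g=I$, inserting $\sum_hS_hS_h^*+\sum_iT_iT_i^*$, and reading off that each cross term is zero---whereas you package the same calculation as the equality case of Kadison--Schwarz for the pinching $\Phi$; since you also supply the direct expansion $\Phi(Z^*Z)-\Phi(Z)^*\Phi(Z)=\sum_{a\neq b}(F_bZF_a)^*(F_bZF_a)$, the two arguments are identical at the level of computations.
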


\begin{proof} Applying $\alpha_g$ to $S_e^*\rho^2(X)S_e=X$, we get $S_g^*\rho^2(X)S_g=\alpha_g(X)$. 
Since $I=\rho^2(X)^*\rho^2(X)$, we have $I=S_g^*\rho^2(X)^*\rho^2(X)S_g$ too, and 
\begin{align*}
\lefteqn{I=\sum_{h\in G}S_g^*\rho^2(X)^*S_hS_h^*\rho^2(X)S_g+\sum_{i=1}S_g^*\rho^2(X)^*T_iT_i^*\rho^2(X)S_g} \\
 &=S_g^*\rho^2(X)^*S_gS_g^*\rho^2(X)S_g+\sum_{h\in G\setminus\{g\}}S_g^*\rho^2(X)^*S_hS_h^*\rho^2(X)S_g\\
 &+\sum_{i=1}S_g^*\rho^2(X)^*T_iT_i^*\rho^2(X)S_g \\
 &=I+\sum_{h\in G\setminus\{g\}}S_g^*\rho^2(X)^*S_hS_h^*\rho^2(X)S_g
 +\sum_{i=1}S_g^*\rho^2(X)^*T_iT_i^*\rho^2(X)S_g.
\end{align*}
Thus $S_h^*\rho^2(X)S_g=0$ for $g\neq h$ and $T_i^*\rho^2(X)S_g=0$. 
In a similar way, we have $S_g\rho^2(X)T_i=0$ and $T_j^*\rho^2(X)T_i=0$ for $j\neq i$.
These relations imply $P\rho^2(X)Q=Q\rho^2(X)P=0$ and 
\begin{align*}
\rho^2(X)&=(P+Q)\rho^2(X)(P+Q)=P\rho^2(X)P+Q\rho^2(X)Q \\
 &=\sum_{h\in G}S_h\alpha_h(X)S_h^*+\sum_{i=1}^m T_i\rho(X)T_i^*. 
\end{align*}
\end{proof}

\begin{lemma}For $g,h\in G$, the equation $S_h^*\rho^2(S_g)S_h=S_{hg}$ holds. 
\end{lemma}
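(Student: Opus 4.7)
The strategy is to combine the defining identity $\rho(S_g)=U(g)\rho(S_e)U(g)^*$ with the explicit formula Eq.(\ref{rhoU}) for $\rho(U(g))$, reducing the claim to the single equality $S_e^*\rho^2(S_e)S_e=S_e$, which I then verify by direct expansion of the defining formula for $\rho$.

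First I would apply $\rho$ to $\rho(S_g)=U(g)\rho(S_e)U(g)^*$, obtaining $\rho^2(S_g)=\rho(U(g))\rho^2(S_e)\rho(U(g))^*$. By Eq.(\ref{rhoU}), which is available in the reconstruction set-up, $\rho(U(g))=\sum_{k\in G} S_kS_{kg}^*+R(g)$, where $R(g):=j_2\circ j_1^{-1}U_{\cK}(g)(j_2\circ j_1^{-1})^*\otimes U(g)$ is a linear combination of terms of the form $T_p(\cdots)T_q^*$. Hence $S_h^*R(g)=R(g)^*S_h=0$, which gives $S_h^*\rho(U(g))=S_{hg}^*$ and $\rho(U(g))^*S_h=S_{hg}$, so that
$$S_h^*\rho^2(S_g)S_h=S_{hg}^*\rho^2(S_e)S_{hg}.$$
Because $\alpha_k\circ\rho=\rho$ in the reconstruction (already checked), $\alpha_k\circ\rho^2=\rho^2$, and applying $\alpha_{hg}$ to the equation $S_e^*\rho^2(S_e)S_e=S_e$ produces $S_{hg}^*\rho^2(S_e)S_{hg}=S_{hg}$. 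Thus the lemma reduces to verifying the single identity $S_e^*\rho^2(S_e)S_e=S_e$.

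For this I would expand $\rho^2(S_e)=\frac{\epsilon}{d}\sum_k\rho(S_k)+\frac{1}{\sqrt{d}}\sum_i\rho(T_i)\rho(j_1(T_i))$. Using the normalization $U(k)S_e=S_e$, each summand $S_e^*\rho(S_k)S_e$ collapses to $S_e^*\rho(S_e)S_e=\frac{\epsilon}{d}S_e$, yielding a $P$-contribution of $\frac{n}{d^2}S_e$. For the $Q$-contribution, only the first summand of the defining formula for $\rho(T_i)$ survives left multiplication by $S_e^*$, producing $S_e^*\rho(T_i)=\frac{1}{\sqrt{d}}j_2(T_i)^*$; likewise only the middle summand of $\rho(j_1(T_i))$ survives right multiplication by $S_e$ (invoking the identity $j_2\circ j_1^{-1}\circ j_1=j_2$), producing $\rho(j_1(T_i))S_e=j_2(T_i)S_e$. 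Since $j_2$ is an isometry, $j_2(T_i)^*j_2(T_i)=I$, so each cross term contributes $\frac{1}{\sqrt{d}}S_e$ and the $Q$-contribution totals $\frac{m}{d}S_e$. The dimension relation $d^2=n+md$ then gives $S_e^*\rho^2(S_e)S_e=\bigl(\frac{n}{d^2}+\frac{m}{d}\bigr)S_e=S_e$. The main obstacle is the systematic bookkeeping of which summands of $\rho(T_i)$, $\rho(j_1(T_i))$, and $R(g)$ are annihilated by $S_e^*$, $S_e$, $S_h^*$, or $S_h$; this rests on the Cuntz algebra orthogonality relations $S_h^*T_p=0$ and $T_q^*S_h=0$, which kill every term beginning with $T_p$ or ending with $T_q^*$ and thereby leave only the purely $P$-to-$P$ pieces that make the computation tractable.
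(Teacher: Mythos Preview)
Your proof is correct and follows essentially the same route as the paper: both reduce the general statement to the single identity $S_e^*\rho^2(S_e)S_e=S_e$ by combining Eq.(\ref{rhoU}) with $\alpha_g\circ\rho=\rho$, and then verify that identity by the same direct expansion (the $P$-part contributing $\frac{n}{d^2}S_e$ and the $Q$-part contributing $\frac{m}{d}S_e$). The only cosmetic difference is the order of the two reductions---you apply Eq.(\ref{rhoU}) first and then $\alpha_{hg}$, while the paper first invokes $\alpha_h$ to reduce to $h=e$ and then Eq.(\ref{rhoU}) to reduce to $g=e$.
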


\begin{proof} It suffices to show the statements for $h=e$ because the others follow from them by applying $\alpha_h$ to them. 
\begin{align*}
S_e^*\rho^2(S_e)S_e &=\frac{\epsilon}{d}\sum_{k\in G}S_e^*\rho(S_k)S_e+\frac{1}{\sqrt{d}}\sum_{i=1}^mS_e^*\rho(T_i)\rho(j_1(T_i))S_e \\
 &=\frac{\epsilon}{d}\sum_{k\in G}S_e^*U(k)\rho(S_e)U(k)^*S_e+\frac{1}{d}\sum_{i=1}^mj_2(T_i)^*\rho(j_1(T_i))S_e \\
 &=\frac{1}{d^2}\sum_{k\in G}S_e+\frac{1}{d}\sum_{i=1}^mS_e=S_e. 
\end{align*}
Now  Eq.(\ref{rhoU}) implies 
$$S_e^*\rho(S_g)S_e=S_e^*\rho(U(g))\rho^2(S_e)\rho(U(g))^*S_e=S_g^*\rho^2(S_e)S_g=S_g.$$
\end{proof}

\begin{lemma}
$$\rho^2(S_g)=\sum_{h\in G}S_h\alpha_h(S_g)S_h^*+\sum_{i=1}^mT_i\rho(S_g)T_i^*.$$
\end{lemma}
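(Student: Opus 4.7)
The statement is an application of Lemma \ref{technical} with $X = S_g$. Its first hypothesis $S_e^*\rho^2(S_g)S_e = S_g$ is the case $h = e$ of the preceding lemma, so only the second hypothesis $T_i^*\rho^2(S_g)T_i = \rho(S_g)$ (for every $i$) remains to be verified.

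I would first reduce to the case $g = e$. Applying $\rho$ to the identity $\rho\circ\alpha_g = \Ad U(g)\circ\rho$ gives $\rho^2\circ\alpha_g = \Ad\rho(U(g))\circ\rho^2$, and hence
$$\rho^2(S_g) = \rho(U(g))\,\rho^2(S_e)\,\rho(U(g))^*.$$
By Eq.(\ref{rhoU}), the unitary $\rho(U(g))$ preserves the decomposition $I = P + Q$: on $P\cO_{n+m}$ it acts by $S_h \mapsto S_{hg^{-1}}$, and on $Q\cO_{n+m}Q$ it acts as $W\otimes U(g)$ with $W = j_2\circ j_1^{-1}U_\cK(g)(j_2\circ j_1^{-1})^*$ unitary on $\cK$. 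A computation using $WW^* = I_\cK$ together with $\rho(S_g) = U(g)\rho(S_e)U(g)^*$ then shows
$$\sum_i T_i\,\rho(S_g)\,T_i^* \;=\; \rho(U(g))\Bigl(\sum_i T_i\,\rho(S_e)\,T_i^*\Bigr)\rho(U(g))^*,$$
so the $g = e$ case implies the general one after conjugating by $\rho(U(g))$.

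The case $g = e$ then reduces to a direct computation. Expand
$$\rho^2(S_e) = \frac{\epsilon}{d}\sum_h \rho(S_h) + \frac{1}{\sqrt{d}}\sum_j \rho(T_j)\rho(j_1(T_j)),$$
substitute the defining formulas for $\rho(S_h) = U(h)\rho(S_e)U(h)^*$ and for $\rho(T)$, and evaluate $T_i^*\rho^2(S_e)T_i$ term by term. The contribution from $\sum_h \rho(S_h)$ collapses using $T_i^*U(h) = (U_\cK(h)^*T_i)^*$ and the formula for $\rho(S_e)$, while the contribution from $\sum_j \rho(T_j)\rho(j_1(T_j))$ splits into three pieces according to the $P$-, $QS_hS_h^*Q$-, and $l$-parts of $\rho(T_j)$. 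Applying Eq.(\ref{involution})--(\ref{period3}), Eq.(\ref{Frobenius1})--(\ref{Frobenius2}), and the orthogonality/completeness identities Eq.(\ref{orthogonality1})--(\ref{orthogonality2}) and Eq.(\ref{complete1})--(\ref{complete2}) collapses everything back to $\rho(S_e) = \frac{\epsilon}{d}\sum_h S_h + \frac{1}{\sqrt{d}}\sum_j T_j j_1(T_j)$. The main obstacle will be the bookkeeping in this matching step: the anti-linearity of $j_1, j_2$, the period-three relation, and the interaction of $l$ with $V, U_\cK$ and the characters $\chi_h$ must be tracked carefully so that the coefficients of $S_h$ and of $T_jj_1(T_j)$ emerge correctly.
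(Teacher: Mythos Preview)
Your approach is essentially the same as the paper's: reduce to $g=e$ via conjugation by $\rho(U(g))$ using Eq.(\ref{rhoU}), and then verify the hypothesis of Lemma~\ref{technical} for $X=S_e$. The only organizational difference is that the paper computes $Q\rho^2(S_e)Q$ globally and shows it equals $\sum_i T_i\rho(S_e)T_i^*$ (which in particular gives $T_i^*\rho^2(S_e)T_i=\rho(S_e)$), rather than computing $T_i^*\rho^2(S_e)T_i$ for each $i$ separately; the global version is slightly cleaner because the Peter--Weyl summation over $g$ interacts more directly with the full space $\cK$. In the actual calculation the only identities needed are Eq.(\ref{Frobenius1}) and Eq.(\ref{complete1}); the others on your list turn out not to enter.
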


\begin{proof} We first show the statement for $g=e$. 
Thanks to the above two lemmas, it suffices to show $Q\rho^2(S_e)Q=\sum_{i=1}^mT_i\rho(S_e)T_i^*$. 
Instead of the orthogonal basis $\{T_i\}_{i=1}^m$, we use $\{T^a_{\pi,p}\}$ sometimes. 
By the definition of $\rho$, we have 
$$Q\rho^2(S_e)Q=\frac{\epsilon}{d}\sum_{g\in G}Q\rho(S_g)Q+\frac{1}{\sqrt{d}}\sum_{i=1}^mQ\rho(T_i)\rho(j_1(T_i))Q.$$
The first term is 
\begin{align*}
\frac{\epsilon}{d}\sum_{g\in G}U_{\cK}(g)\rho(S_e)U_{\cK}(g)^*
&=\frac{\epsilon}{d}\sum_{g\in G}\sum_{\pi,\sigma,a,b,i,j,p,q}\pi(g)_{ij}\overline{\sigma(g)_{pq}}
 T^a_{\pi,i}{T^a_{\pi,j}}^*\rho(S_e)T^b_{\sigma,q}{T^b_{\sigma,p}}^*\\
 &=\frac{n\epsilon }{d}\sum_{\pi,a,b,i,j}\frac{1}{\dim \pi}
 T^a_{\pi,i}{T^a_{\pi,j}}^*\rho(S_e)T^b_{\pi,j}{T^b_{\pi,i}}^*\\
 &=\frac{n\epsilon }{d\sqrt{d}}\sum_{\pi,a,b,i,j}\frac{1}{\dim \pi}
 T^a_{\pi,i}j_1(T^a_{\pi,j})T^b_{\pi,j}{T^b_{\pi,i}}^*.\\
\end{align*}
The second term is \begin{align*}
\textrm{Second term}&=\frac{1}{\sqrt{d}}\sum_{i=1}^m \big\{
\frac{1}{\sqrt{d}}\sum_{h\in G}\alpha_h(j_2\circ j_1^{-1}(T_i))S_h\alpha_h(j_2\circ j_1(T_i)^*)+l(T_i)l(j_1(T_i))\big\}
 \\
 &=\frac{\epsilon}{d}\sum_{j=1}^m\sum_{h\in G}T_jS_hT_j^*+\frac{1}{\sqrt{d}}\sum_{i=1}^ml(T_i)l(j_1(T_i)).
\end{align*}
Using Eq.(\ref{Frobenius1}),(\ref{complete1}), we have 
\begin{align*}
\lefteqn{\frac{1}{\sqrt{d}}\sum_{i=1}^ml(T_i)l(j_1(T_i))=\frac{1}{\sqrt{d}}\sum_{i,p,q}l(T_i)l(T_i)^*T_pT_qj_1(T_q)T_p^* }\\
 &=\frac{1}{\sqrt{d}} \sum_{p,q}T_pT_qj_1(T_q)T_p^* \\
 &-\frac{n}{d\sqrt{d}}\sum_{p,q}\sum_{\pi,a,b,i,j}\frac{1}{\dim \pi}T^a_{\pi,i}j_1(T^a_{\pi,j})j_1(T^b_{\pi,j})^*{T^b_{\pi,i}}^*T_pT_qj_1(T_q)T_p^* \\
 &=\frac{1}{\sqrt{d}} \sum_{p,q}T_pT_qj_1(T_q)T_p^* 
 -\frac{n}{d\sqrt{d}}\sum_{\pi,a,b,i,j}\frac{1}{\dim \pi}T^a_{\pi,i}j_1(T^a_{\pi,j})j_1^2(T^b_{\pi,j}){T^b_{\pi,i}}^* \\
 &=\frac{1}{\sqrt{d}} \sum_{p,q}T_pT_qj_1(T_q)T_p^* 
 -\frac{n\epsilon}{d\sqrt{d}}\sum_{\pi,a,b,i,j}\frac{1}{\dim \pi}T^a_{\pi,i}j_1(T^a_{\pi,j})T^b_{\pi,j}{T^b_{\pi,i}}^*.
\end{align*}
Thus we obtain the statement for $g=e$. 
The general statement follows from  this and Eq.(\ref{rhoU}). 
\end{proof}

\begin{lemma} The following conditions are equivalent: 
\begin{itemize}
\item [(1)] $S_g\in (\alpha_g,\rho^2)$ for all $g\in G$. 
\item [(2)] $S_e^*\rho(l(T))S_e=(1-2n/d^2)T$ for all $T\in \cK$. 
\item [(3)] Let $l^{(2)}_{ij}:\cK\rightarrow \cK$ be as in Remark \ref{remark1}. Then 
\begin{equation}\label{S*rho2S}
\frac{1}{d}\sum_{i,j=1}^m j_2(T_i)^*l(l^{(2)}_{ij}(T))j_2(T_j)=(1-\frac{2n}{d^2})T,\quad T\in \cK.
\end{equation}
\end{itemize}
\end{lemma}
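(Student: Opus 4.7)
\textbf{The plan} is to prove the equivalences in two essentially independent pieces: $(1)\Leftrightarrow(2)$ via a direct computation of $S_e^*\rho^2(T)S_e$, and $(2)\Leftrightarrow(3)$ via the matrix expansion $l(T)=\sum_{i,j}T_i\,l^{(2)}_{ij}(T)\,T_j^*$ from Remark \ref{remark1}. The key preliminary observation is that, by $G$-equivariance (since $\alpha_g\circ\rho=\rho$ forces $\alpha_g\circ\rho^2=\rho^2$), condition (1) amounts to the single equation $\rho^2(T)S_e = S_eT$ for every $T\in\cK$: the intertwining relation on the generators $S_h$ is automatic from the block form $\rho^2(S_h)=\sum_k S_k\alpha_k(S_h)S_k^*+\sum_iT_i\rho(S_h)T_i^*$ proven just before, and applying $\alpha_g$ to $\rho^2(T)S_e = S_eT$ yields $\rho^2(T)S_g = S_g\alpha_g(T)$ for every $g$.

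\textbf{The key computation} is to expand $S_e^*\rho^2(T)S_e$ by writing $\rho(T)=A(T)+B(T)+l(T)$ with $A(T)=(1/\sqrt d)\sum_h S_h\alpha_h(j_2(T)^*)$ and $B(T)=\sum_h\alpha_h(j_2\circ j_1^{-1}(T))S_hS_h^*$. Using $U(h)S_e=S_e$ (from the normalization of $U$), $S_e^*\rho(S_e)=\epsilon/d$, the identities $\rho(j_2(T))^*S_e=(\epsilon/\sqrt d)T$ and $S_e^*\rho(X)=(1/\sqrt d)j_2(X)^*$ for $X\in\cK$, the relation $j_2^2=\epsilon$, and the anti-linear basis expansion $\sum_i\langle j_1(T),T_i\rangle j_1(T_i) = j_1^2(T)=\epsilon T$, both $S_e^*\rho(A(T))S_e$ and $S_e^*\rho(B(T))S_e$ simplify to $(n/d^2)T$. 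Hence
\begin{equation*}
S_e^*\rho^2(T)S_e = \frac{2n}{d^2}T + S_e^*\rho(l(T))S_e,
\end{equation*}
so the equation $S_e^*\rho^2(T)S_e = T$ is literally (2).

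\textbf{Establishing $(1)\Leftrightarrow(2)$ and $(2)\Leftrightarrow(3)$.} The forward direction of $(1)\Leftrightarrow(2)$ is immediate from the reduction by multiplying $\rho^2(T)S_e = S_eT$ on the left by $S_e^*$. For the converse, assuming (2) I set $Y := \rho^2(T)S_e - S_eT$ and compute $Y^*Y$. Since $T\in\cK$ gives $T^*T = \|T\|^2 I$, a scalar, so $\rho^2(T^*T) = \|T\|^2 I$ and $S_e^*\rho^2(T^*T)S_e = T^*T$; taking the adjoint of (2) also yields $S_e^*\rho^2(T^*)S_e = T^*$. Expanding
\begin{equation*}
Y^*Y = S_e^*\rho^2(T^*T)S_e - S_e^*\rho^2(T^*)S_e\,T - T^*S_e^*\rho^2(T)S_e + T^*T = 0
\end{equation*}
forces $Y=0$, giving (1). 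For $(2)\Leftrightarrow(3)$, substituting $l(T)=\sum_{i,j}T_i\,l^{(2)}_{ij}(T)\,T_j^*$ and using $S_e^*\rho(T_i)=(1/\sqrt d)j_2(T_i)^*$, $\rho(T_j)^*S_e=(1/\sqrt d)j_2(T_j)$ gives $S_e^*\rho(l(T))S_e = (1/d)\sum_{i,j}j_2(T_i)^*\rho(l^{(2)}_{ij}(T))j_2(T_j)$. The crucial simplification is that for any $X\in\cK$ the sandwich $j_2(T_i)^*\rho(X)j_2(T_j)$ equals $j_2(T_i)^*l(X)j_2(T_j)$, since the $A(X)$- and $B(X)$-parts of $\rho(X)$ vanish (because $j_2(T_i)^*S_h = 0$ and $S_h^*j_2(T_j) = 0$). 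Substituting and equating with $(1-2n/d^2)T$ yields (3).

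\textbf{Main obstacle.} The principal difficulty is the bookkeeping in the key computation: tracking the anti-linearity of $j_1,j_2$ and applying the intertwining relations (\ref{j1})--(\ref{j2}) correctly to produce the cancellations in $S_e^*\rho(B(T))S_e$, where both $j_1$ and $j_2$ enter. The elegant trick of computing $Y^*Y$ in the converse direction of $(1)\Leftrightarrow(2)$ sidesteps the need to separately verify $T_i^*\rho^2(T)T_i = \rho(T)$ and the off-diagonal vanishings that a direct application of Lemma \ref{technical} would require.
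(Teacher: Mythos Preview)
Your proof is correct and follows essentially the same route as the paper. The key computation of $S_e^*\rho^2(T)S_e$ into the three pieces $A(T)$, $B(T)$, $l(T)$ is identical to the paper's, and your treatment of $(2)\Leftrightarrow(3)$ spells out what the paper dismisses as ``easy.'' The one genuine difference is in the converse $(2)\Rightarrow(1)$: the paper writes only ``thanks to the above lemmas'' (meaning the argument in the proof of Lemma~\ref{technical}, which from $S_e^*\rho^2(X)S_e=X$ and $X$ an isometry deduces $S_h^*\rho^2(X)S_g=\delta_{g,h}\alpha_g(X)$ and $T_i^*\rho^2(X)S_g=0$ by expanding $I=S_g^*\rho^2(X^*X)S_g$), whereas your $Y^*Y=0$ computation packages that same positivity argument in a single line and avoids the appeal to Lemma~\ref{technical}. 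Both arguments exploit exactly the fact that $T^*T$ is a scalar, so this is a stylistic rather than a substantive difference.
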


\begin{proof} It is easy to show that (2) and (3) are equivalent. 
Thanks to the above lemmas, it suffices to show that (2) is equivalent to $S_e^*\rho^2(T)S_e=T$. 
Indeed, 
\begin{align*}\lefteqn{
S_e^*\rho^2(T)S_e}\\ 
&=\frac{1}{\sqrt{d}}\sum_{h\in G}S_e^*\rho(\alpha_h(S_ej_2(T)^*))S_e
+\sum_{h\in G}S_e^*\rho(\alpha_h(j_2\circ j_1^{-1}(T)S_eS_e^*))S_e+S_e^*\rho(l(T))S_e. \\
\end{align*}
The first term is 
\begin{align*}\lefteqn{
\frac{1}{\sqrt{d}}\sum_{h\in G}S_e^*U(h)\rho(S_ej_2(T)^*)U(h)^*S_e}\\
&=\frac{n}{\sqrt{d}}S_e^*\rho(S_ej_2(T)^*)S_e =\frac{n\epsilon}{d^2}j_2^2(T)=\frac{n}{d^2}T.
\end{align*}
The second term is 
\begin{align*}\lefteqn{
\sum_{h\in G}S_e^*U(h)\rho(j_2\circ j_1^{-1}(T)S_eS_e^*)U(h)^*S_e}\\
&=nS_e^*\rho(j_2\circ j_1^{-1}(T)S_eS_e^*)S_e=\frac{n\epsilon}{d\sqrt{d}}j_2^2\circ j_1^{-1}(T)^*\rho(S_e)
=\frac{n}{d\sqrt{d}}j_1^{-1}(T)^*\rho(S_e)\\
&=\frac{n}{d^2}T. 
\end{align*}
Thus the statement is proved. 
\end{proof}

\begin{lemma} Assume that Eq.(\ref{S*rho2S}) holds. 
Then the following conditions are equivalent:
\begin{itemize}
\item [(1)] $\cK=(\rho,\rho^2)$. 
\item [(2)] For any $T,T',T''\in \cK$, the equality $\rho(T'')^*\rho^2(T)T'=\rho(T'')^*T'\rho(T)$ holds. 
\end{itemize}
\end{lemma}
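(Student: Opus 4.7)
The implication (1) $\Rightarrow$ (2) is immediate: if $T' \in \cK = (\rho, \rho^2)$, then $T'\rho(x) = \rho^2(x) T'$ for all $x \in \cO_{n+m}$; taking $x = T \in \cK$ and left-multiplying by $\rho(T'')^*$ yields (2).

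For (2) $\Rightarrow$ (1), since $\cO_{n+m}$ is generated as a C$^*$-algebra by $\{S_g\}_{g\in G}\cup \{T_j\}_{j=1}^m$, it suffices to establish $T'\rho(S_g) = \rho^2(S_g)T'$ and $T'\rho(T_j) = \rho^2(T_j)T'$ for every $T' \in \cK$. The first identity holds unconditionally: from the decomposition
\[
\rho^2(S_g) = \sum_{h\in G}S_h \alpha_h(S_g) S_h^* + \sum_{i=1}^m T_i \rho(S_g) T_i^*
\]
of the preceding lemma, right-multiplication by $T' \in \cK$ annihilates the first sum (since $S_h^* T' = 0$) and reduces the second to $\sum_i T_i \rho(S_g)\langle T',T_i\rangle = T'\rho(S_g)$.

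For the second identity, set $X := \rho^2(T_j)T' - T'\rho(T_j)$; we must show $X = 0$. Since $\rho$ is unital, $I = \rho(P) + \rho(Q)$, so $X = \rho(P)X + \rho(Q)X$. Condition (2) with $T = T_j$ gives $\rho(T'')^* X = 0$ for every $T'' \in \cK$, and averaging over an orthonormal basis of $\cK$ yields $\rho(Q) X = 0$; hence $X = \rho(P) X$. The remaining task, which is the main obstacle, is to verify that $\rho(P) X = 0$ holds \emph{unconditionally}, as a direct consequence of the relations already established.

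The plan is to evaluate $\rho(S_g)^* X$ explicitly for each $g \in G$. Writing $\rho(S_g) = U(g)\rho(S_e)U(g)^*$ and exploiting the reductions $U(g)^* T'' = U_\cK(g)^* T''$ and $\rho(S_e)^* T'' = \frac{1}{\sqrt{d}}\, j_1(T'')^*$ for $T'' \in \cK$ (both immediate from $S_h^* T'' = 0$ and the defining formula for $\rho(S_e)$) gives $\rho(S_g)^* T' = \frac{1}{\sqrt{d}}\, U(g)\, j_1(U_\cK(g)^* T')^*$. On the other side, the same Cuntz relations yield $P\rho(T_j) = \frac{1}{\sqrt{d}}\sum_h S_h \alpha_h(j_2(T_j)^*)$, whence $\rho(P)\rho^2(T_j) = \frac{1}{\sqrt{d}}\sum_h U(h)\,\rho(S_e j_2(T_j)^*)\, U(h)^*$. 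Matching the two resulting expressions for $\rho(P)\rho^2(T_j)T'$ and $\rho(P)T'\rho(T_j)$ is a bookkeeping task that relies on the Frobenius reciprocity identities (\ref{Frobenius1})--(\ref{Frobenius2}), the period-three relation (\ref{period3}), and the equivariance (\ref{equivariance}) together with the compatibility (\ref{rhoU2}) satisfied by $l$. Once $\rho(P)X = 0$ is in hand, combining with $\rho(Q) X = 0$ gives $X = 0$, which completes the proof of (1).
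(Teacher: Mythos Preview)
Your strategy matches the paper's exactly: reduce to $\rho^2(T)T'=T'\rho(T)$, handle the $S_g$'s via the preceding lemma, split $X=\rho^2(T)T'-T'\rho(T)$ as $\rho(P)X+\rho(Q)X$, kill $\rho(Q)X$ with hypothesis (2), and then show $\rho(P)X=0$ unconditionally. The issue is that what you call ``a bookkeeping task'' is the entire content of the lemma, and you do not carry it out; the list of relations you invoke---(\ref{Frobenius1}), (\ref{Frobenius2}), (\ref{period3}), (\ref{equivariance}), (\ref{rhoU2})---suggests you have not actually performed the computation.

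The paper's calculation is short and uses only (\ref{Frobenius2}) and (\ref{period3}). First reduce to $g=e$: since $\alpha_g\circ\rho=\rho$, one has $U(g)^*\rho^2(T)=\rho^2(T)U(g)^*$, so $\rho(S_g)^*\rho^2(T)T'=U(g)\rho(S_e)^*\rho^2(T)U(g)^*T'$ and similarly for the other side. For $g=e$, rather than expanding $\rho(P)\rho^2(T)$ as you propose, observe directly that
\[
\rho(S_e)^*\rho^2(T)=\rho\bigl(S_e^*\rho(T)\bigr)=\tfrac{1}{\sqrt d}\,\rho(j_2(T))^*,
\]
by the definition of $j_2$. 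Now expand $\rho(j_2(T))^*T'$ using (\ref{rhoT}), and compare with $\rho(S_e)^*T'\rho(T)=\tfrac{1}{\sqrt d}\,j_1(T')^*\rho(T)$ likewise expanded. The $S_hS_h^*$-terms match by (\ref{period3}), namely $j_2\!\circ\! j_1^{-1}\!\circ\! j_2=j_1\!\circ\! j_2\!\circ\! j_1$. The $l$-terms give $l(j_2(T))^*T'=j_1(T')^*l(T)$, which is precisely the content of (\ref{Frobenius2}). Neither (\ref{Frobenius1}) nor (\ref{equivariance}) nor (\ref{rhoU2}) is needed.
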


\begin{proof} We only show that (2) implies (1). 
Assume that (2) holds. 
Thanks to the above lemmas, it suffices to show $\rho^2(T)T'=T'\rho(T)$. 
For this, it suffices to show $\rho(S_g)^*\rho^2(T)T'=\rho(S_g^*)T'\rho(T)$ for we have $\rho(P)+\rho(Q)=I$. 
Since we have 
$$\rho(S_g)^*\rho^2(T)T'=U(g)\rho(S_e)^*\rho^2(T)U(g)^*T',$$ 
$$\rho(S_g^*)T'\rho(T)=U(g)\rho(S_e^*)U(g)^*T'\rho(T),$$ 
we only take care of the case with $g=e$. 

For $\rho(S_e)^*\rho^2(T)T'$, we have 
\begin{align*}
\rho(S_e)^*\rho^2(T)T' &=\rho(S_e^*\rho(T))T'=\frac{1}{\sqrt{d}}\rho(j_2(T))^*T' \\
 &=\frac{1}{\sqrt{d}}\sum_{h\in G}\inpr{T'}{V(h)j_2\circ j_1^{-1}\circ j_2(T)}S_hS_h^*+\frac{1}{\sqrt{d}}l(j_2(T))^*T'. 
\end{align*}
For $\rho(S_e)^*T'\rho(T)$, we have 
\begin{align*}
 \rho(S_e)^*T'\rho(T)&= \frac{1}{\sqrt{d}}j_1(T')^*\rho(T)\\
 &=\frac{1}{\sqrt{d}}\sum_{h\in G}\inpr{V(h)j_2\circ j_1^{-1}(T)}{j_1(T')}S_hS_h^*+\frac{1}{\sqrt{d}}j_1(T')^*l(T)\\
 &=\frac{1}{\sqrt{d}}\sum_{h\in G}\inpr{T'}{V(h)j_1\circ j_2\circ j_1(T)}S_hS_h^*+\frac{1}{\sqrt{d}}j_1(T')^*l(T).
\end{align*}
Now the statement follows from Eq.(\ref{Frobenius2}),(\ref{period3}). 
\end{proof}

\begin{theorem}\label{classification} The endomorphism $\rho$ satisfies 
$$\rho^2(x)=\sum_{g\in G}S_g\alpha_g(x)S_g^*+\sum_{i=1}^m T_i\rho(x)T_i^*$$
if and only if Eq.(\ref{S*rho2S}) and the following three equations hold 
for all $h,k\in G$ and $T,T',T''\in \cK$: 
\begin{align}\label{l1}
\lefteqn{\sum_{i=1}^m{T'}^*l(T_i)j_2(l(T)^*j_2(T'')T_i)} \\
 &= \epsilon\inpr{T''}{T'}T-\frac{1}{d}\sum_{h\in G}\inpr{U_{\cK}(h)T''}{j_1(T)}j_1(U_{\cK}(h)T'),\nonumber
\end{align}
\begin{equation}\label{l2}
l(T'')^*j_2\circ j_1^{-1}(T')j_2\circ j_1^{-1}(T)=j_2\circ j_1^{-1}({T''}^*l(T)T'), 
\end{equation}
\begin{align}\label{l3}\lefteqn{
l(T'')^*T'l(T)=\sum_{i=1}^m l({T''}^*l(T)T_i)l(T_i)^*T'}\\
&+\frac{1}{d}\sum_{h\in G}\sum_{i=1}^m\inpr{V(h)j_2\circ j_1^{-1}(T)}{T''}U_{\cK}(h)T_ij_1(T_i) j_1(U_{\cK}(h)^*T')^* \nonumber.
\end{align}
\end{theorem}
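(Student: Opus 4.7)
The plan is to reduce the theorem, via the preceding lemmas, to a single intertwining identity on $\cK$, and then to match the two sides of that identity term by term in the natural Cuntz algebra decomposition. Since both sides of the displayed identity $\rho^2(x)=\sum_g S_g\alpha_g(x)S_g^*+\sum_i T_i\rho(x)T_i^*$ depend $*$-linearly and multiplicatively on $x$, it suffices to check agreement on the canonical generators $\{S_g\}\cup\{T_i\}$. The case $x=S_g$ has already been settled in the lemma immediately preceding Theorem~\ref{classification}. Applying Lemma~\ref{technical} to each Cuntz isometry $T_i$ and extending by linearity, the remaining case $x=T\in\cK$ is equivalent to the two scalar conditions $S_e^*\rho^2(T)S_e=T$ and $T_i^*\rho^2(T)T_i=\rho(T)$ for all $T\in\cK$ and all $i$. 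The lemma characterizing $S_g\in(\alpha_g,\rho^2)$ identifies the first of these with Eq.~(\ref{S*rho2S}); and the lemma characterizing $\cK=(\rho,\rho^2)$ identifies the second, \emph{assuming} Eq.~(\ref{S*rho2S}), with the single intertwining identity
\[
\rho(T'')^*\rho^2(T)T' \;=\; \rho(T'')^*T'\rho(T),\qquad T,T',T''\in\cK. \qquad (\ast)
\]

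So everything reduces to showing that, under Eq.~(\ref{S*rho2S}), the identity $(\ast)$ is equivalent to the conjunction of Eqs.~(\ref{l1}), (\ref{l2}), (\ref{l3}). To prove this, I would substitute into $(\ast)$ the explicit three-term formula $\rho(T)=\frac{1}{\sqrt d}\sum_h S_h\alpha_h(j_2(T)^*)+\sum_h\alpha_h(j_2\circ j_1^{-1}(T))S_hS_h^*+l(T)$, and compute $\rho^2(T)$ by applying $\rho$ to each of these three summands, using Lemma~\ref{rhoS} and Eq.~(\ref{rhoSg}) for $\rho(S_h)$, the formula itself for $\rho$ on the $\cK$-part, and the equivariance Eq.~(\ref{equivariance}) for $\rho(l(T))$. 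The Cuntz relations $S_g^*S_h=\delta_{g,h}I$, $T_i^*T_j=\delta_{i,j}I$, $S_g^*T_i=T_i^*S_g=0$ kill most of the resulting cross terms, and the surviving expressions split naturally into components lying in $P\cO_{n+m}P$, the mixed parts $P\cO_{n+m}Q$ and $Q\cO_{n+m}P$, and $Q\cO_{n+m}Q$.

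Matching the two sides of $(\ast)$ componentwise produces the three stated equations: the piece ending in $S_h$ (valued in $\cK$ after using the Peter--Weyl identities for $V$) yields Eq.~(\ref{l1}); the $P\cO_{n+m}P$ piece, after unwinding the $j_2\circ j_1^{-1}$ factors, yields Eq.~(\ref{l2}); and the purely $Q\cO_{n+m}Q$ piece, which is the one carrying the $l(\cdot)l(\cdot)^*$ contributions, yields Eq.~(\ref{l3}). Each individual matching is a direct calculation that invokes the structural identities already established in Section~\ref{basic}: the Frobenius relations (\ref{Frobenius1})--(\ref{Frobenius2}), the period-three relation (\ref{period3}), the orthogonality relations (\ref{orthogonality1})--(\ref{orthogonality2}), the commutation/equivariance relations (\ref{Weyl})--(\ref{equivariance}), and the completeness relations (\ref{complete1})--(\ref{complete2}).

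The main obstacle is one of bookkeeping rather than of concept. The expansion of $\rho^2(T)$ produces on the order of a dozen summands from the three-term formula applied to each of three terms, and organizing them so that the correct identifications can be read off requires systematic use of the Frobenius and period-three identities to convert $j_1$-terms into $j_2$-terms and vice versa. The most intricate matching is the $Q\cO_{n+m}Q$ one leading to Eq.~(\ref{l3}), where both sides contain pieces quadratic in $l$; here the completeness identity (\ref{complete2}) is essential to convert the ``diagonal'' residue arising from $\sum_i l(T_i)V(h)j_2(T_i)$ into the term carrying $U_\cK(h)T_i j_1(T_i)$ that appears on the right of Eq.~(\ref{l3}).
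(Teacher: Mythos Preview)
Your reduction to the single identity $(\ast)$ via the chain of lemmas is exactly the paper's strategy, and your observation that $(\ast)$ splits into three independent component equations is correct. However, two points of your write-up diverge from what actually happens in the computation.

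First, the paper does \emph{not} expand $\rho^2(T)$ in full. It uses the multiplicativity $\rho(T'')^*\rho^2(T)=\rho\bigl(T''^*\rho(T)\bigr)$ and first computes the much simpler object
\[
T''^*\rho(T)=\sum_{h\in G}\inpr{V(h)j_2\circ j_1^{-1}(T)}{T''}\,S_hS_h^*\;+\;T''^*l(T),
\]
which has only two pieces; applying $\rho$ to this and then multiplying by $T'$ produces a handful of terms instead of the dozen-or-so you would get from a full $\rho^2(T)$ expansion. Your brute-force route would arrive at the same place, but the bookkeeping burden you flag is largely self-inflicted.

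Second, your labeling of the three component types is off. There is no $P\cO_{n+m}P$ piece. Both sides of $(\ast)$ decompose as
\[
\underbrace{\textstyle\sum_k S_k\cdot(\text{element of }\cK^*)}_{\text{gives (\ref{l1})}}
\;+\;
\underbrace{\textstyle\sum_h(\text{element of }\cK)\cdot S_hS_h^*}_{\text{gives (\ref{l2})}}
\;+\;
\underbrace{\text{element of }\cK^2\cK^*}_{\text{gives (\ref{l3})}},
\]
so the pieces sit in $P\cO Q$, $Q\cO P$, and $Q\cO Q$ respectively. In particular, your attribution of Eq.~(\ref{l1}) to ``the piece ending in $S_h$'' and of Eq.~(\ref{l2}) to the $P\cO P$ piece is incorrect. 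Relatedly, the $U_\cK(h)T_ij_1(T_i)$ factor on the right of Eq.~(\ref{l3}) does not arise from the completeness relation~(\ref{complete2}); it comes directly from the $Q$-part of $\rho(S_hS_h^*)$ via $Q\rho(S_e)=\frac{1}{\sqrt d}\sum_i T_ij_1(T_i)$, conjugated by $U(h)$. The Frobenius relations (\ref{Frobenius1})--(\ref{Frobenius2}) and the period-three identity (\ref{period3}) are what is actually used to massage the raw component equations into the forms (\ref{l1}) and (\ref{l2}).
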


\begin{proof} It suffices to show that $\rho({T''}^*\rho(T))T'=\rho(T'')^*T'\rho(T)$ is equivalent to the above three. 
We compute $\rho(T'')^*\rho^2(T)T'$ first: 
$$\rho({T''}^*\rho(T))T'=\sum_{h\in G}\inpr{V(h)j_2\circ j_1^{-1}(T)}{T''}\rho(S_hS_h^*)T'+\rho({T''}^*l(T))T'.$$
The first term is \begin{align*}
\lefteqn{\sum_{h\in G}\inpr{V(h)j_2\circ j_1^{-1}(T)}{T''}U(h)\rho(S_eS_e^*)U(h)^*T'} \\
 &=\frac{\epsilon}{d\sqrt{d}}\sum_{h,k\in G}\inpr{V(h)j_2\circ j_1^{-1}(T)}{T''}\chi_{k}(h)S_k j_1(U_{\cK}(h)^*T')^*\\
 &+ \frac{1}{d}\sum_{h\in G}\sum_{i=1}^m\inpr{V(h)j_2\circ j_1^{-1}(T)}{T''}U(h)T_ij_1(T_i) j_1(U_{\cK}(h)^*T')^*. 
\end{align*}
The second term is 
\begin{align*}
\lefteqn{\rho({T''}^*l(T))T'=\sum_{i=1}^m \rho({T''}^*l(T)T_i)\rho(T_i)^*T' } \\
 &=\sum_{i=1}^m \sum_{h\in G}\inpr{T'}{V(h)j_2\circ j_1^{-1}(T_i)}\rho({T''}^*l(T)T_i) S_hS_h^*+\sum_{i=1}^m \rho({T''}^*l(T)T_i)l(T_i)^*T'\\
 &=\sum_{i=1}^m \sum_{h\in G}\inpr{j_1\circ j_2^{-1}V(h)^*T'}{T_i}V(h)j_2\circ j_1^{-1}({T''}^*l(T)T_i) S_hS_h^*\\
 &+\frac{1}{\sqrt{d}}\sum_{i=1}^m\sum_{k\in G}S_k j_2({T''}^*l(T)T_i)^*V(k)^* l(T_i)^*T'+\sum_{i=1}^m l({T''}^*l(T)T_i)l(T_i)^*T'\\
 &=\sum_{h\in G}V(h)j_2\circ j_1^{-1}({T''}^*l(T)j_1\circ j_2^{-1}(V(h)^*T')) S_hS_h^*\\
 &+\frac{1}{\sqrt{d}}\sum_{i=1}^m\sum_{k\in G}S_k j_2({T''}^*l(T)T_i)^*l(T_i)^*V(k)^*T'V(k)^*+\sum_{i=1}^m l({T''}^*l(T)T_i)l(T_i)^*T'.
\end{align*}
On the other hand, 
\begin{align*}
\lefteqn{\rho(T'')^*T'\rho(T)=\sum_{k\in G} \inpr{T'}{V(k)j_2\circ j_1^{-1}(T'')}S_kS_k^*\rho(T)+l(T'')^*T'\rho(T)} \\
 &=\frac{1}{\sqrt{d}}\sum_{k\in G} \inpr{T'}{V(k)j_2\circ j_1^{-1}(T'')}S_kj_2(T)^*V(k)^*\\
 &+\sum_{h\in G}l(T'')^*T'V(h)j_2\circ j_1^{-1}(T)S_hS_h^*+ l(T'')^*T'l(T).
\end{align*}
Therefore we get 
\begin{align}\label{l4}
\lefteqn{\inpr{V(k)j_2\circ j_1^{-1}(T'')}{T'}j_2(T)} \\
 &= \sum_{i=1}^m{T'}^*V(k)l(T_i)j_2({T''}^*l(T)T_i)\nonumber\\
 &+\frac{\epsilon}{d}\sum_{h\in G}\overline{\chi_{k}(h)}
 \inpr{T''}{V(h)j_2\circ j_1^{-1}(T)}V(k)^*j_1(U_{\cK}(h)^*T'),\nonumber 
\end{align}
\begin{equation}\label{l5}
l(T'')^*T'V(h)j_2\circ j_1^{-1}(T)=V(h)j_2\circ j_1^{-1}({T''}^*l(T)j_1\circ j_2^{-1}(V(h)^*T')), 
\end{equation}
and Eq.(\ref{l3}). 
Thanks to Eq.(\ref{invariance}), we have 
$$V(h)^*l(T'')^*=l(T'')(V(h)^*\otimes V(h)^*),$$ 
and Eq.(\ref{l5}) 
is equivalent to Eq.(\ref{l2}) if $T'$ is replaced with $V(h)j_2\circ j_1^{-1}(T')$.  
Since 
$$V(k)^*j_1(U_{\cK}(h)^*T')=j_1(V(k)^*U_{\cK}(h)^*T')=j_1(\overline{\chi_{k}(h)}U_{\cK}(h)^*V(k)^*T'),$$
if $T'$ is replaced with $V(k)T'$, Eq.(\ref{l4}) is equivalent to 
\begin{align*}
\lefteqn{\inpr{j_2\circ j_1^{-1}(T'')}{T'}j_2(T)} \\
 &= \sum_{i=1}^m{T'}^*l(T_i)j_2({T''}^*l(T)T_i)
 +\frac{\epsilon}{d}\sum_{h\in G}
 \inpr{T''}{V(h)j_2\circ j_1^{-1}(T)}j_1(U_{\cK}(h)^*T'),
\end{align*}
If we replace $T''$ with $j_1\circ j_2^{-1}(T'')$ and $T$ with $j_2^{-1}(T)$, this is equivalent to \begin{align*}
\lefteqn{\inpr{T''}{T'}T} \\
 &=\epsilon \sum_{i=1}^m{T'}^*l(T_i)j_2(j_1\circ j_2^{-1}(T'')^*l(j_2(T))T_i)\\
 &+\frac{\epsilon}{d}\sum_{h\in G}
 \inpr{j_1\circ j_2^{-1}(T'')}{V(h)j_2\circ j_1^{-1}\circ j_2^{-1}(T)}j_1(U_{\cK}(h)^*T')\\
 &=\epsilon \sum_{i=1}^m{T'}^*l(T_i)j_2(l(T)^*j_2(T'')T_i)\\
 &+\frac{1}{d}\sum_{h\in G}
 \inpr{j_1\circ j_2^{-1}(T'')}{V(h)j_1\circ j_2\circ j_1(T)}j_1(U_{\cK}(h)^*T')\\
 &=\epsilon \sum_{i=1}^m{T'}^*l(T_i)j_2(l(T)^*j_2(T'')T_i)
 +\frac{\epsilon}{d}\sum_{h\in G}
 \inpr{U_{\cK}(h)^*T''}{j_1(T)}j_1(U_{\cK}(h)^*T'),\\
\end{align*}
where we use Eq.(\ref{Frobenius2}),(\ref{period3}). 
Thus we are done. 
\end{proof}

\begin{remark} Replacing $T''$ with $j_2(T'')$ and $T'$ with $j_1(T')$, we see that in Eq.(\ref{l3}) is equivalent to 
\begin{align}\label{l6}\lefteqn{
{T'}^*l(T'')l(T)=\sum_{i=1}^m l(j_2(T'')^*l(T)j_2(T_i)){T'}^*l(T_i)}\\
&+\frac{1}{d}\sum_{h\in G}\sum_{i=1}^m\inpr{T''}{U_{\cK}(h)j_1(T)}U_{\cK}(h)T_ij_1(T_i) j_1(U_{\cK}(h)^*j_1(T'))^* \nonumber.
\end{align}
Likewise, replacing $T$ with $j_1(T)$ in Eq.(\ref{l3}), it is equivalent to 
\begin{align}\label{l7}\lefteqn{
\sum_{i,j=1}^ml(T'')^*T'l(T)^*T_iT_jj_1(T_j)T_i^*=\sum_{i,j=1}^m l({T''}^*l(T)^*T_iT_jj_1(T_j))l(T_i)^*T'}\\
&+\frac{1}{d}\sum_{h\in G}\sum_{i=1}^m\inpr{V(h)j_2(T)}{T''}U_{\cK}(h)T_ij_1(T_i) j_1(U_{\cK}(h)^*T')^* \nonumber.
\end{align}
\end{remark}

\begin{definition}\label{tuple} We say that a tuple $(\cK,j_1,j_2,V,U_\cK,\chi,l)$ is admissible if it satisfies 
Eq.(\ref{involution}),(\ref{j1}),(\ref{j2}),(\ref{period3}), 
the condition in the conclusion of Theorem \ref{representation}, and 
Eq.(\ref{Weyl}),(\ref{symmetric}),(\ref{invariance}),(\ref{orthogonality1}),(\ref{orthogonality2}),
(\ref{Frobenius1}),(\ref{Frobenius2}),(\ref{equivariance}),(\ref{rhoU2}),(\ref{S*rho2S}),(\ref{l1}),
(\ref{l2}),(\ref{l3}). 
We say that two tuples $(\cK,j_1,j_2,V,U_\cK,l)$ and $(\cK',j_1',j_2',V',U_\cK',l')$ are equivalent 
if there exist a unitary $W: \cK\to \cK'$ and a group automorphism $\varphi\in \Aut(G)$ satisfying 
$j_1'W=Wj_1$, $j_2'W=Wj_2$, $U_{\cK'}(g)W=WU_\cK(\varphi(g))$, 
$V'(g)W=WV(\varphi(g))$, $\chi'_{h}(g)=\chi_{\varphi(h)}(\varphi(g))$, and 
$$l'(WT)W=(W\otimes W)l(T),\quad T\in \cK.$$
\end{definition}

We have seen that starting from an admissible tuple $(\cK,j_1,j_2,V,U_\cK,\chi,l)$, we can construct the 
Cuntz algebra endomorphism $\rho\in \End(\cO_{m+n})$ and the $G$-action $\alpha$ satisfying relevant properties. 
As in \cite{I93}, we can choose an appropriate representation of $\cO_{m+n}$ so that 
$\rho$ and $\alpha$ extend to the weak closure of $\cO_{m+n}$, which is a hyperfinite type III$_\lambda$ factor, 
without changing morphism spaces (see Appendix and \cite{I93}). 
This finishes the reconstruction process from $(\cK,j_1,j_2,V,U_\cK,\chi,l)$ to $\cC$. 

Let $\cC$ and $\cD$ be C$^*$ near-group categories with finite group $G$ and a multiplicity parameter $m$ realized 
in $\End_0(M)$, which give rise to two admissible tuples $(\cK,j_1,j_2,V,U_\cK,l)$ and $(\cK',j_1',j_2',V',U_\cK',l')$ respectively. 
In view of Theorem \ref{uniqueness} and Remark \ref{6j}, we see that the two C$^*$ near-group categories $\cC$ and 
$\cD$ are equivalent if and only if the two corresponding tuples are equivalent. 
In conclusion, we get the following result. 

\begin{theorem}\label{unique} The association $\cC\mapsto (\cK,j_1,j_2,V,U_\cK,l)$ gives 
a one-to-one correspondence between the set of 
equivalence classes of C$^*$ near-group categories with finite group $G$ and a multiplicity 
parameter $m$, and the set of equivalence classes of admissible tuples. 
\end{theorem}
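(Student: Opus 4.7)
The plan is to assemble the work of Sections 3--4 into a single clean correspondence. For the forward map $\cC \mapsto (\cK, j_1, j_2, V, U_\cK, \chi, l)$, I would invoke Theorem \ref{embedding} to realize $\cC$ inside $\End_0(M)$ for a hyperfinite type III$_1$ factor $M$, then fix a representative $\rho$ of the non-invertible simple object, a parametrization $\alpha : G \to \Aut(M)$ normalized so that $\alpha_g \circ \rho = \rho$, and an isometry $S_e \in (\id, \rho^2)$. The formulas of Section 3 then produce the tuple, and all the axioms of Definition \ref{tuple} are precisely the identities verified throughout Section 3, culminating in Theorem \ref{classification}.

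The reverse map was essentially constructed in the paragraphs preceding the statement: given an admissible tuple, I define $\rho \in \End(\cO_{n+m})$ and $\alpha : G \to \Aut(\cO_{n+m})$ by the explicit formulas of Section \ref{RCS}, and use Lemma \ref{technical} through Theorem \ref{classification} to obtain
\[
\rho^2(x) = \sum_{g \in G} S_g \alpha_g(x) S_g^* + \sum_{i=1}^m T_i \rho(x) T_i^*,
\]
from which the required fusion rule $[\rho]^2 = \bigoplus_g [\alpha_g] \oplus m[\rho]$ is immediate; irreducibility of $\rho$ and distinctness of $[\alpha_g]$ follow from the decomposition of $(\rho^2, \rho^2)$. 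I then appeal to the Appendix (see also \cite{I93}) to extend $\rho$ and $\alpha$ to the weak closure of $\cO_{n+m}$ in a suitable representation, which is a hyperfinite type III$_\lambda$ factor, without changing the morphism spaces; by tensoring with a hyperfinite II$_1$ factor if necessary, I land inside $\End_0(M)$ for a hyperfinite III$_1$ factor $M$.

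To see the two maps descend to bijections on equivalence classes, I would check both roundtrips. Starting from a category $\cC$, extracting a tuple and reconstructing yields a category with the same fusion rules and 6$j$-data; Theorem \ref{uniqueness} then supplies a C$^*$ fusion equivalence back to $\cC$. Starting from a tuple $\Theta$, building $\rho$ and $\alpha$, and then extracting a tuple from this reconstructed category using the tautological Cuntz generators recovers $\Theta$ exactly. Finally, the ambiguity in the forward map is precisely that described in Remark \ref{6j} (choice of representative $\rho$, labelling of $G$, phase of $S_e$), and I would verify that this ambiguity is captured exactly by the equivalence relation in Definition \ref{tuple}. Conversely, given an equivalence $(W, \varphi)$ of tuples, extending $W$ to the Cuntz algebra $\cO_{n+m}$ by $W(S_g) = S_{\varphi(g)}$ and using its extension to the ambient III$_1$ factor produces the required equivalence of categories.

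The main obstacle is the final bookkeeping step: given a monoidal equivalence $(F, L) : \cC \to \cD$, one must check that the automorphism $\Phi \in \Aut(M)$ and unitaries $U_\rho \in M$ furnished by Theorem \ref{uniqueness} restrict to a unitary $W : \cK \to \cK'$ and a group automorphism $\varphi \in \Aut(G)$ satisfying \emph{all} the intertwining conditions of Definition \ref{tuple} simultaneously, which requires keeping careful track of how $\Phi$ interacts with the normalization $U(g) S_e = S_e$ and with the formulas for $\rho(S_e)$ and $\rho(T)$ in Lemma \ref{rhoS} and Eq.(\ref{rhoT}). The computation is an exercise in chasing the explicit reconstruction formulas, but the notational burden is substantial; once it is in hand, the theorem follows as a direct summary of Sections 3--4.
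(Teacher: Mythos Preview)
Your proposal is correct and follows essentially the same approach as the paper: the theorem is stated as a summary of the preceding work in Sections~3--4, and the paper's justification consists of exactly the two ingredients you name---the reconstruction of $\rho$ and $\alpha$ on a hyperfinite III$_\lambda$ factor via the Cuntz algebra and the Appendix, together with the appeal to Theorem~\ref{uniqueness} and Remark~\ref{6j} for the well-definedness and injectivity on equivalence classes. Your account is in fact more explicit than the paper's, which simply declares ``In view of Theorem~\ref{uniqueness} and Remark~\ref{6j}, we see that the two C$^*$ near-group categories $\cC$ and $\cD$ are equivalent if and only if the two corresponding tuples are equivalent''; the bookkeeping obstacle you flag (tracking $\Phi$ and $U_\rho$ against the normalizations) is left entirely implicit there.
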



\section{The case of $m=|G|-1$}

In this section, we briefly give an account of the classification of near-group categories with 
a finite group $G$ and the multiplicity parameter $m=|G|-1$. 
We have seen in Theorem \ref{representation} that $G$ is abelian under the C$^*$ condition. 
In fact, we have the following classification result without this additional assumption. 

A fusion category is said to be group theoretical if it is categorically Morita equivalent to a pointed category 
(see \cite[Definition 9.7.1]{EGNO15}). 

\begin{theorem} Let $\cC$ be a near group category with a finite group $G$ and the multiplicity parameter $m=|G|-1$. 
Then the group $G$ is cyclic and $|G|+1$ is a prime power $q$. 
If $G=\Z_2$, there are three such categories, if $G=\Z_3$ or $G=\Z_7$, there are two such categories, and 
if $G=\Z_{q-1}$ in the other cases, there is one such category. 
All these fusion categories are group theoretical. 
\end{theorem}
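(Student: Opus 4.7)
The plan is to combine Theorem \ref{representation} with the existing classification results of Siehler \cite{S03} and Etingof--Gelaki--Ostrik \cite[Corollary 7.4]{EGO04}. First, substituting $m = n-1$ into $d^2 = n + md$ gives $d = n$, so $d$ is integral. Writing $n = st^2$ and $d = st$ forces $t = 1$ and $s = n$, so case (i) of Theorem \ref{representation} applies: $G$ is abelian, every $\chi_h$ is trivial, and both $V$ and $U_\cK$ are equivalent to $\lambda \ominus 1$. This already yields the abelian conclusion under the C$^*$ hypothesis; for the non-C$^*$ version of the structural statement I would simply cite \cite{S03}.

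Because every $\chi_h$ is trivial, Eq.(\ref{Weyl}) says $V$ and $U_\cK$ commute, and multiplicity-freeness of each on $\cK$ lets me simultaneously diagonalize them. Fix a basis $\{T_\chi\}_{\chi \in \hG \setminus \{1\}}$ with $V(g) T_\chi = \chi(g) T_\chi$; then there exists a permutation $\sigma$ of $\hG \setminus \{1\}$ with $U_\cK(g) T_\chi = \sigma(\chi)(g) T_\chi$. The antilinearity of $j_1, j_2$ combined with the intertwining relations of Lemma \ref{j} forces $j_1 T_\chi \in \C T_{\bar\chi}$ and $j_2 T_\chi \in \C T_{\sigma^{-1}(\bar\chi)}$, and the rotational constraint $(j_2\circ j_1)^3 = I$ then yields $\sigma^3 = \id$. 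So the entire representation-theoretic data of the tuple $(\cK, j_1, j_2, V, U_\cK)$ is pinned down, up to a collection of phases, by the single combinatorial datum $\sigma$.

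The core of the argument is now to insert these diagonal data into the admissibility equations of Theorem \ref{classification}. The orthogonality relations (\ref{orthogonality1})--(\ref{orthogonality2}) together with Eq.(\ref{rhoU2}) reduce the structure map $l$ to a system of scalars $c(\chi, \eta)$ indexed by pairs of non-trivial characters, and the identities (\ref{l1})--(\ref{l3}) translate into a system of quadratic identities on these scalars. Following \cite{S03}, one recognizes these identities as the axioms of a finite near-field whose multiplicative group is $\hG \setminus \{1\}$; the Zassenhaus classification then forces the near-field to have prime-power order $q$, and the commutativity of $G$ collapses it to the field $\F_q$. Consequently $|G| = q - 1$ and $G \cong \Z_{q-1}$. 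The main obstacle is this near-field recognition step, since one must keep track of many quadratic relations simultaneously and identify the correct change of variables that exposes the addition law on $\F_q$; once this is done, the remainder is essentially linear algebra over $\F_q$.

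Finally, for the enumeration of C$^*$ near-group categories, I would analyze the gauge freedom allowed by Definition \ref{tuple}: the phase of $S_e$ together with the sign $\epsilon \in \{\pm 1\}$, automorphisms of $G$, and the Galois action of $\mathrm{Gal}(\F_q / \F_p)$. Generic prime powers admit a single gauge class; the non-prime prime powers $q = 4$ and $q = 8$ pick up an extra class from the non-trivial Frobenius, accounting for the two categories at $G = \Z_3$ and $G = \Z_7$; and $q = 3$ admits three classes in total, one of which corresponds to $\mathrm{Rep}\,\fS_3$. The non-C$^*$ count and the group-theoretical property are both established directly in \cite[Corollary 7.4]{EGO04}, where these categories are realized as abelian extensions of pointed categories, so I would cite that result for both conclusions.
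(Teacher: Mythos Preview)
Your approach diverges from the paper's in two ways, and one of them contains a real gap.

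For the structural claim ($G$ cyclic, $|G|+1$ a prime power), the paper does not argue via the admissibility equations at all. Instead it cites \cite{S03}, \cite{EGO04}, \cite{NO}, and then reproduces the Etingof--Gelaki--Ostrik reduction in subfactor language: setting $N=\rho(M)$, one shows $M^G=N\rtimes_{\alpha'}G$, finds $\theta\in\Aut(M^G)$ with $[\rho]=[\iota\theta\biota]$, and proves that the group $L$ generated by $[\theta]$ and $[\beta_{\hG}]$ in $\Out(M^G)$ is sharply $2$-transitive on $L/\hG$ with abelian point stabilizer $\hG$, forcing $(L,\hG)\cong(\F_q\rtimes\F_q^\times,\F_q^\times)$. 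Your route through simultaneous diagonalization of $V,U_\cK$, the period-$3$ permutation $\sigma$, and recognition of a near-field is essentially Siehler's original argument, which the paper cites but does not reproduce. Both are legitimate; the paper's approach has the advantage that the enumeration falls out of the same construction (see below). Two small slips in your version: the multiplicative group of the near-field should be $\hG$ itself (order $|G|$, near-field of order $|G|+1$), not $\hG\setminus\{1\}$; and \cite{S03} assumes $G$ abelian, so for the non-C$^*$ statement you need \cite{NO}, not \cite{S03}.

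Your enumeration argument, however, is wrong. You attribute the extra category at $q=4$ and $q=8$ to a non-trivial Frobenius in $\mathrm{Gal}(\F_q/\F_p)$, but this heuristic predicts extra categories at \emph{every} non-prime prime power, in particular at $q=9,16,25,\ldots$, contradicting the theorem. The paper's mechanism is entirely different: the embedding $L\hookrightarrow\Out(M^G)$ carries a class in $H^3(L,\T)$ trivial on $\F_q^\times$, and the Lyndon--Hochschild--Serre spectral sequence identifies such classes with $H^3(\F_q,\T)^{\F_q^\times}$; the reverse construction shows this correspondence is bijective. The counts $3,2,2,1,1,\ldots$ come from computing this invariant cohomology group, not from Galois or gauge-phase considerations. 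Concretely, for $q=3$ Example~5.2 shows the three categories arise from the three cube roots $\zeta$ in $j_2(T)=\zeta T$, which is not a phase of $S_e$ (that rescales $j_1$ and $j_2$ equally) and not the sign $\epsilon$ (which is forced to be $1$). You should replace your gauge-freedom analysis by the cohomological count, or simply cite \cite[Corollary~7.4]{EGO04} as the paper does.
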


\begin{example} For $G=\Z_2$, we have $\dim \cK=1$, and Eq.(\ref{complete1}) implies $l=0$. 
We may choose a basis $\{T\}$ of $\cK$ so that $j_1(T)=T$, and so $\epsilon=1$. 
In this case, the only choices of $j_2$ are $j_2(T)=\zeta T$ with $\zeta^3=1$. 
All the conditions in Definition \ref{tuple} are easy to verify in this case, 
and there are certainly three near-group categories. 
This example was already discussed in \cite{I93}.  
\end{example}

For the proof of the above theorem, Siehler \cite[Theorem 1.2]{S03} was the first to show that $G$ is cyclic and $|G|+1$ is a prime power 
under an additional assumption of $G$ being abelian (though this assumption is not explicitly written in \cite[Theorem 1.2]{S03}). 
Etingof-Gelaki-Ostrik \cite[Corollary 7.4]{EGO04} showed the statement under the assumption 
that $G$ is cyclic. 
Recently Nikshych-Ostrik \cite{NO} showed that $G$ is cyclic and the classification was completed.   

Let $\F_q$ be the finite field of order $q$, and let $\F_q^\times$ be its multiplicative group. 
We regard $\F_q$ as an additive group on which $\F_q^\times$ acts by multiplication.  
Etingof-Gelaki-Ostrik reduced the classification to a counting argument of the group $H^3(\F_q,\T)^{\F_q^\times}$ 
of $\F_q^\times$-invariant cohomology classes in $H^3(\F_q,\T)$. 
We reproduce their reduction argument from the view point of operator algebras now.  
The following argument was developed in \cite{I97}, \cite[Theorem 9.8,(i)]{IK02}, 
which could serve as an introduction to more complicated arguments in the next section. 

Let $N=\rho(M)$, which is an irreducible subfactor of index $d^2=|G|^2$. 
Since $\alpha_g\circ \rho=\rho$, the fixed point algebra 
$$M^G=\{x\in M;\; \alpha_g(x)=x,\;\forall g\in G\},$$
is an intermediate subfactor between $M$ and $N$ with 
$$[M:M^G]=[M^G:N]=|G|.$$ 
Let $\iota :M^G\hookrightarrow M$ and $\kappa:N\hookrightarrow M^G$ be inclusion maps. 
Then we have the decomposition $\rho=\iota\circ\kappa\circ \rho_0$, where $\rho_0$ is $\rho$ regarded as an 
isomorphism from $M$ onto $N$. 
Let $\alpha'_g=\rho_0\circ \alpha_g\circ \rho_0^{-1}$, which is an outer action of $G$ on $N$, 
and let $N^G$ be the fixed point algebra 
$$N^G=\{x\in N;\; \alpha'_g(x)=x,\;\forall g\in G\}.$$
Since $\Ad U(g)\circ \rho=\rho\circ \alpha_g$, we have $\alpha'_g(y)=\Ad U(g)(y)$ for any $y\in N$, 
and the von Neumann algebra generated by $N$ and $\{U(g)\}_{g\in G}$ is identified with the crossed product 
$N\rtimes_{\alpha'}G$. 
Eq.(\ref{alphaU}) and Theorem \ref{representation} show $\alpha_g(U(h))=U(h)$ for any $g,h\in G$, 
and we get $N\rtimes_{\alpha'}G\subset M^G$. 
Since 
$$[M^G:N]=|G|=[N\rtimes_{\alpha'}G:N],$$ 
we get $M^G=N\rtimes_{\alpha'}G$. 

Since the image of $\rho_0\circ \iota$ is $N^G$, the duality between the fixed point inclusion 
$N^G\subset N$ and the crossed product inclusion $N\subset N\rtimes_{\alpha'}G$ implies that 
the endomorphism $\kappa\circ \rho_0\circ\iota\in \End_0(M^G)$ contains an automorphism, which we denote 
$\theta\in \Aut(M^G)$. 
Then the Frobenius reciprocity implies $[\kappa\rho_0]=[\theta\overline{\iota}]$, and we get 
$$[\rho]=[\iota\kappa\rho_0]=[\iota\theta\overline{\iota}].$$

Since $G$ is abelian, there exists an outer action $\beta:\hG\rightarrow \Aut(M^G)$ of the dual group 
$\hG$ of $G$ such that $M=M^G\rtimes_\beta \hG$ and $\alpha$ is the dual action of $\beta$. 
Thus 
$$[\biota\iota]=\bigoplus_{\chi\in \hG}[\beta_\chi].$$
We denote by $L$ the group generated by $[\theta]$ and $[\beta_{\hG}]$ in $\Out(M^G)$. 

\begin{lemma} Let the notation be as above. 
\begin{itemize} 
\item[(1)] For $\chi_1,\chi_2,\tau_1,\tau_2\in \hG$, we have $[\beta_{\chi_1}\theta\beta_{\chi_2}]=[\beta_{\tau_1}\theta\beta_{\tau_2}]$ if and only if 
$\chi_i=\tau_i$ for $i=1,2$. 
\item[(2)] $L=[\beta_{\hG}]\sqcup [\beta_{\hG}][\theta][\beta_{\hG}]$. 
\end{itemize}
\end{lemma}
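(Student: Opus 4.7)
The plan is to deduce both parts from the decomposition of $[\rho]^2$ using $[\rho]=[\iota\theta\biota]$, combined with Frobenius reciprocity in the tower $N\subset M^G\subset M$ and the irreducibility of $\rho$. First, I record the Fourier-dual identity $[\iota\biota]=\bigoplus_{g\in G}[\alpha_g]$ in $\End(M)$, and note that here $d=|G|$. Expanding
\[
[\rho]^2=[\iota\theta\biota\iota\theta\biota]=\bigoplus_{\chi\in\hG}[\iota\theta\beta_\chi\theta\biota],
\]
each of the $|G|$ summands on the right has dimension $|G|$, coinciding with both $d(\rho)$ and $d(\iota\biota)$. Comparing with the fusion rule $[\rho]^2=[\iota\biota]\oplus(|G|-1)[\iota\theta\biota]$, a dimension count combined with the irreducibility of $\rho$ and the pairwise inequivalence of the $\alpha_g$ forces the existence of a unique $\chi_0\in\hG$ with $[\iota\theta\beta_{\chi_0}\theta\biota]=[\iota\biota]$, while $[\iota\theta\beta_\chi\theta\biota]=[\rho]$ for every other $\chi$.

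Next, I apply Frobenius reciprocity for the inclusion $M^G\subset M$: for automorphisms $\sigma,\tau\in\Aut(M^G)$,
\[
\dim\Hom(\iota\sigma\biota,\iota\tau\biota)=\sum_{\chi_1,\chi_2\in\hG}\dim\Hom(\beta_{\chi_1}\sigma\beta_{\chi_2},\tau).
\]
Taking $\sigma=\tau=\theta$ and using $\dim\Hom(\rho,\rho)=1$, the set $H=\{(\chi_1,\chi_2)\in\hG\times\hG:[\beta_{\chi_1}\theta\beta_{\chi_2}]=[\theta]\}$ is the singleton $\{(e,e)\}$. Since $\hG$ is abelian, $H$ is precisely the fiber over $[\theta]$ of the map $F:\hG\times\hG\to\Out(M^G)$ given by $F(\chi_1,\chi_2):=[\beta_{\chi_1}\theta\beta_{\chi_2}]$, and any two fibers of $F$ differ by translation in $\hG\times\hG$. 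Hence $[\beta_{\chi_1}\theta\beta_{\chi_2}]=[\beta_{\tau_1}\theta\beta_{\tau_2}]$ reduces to $(\chi_1\tau_1^{-1},\chi_2\tau_2^{-1})\in H=\{(e,e)\}$, giving part (1).

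For part (2), I would show that the right-hand side is closed under multiplication and inversion in $\Out(M^G)$; since it obviously contains $[\theta]$ and $[\beta_{\hG}]$, this forces it to equal $L$. The only non-trivial computation is $[\theta\beta_\mu\theta]$ for $\mu\in\hG$. For $\mu=\chi_0$, matching $\dim\Hom(\iota\theta\beta_{\chi_0}\theta\biota,\iota\biota)=|G|$ via the Frobenius formula above forces $[\theta\beta_{\chi_0}\theta]=[\beta_\xi]$ for some $\xi\in\hG$ (that Hom-dimension is $|G|$ iff $[\theta\beta_{\chi_0}\theta]\in[\beta_{\hG}]$, and $0$ otherwise); in particular $[\theta]^{-1}=[\beta_{\chi_0}\theta\beta_{-\xi}]\in[\beta_{\hG}][\theta][\beta_{\hG}]$. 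For $\mu\neq\chi_0$, $[\iota\theta\beta_\mu\theta\biota]=[\rho]$ together with the uniqueness from part (1) forces $[\theta\beta_\mu\theta]\in[\beta_{\hG}][\theta][\beta_{\hG}]$. Disjointness of the two pieces follows from irreducibility of $\rho$: an equality $[\beta_{\chi_1}\theta\beta_{\chi_2}]=[\beta_\tau]$ would give $[\theta]\in[\beta_{\hG}]$, and thus $[\rho]=[\iota\theta\biota]=[\iota\biota]=\bigoplus_g[\alpha_g]$, contradicting irreducibility of $\rho$. The main obstacle will be the careful bookkeeping of Frobenius reciprocity in the two-factor setting $M^G\subset M$, and in verifying that the dimension count in the first step genuinely rules out mixed decompositions of the $[\iota\theta\beta_\chi\theta\biota]$.
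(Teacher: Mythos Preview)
Your proof is correct and follows the same approach as the paper: part (1) via $\dim(\rho,\rho)=1$ and Frobenius reciprocity, part (2) via the decomposition $[\rho^2]=\bigoplus_\chi[\iota\theta\beta_\chi\theta\biota]$, the only difference being that the paper handles $[\theta]^{-1}$ by invoking self-conjugacy of $\rho$ (so $[\iota\theta\biota]=[\iota\theta^{-1}\biota]$, whence $[\theta^{-1}]\prec[\biota\iota\theta\biota\iota]$) rather than through your distinguished $\chi_0$. Your concern about mixed decompositions dissolves immediately: each summand has dimension $d(\rho)=|G|$ and $\rho$ is irreducible, so each $[\iota\theta\beta_\chi\theta\biota]$ either equals $[\rho]$ or contains no copy of $\rho$ and is therefore a multiplicity-free sum of $|G|$ automorphisms from $\{\alpha_g\}_{g\in G}$, i.e.\ equals $[\iota\biota]$.
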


\begin{proof} Since $\rho$ is irreducible, we have 
$$1=\dim(\rho,\rho)=\dim(\iota\theta\biota,\iota\theta\biota)=\dim(\biota\iota\theta,\theta\biota\iota)
=\sum_{\chi,\tau\in \hG}(\beta_\chi\theta,\theta,\beta_\tau),$$
which shows (1). 

Since $\rho$ is self-conjugate, we $[\iota\theta\biota]=[\iota\theta^{-1}\biota]$. 
This implies that $[\theta^{-1}]$ is contained in $[\biota\iota\theta\biota\iota]$, 
and it is an element in $[\beta_{\hG}][\theta][\beta_{\hG}]$. 
Let $L_1$ be the right-hand side of (2). 
The fusion rules of the category implies  
$$\sum_{g\in G}[\alpha_g]+(|G|-1)[\rho]=[\rho^2]=[\iota\theta\biota\iota\theta\biota]
=\sum_{\chi\in \hG}[\iota\theta\beta_\chi\theta\biota].$$
This means that $[\theta\beta_\chi\theta]$ belongs to $L_1$, which shows that $L_1$ is already a group. 
Therefore $L=L_1$.  
\end{proof}

In what follows, we identify $\hG$ with $[\beta_{\hG}]$ for simplicity. 
The above double coset decomposition implies that $L$ acting on $L/\hG$ is a sharply 2-transitive permutation group 
with the abelian point stabilizer $\hG$, which allows us to identify the pair $(L,\hG)$ 
with $(\F_q\rtimes \F_q^\times,\F^\times)$ (see \cite[Chapter XII, \S 9]{HB82}). 
The embedding $L\subset \Out(L)$ carries a cohomology class in $\omega\in H^3(L,\T)$ whose restriction to 
$\hG$ is trivial as it comes from the genuine group action $\beta$. 
Such a class $\omega$ is identified with a class in $H^3(\F_q,\T)^{\F_q^\times}$ by restriction 
thanks to the Lyndon-Hochschild-Serre spectral sequence. 

Now we reverse the above process. 
Assume that we are given a cohomology class $\omega\in H^3(\F_q\rtimes \F_q^\times,\T)$ 
whose restriction to $\F_q^\times$ is trivial. 
Let $P$ be a hyperfinite type III$_1$ factor. 
Then there exists an embedding of $\F_q\rtimes \F_q^\times$ into $\Out(P)$ carrying the class $\omega$, 
which is unique up to conjugacy in $\Out(P)$ thanks to Theorem \ref{uniqueness} (or see \cite{KT07}). 
We choose a lifting $\gamma:\F_q\rtimes \F_q^\rtimes\to \Aut(P)$ of it. 
Since the restriction of $\omega$ to $\F_q^\times$ is trivial, we may assume that $\gamma$ restricted to 
$\F_q^\times$ is an action, which we denote by $\beta$. 
Since $\F_q^\times$ is cyclic, the second cohomology $H^2(\F_q^\times,\T)$ is trivial, and such $\beta$ 
is unique on $\F_q^\times$ up to 1-cocycle perturbation. 
We denote by $G$ the dual group of $\F_q^\times$. 
Let $M=P\rtimes_\beta \F_q^\times$, and let $\iota:P\hookrightarrow M$ be the inclusion map. 
We choose an arbitrary $h\in \F_q\rtimes \F_q^\times \setminus \F_q^\times$, and set 
$\rho=\iota\gamma_h\biota$, whose equivalence class does not depend on the choice of $h$. 
The same computation as above shows that $\rho$ is irreducible and 
$$[\rho^2]=\sum_{g\in G}[\hat{\beta}_g]+(q-2)[\rho],$$
where $\hat{\beta}$ is the dual action of $\beta$. 
Therefore we get a C$^*$ near-group category with the group $G\cong \Z_{q-1}$ and the multiplicity 
parameter $m=q-2$.

\section{The noncommutative case}
In this section, we classify C$^*$ near group categories with noncommutative $G$. 
In this case, Theorem \ref{representation} implies that $d$ is an integer and 
there exist natural numbers $s,t$ with $t>1$ satisfying $n=|G|=st^2$, $m=(s-1)t$, and $d=st$, 
where we use the same notation as in Section \ref{basic}. 
Let $\hG$ be the set of equivalence classes of unitary representations of $G$, 
and let $\hG^\dagger=\hG\setminus \Hom(G,\T)$. 
Then Theorem \ref{representation} implies that $t$ divides $\dim \pi$ for every $\pi\in \hG^\dagger$, and $\#\Hom(G,\T)=t^2$. 
We denote by $[G,G]$ the commutator subgroup of $G$. 
Since $\Hom(G,\T)$ is identified with the dual group of $G/[G,G]$, 
we have $\#[G,G]=s$. 

Let $p$ be a prime number. 
A $p$-group $G$ is said to be extra-special if $Z(G)=[G,G]\cong \Z_p$, where $Z(G)$ is the center of $G$. 
Our goal in this section is to prove the following theorem. 

\begin{theorem}\label{noncommutative} Let $G$ be a noncommutative finite group. 
Then a C$^*$ near-group category with $G$ exists if and only if $s=2$ and $G$ is an extra-special 2-group.   
In particular $t$ is a power of 2, $n=|G|=2t^2$, $m=t$, and $d=2t$. 
For each extra-special 2-group, there exist exactly three different C$^*$ near-group categories. 
\end{theorem}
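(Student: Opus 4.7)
The plan is to combine the numerical constraints of Theorem~\ref{representation} with the polynomial equations satisfied by any admissible tuple $(\cK,j_1,j_2,V,U_\cK,\chi,l)$. From Theorem~\ref{representation} in the non-abelian case we already have $n=st^2$, $m=(s-1)t$, $d=st$, $|[G,G]|=s$, $|\Hom(G,\T)|=t^2$, and every non-linear irreducible $\pi$ of $G$ has $\dim\pi=tk_\pi$ with $\sum_{\pi\in\hG^\dagger}k_\pi^2=s-1$; moreover $V\cong U_\cK\cong\bigoplus_\pi k_\pi\pi$. The theorem then splits into three tasks: (i) reduce $s$ to $2$; (ii) deduce from $s=2$ that $G$ is extra-special of order $2^{2a+1}$; (iii) enumerate admissible tuples for each such $G$ and show there are exactly three.

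For task~(i), I would exploit the Heisenberg-type commutation (\ref{Weyl}) jointly with the anti-linear self-duality $j_1^2=\epsilon$ and the $\Z_3$-symmetry $(j_2\circ j_1)^3=I$. Since $\chi_h\equiv 1$ on $[G,G]$, the operator $U_\cK(g)$ commutes with $V(h)$ for every $g\in G$ and $h\in[G,G]$, so $V$ restricted to $[G,G]$ acts by central characters on the isotypic components of $U_\cK$. Evaluating (\ref{rhoU2}) at a generator $g$ of $[G,G]$, where the inner products $\langle\chi_g\pi_{ij},\sigma_{pq}\rangle$ collapse to the Peter--Weyl orthogonality relations, yields an explicit trace identity relating the multiplicities $k_\pi$ to the central characters. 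Combining this with the positivity constraints in (\ref{orthogonality2}) and (\ref{complete1}) and with the higher Frobenius--Schur indicator formula $\nu_{3,1}(\rho)=\mathrm{tr}(j_1\circ j_2)$ from the remark after Lemma~\ref{complete}, I expect an integrality obstruction that rules out $s\ge 3$.

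For task~(ii), once $s=2$, the group $G$ has order $2t^2$, a unique non-linear irreducible $\pi$ of dimension $t$, and $t^2$ linear characters. First, $\pi$ must be faithful: if $\ker\pi\ne\{e\}$, then $|G/\ker\pi|=k+t^2$ for some $1\le k\le t^2$, and this integer divides $2t^2$; writing $2t^2=e\cdot|G/\ker\pi|$ with $e\ge 1$ forces $e=1$ (since $|G/\ker\pi|>t^2$), so $\ker\pi=\{e\}$. Next, faithfulness and the classical bound $(\dim\pi)^2\le[G:Z(G)]$ give $|Z(G)|\le 2$, while the normality of $[G,G]=\langle c\rangle$ of order $2$ forces $gcg^{-1}=c$ for all $g\in G$, hence $c\in Z(G)$; so $Z(G)=[G,G]\cong\Z_2$. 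Finally, the commutator pairing $G^{\mathrm{ab}}\times G^{\mathrm{ab}}\to[G,G]=\Z_2$ is a non-degenerate alternating form with values in $\Z_2$, and therefore factors through $G^{\mathrm{ab}}/2G^{\mathrm{ab}}$; non-degeneracy forces $2G^{\mathrm{ab}}=0$, whence $G^{\mathrm{ab}}\cong(\Z_2)^{2a}$, $t=2^a$, and $|G|=2^{2a+1}$, so $G$ is extra-special.

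For task~(iii), with $V\cong U_\cK\cong\pi$ pinned down up to inner automorphism, the anti-linear isometries $j_1,j_2$ are determined by (\ref{involution}), (\ref{j1}), (\ref{j2}), (\ref{period3}) up to an overall scalar, the sign $\epsilon$ being fixed by the Frobenius--Schur type (real or quaternionic) of $\pi$. The remaining freedom lies in the map $l$ subject to (\ref{S*rho2S}) and (\ref{l1})--(\ref{l3}); a direct analysis using the fact that $\pi$ is essentially the unique irreducible module of the Heisenberg extension of $G^{\mathrm{ab}}$ by $\Z_2$ collapses the space of admissible $l$'s to a single cube root of unity, in direct analogy with the $G=\Z_2$ example in the previous section, yielding exactly three equivalence classes per extra-special $G$. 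The hardest step is task~(i): the representation-theoretic numerics alone admit extra-special $p$-groups for every odd prime $p$ (they realize $s=p$ with the right multiplicities and admit non-degenerate symmetric bicharacters on $G^{\mathrm{ab}}$), so the obstruction must come from a sharp identity combining (\ref{rhoU2}), the C$^*$-positivity, and the $\Z_3$-symmetry, and isolating this identity is the crux of the argument.
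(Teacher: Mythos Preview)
Your task (i) is not a proof but a hope: you write ``I expect an integrality obstruction that rules out $s\ge 3$'' and concede at the end that ``isolating this identity is the crux of the argument.'' The extra-special $p$-groups for odd $p$ that you flag are genuine obstacles; no amount of Peter--Weyl bookkeeping on Eq.~(\ref{rhoU2}) alone separates $p=2$ from $p=3$, because the numerics and the bicharacter data are equally consistent in both cases. The paper does not attack $s$ frontally. Instead it uses the subfactor structure of $\rho(M)\subset M$ in an essential way: the intermediate subfactors $N\rtimes_{\alpha'}[G,G]=M^G$ and $N\rtimes_{\alpha'}G=M^{[G,G]}$, together with Frobenius reciprocity for sectors, produce an isomorphism $\varphi:M^{[G,G]}\to M^G$ and a bijection $\Phi:\widehat{[G,G]}^\dagger\to\hG^\dagger$ with $\dim\Phi(\sigma)=t\dim\sigma$. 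From this one extracts, in order, that $[G,G]$ is abelian, that $\dim\pi=t$ for every $\pi\in\hG^\dagger$, that $[G,G]=Z(G)$, that $Z(G)$ is cyclic (here the decomposition of $\cK$ by central characters and the structure of the map $l$ feed into Siehler's Theorem~6.1), and that $G$ is an extra-special $s$-group. Only after all this structure is in place does the paper return to Eq.~(\ref{rhoU2}): with $G$ extra-special one shows $R^*\pi(g)R\sim\pi(g^{-1})$ for $R=j_2j_1$, hence $g^2\in Z(G)$ for every $g$, forcing $s=2$. Your plan skips this entire subfactor-theoretic scaffolding, and without it there is no visible mechanism to pin down $s$.

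Your task (ii) is correct and gives a clean purely group-theoretic derivation of ``extra-special $2$-group'' from $s=2$; the paper's route is different because it establishes the extra-special structure \emph{before} knowing $s=2$, working with general $s$ until the final lemma. Your task (iii) has the right count but the wrong mechanism: once $s=2$, Eq.~(\ref{orthogonality2}) forces $l=0$ outright (this is Eq.~(\ref{l=0}) in the paper), so there is no freedom in $l$ at all. The three categories arise instead from $j_2=\epsilon\zeta j_1$ with $\zeta^3=1$, the scalar $\zeta=j_2j_1$ being the only residual parameter once $V=U_\cK=\pi$ and $j_1$ (unique up to phase by Schur) are fixed. One must then verify that the resulting tuple with $l=0$ is actually admissible for each $\zeta$; the paper checks Eqs.~(\ref{rhoU2}), (\ref{S*rho2S}), (\ref{l1})--(\ref{l3}) by hand, and the existence of the commuting anti-unitary $j$ on $\cK_\pi$ (with the correct sign $\epsilon$) is settled by the central-product decomposition of extra-special $2$-groups into copies of $D_8$ and $Q_8$.
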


\begin{remark} As we will see in the proof below, the three fusion categories for a given extra-special 2-group 
$G$ are distinguished by the third Frobenius-Schur indicator $\nu_{3,1}(\rho)$. 
\end{remark}

\begin{example}The representation category of the bicrossed product Hopf algebra arising from the symmetric group 
$\mathfrak{S}_4=\mathfrak{S_3}\cdot \Z/4\Z$ is an example of such a fusion category with the dihedral group 
$G=D_8$ of order 8 (see \cite[Theorem 14.40,II]{IK02}).  
\end{example}

We will prove Theorem \ref{noncommutative} in several steps. 
Assume that $\cC$ is a C$^*$ near group category with a noncommutative $G$ and we use the same notation as in Section \ref{basic} 
and Theorem \ref{representation}. 
Let $N=\rho(M)$, which is an irreducible subfactor of index $d^2=s^2t^2$. 
Let $\alpha'$ be the outer action of $G$ on $N$ defined by $\alpha'_g=\rho\circ\alpha_g\circ \rho^{-1}$. 
Since $\rho\circ \alpha_g=\Ad U(g)\circ \rho$, we can identify the von Neumann algebra 
generated by $N$ and $U(G)$ with the crossed product $N\rtimes_{\alpha'} G$. 
We denote by $M^G$ the fixed point subalgebra of $M$ under the $G$-action $\alpha$, that is, 
$$M^G=\{x\in M;\; \alpha_g(x)=x,\; \forall g\in G\}.$$  
Since $\alpha_g\circ \rho=\rho$, the fixed point algebra $M^G$ is an intermediate subfactor between 
$M$ and $N$ with index $[M:M^G]=|G|=st^2$. 

\begin{lemma} Let the notation be as above. Then $N\rtimes_{\alpha'} G=M^{[G,G]}$ and $N\rtimes_{\alpha'} [G,G]=M^G$.  
\end{lemma}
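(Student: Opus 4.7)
The plan is to pin down the map $\Psi : G \to \Hom(G,\T)$, $h \mapsto \chi_h$, then match indices. From Eq.(\ref{symmetric}) the pairing $(g,h) \mapsto \chi_h(g)$ is symmetric, and since each $\chi_h$ and each $\chi_g$ is a homomorphism, the pairing is bimultiplicative. In particular $\Psi$ is a group homomorphism into the abelian group $\Hom(G,\T)$, so $[G,G] \subset \ker \Psi$. By Theorem \ref{representation}, each character of $G/[G,G]$ appears exactly $s$ times in the multiset $\{\chi_h\}_{h \in G}$, so $\Psi$ is surjective onto $\Hom(G,\T)$ and has fibers of size $s$. Hence $|\ker \Psi| = s = |[G,G]|$, which forces $\ker \Psi = [G,G]$.

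Next I combine this with Eq.(\ref{alphaU}), $\alpha_h(U(g)) = \overline{\chi_h(g)}\,U(g)$. For a subgroup $H \le G$, the unitary $U(g)$ lies in $M^H$ if and only if $\chi_h(g) = 1$ for all $h \in H$, equivalently, by symmetry, $\chi_g(h) = 1$ for all $h \in H$. Taking $H = [G,G]$: since $\chi_g \in \Hom(G,\T)$ factors through $G/[G,G]$, this is automatic, so $U(g) \in M^{[G,G]}$ for every $g \in G$. Taking $H = G$: this becomes $\chi_g \equiv 1$, i.e.\ $g \in \ker \Psi = [G,G]$. Consequently $N \rtimes_{\alpha'} G \subset M^{[G,G]}$ and $N \rtimes_{\alpha'} [G,G] \subset M^G$, where both containments make sense because $N \subset M^G \subset M^{[G,G]}$ already.

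The proof closes by matching Jones indices. Because $\alpha'$ is an outer action of $G$ on $N$, $[N \rtimes_{\alpha'} H : N] = |H|$ for every subgroup $H$. The tower law gives
\begin{equation*}
[M^{[G,G]} : N] = \frac{[M:N]}{[M:M^{[G,G]}]} = \frac{d^2}{s} = st^2 = |G|,
\qquad
[M^G : N] = \frac{d^2}{|G|} = s = |[G,G]|,
\end{equation*}
so both inclusions are of finite-index subfactors with equal index over the common subfactor $N$. This forces equality, yielding $N \rtimes_{\alpha'} G = M^{[G,G]}$ and $N \rtimes_{\alpha'} [G,G] = M^G$.

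The only non-routine step is the identification $\ker \Psi = [G,G]$; the inclusion $[G,G] \subset \ker \Psi$ is automatic from abelianness of the target, and the reverse comes out of the exact multiplicity information supplied by Theorem \ref{representation}. Everything else is a clean bookkeeping argument with indices.
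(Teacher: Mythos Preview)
Your proof is correct and takes a genuinely different route from the paper's. The paper argues sector-theoretically: it factors $\rho$ through the tower $N\subset N\rtimes_{\alpha'}[G,G]\subset N\rtimes_{\alpha'}G\subset M$, decomposes $\iota_2\overline{\iota_2}$ into $t^2$ automorphisms (since $G/[G,G]$ is abelian of order $t^2$), and then uses the fusion rule $[\rho^2]=\sum_g[\alpha_g]+(s-1)t[\rho]$ together with the dimension bound $d(\iota_1\gamma\overline{\iota_1})=s<d(\rho)$ to force $[\iota_1\iota_2\overline{\iota_2}\,\overline{\iota_1}]=\sum_{g\in G}[\alpha_g]$, which identifies $N\rtimes_{\alpha'}[G,G]$ with $M^G$; the other equality then follows by self-conjugacy of $\rho$.

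Your argument is more elementary and direct: you extract from Theorem~\ref{representation} the precise statement $\ker\Psi=[G,G]$ (the paper never isolates this fact), verify the two inclusions by checking the generators $U(g)$ via Eq.~(\ref{alphaU}), and close with an index comparison. This avoids any sector calculus beyond multiplicativity of the index, at the cost of invoking the multiplicity count from Theorem~\ref{representation}. The paper's approach, by contrast, feeds naturally into the subsequent Lemma~\ref{distinct} and the analysis of $\beta_\pi\varphi\gamma_\sigma$, since the decomposition $\rho=\iota_1\iota_2\iota_3\rho_0$ is exactly what is used there; your identification $\ker\Psi=[G,G]$ is a nice standalone fact but is not reused later.
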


\begin{proof}
Let $\iota_1:N\rtimes_{\alpha'} G\hookrightarrow M$, 
$\iota_2:N\rtimes_{\alpha'} [G,G]\hookrightarrow N\rtimes_{\alpha'} G$, and 
$\iota_3:N \hookrightarrow N\rtimes_{\alpha'} [G,G]$ be the inclusion maps. 
Then we have $\rho=\iota_1\iota_2\iota_3\rho_0$, where $\rho_0$ is $\rho$ regarded as 
an isomorphism from $M$ onto $N$.  
We have $d(\iota_1)=\sqrt{s}$, $d(\iota_2)=t$, $d(\iota_3)=\sqrt{s}$. 

Since $N\rtimes_{\alpha'}[G,G]\subset N\rtimes_{\alpha'}G$ is a crossed product inclusion by a $G/[G:G]$-action, 
and $G/[G,G]$ is an abelian group of order $t^2$, the endomorphism $\iota_2\overline{\iota_2}$ 
is decomposed into $t^2$ automorphisms. 
Since $[\rho^2]=[\rho\brho]$ 
contains $[\iota_1(\iota_2\overline{\iota_2})\overline{\iota_1}]$, 
if $\gamma$ is an automorphism contained in $\iota_2\overline{\iota_2}$, then $\iota_1\gamma\overline{\iota_1}$ is contained in 
$\rho^2$. 
Since $d(\iota_1\gamma\overline{\iota_1})=s<d(\rho)$ and 
$$[\rho^2]=\sum_{g\in G}[\alpha_g]+(s-1)t[\rho],$$ 
the endomorphism $\iota_1\gamma\overline{\iota_1}$ is decomposed into automorphisms, 
and it is the case for $\iota_1\iota_2\overline{\iota_2}\:\overline{\iota_1}$ as well. 
Since $d(\iota_1\iota_2)=\sqrt{s}t=\sqrt{\# G}$, we get 
$$[\iota_1\iota_2\overline{\iota_2}\;\overline{\iota_1}]=\sum_{g\in G}[\alpha_g].$$
This means that $N\rtimes [G,G]$ coincides with 
the fixed point algebra $M^G$. 

Since $\rho$ is self-conjugate, we can show $M^{[G,G]}=N\rtimes_{\alpha'} G$ too 
switching the roles of the crossed products and fixed point algebras.  
\end{proof}

Let $N^{[G,G]}$ be the fixed point algebra 
$$N^{[G,G]}=\{x\in N;\;\alpha'_g(x)=x,\;\forall g\in [G,G]\}.$$
Since $\rho_0(M^{[G,G]})=N^{[G,G]}$, the image of $\rho_0\iota_1$ is noting but $N^{[G,G]}$. 
By the duality between the crossed product subfactor $N\subset N\rtimes_{\alpha'}[G,G]=M^G$ and 
the fixed point algebra subfactor $N^{[G,G]}\subset N$, 
the homomorphism $\iota_3\rho_0\iota_1$ from $M^{[G,G]}$ to $M^G$ contains an isomorphism from 
$M^{[G:G]}$ onto $M^G$, say $\varphi$, and we have $\iota_3\rho_0=\varphi\overline{\iota_1}$ by the Frobenius reciprocity. 
Thus 
\begin{equation}
[\rho]=[\iota_1\iota_2\varphi\overline{\iota_1}]=[\iota_1\varphi^{-1}\overline{\iota_2}\:\overline{\iota_1}].
\end{equation}

Let $\widehat{[G,G]}$ be the set of equivalence classes of unitary representations of $[G,G]$. 
Then we have the following irreducible decomposition 
\begin{equation}
[\overline{\iota_2}\:\overline{\iota_1}\iota_1\iota_2]=\bigoplus_{\tau\in \Hom(G,\T)}[\beta_\tau]\oplus 
\bigoplus_{\pi\in \hG^\dagger}\dim \pi[\beta_\pi],
\end{equation}
\begin{equation}
[\overline{\iota_1}\iota_1]=\bigoplus_{\sigma\in \widehat{[G,G]}}\dim \sigma [\gamma_\sigma].
\end{equation}

\begin{lemma} \label{distinct} With the above notation, the following hold. 
\begin{itemize}
\item [(1)] the homomorphism $\beta_\pi\varphi\gamma_\sigma$ is irreducible for all $\pi \in \hG$ and $\sigma\in \widehat{[G,G]}$. 
\item [(2)] for $\pi,\pi'\in \hG$ and $\sigma,\sigma'\in \widehat{[G,G]}$, the two homomorphisms $\beta_\pi\varphi\gamma_\sigma$ and 
$\beta_{\pi'}\varphi\gamma_{\sigma'}$ are equivalent if and only if $\pi=\pi'$ and $\sigma=\sigma'$. 
\item [(3)] for $\pi,\pi'\in \hG$, we have $\dim (\iota_1\varphi^{-1}\beta_\pi,\iota_1\varphi^{-1}\beta_{\pi'})=\delta_{\pi,\pi'}$. 
\item [(4)] for $\sigma,\sigma'\in \widehat{[G,G]}$, we have $\dim (\iota_1\iota_2\varphi\gamma_\sigma,\iota_1\iota_2\varphi\gamma_{\sigma'})
=\delta_{\sigma,\sigma'}$. 
\end{itemize}
\end{lemma}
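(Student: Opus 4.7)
The plan is to derive a single orthogonality relation from $\dim(\rho,\rho)=1$ and then extract all four assertions from it by routine Frobenius reciprocity combined with the fusion rules for the $\beta$'s and $\gamma$'s. Set $A=\iota_1\iota_2\colon M^G\to M$ and $B=\iota_1\colon M^{[G,G]}\to M$, so that $\rho=A\varphi\bar{B}$. Since $A$ and $B$ arise as fixed-point inclusions for the outer actions of $G$ and $[G,G]$ respectively, the fusion subcategories of $\End(M^G)$ generated by $\{\beta_\pi\}_{\pi\in\hG}$ and of $\End(M^{[G,G]})$ generated by $\{\gamma_\sigma\}_{\sigma\in\widehat{[G,G]}}$ are equivalent to the representation categories of $G$ and $[G,G]$; in particular $\beta_1=\gamma_1=\id$, and the trivial summand appears in $[\overline{\beta_\pi}\beta_{\pi'}]$ and $[\gamma_\sigma\overline{\gamma_{\sigma'}}]$ with multiplicity $\delta_{\pi,\pi'}$ and $\delta_{\sigma,\sigma'}$ respectively.

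Applying Frobenius reciprocity successively to $A$, $\bar{B}$, and the isomorphism $\varphi$, the irreducibility of $\rho$ forces
\begin{equation*}
1=\dim(\rho,\rho)=\dim(\bar{B}B,\;\varphi^{-1}\bar{A}A\varphi)=\sum_{\sigma,\pi}\dim\sigma\cdot m_\pi\cdot\dim(\beta_\pi,\;\varphi\gamma_\sigma\varphi^{-1}),
\end{equation*}
where $m_\pi=1$ for $\pi\in\Hom(G,\T)$ and $m_\pi=\dim\pi$ for $\pi\in\hG^\dagger$. Since the $(\sigma,\pi)=(1,1)$ term already contributes $1$, every other term of this non-negative integer sum must vanish, forcing the key orthogonality
\begin{equation*}
\dim(\beta_\pi,\;\varphi\gamma_\sigma\varphi^{-1})=\delta_{\pi,1}\delta_{\sigma,1}.
\end{equation*}

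Given this orthogonality, all four assertions follow from the same pattern. For (1) and (2), three applications of Frobenius yield
\begin{equation*}
\dim(\beta_\pi\varphi\gamma_\sigma,\beta_{\pi'}\varphi\gamma_{\sigma'})=\dim\bigl(\gamma_\sigma\overline{\gamma_{\sigma'}},\;\varphi^{-1}\overline{\beta_\pi}\beta_{\pi'}\varphi\bigr);
\end{equation*}
decomposing the two sides by the fusion rules of $G$ and $[G,G]$ and applying the orthogonality to each summand, only the trivial-trivial contribution survives, producing $\delta_{\pi,\pi'}\delta_{\sigma,\sigma'}$. This yields both irreducibility (assertion (1)) and inequivalence (assertion (2)). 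For (3) one obtains similarly
\begin{equation*}
\dim(\iota_1\varphi^{-1}\beta_\pi,\iota_1\varphi^{-1}\beta_{\pi'})=\dim(\beta_\pi\overline{\beta_{\pi'}},\;\varphi\bar{B}B\varphi^{-1}),
\end{equation*}
which upon expanding $\bar{B}B$ and $[\beta_\pi\overline{\beta_{\pi'}}]$ and invoking the orthogonality collapses to $\delta_{\pi,\pi'}$; assertion (4) is symmetric, with the roles of $A,\beta$ and $B,\gamma$ exchanged.

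The only real technical care needed is bookkeeping: one must consistently track the source and target of each morphism among $\{M,M^G,M^{[G,G]}\}$ so that Frobenius reciprocity is applied to the correct inclusion on the correct side, and treat $\varphi$ as a dualizable isomorphism with conjugate $\varphi^{-1}$. Once that is in place, the entire content of the lemma is concentrated in the single identity forced by the irreducibility of $\rho$, and everything else is mechanical.
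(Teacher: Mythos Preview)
Your proof is correct and follows essentially the same route as the paper: both extract the key orthogonality $\dim(\beta_\pi\varphi,\varphi\gamma_\sigma)=\delta_{\pi,1}\delta_{\sigma,1}$ from the single equation $\dim(\rho,\rho)=1$, and then derive all four assertions by Frobenius reciprocity and the fusion rules of the representation categories. The only cosmetic difference is that the paper deduces (3) and (4) from (1) and (2), whereas you deduce them from the orthogonality directly; this is the same argument unfolded.
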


\begin{proof} Since $\rho$ is irreducible, 
\begin{align*}
1 &=\dim (\iota_1\iota_2\varphi\overline{\iota_1},\iota_1\iota_2\varphi\overline{\iota_1})
=\dim (\overline{\iota_2}\:\overline{\iota_1}\iota_1\iota_2\varphi,\varphi\iota_1\overline{\iota_1}) \\
&=\sum_{\pi\in \hG}\sum_{\sigma\in \widehat{[G,G]}}\dim \pi \dim \sigma \dim (\beta_\pi\varphi,\varphi\gamma_\sigma),
\end{align*}
which shows that $[\beta_\pi\varphi]=[\varphi\gamma_\sigma]$ if and only if $\pi=1$ and $\sigma=1$. 
Since the right-hand side of
$$\dim (\beta_\pi\varphi\gamma_\sigma,\beta_{\pi'}\varphi\gamma_{\sigma'})
= \dim (\beta_{\overline{\pi}'}\beta_\pi\varphi,\varphi\gamma_{\sigma'}\gamma_{\overline{\sigma}}),$$
is 1 if $\pi=\pi'$ and $\sigma=\sigma'$, and it is $0$ otherwise, we obtain (1) and (2). 
(3) and (4) follow from (1),(2) and the Frobenius reciprocity. 
\end{proof}

Let $\widehat{[G,G]}^\dagger=\widehat{[G,G]}\setminus\{1\}$. 

\begin{lemma} There exists a unique bijection $\Phi:\widehat{[G,G]}^\dagger\rightarrow \hG^\dagger$ such that 
$[\iota_1\iota_2\varphi\gamma_\sigma]=[\iota_1\varphi^{-1}\beta_{\Phi(\sigma)}\varphi]$ and $\dim \Phi(\sigma)=t\dim \sigma$. 
Moreover, there exists $\Psi(\sigma)\in \Hom([G,G],\T)$ for $\sigma\in \widehat{[G,G]}^\dagger$ satisfying  
$[\iota_2\varphi \gamma_\sigma]=[\gamma_{\Psi(\sigma)}\varphi^{-1}\beta_{\Phi(\sigma)}\varphi]$. 
\end{lemma}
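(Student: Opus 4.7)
The plan is to match up the two families of irreducible homs $M^{[G,G]}\to M$ by decomposing the hom $\rho\iota_1\iota_2\varphi$ in two ways using the self-conjugacy of $\rho$, after first establishing an auxiliary identity.

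First I would prove the auxiliary identity $[\iota_1\iota_2\varphi\gamma_\sigma\overline{\iota_1}]=(\dim\sigma)[\rho]$ for every $\sigma\in\widehat{[G,G]}$. Indeed, $[\rho][\iota_1\overline{\iota_1}]=[\rho]\sum_{g\in[G,G]}[\alpha_g]=s[\rho]$ because $[\iota_1\overline{\iota_1}]=\sum_{g\in[G,G]}[\alpha_g]$ and $[\alpha_g\rho]=[\rho]$, while the decomposition $[\rho\iota_1]=\bigoplus_\sigma(\dim\sigma)[\iota_1\iota_2\varphi\gamma_\sigma]$ gives $\sum_\sigma(\dim\sigma)[\iota_1\iota_2\varphi\gamma_\sigma\overline{\iota_1}]=s[\rho]$. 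Each summand is an endomorphism of $M$ lying in $\cC$, hence decomposes into nonnegative integer combinations of $[\alpha_g]$ and $[\rho]$; since $s[\rho]$ has no $[\alpha_g]$-component and multiplicities are nonnegative, no summand can contain any $[\alpha_g]$, and dimension counting (the summand has dimension $d\dim\sigma$) forces the identity.

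Next, I would compute $[\rho\iota_1\iota_2\varphi]$ as a hom $M^{[G,G]}\to M$ in two ways. Using the conjugate form $[\rho]=[\iota_1\varphi^{-1}\overline{\iota_2}\overline{\iota_1}]$ and $[\overline{\iota_2}\overline{\iota_1}\iota_1\iota_2]=\bigoplus_{\pi\in\hG}\dim\pi[\beta_\pi]$ gives
$$[\rho\iota_1\iota_2\varphi]=\bigoplus_{\pi\in\hG}(\dim\pi)[\iota_1\varphi^{-1}\beta_\pi\varphi],$$
which is an irreducible decomposition by Lemma \ref{distinct}(3). Using the first form of $[\rho]$ together with $[\overline{\iota_1}\iota_1]=\bigoplus_\sigma\dim\sigma[\gamma_\sigma]$ gives the second decomposition $[\rho\iota_1\iota_2\varphi]=\bigoplus_\sigma(\dim\sigma)[\iota_1\iota_2\varphi\gamma_\sigma\iota_2\varphi]$. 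The main step is then to show that for every $\sigma\in\widehat{[G,G]}^\dagger$ the irreducible $[\iota_1\iota_2\varphi\gamma_\sigma]$ equals some $[\iota_1\varphi^{-1}\beta_\pi\varphi]$. To this end I would multiply the auxiliary identity on the right by $\iota_1\iota_2\varphi$ and combine it with the formula $\bigoplus_{\sigma'}(\dim\sigma')[\overline{\iota_2}\gamma_{\sigma'}\iota_2]=\bigoplus_\pi(\dim\pi)[\beta_\pi]$ (obtained by inserting $[\overline{\iota_1}\iota_1]$ inside $[\overline{\iota_2}\overline{\iota_1}\iota_1\iota_2]$) to extract a nonzero intertwiner from $[\iota_1\iota_2\varphi\gamma_\sigma]$ into $\bigoplus_\pi(\dim\pi)[\iota_1\varphi^{-1}\beta_\pi\varphi]$; irreducibility of $[\iota_1\iota_2\varphi\gamma_\sigma]$ together with Lemma \ref{distinct}(3) then identifies it with a unique $[\iota_1\varphi^{-1}\beta_{\Phi(\sigma)}\varphi]$. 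Matching dimensions yields $\dim\Phi(\sigma)=t\dim\sigma$, and since $t>1$ this forces $\Phi(\sigma)\in\hG^\dagger$. Injectivity of $\Phi$ is immediate from Lemma \ref{distinct}(4), and surjectivity follows from the dimension-squared identities $\sum_{\sigma\in\widehat{[G,G]}^\dagger}(\dim\sigma)^2=s-1$ and $\sum_{\pi\in\hG^\dagger}(\dim\pi)^2=t^2(s-1)$ together with $\dim\Phi(\sigma)=t\dim\sigma$, so $\Phi$ is a bijection.

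For the second statement, the equivalence $[\iota_1\iota_2\varphi\gamma_\sigma]=[\iota_1\varphi^{-1}\beta_{\Phi(\sigma)}\varphi]$ furnishes a unitary $W\in\cU(M)$ intertwining the two homs. Decomposing $W$ with respect to the normalizer of $M^{[G,G]}$ in $M$ (which is generated by $M^{[G,G]}$ and the unitaries implementing the $G/[G,G]$-action) shows that $\Ad W$ conjugates $\iota_2\varphi\gamma_\sigma$ into $\varphi^{-1}\beta_{\Phi(\sigma)}\varphi$ up to a one-dimensional automorphism of $M^{[G,G]}$, i.e.\ a sector $[\gamma_{\Psi(\sigma)}]$ for a unique $\Psi(\sigma)\in\Hom([G,G],\T)$. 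The main obstacle is the Frobenius reciprocity manipulation in the key step: conceptually it amounts to tracking how the irreducibles $\gamma_\sigma$ of $M^{[G,G]}$ induce, via the intermediate inclusion $M^G\subset M^{[G,G]}$, to the irreducibles $\beta_\pi$ of $M^G$, mirroring the group-theoretic induction from $[G,G]$ to $G$.
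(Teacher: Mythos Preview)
Your overall strategy (compare the two irreducible families of homs $M^{[G,G]}\to M$ inside a sector built from $\rho$) matches the paper's, and your auxiliary identity $[\iota_1\iota_2\varphi\gamma_\sigma\overline{\iota_1}]=(\dim\sigma)[\rho]$ is correct. But the main step and the $\Psi$ step both have genuine gaps.

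\textbf{Main step.} To compare $[\iota_1\iota_2\varphi\gamma_\sigma]$ with the $[\iota_1\varphi^{-1}\beta_\pi\varphi]$, Frobenius reciprocity requires an identity whose right tail is exactly the conjugate of $\iota_1\iota_2\varphi$, namely $\varphi^{-1}\overline{\iota_2}\,\overline{\iota_1}$. The paper establishes $[\iota_1\iota_2\varphi\gamma_\sigma\varphi^{-1}\overline{\iota_2}\,\overline{\iota_1}]=t\dim\sigma\,[\rho]$ for $\sigma\neq 1$ (by computing $[\rho^2]$ two ways), and then a single Frobenius step gives
\[
t\dim\sigma=\dim(\iota_1\iota_2\varphi\gamma_\sigma,\rho\iota_1\iota_2\varphi)=\sum_{\pi}\dim\pi\,\dim(\iota_1\iota_2\varphi\gamma_\sigma,\iota_1\varphi^{-1}\beta_\pi\varphi),
\]
from which Lemma~\ref{distinct}(3),(4) pins down a unique $\Phi(\sigma)$ with $\dim\Phi(\sigma)=t\dim\sigma$. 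Your auxiliary identity instead has only $\overline{\iota_1}$ on the right, so Frobenius lands you in $\dim(\iota_1\iota_2\varphi\gamma_\sigma,\rho\iota_1)$, and $[\rho\iota_1]$ does \emph{not} decompose into the $[\iota_1\varphi^{-1}\beta_\pi\varphi]$'s (that decomposition is of $[\rho\iota_1\iota_2\varphi]$). Your attempt to bridge this by ``multiplying on the right by $\iota_1\iota_2\varphi$ and combining with $\sum_{\sigma'}\dim\sigma'[\overline{\iota_2}\gamma_{\sigma'}\iota_2]=\sum_\pi\dim\pi[\beta_\pi]$'' does not yield an intertwiner out of $[\iota_1\iota_2\varphi\gamma_\sigma]$: after multiplying you obtain sectors $[\iota_1\iota_2\varphi\gamma_\sigma\gamma_{\sigma'}\iota_2\varphi]$, which still carry the extra tail $\iota_2\varphi$ and are not the objects you need to match. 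The fix is simple: prove the paper's version of the auxiliary identity (same argument, inserting $\varphi^{-1}\overline{\iota_2}$ as well) and apply Frobenius once.

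\textbf{The $\Psi$ step.} Your argument via ``decomposing $W$ with respect to the normalizer of $M^{[G,G]}$'' does not work: the intertwining unitary $W\in M$ witnessing $[\iota_1\iota_2\varphi\gamma_\sigma]=[\iota_1\varphi^{-1}\beta_{\Phi(\sigma)}\varphi]$ satisfies $W\iota_1(y)=\iota_1(y')W$ only for $y$ in the image of $\iota_2\varphi\gamma_\sigma$, not for all $y\in M^{[G,G]}$, so there is no reason $W$ normalizes $M^{[G,G]}$. The paper again uses a one-line Frobenius computation:
\[
1=\dim(\iota_1\iota_2\varphi\gamma_\sigma,\iota_1\varphi^{-1}\beta_{\Phi(\sigma)}\varphi)
=\sum_{\sigma'}\dim\sigma'\,\dim(\iota_2\varphi\gamma_\sigma,\gamma_{\sigma'}\varphi^{-1}\beta_{\Phi(\sigma)}\varphi),
\]
forcing a unique $\sigma'=\Psi(\sigma)$ with $\dim\sigma'=1$, i.e.\ $\Psi(\sigma)\in\Hom([G,G],\T)$. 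Your surjectivity and dimension bookkeeping in the first paragraph are fine and identical to the paper's.
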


\begin{proof} On one hand, we have 
$$[\rho^2]=\sum_{g\in G}[\alpha_g]+(s-1)t[\rho],$$
and on the other hand, 
$$[\rho^2]=[\iota_1\iota_2\varphi\overline{\iota_1}\iota_1\varphi^{-1}\overline{\iota_2}\:\overline{\iota_1}]
=\sum_{\sigma\in \widehat{[G,G]}}\dim \sigma [\iota_1\iota_2\varphi\gamma_\sigma\varphi^{-1}\overline{\iota_2}\:\overline{\iota_1}].$$
Since 
$$[\iota_1\iota_2\varphi\varphi^{-1}\overline{\iota_2}\:\overline{\iota_1}]=\sum_{g\in G}[\alpha_g],$$
for $\sigma\neq 1$, we have $[\iota_1\iota_2\varphi\gamma_\sigma\varphi^{-1}\overline{\iota_2}\:\overline{\iota_1}]=t\dim \sigma [\rho]$. 
Thus for $\sigma\neq 1$,
\begin{align*}
t\dim \sigma&=\dim (\iota_1\iota_2\varphi\gamma_\sigma\varphi^{-1}\overline{\iota_2}\:\overline{\iota_1},\rho)
=\dim (\iota_1\iota_2\varphi\gamma_\sigma,\iota_1\varphi^{-1}\overline{\iota_2}\:\overline{\iota_1}\iota_1\iota_2\varphi)\\
 &=\sum_{\pi\in \hG}\dim \pi \dim (\iota_1\iota_2\varphi\gamma_\sigma,\iota_1\varphi^{-1}\beta_\pi\varphi).
\end{align*}
Thanks to Lemma \ref{distinct}(3),(4), there exists unique $\pi$, which we call $\Phi(\sigma)$ such that 
$[\iota_1\iota_2\varphi\gamma_\sigma]=[\iota_1\varphi^{-1}\beta_\pi\varphi]$ and $\dim \pi=t\dim \sigma$. 
Moreover $\Phi$ is an injection. 
Since 
$$\sum_{\sigma\in \widehat{[G,G]}^\dagger}(\dim \sigma)^2=|[G,G]|-1=s-1,$$
we have 
$$\sum_{\sigma\in \widehat{[G,G]}^\dagger}(\dim \Phi(\sigma))^2=t^2(s-1)=\# G-\# \Hom(G,\T)=\sum_{\pi\in \hG^\dagger}(\dim \pi)^2,$$
and we see that $\Phi$ is a surjection. 
Since 
\begin{align*}1&=\dim (\iota_1\iota_2\varphi\gamma_\sigma,\iota_1\varphi^{-1}\beta_{\Phi(\sigma)}\varphi)
=\dim (\iota_2\varphi\gamma_\sigma,\overline{\iota_1}\iota_1\varphi^{-1}\beta_{\Phi(\sigma)}\varphi)\\
&= \sum_{\sigma'\in \widehat{[G,G]}}\dim \sigma'\dim (\iota_2\varphi\gamma_\sigma,\gamma_{\sigma'}\varphi^{-1}\beta_{\Phi(\sigma)}\varphi),
\end{align*}
there exists a unique $\sigma'\in \Hom([G,G],\T)$ such that 
$[\iota_2\varphi\gamma_\sigma]=[\gamma_{\sigma'}\varphi^{-1}\beta_{\Phi(\sigma)}\varphi]$. 
\end{proof}

\begin{lemma} The commutator subgroup $[G,G]$ is abelian, and $\dim \pi=t$ for all $\pi \in \hG^\dagger$.  
\end{lemma}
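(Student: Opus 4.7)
The plan is to compute $[\rho\brho]$ in two different ways and compare the coefficient of $[\rho]$. Since $\rho$ is self-conjugate, the fusion rules immediately give
$$[\rho\brho]=[\rho^2]=\sum_{g\in G}[\alpha_g]+(s-1)t[\rho].$$
On the other hand, substituting $[\rho]=[\iota_1\iota_2\varphi\overline{\iota_1}]$ and $[\brho]=[\iota_1\varphi^{-1}\overline{\iota_2}\,\overline{\iota_1}]$ and inserting the decomposition $[\overline{\iota_1}\iota_1]=\bigoplus_{\sigma\in\widehat{[G,G]}}\dim\sigma\cdot[\gamma_\sigma]$ yields
$$[\rho\brho]=\sum_{\sigma\in\widehat{[G,G]}}\dim\sigma\cdot[\iota_1\iota_2\varphi\gamma_\sigma\varphi^{-1}\overline{\iota_2}\,\overline{\iota_1}].$$

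Next I would isolate the $\sigma=\mathbf{1}$ contribution. Since $\gamma_\mathbf{1}=\id$, this summand collapses to $[\iota_1\iota_2\overline{\iota_1\iota_2}]$, which equals $\sum_{g\in G}[\alpha_g]$ because $M^G\subset M$ is a fixed-point subfactor for the outer $G$-action. Comparing with the fusion-rule expression, the pointed part of $[\rho\brho]$ is already exhausted by this single summand. Positivity of multiplicities in the C$^*$ fusion category $\cC$ then forces, for each $\sigma\neq\mathbf{1}$, the summand $[\iota_1\iota_2\varphi\gamma_\sigma\varphi^{-1}\overline{\iota_2}\,\overline{\iota_1}]$ to decompose purely into copies of $[\rho]$; its dimension $st^2(\dim\sigma)^2$ divided by $d(\rho)=st$ gives $t(\dim\sigma)^2$ such copies.

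Equating the coefficients of $[\rho]$ then yields
$$\sum_{\sigma\in\widehat{[G,G]}^\dagger}\dim\sigma\cdot t(\dim\sigma)^2=(s-1)t,\qquad \text{hence}\qquad \sum_{\sigma\in\widehat{[G,G]}^\dagger}(\dim\sigma)^3=s-1.$$
Since the regular representation of $[G,G]$ gives $\sum_{\sigma\in\widehat{[G,G]}^\dagger}(\dim\sigma)^2=|[G,G]|-1=s-1$, the two sums coincide. As $(\dim\sigma)^3\ge(\dim\sigma)^2$ with equality iff $\dim\sigma=1$, every $\sigma\in\widehat{[G,G]}^\dagger$ is one-dimensional, so $[G,G]$ is abelian. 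The bijection $\Phi:\widehat{[G,G]}^\dagger\to\hG^\dagger$ from the previous lemma then satisfies $\dim\Phi(\sigma)=t\dim\sigma=t$, so every $\pi\in\hG^\dagger$ has $\dim\pi=t$. The only delicate point is the non-negativity argument that rules out $[\alpha_g]$ components in the $\sigma\neq\mathbf{1}$ summands, but this is immediate once the $\sigma=\mathbf{1}$ contribution is pinned down.
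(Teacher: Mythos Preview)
Your argument contains a fatal dimension error. The summand $[\iota_1\iota_2\varphi\gamma_\sigma\varphi^{-1}\overline{\iota_2}\,\overline{\iota_1}]$ has dimension
\[
d(\iota_1)\,d(\iota_2)\,d(\varphi)\,d(\gamma_\sigma)\,d(\varphi^{-1})\,d(\overline{\iota_2})\,d(\overline{\iota_1})
=\sqrt{s}\cdot t\cdot 1\cdot \dim\sigma\cdot 1\cdot t\cdot \sqrt{s}=st^2\dim\sigma,
\]
not $st^2(\dim\sigma)^2$; recall that $d(\gamma_\sigma)=\dim\sigma$, as follows from $s=d(\overline{\iota_1}\iota_1)=\sum_\sigma(\dim\sigma)\,d(\gamma_\sigma)$ together with $\sum_\sigma(\dim\sigma)^2=|[G,G]|=s$. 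Indeed, the proof of the preceding lemma in the paper already records that for $\sigma\neq 1$ one has $[\iota_1\iota_2\varphi\gamma_\sigma\varphi^{-1}\overline{\iota_2}\,\overline{\iota_1}]=t\dim\sigma\,[\rho]$, confirming the count $t\dim\sigma$ rather than $t(\dim\sigma)^2$.

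With the correct count, equating coefficients of $[\rho]$ gives
\[
\sum_{\sigma\in\widehat{[G,G]}^\dagger}(\dim\sigma)\cdot t\dim\sigma=(s-1)t,
\qquad\text{i.e.}\qquad
\sum_{\sigma\in\widehat{[G,G]}^\dagger}(\dim\sigma)^2=s-1,
\]
which is just the orthogonality relation $\sum_\sigma(\dim\sigma)^2=|[G,G]|$ and carries no information. So the cubic-versus-quadratic comparison on which your proof rests never materializes, and the argument does not establish that $[G,G]$ is abelian.

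The paper's proof proceeds quite differently: it argues by contradiction, assuming $[G,G]$ non-abelian so that some $\sigma\otimes\overline{\sigma}$ contains a nontrivial irreducible $\mu$, and then uses the refinement $[\iota_2\varphi\gamma_\sigma]=[\gamma_{\Psi(\sigma)}\varphi^{-1}\beta_{\Phi(\sigma)}\varphi]$ from the previous lemma to produce a relation $[\varphi\gamma_\mu]=[\beta_{\mu'}\varphi]$ with $\mu\neq 1$, contradicting the earlier uniqueness statement that $[\beta_\pi\varphi]=[\varphi\gamma_\sigma]$ forces $\pi=1$ and $\sigma=1$. The conclusion $\dim\pi=t$ then follows from the bijection $\Phi$.
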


\begin{proof} Suppose that $[G,G]$ is non-abelian. 
Then there exists an irreducible representation $\sigma\in \widehat{[G,G]}$ such that $\sigma\otimes \overline{\sigma}$ 
contains a non-trivial irreducible representation $\mu$. 
Since  $[\iota_2\varphi\gamma_\sigma]=[\gamma_{\Psi(\sigma)}\varphi^{-1}\beta_{\Phi(\sigma)}\varphi]$, 
the endomorphism $\gamma_\mu$ is contained in $\varphi^{-1}\beta_{\overline{\Phi(\sigma)}}\beta_{\Phi(\sigma)}\varphi$, 
and there exists $\mu\in \hG$ such that $[\gamma_\mu]=[\varphi^{-1}\beta_\mu\varphi]$. 
However, this implies $[\varphi\gamma_\mu]=[\beta_\mu\varphi]$, which contradicts Lemma \ref{distinct} because 
$\mu\neq 1$. 
\end{proof}

We recall a well-known fact that the dimension of any irreducible projective representation 
of a finite group does not exceed the square root of the order of the group (see \cite[Problem 11.7]{Is94}). 

\begin{lemma} The commutator subgroup $[G,G]$ coincides with the center $Z(G)$ of $G$. 
\end{lemma}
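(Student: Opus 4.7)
The plan is to establish the two inequalities $|Z(G)|\le s$ and $[G,G]\subseteq Z(G)$; combined with $|[G,G]|=s$ these give $Z(G)=[G,G]$ immediately.

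For $|Z(G)|\le s$, take any $\pi\in\hG^\dagger$; by Theorem \ref{representation} it has dimension $t$. Since $Z(G)$ acts on $\pi$ by scalars through the central character, $\pi$ descends to an irreducible projective representation of $G/Z(G)$ of dimension $t$. The projective-representation dimension bound recalled just before the lemma yields $t\le\sqrt{|G/Z(G)|}$, whence $|Z(G)|\le|G|/t^2=s$.

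For $[G,G]\subseteq Z(G)$, it suffices to show that the conjugation action of $G$ on $\widehat{[G,G]}^\dagger$ is trivial, because then every character of $[G,G]$ is $G$-invariant, which forces $[G,G]$ to be central in $G$. The key tool is the bijection $\Phi\colon\widehat{[G,G]}^\dagger\to\hG^\dagger$ from the preceding lemma, which I will show is equivariant for the natural $G$-actions on both sides. On the source, $G$ acts by conjugation $\sigma\mapsto g\cdot\sigma$. On the target, the endomorphisms $\beta_\pi$ live in $\End(M^G)$, and since $\alpha_g$ fixes $M^G$ pointwise, every $\beta_\pi$ is invariant, so $G$ acts trivially on $\hG^\dagger$. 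To establish $\Phi(g\cdot\sigma)=\Phi(\sigma)$, I apply $\alpha_g$ to the defining relation $[\iota_1\iota_2\varphi\gamma_\sigma]=[\iota_1\varphi^{-1}\beta_{\Phi(\sigma)}\varphi]$: by the functoriality of the Galois duality for $N\subset M^G$, conjugation of $\gamma_\sigma\in\Aut(N)$ by $\alpha_g|_N$ produces $\gamma_{g\cdot\sigma}$, while the other factors are either $\alpha_g$-invariant or differ only by inner automorphisms that vanish on equivalence classes. Hence $[\iota_1\iota_2\varphi\gamma_{g\cdot\sigma}]=[\iota_1\varphi^{-1}\beta_{\Phi(\sigma)}\varphi]$, and the uniqueness part of the preceding lemma gives $\Phi(g\cdot\sigma)=\Phi(\sigma)$; injectivity of $\Phi$ then yields $g\cdot\sigma=\sigma$ for every $g\in G$.

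The main obstacle will be the careful bookkeeping of how $\alpha_g$ acts along the intermediate subfactor tower $N\subset M^G\subset M^{[G,G]}\subset M$: while $\alpha_g$ restricts to the identity on $M^G$, it acts nontrivially on $M^{[G,G]}$ through the quotient $G/[G,G]$, so one must verify that the non-trivial conjugations on the outer factors cancel at the level of equivalence classes, leaving $\gamma_\sigma\mapsto\gamma_{g\cdot\sigma}$ as the sole net change in the defining equation of $\Phi$.
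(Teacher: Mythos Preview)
Your bound $|Z(G)|\le s$ via the projective-representation dimension inequality matches the paper exactly, so that half is fine.

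The gap is in your argument for $[G,G]\subseteq Z(G)$. First, a point of confusion: $\gamma_\sigma$ is not an automorphism of $N$ but an endomorphism (in fact an automorphism, since $[G,G]$ is already known to be abelian) of $M^{[G,G]}=N\rtimes_{\alpha'}G$, arising from the decomposition of $\overline{\iota_1}\iota_1$. More seriously, the equivariance claim you need---that conjugating the defining relation $[\iota_1\iota_2\varphi\gamma_\sigma]=[\iota_1\varphi^{-1}\beta_{\Phi(\sigma)}\varphi]$ by $\alpha_g$ leaves the right-hand side unchanged while turning $\gamma_\sigma$ into $\gamma_{g\cdot\sigma}$ on the left---is exactly what you flag as the ``main obstacle,'' and you never discharge it. The difficulty is real: while $\alpha_g$ is trivial on $M^G$, the isomorphism $\varphi\colon M^{[G,G]}\to M^G$ was obtained abstractly from the duality $\iota_3\rho_0\iota_1$, and there is no reason for $\varphi(\alpha_g|_{M^{[G,G]}})\varphi^{-1}$ to be inner in $M^G$ or to commute with $\beta_\pi$ up to equivalence. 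Tracking the cancellations would require controlling how $\varphi$ intertwines $\alpha_g|_{M^{[G,G]}}$ with an automorphism of $M^G$, which you have not done.

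The paper avoids this bookkeeping entirely. From the relation $[\iota_2\varphi\gamma_\sigma]=[\gamma_{\Psi(\sigma)}\varphi^{-1}\beta_{\Phi(\sigma)}\varphi]$ and the fact that $\iota_2\overline{\iota_2}$ splits into $t^2$ automorphisms, it follows that $\beta_{\Phi(\sigma)}\beta_{\overline{\Phi(\sigma)}}$ contains $t^2$ automorphisms; since $\dim\Phi(\sigma)=t$, this forces $\pi\otimes\overline{\pi}\cong\bigoplus_{\tau\in\Hom(G,\T)}\tau$ for every $\pi\in\hG^\dagger$. Then for $g\in[G,G]$ each $\tau(g)=1$, so $\pi(g)\otimes\overline{\pi(g)}=\mathrm{id}$, whence $\pi(g)$ is scalar for every irreducible $\pi$, and $g\in Z(G)$. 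This is a direct representation-theoretic argument, much shorter than the equivariance route you propose.
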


\begin{proof} Let $\sigma\in \widehat{[G,G]}^\dagger$. Since $[\iota_2\varphi \gamma_\sigma]=[\gamma_{\Psi(\sigma)}\varphi^{-1}\beta_{\Phi(\sigma)}\varphi]$ 
and $\iota_2\overline{\iota_2}$ is decomposed into $t^2$ automorphisms, the endomorphism $\beta_{\Phi(\sigma)}\beta_{\overline{\Phi(\sigma)}}$ contains $t^2$ automorphisms. 
Since $\dim \Phi(\sigma)=t$, this means that 
$$\pi\otimes \overline{\pi}=\bigoplus_{\tau\in \Hom(G,\T)}\tau,$$
for all $\pi\in \hG^\dagger$. 
Let $g\in [G,G]$. 
Then $\pi(g)$ is a scalar for all $\pi\in \hG^\dagger$ because $\tau(g)=1$ for all $\tau\in \Hom(G,\T)$. 
This implies $g\in Z(G)$, and $[G,G]\subset Z(G)$. 

Since $\pi\in \hG^\dagger$ can be regarded as an irreducible projective representation of $G/Z(G)$, the dimension of $\pi$ does not exceed the square root of 
the order of $G/Z(G)$. 
Since $\dim \pi=t$, we have $t^2\leq \#G/Z(G)\leq \#G/[G,G]=t^2$, and $Z(G)=[G,G]$.   
\end{proof}

Recall that $\widehat{Z(G)}^\dagger=\widehat{Z(G)}\setminus \{1\}$. 

\begin{lemma}\label{tensorproduct} For each character $\sigma\in \widehat{Z(G)}^\dagger$, 
the induced representation $\mathrm{Ind}_{Z(G)}^G\sigma$ is decomposed as
$$\mathrm{Ind}_{Z(G)}^G\sigma\cong \overbrace{\pi_\sigma\oplus \cdots \oplus \pi_\sigma}^{t},$$
where $\pi_\sigma$ is an irreducible representation of dimension $t$. 
For $\sigma_1,\sigma_2\in \widehat{Z(G)}^\dagger$ with $\sigma_1\neq \sigma_2$, the corresponding two induced representations are mutually disjoint. 
Moreover, we have 
$\hG^\dagger=\{\pi_{\sigma}\}_{\sigma\in \widehat{Z(G)}^\dagger}$, and
$$\pi_{\sigma_1}\otimes \pi_{\sigma_2}\cong \overbrace{\pi_{\sigma_1\sigma_2}\oplus \cdots \oplus \pi_{\sigma_1\sigma_2}}^{t}, 
\; \mathrm{for}\; \sigma_1\neq \overline{\sigma_2},$$ 
$$\pi_\sigma\otimes \tau\cong \pi_\sigma,\quad \tau\in \Hom(G,\T),$$
$$\pi_{\sigma}\otimes \pi_{\overline{\sigma}}\cong \bigoplus_{\tau\in \Hom(G,\T)}\tau,$$
$$\overline{\pi_\sigma}\cong \pi_{\overline{\sigma}}.$$
\end{lemma}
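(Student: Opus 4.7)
My plan is to extract all of the statements from the \emph{central character} of representations, together with dimension counting and Frobenius reciprocity. For $\pi\in \hG$, since $Z(G)$ is central, Schur's lemma forces $\pi|_{Z(G)}=\sigma_\pi(\cdot)I$ for a unique $\sigma_\pi\in \widehat{Z(G)}$. Because $Z(G)=[G,G]$, the group $G/Z(G)$ is abelian of order $t^2$, so every representation with trivial central character factors through $G/Z(G)$ and is one-dimensional; hence $\sigma_\pi\in \widehat{Z(G)}^\dagger$ whenever $\pi\in \hG^\dagger$. Conversely, the $t^2$ characters of $G/Z(G)$ are exactly the elements of $\Hom(G,\T)$, so the assignment $\pi\mapsto \sigma_\pi$ restricts to a map $\hG^\dagger\to \widehat{Z(G)}^\dagger$, and from $\sum_{\pi\in\hG}(\dim\pi)^2=|G|=st^2$ together with $\dim\pi=t$ on $\hG^\dagger$ one reads off $|\hG^\dagger|=s-1=|\widehat{Z(G)}^\dagger|$.

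Next I would analyze $\mathrm{Ind}_{Z(G)}^G\sigma$ for $\sigma\in \widehat{Z(G)}^\dagger$ by Frobenius reciprocity:
\[
\dim\Hom_G(\pi,\mathrm{Ind}_{Z(G)}^G\sigma)=\dim\Hom_{Z(G)}(\pi|_{Z(G)},\sigma)
=(\dim\pi)\,\delta_{\sigma_\pi,\sigma}.
\]
Therefore $\mathrm{Ind}_{Z(G)}^G\sigma=\bigoplus_{\sigma_\pi=\sigma}(\dim\pi)\pi$, all summands lying in $\hG^\dagger$. Comparing dimensions $t^2=\dim\mathrm{Ind}_{Z(G)}^G\sigma=\sum_{\sigma_\pi=\sigma}t\cdot t$ forces exactly one $\pi$ in the sum, which I call $\pi_\sigma$; this gives (1). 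Disjointness for distinct $\sigma$ in (2) is immediate since the central characters differ. Together with the counting of $|\hG^\dagger|=s-1$ above, the map $\sigma\mapsto \pi_\sigma$ is a bijection onto $\hG^\dagger$, yielding the description of $\hG^\dagger$.

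The remaining statements then follow cleanly from the central character together with the already-known identity $\pi\otimes\overline{\pi}\cong \bigoplus_{\tau\in\Hom(G,\T)}\tau$ for $\pi\in\hG^\dagger$. For (7), $\overline{\pi_\sigma}$ has central character $\overline{\sigma}\in \widehat{Z(G)}^\dagger$ and dimension $t$, so it is $\pi_{\overline{\sigma}}$. For (5), the character $\tau\in\Hom(G,\T)$ is trivial on $Z(G)=[G,G]$, so $\pi_\sigma\otimes\tau$ still has central character $\sigma$ and dimension $t$, hence equals $\pi_\sigma$. For (4) with $\sigma_1\neq \overline{\sigma_2}$, every irreducible summand of $\pi_{\sigma_1}\otimes\pi_{\sigma_2}$ has central character $\sigma_1\sigma_2\in\widehat{Z(G)}^\dagger$, so all summands are $\pi_{\sigma_1\sigma_2}$; dimension counting $t\cdot t=t^2=t\cdot\dim\pi_{\sigma_1\sigma_2}$ forces the multiplicity $t$. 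Finally (6) is just a restatement of the previously established identity applied to $\pi_\sigma$, since $\overline{\pi_\sigma}=\pi_{\overline{\sigma}}$.

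The argument is essentially routine once the central character machinery is in place; the only step that deserves attention is verifying that the central character of $\pi\in\hG^\dagger$ is necessarily non-trivial, which relies crucially on the previously established facts $Z(G)=[G,G]$ and $\dim\pi=t$ for all $\pi\in \hG^\dagger$. After that, every assertion of the lemma is a dimension check or a central character bookkeeping.
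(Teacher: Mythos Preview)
Your proof is correct and follows essentially the same route as the paper: both arguments hinge on the central character $\pi\mapsto\sigma_\pi$ being well-defined (Schur), non-trivial on $\hG^\dagger$ (because $G/Z(G)=G/[G,G]$ is abelian), and then on Frobenius reciprocity plus the dimension identity $[G:Z(G)]=t^2=t\cdot\dim\pi$ to pin down $\mathrm{Ind}_{Z(G)}^G\sigma\cong t\,\pi_\sigma$. Your explicit count $|\hG^\dagger|=s-1=|\widehat{Z(G)}^\dagger|$ before invoking bijectivity is a slight variation in exposition, but the substance is the same.
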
 

\begin{proof} For $\pi\in \hG^\dagger$ and $g\in Z(G)=[G,G]$, we have seen in the proof of the previous lemma that $\pi(g)$ is a scalar. 
Thus the restriction of $\pi$ to $Z(G)$ is decomposed as $t$ copies of some $\sigma\in \widehat{Z(G)}$. 
If $\sigma$ were trivial, the representation $\pi$ would reduce to a representation of $G/Z(G)=G/[G,G]$. 
Since $\dim \pi>1$ and $G/[G,G]$ is abelian, this is impossible, and $\sigma\in \widehat{Z(G)}^\dagger$. 
By the Frobenius reciprocity, the induced representation $\mathrm{Ind}_{Z(G)}^G\sigma$ contains $t$ copies of $\pi$. 
Since $\#G/Z(G)=t^2$ and $\dim \pi=t$, we obtain 
$$\mathrm{Ind}_{Z(G)}^G\sigma\cong \overbrace{\pi\oplus \cdots \oplus \pi}^{t}.$$
This means that $\pi\in \hG^\dagger$ is completely determined by its restriction to $Z(G)$ and the other statements follow.
\end{proof}

\begin{lemma} The group $Z(G)$ is cyclic, and $s+1$ is a prime power. 
\end{lemma}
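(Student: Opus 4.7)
The plan is to imitate the argument of Section 5---where the analogous statement for $m = |G|-1$ was obtained by exhibiting a sharply $2$-transitive permutation group of degree $|G|+1$ inside $\Out(M^G)$---but now at the intermediate level of the chain $N \subset M^G \subset M^{[G,G]} \subset M$. Here $\rho = \iota_1 \theta \overline{\iota_1}$ with $\theta = \iota_2 \varphi \in \End(M^{[G,G]})$ of dimension $t$, the inclusion $\iota_1 \colon M^{[G,G]} \hookrightarrow M$ has index $s = |Z(G)|$, and $[\overline{\iota_1} \iota_1] = \bigoplus_{\sigma \in \widehat{Z(G)}} [\gamma_\sigma]$ realizes $\widehat{Z(G)}$ as an abelian subgroup of $\Out(M^{[G,G]})$ of order $s$.

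First I will extract a dimension-one automorphism class $[\tau] \in \Out(M^{[G,G]}) \setminus \widehat{Z(G)}$ from the sector of $\theta$. The identity $[\iota_1 \iota_2 \varphi \gamma_\sigma] = [\iota_1 \varphi^{-1} \beta_{\Phi(\sigma)} \varphi]$ from the preceding lemma, combined with the Galois decomposition $[\iota_2 \overline{\iota_2}] = \bigoplus_{h \in H} [\delta_h]$ for $H = G/[G,G]$ acting on $M^{[G,G]}$, shows that the $H$-redundancy carried by $\theta \overline{\theta}$ can be absorbed. Using that each $\pi_\sigma$ has dimension exactly $t$ (so that the central extension $1 \to Z(G) \to G \to H \to 1$ is nondegenerate in a suitable sense), the residual data collapses to a well-defined automorphism class $[\tau]$ of $M^{[G,G]}$ lying outside $\widehat{Z(G)}$.

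Next I will show that $L := \langle [\tau], \widehat{Z(G)} \rangle \subset \Out(M^{[G,G]})$ admits the double-coset decomposition $L = \widehat{Z(G)} \sqcup \widehat{Z(G)} [\tau] \widehat{Z(G)}$ and that $L$ acts sharply $2$-transitively on $L/\widehat{Z(G)}$. Self-conjugacy $[\rho] = [\overline{\rho}]$ forces $[\tau]^{-1} \in \widehat{Z(G)} [\tau] \widehat{Z(G)}$; the fusion rule $[\rho^2] = \sum_{g \in G} [\alpha_g] + (s-1)t[\rho]$ together with Lemma \ref{distinct} yields $[\tau \gamma_\sigma \tau] \in L$ for every $\sigma \in \widehat{Z(G)}$; and $\dim(\rho,\rho) = 1$ forces $|\widehat{Z(G)} [\tau] \widehat{Z(G)}| = s^2$, whence $|L| = s(s+1)$, with the action on $L/\widehat{Z(G)}$ sharply $2$-transitive. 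Since $Z(G)$ is abelian, the point stabilizer $\widehat{Z(G)}$ is abelian; the classification of finite sharply $2$-transitive permutation groups with abelian point stabilizer (see \cite[Chapter XII, \S 9]{HB82}) then places us in the field case, so $s+1 = q$ is a prime power, $L \cong \F_q \rtimes \F_q^\times$, and $\widehat{Z(G)} \cong \F_q^\times$ is cyclic. Dually, $Z(G)$ is cyclic, as required.

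The main obstacle is Step 1: producing the dimension-one automorphism class $[\tau]$ out of the dimension-$t$ sector $[\theta]$. In the $t = 1$ setting of Section 5 the class $[\theta]$ was itself an automorphism, but here the intermediate Galois inclusion $M^G \subset M^{[G,G]}$ of index $t^2$ introduces an $H$-redundancy that must be carefully divided out using the decomposition $[\iota_2 \overline{\iota_2}] = \bigoplus_{h \in H} [\delta_h]$ and the uniform-dimension property of the $\pi_\sigma$'s. Once $[\tau]$ is in hand, the remaining sharp $2$-transitivity verification follows the template of Section 5 essentially verbatim.
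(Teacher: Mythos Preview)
Your Step~1 is a genuine gap, not a routine verification. You need to produce an automorphism class $[\tau]\in\Out(M^{[G,G]})\setminus\widehat{Z(G)}$ out of the dimension-$t$ endomorphism $\theta=\iota_2\varphi$, but no mechanism is given: the sentence ``the residual data collapses to a well-defined automorphism class $[\tau]$'' is an assertion, not a construction. In Section~5 this step was trivial because $\theta$ itself had dimension~$1$; here it does not, and neither does $\varphi^{-1}\beta_{\Phi(\sigma)}\varphi$. There is no evident dimension-$1$ sector to peel off at the level of $M^{[G,G]}$, and ``dividing out the $H$-redundancy'' via $[\iota_2\overline{\iota_2}]=\bigoplus_{h\in H}[\delta_h]$ does not by itself produce one---those $\delta_h$ live on $M^{[G,G]}$ too and are already accounted for in Lemma~\ref{distinct}. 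Without $[\tau]$ your Step~2 has nothing to act on, so the double-coset and sharp $2$-transitivity argument cannot start.

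The paper sidesteps this obstacle entirely by working not with $\Out$ of an intermediate factor but with the intertwiner space $\cK=(\rho,\rho^2)$ itself. From the irreducible decomposition $\cK=\bigoplus_{\sigma\in\widehat{Z(G)}^\dagger}\cK_\sigma$ under $V$ (each $\cK_\sigma$ carrying $\pi_\sigma$), the anti-unitaries $j_1,j_2$ induce permutations of the index set: $j_1$ sends $\cK_\sigma$ to $\cK_{\overline\sigma}$, while $j_2$ sends $\cK_\sigma$ to $\cK_{\theta(\sigma)}$ for some involution $\theta$ on $\widehat{Z(G)}^\dagger$ (here $\theta$ is a permutation of a finite set, not a factor automorphism). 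The relation $(j_2j_1)^3=I$ gives $\theta(\overline{\theta(\overline{\theta(\overline\sigma)})})=\sigma$, and the support constraint on $l(\cK_\sigma)$ coming from Eqs.~(\ref{invariance}), (\ref{equivariance}), (\ref{Frobenius2}) yields the functional equation $\theta(\theta(\nu)\overline{\theta(\mu)})=\theta(\nu\overline\mu)\,\overline{\theta(\overline\mu)}$. These are precisely the hypotheses of Siehler's combinatorial result \cite[Theorem~6.1]{S03}, which directly forces $\widehat{Z(G)}$ (hence $Z(G)$) to be cyclic with $s+1$ a prime power. The underlying structure is indeed the same near-field/sharply $2$-transitive phenomenon you are aiming for, but the paper accesses it through the combinatorics of the $j$-maps on $\cK$ rather than through automorphisms of an intermediate subfactor---thereby avoiding the extraction problem that stalls your approach.
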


\begin{proof} Since $t=\dim \pi_\sigma$, Theorem \ref{representation} implies that 
the representation $(V,\cK)$ has the following irreducible decomposition:  
$$(V,\cK)=\bigoplus_{\sigma\in \widehat{Z(G)}^\dagger}(\pi_\sigma,\cK_\sigma).$$
Eq.(\ref{j1}) implies $j_1\cK_\sigma=\cK_{\overline{\sigma}}$. 

Thanks to Eq.(\ref{Weyl}), the operator $U_{\cK}(h)$ for $h\in G$ is an intertwiner between $V$ and $\chi_h\otimes V$. 
Since the above lemma shows that $\chi_h\otimes \pi_\sigma$ is equivalent to $\pi_\sigma$, we have 
$U_{\cK}(h)\cK_\sigma=\cK_\sigma$. 
Thus Eq.(\ref{j2}) implies that there exists a permutation $\theta$ of order two of $\widehat{Z(G)}^\dagger$ such that $j_2\cK_\sigma=\cK_{\theta(\sigma)}$. 
Eq.(\ref{period3}) implies 
$$\theta(\overline{\theta(\overline{\theta(\overline{\sigma})})})=\sigma,\quad \sigma\in \widehat{Z(G)}^\dagger.$$

Eq.(\ref{orthogonality2}) shows that if $T\in \cK_\sigma$ is an isometry, we have 
\begin{equation}\label{l=0} I_{\cK_{\theta(\sigma)}}+l(T)^*l(T)=Q.\end{equation}
Thus if $s=2$, we have $l(T)=0$. In this case $Z(G)\cong \Z_2$ and the statement holds. 
Assume $s\neq 2$ now. 
Then $l(T)\neq 0$ for any $T\in \cK_\sigma\setminus \{0\}$. 
Eq(\ref{invariance}) and Eq.(\ref{equivariance}) implies 
$$l(\cK_\sigma) \subset \bigoplus_{\mu,\nu\in \widehat{Z(G)}^\dagger,\; \sigma=\theta(\nu)\overline{\theta(\mu)}
}\cK_\mu \cK_{\nu\overline{\mu}}\cK_\nu^*.$$ 
Eq.(\ref{Frobenius2}) implies $\theta(\theta(\nu)\overline{\theta(\mu)})=\theta(\nu\overline{\mu})\overline{\theta(\overline{\mu})}$. 
Thus \cite[Theorem 6.1]{S03} implies that $Z(G)$ is cyclic, and $s+1$ is a prime power. 
\end{proof} 

\begin{lemma} The number $s$ is a prime and $G/Z(G)$ is an elementary abelian $s$-group.  
In consequence, the group $G$ is an extra-special $s$-group.  
\end{lemma}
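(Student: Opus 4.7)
My plan is to exploit the commutator pairing on $V:=G/Z(G)$. Define
$$b:V\times V\to Z(G),\qquad b(\bar g,\bar h):=[g,h].$$
Since $[G,G]=Z(G)$ is central, the identity $[gz,h]=[g,h]$ for $z\in Z(G)$ (and the analogous identity in the second slot) makes $b$ well-defined on $V\times V$, while centrality of commutators gives bi-multiplicativity $[gg',h]=[g,h][g',h]$. The pairing $b$ is alternating since $[g,g]=e$ and non-degenerate because $[g,\cdot]\equiv e$ forces $g\in Z(G)$.

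The next step is a conjugacy-class count. By Lemma \ref{tensorproduct} together with Theorem \ref{representation}, the set $\hG$ is the disjoint union of the $t^2$ linear characters $\Hom(G,\T)$ and the $s-1$ representations $\{\pi_\sigma:\sigma\in\widehat{Z(G)}^\dagger\}$ of dimension $t$, so $G$ has exactly $t^2+s-1$ conjugacy classes. The $s$ central elements give $s$ singleton classes, leaving $t^2-1$ non-central classes accounting for $|G|-s=s(t^2-1)$ elements. For non-central $g$, the map $\phi_g:V\to Z(G)$ sending $\bar h\mapsto [g,h]$ is a homomorphism whose kernel is $C_G(g)/Z(G)$, so the class size $|G|/|C_G(g)|$ equals $|\mathrm{Im}\,\phi_g|$ and therefore divides $s=|Z(G)|$. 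The total $s(t^2-1)$ distributed over $t^2-1$ classes of size at most $s$ forces every non-central class to have size exactly $s$; equivalently, $\phi_g$ is surjective for every $\bar g\neq e$.

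From this the conclusions follow by elementary bilinear algebra over $\Z/s\Z$. Let $p$ be any prime divisor of $|V|=t^2$; Cauchy's theorem produces $v\in V$ with $v\neq e$ and $v^p=e$. Bi-multiplicativity of $b$ in the first argument yields $\phi_v(w)^p=\phi_{v^p}(w)=e$ for every $w$, so every element of $\mathrm{Im}\,\phi_v$ has order dividing $p$. Since $\mathrm{Im}\,\phi_v=Z(G)\cong\Z/s\Z$, this forces $s\mid p$, whence $s=p$; as this holds for every prime dividing $|V|$, both $V$ and $G$ are $s$-groups and $s$ is prime. Applying the same identity with arbitrary $v\in V$ and $p=s$ shows $\phi_{v^s}=\phi_v^s\equiv e$, and non-degeneracy of $b$ yields $v^s=e$. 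Hence $V$ is elementary abelian of exponent $s$, and together with $Z(G)=[G,G]\cong\Z/s\Z$ this identifies $G$ as an extra-special $s$-group of order $s^{2a+1}$ with $t=s^a$.

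The main obstacle I expect is the conjugacy-class calculation: one needs the exact count $t^2+s-1$ of irreducible representations (supplied by Lemma \ref{tensorproduct}) in order to combine with the divisibility $|\mathrm{Im}\,\phi_g|\mid s$ and rigidify the class equation so that every non-central class attains the maximal size $s$. Once every row of $b$ is known to be surjective, the remaining arguments are routine manipulations of a non-degenerate alternating pairing valued in $\Z/s\Z$.
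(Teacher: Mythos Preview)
Your proof is correct and reaches the conclusion by a route parallel to the paper's but with a different engine driving the key step. Both arguments define the commutator pairing $\omega(g,h)=[\tilde g,\tilde h]$ on $V=G/Z(G)$ and then need the strong non-degeneracy statement that for every non-trivial $\bar g$ the map $\phi_g:h\mapsto\omega(g,h)$ hits all of $Z(G)$. The paper obtains this by showing each $\pi_\sigma$ is a \emph{faithful} projective representation of $V$ (via the dimension bound $\dim\pi_\sigma=t=\sqrt{|V|}$): if $\pi_{\sigma^p}(\tilde g)$ were scalar, faithfulness would force $\bar g=e$. You obtain it instead by a conjugacy-class count, using that $|\hat G|=t^2+s-1$ forces the $t^2-1$ non-central classes to partition $s(t^2-1)$ elements into pieces of size dividing $s$, hence all of size exactly $s$. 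These two facts are equivalent (surjectivity of $\phi_g$ for $g\neq e$ is precisely faithfulness of every $\pi_\sigma$), so from that point on both proofs run the same: an element of order $p$ in $V$ forces $Z(G)\cong\Z/s\Z$ to have exponent dividing $p$, whence $s=p$, and then $\phi_{v^s}\equiv e$ together with non-degeneracy gives $v^s=e$. Your counting argument is a bit more elementary in that it avoids the projective-representation dimension bound, while the paper's argument is perhaps more conceptual; both rely on the previously established cyclicity of $Z(G)$.
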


\begin{proof} Since $\dim \pi=t$ and $\# G/Z(G)=t^2$, if we regard $\pi_\sigma$ for $\sigma\in \widehat{Z(G)}^\dagger$ as a projective representation 
of $G/Z(G)$, it is faithful. 
Indeed, if it were not the case, there would be a normal subgroup $H$ of $G$ strictly containing $Z(G)$ such that we can regard $\pi_\sigma$ as 
an irreducible projective representation of $G/H$. 
However, this is impossible because $t=\dim \pi_\sigma$ cannot exceed the square root of the order of $G/H$. 

For each $g\in G/Z(G)$, we choose a lift $\tilde{g}$, and set $\omega(g,h)=\tilde{g}\tilde{h}\tilde{g}^{-1}\tilde{h}^{-1}\in Z(G)$ for $g,h\in G/Z(G)$. 
Then $\omega:(G/Z(G))^2\rightarrow Z(G)$ is an anti-symmetric bihomomorphism, which is independent of the choice of the lifts. 
Let $p$ be a prime dividing $t$. 
We choose $g\in G/Z(G)$ whose order is $p$. 
Assume that $s$ is not equal to $p$. 
Then there exists a character $\sigma\in \widehat{Z(G)}$ such that $\sigma^p\neq 1$.  
Since $\sigma^p(\omega(g,h))=1$ for all $h\in G/Z(G)$, the matrix $\pi_{\sigma^p}(\tilde{g})$ is a scalar. 
However, this would imply $g=e$, which is contradiction, and we obtain $s=p$. 
Since this is the case for all prime $p$ dividing $t$, we see that $G/Z(G)$ is a $p$-group. 
If there existed $g\in G/Z(G)$ satisfying $g^p\neq e$, for any $\sigma\in \widehat{Z(G)}^\dagger$ and $h\in G/Z(G)$, 
we would have $\sigma(\omega(g^p,h))=1$, and $\omega(g^p,h)=e$. 
This means $\tilde{g}^p\in Z(G)$, and is again contradiction. 
Thus we conclude that $G/Z(G)$ is an elementary abelian $p$-group. 
\end{proof}

\begin{lemma} The number $s$ is $2$, and $j_2j_1$ is a scalar. 
In particular, the group $G$ is an extra-special 2-group. 
\end{lemma}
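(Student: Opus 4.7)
The plan is to prove the two assertions ($s=2$ and $j_2 j_1$ scalar) in sequence, splitting the argument cleanly into an ``$s=2$-forces-everything'' part and an exclusion of larger $s$.

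\textbf{Step one: pin down $l$ exactly.} I would first apply Schur orthogonality for the irreducible representation $\pi_{\theta(\sigma)}$ on the $t$-dimensional space $\cK_{\theta(\sigma)}$ to Eq.~(\ref{orthogonality2}) with $T=T'$ a unit vector in $\cK_\sigma$. The character sum evaluates to
$$\frac{1}{d}\sum_{h\in G}V(h)j_2(T)j_2(T)^*V(h)^* = \frac{|G|}{d\,\dim\pi_{\theta(\sigma)}}\,P_{\cK_{\theta(\sigma)}} = P_{\cK_{\theta(\sigma)}},$$
so $l(T)^*l(T) = I_\cK - P_{\cK_{\theta(\sigma)}}$, a positive operator of Hilbert--Schmidt norm $(s-2)t$. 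A parallel reduction of Eq.~(\ref{complete1}) (using the fact that each $\pi_\sigma$ appears with multiplicity one in $U_\cK$ and trace $t$) collapses the first summand on the left to $\sum_\sigma P_{\cK_\sigma}\otimes P_{\cK_{\bar\sigma}}$, yielding
$$\sum_i l(T_i)l(T_i)^* = \sum_{\sigma,\tau:\; \tau\ne\bar\sigma} P_{\cK_\sigma}\otimes P_{\cK_\tau}.$$
In particular, when $s=2$ the first identity forces $l\equiv 0$; the content of the remainder of the lemma in this case becomes purely about $(\cK,j_1,j_2,V,U_\cK,\chi)$.

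\textbf{Step two: exclude $s>2$.} Since $Z(G)$ is cyclic of \emph{odd} prime order $s>2$, the involution $\sigma\mapsto\bar\sigma$ on $\widehat{Z(G)}^\dagger$ has no fixed points. Hence for any $T\in\cK_\sigma$ and any $h\in G$ the vectors $j_1(T)\in\cK_{\bar\sigma}$ and $U_\cK(h)T\in\cK_\sigma$ lie in orthogonal components of $\cK$, so the character-sum term on the right of Eq.~(\ref{l1}) vanishes, leaving
$$\sum_{i=1}^m {T'}^*l(T_i)j_2\!\bigl(l(T)^*j_2(T'')T_i\bigr) \;=\; \epsilon\,\inpr{T''}{T'}\,T.$$
The plan is to combine this identity with Eq.~(\ref{l3}) (or equivalently Eq.~(\ref{l7})) and with the rigid block form of $l$ from Step one: by contracting both sides against an orthonormal basis of $\cK_\sigma$ and summing, one obtains a scalar identity whose left-hand side can be bounded using $\|l(T)\|_{\mathrm{HS}}^2=(s-2)t$ together with the constraint $l\circ j_2=\sum_i T_i l(\cdot)^* j_1(T_i)$ of Eq.~(\ref{Frobenius2}). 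The right-hand side is a constant $\epsilon t$ (after contraction over $\sigma$ or, alternatively, trace). Matching the two expressions should force $(s-2)=0$, contradicting $s>2$. A close cousin of this bookkeeping strategy is used earlier to derive Lemma~\ref{tensorproduct} via induced representations, and I expect to reuse the fact that $\pi_\sigma\otimes\bar\pi_\sigma=\bigoplus_{\tau\in\Hom(G,\T)}\tau$ to handle the cross terms.

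\textbf{Step three: $j_2j_1$ is scalar.} With $s=2$ established, $\cK=\cK_{\sigma_0}$ carries the single irreducible representation $V\cong\pi_{\sigma_0}$ of dimension $t$, and $l\equiv 0$. Lemma~\ref{j} exhibits $j_2\circ j_1^{-1}$ as a unitary intertwiner from $V$ to $U_\cK$. Invoking the gauge equivalence of admissible tuples from Definition~\ref{tuple}, I would choose a unitary $W$ on $\cK$ identifying $U_\cK$ with $V$ as honest representations; after this change Schur's lemma (applied to the irreducible $\pi_{\sigma_0}$) forces $j_2\circ j_1^{-1}$ to be a scalar, hence so is $j_2 j_1=\epsilon\, j_2\circ j_1^{-1}$. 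The relation $(j_2 j_1)^3=I$ of Eq.~(\ref{period3}) then pins this scalar down to a cube root of unity, which will feed into the three-fold classification announced in Theorem~\ref{noncommutative}.

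\textbf{Expected main obstacle.} The crux is Step two: Eq.~(\ref{l1}) is bilinear in $l$ and its indices run through all of $\widehat{Z(G)}^\dagger$ coupled by both $\theta$ and character multiplication, so the bookkeeping required to extract a single scalar identity is delicate. It may be necessary to iterate with the rotation symmetry $(j_2\circ j_1)^3=I$, or to use Eq.~(\ref{Frobenius1}) to replace $l$ with $l\circ j_1$ and absorb terms, before the intended numerical contradiction becomes visible.
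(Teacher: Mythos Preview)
Your Step~2 is not a proof but a hope, and the hoped-for mechanism is not the one the paper uses. You correctly observe that for odd prime $s$ the character sum in Eq.~(\ref{l1}) drops out, leaving a bilinear-in-$l$ identity equal to $\epsilon\inpr{T''}{T'}T$; but you never extract from this any concrete scalar relation, let alone one that forces $s=2$. A naive trace/contraction produces a vector identity in $\cK$ whose left side involves $l$ quadratically through two different legs (via $l(T_i)$ and $l(T)^*$), coupled by $j_2$ and the unknown permutation $\theta$; there is no evident norm bound of the shape ``$\leq C(s-2)t$'' that matches $\epsilon t$. The paper avoids $l$ entirely for this step and instead exploits Eq.~(\ref{rhoU2}). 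That equation, restricted to the diagonal block $\cK_\sigma\otimes\cK_{\overline{\sigma}}$, combines with the non-degeneracy of the commutator pairing on $G/Z(G)$ (encoded via a map $\varphi_\sigma$ with $\overline{\chi_g(h)}=\sigma(\varphi_\sigma(g)h\varphi_\sigma(g)^{-1}h^{-1})$) and Peter--Weyl to yield the proportionality $\pi_\sigma(\varphi_\sigma(g))\sim (j_1j_2)\,\pi_{\theta(\overline{\sigma})}(g)\,(j_1j_2)^{-1}$. Rewriting this via Eq.~(\ref{Weyl}) and iterating with $R=j_2j_1$ of period three and the relation $\theta(\overline{\theta(\overline{\theta(\overline{\sigma})})})=\sigma$ gives $\pi_\sigma(g)\sim\pi_\sigma(g^{-1})$, hence $g^2\in Z(G)$ for all $g$. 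That structural conclusion is what forces $s=2$; no norm estimate is involved.

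Your Step~3 also has a gap: the gauge equivalence of Definition~\ref{tuple} conjugates $V$ and $U_\cK$ by the \emph{same} unitary $W$, so it cannot turn $U_\cK$ into $V$ unless they were already equal. What is actually needed (and what the paper does, using the same Eq.~(\ref{rhoU2}) analysis) is to observe that once $s=2$ there is a single irreducible $\pi=V$, and the Weyl relation Eq.~(\ref{Weyl}) together with Schur gives $R^{-1}\pi(g)R=\tau(g)\pi(g)$ for some character $\tau\in\Hom(G,\T)$; then $R^3=I$ forces $\tau^3=1$, while $\Hom(G,\T)\cong(\Z/2)^{2k}$ forces $\tau=1$, whence $R$ commutes with the irreducible $\pi(G)$ and is a scalar. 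Your invocation of Schur is the right endgame, but it requires first pinning down $U_\cK=\tau\cdot V$ and killing $\tau$ via the period-three condition, not a basis change.
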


\begin{proof} We choose an orthonormal basis $\{T(\sigma)_{i}\}_{i=1}^t$ of $\cK_\sigma$ for $\sigma\in \widehat{Z(G)}^\dagger$, and express $\pi_\sigma$ as 
$$\pi_\sigma(g)=\sum_{i,j=1}^t\pi_\sigma(g)_{ij}T(\sigma)_iT(\sigma)_j^*.$$
Since $j_2V(g)j_2^*=U_{\cK}(g)$, we have the irreducible decomposition of $U_{\cK}$ as 
$$U_{\cK}(g)=\sum_{\sigma\in \widehat{Z(G)}^\dagger}\overline{\pi_\sigma(g)_{ij}}j_2(T(\sigma)_i)j_2(T(\sigma)_j)^*.$$
Since $\chi_h\otimes \pi_\sigma\cong \pi_\sigma$, Eq.(\ref{rhoU2}) takes the following form now: 
\begin{align*}\lefteqn{
j_2j_1U_{\cK}(g)j_1^*j_2^*\otimes U_{\cK}(g)}\\
&=\frac{1}{st}\sum_{h,\sigma,i,j,p,q}\chi_g(h)\overline{\pi_\sigma(h)_{ij}}\pi_\sigma(h)_{pq}j_2T(\sigma)_iT(\sigma)_p^*j_2^*
\otimes j_1j_2T(\sigma)_jT(\sigma)_q^*j_2^*j_1^*\\
&+\sum_{\sigma,ij}\overline{\pi_\sigma(g)_{ij}}l(j_2T(\sigma)_i)l(j_2T(\sigma)_j)^*,
\end{align*}
which is equivalent to 
\begin{align*}\lefteqn{
j_1j_2V(g)j_2^*j_1^*\otimes V(g)}\\
&=\frac{1}{st}\sum_{h,\sigma,i,j,p,q}\overline{\chi_g(h)}\pi_\sigma(h)_{ij}\overline{\pi_\sigma(h)_{pq}}T(\sigma)_iT(\sigma)_p^*
\otimes j_2j_1j_2T(\sigma)_jT(\sigma)_q^*j_2^*j_1^*j_2\\
&+\sum_{\sigma,ij}\pi_\sigma(g)_{ij}(j_2\otimes j_2)l(j_2T(\sigma)_i)l(j_2T(\sigma)_j)^*(j_2\otimes j_2)^*.
\end{align*}
This and Lemma \ref{tensorproduct} imply
\begin{align*}\lefteqn{
\sum_{\sigma}j_1j_2\pi_\sigma(g)j_2^*j_1^*\otimes \pi_{\overline{\sigma}}(g)}\\
&=\frac{1}{st}\sum_{h,\sigma,i,j,p,q}\overline{\chi_g(h)}\pi_\sigma(h)_{ij}\overline{\pi_\sigma(h)_{pq}}T(\sigma)_iT(\sigma)_p^*
\otimes j_2j_1j_2T(\sigma)_jT(\sigma)_q^*j_2^*j_1^*j_2, 
\end{align*}
and 
\begin{align*}\lefteqn{
\sum_{\sigma\neq \overline{\tau}}j_1j_2\pi_\sigma(g)j_2^*j_1^*\otimes \pi_{\tau}(g)}\\
&=\sum_{\sigma,ij}\pi_\sigma(g)_{ij}(j_2\otimes j_2)l(j_2T(\sigma)_i)l(j_2T(\sigma)_j)^*(j_2\otimes j_2)^*.
\end{align*}

Recall that for $\sigma \in \widehat{Z(G)}^\dagger$, the map $G\times G\ni(g,h)\mapsto \sigma(ghg^{-1}h^{-1})\in \T$ induces a non-degenerate 
anti-symmetric bicharacter of $G/[G,G]$. 
Thus for each $g\in G$, there exists a map $\varphi_\sigma:G\rightarrow G$ satisfying $\overline{\chi_g(h)}
=\sigma(\varphi_\sigma(g)h\varphi_\sigma(g)^{-1}h^{-1})$. 
Note that the element $\varphi_\sigma(g)$ is unique up to a multiple of a central element of $G$. 
Using this, we have 
\begin{align*}
\lefteqn{\sum_{h\in G}\overline{\chi_g(h)}\pi_\sigma(h)_{ij}\overline{\pi_\sigma(h)_{pq}}} \\
&=\sum_{h\in G}\sigma(\varphi_\sigma(g)h\varphi_\sigma(g)^{-1}h^{-1})\pi_\sigma(h)_{ij}\overline{\pi_\sigma(h)_{pq}}\\
&=\sum_{h\in G}\pi_\sigma(\varphi_\sigma(g)h\varphi_\sigma(g)^{-1}h^{-1}h)_{ij}\overline{\pi_\sigma(h)_{pq}}\\
&=\sum_{h\in G}\sum_{a,b=1}^t\pi_\sigma(\varphi_\sigma(g))_{ia}\pi_\sigma(h)_{ab}\overline{\pi_\sigma(\varphi_\sigma(g))_{jb}\pi_\sigma(h)_{pq}}\\
&=\frac{n}{\dim \pi_\sigma}\pi_\sigma(\varphi_\sigma(g))_{ip}\overline{\pi_\sigma(\varphi_\sigma(g))_{jq}}.
\end{align*}
Thus we obtain
$$\sum_{\sigma\in \widehat{Z(G)}^\dagger}j_1j_2\pi_\sigma(g)j_2^*j_1^*\otimes \pi_{\overline{\sigma}}(g)
=\sum_{\sigma\in \widehat{Z(G)}^\dagger}\pi_\sigma(\varphi_\sigma(g))\otimes j_2j_1j_2\pi_\sigma(\varphi_\sigma(g))j_2^*j_1^*j_2^*.$$
This implies that there exists a scalar $c_{\sigma}(g)\in \T$ such that 
$$\pi_\sigma(\varphi_\sigma(g))=c_\sigma(g)j_1j_2\pi_{\theta(\overline{\sigma})}(g)j_2^*j_1^*,$$
$$j_2j_1j_2\pi_\sigma(\varphi_\sigma(g))j_2^*j_1^*j_2^*=\overline{c_\sigma(g)}\pi_{\overline{\theta(\overline{\sigma})}}(g).$$
Since $(j_2j_1)^3=I$ and $j_1\pi_\sigma(g)j_1^*=\pi_{\overline{\sigma}}(g)$, the above two equations are equivalent. 

In the above argument, we have seen that $\overline{\chi_g(h)}\pi_\sigma(h)=\pi_\sigma(\varphi_\sigma(g)h\varphi_{\sigma}(g)^{-1})$ holds. 
On the other hand, Eq.(\ref{Weyl}) implies that 
$$\overline{\chi_g(h)}\pi_\sigma(h)=j_2\pi_{\theta(\sigma)}(g^{-1})j_2^*\pi_\sigma(h)j_2\pi_{\theta(\sigma)}(g)j_2^*$$ 
holds, and so 
$$j_2\pi_{\theta(\sigma)}(g^{-1})j_2^*\sim j_1j_2\pi_{\theta(\overline{\sigma})}(g)j_2^*j_1^*,$$
where $\sim$ means that two matrices are proportional.  
Let $R=j_2j_1$, which is a unitary of period three. 
Then if $\sigma$ is replaced with $\theta(\sigma)$ and $g$ is replaced with $g^{-1}$ in the above, we obtain
$$\pi_\sigma(g)\sim Rj_2\pi_{\theta(\overline{\theta(\sigma)})}(g^{-1})j_2^*R^*= Rj_2\pi_{\overline{\theta({\overline{\sigma})}}}(g^{-1})j_2^*R^*
=R^*\pi_{\theta({\overline{\sigma})}}(g^{-1})R.$$
Since $R^3=I$ and $\theta(\overline{\theta(\overline{\theta(\overline{\sigma})})})=\sigma$, this implies $\pi_\sigma(g)\sim \pi_\sigma(g^{-1})$, and 
$g^2\in Z(G)$ for all $g\in G$. 
Therefore $s=2$. 

Let $\sigma$ be the unique element in $\widehat{Z(G)}^\dagger$, and let $\pi=\pi_{\sigma}$. 
The above argument shows that there exists $\tau\in \Hom(G,\T)$ such that $R^*\pi(g)R=\tau(g)\pi(g)$. 
Since $R$ has period three, we have $\tau^3=1$, and $\tau=1$ as $\Hom(G,\T)=\widehat{G/Z(G)}$ is an elementary 2-group. 
This implies that $R$ is a scalar. 
\end{proof}

We have seen that $G$ is an extra-special $2$-group of order $st^2=2t^2$.
Let $\sigma$ be a unique non-trivial character of $Z(G)\cong \Z_2$. 
Then $\pi_\sigma$ is a unique irreducible representation of dimension $t$, and we denote $(\pi_\sigma,\cK_\sigma)=(\pi,\cK_\pi)$ for simplicity. 
The set $\hG^\dagger$ is a singleton $\{\pi\}$, and we can identify $(V,\cK)$ with $(\pi,\cK_\pi)$. 
The period three unitary $j_2j_1$ is a scalar, which we denote by $\zeta\in \T$,  
and we have $j_2=\epsilon \zeta j_1$. 
This implies that $U_\cK=V$, and $\chi_h(g)$ is determined by $\pi(g)\pi(h)=\chi_h(g)\pi(h)\pi(g)$.  
Eq.(\ref{l=0}) implies $l=0$. 
In summary, every information of $\cC$ is encoded in the tuple 
$$(\cK=\cK_\pi,V=\pi, U_\cK=\pi, j_1,j_2=\epsilon \zeta j_1,\chi, l=0).$$  

Now we proceed to the reconstruction part. 
Recall that there are exactly two isomorphism classes of extra-special 2-groups for a given order $2^{2k+1}$, 
where $k$ is a natural number (see \cite[Exercise 5.3.7]{R93}). 
In the both cases, there exists only one irreducible representation whose dimension is larger than one, 
and its dimension is necessarily $2^k$.  

\begin{lemma} Let $G$ be an extra-special 2-group of order $2^{2k+1}$, and let $(\pi,\cK_\pi)$ be the unique 
irreducible representation $G$ of dimension $2^k$. 
Let $j$ be an anti-unitary on $\cK_\pi$ satisfying $j^2=\epsilon\in \{1,-1\}$ and $j\pi(g)=\pi(g)j$ for all $g\in G$, 
and let $\zeta$ be a third root of unity. 
Let $\chi$ be a bicharacter of $G$ determined by $\pi(g)\pi(h)=\chi_h(g)\pi(h)\pi(g)$. 
Then 
$$(\cK=\cK_\pi,V=\pi, U_\cK=\pi, j_1=j,j_2=\epsilon \zeta j,\chi, l=0),$$
is an admissible tuple.  
\end{lemma}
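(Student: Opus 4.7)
My approach is to verify, one by one, each of the roughly fifteen admissibility conditions listed in Definition \ref{tuple}, grouping them by the mechanism of verification.

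First, the purely algebraic relations on the anti-linear isometries are immediate. For (\ref{involution}), $(\epsilon\zeta j)^{2} = |\epsilon\zeta|^{2}j^{2} = \epsilon$ since $j$ is antilinear; the intertwining relations (\ref{j1}), (\ref{j2}) follow from $j\pi(g)=\pi(g)j$ together with the fact that $\epsilon\zeta$ is a scalar; and (\ref{period3}) reduces to $\zeta^{3}=1$ after noting that $j_{2}j_{1} = \epsilon\zeta\cdot j^{2} = \zeta\cdot I$ is a scalar operator. The Weyl relation (\ref{Weyl}) is the defining relation for $\chi$, and (\ref{symmetric}) follows because the commutator $[g,h]$ lies in $Z(G)\cong\Z_{2}$ and thus has order at most $2$. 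The representation-theoretic decomposition of Theorem \ref{representation} is verified by noting $s=2$, $t=2^{k}$, $|G|=2t^{2}$, and that the commutator pairing $h\mapsto\chi_{h}$ has kernel $Z(G)$, yielding a surjection onto $\Hom(G,\T)$ with each character arising exactly twice.

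Second, the choice $l=0$ trivializes a large block of conditions: (\ref{invariance}), (\ref{Frobenius1}), (\ref{Frobenius2}), (\ref{equivariance}), and (\ref{l2}) collapse to $0=0$ on both sides; and (\ref{S*rho2S}) reduces to $0 = (1-2n/d^{2})T$, which holds because $1 - 2\cdot 2t^{2}/(2t)^{2} = 0$.

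Third, the conditions (\ref{orthogonality1}), (\ref{orthogonality2}), and (\ref{rhoU2}) reduce to Schur orthogonality for the faithful irrep $\pi$. For (\ref{orthogonality1}), $\sum_{g\in G}\pi(g)=0$ because $\pi$ is non-trivial. For (\ref{orthogonality2}) with $l=0$, the Schur sum $\sum_{h}\pi(h)X\pi(h)^{*} = \frac{|G|\Tr X}{\dim\pi}I$ applied to $X=j_{2}(T')j_{2}(T)^{*}$ produces precisely $\inpr{T}{T'}Q$ after dividing by $d = 2t$, since $|G|/(d\dim\pi)=1$. For (\ref{rhoU2}), the LHS simplifies to $\pi(g)\otimes\pi(g)$ because $j_{2}j_{1}^{-1}=\epsilon\zeta\, I$ is a scalar, and the RHS collapses to the same expression by Schur's second orthogonality applied to $\sum_{h}\chi_{g}(h)\pi_{ij}(h)\overline{\pi_{pq}(h)}$, using the substitution $\chi_{g}(h)\pi(h)=\pi(g)\pi(h)\pi(g)^{-1}$ coming from Weyl.

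The technical heart of the proof, which I expect to be the main obstacle, is the verification of (\ref{l3}) and especially (\ref{l1}). With $l=0$ both left-hand sides vanish, so the task reduces to showing the right-hand sides also vanish. For (\ref{l3}), the element $\sum_{i}T_{i}j_{1}(T_{i})\in\cK^{\otimes 2}$ is, up to a scalar, the canonical intertwiner witnessing the self-duality of $\pi$, and the Schur sum of its pairings against $j_{1}(\pi(h)^{*}T')^{*}$ cancels by $G$-equivariance. For (\ref{l1}), the Schur reduction of the second RHS term yields a compact expression in terms of $\inpr{T''}{j_{1}T'}j_{1}T$, after which the remaining identity couples the inner product structure, the anti-unitary $j_{1}$, and the sign $\epsilon$; establishing this identity is the delicate step in which the match between the Frobenius-Schur indicator of $\pi$ and the prescribed value of $\epsilon$ becomes essential.
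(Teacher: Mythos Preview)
Your overall strategy matches the paper's exactly: check each admissibility condition, note that $l=0$ collapses most of them, and reduce the remainder to Schur orthogonality. The treatment of (\ref{involution})--(\ref{period3}), (\ref{Weyl}), (\ref{symmetric}), (\ref{orthogonality1}), (\ref{orthogonality2}), (\ref{rhoU2}), (\ref{S*rho2S}), and (\ref{l2}) is correct and essentially identical to the paper.

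Your handling of (\ref{l1}) contains a miscalculation that makes you overestimate the difficulty. The Schur reduction of $\frac{1}{d}\sum_{h}\inpr{\pi(h)T''}{j_1(T)}\,j_1(\pi(h)T')$ does \emph{not} produce an expression of the form $\inpr{T''}{j_1 T'}\,j_1 T$. Expanding in an orthonormal basis and applying $\sum_h \pi(h)_{ij}\overline{\pi(h)_{ab}}=\frac{|G|}{\dim\pi}\delta_{ia}\delta_{jb}$ gives
\[
\frac{|G|}{d\dim\pi}\,\inpr{T''}{T'}\,j_1^2(T)=\epsilon\inpr{T''}{T'}T,
\]
which cancels the first term on the right of (\ref{l1}) on the nose. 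There is no residual ``delicate identity'' and no appeal to the Frobenius--Schur indicator of $\pi$; the only fact used is $j_1^2=\epsilon$, which is a hypothesis. The compatibility of $\epsilon$ with the Frobenius--Schur indicator of $\pi$ is relevant only to the \emph{existence} of $j$, which is addressed separately in the proof of Theorem~\ref{noncommutative}, not in this lemma.

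For (\ref{l3}) your sketch is in the right direction but vaguer than necessary. The paper's argument is one line: with $l=0$ the right side lies in the image of the averaging operator $\frac{1}{|G|}\sum_h U_\cK(h)^{\otimes 3}$ on $\cK\otimes\cK\otimes\cK^*$, and since $\pi\otimes\pi\otimes\overline{\pi}\cong t^2\,\pi$ contains no trivial subrepresentation, that image is zero.
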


\begin{proof} We will show only Eq.(\ref{symmetric}),(\ref{rhoU2}),(\ref{S*rho2S}),(\ref{l1}),(\ref{l2}),(\ref{l3}) 
because the other conditions are easy to verify. 
Recall that we have $n=|G|=2^{2k+1}$, $m=2^k$, $d=2^{k+1}$. 

Since $ghg^{-1}h^{-1}\in Z(G)\cong \Z_2$, Eq.(\ref{symmetric}) holds. 
Since $d^2=2n$, Eq.(\ref{S*rho2S}) is automatically satisfied. 
Since $l=0$, Eq.(\ref{l2}) is automatically satisfied, and the left-hand sides of Eq.(\ref{l1}) and Eq.(\ref{l3}) are $0$. 
Since $\pi\otimes \pi\otimes \overline{\pi}$ contains no trivial representation, the right-hand side of 
Eq.(\ref{l3}) is $0$. 
For an orthonormal basis $\{T_i\}_{i=1}^{2^k}$ of $\cK=\cK_\pi$, let 
$$\pi(g)=\sum_{i,j=1}^{2^k}\pi(g)_{ij}T_iT_j^*.$$
Then 
\begin{align*}
\lefteqn{\frac{1}{d}\sum_{h\in G}\inpr{U_{\cK}(h)T''}{j_1(T)}j_1(U_{\cK}(h)T')} \\
 &=\frac{1}{d}\sum_{i,j,a,b=1}^{2^k}\sum_{h\in G}\pi(h)_{ij}\overline{\pi(h)_{ab}}
 \inpr{T_i}{j_1(T)}\inpr{T''}{T_j}\inpr{T_b}{T'}j_1(T_a) \\
 &=\frac{n}{d\dim \pi}\sum_{i,j,a,b=1}^{2^k}\delta_{i,a}\delta_{j,b} \inpr{T_i}{j_1(T)}\inpr{T''}{T_j}\inpr{T_b}{T'}j_1(T_a) \\
 &=\inpr{T''}{T'}j_1^2(T)=\epsilon \inpr{T''}{T'}T,
\end{align*}
and the right-hand side of Eq.(\ref{l1}) is $0$. 

The left-hand side of Eq.(\ref{rhoU2}) is $\pi(g)\otimes \pi(g)$. 
On the other hand, the right-hand side of Eq.(\ref{rhoU2}) is 
\begin{align*}
\lefteqn{\frac{1}{d}\sum_{i,j,x,y=1}^{2^k}\sum_{h\in G}\chi_h(g)\pi(h)_{ij}\overline{\pi(h)_{xy}}T_iT_x^*\otimes j_1T_jT_y^*j_1^*} \\
 &=\frac{1}{d}\sum_{i,j,x,y=1}^{2^k}\sum_{h\in G}\pi(ghg^{-1})_{ij}\overline{\pi(h)_{xy}}T_iT_x^*\otimes j_1T_jT_y^*j_1^*\\
 &=\frac{1}{d}\sum_{i,j,x,y,a,b=1}^{2^k}\sum_{h\in G}\pi(g)_{ia}\pi(h)_{ab}\overline{\pi(g)_{jb}}\overline{\pi(h)_{xy}}T_iT_x^*\otimes j_1T_jT_y^*j_1^*\\
 &=\frac{n}{d\dim \pi}\sum_{i,j,x,y,a,b=1}^{2^k}\delta_{a,x}\delta_{b,y}\pi(g)_{ia}\overline{\pi(g)_{jb}}T_iT_x^*\otimes j_1T_jT_y^*j_1^*\\
 &=\pi(g)\otimes j_1\pi(g)j_1^*\\
 &=\pi(g)\otimes \pi(g),
\end{align*}
which finishes the proof. 
\end{proof}

\begin{proof}[Proof of Theorem \ref{noncommutative}]
We use the same notation as in the above lemma. 
Since $\pi$ is irreducible, if $j'$ is another anti-unitary on $\cK_\pi$ satisfying $j'\pi(g)=\pi(g)j'$ for any $g\in G$, 
there exists $c\in \T$ with $j'=cj$. 
Choosing a square root $c^{1/2}$ of $c$, we get $j'=c^{1/2}jc^{-1/2}$. 
Therefore the equivalence class of the tuple in the above lemma is determined by the third root of unity $\zeta$. 
Theorem \ref{unique} shows that there are exactly three C$^*$ near-group categories for a given extra-special 2-group 
$G$ as far as $j$ exists. 
Therefore to finish the proof, it suffices to show the existence of $j$ satisfying the condition 
of the above lemma. 

When $|G|=8$, we have $\dim \pi=2$, and the anti-unitary $j$ with $j^2=1$ is unitarily equivalent to the complex conjugation of $\C^2$. 
Thus the condition $j\pi(g)=\pi(g)j$ is equivalent to the existence of a real representation equivalent to $\pi$. 
The dihedral group $D_8$ of order 8 satisfies this condition. 
When $\dim \pi=2$ and $j^2=-1$, the condition $j\pi(g)=\pi(g)j$ is equivalent to $\pi(G)\subset SU(2)$. 
The quaternion group $Q_8$ satisfies this condition. 

Now we consider the general case. 
Note that every extra-special 2-group of order $2^{2k+1}$ is a central product of either $k$ copies of 
$D_8$, or $k-1$ copies of $D_8$ and 1 copy of $Q_8$ (see \cite[Exercise 5.3.7]{R93}). 
Moreover, the irreducible representation $\pi$ can be constructed from 2-dimensional irreducible representations of 
$D_8$ and $Q_8$ by tensor product. 
Thus there exists $j$ as in the previous lemma with $\epsilon=1$ in the former case, and $\epsilon=-1$ in the latter case. 
This finishes the proof.
\end{proof}

We denote by $\cC_{G,\zeta}$ the C$^*$ near-group category arising from the tuple 
$$(\cK=\cK_\pi,V=\pi, U_\cK=\pi, j_1=j,j_2=\epsilon \zeta j,\chi, l=0).$$

\begin{cor} The fusion category $\cC_{G,\zeta}$ is group theoretical. 
\end{cor}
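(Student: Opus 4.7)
The plan is to realize $\cC_{G,\zeta}$ as Morita equivalent to a pointed fusion category by imitating the operator-algebraic argument of Section 5 using an intermediate subfactor adapted to the extra-special structure of $G$. Realize $\cC_{G,\zeta}$ inside $\End(M)$ for a hyperfinite type III$_1$ factor $M$, set $N = \rho(M)$, and let $\alpha : G \to \Aut(M)$ be the distinguished $G$-action. Since $G$ is extra-special of order $2t^2$, the commutator induces a non-degenerate symplectic bicharacter on $G/Z(G) \cong \Z_2^{2a}$ over $\F_2$; let $H \leq G$ be the preimage of a Lagrangian subspace, a maximal abelian subgroup of order $|H| = 2t$ containing $Z(G) = [G,G]$. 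The intermediate fixed-point subfactor $N \subset M^H \subset M$ satisfies $[M : M^H] = [M^H : N] = 2t = d(\rho)$; let $\iota : M^H \hookrightarrow M$ be the inclusion, so $d(\iota)^2 = 2t = d(\rho)$.

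The central step is to establish $[\rho] = [\iota \circ \theta \circ \biota]$ for an automorphism $\theta$ of $M^H$. Dimensional matching forces $d(\theta) = 1$ if $\theta$ exists; its existence then follows from Frobenius reciprocity, $\dim(\iota\theta\biota, \rho) = \dim(\theta, \biota\rho\iota)$, together with the fact that $H$ being Lagrangian makes the restriction $\pi|_H$ of the unique $t$-dimensional irreducible representation $\pi$ of $G$ a multiplicity-free sum of $t$ distinct characters of $H$. Combined with the vanishing $l = 0$ established in the proof of Theorem \ref{noncommutative}, this yields an explicit automorphism summand inside $\biota\rho\iota$, and the irreducibility of $\rho$ then pins down the decomposition.

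Since $H$ is abelian, $M^H \subset M$ is a crossed-product inclusion $M = M^H \rtimes_\beta \hH$ for the dual action $\beta$, so $[\biota\iota] = \bigoplus_{\chi \in \hH} [\beta_\chi]$. Let $\Gamma$ be the finite group generated in $\Out(M^H)$ by $[\theta]$ and $\beta(\hH)$. Patterning the double-coset analysis of Section 5, one shows $|\Gamma| = 4t^2 = \mathrm{FPdim}(\cC_{G,\zeta})$, and the embedding $\Gamma \hookrightarrow \Out(M^H)$ carries a class $[\omega] \in H^3(\Gamma, \T)$ whose restriction to $\hH$ is trivial (because $\beta$ is a genuine action). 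The module category over $\mathrm{Vec}_\Gamma^\omega$ determined by $(\hH, 1)$ then has dual fusion category equivalent to $\cC_{G,\zeta}$, establishing group-theoreticality in the sense of \cite[Definition 9.7.1]{EGNO15}. The three values of $\zeta$ correspond to the three admissible cohomology classes in $H^3(\Gamma, \T)$ restricting trivially to $\hH$, matching the threefold classification of Theorem \ref{noncommutative}.

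The main obstacle is the decomposition $[\rho] = [\iota\theta\biota]$ with $\theta$ a single automorphism: once this holds, the remaining identification of the pointed datum $(\Gamma, \omega, \hH)$ proceeds in parallel with the $m = |G| - 1$ case treated in Section 5. The Lagrangian property of $H$ is exactly what is needed to ensure the relevant dimension and multiplicity counts work out, and the vanishing $l = 0$ provides the computational control needed to exhibit $\theta$ explicitly.
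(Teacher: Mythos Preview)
Your overall strategy matches the paper's: choose a maximal abelian (Lagrangian) subgroup $H\lhd G$ containing $Z(G)$ with $|H|=2t$, pass to the intermediate subfactor $M^H$, and factor $[\rho]=[\nu\theta\overline{\nu}]$ for an automorphism $\theta$ of $M^H$ (the paper writes $\nu$ for your $\iota$). Once that factorization is in hand, the paper finishes more quickly than you do: since $H$ is abelian, $\overline{\nu}\nu$ decomposes into automorphisms, so $\overline{\nu}\rho\nu$ does; and since $H\lhd G$, each $\alpha_g$ restricts to $M^H$, so $\overline{\nu}\alpha_g\nu=\overline{\nu}\nu\beta_g$ also decomposes into automorphisms. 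Hence the dual category generated by these is pointed, and you are done---no need to identify $\Gamma$ or its cohomology.

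The gap in your write-up is the existence of $\theta$. You invoke Frobenius reciprocity with $\theta$ already named, and then appeal to $\pi|_H$ being multiplicity-free and $l=0$; but it is not explained how those facts produce an automorphism summand of $\biota\rho\iota$. The paper's mechanism is different and structural: the Lagrangian condition is exactly $\chi_{h_1}(h_2)=1$ for $h_1,h_2\in H$, which by Eq.~(\ref{alphaU}) gives $U(h)\in M^H$ for all $h\in H$, and then an index count yields $M^H=N\rtimes_{\alpha'}H$. With that identification, $\mu\rho_0\nu\in\End(M^H)$ (where $\mu:N\hookrightarrow M^H$) factors through $N^H\subset N\subset N\rtimes_{\alpha'}H$, and the Takesaki duality between the fixed-point and crossed-product inclusions forces it to contain an automorphism $\theta$; Frobenius reciprocity and dimension matching then give $[\rho]=[\nu\theta\overline{\nu}]$. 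This is the missing idea.

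Finally, your side computation $|\Gamma|=4t^2=\mathrm{FPdim}(\cC_{G,\zeta})$ is incorrect: the global dimension is $|G|+d^2=2t^2+4t^2=6t^2$ (cf.\ the Remark following the corollary, where the paper notes the order of the pointed group is $2^{2k+1}\times 3$). This does not affect the corollary itself, but it signals that the double-coset picture here is not a direct transcription of Section~5.
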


\begin{proof} We may assume $\cC_{G,\omega}\subset \End_0(M)$ and we use the same notation as before. 
We claim that there exists an abelian normal subgroup $H\triangleleft G$ of order $2^{k+1}$ with $Z(G)\subset H$ and $\chi_{h_1}(h_2)=1$ 
for any $h_1,h_2\in H$. 
Indeed, since $G$ is a central product of copies of $D_8$ and $Q_8$, it suffices to verify the claim for $G=D_8$ and $G=Q_8$, 
which is straightforward. 

Let $M^H$ and $N^H$ be the fixed point algebras 
$$\{x\in M;\;\alpha_h(x)=x,\; \forall h\in H\},$$
$$\{x\in N;\;\alpha'_h(x)=x,\; \forall h\in H\}.$$
We claim $M^H=N\rtimes_{\alpha'}H$. 
Indeed, we have $M^H\subset M^{Z(G)}=N\rtimes_{\alpha'}G$, and every element in $N\rtimes_{\alpha'}G$ is uniquely expands as 
$$\sum_{g\in G}U(g)\rho(x_g).$$
Thus Eq.(\ref{alphaU}) implies $N\rtimes_{\alpha'}H\subset M^H$. 
Since, 
$$[M^H:N]=[M:N]/|H|=2^{k+1}=|H|=[N\rtimes_{\alpha'}H:N],$$
we get the claim. 

Let $\nu:M^H\hookrightarrow M$ and $\mu:N\hookrightarrow M^H$ be the inclusion maps. 
Then we have $\rho=\nu\mu\rho_0$, and the image of $\rho_0\nu$ is $N^H$. 
By the duality between the crossed product inclusion $N\subset N\rtimes_{\alpha'}H=M^H$ and the fixed point algebra inclusion $N^H\subset N$, 
the endomorphism $\mu\rho_0\nu\in \End_0(M^H)$ contains an automorphism, say $\theta\in \Aut(M^H)$. 
Thus the Frobenius reciprocity implies that $[\mu\rho_0]=[\theta\overline{\nu}]$ and we get 
$[\rho]=[\nu\theta\overline{\nu}]$. 

Since $H$ is abelian, the endomorphism $\overline{\nu}\nu$ is decomposed into automorphisms, and so is $\overline{\nu}\rho\nu$. 
Since $\alpha_g$ normalizes $\alpha_H$, it globally preserves $M^H$, and there exists $\beta_g\in \Aut(M^H)$ satisfying 
$\alpha_g\nu=\nu\beta_g$. 
This implies that $\overline{\nu}\alpha_g\nu=\overline{\nu}\nu\beta_g$ is decomposed into automorphisms too. 
Thus the fusion category generated by $\overline{\nu}\rho\nu$ and $\overline{\nu}\alpha_g\nu$ is a pointed category, 
which is categorically Morita equivalent to $\cC_{G,\zeta}$.  
\end{proof}

\begin{remark} Let the notation be as in the above proof. 
\begin{itemize}
\item[(1)] The subgroup $H$ is not unique. 
For $D_8$, there are three possibilities; two of them are isomorphic to $\Z_2\times \Z_2$ and 
the other is isomorphic to $\Z_4$. 
For $Q_8$ there are three possibilities, and they are all isomorphic to $\Z_4$. 
\item[(2)] Let $L$ be the group generated by the automorphisms contained in $\overline{\nu}\rho\nu$ and 
$\overline{\nu}\alpha_g\nu$ in $\Out(M^H)$. 
Since the global dimension of $\cC_{G,\zeta}$ is $|G|+d^2=2^{2k+1}\times 3$, the order of $L$ is 
$2^{2k+1}\times 3$. 
The group $\Z_2^{2k}\rtimes SL(2,2)\cong \Z_2^{2k}\rtimes \fS_3$ is a canonical candidate of $L$ 
as we see below. 
When $k=1$, this group is isomorphic to $\fS_4$. 
\end{itemize}
\end{remark}

\begin{example} Let $\F_2$ be the finite field of order 2, and we consider the following subgroups of 
$SL(2+k,2)$: 
$$L=\{\left(
\begin{array}{ccc}
x &y &a  \\
z &u &b  \\
0_k^T&0_k^T &I_k 
\end{array}
\right)\in SL(2+k,2);\; a,b\in \F_2^k,\;\left(
\begin{array}{cc}
x &y  \\
z &u 
\end{array}
\right)\in SL(2,2)\},$$
$$A=\{\left(
\begin{array}{ccc}
1 &y &a  \\
0 &1 &0_k  \\
0_k^T &0_k^T &I_k 
\end{array}
\right)\in SL(2+k,2);\; y\in \F_2,\; a\in \F_2^k\}.$$
Then $L\cong \Z_2^{2k}\rtimes SL(2,2)\cong \Z_2^{2k}\rtimes \fS_3$ and $A\cong \Z_2^{k+1}$. 
The normalizer of $A$ in $L$ is 
$$N_L(A)=\{\left(
\begin{array}{ccc}
1 &y &a  \\
0 &1 &b  \\
0_k^T &0_k^T &I_k 
\end{array}
\right)\in L;\; a,b\in \F_2^k,\; y\in \F_2
\},$$
and $N_L(A)=A\rtimes K$ with 
$$K=\{\left(
\begin{array}{ccc}
1 &0 &0_k  \\
0 &1 &b  \\
0_k^T &0_k^T &I_k 
\end{array}
\right)\in L;\; b\in \F_2^k
\}\cong \Z_2^k.$$
Let 
$$B=\{\left(
\begin{array}{ccc}
1 &0 &a  \\
0 &1 &0_k  \\
0_k^T &0_k^T &I_k 
\end{array}
\right)\in SL(2+k,2);\; a\in \F_2^k\}.$$
Let $P$ be a type III factor and let $\beta:L \to \Aut(P)$ be a map that induces an injective homomorphism 
from $L$ into $\Out(P)$. 
We assume that the restriction of $\beta$ to $A$ is an action and $\beta_k$ normalizes $\beta_A$ for any $k\in K$. 
Let $M=P\rtimes_\beta A$, and let $\nu:P\hookrightarrow M$ be the inclusion map, let 
$$f=\left(
\begin{array}{ccc}
0 &1 &0_k  \\
1 &0 &0_k  \\
0_k^T &0_k^T &I_k 
\end{array}
\right)\in L,
$$ 
and let $\theta=\beta_f$. 
We claim that $\rho=\nu\theta\overline{\nu}$ generates a C$^*$ near-group category with 
a noncommutative $G$ with $|G|=2^{2k+1}$. 
Indeed, since $f^{-1}=f$, the endomorphism $\rho$ is self-conjugate. 
Since 
$$\dim(\rho,\rho)=\dim(\overline{\nu}\nu\beta_f,\beta_f\overline{\nu}\nu)=\sum_{g,h\in A}(\beta_{fgf},\beta_h)=1,$$
$\rho$ is irreducible. 
Since $\beta_k$ for $k\in K$ normalizes $\beta_A$, it extends to an automorphism $\tilde{\beta}_k\in \Aut(M)$. 
We denote by $\hat{\beta}$ the dual action of $\beta$ restricted to $A$. 
Then since 
$$f\left(
\begin{array}{ccc}
1&0 &a  \\
0&1 &0_k  \\
0_k^T &0_k^T &I_k 
\end{array}
\right)f=\left(
\begin{array}{ccc}
1&0 &0_k  \\
0&1 &a  \\
0_k^T &0_k^T &I_k 
\end{array}
\right)\in K,$$
$$f\left(
\begin{array}{ccc}
1&1 &a  \\
0&1 &0_k  \\
0_k^T &0_k^T &I_k 
\end{array}
\right)f=\left(
\begin{array}{ccc}
1&0 &0_k  \\
1&1 &a  \\
0_k^T &0_k^T &I_k 
\end{array}
\right)
=\left(
\begin{array}{ccc}
1&1 &a  \\
0&1 &0_k  \\
0_k^T &0_k^T &I_k 
\end{array}
\right)f
\left(
\begin{array}{ccc}
1&1 &a  \\
0&1 &0_k  \\
0_k^T &0_k^T &I_k 
\end{array}
\right),
$$
we get 
\begin{align*}
\lefteqn{[\rho^2]=[\nu\theta\overline{\nu}\nu\theta\overline{\nu}]=\sum_{g\in A}[\nu\beta_{fgf}\overline{\nu}]
=\sum_{k\in K}[\nu\beta_k\overline{\nu}] +\sum_{h\in B}[\nu\beta_{hfh}\overline{\nu}]} \\
 &=\sum_{k\in K}[\tilde{\beta}_k\nu\overline{\nu}] +\sum_{h\in B}[\nu\theta\overline{\nu}]
 =\sum_{\tau\in \hat{A},\;k\in K}[\tilde{\beta}_k\hat{\beta}_\tau]+2^k[\rho]. 
\end{align*}
This shows the claim with $G$ equal to $\{[\tilde{\beta}_k\hat{\beta}_\tau]\}_{\tau\in \hat{A},\;k\in K}\subset \Out(M)$, 
which is an extension 
$$0\to \hat{A}\to G\to K\to 0.$$ 
It is an interesting problem to compute a cohomological invariant for $\beta$, 
and identify it with the third root of unity $\zeta$ in $\cC_{G,\zeta}$. 

The extra-special 2-group $G$ obtained in this way is always a central product of $k$ copies of $D_8$. 
To obtain the other type of an extra-special 2-group of the same order, we should replace $A$ with a subgroup 
$A_1$ isomorphic to $\Z_4\times \Z_2^{k-1}$. 
For example, we could choose $A_1$ to be the group generated by $B$ and 
$$\left(
\begin{array}{ccc}
1 &1 &0_k  \\
0 &1 &e_1  \\
0_k^T &0_k^T &I_k 
\end{array}
\right),\quad e_1=(1,0,\cdots,0)\in \F_2^k.$$ 
Note that the group $G$ is not uniquely determined by the pair $(L,A_1)$, but it also depends on 
the choice of $\beta$. 
The pointed category generated by $\beta_{L}$ carries a cohomology class in $H^3(L,\T)$. 
On the other hand, there is a restriction map from $H^3(L,\T)$ to $H^3(SL(2,2),\T)\cong \Z_6$. 
We conjecture that $H^3(SL(2,2),\T)$ parameterizes 6 different $C^*$ near-group categories 
$\cC_{G,\zeta}$ with $|G|=2^{2k+1}$. 
\end{example}

\section{Irrational case} 
In the rest of this note, we investigate the structure of C$^*$ near-group categories in the irrational case. 
This section is devoted to laying the basis for the classification of such categories. 

Assume that we have a C$^*$ near group category as in Section \ref{basic} with irrational $d$. 
Then Theorem \ref{representation} implies that $G$ is abelian, the multiplicity parameter $m$ is a multiple of $n=|G|$, and 
the symmetric bicharacter $\inpr{\cdot}{\cdot}:G\times G\rightarrow \T$ defined by $\inpr{g}{h}=\chi_g(h)$ is non-degenerate. 
In what follows, we use additive notation for $G$. 
We have two unitary representations $V(g), U_{\cK}(g)$ in $\cK$, equivalent to the regular representation of $G$, 
and two anti-unitaries $j_1,j_2$ satisfying the conditions stated in Section \ref{basic}. 
Let $v_0(g)=V(g)$, $v_1(g)=U_{\cK}(g)$, $v_2(g)=(j_1j_2)^*U_{\cK}(g)j_1j_2$, and $w=j_1j_2$. 
Then they satisfy the following relations: $w^3=I$, $j_1^2=\epsilon\in \{1,-1\}$, $j_1v_i(g)j_1^{-1}=v_{-i}(g)$, $j_1wj_1^{-1}=w^*$, 
$w^*v_i(g)w=v_{i+1}(g)$, 
$$v_{i+1}(g)v_i(h)=\inpr{h}{g}v_{i}(h)v_{i+1}(g),$$
where $i$ is understood as an element of $\Z/3\Z$.

\begin{lemma}\label{3nreps} Let $G$ be a finite abelian group of order $n$, and let $\inpr{\cdot}{\cdot}:G\times G\rightarrow \T$ be 
a non-degenerate symmetric bicharacter. 
Let $\cH(G)$ be the universal C$^*$-algebra generated by three unitary representations $v_0,v_1,v_2$ of $G$ 
and a unitary $w$ satisfying the following commutation relations:
\begin{equation}
v_{i+1}(g)v_i(h)=\inpr{h}{g}v_{i}(h)v_{i+1}(g),
\end{equation}
\begin{equation}
w^*v_i(g)w=v_{i+1}(g),
\end{equation}
\begin{equation}
w^3=1,
\end{equation}
where $i$ is understood as an element of $\Z/3\Z$. 
Then there exist exactly $3n$ irreducible representations of $\cH(G)$, and they are of the following form: 
The representation space is $\ell^2(G)$ and 
\begin{equation}
\pi_{a,c}(v_0(g))f(h)=\inpr{g}{h}f(h),
\end{equation}
\begin{equation}\pi_{a,c}(v_1(g))f(h)=f(h+g),
\end{equation}
\begin{equation}
\pi_{a,c}(v_2(g))f(h)=a(h)\overline{a(h-g)}f(h-g),
\end{equation}
\begin{equation}
\pi_{a,c}(w)f(h)=\frac{c}{\sqrt{n}}\sum_{k}a(h)\overline{\inpr{h}{k}}f(k),
\end{equation}
where $a:G\rightarrow \T$ and $c\in \T$ satisfy 
\begin{equation}\label{coboundary}
a(g+h)\inpr{g}{h}=a(g)a(h),
\end{equation}\label{c}
\begin{equation}
c^3\sum_{g\in G}a(g)=\sqrt{n}. 
\end{equation}
\end{lemma}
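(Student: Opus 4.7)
The plan is to reduce $(v_0, v_1)$ to a Schr\"odinger-type normal form and then identify the remaining freedom for $w$. In any irreducible representation $\pi$ of $\cH(G)$, the Weyl commutation
\begin{equation*}
v_1(g)v_0(h) = \inpr{h}{g}v_0(h)v_1(g)
\end{equation*}
together with the non-degeneracy of $\inpr{\cdot}{\cdot}$ forces $\pi(v_0(G))$ to generate a MASA whose joint spectrum is all of $\hG$, with $\pi(v_1)$ permuting the $1$-dimensional weight spaces simply transitively. A choice of unit vector in one weight space then yields a unitary conjugating $(\pi(v_0), \pi(v_1))$ onto the standard pair $(E_g, T_g)$ of multiplication and translation operators on $\ell^2(G)$; this is the finite Stone--von Neumann phenomenon. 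In particular every irrep has dimension $n$.

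After this normalization, set $W := \pi(w)$. The Fourier transform $Ff(h) := n^{-1/2}\sum_k \overline{\inpr{h}{k}}f(k)$ satisfies $F^* v_0(g) F = v_1(g)$ by direct computation, and the set of unitaries enjoying the same property is the coset $v_0(G)'F$. Since $v_0(G)$ is maximal abelian, its commutant consists of multiplication operators, so $W = M_b F$ for some $b: G \to \T$; writing $b = c\,a$ with $a(0) = 1$ singles out a scalar $c \in \T$. The cyclic relations $w^* v_i w = v_{i+1}$ and the Weyl commutations involving $v_2$ are purely algebraic consequences of $W^* v_0 W = v_1$ and $W^3 = I$, so the only remaining condition to impose is $W^3 = I$. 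A direct calculation gives
\begin{equation*}
W^3 f(h) = \frac{b(h)}{n\sqrt{n}}\sum_l f(l)\sum_m b(m)\overline{\inpr{l}{m}}\hat b(h+m),\qquad \hat b(u) := \sum_k b(k)\overline{\inpr{u}{k}}.
\end{equation*}
Equating this with $f(h)$ and inverse Fourier transforming in the variable $l$ forces $b(m)\hat b(h+m) = \sqrt{n}\inpr{h}{m}/b(h)$ for all $h,m$. Specializing to $m=0$ gives $\hat b(h) = \sqrt{n}/(c\, b(h))$, and substituting back and rewriting $b = c\,a$ produces exactly the functional equation $a(g+h)\inpr{g}{h} = a(g)a(h)$ and the scalar constraint $c^3\sum_g a(g) = \sqrt{n}$.

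For the converse and enumeration, given $(a,c)$ satisfying these two constraints a direct check verifies all defining relations, and each $\pi_{a,c}$ is automatically irreducible since $v_0(G) \cup v_1(G)$ already generates $\B(\ell^2(G))$. Solutions $a$ to the functional equation form a torsor over $\Hom(G, \T)$ and hence number exactly $n$, with non-emptiness coming from the standard classification of non-degenerate symmetric bicharacters on finite abelian groups. For each $a$ a change of variables using the functional equation yields the Gauss-sum identity $|\sum_g a(g)|^2 = n$, showing $\sqrt{n}/\sum_g a(g) \in \T$ and hence admits precisely three cube roots $c$. This gives $3n$ pairs, and distinct pairs yield inequivalent representations since any intertwiner would have to commute with the Schr\"odinger pair, hence be scalar, forcing $c_1 a_1 = c_2 a_2$. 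The main technical obstacle is the $W^3$ computation, whose essential content is the auxiliary identity $\sum_l a(l)\overline{\inpr{l}{u}} = \overline{a(u)}\sum_l a(l)$ valid for quadratic $a$, from which both the functional equation and the Gauss-sum modulus will fall out.
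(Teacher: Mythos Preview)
Your argument is correct and reaches the same conclusion, but the route differs from the paper's in one key organizational respect. The paper argues by a dimension count: the $*$-algebra spanned by $\{v_0(g_0)v_1(g_1)v_2(g_2)w^k\}$ has dimension at most $3n^3$, so once one exhibits $3n$ pairwise inequivalent irreducibles of dimension $n$ (the $\pi_{a,c}$), Wedderburn forces these to be the complete list. The paper therefore only needs the \emph{existence} direction---positing $\pi_{a,c}(w)=cA\cF$ and verifying $(A\cF)^3=\hat a(0)$ via the identity $\hat a(g)=\hat a(0)\,\overline{a(g)}$---together with the count and inequivalence arguments, which match yours. Your approach instead classifies irreducibles directly: normalize $(v_0,v_1)$ to Schr\"odinger form and then solve for $W$ from $W^3=I$. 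This is more constructive (it explains where the formula for $\pi_{a,c}$ comes from rather than pulling it from thin air), and your $W^3$ computation is exactly dual to the paper's verification of $(A\cF)^3=\hat a(0)$; both rest on the same Gauss-sum identity you isolate at the end.

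One point deserves tightening. Your assertion that $\pi(v_0(G))$ generates a MASA with $1$-dimensional weight spaces does not follow from the Weyl relation and non-degeneracy alone: those only force the restriction of $\pi$ to the Weyl subalgebra $\langle v_0,v_1\rangle\cong M_n(\C)$ to be a multiple $d$ of the Schr\"odinger representation, so a priori $\dim\pi=nd$. To conclude $d=1$ you must use that $\cH(G)$ is generated over this copy of $M_n(\C)$ by the \emph{single} element $w$: in the decomposition $\cH_\pi\cong\C^n\otimes\C^d$ one has $\pi(w)=W_1\otimes W_2$, the commutant of $\pi(\cH(G))$ is $I\otimes\{W_2\}'$, and since a single unitary generates a commutative algebra, irreducibility forces $M_d(\C)$ to be commutative, hence $d=1$. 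Alternatively you can bypass this entirely by inserting the paper's bound $\dim\cH(G)\le 3n^3$ after your enumeration, which then shows your $3n$ representations exhaust the list without ever needing to argue that an arbitrary irreducible has dimension $n$.
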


\begin{proof} Note that the dimension of the $*$-algebra generated by $v_0$,$v_1$, $v_2$, and $w$ is $3n^3$. 
Thus it suffices to show that there exist $3n$ irreducible representations of $\cH(G)$ of dimension $n$ as above. 

Since $\inpr{\cdot}{\cdot}$ is a symmetric 2-cocycle, it is  a coboundary and a function $a$ satisfying Eq.(\ref{coboundary}) certainly exists. 
We choose one of them and denote it by $a$. 
Let $a':G\rightarrow \T$. 
Then $a'$ satisfies Eq.(\ref{coboundary}) if and only if $a'/a$ is a character, and so there are exactly $n$ such functions. 

For a function $f$ on $G$, we define its Fourier transform $\cF f(h)=\hat{f}(h)$ by 
$$\hat{f}(g)=\frac{1}{\sqrt{n}}\sum_{h\in G}\overline{\inpr{g}{h}}f(h).$$
Then Eq.(\ref{coboundary}) implies 
\begin{equation}\label{Fourier-a}
\hat{a}(g)=\hat{a}(0)\overline{a(g)}.
\end{equation}
Since the Fourier transform preserves the $\ell^2$-norm, we have $|\hat{a}(0)|=1$. 
Thus for a given $a$, there exists exactly three $c\in \T$ satisfying Eq.(\ref{c}). 
We choose one of them. 

Let $\mu(g)f(h)=\inpr{g}{h}f(h)$ and $\varrho(g)f(h)=f(h+g)$. 
Then the commutation relations $\varrho(g)\mu(h)=\inpr{g}{h}\mu(h)\varrho(g)$ and $\mu(g)\cF=\cF\varrho(g)$ hold. 
Let $Af(h)=a(h)f(h)$. 
Then we have $\mu(g)A\cF=A\cF\varrho(g)$. 
To show that $\pi_{a,c}$ is a well-defined irreducible representation, it suffices to show $c^3(A\cF)^3=1$ because 
if it is the case, we can put $\pi_{a,c}(w)=cA\cF$, $\pi_{a,c}(v_0(g))=\mu(g)$, $\pi_{a,c}(v_1(g))=\varrho(g)$, and 
$$\pi_{a,c}(v_2(g))=\pi_{a,c}(w)^*\pi_{a,c}(v_1(g))\pi_{a,c}(w).$$ 
Indeed, 
\begin{align*}
\cF A\cF f(g) &=\frac{1}{n}\sum_{h,k\in G}\overline{\inpr{g}{h}}a(h)\overline{\inpr{h}{k}}f(k)
=\frac{1}{\sqrt{n}}\sum_{k\in G}\hat{a}(g+k)f(k) \\
 &=\frac{\hat{a}(0)}{\sqrt{n}}\sum_{k\in G}\overline{a(g+k)}f(k)
 =\frac{\hat{a}(0)}{\sqrt{n}}\sum_{k\in G}\overline{a(g)}\inpr{g}{k}\overline{a(k)}f(k)\\
 &=\hat{a}(0)A^*\cF^*A^*f(g).
\end{align*}
This shows $(A\cF)^3=\hat{a}(0)$. 

We now show that if $\pi_{a,c}$ and $\pi_{a',c'}$ are equivalent then $(a,c)=(a',c')$. 
Note that they coincide on $C^*\{v_0,v_1\}$, which are already irreducible. 
Thus if they are equivalent, they must coincide on $\cH(G)$, which implies $(a,c)=(a',c')$. 
\end{proof}

\begin{remark}\label{r3nrep} We always have the following relation:
$$\pi_{a,c}(v_2(g))=a(g)\pi_{a,c}(v_1(-g))\pi_{a,c}(v_0(-g)).$$
\end{remark}

We introduce an anti-linear involutive $*$-isomorphism $\kappa$ of $\cH(G)$ by setting 
$$\kappa(v_i(g))=v_{-i}(g),\quad  \kappa(w)=w^{-1}.$$ 
We say that $(\pi,j,\cK)$ is a covariant representation of $(\cH(G),\kappa)$ if 
$(\pi,\cK)$ is a representation of $\cH(G)$ and $j$ is an anti-unitary of $\cK$ such that 
$\pi(\kappa(x))=j\pi(x)j^{-1}$ holds for all $x\in \cH(G)$ and $j^2=\epsilon$ is a scalar. 
We say that a covariant representation is even (resp. odd) if $\epsilon=1$ (resp. $\epsilon=-1$). 
We would like to classify all even and odd covariant representations. 

We fix $a:G\rightarrow \T$ satisfying Eq.(\ref{coboundary}) and $a(g)=a(-g)$ for all $g\in G$, 
and set $a_\chi(g)=a_1(g)\chi(g)$ for all $\chi\in \hG$. 
Let $c_{\chi,i}$, $i=0,1,2$ be the three solutions of Eq.(\ref{c}) for $a_\chi$. 
Note that $\widehat{a_\chi}(0)=\widehat{a_{\chi^{-1}}}(0)$ holds. 
Indeed, choosing $h\in G$ satisfying $\chi(g)=\inpr{g}{h}$ for all $g\in G$, we obtain 
$$\widehat{a_\chi}(0)=\hat{a}(-h)=\hat{a}(0)\overline{a(-h)}=\hat{a}(0)\overline{a(h)}=\hat{a}(h)=\widehat{a_{\chi^{-1}}}(0).$$
Thus we may and do assume $c_{\chi,i}=c_{\chi^{-1},i}$. 

Let $(\pi,\cK)$ be a representation of $\cH(G)$. 
Then Lemma \ref{3nreps} shows that up to unitary equivalence, we may assume the following: 
$\cK=\ell^2(G)\otimes \cK_0$, $\pi(v_0(g))=\mu(g)\otimes I$, $\pi(v_1(g))=\varrho(g)\otimes I$, 
$$\pi(w)=\sum_{g\in G}(e_g\otimes CA(g))(\cF\otimes I),$$
where $e_g$ is the projection from $\ell^2(G)$ onto $\C\delta_g$. 
$\cK_0$ is decomposed as 
$$\cK_0=\bigoplus_{\chi\in \hG,\;i=1,2,3}\cK_{\chi,i}$$
and $A(g)|_{\cK_{\chi,i}}=a_\chi(g)I$, $C|_{\cK_{\chi,i}}=c_{\chi,i}I$. 

Let $j$ be an anti-linear unitary of $\cK$ satisfying $j^2=\epsilon$. 
Then $(\pi,K,j)$ is a covariant representation of $(\cH(G),j)$ if and only if 
\begin{equation}\label{covariant1}j\pi(v_0(g))=\pi(v_0(g))j,
\end{equation}  
\begin{equation}\label{covariant2}j\pi(w)=\pi(w)^{-1}j.
\end{equation} 
Indeed, it is clear that the two conditions Eq.(\ref{covariant1}),(\ref{covariant2}) are necessary. 
Assume conversely that they are satisfied. 
Then 
\begin{align*}j\pi(v_1(g))j^{-1}&=j\pi(w)^{-1}\pi(v_0(g))\pi(w)j^{-1}=\pi(w)j\pi(v_0(g))j^{-1}\pi(w)^{-1}\\
&=\pi(w)\pi(v_0(g))\pi(w)^{-1}=\pi(v_2(g)),
\end{align*}
which shows that  $(\pi,H,j)$ is a covariant representation. 

The equation Eq.(\ref{covariant1}) is equivalent to the condition that there exists a family of anti-unitaries $j(g)$ on $H$ 
such that $j\delta_h\otimes \xi=\delta_{-h}\otimes j(h)\xi$. 
The condition $j^2=\epsilon$ is equivalent to $j(-g)j(g)=\epsilon$. 
Under these conditions, we have 
\begin{equation}
j\pi(w)\delta_h\otimes \xi=\frac{1}{\sqrt{n}}j\sum_{k\in G}\overline{\inpr{k}{h}}\delta_k\otimes CA(k)\xi
=\frac{1}{\sqrt{n}}\sum_{k\in G}\inpr{k}{h}\delta_{-k}\otimes j(k)CA(k)\xi,
\end{equation}
\begin{equation}
\pi(w)^{-1}j\delta_h\otimes \xi=\pi(w)^{-1}\delta_{-h}\otimes j(-h)\xi=\frac{1}{\sqrt{n}}\sum_{k\in G}\inpr{k}{h}\delta_{-k}\otimes A(-h)^*C^*j(h)\xi. 
\end{equation}
Thus Eq.(\ref{covariant2}) is equivalent to 
\begin{equation}\label{covariant2'}
j(k)CA(k)=A(-h)^*C^*j(h).
\end{equation}

We claim that Eq.(\ref{covariant2'}) is equivalent to the following two:
\begin{equation}\label{j(0)C}
j(0)C=C^*j(0),
\end{equation}
\begin{equation}\label{j(h)}
j(h)=A(-h)j(0)=j(0)A(h)^*.
\end{equation}
Assume Eq.(\ref{covariant2'}) first. 
Then for $h=k=0$, we get Eq.(\ref{j(0)C}). 
This together with Eq.(\ref{covariant2'}) for the case $k=0$ and the case $h=0$ implies Eq.(\ref{j(h)}). 
It is straightforward to show that Eq.(\ref{j(0)C}) and (\ref{j(h)}) imply Eq.(\ref{covariant2'}), 
and the claim is shown. 
Assuming these equivalent conditions, we known that $j(-h)j(h)=\epsilon$ is equivalent to $j(0)^2=\epsilon$. 

Summing up the above argument, now we have the following lemma:

\begin{lemma}\label{clacov} Every even covariant representation of $(\cH(G),\kappa)$ is a direct sum of the following covariant representations 
$(\pi,H,j)$: 
\begin{itemize} 
\item [$(1)$] $H=\ell^2(G)$, $\pi=\pi_{\chi,c}$ with $\chi^2=1$,  and $j\delta_h=a_\chi(h)\delta_{-h}$. 
\item [$(2)$] $H=\ell^2(G)\oplus \ell^2(G)$, $\pi=\pi_{\chi,c}\oplus \pi_{\chi^{-1},c}$ with $\chi^2\neq 1$ and 
$$j(x\delta_h\oplus y\delta_k)=\overline{y}a_{\chi^{-1}}(k)\delta_{-k}\oplus \overline{x}a_\chi(h)\delta_{-h}.$$ 
\end{itemize} 

Every odd covariant representation of $(\cH(G),\kappa)$ is a direct sum of the following covariant representations 
$(\pi,H,j)$: $H=\ell^2(G)\oplus \ell^2(G)$, $\pi=\pi_{\chi,c}\oplus \pi_{\chi^{-1},c}$ and 
$$j(x\delta_h\oplus y\delta_k)=-\overline{y}a_{\chi^{-1}}(k)\delta_{-k}\oplus \overline{x}a_\chi(h)\delta_{-h}.$$ 
\end{lemma}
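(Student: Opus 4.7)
The plan is to decompose any covariant representation $(\pi,\cK,j)$ into atoms by analyzing how the anti-unitary $j(0)$ acts on the canonical isotypic decomposition $\cK_0 = \bigoplus_{\chi \in \hG,\; i=1,2,3} \cK_{\chi,i}$. The preceding discussion has already reduced the classification of covariant representations with $j^2 = \epsilon$ to specifying a single anti-unitary $j(0)$ on $\cK_0$ satisfying $j(0)C = C^*j(0)$ and $A(-h)j(0) = j(0)A(h)^*$ for all $h \in G$, together with $j(0)^2 = \epsilon$; the full $j$ is then recovered from $j(\delta_h \otimes \xi) = \delta_{-h} \otimes A(-h)j(0)\xi$.

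The first step is to pin down where $j(0)$ sends $\cK_{\chi,i}$. On this component $A(h)$ acts by $a_\chi(h)$ and $C$ by $c_{\chi,i}$. The $A$-intertwining relation requires the image to satisfy $a_{\chi'}(-h) = a_\chi(h)$ on its support; combined with $a(-h) = a(h)$, this forces $\chi' = \chi^{-1}$. The $C$-intertwining forces $c_{\chi',i'} = c_{\chi,i}$, and together with the convention $c_{\chi^{-1},i} = c_{\chi,i}$ and distinctness of the three cube roots of $\overline{\widehat{a_\chi}(0)}$, this pins down $i' = i$. Hence $j(0)$ is an anti-unitary isomorphism $\cK_{\chi,i} \to \cK_{\chi^{-1},i}$, and $j(0)^2 = \epsilon$ describes its squared behaviour on each such pair.

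A case analysis then splits the representation into atoms. When $\chi^2 \neq 1$, pick any orthonormal basis $\{\xi_k\}$ of $\cK_{\chi,i}$ and set $\eta_k = j(0)\xi_k$; then $\{\eta_k\}$ is an orthonormal basis of $\cK_{\chi^{-1},i}$ with $j(0)\eta_k = \epsilon \xi_k$, and each pair $(\xi_k,\eta_k)$ spans a $j(0)$-invariant two-dimensional subspace of $\cK_0$ giving a covariant subrepresentation on $\ell^2(G) \oplus \ell^2(G)$ with representation $\pi_{\chi,c} \oplus \pi_{\chi^{-1},c}$ (where $c = c_{\chi,i}$). When $\chi^2 = 1$, $j(0)$ restricts to an anti-unitary of $\cK_{\chi,i}$ with $j(0)^2 = \epsilon$. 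For $\epsilon = 1$ one picks an orthonormal basis of $j(0)$-fixed vectors, yielding one-dimensional summands of form (1). For $\epsilon = -1$, the identity $\inpr{j(0)\xi}{\xi} = 0$ (an immediate consequence of $j(0)^2 = -1$ combined with $\inpr{j(0)\xi}{j(0)\eta} = \inpr{\eta}{\xi}$) allows one to choose an orthonormal basis of quaternionic pairs $\xi_k, \eta_k = j(0)\xi_k$ with $j(0)\eta_k = -\xi_k$, giving summands of the form $\pi_{\chi,c}\oplus\pi_{\chi,c}$, i.e., the $\chi = \chi^{-1}$ instance of the odd-case formula.

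The last step is to verify the explicit formulas for $j$ on each atomic piece by substituting the chosen basis vectors into $j(\delta_h\otimes\xi) = \delta_{-h}\otimes A(-h)j(0)\xi$, and using the identities $a_\chi(-h) = a_{\chi^{-1}}(h)$ (from $a(-h) = a(h)$) and $a_\chi(h) = a_\chi(-h)$ (when $\chi^2 = 1$) to match the formulas in the statement. The main obstacle is the book-keeping: tracking the antilinear conjugations (which produce the $\overline{x}, \overline{y}$ factors), the swap of the $\chi/\chi^{-1}$ coordinates, and the sign $\epsilon$, and in particular confirming that the odd $\chi^2 = 1$ subcase really produces a summand of precisely the stated form with $\chi$ and $\chi^{-1}$ identified.
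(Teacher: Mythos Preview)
Your proposal is correct and follows essentially the same approach as the paper. The paper establishes the reduction to the single anti-unitary $j(0)$ on $\cK_0$ satisfying $j(0)C=C^*j(0)$, $j(h)=A(-h)j(0)=j(0)A(h)^*$, and $j(0)^2=\epsilon$, and then simply states the lemma as a summary (``Summing up the above argument, now we have the following lemma'') without writing out the isotypic analysis or the case split; you have supplied precisely the details the paper leaves implicit, and your computation of the explicit formulas on the atomic pieces checks out.
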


We get back to the C$^*$ near-group category $\cC$ with irrational $d$.  
We may identify $\cK$ with $\ell^2(G)\otimes \cK_0$ as above. 
We denote by $T_h(\xi)\in \cK$ the element corresponding to $\delta_h\otimes \xi$. 

\begin{lemma}\label{ACJ} For a given C$^*$ near group category with a finite abelian group $G$ and irrational $d$ 
as in Section \ref{basic}, there exist a non-degenerate symmetric bicharacter $\inpr{\cdot}{\cdot}:G\times G\to \T$, 
an anti-unitary $J$, and mutually commuting unitaries $A(g)$, $C$  acting on $\cK_0$ satisfying 
$J^2=\epsilon$, $JC=C^*J$, $JA(g)=A(-g)^*J$, 
$$A(g)A(h)=\inpr{g}{h}A(g+h),$$
$$\frac{1}{\sqrt{n}}\sum_{g\in G}A(g)=C^{-3},$$
$$V(g)T_h(\xi)=\inpr{g}{h}T_h(\xi),$$
$$U_{\cK}(g)T_h(\xi)=T_{h-g}(\xi),$$
$$j_1(T_h(\xi))=T_{-h}(A(-h)J\xi),$$
$$j_2(T_h(\xi))=\frac{\epsilon}{\sqrt{n}}\sum_{k\in G}\overline{\inpr{h}{k}}T_k(C^*J\xi),$$
$$j_1j_2(T_h(\xi))=\frac{1}{\sqrt{n}}\sum_{k\in G}\overline{\inpr{h}{k}}T_k(CA(k)\xi),$$
$$j_2j_1(T_h(\xi))=\frac{1}{\sqrt{n}}\sum_{k\in G}\inpr{h}{k}T_k(C^*A(h)^*\xi),$$
\begin{align*}
\lefteqn{(j_2\circ j_1^{-1})U_{\cK}(g)(j_2\circ j_1^{-1})^*T_h(\xi)} \\
 &=T_{h+g}(A(g+h)A(h)^*\xi)=U_{\cK}(-g)V(-g)T_h(A(g)\xi).
\end{align*}

Moreover we can choose an orthonormal basis $\{e_t\}_{t\in \Lambda}$ of $\cK_0$ with an involution $\Lambda\ni t\mapsto \overline{t}\in \Lambda$ 
satisfying $A(g)e_t=a(g)\chi_t(g)e_t$, and $Ce_t=c_te_t$, $Je_t=\epsilon_{t}e_{\overline{t}}$ with 
\begin{itemize}
\item [$(1)$] $a(-g)=a(g)$, $a(g)a(h)=\inpr{g}{h}a(g+h)$, 
\item [$(2)$] $\chi_t\in \hat{G}$, $\chi_{\overline{t}}=\chi_t^{-1}$, 
\item [$(3)$] $c_{\overline{t}}=c_{t}$, $\sum_{g\in G}a(g)\chi_t(g)=\sqrt{n}c_t^{-3}$,   
\item [$(4)$] $\epsilon_{t}\in \{1,-1\}$, $\epsilon_t\epsilon_{\overline{t}}=\epsilon$.
\end{itemize}
When $\epsilon=1$, we can arrange the basis so that $\epsilon_t=1$ for all $t\in \Lambda$. 
\end{lemma}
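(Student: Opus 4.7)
The plan is to recognize the tuple $(V, U_\cK, j_1, j_2)$ as providing a covariant representation of $(\cH(G), \kappa)$ introduced before Lemma \ref{clacov}, and then read off the desired data from Lemmas \ref{3nreps} and \ref{clacov}.

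First I would set $\inpr{g}{h} := \chi_g(h)$, which is symmetric by Eq.~(\ref{symmetric}) and non-degenerate because Theorem \ref{representation} asserts $\bigoplus_{h \in G}\chi_h \cong \lambda$. With $v_0 = V$, $v_1 = U_\cK$, $w = j_1 j_2$, and $v_2 = w^{-1} v_1 w$, the defining relations of $\cH(G)$ are precisely Eq.~(\ref{Weyl}) (for $i=0$), the definition of $v_2$ (for $i=1$), and Eq.~(\ref{period3}). A short computation using $j_i^2 = \epsilon$ yields $j_1 w j_1^{-1} = w^{-1}$ (since $w^{-1} = j_2 j_1$), together with $j_1 v_0 j_1^{-1} = v_0$ and $j_1 v_1 j_1^{-1} = v_2$ from Eq.~(\ref{j1}),(\ref{j2}); hence $(\pi, \cK, j_1)$ is a covariant representation of $(\cH(G), \kappa)$.

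Applying Lemmas \ref{3nreps} and \ref{clacov} together with the analysis just before Lemma \ref{clacov}, I can write $\cK \cong \ell^2(G) \otimes \cK_0$ so that $v_0$, $v_1$, and $\pi(w)$ take the stated multiplication/translation/Fourier form with commuting unitaries $A(g), C$ scalar on each isotypic piece $\cK_{\chi,i}$, and $j_1$ acts by $j_1(\delta_h \otimes \xi) = \delta_{-h} \otimes j(h)\xi$ with $j(h) = JA(h)^* = A(-h)J$ for $J := j(0)$. The identities $JA(g) = A(-g)^*J$, $JC = C^*J$, and $J^2 = \epsilon$ are Eq.~(\ref{j(0)C})--(\ref{j(h)}) together with $j(-h)j(h) = \epsilon$; the cocycle $A(g)A(h) = \inpr{g}{h}A(g+h)$ and the normalization $n^{-1/2}\sum_g A(g) = C^{-3}$ come from Eq.~(\ref{coboundary}) and the scalar constraint on $c$ in Lemma \ref{3nreps}. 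The explicit formulas for $V(g) T_h(\xi)$, $U_\cK(g)T_h(\xi)$, and $j_1(T_h(\xi))$ are then immediate; those for $j_2$, $j_1 j_2$, $j_2 j_1$ follow from $j_2 = \epsilon j_1 w$ and $j_2 j_1 = w^{-1}$ applied to the explicit form of $\pi(w)$; and the formula for $(j_2 \circ j_1^{-1})U_\cK(g)(j_2 \circ j_1^{-1})^* = v_2(g)$ comes from the $\pi_{a_\chi, c}$ action on each block, simplified via Remark \ref{r3nrep} and the cocycle identity for $A$.

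For the basis statement I would simultaneously diagonalize the commuting unitaries $\{A(g)\}_{g \in G} \cup \{C\}$ on $\cK_0$, which is possible because each of these operators is scalar on every isotypic block $\cK_{\chi,i}$; I label the resulting basis $\{e_t\}_{t \in \Lambda}$ with the corresponding eigendata $\chi_t, c_t$, and fix once for all a symmetric cocycle $a$ with $a(-g) = a(g)$. The involution $t \mapsto \overline{t}$ is dictated by $J$: according to Lemma \ref{clacov}, $J$ swaps $\cK_{\chi,i}$ with $\cK_{\chi^{-1},i}$ (where one pre-normalizes $c_{\chi,i} = c_{\chi^{-1},i}$, as done in the discussion preceding that lemma), and pairing basis vectors across this swap defines $\overline{t}$ and yields conditions (1)--(4). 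When $\epsilon = 1$, one can diagonalize $J$ on each self-conjugate block ($\chi^2 = 1$) to obtain a real basis, and pick matched bases on paired blocks ($\chi^2 \neq 1$), producing $\epsilon_t = 1$ throughout. The main obstacle is not any deep argument but rather the careful bookkeeping of sign conventions — in particular tracking the $\epsilon$-ambiguity in $j_i^{-1} = \epsilon j_i$ consistently across all the identities.
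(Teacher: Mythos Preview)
Your proposal is correct and follows exactly the paper's approach: the paper sets up Lemmas~\ref{3nreps} and~\ref{clacov} and the intervening analysis (decomposing $\cK\cong\ell^2(G)\otimes\cK_0$ and deriving Eqs.~(\ref{covariant2'})--(\ref{j(h)})) precisely so that Lemma~\ref{ACJ} drops out as a bookkeeping exercise, and you have correctly identified and carried out that bookkeeping. The only thing to add is that the paper does not write out a separate proof of this lemma at all---it is intended to be read off directly from the preceding development, which is what you do.
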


Now we determine the form of $\rho(T_g(\xi))$. 
Since 
$\rho(T_g(\xi))=\rho(U(-g))\rho(T_0(\xi))$, we have 
\begin{align*}
l(T_g(\xi))&=((j_2\circ j_1^{-1})U_{\cK}(-g)(j_2\circ j_1^{-1})^*\otimes U_{\cK}(-g))l(T_0(\xi)) \\
 &=(AU_{\cK}(g)A^*\otimes U_{\cK}(-g))l(T_0(\xi)),
\end{align*}
where $AT_h(\xi)=T_h(A(h)\xi)$. Thus $l(T_g(\xi))$ is determined by $l(T_0(\xi))$. 
Moreover, $l(T_0(\xi))$ satisfies $\alpha_g(l(T_0(\xi)))=l(T_0(\xi))$ and $U_{\cK}(g)l(T_0(\xi))U_{\cK}(g)^*=l(T_0(\xi))$. 
Let $\cK_h$ be the linear span of $\{T_h(\xi);\;\xi\in \cK_0\}$, and let $Q_h$ be the projection from $\cK$ onto $\cK_h$. 
This notation for $h=0$ is consistent with the previous one as we identify $\xi\in \cK_0$ in the previous sense 
with $T_0(\xi)$ in what follows. 
Then
\begin{align*}
l(T_0(\xi))&=\sum_{k\in G}l(T_0(\xi))Q_k=\sum_{k\in G}l(T_0(\xi))U_{\cK}(-k)Q_0U_{\cK}(k)\\
&=\sum_{k\in G}U_{\cK}(-k)l(T_0(\xi))Q_0U_{\cK}(k).
\end{align*}
Therefore $\rho(T_g(\xi))$ is determined by 
$l(T_0(\xi))Q_0\in \cK^2\cK_0^*$. 

Since $\alpha_g(l(T_0(\xi))Q_0)=l(T_0(\xi))Q_0$, we have 
$$l(T_0(\xi))Q_0\in \bigoplus_{h\in G} \cK_h\cK_{-h}\cK_0^*,$$
and there exists a family of linear maps $B_h:\cK_0\rightarrow \cK_0 \cK_0\cK_0^*$ satisfying 
$$l(T_0(\xi))Q_0=\sum_{h\in G}(U_{\cK}(-h)\otimes U_{\cK}(h))(A(h)\otimes I)B_h(\xi).$$
Now we can write down $l(T_g(\xi))$ in terms of $B_h(\xi)$. 

\begin{lemma} There exists a family of linear maps $B_h:\cK_0\rightarrow \cK_0^2\cK_0^*$ satisfying 
\begin{equation}\label{lB} 
l(T_g(\xi))=\sum_{h,k}\inpr{g}{k}(U_{\cK}(-h-k)\otimes U_{\cK}(h))(A(h)\otimes I)B_{g+h}(\xi)U_{\cK}(k).
\end{equation}
\end{lemma}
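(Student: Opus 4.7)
The plan is to compute $l(T_g(\xi))$ by reducing to $l(T_0(\xi))$ via the identity $T_g(\xi)=U(-g)T_0(\xi)$ in $\cO_{n+m}$ (which holds because $S_h^*T_0(\xi)=0$ leaves only the $U_\cK(-g)$-part of $U(-g)$ acting on $T_0(\xi)$). Applying $\rho$ gives $\rho(T_g(\xi))=\rho(U(-g))\rho(T_0(\xi))$, and I substitute Eq.(\ref{rhoU}) for $\rho(U(-g))$ and expand. Of the six cross-products in the resulting expansion, the Cuntz orthogonality relations $S_h^*T_i=0=T_i^*S_h$ kill everything not of the form $\cK^2\cK^*$ except
$$l(T_g(\xi))=\bigl((j_2\circ j_1^{-1})U_\cK(-g)(j_2\circ j_1^{-1})^*\otimes U(-g)\bigr)\cdot l(T_0(\xi)).$$
Using the explicit formulas in Lemma \ref{ACJ} one checks $(j_2\circ j_1^{-1})U_\cK(-g)(j_2\circ j_1^{-1})^*T_h(\eta)=\inpr{g}{h}T_{h-g}(A(-g)\eta)$, and the same expression equals $AU_\cK(g)A^*T_h(\eta)$ with $AT_h(\eta):=T_h(A(h)\eta)$, so this identifies the transport operator.

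Next I upgrade the hypothesis, which only fixes $l(T_0(\xi))Q_0$, to a formula for all of $l(T_0(\xi))$. Combining Eq.(\ref{equivariance}) with $V(g)T_0(\xi)=T_0(\xi)$ gives $U(g)l(T_0(\xi))U(g)^*=l(T_0(\xi))$, and absorbing the $P$-parts (which annihilate $l(T_0(\xi))$ on both sides) reduces this to $U_\cK(g)l(T_0(\xi))=l(T_0(\xi))U_\cK(g)$ inside $\cO_{n+m}$. Viewing $l(T_0(\xi))$ as a map $\cK\to\cK^2$, this is the intertwining identity $l(T_0(\xi))\circ U_\cK(g)=(U_\cK(g)\otimes I)\circ l(T_0(\xi))$. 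Specializing at $T_c(\eta)=U_\cK(-c)T_0(\eta)$ and inserting the given form of $l(T_0(\xi))Q_0$ produces
$$l(T_0(\xi))\,T_c(\eta)=\sum_{h\in G}(U_\cK(-h-c)\otimes U_\cK(h))(A(h)\otimes I)B_h(\xi)\,T_0(\eta).$$

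The last step is a direct computation. Applying the transport operator $AU_\cK(g)A^*\otimes U_\cK(-g)$ to the display above, the action on the first factor is $AU_\cK(g)A^*U_\cK(-h-c)T_0(A(h)\mu)=T_{h+c-g}(A(h+c-g)A(h+c)^*A(h)\mu)$, while the second becomes $T_{g-h}(\nu)$, where $B_h(\xi)T_0(\eta)=T_0(\mu)\otimes T_0(\nu)$. The cocycle identity $A(x+y)=\inpr{x}{y}^{-1}A(x)A(y)$ (and the symmetry of $\inpr{\cdot}{\cdot}$) gives $A(h+c-g)A(h+c)^*=\inpr{g}{h+c}A(-g)$ and $A(-g)A(h)=\inpr{g}{h}^{-1}A(h-g)$, whose product collapses to the overall scalar $\inpr{g}{c}$ and leaves the first factor at $T_{h+c-g}(A(h-g)\mu)$. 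Reindexing $h\mapsto h+g$ lines this up with the summand $(U_\cK(-h-c)\otimes U_\cK(h))(A(h)\otimes I)B_{g+h}(\xi)T_0(\eta)$ of the claimed formula, and the sum over $k$ in Eq.(\ref{lB}) collapses to $k=c$ because $U_\cK(k)T_c(\eta)$ lies in $\cK_0$ only then. The main obstacle is purely notational---keeping the $A(\cdot)$-twists straight under successive compositions---and no algebraic input is needed beyond Lemma \ref{ACJ}, Eq.(\ref{rhoU}), and Eq.(\ref{equivariance}).
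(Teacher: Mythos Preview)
Your proof is correct and follows essentially the same approach as the paper: both derive $l(T_g(\xi))=(AU_\cK(g)A^*\otimes U_\cK(-g))l(T_0(\xi))$ from Eq.~(\ref{rhoU}), use the commutation $U_\cK(g)l(T_0(\xi))U_\cK(g)^*=l(T_0(\xi))$ (from Eq.~(\ref{equivariance})) to expand $l(T_0(\xi))$ in terms of $l(T_0(\xi))Q_0$, substitute the defining expression for $B_h$, and then simplify via the cocycle identity for $A$. The only difference is presentational: you evaluate everything on test vectors $T_c(\eta)$, whereas the paper carries out the same reindexing directly at the level of Cuntz algebra elements.
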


\begin{proof} This follows from 
\begin{align*}
\lefteqn{l(T_g(\xi))=\sum_{h,k}\big(AU_{\cK}(g)A^*U_{\cK}(-k)\otimes U_{\cK}(-g)\big)} \\
 &\times (U_{\cK}(-h)\otimes U_{\cK}(h))(A(h)\otimes I)B_h(\xi)U_{\cK}(k)\\
 &=\sum_{h,k}(U_{\cK}(-h-k)\otimes U_{\cK}(h))(A(h+k)A(g+h+k)^*A(h+g)\otimes I)B_{g+h}(\xi)U_{\cK}(k) \\
 &=\sum_{h,k}\inpr{g+h}{k}(U_{\cK}(-h-k)\otimes U_{\cK}(h))(A(h+k)A(k)^*\otimes I)B_{g+h}(\xi)U_{\cK}(k)\\
 &=\sum_{h,k}\inpr{g}{k}(U_{\cK}(-h-k)\otimes U_{\cK}(h))(A(h)\otimes I)B_{g+h}(\xi)U_{\cK}(k).
\end{align*}
\end{proof}

\section{Polynomial equations for the irrational case}\label{PEIC}
Let $G$ be a finite abelian group of order $n$, let $m$ be a multiple of $n$, and let $\epsilon\in \{1,-1\}$. 
Let $\inpr{\cdot}{\cdot}$, $\cK$, $\cK_0$, $A(g)$, $C$, and $J$ satisfy the conditions in the statement of Lemma \ref{ACJ},  
and let $B_g:\cK_0\rightarrow \cK_0\cK_0\cK_0^*$ be a linear map. 
We defined $l\rightarrow \cK\cK\cK^*$ by Eq.(\ref{lB}). 
We will deuce the polynomial equations classifying the corresponding C$^*$ near-group categories in terms of the above data. 
We set 
$$\hat{B}_g(\xi)=\frac{1}{\sqrt{n}} \sum_{h\in G}\overline{\inpr{g}{h}} B_h(\xi).$$

\begin{lemma}
Eq.(\ref{orthogonality1}), (\ref{orthogonality2}), (\ref{Frobenius1}), and (\ref{Frobenius2}) are equivalent to 
\end{lemma}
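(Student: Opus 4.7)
The plan is to substitute the explicit expression Eq.(\ref{lB}) for $l(T_g(\xi))$ into each of the four equations, use the explicit formulas for $j_1, j_2, V, U_\cK$ from Lemma \ref{ACJ}, and then match coefficients in the orthonormal basis $\{T_h(e_t)\}_{h\in G,\, t\in \Lambda}$ of $\cK$. Because all the translation unitaries $U_\cK(k)$ and character operators $V(g)$ act diagonally (resp.\ by shift) on the index $h$, the computations reduce to Fourier analysis on $G$, and this is precisely what motivates introducing $\hat{B}_g(\xi) = \frac{1}{\sqrt{n}}\sum_h \overline{\inpr{g}{h}} B_h(\xi)$: the quadratic and linear constraints should look much cleaner in the Fourier-dual variable.

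First I would handle Eq.(\ref{orthogonality1}) and Eq.(\ref{orthogonality2}), which are the simplest. Plugging $T = T_g(\xi)$ into Eq.(\ref{orthogonality1}) and using $j_2(T_g(\xi))^* = \frac{\epsilon}{\sqrt{n}}\sum_k \inpr{g}{k} T_k(JC\xi)^*$, the first term of (\ref{orthogonality1}) becomes a simple expression involving $JC\xi$ and a sum over $k$ of $T_k(\cdot)^* V(g)^*$. The second term, after writing $T_i=T_h(e_t)$, pairs $j_1(T_h(e_t))^*T_h(e_t)^*$ against $l(T_g(\xi))$, and using Eq.(\ref{lB}) one picks up the coefficient of $T_h\otimes T_{-h-k}$ in $l(T_g(\xi))$; this selects the $B_{g+h}$ component, and after summing on $h,t$ converts to $\hat{B}_0$ evaluated at $\xi$. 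Matching coefficients should yield a condition of the form ``$\hat{B}_0(\xi) = (\text{linear in } JC\xi)$''. For Eq.(\ref{orthogonality2}), I would similarly set $T=T_g(\xi), T'=T_{g'}(\xi')$; the first term collapses via the averaging $\frac{1}{n}\sum_h V(h)(\cdots)V(h)^*$ onto diagonal components, while $l(T')^*l(T)$, when expanded using (\ref{lB}) twice, yields a double Fourier sum in which the $U_\cK$--translations line up precisely so as to convert the product $B^* B$ into $\hat{B}^*\hat{B}$ by Plancherel. Matching coefficients against $\delta_{g,g'}\inpr{\xi}{\xi'}\sum T_i T_i^*$ should give a quadratic ``unitarity''-type identity for the $\hat{B}_g$'s.

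Next I would treat Eq.(\ref{Frobenius1}) and Eq.(\ref{Frobenius2}). Using $j_1(T_g(\xi))=T_{-g}(A(-g)J\xi)$ and $j_2(T_g(\xi))=\frac{\epsilon}{\sqrt{n}}\sum_k\overline{\inpr{g}{k}}T_k(C^*J\xi)$, the left-hand sides are re-expressed directly in terms of $B_{-g}(A(-g)J\xi)$ and a Fourier transform of $B$ in the case of $j_2$. For the right-hand sides, one expands using Eq.(\ref{lB}) with the basis $T_i = T_h(e_t)$; the formulas for $j_1(T_h(e_t))$ combine with $U_\cK$ shifts so that the triple sums over $h,k,t$ reassemble into adjoint/conjugation operations on $B_g$. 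I expect the resulting identities to take the form of covariance of the family $\{B_g\}$ under two natural involutions --- one encoding a $j_1$-type symmetry (an antilinear involution involving $A$ and $J$), and one encoding a $j_2$-type symmetry that becomes transparent after passing to $\hat{B}_g$ and involves $C$.

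The main obstacle is purely combinatorial: keeping track of the many $\inpr{g}{h}$ factors, the shifts in the $G$-index, and the adjoint positions of $A(g)$ and $C$ so that the resulting equations really do assemble into clean statements about $B_g$ or $\hat{B}_g$. The strategy for managing this is to do everything in the basis $\{T_h(e_t)\}$ with $A(h)e_t = a(h)\chi_t(h)e_t$, $Ce_t = c_t e_t$, $Je_t = \epsilon_t e_{\bar t}$ from Lemma \ref{ACJ}, so that every operator is diagonal or permutational on $\cK_0$ and the remaining computation is bookkeeping of scalar characters of $G$, which Fourier-inverts cleanly. Once each of the four equations has been reduced, reading off the equivalent conditions on $(B_g)$ and $(\hat{B}_g)$ completes the proof.
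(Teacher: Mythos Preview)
Your proposal is correct and takes essentially the same approach as the paper: substitute the expression \eqref{lB} for $l(T_g(\xi))$ into each of the four equations, use the explicit formulas from Lemma \ref{ACJ}, and reduce to identities among the $B_h$ on $\cK_0$. One small presentational difference: for Eq.~\eqref{orthogonality2} the paper's direct computation yields the convolution form $\sum_h B_{h+q}(\eta)^*B_{h+p}(\xi)$ (Eq.~\eqref{orthogonality2'}) rather than the $\hat{B}$ form you anticipate, and the passage to $\hat{B}_g(\eta)^*\hat{B}_g(\xi)$ via Plancherel is recorded separately in the subsequent Remark---but this is exactly the equivalence you describe, just split into two steps.
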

\begin{equation}\label{orthogonality1'}
\sum_{s,h}T_0(Je_s)^*T_0(e_s)^*B_h(\xi)=-\frac{\sqrt{n}}{d}T_0(C^*J\xi)^*.
\end{equation}
\begin{equation}\label{orthogonality2'}
\sum_{h}B_{h+q}(\eta)^*B_{h+p}(\xi)=\delta_{p,q}\inpr{\xi}{\eta}Q_0-\frac{1}{d}T_0(C^*J\eta)T_0(C^*J\xi)^*.
\end{equation}
\begin{align}\label{Frobenius1'}
\lefteqn{(X(h)\otimes X(h))B_{-g}(X(h)^*A(-g)J\xi)X(h)^*} \\
 &=\sum_{s,t}B_g(\xi)^*T_0(e_s)T_0(e_t)T_0(Je_t)T_0(e_s)^*. \nonumber
\end{align}
\begin{equation}\label{Frobenius2'}
\hat{B}_g(\xi)=\sum_{s}T_0(e_s)B_g(C^*J\xi)^*T_0(Je_s)
\end{equation}

\begin{proof} Eq.(\ref{orthogonality1}) is 
\begin{align*}
0 &=\frac{\epsilon n}{d}j_2(T_g(\xi))^*Q_0+\sum_{l,s}j_1(T_l(e_s))^*T_l(e_s)^*l(T_g(\xi)) \\
 &= \frac{\sqrt{n}}{d}T_0(C^*J\xi)^*+\sum_{l,s}T_{-l}(A(-l)Je_s)^*T_l(e_s)^*l(T_g(\xi)) \\
 &= \frac{\sqrt{n}}{d}T_0(C^*J\xi)^*+\sum_{l,s}T_{0}(A(-l)Je_s)^*T_0(e_s)^*(A(l)\otimes I)B_{g+l}(\xi)  \\
 &= \frac{\sqrt{n}}{d}T_0(C^*J\xi)^*+\sum_{l,s}T_{0}(JA(l)^*e_s)^*T_0(A(l^*)e_s)^*B_{g+l}(\xi)  \\
 &=\frac{\sqrt{n}}{d}T_0(C^*J\xi)^*+\sum_{l,s}T_{0}(Je_s)^*T_0(e_s)^*B_{g+l}(\xi).
\end{align*}

Eq.(\ref{orthogonality2}) is 
\begin{align*}
\lefteqn{\delta_{p,q}\inpr{\xi}{\eta}Q=l(T_q(\eta))^*l(T_p(\xi))+\frac{n}{d}\sum_{k}Q_kj_2(T_q(\eta))j_2(T_p(\xi))^*Q_k} \\
 &=\sum_{h,k}\inpr{p-q}{k}U_{\cK}(k)^*B_{h+q}(\eta)^*B_{h+p}(\xi)U_{\cK}(k)\\
 &+\frac{1}{d}\sum_{k}\inpr{p-q}{k}T_k(C^*J\eta)T_k(C^*J\xi)^*, \\
\end{align*}
which is equivalent to 
$$\sum_hB_{h+q}(\eta)^*B_{h+p}(\xi)=\delta_{p,q}\inpr{\xi}{\eta}Q_0-\frac{1}{d}T_0(C^*J\eta)T_0(C^*J\xi)^*.$$

Eq.(\ref{Frobenius1}) is 
\begin{align*}
\lefteqn{l(T_{-g}(A(-g)J\xi))=\sum_{h,k,s,t}l(T_g(\xi))^*T_{h+k}(e_s)T_{-h}(e_t)T_h(A(h)Je_t)T_{h+k}(e_s)^*} \\
 &=\sum_{h,k,s,t}\overline{\inpr{g}{k}}U_{\cK}(k)^* B_{g+h}(\xi)^*T_0(A(h)^*e_s)T_0(e_t)T_h(A(h)Je_t)T_0(e_s)^*U_{\cK}(h+k)\\
 &=\sum_{h,k,s,t}\overline{\inpr{g}{h+k}}U_{\cK}(h+k)^* B_{g-h}(\xi)^*\\
 &\times T_0(A(-h)^*e_s)T_0(e_t)T_{-h}(A(-h)Je_t)T_0(e_s)^*U_{\cK}(k)\\
 &=\sum_{h,k,s,t}\overline{\inpr{g}{h+k}}(U_{\cK}(-h-k)\otimes U_{\cK}(h))(I\otimes A(-h))B_{g-h}(\xi)^*\\
 &\times  T_0(e_s)T_0(e_t)T_0(Je_t)T_0(e_s)^*A(-h)^*U_{\cK}(k),\\
\end{align*}
which is equivalent to 
\begin{align*}\lefteqn{(A(h)\otimes I)B_{h-g}(A(-g)J\xi)}\\
&=\overline{\inpr{g}{h}}(I\otimes A(-h))B_{g-h}(\xi)^*\sum_{s,t}T_0(e_s)T_0(e_t)T_0(Je_t)T(e_s)^*A(-h)^*.
\end{align*}
Replacing $g$ with $g+h$, we get 
\begin{align*}
\lefteqn{\inpr{h}{h}(A(h)\otimes A(-h)^*)B_{-g}(A(-h)A(-g)J\xi)A(-h)} \\
 &=B_g(\xi)^*\sum_{s,t}T_0(e_s)T_0(e_t)T_0(Je_t)T_0(e_s)^*.
\end{align*}

Eq.(\ref{Frobenius2}) is 
\begin{align*}
\lefteqn{l(j_2(T_g(\xi)))=\sum_{l,s}T_l(e_s)l(T_g(\xi)^*)T_{-l}(A(-l)Je_s)} \\
 &=\sum_{h,k,s}\overline{\inpr{g+h}{k}} T_{-h-k}(e_s)U_{\cK}(-k)B_{g+h}(\xi)^*(A(k)A(h+k)^*)T_0(A(h+k)Je_s)U_{\cK}(-h)\\
 &=\sum_{h,k,s}\overline{\inpr{g+h}{k}} T_{-h-k}(e_s)U_{\cK}(-k)B_{g+h}(\xi)^*T_0(A(k)Je_s)U_{\cK}(-h) \\
 &=\sum_{h,k,s}\inpr{g-k}{h}(U_{\cK}(-h-k)\otimes U_{\cK}(h)) T_0(e_s)B_{g-k}(\xi)^*T_0(A(-h)Je_s)U_{\cK}(k)\\ 
 &=\epsilon \sum_{h,k,s}\inpr{g-k}{h}(U_{\cK}(-h-k)\otimes U_{\cK}(h)) T_0(A(h)Je_s)B_{g-k}(\xi)^*T_0(e_s)U_{\cK}(k). 
\end{align*}
The left-hand side is 
\begin{align*}
\lefteqn{\frac{\epsilon}{\sqrt{n}}\sum_{l}\overline{\inpr{g}{l}}l(T_l(C^*J\xi))} \\
 &=\frac{\epsilon}{\sqrt{n}}\sum_{l}\overline{\inpr{g}{l}}\sum_{h,k}\inpr{l}{k}(U_k(-h-k)\otimes U_{\cK}(h))(A(h)\otimes I)B_{h+l}(C^*J\xi)U_{\cK}(k) \\
 &=\frac{\epsilon}{\sqrt{n}}\sum_{h,k,l}\overline{\inpr{g-k}{l-h}}(U_k(-h-k)\otimes U_{\cK}(h))(A(h)\otimes I)B_l(C^*J\xi)U_{\cK}(k) \\
 &=\epsilon\sum_{h,k}\inpr{g-k}{h}(U_k(-h-k)\otimes U_{\cK}(h))(A(h)\otimes I)\hat{B}_{g-k}(C^*J\xi)U_{\cK}(k), \\
\end{align*}
which shows that Eq.(\ref{Frobenius2}) is equivalent to 
$$\hat{B}_g(C^*J\xi)=\sum_{s}T_0(Je_s)B_g(\xi)^*T_0(e_s).$$
\end{proof}

\begin{remark}
By Fourier transform, Eq.(\ref{orthogonality1'}) and (\ref{orthogonality2'}) are equivalent to the following two:
\begin{equation}\label{orthogonality1''}
\sum_{s}T_0(Je_s)^*T_0(e_s)^*\hat{B}_0(\xi)=-\frac{1}{d}T_0(C^*J\xi)^*,
\end{equation}
\begin{equation}\label{orthogonality2''}
\hat{B}_h(\eta)^*\hat{B}_h(\xi)=\frac{\inpr{\xi}{\eta}}{n}Q_0-\frac{\delta_{h,0}}{d}T_0(C^*J\eta)T_0(C^*J\xi)^*.
\end{equation}
Eq.(\ref{Frobenius1'}) is equivalent to the following two: 
\begin{equation}\label{Frobenius1''1}
B_g(A(h)\xi)=(A(h)\otimes A(h))B_g(\xi)A(h)^*, 
\end{equation}
\begin{equation}\label{Frobenius1''2}
B_{g}(\xi)^*=\epsilon \sum_{s,t}T_0(e_s)T_0(Je_t)^*T_0(e_t)^*T_0(e_s)^*B_{-g}(A(-g)J\xi). 
\end{equation}
Eq.(\ref{Frobenius2'}) is the equivalent to the following under the presence of Eq.(\ref{Frobenius1''2}):
\begin{align}\label{Frobenius2''}
\lefteqn{\hat{B}_g(\xi)} \\
 &=\epsilon\sum_{s,t,r}\big( T_0(e_t)^*T_0(e_s)^*B_{-g}(A(-g)C\xi)T_0(e_r)\big)T_0(Je_r)T_0(e_s)T_0(Je_t)^*.\nonumber
\end{align}
Note that $T_0(e_t)^*T_0(e_s)^*B_{-g}(A(-g)C\xi)T_0(e_r)$ is already a scaler. 
\end{remark}

\begin{lemma} Eq.(\ref{rhoU2}) is equivalent to 
\begin{equation}\label{rehoU'}
\sum_{r}\hat{B}_g(e_r)\hat{B}_g(e_r)^*=\frac{1}{n}Q_0\otimes Q_0-\frac{\delta_{g,0}}{d}\sum_{s,t}T_0(e_s)T_0(Je_s)T_0(Je_t)^*T_0(e_t)^*.
\end{equation}
\end{lemma}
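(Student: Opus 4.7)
The plan is to expand Eq.(\ref{rhoU2}) using the explicit coordinates of Lemma \ref{ACJ} together with the formula Eq.(\ref{lB}) for $l$, and then to recognise both sides as Fourier transforms of Eq.(\ref{rehoU'}). Since $G$ is abelian, the irreducible summands of $U_\cK$ are characters $\chi_k(g)=\inpr{k}{g}$ for $k\in G$, and $U_\cK(g)$ is diagonalised on the Fourier-transformed basis
$$\widetilde{T}_k(e_s):=\frac{1}{\sqrt{n}}\sum_{h\in G}\overline{\inpr{k}{h}}\,T_h(e_s),\qquad k\in G,\ s\in \Lambda.$$
I will take these as the basis $\{T^a_{\pi,i}\}$ of Theorem \ref{representation}, identifying the character label $\pi$ with $k$, the multiplicity label $a$ with $s\in\Lambda$ (since each character appears with multiplicity $m/n=|\Lambda|$), and dropping the trivial $1\times 1$ matrix index $i$.

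Next, because $G$ is abelian, the inner product $\inpr{\chi_g\pi_{ij}}{\sigma_{pq}}$ equals $n\,\delta_{\sigma,\chi_g\pi}$, so the first sum on the right of Eq.(\ref{rhoU2}) collapses into a single sum over $k\in G$ and the multiplicity labels $s,t\in\Lambda$. Using the formula $j_1(T_h(\xi))=T_{-h}(A(-h)J\xi)$ of Lemma \ref{ACJ}, which at $h=0$ gives $j_1(T_0(e_s))=T_0(Je_s)$, this collapsed sum reassembles into the quartic expression $\sum_{s,t}T_0(e_s)T_0(Je_s)T_0(Je_t)^*T_0(e_t)^*$ of Eq.(\ref{rehoU'}); the factor $\delta_{g,0}$ tracks the condition $\chi_g=1$, which by non-degeneracy of $\inpr{\cdot}{\cdot}$ forces $g=0$. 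The second sum becomes $\sum_{k,s}\overline{\inpr{k}{g}}\,l(\widetilde T_k(e_s))l(\widetilde T_k(e_s))^*$; substituting Eq.(\ref{lB}) and performing the Fourier transform in the summation variable $h$ produces the operator $\sum_r \hat{B}_{g}(e_r)\hat{B}_{g}(e_r)^*$ (up to $A$-cocycle factors, manipulated using $A(g)A(h)=\inpr{g}{h}A(g+h)$), conjugated by the $(U_\cK\otimes U_\cK)$-action that constitutes precisely the left-hand side $(j_2j_1^{-1})U_\cK(g)(j_2j_1^{-1})^*\otimes U_\cK(g)$ of Eq.(\ref{rhoU2}), as made explicit in Lemma \ref{ACJ}.

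After moving the common $(j_2j_1^{-1})U_\cK(g)(j_2j_1^{-1})^*\otimes U_\cK(g)$ to one side and stripping the $(U_\cK\otimes U_\cK)$-conjugations that come from the general $\cK_h\otimes \cK_{h'}$-grading of the range, Eq.(\ref{rhoU2}) becomes a one-parameter family, indexed by $g\in G$, of operator identities on $\cK_0\otimes \cK_0$, which is manifestly Eq.(\ref{rehoU'}). The converse implication is automatic since the Fourier transform $B_h\leftrightarrow \hat{B}_g$ is invertible, so the two operator identities carry the same information.

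The main obstacle will be the careful bookkeeping of the cocycle factors $A(h)$ and the simultaneous conjugations $U_\cK(-h-k)\otimes U_\cK(h)$ appearing in Eq.(\ref{lB}): these must be pushed through the Fourier transform using the structural relations of Lemma \ref{ACJ} (the cocycle identity for $A$, the commutation rules among $v_0,v_1,v_2$, and the intertwining properties of $J$) before the final identification with Eq.(\ref{rehoU'}) becomes visible. This step is lengthy but entirely mechanical; no new conceptual input is required beyond what Lemma \ref{ACJ} and Eq.(\ref{lB}) already provide.
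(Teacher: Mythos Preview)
Your overall strategy—substitute Eq.~(\ref{lB}), strip the $(U_\cK\otimes U_\cK)$-conjugations, and recognise a Fourier relation with $\hat B_g$—is the right one, and is essentially what the paper does. However, two of your intermediate identifications are wrong, and they hide a missing step.

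First, the claim that the first sum on the right of Eq.~(\ref{rhoU2}) ``reassembles into'' the quartic term with ``$\delta_{g,0}$ tracking $\chi_g=1$'' is incorrect. In the abelian case the collapsed inner product gives $n\,\delta_{\sigma,\chi_g\pi}$, i.e.\ a \emph{shift} $k\mapsto k+g$ in the character label, not a $\delta_{g,0}$. That sum contributes for \emph{every} $g$. Second, the claim that the $l(\cdot)l(\cdot)^*$ term ``produces $\sum_r\hat B_g(e_r)\hat B_g(e_r)^*$'' with the \emph{same} $g$ is also incorrect: if you unwind your Fourier basis you get $\sum_{l,s}l(T_{l-g}(e_s))l(T_l(e_s))^*$, and after substituting Eq.~(\ref{lB}) and stripping the conjugations this becomes the \emph{convolution} $\sum_{l,s}B_{l+q}(e_s)B_l(e_s)^*$, parametrised by a shift $q$ (not by $g$). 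The honest intermediate equation is
\[
\sum_{l,s}B_{l+q}(e_s)B_l(e_s)^*=\delta_{q,0}\,Q_0\otimes Q_0-\frac{1}{d}\sum_{s,t}T_0(e_s)T_0(Je_s)T_0(Je_t)^*T_0(e_t)^*,
\]
where the $\delta_{q,0}$ comes from the \emph{left-hand side} of Eq.~(\ref{rhoU2}) (the unitary block structure), while the first sum of Eq.~(\ref{rhoU2}) produces the $q$-\emph{independent} constant on the right. Only after a further Fourier transform in $q$ does this become Eq.~(\ref{rehoU'}); under that transform the roles swap ($\delta_{q,0}\to \frac{1}{n}$, constant $\to \delta_{g,0}$), which is why your attributions came out backwards. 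So your plan is salvageable, but you are missing this final Fourier transform and have mis-tracked the origin of the two terms.

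By way of comparison: the paper avoids the Fourier basis $\widetilde T_k(e_s)$ altogether. Instead of starting from Eq.~(\ref{rhoU2}) as written, it steps back to the equation it came from in the proof of Lemma~\ref{rhoU3}, namely $Q\rho(U(g))Q=(j_2j_1^{-1})U_\cK(g)(j_2j_1^{-1})^*\otimes U(g)$, and expands $U(g)$ directly in the standard basis $T_h(e_s)$. This lands on the convolution equation above with less overhead; the equivalence with Eq.~(\ref{rehoU'}) is then a single Fourier transform. Your route through the $U_\cK$-eigenbasis introduces an extra Fourier layer that you then have to undo—valid, but more bookkeeping for no gain.
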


\begin{proof} Note that we have 
$$U(g)=\sum_{h}\inpr{g}{h}S_hS_h^*+\sum_{h,t}T_{h-g}(e_t)T_h(e_t)^*.$$
In view of the proof of Lemma \ref{rhoU3}, it suffices to show that the above equality is equivalent to 
$$j_2\circ j_1^{-1}U_{\cK}(g)(j_2\circ j_1^{-1})^*\otimes U_{\cK}(g)=\sum_{l}\inpr{g}{l}Q\rho(S_lS_l^*)Q+\sum_{l,s}l(T_{l-g}(e_s))l(T_l(e_s))^*.$$

The left-hand side is 
\begin{align*}
\lefteqn{\sum_{h,k,s,t}T_{g+h}(A(g+h)A(h)^*e_s)T_h(e_s)^*\otimes T_{k-g}(e_t)T_k(e_t)^*} \\
 &=\sum_{h,k,s,t}\overline{\inpr{g}{h}}T_{g+h}(A(g)e_s)T_h(e_s)^*\otimes T_{k-g}(e_t)T_k(e_t)^*.
\end{align*}

The first term of the right-hand side is 
\begin{align*}
\lefteqn{\sum_{l}\inpr{g}{l}U_{\cK}(l)\rho(S_0S_0^*)U_{\cK}(l)^*}\\
 &=\frac{1}{d} \sum_{l,p,q,s,t}\inpr{g}{l}U_{\cK}(l)T_p(e_s)T_{-p}(A(-p)Je_s)T_{-q}(A(-q)Je_t)^*T_q(e_t)^*U_{\cK}(l)^*\\
 &=\frac{1}{d} \sum_{l,p,q,s,t}\inpr{g}{l}T_{p-l}(e_s)T_{q-l}(e_t)^*\otimes T_{-p}(A(-p)Je_s)T_{-q}(A(-q)Je_t)^*\\
 &=\frac{1}{d} \sum_{h,k,p,s,t}\overline{\inpr{g}{h+k}}T_{h+k+p}(e_s)T_h(e_t)^*\otimes T_{-p}(A(-p)Je_s)T_k(A(k)Je_t)^*\\
 &=\frac{1}{d} \sum_{h,k,p,s,t}\overline{\inpr{g}{h+k}}T_{h+p}(e_s)T_h(e_t)^*\otimes T_{k-p}(A(k-p)Je_s)T_k(A(k)Je_t)^*\\
\end{align*}
The second term of the right-hand side is \begin{align*}
\lefteqn{
\sum_{h,k,k',l,s} (U_{\cK}(-h-k')\otimes U_{\cK}(k'))(\inpr{l-g}{h}A(k')\otimes I)B_{k'+l-g}(e_s)}\\
&\times  B_{l+k}(e_s)^*(\inpr{l}{-h}A(k)^*\otimes I)(U_{\cK}(h+k)\otimes U_{\cK}(-k))\\
 &=
\sum_{h,k,k',l,s}\overline{\inpr{g}{h}}(U_{\cK}(-h-k')\otimes U_{\cK}(k'))(A(k')\otimes I)\\
&\times B_{l+k'-k-g}(e_s)B_l(e_s)^*(A(k)^*\otimes I)
(U_{\cK}(h+k)\otimes U_{\cK}(-k)) \\
 &=\sum_{h,k,p,l,s}\overline{\inpr{g}{h+k}}(U_{\cK}(-h-p)\otimes U_{\cK}(-k+p))(A(p-k)\otimes I)\\
&\times B_{l+p-g}(e_s)B_l(e_s)^*(A(-k)^*\otimes I)
(U_{\cK}(h)\otimes U_{\cK}(k)). \\
\end{align*}
Thus Eq.(\ref{rhoU2}) is equivalent to 
\begin{align*}
\lefteqn{\sum_{l,s}B_{l+p-g}(e_s)B_l(e_s)^*} \\
 &= \delta_{g,p}\inpr{g}{k}A(g-k)^*A(g)A(-k)\otimes Q_0\\
 &-\frac{1}{d}\sum_{s,t}T_0(A(p-k)^*e_s)T_0(A(-k)^*e_t)^*\otimes T_0(A(k-p)Je_s)T_0(A(k)Je_t)^*\\
  &= \delta_{g,p}Q_0\otimes Q_0-\frac{1}{d}\sum_{s,t}T_0(e_s)T_0(e_t)^*\otimes T_0(Je_s)T_0(Je_t)^*.
\end{align*}
\end{proof}

\begin{lemma} Eq.(\ref{S*rho2S}) is equivalent to 
\begin{align}\label{S*rho2S'}
\lefteqn{(\frac{m}{n}-\frac{1}{d})T_0(\xi)} \\
 &=\sum_{h,s,t}T_0(A(-h)^*C^*Je_s)^*B_{-h}(T_0(e_s)^*B_h(\xi)T_0(e_t))T_0(C^*Je_t).\nonumber
\end{align}
\end{lemma}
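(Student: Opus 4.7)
The plan is a direct computation in the same spirit as the preceding three lemmas of this section. I will substitute the explicit formulas of Lemma~\ref{ACJ} and Eq.~(\ref{lB}) for $l$, $j_1$, $j_2$, $V$, $U_{\cK}$ into Eq.~(\ref{S*rho2S}), take the $\{T_i\}_{i=1}^m$ there to be the orthonormal basis $\{T_h(e_s)\}_{h\in G,\, s\in \Lambda}$ of $\cK$, and reorganize the result as an equivalent identity on the maps $B_g\colon \cK_0\to \cK_0^2\cK_0^*$.

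First I will note that both sides of Eq.~(\ref{S*rho2S}) are $G$-covariant in $T$ via $V(g)$ and $U_\cK(g)$, using Eq.~(\ref{equivariance}) together with the commutation relations for $j_2$ in Lemma~\ref{ACJ}; hence it suffices to verify the identity for $T=T_0(\xi)$ with $\xi\in \cK_0$. Next, using Eq.~(\ref{lB}), I will expand the matrix element
$l^{(2)}_{(h,s),(h',s')}(T_0(\xi)) = T_h(e_s)^*\, l(T_0(\xi))\, T_{h'}(e_{s'})$; the $U_\cK(k)$ factor there forces a unique value of $k$ to survive, so $l^{(2)}_{(h,s),(h',s')}(T_0(\xi))$ becomes a specific matrix element of some $B_{h''}(\xi)$ with $h''=h''(h,h')$. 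I will then substitute this back, apply $l$ once more via Eq.~(\ref{lB}), and sandwich with $j_2(T_h(e_s))=\frac{\epsilon}{\sqrt{n}}\sum_k \overline{\inpr{h}{k}}\,T_k(C^*Je_s)$ from Lemma~\ref{ACJ}.

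After summing over $h,s,h',s'$, the Fourier phases from $j_2$ collapse one of the variables via $\frac{1}{n}\sum_k\overline{\inpr{h}{k}}\inpr{h'}{k}=\delta_{h,h'}$, leaving a single free index $h$; the $C^*J$ and $A(-h)^*$ factors coming from $j_2$ and from Eq.~(\ref{lB}) combine to produce precisely the vectors $T_0(A(-h)^*C^*Je_s)^*$ and $T_0(C^*Je_t)$ that appear in Eq.~(\ref{S*rho2S'}). Finally, using $d^2=n+md$ to rewrite $(1-2n/d^2)=(d^2-2n)/d^2$ and clearing the overall prefactor $1/(nd)$ that arises from $\frac{1}{d}\cdot\frac{1}{n}$ yields the coefficient $(d^2-2n)/(nd)=m/n-1/d$ on the left-hand side of Eq.~(\ref{S*rho2S'}), as required.

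The main obstacle will be the bookkeeping of the many characters $\inpr{\cdot}{\cdot}$ and $A,C,J$ cocycles: the identities $A(g)A(h)=\inpr{g}{h}A(g+h)$, $JA(g)=A(-g)^*J$, $JC=C^*J$, together with the Weyl-type commutations recorded in Lemma~\ref{ACJ}, must conspire so that all intermediate phases cancel and only the two $B$ matrix elements $B_h(\xi)$ and $B_{-h}(\cdots)$ in Eq.~(\ref{S*rho2S'}) survive. This is entirely analogous to the phase-tracking steps used in the proofs of Eq.~(\ref{orthogonality2'}), (\ref{Frobenius2'}), and (\ref{rehoU'}) above, and should go through by the same techniques without substantially new input.
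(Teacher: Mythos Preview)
Your proposal is correct and follows essentially the same route as the paper's proof: substitute the explicit formulas for $j_2$ and $l$ from Lemma~\ref{ACJ} and Eq.~(\ref{lB}) into Eq.~(\ref{S*rho2S}), let the $U_\cK(k)$ factor pin down the summation index, and let the Fourier phases coming from the two $j_2$'s collapse the remaining free variable. The only cosmetic difference is that the paper keeps the general argument $T=T_g(\xi)$ throughout and observes at the end that both sides acquire the same factor $U_\cK(-g)$, whereas you invoke $G$-covariance at the outset to reduce to $g=0$; these are equivalent. One small bookkeeping slip: after the Fourier collapses the overall scalar prefactor on the left is $n/d$, not $1/(nd)$ (extra factors of $n$ come back from the sums you collapse), and it is dividing $(1-2n/d^2)$ by $n/d$ that produces $(d^2-2n)/(nd)=m/n-1/d$.
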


\begin{proof} The left-hand side of Eq.(\ref{S*rho2S}) for $T_i=T_{h+k}(e_s)$ and $T_j=T_k(e_t)$ is
\begin{align*}
\lefteqn{\frac{1}{d}\sum_{h,k,s,t}j_2(T_{h+k}(e_s))^*l(T_{h+k}(e_s)^*l(T_g(\xi))T_{k}(e_t))j_2(T_{k}(e_t))} \\
 &= \frac{1}{nd}\sum_{h,k,p,q,s,t}\inpr{h+k}{p}\inpr{g}{k}\overline{\inpr{k}{q}}\\
 &\times T_p(C^*Je_s)^*l(U_{\cK}(h)^*T_0(A(h)^*e_s)^*B_{g+h}(\xi)T_0(e_t))T_q(C^*Je_t)\\
 &= \frac{1}{d}\sum_{h,q,s,t}\inpr{h}{q-g}
 T_{q-g}(C^*Je_s)^*l(T_{-h}(T_0(A(h)^*e_s)^*B_{g+h}(\xi)T_0(e_t)))T_q(C^*Je_t)\\
 &=\frac{1}{d}\sum_{h,q,s,t}\overline{\inpr{h}{g}}U_{\cK}(-g)\\
 &\times  T_0(A(-g)^*C^*Je_s)^*B_{-h-g}(T_0(A(h)^*e_s)^*B_{g+h}(\xi)T_0(e_t))T_0(C^*Je_t)\\
 &=\frac{n}{d}\sum_{h,s,t}\inpr{g-h}{g}U_{\cK}(-g)\\
 &\times  T_0(A(-g)^*C^*Je_s)^*B_{-h}(T_0(A(h-g)^*e_s)^*B_h(\xi)T_0(e_t))T_0(C^*Je_t)\\
 &=\frac{n}{d}\sum_{h,s,t}\inpr{g-h}{g}U_{\cK}(-g)\\
 &\times  T_0(A(-g)^*A(g-h)^*C^*Je_s)^*B_{-h}(T_0(e_s)^*B_h(\xi)T_0(e_t))T_0(C^*Je_t)\\
 &=\frac{n}{d}\sum_{h,s,t}U_{\cK}(-g)T_0(A(-h)^*C^*Je_s)^*B_{-h}(T_0(e_s)^*B_h(\xi)T_0(e_t))T_0(C^*Je_t).
\end{align*}
Since 
$$\frac{d}{n}(1-\frac{2n}{d^2})=\frac{d^2-2n}{nd}=\frac{md-n}{nd}=\frac{m}{n}-\frac{1}{d},$$
we get the statement. 
\end{proof}

\begin{lemma} Eq.(\ref{l1}) is equivalent to 

\begin{align}\label{l1'}
\lefteqn{\delta_{g,0}\inpr{\eta}{\zeta}T_0(\xi)-\frac{1}{d}\inpr{\xi}{\zeta}T_0(\eta)} \\
 &=\sum_{h,s}T_0(J\eta)^*B_{g-h}(e_s)T_0(C^*JB_h(\xi)^*T_0(A(h)^*C^*\zeta)T_0(e_s)).\nonumber
\end{align}

Eq.(\ref{l2}) is equivalent to 
\begin{align}\label{l2'}
\lefteqn{\epsilon C^*A(k)^*T_0(A(-k)^*\zeta)^*B_{g-k}(\xi)T_0(\eta)} \\
 &=\frac{1}{\sqrt{n}}\sum_{h}\overline{\inpr{g}{h}}B_{h-k}(\zeta)^*T_0(C^*A(h)^*\eta)T_0(C^*A(g)^*\xi).\nonumber
\end{align}

Under the presence of Eq.(\ref{Frobenius1''1}), Eq.(\ref{l3}) is equivalent to 
\begin{align}\label{l3'}
\lefteqn{\sum_{g,t}B_g(T_0(A(-g-h)^*\zeta)^*B_{-g-h}(\xi)T_0(e_t))B_{g+k}(e_t)^*T_0(\eta)} \\
 &=\inpr{h}{k}B_{k}(A(-h)^*\zeta)^*T_0(\eta)B_{-h}(\xi)
 -\frac{\epsilon}{d\sqrt{n}}\inpr{C^*\xi}{\zeta}_{\cK_0}\sum_{t}T_0(e_t)T_0(Je_t)T_0(J\eta)^*.\nonumber
\end{align}
\end{lemma}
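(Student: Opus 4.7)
The plan is to reduce each of the identities (\ref{l1}), (\ref{l2}), (\ref{l3}) to a statement purely about the maps $B_h$ (and their Fourier transforms $\hat B_g$) by evaluating them on the distinguished vectors $T_g(\xi), T_0(\eta), T_0(\zeta)$, in perfect analogy with the derivation of (\ref{orthogonality1'})--(\ref{orthogonality2''}), (\ref{rehoU'}), and (\ref{S*rho2S'}) in the preceding lemmas. Since $\cK=\ell^2(G)\otimes \cK_0$ and $V,U_\cK$ act diagonally in the $G$-variable, and since $l$ is completely encoded by $B_h$ via (\ref{lB}), one may always replace the generic $T,T',T''$ in (\ref{l1})--(\ref{l3}) by elements of the form $T_g(\xi), T_0(\eta), T_0(\zeta)$ after absorbing the $G$-shifts into the $U_\cK$-twists. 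The formulas
$$j_1(T_h(\xi))=T_{-h}(A(-h)J\xi),\quad j_2(T_h(\xi))=\tfrac{\epsilon}{\sqrt n}\sum_kT_k(C^*J\xi)\overline{\inpr{h}{k}},$$
together with $(j_2\circ j_1^{-1})U_\cK(g)(j_2\circ j_1^{-1})^*T_h(\xi)=T_{h+g}(A(g+h)A(h)^*\xi)$ are the main computational inputs.

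First, for (\ref{l1}), substitute $T=T_g(\xi)$, $T'=T_0(\eta)$, $T''=T_0(\zeta)$. The left-hand side becomes a sum over $h,s$ of $T_0(J\eta)^*l(T_h(e_s))j_2(l(T_g(\xi))^*T_0(\zeta)T_h(e_s))$; inserting (\ref{lB}) and using $\sum_h\inpr{g-h}{\cdot}$ to collapse $G$-sums produces the right-hand side of (\ref{l1'}) up to the correction term coming from $\frac{1}{d}\sum_h\inpr{U_\cK(h)T''}{j_1(T)}j_1(U_\cK(h)\eta)$, which directly yields the $-\frac{1}{d}\inpr{\xi}{\zeta}T_0(\eta)$ summand. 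The Kronecker $\delta_{g,0}$ comes from the orthogonality $\sum_{k\in G}\inpr{g}{k}=n\delta_{g,0}$.

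Second, for (\ref{l2}), again set $T''=T_0(\zeta)$, $T'=T_0(\eta)$, $T=T_g(\xi)$; since both $j_2\circ j_1^{-1}(T_0(\eta))$ and $j_2\circ j_1^{-1}(T_g(\xi))$ are explicit linear combinations of the $T_k(\cdot)$ with coefficients $\overline{\inpr{\cdot}{\cdot}}$ and factors $A,C$, the identity becomes a pointwise equality on each $T_k(\cdot)$-component. Extracting the $T_0$-component of each side, one obtains (\ref{l2'}) directly after matching $A$- and $C$-factors. The Fourier kernel $\overline{\inpr{g}{h}}/\sqrt n$ arises from expanding $j_2\circ j_1^{-1}(T_g(\xi))$ as a sum over $h$, and this explains the appearance of $B_{h-k}$ rather than $\hat B$ on the right-hand side.

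Third, for (\ref{l3}), which is trilinear in $l$, the same substitution yields three copies of (\ref{lB}) inside a product; the key simplification is that the equivariance (\ref{Frobenius1''1}) lets us push all $A(h)$-factors through the $B$'s into the arguments, thereby reducing the expression to an identity about $B_g(\cdot)$'s evaluated on vectors in $\cK_0$. After the $G$-sums collapse (again using character orthogonality), the residual term from the $U_\cK(h)T_ij_1(T_i)$-correction in (\ref{l3}) produces the $-\frac{\epsilon}{d\sqrt n}$ summand of (\ref{l3'}). The $\inpr{h}{k}$ prefactor on the right of (\ref{l3'}) tracks the $A$-cocycle $A(g)A(h)=\inpr{g}{h}A(g+h)$ used in the simplifications.

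The only real obstacle is bookkeeping: these identities hide, when fully unwound, four or five independent $G$-summations and two or three basis indices, and one must repeatedly use the cocycle identity for $A$, the symmetry of $\inpr{\cdot}{\cdot}$, the intertwining $JA(g)=A(-g)^*J$, $JC=C^*J$, and the orthogonality $\sum_{g\in G}\inpr{g}{h}=n\delta_{h,0}$ in the right order. In each case the strategy is identical to the preceding lemmas of this section; once the substitutions are carried out and $\hat B$ is introduced wherever a Fourier dual sum appears, the three target equations fall out after matching components in the $T_h(\cdot)$-decomposition of $\cK$.
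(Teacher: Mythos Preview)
Your overall strategy---substitute $T=T_g(\xi)$, $T'=T_{g'}(\eta)$, $T''=T_{g''}(\zeta)$, expand via (\ref{lB}) and the formulas for $j_1,j_2$, then collapse the $G$-sums---is exactly what the paper does. The problem is with your particular choice of indices.

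You fix $g'=g''=0$ and keep only $g$ free, asserting that ``the $G$-shifts can be absorbed into $U_\cK$-twists.'' This is where the argument breaks. If you trace the paper's computation for (\ref{l1}), the free parameter $g$ in the target (\ref{l1'}) turns out to be $g'$ (the index of $T'$), \emph{not} the index of $T$: the $\delta_{g,0}$ in (\ref{l1'}) arises from $\inpr{T_{g''}(\zeta)}{T_{g'}(\eta)}=\delta_{g',g''}\inpr{\zeta}{\eta}$, not from character orthogonality as you claim, and the $B_{g-h}$ on the right comes from $B_{g'-h}$. Likewise, in (\ref{l2'}) the parameter $k$ is $g'-g''$. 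So with $g'=g''=0$ you only recover (\ref{l1'}) at $g=0$ and (\ref{l2'}) at $k=0$; varying your remaining index $g$ is then absorbed by the substitutions $\eta\to A(\cdot)^*\eta$, $\zeta\to A(\cdot)^*\zeta$ and gives nothing new. There is no obvious $U_\cK$-equivariance of the identities (\ref{l1})--(\ref{l3}) that moves $g'$ or $g''$---the relation $l(V(g)T)=U_\cK(g)l(T)U_\cK(g)^*$ from (\ref{equivariance}) is a $V$-equivariance (phases only), and $l$ has no clean commutation with $U_\cK$.

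The fix is simply to keep all three indices $g,g',g''$ general at the outset, as the paper does; only after the computation do you see which combinations survive as free parameters in the reduced equations, and then make the final $A$-substitutions in $\eta,\zeta$ to bring them to the stated form. (There are also a couple of slips in your displayed LHS for (\ref{l1}): $T'^*=T_0(\eta)^*$, not $T_0(J\eta)^*$, and you dropped the inner $j_2$ on $T''$, which introduces an additional $G$-sum with a $C^*J$ factor.)
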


\begin{proof} We set $T=T_g(\xi)$, $T'=T_{g'}(\eta)$, and $T_{g''}(\zeta)$ in Eq.(\ref{l1}), (\ref{l2}, (\ref{l3})). 
Then the left-hand side of Eq.(\ref{l1})is 
\begin{align*}
\lefteqn{\sum_{i}T_{g'}(\eta)^*l(T_i)j_2(l(T_g(\xi))^* j_2(T_{g''}(\zeta))T_i)} \\
 &=\sum_{h,k,s} T_{g'}(\eta)^*l(T_{-h}(e_s))j_2(l(T_g(\xi))^* \frac{\epsilon}{\sqrt{n}}\overline{\inpr{g''}{h+k}}T_{h+k}(C^*J\zeta)T_{-h}(e_s))\\
 &= \frac{\epsilon}{\sqrt{n}} \sum_{h,k,s}\inpr{g''}{h+k}\inpr{k}{g} T_{g'}(\eta)^*l(T_{-h}(e_s))j_2(T_k(B_{g+h}(\xi)^*T_{0}(A(h)^*C^*J\zeta)T_0(e_s)))\\
 &=\frac{1}{n} \sum_{h,k,f,s}\inpr{g+g''-f}{k}\inpr{h}{g''}\\
 &\times T_{g'}(\eta)^*l(T_{-h}(e_s))T_f(C^*JB_{g+h}(\xi)^*T_{0}(A(h)^*C^*J\zeta)T_0(e_s)) \\
 &=\sum_{h,k,s}\inpr{h}{g''} T_{g'}(\eta)^*l(T_{-h}(e_s))T_{g+g''}(C^*JB_{g+h}(\xi)^*T_{0}(A(h)^*C^*J\zeta)T_0(e_s))\\
 &=\sum_{h,k,s}\overline{\inpr{h}{g}} U_{\cK}(g'-g''-g)T_0(A(g'-g''-g)^*\eta)^*B_{g'-g''-g-h}(e_s)\\
 &\times T_0(C^*JB_{g+h}(\xi)^*T_{0}(A(h)^*C^*J\zeta)T_0(e_s))\\
 &=\sum_{h,k,s} U_{\cK}(g'-g''-g)T_0(A(g'-g''-g)^*\eta)^*B_{g'-g''-g-h}(e_s)\\
 &\times T_0(C^*JB_{g+h}(\xi)^*T_{0}(A(h+g)^*A(g)C^*J\zeta)T_0(e_s))\\
 &=\sum_{h,k,s} U_{\cK}(g'-g''-g)T_0(A(g'-g''-g)^*\eta)^*B_{g'-g''-h}(e_s)\\
 &\times T_0(C^*JB_h(\xi)^*T_{0}(A(h)^*A(g)C^*J\zeta)T_0(e_s)).
\end{align*}
The right-hand side is 
\begin{align*}
\lefteqn{
\epsilon\delta_{g',g''} \inpr{\zeta}{\eta}T_g(\xi)-\frac{1}{d}\sum_h\inpr{T_{g''-h}(\zeta)}{T_{-g}(A(-g)J\xi)} T_{h-g'}(A(h-g')J\eta) 
}\\
&=\epsilon\delta_{g',g''}\inpr{\zeta}{\eta}T_g(\xi)-\frac{1}{d}\sum_h\inpr{\zeta}{A(-g)J\xi}\delta_{h,g+g''}T_{h-g'}(A(h-g')J\eta) \\
 &=\epsilon\delta_{g',g''}\inpr{\zeta}{\eta}T_{g+g''-g'}(\xi)-\frac{\epsilon}{d}\inpr{\xi}{A(g)J\zeta}T_{g+g''-g'}(A(g+g''-g')J\eta). \\
\end{align*}
Thus we get 
\begin{align*}
\lefteqn{\delta_{g',0}\inpr{\eta}{\zeta}T_0(\xi)-\frac{1}{d}\inpr{\xi}{A(g)\zeta}
T_0(A(g-g')\eta)
} \\
 &=\sum_{h,s}T_0(A(g'-g)^*J\eta)^*B_{g'-h}(e_s)T_0(C^*JB_h(\xi)^*T_0(A(h)^*A(g)C^*\zeta)T_0(e_s)).
\end{align*}
Replacing $\eta$ with $A(g-g')^*\eta$ and $\zeta$ with $A(g)^*\zeta$, we get Eq.(\ref{l1'}). 

Note that we have 
\begin{align*}
j_2\circ j_1^{-1}(T_g(\xi)) &=\epsilon j_2(T_{-g}(A(-g)J\xi))=\frac{1}{\sqrt{n}}\sum_h\inpr{g}{h}T_h(C^*JA(-g)J\xi) \\
 &=\frac{\epsilon}{\sqrt{n}}\sum_h\inpr{g}{h}T_h(C^*A(g)^*\xi).
\end{align*}
The left-hand side of Eq.(\ref{l2}) is
\begin{align*}
\lefteqn{\frac{1}{n}\sum_{h,k}l(T_{g''}(\zeta))^*\inpr{g'}{h+k}\inpr{g}{-h}T_{h+k}(C^*A(g')^*\eta)T_{-h}(C^*A(g)^*\xi)
}\\
&=\frac{1}{n}\sum_{h,k}\inpr{g'-g}{h}\inpr{g'-g''}{k}U_{\cK}(-k)B_{g''+h}(\zeta)^*T_0(C^*A(h)^*A(g')^*\eta)T_0(C^*A(g)^*\xi)\\
&=\frac{1}{n}\sum_{h,k}\overline{\inpr{g}{h}}\inpr{g'-g''}{k}U_{\cK}(-k)B_{g''+h}(\zeta)^*T_0(C^*A(h+g')^*\eta)T_0(C^*A(g)^*\xi)\\
&=\frac{1}{n}\sum_{h,k}\overline{\inpr{g}{h-g'}}\inpr{g'-g''}{k}U_{\cK}(-k)B_{h+g''-g'}(\zeta)^*T_0(C^*A(h)^*\eta)T_0(C^*A(g)^*\xi)\\
\end{align*}
The right-hand side of Eq.(\ref{l2}) is 
\begin{align*}
\lefteqn{j_2\circ j_1^{-1}(T_{g''}(\zeta)^*l(T_g(\xi))T_{g'}(\eta))} \\
 &=\inpr{g'}{g}j_2\circ j_1^{-1}(T_{g'-g''}(T_0(A(g''-g')^*\zeta)^*B_{g+g''-g'}(\xi)T_0(\eta))) \\
 &=\frac{\epsilon\inpr{g'}{g}}{\sqrt{n}}\sum_{k}\inpr{k}{g'-g''}T_k(C^*A(g'-g'')^*T_0(A(g''-g')^*\zeta)^*B_{g+g''-g'}(\xi)T_0(\eta)).
\end{align*}
Let $f=g'-g''$. 
Then Eq.(\ref{l2}) is equivalent to 
\begin{align*}
\lefteqn{\epsilon C^*A(f)^*T_0(A(-f)^*\zeta)^*B_{g-f}(\xi)T_0(\eta)} \\
 &=\frac{1}{\sqrt{n}}\sum_{h}\overline{\inpr{g}{h}}B_{h-f}(\zeta)^*T_0(C^*A(h)^*\eta)T_0(C^*A(g)^*\xi).
\end{align*}

The left-hand side of Eq.(\ref{l3}) is 
\begin{align*}
\lefteqn{l(T_{g''}(\zeta))^*T_{g'}(\eta)l(T_g(\xi))} \\
 &=\sum_{k,h}\overline{\inpr{g''}{k}}U_{\cK}(-k)B_{g''+h}(\zeta)^*(A(h)^*U_{\cK}(h+k)\otimes U_{\cK}(-h))T_{g'}(\eta)l(T_g(\xi)) \\
 &=\sum_{k,h}\overline{\inpr{g''}{k}}U_{\cK}(-k)B_{g''+h}(\zeta)^*T_{g'-h-k}(A(h)^*\eta)U_{\cK}(-h)l(T_g(\xi))  \\
 &=\sum_{k}\overline{\inpr{g''}{k}}U_{\cK}(-k)B_{g'+g''-k}(\zeta)^*T_0(A(g'-k)^*\eta)U_{\cK}(k-g')l(T_g(\xi))  \\
 &=\sum_{k,k',h'}\inpr{g}{k'}\overline{\inpr{g''}{k}}U_{\cK}(-k)B_{g'+g''-k}(\zeta)^*T_0(A(g'-k)^*\eta)U_{\cK}(k-g')\\
 &\times (U_{\cK}(-h'-k')A(h')\otimes U_{\cK}(h'))B_{g+h'}(\xi)U_{\cK}(k')\\
 &=\sum_{k,k'}\inpr{g}{k'}\overline{\inpr{g''}{k}}U_{\cK}(-k)B_{g'+g''-k}(\zeta)^*T_0(A(g'-k)^*\eta)\\
 &\times (A(k-k'-g')\otimes U_{\cK}(k-k'-g'))B_{g-g'+k-k'}(\xi)U_{\cK}(k')\\ 
\end{align*}
The first term of the right-hand side is 
\begin{align*}
\lefteqn{\sum_{f,t}l(T_{g''}(\zeta)^*l(T_g(\xi))T_f(e_t))l(T_f(e_t))^*T_{g'}(\eta)}\\
 &=\sum_{f,t,h}\inpr{g}{f}l(T_{g''}(\zeta)^*   (U_{\cK}(-h-f)A(h)\otimes U_{\cK}(h))B_{g+h}(\xi)T_0(e_t))l(T_f(e_t))^*T_{g'}(\eta)  \\
 &=\sum_{f,t}\inpr{g}{f}l(U_{\cK}(g''-f)T_0(A(g''-f)^*\zeta)^*B_{g+g''-f}(\xi)T_0(e_t))l(T_f(e_t))^*T_{g'}(\eta)  \\
 &=\sum_{f,t,p,q}\inpr{g}{f}\inpr{q}{f-g''} (U_{\cK}(-p-q)A(p)\otimes U_{\cK}(p))\\
 &\times B_{f-g''+p}(T_0(A(g''-f)^*\zeta)^*B_{g+g''-f}(\xi)T_0(e_t))U_{\cK}(q)l(T_f(e_t))^*T_{g'}(\eta) \\
 &=\sum_{f,t,p,q,r}\inpr{g}{f}\overline{\inpr{q}{g''}}(U_{\cK}(-p-q)A(p)\otimes U_{\cK}(p))\\
 &\times B_{f-g''+p}(T_0(A(g''-f)^*\zeta)^*B_{g+g''-f}(\xi)T_0(e_t))U_{\cK}(q)\\
 &\times U_{\cK}(-q)B_{f+r}(e_t)^*(A(r)^*U_{\cK}(r+q)\otimes U_{\cK}(-r))T_{g'}(\eta) \\
 &=\sum_{f,t,p,r}\inpr{g}{f}\inpr{r-g'}{g''} 
 (U_{\cK}(-p+r-g')A(p)\otimes U_{\cK}(p))\\
 &\times B_{f-g''+p}(T_0(A(g''-f)^*\zeta)^*B_{g+g''-f}(\xi)T_0(e_t)) B_{f+r}(e_t)^*T_0(A(r)^*\eta)U_{\cK}(-r) \\
 &=\sum_{f,t,k,r}\inpr{g}{f}\inpr{r-g'}{g''} (U_{\cK}(-k)A(k+r-g')\otimes U_{\cK}(k+r-g'))\\
 &\times B_{f+k+r-g'-g''}(T_0(A(g''-f)^*\zeta)^*B_{g+g''-f}(\xi)T_0(e_t)) B_{f+r}(e_t)^*T_0(A(r)^*\eta)U_{\cK}(-r) \\
&=\sum_{f,t,k,k'}\inpr{g}{f}\overline{\inpr{k'+g'}{g''}}(U_{\cK}(-k)A(k-k'-g')\otimes U_{\cK}(k-k'-g'))\\
 &\times B_{f+k-k'-g'-g''}(T_0(A(g''-f)^*\zeta)^*B_{g+g''-f}(\xi)T_0(e_t)) B_{f-k'}(e_t)^*T_0(A(-k')^*\eta)U_{\cK}(k') \\
\end{align*}
The second term of the right-hand side is 
\begin{align*}
\lefteqn{\frac{1}{d}\sum_{h,k,t}\inpr{V(h)j_2\circ j_1^{-1}(T_g(\xi))}{T_{g''}(\zeta)}_\cK} \\
&\times U_{\cK}(h)T_{h+k}(e_t)j_1(T_{h+k}(e_t))j_1(U_{\cK}(h)^*T_{g'}(\eta))^*\\
 &= \frac{\epsilon}{d\sqrt{n}}\sum_{h,k,t}\inpr{h}{g''}\inpr{g}{g''}\inpr{C^*A(g)^*\xi}{\zeta}_{\cK_0}\\
 &\times T_{k}(e_t)T_{-h-k}(A(-h-k)Je_t)T_{-h-g'}(A(-h-g')J\eta)^*\\
 &= \frac{\epsilon}{d\sqrt{n}}\sum_{k,k',t}\overline{\inpr{k'+g'}{g''}}\inpr{g}{g''}\inpr{C^*A(g)^*\xi}{\zeta}_{\cK_0}\\
 &\times T_{k}(e_t)T_{g'-k+k'}(A(g'-k+k')Je_t)T_{k'}(A(k')J\eta)^*.\\
\end{align*}
Thus we get 
\begin{align*}
\lefteqn{\sum_{f,t}\inpr{g}{f}  B_{f+k-k'-g'-g''}(T_0(A(g''-f)^*\zeta)^*B_{g+g''-f}(\xi)T_0(e_t))} \\
 &\times B_{f-k'}(e_t)^*T_0(A(-k')^*\eta)\\
 &\frac{\epsilon}{d\sqrt{n}}\inpr{g}{g''}\inpr{C^*A(g)^*\xi}{\zeta}_{\cK_0}\sum_{t}T_0(e_t)T_0(Je_t)T_0(A(k')J\eta)^*\\
 &=\inpr{g'}{g''}\inpr{g+g''}{k'}\overline{\inpr{g''}{k}}A(k-k'-g')^*B_{g'+g''-k}(\zeta)^*T_0(A(g'-k)^*\eta)\\
 &\times A(k-k'-g')B_{g-g'+k-k'}(\xi)\\ 
 &=\inpr{g'}{g''}\inpr{g+g''}{k'}\overline{\inpr{g''}{k}}B_{g'+g''-k}(A(k-k'-g')^*\zeta)^*\\
 &\times T_0(A(k-k'-g')^*A(g'-k)^*\eta)B_{g-g'+k-k'}(\xi)\\ 
 &=\inpr{g''}{g'-k+k'}\inpr{g}{k'}\inpr{k-k'-g'}{-g'+k} \\
 &\times B_{g'+g''-k}(A(k-k'-g')^*\zeta)^*T_0(A(-k')^*\eta)B_{g-g'+k-k'}(\xi),
\end{align*}
where we used Eq.(\ref{Frobenius1''1}). 
Replacing $k$, $\eta$, and $\zeta$ with $k''+k'+g'$, $A(-k')\eta$, and $A(g)^*\zeta$ respectively, and multiplying $\overline{\inpr{g}{g''}}$ 
make the first term of the left-hand side of this as 
\begin{align*}
\lefteqn{\sum_{f,t}B_{f+k''-g''}(T_0(A(g+g''-f)^*\zeta)^*B_{g+g''-f}(\xi)T_0(e_t))B_{f-k'}(e_t)^*T_0(\eta)} \\
 &=\sum_{x,t}B_x(T_0(A(g+k''-x)^*\zeta)^*B_{g+k''-x}(\xi)T_0(e_t))B_{x-k''-k'+g''}(e_t)^*T_0(\eta).
\end{align*}
The second term becomes 
$$\frac{\epsilon}{d\sqrt{n}}\inpr{C^*\xi}{\zeta}_{\cK_0}\sum_{t}T_0(e_t)T_0(Je_t)T_0(J\eta)^*.$$
The right-hand side becomes 
\begin{align*}
\lefteqn{\overline{\inpr{g''}{k''}}\inpr{g}{k'-g''}\inpr{k''}{k'+k''}B_{g''-k''-k'}(A(k'')^*A(g)^*\zeta)^*T_0(\eta)B_{g+k''}(\xi) } \\
 &=\inpr{g}{k''} \overline{\inpr{g''}{k''}}\inpr{g}{k'-g''}\inpr{k''}{k'+k''}B_{g''-k''-k'}(A(g+k'')^*\zeta)^*T_0(\eta)B_{g+k''}(\xi)\\
 &=\inpr{g''-k'-k''}{-g-k'}B_{g''-k''-k''}(A(g+k'')^*\zeta)^*T_0(\eta)B_{g+k''}(\xi). 
\end{align*}
This is equivalent to Eq.(\ref{l3'}). 
\end{proof}

Theorem \ref{unique} and our computation so far show the following classification theorem. 

\begin{theorem}\label{irrcl} C$^*$ near-group categories with a finite abelian group $G$ and irrational $d$ is completely classified by 
$$(\epsilon,\inpr{\cdot}{\cdot},\cK_0,A(g),B_g,C,J)$$ 
satisfying the conditions in the statement of Lemma \ref{ACJ} and Eq.(\ref{orthogonality1''})-(\ref{l3'}) up to equivalence in the following sense: 
we say that two tuples  
$$(\epsilon,\inpr{\cdot}{\cdot},\cK_0,A(g),B_g,C,J),\quad (\epsilon',\inpr{\cdot}{\cdot}',\cK_0',A'(g),B'_g,C',J')$$ 
are equivalent if there exist a unitary $W:\cK_0\to \cK_0'$ and a group automorphism $\varphi\in \Aut(G)$ satisfying 
$\inpr{g}{h}'=\inpr{\varphi(g)}{\varphi(h)}$, $A'(g)=WA(\varphi(g))W^*$, 
$B'_{\varphi(g)}(W\xi)W=(W\otimes W)B_g(\xi)$, $C'=WCW^*$, and $J'=WJW^*$. 
\end{theorem}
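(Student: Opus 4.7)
The plan is to combine Theorem \ref{unique}, which already gives a bijection between C$^*$ near-group categories and equivalence classes of admissible tuples $(\cK, j_1, j_2, V, U_\cK, \chi, l)$, with the explicit parametrization developed in this section. Starting from a C$^*$ near-group category with irrational $d$, Theorem \ref{representation} forces $G$ to be abelian, and Lemma \ref{ACJ} produces a canonical identification $\cK \cong \ell^2(G) \otimes \cK_0$ under which $V$, $U_\cK$, $\chi$, and the anti-unitaries $j_1, j_2$ are all built out of the reduced data $(\epsilon, \inpr{\cdot}{\cdot}, \cK_0, A(g), C, J)$ on $\cK_0$. The $\alpha_G$-invariance of $l$ together with the equivariance relation (\ref{equivariance}) shows that the remaining datum $l: \cK \to \cK^2 \cK^*$ is entirely determined by the family $B_g: \cK_0 \to \cK_0^2 \cK_0^*$ through the formula (\ref{lB}); conversely, given any such family, (\ref{lB}) defines an $l$ compatible with (\ref{invariance}) and (\ref{equivariance}).

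The next step is to match each remaining admissibility condition from Definition \ref{tuple} with its translation on the reduced data. The structural axioms (\ref{involution}), (\ref{j1}), (\ref{j2}), (\ref{period3}), (\ref{Weyl}), and (\ref{symmetric}) are built into Lemma \ref{ACJ} and the non-degeneracy of $\inpr{\cdot}{\cdot}$. The orthogonality relations (\ref{orthogonality1})--(\ref{orthogonality2}) are equivalent to (\ref{orthogonality1'})--(\ref{orthogonality2'}) (equivalently (\ref{orthogonality1''})--(\ref{orthogonality2''}) after Fourier transform); the Frobenius identities (\ref{Frobenius1})--(\ref{Frobenius2}) are equivalent to (\ref{Frobenius1'})--(\ref{Frobenius2'}) (equivalently (\ref{Frobenius1''1})--(\ref{Frobenius2''})); and the remaining structural equations (\ref{rhoU2}), (\ref{S*rho2S}), (\ref{l1}), (\ref{l2}), (\ref{l3}) are equivalent to (\ref{rehoU'}), (\ref{S*rho2S'}), (\ref{l1'}), (\ref{l2'}), (\ref{l3'}) respectively. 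These translations are exactly the content of the lemmas proved in Section \ref{PEIC}, so collecting them closes the loop in both directions.

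For the equivalence statement, I would argue as follows. Given two admissible tuples equivalent in the sense of Definition \ref{tuple} via a unitary $W: \cK \to \cK'$ and an automorphism $\varphi \in \Aut(G)$, the condition $V'(g)W = WV(\varphi(g))$ forces $W$ to send the $V$-joint eigenspace indexed by $h \in G$ to the $V'$-joint eigenspace indexed by $\varphi(h)$, so after reparametrizing by $\varphi$ the unitary $W$ takes the form $W = I_{\ell^2(G)} \otimes W_0$ for some unitary $W_0: \cK_0 \to \cK_0'$, and the intertwining conditions with $j_1, j_2, l$ translate into precisely the intertwining conditions with $A, C, J, B$ listed in the theorem. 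Conversely, any pair $(W_0, \varphi)$ satisfying the listed relations produces an admissible-tuple equivalence by the explicit formulas of Lemma \ref{ACJ} and (\ref{lB}).

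The main obstacle is purely bookkeeping: verifying that the long list of translations (\ref{orthogonality1''})--(\ref{l3'}) covers every admissibility condition exactly once, with no hidden redundancy and no missing equation, and that the reduced equivalence correctly captures the quotient by the freedom in the choice of identification $\cK \cong \ell^2(G) \otimes \cK_0$ (cf.\ Remark \ref{6j}). Since each translation lemma in Section \ref{PEIC} is stated as an equivalence rather than an implication, no genuine new content is needed here, and the proof reduces to the single sentence already given in the paper: the theorem follows by combining Theorem \ref{unique} with the preceding computations.
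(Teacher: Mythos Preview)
Your proposal is correct and takes essentially the same approach as the paper, which proves the theorem in a single sentence by invoking Theorem \ref{unique} together with the translation lemmas of Section \ref{PEIC}; you have simply unpacked that sentence and checked that the equivalence relations match up.
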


Now we assume that an orthonormal basis $\{e_t\}_{t\in \Lambda}$ of $\cK_0$ satisfies the conditions in Lemma \ref{ACJ}, 
and define $b^{s,t}_{r,p}(g)\in \C$ by 
$$b^{r,s}_{t,u}(g)=T(e_s)^*T(e_r)^*B_g(e_u)T(e_t).$$ 
Then we have
$$B_g(e_u)=\sum_{r,s,t}b^{r,s}_{t,u}(g)T_0(e_r)T_0(e_s)T_0(e_t)^*.$$

\begin{theorem}\label{irrpe} In terms of $b^{r.s}_{t,u}(g)$, Eq.(\ref{orthogonality1''})-(\ref{l3'}) altogether are equivalent to 
\begin{equation}\label{p1}
\frac{1}{\sqrt{n}}\sum_h\inpr{g}{h}{b^{r,s}_{t,u}}(h)=\epsilon \epsilon_r\epsilon_t c_ua(g)\chi_u(g)b^{s,\overline{t}}_{\overline{r},u}(g),
\end{equation}
\begin{equation}\label{p2}
\sum_{r}b^{r,s}_{r,u}(0)=-\frac{\delta_{s,u}}{d},
\end{equation}
\begin{equation}\label{p3}
\sum_{s}b^{r,s}_{t,s}(0)=-\frac{\delta_{r,t}}{d},
\end{equation}
\begin{equation}\label{p4}
\sum_{r,t}\overline{b^{r,s'}_{t,u'}(g)}b^{r,s}_{t,u}(g)=\frac{\delta_{s,s'}\delta_{u,u'}}{n}
-\frac{\delta_{g,0}\delta_{s,u}\delta_{s',u'}}{d},
\end{equation}
\begin{equation}\label{p5}
\sum_{s,u}b^{r,s}_{t,u}(g)\overline{b^{r',s}_{t',u}(g)}=\frac{\delta_{r,r'}\delta_{t,t'}}{n}
-\frac{\delta_{g,0}\delta_{r,t}\delta_{r',t'}}{d},
\end{equation}
\begin{equation}\label{p6}
\chi_r\chi_s\neq \chi_t\chi_u \Longrightarrow b^{r,s}_{t,u}(g)=0,\quad \forall g\in G,
\end{equation}
\begin{equation}\label{p7}
\overline{b^{r,s}_{t,u}(g)}=\epsilon_{s}\epsilon_u a(g)\chi_u(g)b^{t,\overline{s}}_{r,\overline{u}}(-g),
\end{equation}
\begin{equation}\label{p8}
\overline{b^{r,s}_{t,u}(g)}=\epsilon_{t}\epsilon_r c_r\overline{c_t}a(g)\chi_r(g)b^{\overline{r},u}_{\overline{t},s}(-g),
\end{equation}
\begin{equation}\label{p9}
b^{r,s}_{t,u}(g)=\epsilon_r\epsilon_s\epsilon_t\epsilon_uc_t\overline{c_r}\overline{\chi_r(g)\chi_s(g)}
b^{\overline{t},\overline{u}}_{\overline{r},\overline{s}}(g), 
\end{equation}
\begin{align}\label{p10}
\lefteqn{c_r\epsilon_r\sum_{g,q,s,t}\epsilon_t\overline{c_t}
\overline{b^{v,w}_{q,s}(g)}
b^{\overline{r},u}_{\overline{t},s}(g+h)
b^{p,x}_{q,t}(g+k)} \\
 &=\epsilon_u\epsilon_w\chi_r(h)\overline{\chi_u(h)\inpr{h}{k}}
 \sum_{y} b^{p,y}_{v,r}(k)b^{x,\overline{w}}_{y,\overline{u}}(h)
 -\frac{c_u\epsilon_{\overline{p}}\epsilon_v}{d\sqrt{n}}\delta_{r,u}\delta_{w,\overline{v}}\delta_{x,\overline{p}}.\nonumber 
\end{align}
\end{theorem}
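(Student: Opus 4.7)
The plan is to translate each of the operator equations \eqref{orthogonality1''}--\eqref{l3'} into coordinates by substituting
\[
B_g(e_u) \;=\; \sum_{r,s,t} b^{r,s}_{t,u}(g)\, T_0(e_r)T_0(e_s)T_0(e_t)^*
\]
and then match the resulting scalar identities against \eqref{p1}--\eqref{p10}. All operator actions on the basis $\{e_t\}_{t\in\Lambda}$ reduce to scalars: $A(g)e_t = a(g)\chi_t(g)e_t$, $Ce_t = c_te_t$, $Je_t = \epsilon_t e_{\overline{t}}$, and consequently $C^*Je_t = \overline{c_t}\epsilon_t e_{\overline{t}}$. Combined with orthonormality of the tensor basis $\{T_0(e_r)T_0(e_s)T_0(e_t)^*\}_{r,s,t}$ in $\cK^2\cK^*$, each operator identity therefore unpacks into a finite family of scalar equations in the $b^{r,s}_{t,u}(g)$.

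I would group the derivations as follows. Equation \eqref{orthogonality1''} yields both \eqref{p2} and \eqref{p3} after contracting with $T_0(e_s)^*T_0(e_u)$ and with $T_0(e_r)T_0(e_t)^*$ respectively. The orthogonality \eqref{orthogonality2''} gives \eqref{p4}, and its partner \eqref{rehoU'} gives \eqref{p5}; in each case the $-\tfrac{1}{d}$ correction comes from the rank-one $T_0(Je_s)T_0(Je_t)^*$ term. The equivariance \eqref{Frobenius1''1}, applied against $A(h)$ diagonalized in the basis, forces $\chi_r\chi_s = \chi_t\chi_u$ in any nonzero coefficient, which is \eqref{p6}. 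The Frobenius involution \eqref{Frobenius1''2} translates to \eqref{p7}, and \eqref{Frobenius2''} translates to \eqref{p8}; combining these two with $(j_2j_1)^3=I$ gives \eqref{p9}. The Fourier form \eqref{Frobenius2'}, read off before inverse Fourier transform, produces the Fourier identity \eqref{p1} directly. Finally, the cubic identity \eqref{l3'} expands, upon reading coefficients of $T_0(e_p)T_0(e_x)T_0(e_v)^*$, to \eqref{p10}; the $\delta$-term on the right matches after substituting $\xi=e_u$, $\zeta=e_w$, $\eta=e_{\overline{v}}$ and using $c_{\overline{v}}=c_v$, $\epsilon_v\epsilon_{\overline{v}}=\epsilon$.

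It remains to handle \eqref{S*rho2S'}, \eqref{l1'} and \eqref{l2'}. Under the selection rule \eqref{p6} and the symmetries \eqref{p7}--\eqref{p9}, I expect each of these to reduce to linear combinations of \eqref{p2}--\eqref{p5} and the Fourier identity \eqref{p1}, and hence to add no new constraint; this is the pattern established in \cite{I01} for the analogous intermediate-depth identities. The main obstacle is purely combinatorial bookkeeping: every computation produces multi-index sums that entangle the cocycle $a(g)$, the characters $\chi_t$, the scalars $c_t$, the signs $\epsilon_t$, and the involution $t\mapsto\overline{t}$, and the $\delta_{g,0}$-type correction terms and the factors of $c_u$, $\overline{c_t}$ must be propagated without error. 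The deepest point is the verification of \eqref{p10}, where both sides are genuinely cubic in the $b$'s and the pairing of the correction term with $\inpr{C^*\xi}{\zeta}_{\cK_0}$ demands careful tracking of the $J$-action on all three tensor slots.
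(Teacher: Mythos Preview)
There is a genuine gap in your bookkeeping of which input equation produces which output equation, and it propagates to the handling of \eqref{S*rho2S'}.

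First, \eqref{Frobenius2'} and \eqref{Frobenius2''} are not independent: the paper's Remark after Lemma~8.1 states that \eqref{Frobenius2''} is equivalent to \eqref{Frobenius2'} in the presence of \eqref{Frobenius1''2}. So you cannot extract two distinct scalar identities \eqref{p1} and \eqref{p8} from them. In fact, writing out \eqref{Frobenius2''} in coordinates (the left side is a Fourier transform in $h$, the right side is $B_{-g}$ with permuted indices) yields precisely the Fourier identity \eqref{p1}, not \eqref{p8}. In the paper's logic \eqref{p8} is redundant, following from \eqref{p7} and \eqref{p9}.

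Second, and this is the real issue, \eqref{S*rho2S'} is \emph{not} redundant. It is the source of \eqref{p9}. The paper shows that \eqref{S*rho2S'} in coordinates reads
\[
\Big(\frac{m}{n}-\frac{1}{d}\Big)\delta_{p,u}=\sum_{g,r,s,t}\epsilon_r\epsilon_tc_r\overline{c_t}\,a(g)\chi_r(g)\,
b^{\overline{r},p}_{\overline{t},s}(-g)\,b^{r,s}_{t,u}(g),
\]
and that under \eqref{p4} and \eqref{p7} both families $\{b^{r,s}_{t,u}(g)\}$ and $\{\epsilon_r\epsilon_tc_r\overline{c_t}a(g)\chi_r(g)b^{\overline{r},p}_{\overline{t},s}(-g)\}$ have the same $\ell^2$-norm $m/n-1/d$. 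The Cauchy--Schwarz equality case then forces the pointwise proportionality that, combined with \eqref{p7}, is exactly \eqref{p9}. Your expectation that \eqref{S*rho2S'} ``reduces to linear combinations of \eqref{p2}--\eqref{p5} and \eqref{p1}'' misses this: \eqref{S*rho2S'} is a \emph{saturated} inner-product identity, not a linear one, and the equality case is what delivers the new symmetry \eqref{p9}. Without it, neither \eqref{p8} nor \eqref{p9} is accounted for, and your derivation of \eqref{p9} from ``\eqref{p7}, \eqref{p8}, and $(j_2j_1)^3=I$'' has no independent input for \eqref{p8}.

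The remaining pairings (orthogonality, selection rule, \eqref{l3'}$\leftrightarrow$\eqref{p10}, redundancy of \eqref{l1'},\eqref{l2'}) are as you describe; also, a minor point: \eqref{orthogonality1''} gives \eqref{p2} only, with \eqref{p3} following from \eqref{p2} and \eqref{p5}.
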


\begin{proof} Eq.(\ref{Frobenius2''}) is equivalent to (\ref{p1}). 
Under the presence of this condition, we have equivalence of 
Eq.(\ref{orthogonality1''}),(\ref{orthogonality2''}),(\ref{Frobenius1''1}),(\ref{Frobenius1''2}),(\ref{rehoU'}) 
and Eq.(\ref{p2}),(\ref{p4})-(\ref{p7}). 

Eq.(\ref{S*rho2S'}) is equivalent to 
$$(\frac{m}{n}-\frac{1}{d})\delta_{p,u}=\sum_{g,r,s,t}\epsilon_r\epsilon_tc_r\overline{c_t}a(g)\chi_r(g)
b^{\overline{r},p}_{\overline{t},s}(-g)b^{r,s}_{t,u}(g),$$
which follows from Eq.(\ref{p4}),(\ref{p7}),(\ref{p9}). 
We show, on the other hand, that this together with Eq.(\ref{p4}),(\ref{p7}) implies Eq.(\ref{p9}). 
Eq.(\ref{p4}) implies 
$$\sum_s\sum_h\sum_{r,r}|b^{r,s}_{t,u}(g)|^2=\sum_s\sum_h(\frac{1}{n}-\frac{\delta_{g,0}\delta_{s,u}}{d})
=\sum_s(1-\frac{\delta_{s,u}}{d})=\frac{n}{m}-\frac{1}{d}.$$
In the same way, we have 
$$\sum_s\sum_h\sum_{r,r}|\epsilon_r\epsilon_tc_r\overline{c_t}a(g)\chi_r(g)
b^{\overline{r},p}_{\overline{t},s}(-g)|^2=\frac{n}{m}-\frac{1}{d}.$$
Thus the Cauchy-Schwartz inequality implies  
$$b^{\overline{r},p}_{\overline{t},s}(-g)=\epsilon_r\epsilon_tc_t\overline{c_ra(g)\chi_r(g)b^{r,s}_{t,u}(g)},$$
which together with Eq.(\ref{p7}) implies Eq.(\ref{p9}).

Although Eq.(\ref{p3}),(\ref{p8}) are redundant, we put them in the statement 
to emphasize that the equations are symmetric in left and right variables. 
In fact, Eq.(\ref{p3}) follows from Eq.(\ref{p2}),(\ref{p5}), and  
Eq.(\ref{p8}) follows from Eq.(\ref{p7}),(\ref{p9}). 
In the rest of the proof, we assume the conditions we have obtained so far, and show that 
Eq.(\ref{l1'}),(\ref{l2'}) follow from them, and Eq.(\ref{l3'}) is equivalent to Eq.(\ref{p10}). 

Eq.(\ref{l1'}) is equivalent to 
$$\delta_{g,0}\delta_{r,v}\delta_{p,u}-\frac{\delta_{u,r}\delta_{p,v}}{d}
=\sum_{h,s,t}\epsilon_t\epsilon_vc_r\overline{c_t}a(h)\chi_r(h)b^{r,s}_{t,u}(h)b^{\overline{v},p}_{\overline{t},s}(g-h),$$
and by Eq.(\ref{p8}) and the Fourier transform, the right-hand side is equal to 
$$\sum_{h,s,t}\epsilon_r\epsilon_v\overline{b^{\overline{r},u}_{\overline{t},s}(-h)}b^{\overline{r},p}_{\overline{t},s}(g-h)
=\sum_{k,s,t}\epsilon_r\epsilon_v\overline{\widehat{b^{\overline{r},u}_{\overline{t},s}}(k)}
\widehat{b^{\overline{r},p}_{\overline{t},s}}(k).$$
Thanks to Eq.(\ref{p1}),(\ref{p5}), this is equal to the left-hand side. 

Eq.(\ref{l2'}) is equivalent to 
\begin{align*}
\lefteqn{\epsilon\overline{c_s\chi_r(k)\chi_s(k)}b^{r,s}_{t,u}(g)} \\
 &=\overline{c_tc_ua(g+k)a(k)\chi_u(g+k)\chi_t(k)\inpr{g+k}{k}} \frac{1}{\sqrt{n}} 
 \sum_{h}\overline{\inpr{g}{h}a(h)\chi_t(h)b^{t,u}_{s,r}(h)}.
\end{align*}
Thanks to $a(g+k)a(k)\inpr{g+k}{k}=a(g)a(k)^2\inpr{k}{k}=a(g)$ and Eq.(\ref{p8}), the right-hand side is 
\begin{align*}
\lefteqn{\overline{c_tc_ua(g)\chi_u(g+k)\chi_t(k)} \frac{1}{\sqrt{n}} 
 \sum_{h}\overline{\inpr{g}{h}a(h)\chi_t(h)b^{t,u}_{s,r}(h)} } \\
 &=\epsilon_s\epsilon_t\overline{c_sc_ua(g)\chi_u(g+k)\chi_t(k)} \frac{1}{\sqrt{n}} 
 \sum_{h}\overline{\inpr{g}{h}}b^{\overline{t},r}_{\overline{s},u}(-h). 
\end{align*}
This coincides with the left-hand side thanks to Eq.(\ref{p1}),(\ref{p6}). 

We set $\xi=e_u$, $\eta=e_p$, $\zeta=e_r$ in Eq.(\ref{l3'}). 
Then left-hand side is 
$$\sum_{g,s,t}a(-g-h)\chi_r(-g-h)b^{r,s}_{t,u}(-g-h)B_g(T_0(e_s))B_{g+k}(e_t)^*T_0(e_p).$$
Thanks to Eq.(\ref{p8}), this is equal to 
\begin{align*}
\lefteqn{\sum_{g,s,t}\epsilon_t\epsilon_rc_t\overline{c_r}\overline{b^{\overline{r},u}_{\overline{t},s}(g+h)}B_g(T_0(e_s))B_{g+k}(e_t)^*T_0(e_p)} \\
 &=\sum_{g,s,t,v,w,x}\epsilon_t\epsilon_rc_t\overline{c_r}b^{v,w}_{q,s}(g)\overline{b^{\overline{r},u}_{\overline{t},s}(g+h)}
 \overline{b^{p,x}_{q,t}(g+k)} T_0(e_v)T_0(e_w)T(e_x)^*.
\end{align*}
The first term of the right-hand side is 
\begin{align*}
\lefteqn{\inpr{h}{k}a(h)\overline{\chi_r(h)}B_k(e_r)^*T(e_p)B_{-h}(e_u)} \\
 &=\inpr{h}{k}a(h)\overline{\chi_r(h)}\sum_{v,w,x,y} \overline{b^{p,y}_{v,r}(k)}b^{y,w}_{x,u}(-h)T_0(e_v)T_0(e_w)T_0(e_x)^*\\
 &=\epsilon_u\inpr{h}{k}\chi_u(h)\overline{\chi_r(h)}\sum_{v,w,x,y}\epsilon_w\overline{b^{p,y}_{v,r}(k)b^{x,\overline{w}}_{y,\overline{u}}(h)}
 T_0(e_v)T_0(e_w)T_0(e_x)^*,
\end{align*}
where we used Eq.(\ref{p7}).
The second term is 
$$-\frac{\overline{c_u}\delta_{r,u}\epsilon_{\overline{p}}}{d\sqrt{n}}\sum_{v,w,x}\epsilon_v\delta_{w,\overline{v}}\delta_{x,\overline{p}}
T_0(e_v)T_0(e_w)T_0(e_x)^*.$$
Thus Eq.(\ref{l3'}) is equivalent to Eq.(\ref{p10}). 
\end{proof}

Since $\inpr{\cdot}{\cdot}$ is non-degenerate, there exists unique $g_r\in G$ for each $r\in \Lambda$ satisfying 
$\chi_r(g)=\inpr{g}{g_r}$ for any $g\in G$. 

\begin{lemma}
Eq.(\ref{p1}) and (\ref{p9}) imply
\begin{equation}\label{p11} b^{r,s}_{t,u}(g)=c_rc_u\overline{c_sc_t}b^{s,r}_{u,t}(g+g_s-g_u).
\end{equation}
\end{lemma}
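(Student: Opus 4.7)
The plan is to verify Eq.(\ref{p11}) by showing that both sides have the same Fourier transform in the variable $g$ with respect to the bicharacter $\inpr{\cdot}{\cdot}$. Since the transform $Tf(g) := \frac{1}{\sqrt{n}}\sum_h \inpr{g}{h}f(h)$ is invertible on $\ell^2(G)$, equality of the transformed functions is equivalent to equality of the originals.

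Applying Eq.(\ref{p1}) directly to the left-hand side yields
$$T(b^{r,s}_{t,u})(g)=\epsilon\epsilon_r\epsilon_t c_u a(g)\chi_u(g)\, b^{s,\overline{t}}_{\overline{r},u}(g).$$
For the right-hand side, I will first invoke the translation rule $T(f(\cdot+k))(g)=\inpr{g}{k}\,Tf(g)$, which combined with $\chi_s(g)=\inpr{g}{g_s}$ gives
$$T\bigl(b^{s,r}_{u,t}(\cdot+g_s-g_u)\bigr)(g)=\chi_u(g)\overline{\chi_s(g)}\,\hat b^{s,r}_{u,t}(g),$$
and then apply Eq.(\ref{p1}) again, this time with indices $(s,r,u,t)$, to rewrite $\hat b^{s,r}_{u,t}(g)=\epsilon\epsilon_s\epsilon_u c_t a(g)\chi_t(g)\,b^{r,\overline{u}}_{\overline{s},t}(g)$. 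Cancelling the common nonzero factor $\epsilon c_u a(g)\chi_u(g)$ from both transforms reduces the desired identity (\ref{p11}) to the single equation
$$\epsilon_r\epsilon_t\,b^{s,\overline{t}}_{\overline{r},u}(g)=\epsilon_s\epsilon_u c_r\overline{c_s}\,\overline{\chi_s(g)}\chi_t(g)\,b^{r,\overline{u}}_{\overline{s},t}(g).$$

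This reduced identity is precisely what Eq.(\ref{p9}) produces when applied to $b^{s,\overline{t}}_{\overline{r},u}(g)$: using $\epsilon_{\overline{r}}=\epsilon\epsilon_r$, $\epsilon_{\overline{t}}=\epsilon\epsilon_t$, $c_{\overline{r}}=c_r$, $\chi_{\overline{t}}=\overline{\chi_t}$, and $\overline{\overline{r}}=r$, $\overline{\overline{t}}=t$, Eq.(\ref{p9}) gives
$$b^{s,\overline{t}}_{\overline{r},u}(g)=\epsilon_r\epsilon_s\epsilon_t\epsilon_u\, c_r\overline{c_s}\,\overline{\chi_s(g)}\chi_t(g)\,b^{r,\overline{u}}_{\overline{s},t}(g),$$
and multiplying through by $\epsilon_r\epsilon_t$ yields exactly the required equation.

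The approach involves no new ingredients beyond Eq.(\ref{p1}), Eq.(\ref{p9}), injectivity of the Fourier transform, and its translation rule. The only nontrivial aspect is careful bookkeeping of the signs $\epsilon_\bullet$, the third roots of unity $c_\bullet$, and the characters $\chi_\bullet$ as the indices move through the involution $\bullet\mapsto\overline{\bullet}$ and as the translation $g_s-g_u$ is absorbed by $\overline{\chi_s}\chi_u$ through the nondegenerate pairing; once this is tracked correctly, the identity follows immediately.
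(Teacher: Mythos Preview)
Your proof is correct and follows essentially the same route as the paper: apply Eq.(\ref{p1}) to Fourier-transform both sides, then reduce to an identity that is exactly Eq.(\ref{p9}) applied to $b^{s,\overline{t}}_{\overline{r},u}(g)$. One minor slip in the write-up: the translation rule should read $T(f(\cdot+k))(g)=\overline{\inpr{g}{k}}\,Tf(g)$, but you apply it correctly to obtain $\chi_u(g)\overline{\chi_s(g)}$, so the argument is unaffected.
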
 

\begin{proof} Assuming Eq.(\ref{p1}),(\ref{p6}), and (\ref{p9}), we get \begin{align*}
\lefteqn{\cF^{-1}(b^{r,s}_{t,u})(h)=\epsilon\epsilon_r\epsilon_tc_u a(h)\chi_u(h)b^{s,\overline{t}}_{\overline{r},u}} \\
 &=\epsilon\epsilon_r\epsilon_tc_ua(h)\chi_u(h)
 \epsilon_s\epsilon_{\overline{t}}\epsilon_{\overline{r}}\epsilon_u c_{\overline{r}}\overline{c_s}\overline{\chi_s(h)\chi_{\overline{t}}(h)}
 b^{r,\overline{u}}_{\overline{s},t}(h)\\
 &=\epsilon\epsilon_s\epsilon_u  c_uc_{r}\overline{c_s}a(h)\chi_u(h)\chi_{t}(h)\overline{\chi_s(h)}
 b^{r,\overline{u}}_{\overline{s},t}(h) \\
 &=c_uc_{r}\overline{c_s c_t}\chi_u(h)\overline{\chi_s(h)}\cF^{-1}(b^{s,r}_{u,t})(h)\\
 &=c_uc_{r}\overline{c_s c_t}\inpr{h}{g_u-g_s}\cF^{-1}(b^{s,r}_{u,t})(h),
\end{align*}
which show the statement. 
\end{proof}

We denote by $\cG(A,C,J)$ the set of all unitaries acting on $\cK_0$ commuting with $A(g),C,J$, and 
call it the gauge group. 
An element $v$ in the gauge group acts on the solution of the above polynomial equations as 
$B'_g=(v\otimes v)B_g(v^*\cdot)v^*$, or equivalently, 
$$b'^{r',s'}_{t',u'}(g)=\sum_{r,s,t,u}v_{r'r}v_{s's}\overline{v_{t't}v_{u'u}} b^{r,s}_{t,u}(g).$$

It is convenient to introduce a matrix $\cB(g)$ for each $g\in G$ with an index set $\Lambda\times \Lambda$ whose 
$\big((r,t),(s,u)\big)$-entry is $b^{r,s}_{t,u}(g)$. 
Then 
$$\cB(g)^*\cB(g)=\cB(g)\cB(g)^*=\frac{1}{n}I-\frac{\delta_{g,0}}{d}\delta\otimes \delta,$$
where $\delta\in \C^{\Lambda\times \Lambda}$ is a vector whose $(r,t)$-component is $\delta_{r,t}$. 
In fact $\delta$ is a common eigenvector of $\cB(0)$ and $\cB(0)^*$ with an eigenvalue $-1/d$. 
Thus $\sqrt{n}\cB(g)$ is a unitary for $g\neq 0$, and $\sqrt{n}\cB(0)$ is a unitary on $\{\delta\}^{\perp}$.  
An element $v$ of the gauge group acts on $\cB(g)$ as 
$$\cB'(g)=(v\otimes \overline{v})\cB(g)(v^{T}\otimes v^*).$$

\section{The case of $m=|G|$}\label{m=n} 
In this section, we give a brief account of the most tractable case $m=n$ among the irrational case, 
which was essentially done in \cite{I01} and \cite{EG14}.  
Since $\dim \cK_0=1$, we can choose $e\in \cK_0$ with $\|e\|=1$ and $Je=e$, and so $\epsilon=1$. 
Such $e$ is unique up to sign.
We denote $T_g=T_g(e)$.  
Then $V(g)T_h=\inpr{g}{h}T_h$ and $U_{\cK}(g)T_h=T_{h-g}$. 
We can arrange $a(g)$ so that $A(g)=a(g)I$, and $C$ is a scaler, which we denote by $c$.  
Thus 
$$j_1(T_h)=a(h)T_{-h},$$
$$j_2(T_h)=\frac{\overline{c}}{\sqrt{n}}\sum_{k\in G}\overline{\inpr{h}{k}}T_k.$$
There exists $b:G\rightarrow \C$ satisfying $B_g(e)=b(g)T_0T_0T_0^*$, and 
$$l(T_g)=\sum_{h,k}a(h)b(h+g)\inpr{g}{k}T_{h+k}T_{-h}T_k^*.$$

\begin{theorem}\label{m=nTh} The C$^*$ near-group categories with a finite abelian group $G$ with $m=|G|$ are completely 
classified by $(\inpr{\cdot}{\cdot},a,b,c)$ satisfying the conditions in the statement of Lemma \ref{ACJ} and 
the following equations up to the group automorphisms of $G$. 
\begin{equation}\label{m=n1}
\hat{b}(g)=ca(g)b(-g),
\end{equation}\label{m=n2}
\begin{equation}
b(0)=-\frac{1}{d},
\end{equation}\label{m=n3}
\begin{equation}
|b(g)|^2=\frac{1}{n}-\frac{\delta_{g,0}}{d},
\end{equation}
\begin{equation}\label{m=n4}
\overline{b(g)}=a(g)b(-g),
\end{equation}
\begin{equation}\label{m=n5}
\sum_{g\in G}b(g+h)b(g+k)\overline{b(g)}=\overline{\inpr{h}{k}}b(h)b(k)-\frac{c}{d\sqrt{n}}.
\end{equation}
\end{theorem}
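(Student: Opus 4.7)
The plan is to specialize Theorem~\ref{irrcl} and Theorem~\ref{irrpe} to the one-dimensional case $\dim\cK_0=1$. Since $m=n$, Theorem~\ref{representation} gives $\dim\cK=m=n$, so in the decomposition $\cK\cong \ell^2(G)\otimes \cK_0$ of Lemma~\ref{ACJ} the multiplicity space $\cK_0$ is one-dimensional. I fix a unit vector $e\in\cK_0$; since $J$ is anti-unitary on a one-dimensional space, $J^2=\epsilon$ forces $\epsilon=1$, and after rescaling $e$ by a suitable phase we may assume $Je=e$. The operators $A(g)$ and $C$ then collapse to scalars $a(g),c\in\T$ with $a(-g)=a(g)$, $a(g)a(h)=\inpr{g}{h}a(g+h)$, and $\sum_g a(g)=\sqrt{n}\,c^{-3}$ (the character $\chi_t$ from Lemma~\ref{ACJ} being necessarily trivial here), while $B_g(e)\in\cK_0\otimes\cK_0\otimes \cK_0^*$ collapses to a scalar $b(g)$, so $B_g(e)=b(g)\,T_0T_0T_0^*$. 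The formula for $l(T_g)$ in the statement then follows by substituting into Eq.~(\ref{lB}).

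Next I would translate the ten polynomial identities (\ref{p1})--(\ref{p10}) of Theorem~\ref{irrpe} to this scalar setting, where the index set $\Lambda$ is a singleton with $\epsilon_r=1$, $\chi_r=1$, and $c_r=c$. The identities (\ref{p2}) and (\ref{p3}) both collapse to $b(0)=-1/d$, which is (\ref{m=n2}); (\ref{p4}) and (\ref{p5}) both collapse to $|b(g)|^2=\frac{1}{n}-\frac{\delta_{g,0}}{d}$, which is (\ref{m=n3}); (\ref{p6}) is vacuous; (\ref{p7}) becomes $\overline{b(g)}=a(g)b(-g)$, which is (\ref{m=n4}); (\ref{p9}) is automatic since $c_t\overline{c_r}=|c|^2=1$ and all characters are trivial; and (\ref{p8}) is implied by (\ref{p7}) and (\ref{p9}). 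For (\ref{p1}), using $\overline{\inpr{g}{h}}=\inpr{-g}{h}$ and $a(-g)=a(g)$ one rewrites the scalar identity $\frac{1}{\sqrt{n}}\sum_h\inpr{g}{h}b(h)=ca(g)b(g)$ as $\hat b(g)=ca(g)b(-g)$, which is (\ref{m=n1}). Finally (\ref{p10}) collapses directly to (\ref{m=n5}).

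For the equivalence relation, the gauge group $\cG(A,C,J)$ of unitaries on $\cK_0$ commuting with $A(g)$, $C$, and $J$ reduces to $\{\pm 1\}$ (the anti-linearity of $J$ forces $v=\bar v$), and such a $v$ acts on the scalar $b(g)$ as $v\cdot v\cdot \bar v\cdot \bar v=1$; hence the gauge action is trivial. By Theorem~\ref{irrcl} the only remaining source of equivalence is some $\varphi\in\Aut(G)$, acting simultaneously on $(\inpr{\cdot}{\cdot},a,b)$ by pullback and trivially on $c$, which is precisely the equivalence asserted in Theorem~\ref{m=nTh}. I expect the main bookkeeping obstacle to be reconciling the Fourier-transform sign conventions between (\ref{p1}) and (\ref{m=n1}); beyond that, the theorem is a direct unwinding of Theorem~\ref{irrpe}, essentially as carried out in \cite{I01} and \cite{EG14}.
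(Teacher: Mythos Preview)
Your proposal is correct and follows exactly the approach of the paper: the paper's proof consists of the single observation that replacing $e$ by $-e$ leaves the equations unchanged, after which the statement follows from Theorem~\ref{irrcl} and Theorem~\ref{irrpe}. You have simply filled in the specialization of (\ref{p1})--(\ref{p10}) to the scalar case in detail, and your gauge-group analysis (that $\cG(A,C,J)=\{\pm1\}$ acts trivially on $b$) is precisely the paper's remark about the sign of $e$.
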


\begin{proof} 
Note that replacing $e$ with $-e$ does not change the above equations. 
Thus the statement follows from Theorem \ref{irrcl} and Theorem \ref{irrpe}. 
\end{proof}

\begin{remark} We rephrase the equations in the above theorem so that we can easily guess 
Galois conjugate solutions in the non-unitary case. 
We use a parameter $c'=\overline{c}/\sqrt{n}$ instead of $c$. 
For a given pair $(\inpr{\cdot}{\cdot},a(g))$, the polynomial equations are equivalent to 
\begin{equation}\label{Gal1}
{c'}^3=\frac{1}{n^2}\sum_{g\in G}a(g),
\end{equation}
\begin{equation}\label{Gal2}
d^2=dn+n,
\end{equation}
\begin{equation}\label{Gal3}
\cR_{c'}b=b,
\end{equation}
\begin{equation}\label{Gal4}
b(0)=-\frac{1}{d}
\end{equation}
\begin{equation}\label{Gal5}
a(g)b(g)b(-g)=\frac{1}{n}-\frac{\delta_{g,0}}{d},
\end{equation}
\begin{equation}\label{Gal6}
\sum_{g\in G}a(g)b(-g)b(g+h)b(g+k)=\inpr{h}{k}^{-1}b(h)b(k)-\frac{c'^{-1}}{dn},
\end{equation}
\begin{equation}\label{Gal7}
Jb=b,
\end{equation}
\begin{equation}\label{Gal8}
d>0,
\end{equation}
where $\cR_{c'}$ is a period 3 unitary on $\ell^2(G)$ given by 
$$\cR_{c'}f(g)=c'a(g)^{-1}\sum_{h\in G}\inpr{g}{h}f(h),$$
and $\cJ$ is a period 2 anti-unitary on $\ell^2(G)$ given by 
$$\cJ f(g)=\overline{a(g)f(-g)}.$$
They satisfy $\cJ\cR_{c'}=\cR_{c'}^2\cJ$, and the eigenspaces of $\cR_{c'}$ are preserved by $\cJ$. 
We suspect that the last two equations come from the unitarity of the category, 
and the other equations are already good enough to give near-group categories in the general case. 
We show that known solutions of the whole equations have Galois conjugate solutions that satisfy 
all the equations except for the last two. 
\end{remark}

\begin{example} For $G=\Z_2$, there is a unique non-degenerate symmetric bicharacter $\inpr{g}{h}=(-1)^{gh}$, 
and there is unique $a(g)$ up to complex conjugate given by $a(1)=\ii$. 
Then there is a unique solution of Eq.(\ref{Gal1})-(\ref{Gal8}): 
$$c'=\frac{e^{-\frac{7\pi\ii}{12}}}{\sqrt{2}}=\frac{1-\sqrt{3}-(1+\sqrt{3})\ii}{4},\quad d=1+\sqrt{3},$$
$$b(1)=\frac{e^{-\frac{\pi\ii}{4}}}{\sqrt{2}}=\frac{1-\ii}{2}.$$
For only  Eq.(\ref{Gal1})-(\ref{Gal6}), there is another solution 
$$c'=\frac{e^{\frac{\pi\ii}{12}}}{\sqrt{2}}=\frac{1+\sqrt{3}+(\sqrt{3}-1)\ii}{4},\quad d=1-\sqrt{3},$$
$$b(1)=\frac{e^{-\frac{\pi\ii}{4}}}{\sqrt{2}}=\frac{1-\ii}{2}.$$
\end{example} 

\begin{example} For $G=\Z_3$, there is a unique non-degenerate symmetric bicharacter up to complex conjugate, 
and we consider $\inpr{g}{h}=\zeta_3^{gh}$, where $\zeta_n=e^{\frac{2\pi\ii}{n}}$. 
For this there is a unique $a(g)$ given by $a(1)=a(2)=\zeta_3$. 
For this pair, there exists a unique solution of Eq.(\ref{Gal1})-(\ref{Gal8}) up to group automorphism given by 
$$c'=\frac{e^{\frac{\pi\ii}{6}}}{\sqrt{3}}=\frac{1}{2}+\frac{\sqrt{3}\ii}{6}\in \Q(\zeta_3),\quad d=\frac{3+\sqrt{21}}{2},$$
$$b(1)=\zeta_3\big(\frac{-3+\sqrt{21}}{12}+\sqrt{\frac{3+\sqrt{21}}{2}}\frac{\ii}{2\sqrt{3}}\big), \;
b(2)=\zeta_3\big(\frac{-3+\sqrt{21}}{12}-\sqrt{\frac{3+\sqrt{21}}{2}}\frac{\ii}{2\sqrt{3}}\big).$$
For only  Eq.(\ref{Gal1})-(\ref{Gal6}), there is another solution 
$$c'=\frac{e^{\frac{\pi\ii}{6}}}{\sqrt{3}}=\frac{1}{2}+\frac{\sqrt{3}\ii}{6},\quad d=\frac{3-\sqrt{21}}{2},$$
$$b(1)=\zeta_3\big(\frac{-3-\sqrt{21}}{12}+\sqrt{\frac{-3+\sqrt{21}}{2}}\frac{1}{2\sqrt{3}}\big),\;
b(3)=\zeta_3\big(\frac{-3-\sqrt{21}}{12}-\sqrt{\frac{-3+\sqrt{21}}{2}}\frac{1}{2\sqrt{3}}\big).$$
\end{example}

\begin{example} For $\Z_4$, there is a unique non-degenerate symmetric bicharacter up to complex conjugate, 
and we consider $\inpr{g}{h}=\ii^{gh}$. 
Then there exists a unique solution of Eq.(\ref{Gal1})-(\ref{Gal8}) up to group automorphism given by 
$$a(1)=a(3)=e^{-\frac{\pi\ii}{4}},\quad a(2)=-1,$$
$$c'=\frac{e^{-\frac{3\pi\ii}{4}}}{2},\quad d=2+2\sqrt{2},$$
$$b(1)=\zeta_{16}(\frac{\sqrt{4-2\sqrt{2}}}{4}+\frac{\ii}{2^{5/4}}),\quad b(2)=\zeta_{16}(\frac{\sqrt{4-2\sqrt{2}}}{4}-\frac{\ii}{2^{5/4}}),$$
$$b(2)=\frac{-\ii}{2}.$$
For only  Eq.(\ref{Gal1})-(\ref{Gal6}), there is another solution 
$$a(1)=a(3)=-e^{-\frac{\pi\ii}{4}},\quad a(2)=-1,$$
$$c'=\frac{e^{\frac{\pi\ii}{4}}}{2},\quad d=2-2\sqrt{2},$$
$$b(1)=\zeta_{16}^5\big(\frac{\sqrt{4+2\sqrt{2}}}{4}+\frac{1}{2^{5/4}}\big),
\quad b(3)=\zeta_{16}^5\big(\frac{\sqrt{4+2\sqrt{2}}}{4}-\frac{1}{2^{5/4}}\big),$$
$$b(2)=\frac{-\ii}{2}.$$
\end{example}

\begin{example}\label{Z2Z2} For $G=\Z_2\times \Z_2=\{0,g_1,g_2,g_3\}$, there is a unique C$^*$ near-group category.  
There exist exactly two non-degenerate symmetric bicharacters on $\Z_2\times \Z_2$ up to group automorphisms, 
and we denote by $\inpr{\cdot}{\cdot}_1$ the one given by the following table. 
\begin{table}[ht]
\begin{tabular}{|c|c|c|c|}\hline
&$g_0$&$g_1$&$g_2$\\ \hline
$g_0$&-1&1&-1\\ \hline
$g_1$&1&-1&-1\\ \hline
$g_2$&-1&-1&1\\ \hline
\end{tabular}
\end{table}
Up to flipping $g_1$ and $g_2$, there are exactly two $a(g)$ for $\inpr{\cdot}{\cdot}_1$. 
For $a(g_0)=-a(g_1)=\ii$, $a(g_2)=1$, there exists a unique solution for Eq.(\ref{Gal1})-(\ref{Gal8}): 
$$c'=\frac{1}{2}, \quad d=2+2\sqrt{2},$$
$$b(g_1)=\frac{e^{\frac{3\pi\ii}{4}}}{2},\quad b(g_2)=\frac{e^{\frac{-3\pi\ii}{4}}}{2},\quad b(g_3)=\frac{1}{2}.$$
For only Eq.(\ref{Gal1})-(\ref{Gal6}) there is another solution 
$$c'=\frac{1}{2}, \quad d=2-2\sqrt{2},$$
$$b(g_1)=\frac{e^{\frac{-\pi\ii}{4}}}{2},\quad b(g_2)=\frac{e^{\frac{\pi\ii}{4}}}{2},\quad b(g_3)=\frac{1}{2}.$$
For $a(g_0)=a(g_1)=\ii$, $a(g_2)=-1$, there is no solution. 

The other non-degenerate symmetric bicharacter $\inpr{\cdot}{\cdot}_2$ is given by the following table 
\begin{table}[ht]
\begin{tabular}{|c|c|c|c|}\hline
&$g_0$&$g_1$&$g_2$\\ \hline
$g_0$&1&-1&-1\\ \hline
$g_1$&-1&1&-1\\ \hline
$g_2$&-1&-1&1\\ \hline
\end{tabular}
\end{table}
though there is no solution for it. 
\end{example}

For the complete list of the solutions of Eq.(\ref{Gal1})-(\ref{Gal8}) for $G$ with $|G|\leq 13$, 
see \cite[Table 2]{EG14} (see Example \ref{Z2Z2Z3} below for two additional solutions for $G=\Z_2\times \Z_2\times \Z_3$). 
\section{The case of $m=2|G|$}\label{2n}
In this section we assume $m=2n$ and write down the polynomial equations Eq(\ref{p1})-(\ref{p9}) 
in a more accessible form. 
The author's experience tells that there are only finitely many solutions for Eq(\ref{p1})-(\ref{p9}) 
up to equivalence, and they are likely to satisfy Eq.(\ref{p10}) automatically, which in practice could be 
verified by computer. 
We show that there are exactly two solutions, up to equivalence, of the polynomial equations for $G=\Z_3$. 

\subsection{Possible cases}
We choose the index set $\Lambda$ of the orthonormal basis of $\cK_0$ as $\Lambda=\{1,2\}$. 
When $\chi_1^2=1$, we may replace $a(g)$ with $a(g)\chi_1(g)$, and we may and do assume $\chi_1=1$. 
Lemma \ref{clacov} shows that the only possible cases are the following: 
\begin{itemize}
\item Case I. $\chi_1=\chi_2=1$, $\epsilon=1$, $Je_1=e_1$, $Je_2=e_2$. 

\item Case II. $\chi_1=\chi_2=1$, $\epsilon=-1$, $Je_1=e_2$, $Je_2=-e_2$, $c_1=c_2$. 

\item Case III. $\chi_1=1$, $\chi_2\neq 1$, $\chi_2^2=1$, $\epsilon=1$, $Je_1=e_1$, $Je_2=e_2$. 

\item Case IV. $\chi_2=\chi_1^{-1}$, $\chi_1^2\neq 1$. $Je_1=e_2$, $Je_2=\epsilon e_1$, $c_1=c_2$. 
\end{itemize}

We use the lexicographic order of the set $\Lambda^2=\{1,2\}^2$ to express $\cB(g)$ as a matrix.

\begin{lemma} Case IV never occurs. 
\end{lemma}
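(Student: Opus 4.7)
The plan is to extract a quantitative contradiction from the polynomial equations of Theorem~\ref{irrpe} under the Case IV hypothesis. In Case IV one has $Je_1=e_2$, $Je_2=\epsilon e_1$, $c_1=c_2$, and $\chi_2=\chi_1^{-1}$ with $\chi_1^2\neq 1$; in particular $\epsilon_1=1$ and $\epsilon_2=\epsilon$. I abbreviate $x(g)=b^{1,1}_{1,1}(g)$, $y(g)=b^{1,2}_{2,1}(g)$, $z(g)=b^{2,1}_{2,1}(g)$, and $w(g)=b^{2,2}_{1,1}(g)$.

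First I invoke the selection rule Eq.(\ref{p6}): $b^{r,s}_{t,u}(g)=0$ unless $\chi_r\chi_s=\chi_t\chi_u$. Applied to Eq.(\ref{p4}) with $s=s'=u=u'=1$, the only surviving indices have $r=t$, so
\[
|x(g)|^2+|z(g)|^2=\frac{1}{n}-\frac{\delta_{g,0}}{d},
\]
and summing over $g\in G$ yields $\sum_g|x(g)|^2+\sum_g|z(g)|^2=1-1/d$. Next, Eq.(\ref{p1}) at the triples $(r,s,t,u)=(1,1,1,1),(1,2,2,1),(2,1,2,1)$ expresses the inverse Fourier transforms of $x,y,z$ as pointwise multiples of $y,z,x$ respectively by unimodular functions; Plancherel then gives $\sum_g|x(g)|^2=\sum_g|y(g)|^2=\sum_g|z(g)|^2=(1-1/d)/2<1/2$.

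The contradiction comes from Eq.(\ref{p4}) with $s=2$, $u=1$, whose non-vanishing contributions satisfy $\chi_r=\chi_t\chi_1^2$. If $\chi_1^4\neq 1$ the only solution is $(r,t)=(1,2)$, whence $|y(g)|^2=1/n$ identically and $\sum_g|y(g)|^2=1$, contradicting the bound $<1/2$ just obtained. If $\chi_1^4=1$, then in addition $(r,t)=(2,1)$ contributes, so the relation becomes $|y(g)|^2+|w(g)|^2=1/n$; in this subcase Eq.(\ref{p9}) applied to $w$, using $c_1=c_2$ and $\epsilon_1^2\epsilon_2^2=1$, reduces to $w(g)=\chi_1^2(g)w(g)$, forcing $w$ to be supported on the index-two subgroup $\ker\chi_1^2\subset G$. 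Consequently $|y(g)|^2=1/n$ for every $g\notin\ker\chi_1^2$, yielding $\sum_g|y(g)|^2\geq 1/2$, once more contradicting the bound $<1/2$.

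The main obstacle will be the case $\chi_1^4=1$: the pointwise bound $|y(g)|^2\leq 1/n$ alone is too weak, and one must first exploit the self-duality relation Eq.(\ref{p9}) to localise $w$ on a proper subgroup before the orthogonality estimate from Eq.(\ref{p4}) closes the argument.
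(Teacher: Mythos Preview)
Your proof is correct and follows essentially the same argument as the paper's own proof. The paper packages the three instances of Eq.~(\ref{p1}) as a single unitary operator $\cR_1$ acting cyclically on $b^{1,2}_{2,1}\mapsto b^{2,1}_{2,1}\mapsto b^{1,1}_{1,1}$, whereas you invoke Eq.~(\ref{p1}) directly and appeal to Plancherel; the remaining steps (the use of Eq.~(\ref{p4}), the selection rule Eq.~(\ref{p6}), the support constraint from Eq.~(\ref{p9}) when $\chi_1^4=1$, and the final $\geq 1/2$ versus $<1/2$ contradiction) are identical.
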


\begin{proof} 
We assume that $b^{r,s}_{t,u}(g)$ is a solution for Eq.(\ref{p1})-Eq.(\ref{p10}) in Case IV. 
We introduce a unitary operator $\cR_1$ of period 3 on $\ell^2(G)$ by 
$$\cR_1f(g)=\frac{\overline{c_1a(g)}}{\sqrt{n}}\sum_{h\in G}\inpr{g}{h}f(h).$$
Then Eq.(\ref{p1}) implies $\cR_1 b^{1,2}_{2,1}(g)=b^{2,1}_{2,1}(g)$ and 
$\cR_1^2 b^{1,2}_{2,1}(g)=\epsilon b^{1,1}_{1,1}(g)$, 
which shows 
$$\|b^{2,1}_{2,1}\|=\|b^{1,1}_{1,1}\|=\|b^{1,2}_{2,1}\|,$$
where $\|f\|$ denotes the $\ell^2$-norm of $f\in \ell^2(G)$.  

Eq.(\ref{p6}) implies that the matrix $\cB(g)$ is of the form 
$$\cB(g)=\left(
\begin{array}{cccc}
* &0 &0 &*  \\
0 &* &* &0  \\
0 &* &* &0  \\
* &0 &0 &* 
\end{array}
\right),
$$
and Eq.(\ref{p4}) implies 
$$|b^{1,2}_{2,1}(g)|^2+|b^{2,2}_{1,1}(g)|^2=\frac{1}{n},$$
$$|b^{1,1}_{1,1}(g)|^2+|b^{2,1}_{2,1}(g)|^2=\frac{1}{n}-\frac{\delta_{g,0}}{d}.$$
Thus 
$$2\|b^{1,2}_{2,1}\|^2=\|b^{1,1}_{1,1}\|^2+\|b^{2,1}_{2,1}\|^2=\sum_{g\in G}(\frac{1}{n}-\frac{\delta_{g,0}}{d})=1-\frac{1}{d},$$
and $\|b^{1,2}_{2,1}\|^2<1/2$. 
If $\chi_1^4\neq 1$, Eq.(\ref{p6}) implies $b^{2,2}_{1,1}(g)=0$ and $|b^{2,1}_{1,2}(g)|^2=1/n$, which is contradiction. 
If $\chi_1^4=1$, Eq.(\ref{p9}) implies $b^{2,2}_{1,1}(g)=\chi_1(g)^2b^{2,2}_{1,1}(g)$, and $b^{2,2}_{1,1}$ is supported by 
$H=\{g\in G;\; \chi_1(g)^2=1\}$. 
Note that $H$ is an index 2 subgroup of $G$. 
For $g\in G\setminus H$, we have $|b^{1,2}_{2,1}(g)|^2=1/n$, and 
$$\|b^{1,2}_{2,1}\|^2\geq \sum_{g\in G\setminus H}|b^{1,2}_{2,1}(g)|^2=\frac{1}{2},$$
which is contradiction too. 
\end{proof}

\subsection{Case I}
We first assume only $\chi_1=\chi_2=1$. 

We choose $c\in \T$ with $c^3\hat{a}(0)=1$, and set $\omega_r=c_t/c$. 
Then $\omega_t^3=1$. 
We introduce a unitary $\cR\in \B(\ell^2(G))$ of period 3 by 
$$\cR f(g)=\frac{\overline{ca(g)}}{\sqrt{n}}\sum_{h}\inpr{g}{h}f(h),$$
and anti-unitary $\cJ$ of period 2 by 
$$\cJ f(g)=\overline{a(g)f(-g)}.$$
Then they satisfy $\cR\cJ=\cJ\cR^2$. 
Eq.(\ref{p1}) and Eq.(\ref{p2}) now become  
$$\cR b^{r,s}_{t,u}(g)=\omega_ub^{s,\overline{t}}_{\overline{r},u}(g),$$
$$\cJ b^{r,s}_{t,u}(g)=\epsilon_s\epsilon_u b^{t,\overline{s}}_{r,\overline{u}}(g).$$
In particular, the function $b^{r,r}_{\overline{r},u}$ is an eigenvector of $\cR $ for 
the eigenvalue $\omega_u$. 

Now we assume $\epsilon=1$, $Je_1=e_1$, $Je_2=e_2$. 
When $c_1=c_2$, the gauge group $\cG(A,C,J)$ is $O(2)$. 
When $c_1\neq c_2$, it is $\Z_2\times \Z_2$. 

It is straightforward to show the following lemma. 

\begin{lemma} \label{CaseI1} 
Assume $\chi_1=\chi_2=1$, $\epsilon=1$, $Je_1=e_1$, $Je_2=e_2$. 
Let $\xi_1(g)=b^{1,1}_{1,1}(g)$, $\xi_2(g)=b^{2,2}_{2,2}(g)$, $\eta_1(g)=b^{2,2}_{2,1}(g)$, $\eta_2(g)=b^{1,1}_{1,2}(g)$, 
$\mu (g)=b^{1,2}_{1,2}(g)$. 
Then Eq.(\ref{p1})-(\ref{p9}) are equivalent to the following:
\begin{itemize}
\item[(1)] The matrix $\cB(g)=(b^{r,s}_{t,u}(g))_{(r,t),(s,u)}$ is expressed as 
$$\cB(g)=\left(
\begin{array}{cccc}
\xi_1(g) &\eta_2(g) &\eta_2(g) &\mu (g)  \\
\omega_1\omega_2^2\eta_2(g) &\omega_2\cR ^2\mu (g) &\omega_1^2\cR \mu (g) &\omega_1\omega_2^2\eta_1(g)  \\
\omega_1^2\omega_2\eta_2(g) &\omega_2^2\cR \mu (g) &\omega_1\cR ^2\mu (g) &\omega_1^2\omega_2\eta_1(g)  \\
\mu (g) &\eta_1(g) &\eta_1(g) &\xi_2(g) 
\end{array}
\right).
$$

\item[(2)] For any $g\in G$, 
\begin{equation}
\cR \xi_1(g)=\omega_1\xi_1(g),\quad \cJ\xi_1(g)=\xi_1(g), 
\end{equation}
\begin{equation}
\cR \xi_2(g)=\omega_2\xi_2(g),\quad \cJ\xi_2(g)=\xi_2(g), 
\end{equation}
\begin{equation}
\cR \eta_1(g)=\omega_1\eta_1(g),\quad \cJ\eta_1(g)=\eta_1(g), 
\end{equation}
\begin{equation}
\cR \eta_2(g)=\omega_2\eta_2(g),\quad \cJ\eta_2(g)=\eta_2(g), 
\end{equation}
\begin{equation}
\cJ\mu (g)=\mu (g),\quad \cJ\cR \mu (g)=\cR ^2\mu (g), 
\end{equation}
(where the second one follows from the first one in the last equation though). 

\item[(3)] $\xi_1(0)+\mu(0)=\xi_2(0)+\mu(0)=-1/d$, $\eta_1(0)+\eta_2(0)=0$. 

\item[(4)] For any $g \in G$.
\begin{equation}
|\xi_1(g)|^2+2|\eta_2(g)|^2+|\mu(g)|^2=\frac{1}{n}-\frac{\delta_{g,0}}{d},
\end{equation}
\begin{equation}
|\xi_2(g)|^2+2|\eta_1(g)|^2+|\mu(g)|^2=\frac{1}{n}-\frac{\delta_{g,0}}{d},
\end{equation}
\begin{equation}
|\eta_1(g)|^2+|\eta_2(g)|^2+|\cR \mu(g)|^2+|\cR ^2\mu(g)|^2=\frac{1}{n},
\end{equation}
\begin{equation}
2\eta_2(g)\overline{\eta_1(g)}+\mu(g)\overline{\xi_1(g)}+\overline{\mu(g)}\xi_2(g)=-\frac{\delta_{g,0}}{d}, 
\end{equation}
\begin{equation}
\eta_2(g)\overline{\xi_1(g)}+\eta_1(g)\overline{\mu(g)}+(\omega_1\omega_2\cR \mu(g)+\overline{\omega_1\omega_2}\cR ^2\mu(g))\overline{\eta_2(g)}=0,
\end{equation}
\begin{equation}
\eta_1(g)\overline{\xi_2(g)}+\eta_2(g)\overline{\mu(g)}+(\omega_1\omega_2\cR \mu(g)+\overline{\omega_1\omega_2}\cR ^2\mu(g))\overline{\eta_1(g)}=0,
\end{equation}
\begin{equation}
|\eta_1(g)|^2+|\eta_2(g)|^2+\overline{\omega_1\omega_2}\cR \mu(g)\overline{\cR ^2\mu(g)}+\omega_1\omega_2\overline{\cR \mu(g)}\cR ^2\mu(g)=0. 
\end{equation}
\end{itemize}
\end{lemma}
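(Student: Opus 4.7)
The plan is to prove Lemma \ref{CaseI1} by viewing it as a careful bookkeeping translation of the universal system (p1)--(p9) of Theorem \ref{irrpe} into Case I ($\chi_1=\chi_2=1$, $\epsilon=\epsilon_1=\epsilon_2=1$, $\overline{r}=r$). In this case the characters $\chi_r$ are trivial, so (p6) is vacuous, and (p1), (p7), (p8), (p9), (p11) simplify to
\begin{equation*}
\cR b^{r,s}_{t,u}=\omega_u b^{s,t}_{r,u},\quad \overline{b^{r,s}_{t,u}(g)}=a(g)b^{t,s}_{r,u}(-g),
\quad b^{r,s}_{t,u}=(c_t/c_r)\,b^{t,u}_{r,s},\quad b^{r,s}_{t,u}=(c_rc_u)/(c_sc_t)\,b^{s,r}_{u,t}.
\end{equation*}

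First I would use the two swap identities coming from (p9) and (p11) to organize the $16$ scalar functions $\{b^{r,s}_{t,u}\}_{r,s,t,u\in\{1,2\}}$ into orbits under the resulting $\Z_2\times\Z_2$ action on the four indices. There are exactly five such orbits, and choosing $\xi_1=b^{1,1}_{1,1}$, $\xi_2=b^{2,2}_{2,2}$, $\eta_1=b^{2,2}_{2,1}$, $\eta_2=b^{1,1}_{1,2}$, $\mu=b^{1,2}_{1,2}$ as representatives recovers the constants listed in part~(1); each off-diagonal entry picks up a product of powers of $\omega_1,\omega_2$ determined by its orbit position, using $\omega_i^3=1$.

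Next I would determine the four matrix entries lying on the orbit of $\mu$ but \emph{not} reached by the two swaps---namely $b^{1,1}_{2,2}, b^{2,1}_{1,2}, b^{1,2}_{2,1}, b^{2,2}_{1,1}$---via repeated application of (p1) in the form $\cR b^{r,s}_{t,u}=\omega_u b^{s,t}_{r,u}$, which mixes these with $\mu$ and $\cR\mu, \cR^2\mu$. This produces the middle $2\times 2$ block of the displayed matrix. Applying (p1) to the orbit representatives $\xi_i,\eta_i$ then yields the four eigenvalue equations in part~(2), while applying it to $\mu$ produces the last relation $\cJ\cR\mu=\cR^2\mu$, which in fact follows from $\cJ\mu=\mu$ (the first of the $\mu$ identities) together with $\cR\cJ=\cJ\cR^2$. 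The $\cJ$-invariance of $\xi_i,\eta_i,\mu$ is precisely the (p7) identity specialized to each orbit representative; I would record that (p8), after using (p11), is identical to (p7), so no new content.

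Finally I would derive part (3) as the four instances $(s,u)\in\{(1,1),(2,2),(1,2),(2,1)\}$ of (p2) at $g=0$, and derive the six conditions of part~(4) by expanding (p4) with the explicit matrix form of part~(1), treating the four diagonal choices $(s,u)=(s',u')$ and the three off-diagonal choices. One checks that (p5) repeats these conditions up to the $(p9)$--$(p11)$ symmetry and hence adds nothing new. The main obstacle, and the bulk of the calculation, will be the off-diagonal verification of (p4): the cross terms involve both $\cR\mu$ and $\cR^2\mu$, and one must use $(\omega_1\omega_2)^2=\overline{\omega_1\omega_2}$ together with the $\cJ$-relation $\cJ\cR\mu=\cR^2\mu$ to collapse the resulting expressions into the three compact identities involving $\eta_i\overline{\xi_j}$, $\eta_i\overline{\mu}$, and $\cR\mu\,\overline{\cR^2\mu}$ listed in the lemma. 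Once this bookkeeping is complete, the converse direction---that a tuple $(\xi_1,\xi_2,\eta_1,\eta_2,\mu)$ satisfying (2)--(4) defines, via the matrix in (1), a solution of (p1)--(p9)---is a direct substitution check.
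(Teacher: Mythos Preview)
The paper declares this lemma ``straightforward'' and supplies no proof; your plan is exactly the intended bookkeeping---specialize (p1)--(p9) and (p11) to $\epsilon_r=1$, $\overline r=r$, $\chi_r=1$ and read off the consequences---and it would succeed.

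One small correction to your outline: under the $\Z_2\times\Z_2$ generated by the swaps from (p9) and (p11) there are \emph{seven} orbits in $\{1,2\}^4$, not five. The two size-$2$ orbits $\{(1,1,2,2),(2,2,1,1)\}$ and $\{(1,2,2,1),(2,1,1,2)\}$ are not reached from $(1,2,1,2)$ by those swaps, as you yourself observe in the next paragraph. They are linked to $\mu$ only through the $3$-cycle $(r,s,t)\mapsto(s,t,r)$ coming from (p1), and this is precisely where $\cR\mu$ and $\cR^2\mu$ appear. With that miscount fixed, the rest of your reduction---deriving (2) from (p1) and (p7), (3) from (p2), (4) from (p4), and checking that (p3), (p5), (p8) become redundant via (p9)--(p11)---is correct.
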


\begin{remark}\label{rCaseI} The condition (4) above imply 
\begin{equation}\label{Ir1}
\|\xi_1\|^2+2\|\eta_2\|^2+\|\mu\|^2=1-\frac{1}{d},
\end{equation}
\begin{equation}\label{Ir2}
\|\xi_2\|^2+2\|\eta_1\|^2+\|\mu\|^2=1-\frac{1}{d},
\end{equation}
\begin{equation}\label{Ir3}
\|\eta_1\|^2+\|\eta_2\|^2+2\|\mu\|^2=1,
\end{equation}
\begin{equation}\label{Ir4}
2\inpr{\eta_2}{\eta_1}_{\ell^2(G)}+\inpr{\mu}{\xi_1}_{\ell^2(G)}+\inpr{\xi_2}{\mu}_{\ell^2(G)}=-\frac{1}{d}, 
\end{equation}
\begin{equation}\label{Ir5}
\inpr{\eta_2}{\xi_1}_{\ell^2(G)}+\inpr{\eta_1}{\mu}_{\ell^2(G)}+(\omega_1\overline{\omega_2}+\overline{\omega_1}\omega_2)\inpr{\mu}{\eta_2}_{\ell^2(G)}=0,
\end{equation}
\begin{equation}\label{Ir6}
\inpr{\eta_1}{\xi_2}_{\ell^2(G)}+\inpr{\eta_2}{\mu}_{\ell^2(G)}+(\omega_1\overline{\omega_2}+\overline{\omega_1}\omega_2)\inpr{\mu}{\eta_1}_{\ell^2(G)}=0,
\end{equation}
\begin{equation}\label{Ir7}
\|\eta_1\|^2+\|\eta_2\|^2+\overline{\omega_1\omega_2}\inpr{\mu}{\cR\mu}_{\ell^2(G)}+\omega_1\omega_2\inpr{\cR \mu}{\mu}_{\ell^2(G)}=0. 
\end{equation}
Eq.(\ref{Ir3}) and Eq.(\ref{Ir7}) imply
\begin{equation}\label{Ir8}
2\|\mu\|^2-\omega_1\omega_2\inpr{\cR \mu}{\mu}_{\ell^2(G)}-\overline{\omega_1\omega_2}\inpr{\cR^2\mu}{\mu}_{\ell^2(G)}=1. 
\end{equation}
\end{remark}

\begin{lemma} \label{CaseI2} 
The solutions of the equations in Lemma \ref{CaseI1} satisfy either of the following two: 
\begin{itemize}
\item[(1)] $\eta_1(0)=-\eta_2(0)\in \R$ with $|\eta_1(0)|\leq 1/(2\sqrt{n})$, and there exist 
$\kappa_1,\kappa_2\in \{1,-1\}$ satisfying  
$$\xi_1(0)=\xi_2(0)=-\frac{1}{2d}-\frac{\kappa_1\sqrt{1-4n\eta_1(0)^2}}{2\sqrt{n}},$$
$$\mu(0)=-\frac{1}{2d}+\frac{\kappa_1\sqrt{1-4n\eta_1(0)^2}}{2\sqrt{n}},$$
$$\cR\mu(0)=\overline{\cR^2\mu(0)}=\overline{\omega_1\omega_2}\frac{\kappa_1\sqrt{1-4n\eta_1(0)^2}+\kappa_2\ii }{2\sqrt{n}}.$$

\item[(2)] There exist $\kappa_1,\kappa_2 \in \{1,-1\}$ satisfying 
$$\xi_1(0)=\xi_2(0)=-\frac{1}{2d}-\frac{\kappa_1}{2\sqrt{n}},$$
$$\eta_1(0)=\eta_2(0)=0,$$
$$\mu(0)=-\frac{1}{2d}+\frac{\kappa_1}{2\sqrt{n}},$$
$$\cR\mu(0)=\overline{\cR^2\mu(0)}=\overline{\omega_1\omega_2}\frac{-\kappa_1+\kappa_2\ii }{2\sqrt{n}}.$$
\end{itemize}
\end{lemma}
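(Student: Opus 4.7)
The plan is to evaluate the conditions of Lemma~\ref{CaseI1}(2)--(4) at $g = 0$ and to solve the resulting algebraic system. The key observation is that $a(0) = 1$ (from $a(g+h)\inpr{g}{h}=a(g)a(h)$ at $g=h=0$), so $(\cJ f)(0) = \overline{f(0)}$ for every $f \in \ell^2(G)$; hence the $\cJ$-invariance in part (2) forces each of $\xi_1(0), \xi_2(0), \eta_1(0), \eta_2(0), \mu(0)$ to be real, and combining $\cJ\mu=\mu$ with $\cJ\cR = \cR^2\cJ$ we get $\cR^2\mu(0) = \overline{\cR\mu(0)}$. Write $\xi := \xi_1(0) = \xi_2(0)$, $\eta := \eta_1(0) = -\eta_2(0)$, $m := \mu(0)$, and $\tau := \cR\mu(0)$. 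Part (3) then gives $\xi + m = -1/d$, while the fourth identity of part (4) at $g = 0$ yields $\xi m = \eta^2 - 1/(2d)$.

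Thus $\xi$ and $m$ are the two roots of $t^2 + t/d + \eta^2 - 1/(2d) = 0$, whose discriminant is $1/d^2 + 2/d - 4\eta^2 = 1/n - 4\eta^2$; here I use the identity $1/n = 1/d^2 + 2/d$, which follows from $d^2 = n + 2nd$ (the relation $d^2 = |G| + md$ of Section~\ref{basic} with $m = 2n$). This forces $|\eta| \leq 1/(2\sqrt{n})$ and yields the stated formulas for $\xi$ and $m$, with the sign $\kappa_1 \in \{\pm 1\}$ choosing which root is which. The third identity of part (4) at $g = 0$ reads $2\eta^2 + 2|\tau|^2 = 1/n$, giving $|\tau|^2 = 1/(2n) - \eta^2$.

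To pin down $\tau$ itself I would split into subcases on whether $\eta = 0$. If $\eta \neq 0$, the fifth identity of part (4) at $g = 0$ simplifies to $\eta\bigl(m - \xi - 2\,\mathrm{Re}(\omega_1\omega_2\tau)\bigr) = 0$, so $\mathrm{Re}(\omega_1\omega_2\tau) = \kappa_1\sqrt{1-4n\eta^2}/(2\sqrt{n})$; combined with $|\omega_1\omega_2\tau|^2 = 1/(2n) - \eta^2$ this forces $\mathrm{Im}(\omega_1\omega_2\tau)^2 = 1/(4n)$, hence $\mathrm{Im}(\omega_1\omega_2\tau) = \kappa_2/(2\sqrt{n})$ for some $\kappa_2 \in \{\pm 1\}$, producing case (1). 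If $\eta = 0$, the fifth and sixth identities are vacuous, but the seventh identity, using $(\omega_1\omega_2)^3 = 1$ to rewrite $\overline{\omega_1\omega_2}\tau^2 = (\omega_1\omega_2\tau)^2$, reduces to $\mathrm{Re}\bigl((\omega_1\omega_2\tau)^2\bigr) = 0$; together with $|\omega_1\omega_2\tau| = 1/\sqrt{2n}$ this forces $\omega_1\omega_2\tau \in \{\pm 1 \pm \ii\}/(2\sqrt{n})$, which after relabeling of signs is exactly case (2).

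The main obstacle is purely bookkeeping, in particular verifying that in case (1) the seventh identity of part (4), which was not used in the derivation, is automatically satisfied. This is a direct calculation: with $u := \omega_1\omega_2\tau$ one has $\overline{\omega_1\omega_2}\tau^2 = u^2 = \bigl(\kappa_1\sqrt{1-4n\eta^2} + \kappa_2\ii\bigr)^2/(4n) = -\eta^2 + \kappa_1\kappa_2\ii\sqrt{1-4n\eta^2}/(2n)$, so twice its real part is $-2\eta^2$, which cancels the $|\eta_1(0)|^2 + |\eta_2(0)|^2 = 2\eta^2$ contribution. The remaining identity (sixth) is checked the same way as the fifth, and the symmetric condition $\cJ\cR\mu(0) = \cR^2\mu(0)$ together with $|\tau|$ known reduces all remaining checks to the identities just derived.
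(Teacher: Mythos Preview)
Your proof is correct and follows essentially the same approach as the paper: reduce the conditions of Lemma~\ref{CaseI1} at $g=0$ to a small algebraic system in the real unknowns $\xi_1(0)=\xi_2(0)$, $\eta_1(0)=-\eta_2(0)$, $\mu(0)$ and the complex $\cR\mu(0)$, then solve. The only cosmetic difference is that the paper parametrizes $\xi_1(0)=-\tfrac{1}{2d}-x$, $\mu(0)=-\tfrac{1}{2d}+x$ and reads off $x^2+\eta_1(0)^2=\tfrac{1}{4n}$ from the \emph{first} identity of (4), whereas you recover the same via Vieta from the \emph{fourth} identity; note also that when $\eta_1(0)=0$ your four values $\omega_1\omega_2\tau\in\{\pm1\pm\ii\}/(2\sqrt{n})$ are split between case~(1) (real part $\kappa_1/(2\sqrt n)$) and case~(2) (real part $-\kappa_1/(2\sqrt n)$), not just case~(2), but since the lemma asserts ``either of the two'' this does not affect the argument.
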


\begin{proof} Lemma \ref{CaseI1} (2) shows that $\xi_r(0)$, $\eta_r(0)$, and $\mu(0)$ are all real and 
$\cR^2\mu(0)=\overline{\cR\mu(0)}$. 
Lemma \ref{CaseI1} (3) shows that $\eta_1(0)+\eta_2(0)=0$ and there exist real numbers $x$ and $y$ satisfying 
$$\xi_1(0)=\xi_2(0)=-\frac{1}{2d}-x,\quad \mu(0)=-\frac{1}{2d}+x.$$
Now Lemma \ref{CaseI1} (4) with $g=0$ is equivalent to 
$$x^2+\eta_1(0)^2=\frac{1}{4n},$$
$$\eta_1(0)^2+|\cR\mu(0)|^2=\frac{1}{2n},$$
$$\eta_1(0)(\omega_1\omega_2\cR\mu(0)+\overline{\omega_1\omega_2\cR\mu(0)}-2x)=0,$$
$$2\eta_1(0)^2+(\omega_1\omega_2\cR\mu(0))^2+(\overline{\omega_1\omega_2\cR\mu(0)})^2=0.$$
Solving these, we get the statement. 
\end{proof}

Let $\zeta_3=e^{2\pi\ii /3}$. 
We can expand $\mu$ satisfying $\cJ\mu=\mu$ as $\mu=\mu_0+\mu_1+\mu_2$ with $\cR\mu_i=\zeta_3^i\mu_i$ 
and $\cJ\mu_i=\mu_i$ because we have $\cJ\cR=\cR^2\cJ$. 
The index $i$ in $\mu_i$ will be understood as an element of $\Z_3$. 

\begin{lemma}\label{CaseI3}  
If $\omega_1=\omega_2=1$, there exists a solution of the equations in Lemma \ref{CaseI1} 
only if $\dim\ker(\cR-1)\geq 2$. 
The solutions satisfy $\|\mu_1\|^2+\|\mu_2\|^2=1/3$, and either of the following two:
\begin{itemize}
\item[(1)]
$\eta_1(0)=-\eta_2(0)\in \R$ with $ |\eta_1(0)|\leq 1/(2\sqrt{n})$, and there exist 
$\kappa_1,\kappa_2\in \{1,-1\}$ satisfying  
$$\xi_1(0)=\xi_2(0)=-\frac{1}{2d}-\frac{\kappa_1\sqrt{1-4n\eta_1(0)^2}}{2\sqrt{n}},$$
$$\mu_0(0)=-\frac{1}{6d}+\frac{\kappa_1\sqrt{1-4n\eta_1(0)^2}}{2\sqrt{n}},$$
$$\mu_1(0)=-\frac{1}{6d}+\frac{\kappa_2}{2\sqrt{3n}},$$
$$\mu_2(0)=-\frac{1}{6d}-\frac{\kappa_2}{2\sqrt{3n}}.$$
\item[(2)] There exist $\kappa_1,\kappa_2 \in \{1,-1\}$ satisfying 
$$\xi_1(0)=\xi_2(0)=-\frac{1}{2d}-\frac{\kappa_1}{2\sqrt{n}},$$
$$\eta_1(0)=\eta_2(0)=0,$$
$$\mu_0(0)=-\frac{1}{6d}-\frac{\kappa_1}{6\sqrt{n}},$$
$$\mu_1(0)=-\frac{1}{6d}+\frac{\kappa_1}{3\sqrt{n}}+\frac{\kappa_2}{2\sqrt{3n}},$$
$$\mu_2(0)=-\frac{1}{6d}+\frac{\kappa_1}{3\sqrt{n}}-\frac{\kappa_2}{2\sqrt{3n}}.$$
\end{itemize}
\end{lemma}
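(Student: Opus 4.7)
The plan is to split the statement into three claims, each of which can be verified from the ingredients already assembled in Lemma \ref{CaseI1}, Lemma \ref{CaseI2}, and Remark \ref{rCaseI}. First, I will derive the global identity $\|\mu_1\|^2+\|\mu_2\|^2=1/3$ from Eq.(\ref{Ir8}). Second, I will compute the scalars $\mu_i(0)$ by applying the spectral projections for $\cR$ to the known values of $\mu(0)$ and $\cR^{\pm1}\mu(0)$. Third, the necessary condition $\dim\ker(\cR-1)\ge 2$ will be obtained by assuming $\dim\ker(\cR-1)=1$ and extracting a contradiction from Eq.(\ref{Ir4})-(\ref{Ir5}), using the fact that a 1-dimensional $\cJ$-invariant eigenspace forces all of $\xi_1,\xi_2,\eta_1,\eta_2,\mu_0$ to be proportional.

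For the first claim, set $\omega_1=\omega_2=1$ in Eq.(\ref{Ir8}) and decompose $\mu=\mu_0+\mu_1+\mu_2$ into the $\cR$-eigenspaces of eigenvalues $1,\zeta_3,\zeta_3^2$. The cross terms vanish since the eigenspaces are orthogonal, and a short computation using $\zeta_3+\zeta_3^2=-1$ gives
\[
\inpr{\cR\mu}{\mu}+\inpr{\cR^2\mu}{\mu}=2\|\mu_0\|^2-\|\mu_1\|^2-\|\mu_2\|^2,
\]
so $2\|\mu\|^2-\inpr{\cR\mu}{\mu}-\inpr{\cR^2\mu}{\mu}=3(\|\mu_1\|^2+\|\mu_2\|^2)=1$. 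For the second claim, apply the spectral projection $P_i=\frac{1}{3}(I+\overline{\zeta_3^i}\cR+\zeta_3^i\cR^2)$ to the values at $0$ already listed in Lemma \ref{CaseI2}, noting that with $\omega_1\omega_2=1$ we have $\cR^2\mu(0)=\overline{\cR\mu(0)}$. In case (1), substituting $\mu(0)=-1/(2d)+\kappa_1\sqrt{1-4n\eta_1(0)^2}/(2\sqrt{n})$ and $\cR\mu(0)=(\kappa_1\sqrt{1-4n\eta_1(0)^2}+\kappa_2\ii)/(2\sqrt{n})$ into $\mu_i(0)=P_i\mu(0)$ produces exactly the formulas stated; case (2) is analogous.

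The third claim is the main substantive step. Suppose for contradiction that $\dim_{\mathbb{C}}\ker(\cR-1)=1$. Since $\cR\cJ=\cJ\cR^2$ forces $\cJ$ to preserve $\ker(\cR-1)$, and $\cJ^2=1$ in Case I, the $\cJ$-fixed real subspace of $\ker(\cR-1)$ is one-dimensional; choose a nonzero $v_0\in\ker(\cR-1)^{\cJ}$. By Lemma \ref{CaseI1}(2) every one of $\xi_1,\xi_2,\eta_1,\eta_2,\mu_0$ lies in this real line, so we may write $\xi_1=\alpha v_0$, $\xi_2=\beta v_0$, $\eta_1=\delta v_0$, $\eta_2=\epsilon v_0$, $\mu_0=\gamma v_0$ with real coefficients. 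Since $d^2=md+n$ with $m=2n$ forces $d>\sqrt{n}$, the values $\xi_1(0)=\xi_2(0)$ given in Lemma \ref{CaseI2} are nonzero, so $v_0(0)\ne 0$, hence $\alpha=\beta$; Lemma \ref{CaseI1}(3) gives $\delta=-\epsilon$. Now evaluate Eq.(\ref{Ir5}), using $\omega_1\overline{\omega_2}+\overline{\omega_1}\omega_2=2$ and the orthogonality $\mu_1,\mu_2\perp v_0$: the left-hand side collapses to $(\epsilon\alpha+\delta\gamma+2\gamma\epsilon)\|v_0\|^2=\epsilon(\alpha+\gamma)\|v_0\|^2$.

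In case (1) with $\eta_1(0)\ne0$ we have $\epsilon\ne0$, so $\alpha+\gamma=0$, i.e.\ $\xi_1(0)+\mu_0(0)=0$; but a direct computation from Lemma \ref{CaseI2} and the formula for $\mu_0(0)$ above gives $\xi_1(0)+\mu_0(0)=-1/(2d)-1/(6d)=-2/(3d)\ne0$, a contradiction. In case (2) (and in case (1) when $\eta_1(0)=0$, which forces $\eta_1=\eta_2=0$ in dimension one), Eq.(\ref{Ir5}) is vacuous, so we turn instead to Eq.(\ref{Ir4}): with $\eta_1=\eta_2=0$ it reduces to $\alpha\gamma\|v_0\|^2=-1/(2d)<0$, while on the other hand $\alpha\gamma\,v_0(0)^2=\xi_1(0)\mu_0(0)=\frac{1}{12d^2}\bigl(1+d\kappa_1/\sqrt n\bigr)^2\ge0$, another contradiction. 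This forces $\dim\ker(\cR-1)\ge 2$.

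The main obstacle is the last paragraph: one must show that neither case (1) nor case (2) is compatible with $\dim\ker(\cR-1)=1$, and the two require slightly different equations (Eq.(\ref{Ir5}) for case (1), Eq.(\ref{Ir4}) for case (2)). The parameters $\kappa_1,\kappa_2\in\{\pm1\}$ and $\eta_1(0)$ must be handled uniformly, and the algebraic identity $\xi_1(0)\mu_0(0)\ge0$ together with $\alpha\gamma<0$ is the crucial sign obstruction; the same kind of argument has to be arranged carefully so that it works regardless of the sign choices $\kappa_1,\kappa_2$.
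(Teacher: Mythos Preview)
Your first two claims---the identity $\|\mu_1\|^2+\|\mu_2\|^2=1/3$ from Eq.(\ref{Ir8}) and the evaluation of $\mu_i(0)$ via spectral projections applied to Lemma \ref{CaseI2}---are carried out correctly and match the paper's approach.

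The third claim has a genuine gap. Your sign argument for the sub-case ``case (1) with $\eta_1(0)=0$'' fails: the formula $\xi_1(0)\mu_0(0)=\frac{1}{12d^2}(1+d\kappa_1/\sqrt n)^2$ is valid only in case (2), where $\mu_0(0)=-\frac{1}{6d}-\frac{\kappa_1}{6\sqrt n}=\frac{1}{3}\xi_1(0)$. In case (1) with $\eta_1(0)=0$ one has instead $\mu_0(0)=-\frac{1}{6d}+\frac{\kappa_1}{2\sqrt n}$, and a direct check (using $d>\sqrt n$) shows $\xi_1(0)\mu_0(0)<0$ for either value of $\kappa_1$. So the inequality $\alpha\gamma\|v_0\|^2=-1/(2d)<0$ is perfectly compatible with $\alpha\gamma v_0(0)^2=\xi_1(0)\mu_0(0)$, and no contradiction is obtained.

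The paper closes this gap by a quantitative rather than a sign argument. After deducing $\eta_1(0)=0$ in case (1) (via Eq.(\ref{Ir6}), equivalent to your use of Eq.(\ref{Ir5})), it combines three relations: Eq.(\ref{Ir3}) with $\eta_1=\eta_2=0$ gives $\|\mu\|^2=1/2$, hence $\|\mu_0\|^2=1/6$; Eq.(\ref{Ir1}) gives $\|\xi_1\|^2=1/2-1/d$; and Eq.(\ref{Ir4}) gives $\mu_0(0)\xi_1(0)\|f\|^2=-1/(2d)$. Dividing the first two yields $(\xi_1(0)/\mu_0(0))^2=3(1-2/d)$, while the third gives $\xi_1(0)/\mu_0(0)=-3/d$; equating forces $d^2-2d-3=0$, i.e.\ $d=3$, contradicting the irrationality of $d$. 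Your argument for case (2) via the sign of $\xi_1(0)\mu_0(0)$ is correct and in fact slightly slicker than the paper's (which observes $\xi_1=3\mu_0$ and reads off $6\|\mu_0\|^2=-1/d$ from Eq.(\ref{Ir4})); it is only the transplant of that sign argument to case (1) that breaks down.
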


\begin{proof} The equality $\|\mu_1\|^2+\|\mu_2\|^2=1/3$ follows from Eq.(\ref{Ir8}). 
Assume that (1) in Lemma \ref{CaseI2} occurs. 
Then we have 
$$\mu_0(0)+\mu_1(0)+\mu_2(0)=-\frac{1}{2d}+\frac{\kappa_1\sqrt{1-4n\eta_1(0)^2}}{2\sqrt{n}},$$
$$\mu_0(0)+\zeta_3\mu_1(0)+\overline{\zeta_3}\mu_2(0)
=\frac{\kappa_1\sqrt{1-4n\eta_1(0)^2}+\kappa_2\ii }{2\sqrt{n}},$$
$$\mu_0(0)+\overline{\zeta_3}\mu_1(0)+\zeta_3\mu_2(0)
=\frac{\kappa_1\sqrt{1-4n\eta_1(0)^2}-\kappa_2\ii }{2\sqrt{n}},$$
which imply (1). 

We further assume $\dim (\cR-1)=1$ and get contradiction. 
Since $(\xi_1(0),\mu_0(0))\neq (0,0)$, there exists a unique $f\in \ker(\cR-1)$ satisfying $\cJ f=f$ and $f(0)=1$. 
Then we have $\xi_1=\xi_2=\xi_1(0)f$, $\eta_1=-\eta_2=\eta_1(0)f$, and $\mu_0=\mu_0(0)f$. 
Eq.(\ref{Ir6}) implies $\eta_1(0)(\xi_1(0)+\mu_0(0))=0$, and we get $\eta_1(0)=0$. 
Thus $\eta_1=\eta_2=0$. 
Eq.(\ref{Ir3}) implies $\|\mu\|^2=1/2$.  
Since $\|\mu_1\|^2+\|\mu_2\|^2=1/3$, 
we get $\|\mu_0\|^2=1/6$ and $\mu_0(0)^2\|f\|^2=1/6$. 
Eq.(\ref{Ir1}) and Eq.(\ref{Ir4}) imply 
$$\xi_1(0)^2\|f\|^2=\frac{1}{2}-\frac{1}{d},$$
$$\mu_0(0)\xi_1(0)\|f\|^2=-\frac{1}{2d}.$$
and 
$$(\frac{\xi_1(0)}{\mu_0(0)})^2=3(1-\frac{2}{d}),$$
$$\frac{\xi_1(0)}{\mu_0(0)}=-\frac{3}{d},$$
which is contradiction. 

Now assume that (2) in Lemma \ref{CaseI2} occurs. 
Then we have 
$$\mu_0(0)+\mu_1(0)+\mu_2(0)=-\frac{1}{2d}+\frac{\kappa_1}{2\sqrt{n}},$$
$$\mu_0(0)+\zeta_3\mu_1(0)+\overline{\zeta_3}\mu_2(0)=\frac{-\kappa_1+\kappa_2\ii }{2\sqrt{n}},$$
$$\mu_0(0)+\overline{\zeta_3}\mu_1(0)+\zeta_3\mu_2(0)=\frac{-\kappa_1-\kappa_2\ii }{2\sqrt{n}},$$
which implies (2). 

We further assume $\dim (\cR-1)=1$ and get contradiction. 
In this case, we would have $\xi_1=\xi_2=3\mu_0$, $\eta_1=\eta_2=0$, 
which contradicts Eq.(\ref{Ir4}).
\end{proof}

\begin{remark} If $\omega_1=\omega_2=\zeta_3^i$, we can apply the same argument to $\zeta_3^{-i}\cR$ instead of $\cR$, 
and the same statement replacing $(\mu_0,\mu_1,\mu_2)$ with $(\mu_i,\mu_{1+i},\mu_{2+i})$ holds. 
\end{remark}

In the same way, we can show 

\begin{lemma}\label{CaseI4} 
If $\omega_1=\zeta_3$ and $\omega_2=\zeta_3^{-1}$, the solutions of the equations in Lemma \ref{CaseI1} 
satisfy 
\begin{equation}
\|\mu_1\|^2+\|\mu_2\|^2=\frac{1}{3},
\end{equation}
and either of the following two:
\begin{itemize}
\item[(1)]
$\eta_1(0)=-\eta_2(0)\in \R$ with $ |\eta_1(0)|\leq 1/(2\sqrt{n})$, and there exist 
$\kappa_1,\kappa_2\in \{1,-1\}$ satisfying  
$$\xi_1(0)=\xi_2(0)=-\frac{1}{2d}-\frac{\kappa_1\sqrt{1-4n\eta_1(0)^2}}{2\sqrt{n}},$$
$$\mu_0(0)=-\frac{1}{6d}+\frac{\kappa_1\sqrt{1-4n\eta_1(0)^2}}{2\sqrt{n}},$$
$$\mu_1(0)=-\frac{1}{6d}+\frac{\kappa_2}{2\sqrt{3n}},$$
$$\mu_2(0)=-\frac{1}{6d}-\frac{\kappa_2}{2\sqrt{3n}}.$$
\item[(2)] There exist $\kappa_1,\kappa_2 \in \{1,-1\}$ satisfying 
$$\xi_1(0)=\xi_2(0)=-\frac{1}{2d}-\frac{\kappa_1}{2\sqrt{n}},$$
$$\eta_1(0)=\eta_2(0)=0,$$
$$\mu_0(0)=-\frac{1}{6d}-\frac{\kappa_1}{6\sqrt{n}},$$
$$\mu_1(0)=-\frac{1}{6d}+\frac{\kappa_1}{3\sqrt{n}}+\frac{\kappa_2}{2\sqrt{3n}},$$
$$\mu_2(0)=-\frac{1}{6d}+\frac{\kappa_1}{3\sqrt{n}}-\frac{\kappa_2}{2\sqrt{3n}}.$$
\end{itemize}
\end{lemma}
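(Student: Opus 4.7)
The plan is to run the same strategy as in the proof of Lemma \ref{CaseI3}, observing that the key identities depend only on the product $\omega_1\omega_2$, which equals $1$ both when $\omega_1=\omega_2=1$ and when $\omega_1=\zeta_3$, $\omega_2=\zeta_3^{-1}$. Accordingly, Lemma \ref{CaseI2} applies verbatim, and the equality $\|\mu_1\|^2+\|\mu_2\|^2=1/3$ will again come from Eq.(\ref{Ir8}). The only genuinely new point is that the rigidity step forcing $\dim\ker(\cR-1)\geq 2$ in Lemma \ref{CaseI3} is now vacuous: since $\xi_1\in \ker(\cR-\zeta_3)$ and $\xi_2\in\ker(\cR-\zeta_3^{-1})$ lie in distinct $\cR$-eigenspaces from the ones containing $\mu_0,\mu_1,\mu_2$, the obstruction that ruled out $\dim\ker(\cR-1)=1$ in Lemma \ref{CaseI3} does not arise here, so no such hypothesis needs to be imposed.

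First I would decompose $\mu=\mu_0+\mu_1+\mu_2$ into $\cR$-eigencomponents ($\cR\mu_i=\zeta_3^i\mu_i$), use $\cJ\cR=\cR^2\cJ$ together with $\cJ\mu=\mu$ to check $\cJ\mu_i=\mu_{-i}$ (so in particular $\mu_0$ is $\cJ$-invariant and $\mu_1,\mu_2$ are exchanged by $\cJ$), and plug this decomposition into Eq.(\ref{Ir8}). Because $\omega_1\omega_2=\overline{\omega_1\omega_2}=1$, the inner products $\inpr{\cR\mu}{\mu}$ and $\inpr{\cR^2\mu}{\mu}$ collapse to $\|\mu_0\|^2+\zeta_3\|\mu_1\|^2+\zeta_3^{-1}\|\mu_2\|^2$ and its conjugate, and the identity reduces to $3(\|\mu_1\|^2+\|\mu_2\|^2)=1$.

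Next, I would apply Lemma \ref{CaseI2} to extract the values of $\xi_1(0)=\xi_2(0)$, $\eta_1(0)=-\eta_2(0)$, $\mu(0)$, $\cR\mu(0)$, and $\cR^2\mu(0)$ in terms of $\kappa_1,\kappa_2\in\{1,-1\}$ and, in case (1), the real parameter $\eta_1(0)$; here $\overline{\omega_1\omega_2}=1$ gives the same explicit formulas as in Lemma \ref{CaseI3}. Finally, I would invert the $3\times 3$ discrete Fourier transform
\[
\begin{pmatrix}\mu(0)\\ \cR\mu(0)\\ \cR^2\mu(0)\end{pmatrix}
=\begin{pmatrix}1&1&1\\ 1&\zeta_3&\zeta_3^{-1}\\ 1&\zeta_3^{-1}&\zeta_3\end{pmatrix}
\begin{pmatrix}\mu_0(0)\\ \mu_1(0)\\ \mu_2(0)\end{pmatrix}
\]
and read off $\mu_0(0),\mu_1(0),\mu_2(0)$; using $\zeta_3+\zeta_3^{-1}=-1$ and $\zeta_3^{-1}-\zeta_3=-\ii\sqrt{3}$, the expressions simplify to exactly the ones stated in (1) and (2).

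The only mildly delicate point is bookkeeping: one needs to track carefully the signs and the factor of $\sqrt{3}$ produced by $\zeta_3^{-1}-\zeta_3$, and make sure the $\cJ$-invariance $\cJ\mu_1=\mu_2$ is compatible with the resulting formulas (which it is, since $\mu_1(0)$ and $\mu_2(0)$ turn out to be real and mutually swapped by $\kappa_2\mapsto -\kappa_2$). No new inequality or dimension argument is required, so the proof is essentially a shorter variant of the proof of Lemma \ref{CaseI3}, with the would-be rigidity step omitted.
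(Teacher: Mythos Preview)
Your proposal is correct and follows the paper's own approach: the paper's proof of Lemma~\ref{CaseI4} is literally the single sentence ``In the same way, we can show [Lemma~\ref{CaseI4}]'', and you have correctly identified that the crucial quantity is $\omega_1\omega_2$, which equals $1$ here just as in Lemma~\ref{CaseI3}, so Lemma~\ref{CaseI2} and Eq.~(\ref{Ir8}) feed in verbatim and the Fourier inversion at $g=0$ proceeds identically; you have also correctly noted that the $\dim\ker(\cR-1)\geq 2$ constraint from Lemma~\ref{CaseI3} simply does not appear here.

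One small slip: from $\cJ\cR=\cR^2\cJ$ and the anti-linearity of $\cJ$ you get $\cR(\cJ\mu_i)=\cJ\cR^2\mu_i=\overline{\zeta_3^{2i}}\cJ\mu_i=\zeta_3^{i}\cJ\mu_i$, so $\cJ\mu_i$ lies in the \emph{same} eigenspace as $\mu_i$, and $\cJ\mu=\mu$ forces $\cJ\mu_i=\mu_i$ for each $i$ (as stated in the paragraph before Lemma~\ref{CaseI3}), not $\cJ\mu_1=\mu_2$. This does not affect your computation of $\mu_i(0)$, since the correct condition still gives $\mu_i(0)\in\R$, but your consistency check at the end is based on the wrong premise.
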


We can show the following lemma in the same way except for Eq.(\ref{I52}), 
which follows from Eq.(\ref{Ir2}), (\ref{Ir3}), and (\ref{I51}). 

\begin{lemma}\label{CaseI5} 
If $\omega_1=1$ and $\omega_2=\zeta_3^{\pm 1}$, the solutions of the equations in Lemma \ref{CaseI1} 
satisfy 
\begin{equation}\label{I51}\|\mu_0\|^2+\|\mu_{\pm 1}\|^2=\frac{1}{3},\end{equation}
\begin{equation}\label{I52} \|\xi_2\|^2-2\|\eta_2\|^2-3\|\mu_{\mp 1}\|^2=-\frac{1}{d}, 
\end{equation}
and either of the following two:
\begin{itemize}
\item[(1)]
$\eta_1(0)=-\eta_2(0)\in \R$ with $ |\eta_1(0)|\leq 1/(2\sqrt{n})$, and there exist 
$\kappa_1,\kappa_2\in \{1,-1\}$ satisfying  
$$\xi_1(0)=\xi_2(0)=-\frac{1}{2d}-\frac{\kappa_1\sqrt{1-4n\eta_1(0)^2}}{2\sqrt{n}},$$
$$\mu_0(0)=-\frac{1}{6d}+\frac{\kappa_2}{2\sqrt{3n}},$$
$$\mu_{\pm 1}(0)=-\frac{1}{6d}-\frac{\kappa_2}{2\sqrt{3n}},$$
$$\mu_{\mp 1}(0)=-\frac{1}{6d}+\frac{\kappa_1\sqrt{1-4n\eta_1(0)^2}}{2\sqrt{n}}.$$
\item[(2)] There exist $\kappa_1,\kappa_2 \in \{1,-1\}$ satisfying 
$$\xi_1(0)=\xi_2(0)=-\frac{1}{2d}-\frac{\kappa_1}{2\sqrt{n}},$$
$$\eta_1(0)=\eta_2(0)=0,$$
$$\mu_0(0)=-\frac{1}{6d}+\frac{\kappa_1}{3\sqrt{n}}+\frac{\kappa_2}{2\sqrt{3n}},$$
$$\mu_{\pm 1}(0)=-\frac{1}{6d}+\frac{\kappa_1}{3\sqrt{n}}-\frac{\kappa_2}{2\sqrt{3n}},$$
$$\mu_{\mp 1}(0)=-\frac{1}{6d}-\frac{\kappa_1}{6\sqrt{n}}.$$
\end{itemize}
\end{lemma}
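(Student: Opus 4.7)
\medskip

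\noindent\textbf{Proof plan for Lemma \ref{CaseI5}.}
The plan is to follow the same template as in the proofs of Lemma~\ref{CaseI3} and Lemma~\ref{CaseI4}, with the product $\omega_1\omega_2$ now equal to $\zeta_3^{\pm 1}$ rather than $1$ or $\overline{\zeta_3^{\pm 1}}$, and to keep careful track of which $\cR$-eigencomponent picks up which value. First, I would record the spectral decomposition $\mu=\mu_0+\mu_1+\mu_2$ relative to $\cR$, with $\cJ\mu_i=\mu_i$, and compute
\[
\inpr{\cR\mu}{\mu}_{\ell^2(G)}=\sum_{i=0}^{2}\zeta_3^{i}\|\mu_i\|^2,\qquad
\inpr{\cR^2\mu}{\mu}_{\ell^2(G)}=\sum_{i=0}^{2}\zeta_3^{-i}\|\mu_i\|^2.
\]
Substituting these into Eq.(\ref{Ir8}) with $\omega_1\omega_2=\zeta_3^{\pm 1}$ and using $\zeta_3+\zeta_3^{-1}=-1$ collapses the identity to $3\|\mu_0\|^2+3\|\mu_{\pm 1}\|^2=1$, which is Eq.(\ref{I51}).

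\medskip

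Next, I would derive Eq.(\ref{I52}) as the remark after the statement suggests: eliminating $\|\eta_1\|^2$ between Eq.(\ref{Ir2}) and Eq.(\ref{Ir3}) yields
\[
\|\xi_2\|^2-2\|\eta_2\|^2-3\|\mu\|^2=-1-\tfrac{1}{d},
\]
and then substituting $\|\mu\|^2=\|\mu_0\|^2+\|\mu_{\pm 1}\|^2+\|\mu_{\mp 1}\|^2=\tfrac13+\|\mu_{\mp 1}\|^2$ via Eq.(\ref{I51}) gives Eq.(\ref{I52}).

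\medskip

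For the explicit values at $g=0$, I would invoke Lemma~\ref{CaseI2} to fix $\xi_1(0),\xi_2(0),\eta_1(0),\eta_2(0)$ and the values $\mu(0)$, $\cR\mu(0)=\overline{\omega_1\omega_2}\,(\kappa_1\sqrt{1-4n\eta_1(0)^2}+\kappa_2\ii)/(2\sqrt{n})$, and $\cR^2\mu(0)=\overline{\cR\mu(0)}$. Inverting the discrete Fourier transform over $\Z/3\Z$ gives
\[
3\mu_k(0)=\mu(0)+\zeta_3^{-k}\cR\mu(0)+\zeta_3^{-2k}\cR^2\mu(0),\qquad k=0,1,2.
\]
With $\omega_1=1$ and $\omega_2=\zeta_3^{\pm 1}$, so that $\overline{\omega_1\omega_2}=\zeta_3^{\mp 1}$, the three sums evaluate using $\zeta_3^2+\zeta_3^{-2}=-1$ and $\zeta_3-\zeta_3^{-1}=\ii\sqrt3$: the component indexed by $\mp 1$ is the one where the $\sqrt{1-4n\eta_1(0)^2}$ term survives (with coefficient $3/(2\sqrt{n})$), while for the index $\pm 1$ and the index $0$ that radical cancels and only the $\pm \kappa_2\sqrt{3}/(2\sqrt{n})$ contribution remains with opposite signs. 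Dividing by $3$ yields exactly the formulas listed in (1) of the lemma; case (2) is obtained by the same computation after setting $\eta_1(0)=\eta_2(0)=0$ and using the second alternative of Lemma~\ref{CaseI2}.

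\medskip

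The only delicate point — the main obstacle — is bookkeeping the correspondence between the sign choice $\pm$ in $\omega_2=\zeta_3^{\pm 1}$ and the indices $\pm 1$, $\mp 1$ that appear in the conclusion: a swap here would send $\mu_{\pm 1}(0)$ to $\mu_{\mp 1}(0)$. I would verify this by explicitly checking the case $\omega_2=\zeta_3$ (so $\mp 1=-1$, i.e.\ $\mu_2$), where the $\cR^2$-eigenvalue $\zeta_3^{-2}=\zeta_3$ multiplies $\cR^2\mu(0)$ and produces the reinforcement $\zeta_3\cdot\overline{\zeta_3}+\zeta_3^{-1}\cdot\zeta_3=2$ of the $A=\kappa_1\sqrt{1-4n\eta_1(0)^2}/(2\sqrt{n})$ term in $3\mu_{-1}(0)=\mu(0)+\zeta_3\cR\mu(0)+\zeta_3^{-1}\cR^2\mu(0)$, and the opposite case is handled by complex conjugation. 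No further obstruction remains, since all the identities in Lemma~\ref{CaseI1}(4) beyond those used above are automatically preserved by the parameterization inherited from Lemma~\ref{CaseI2}.
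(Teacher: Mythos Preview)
Your proposal is correct and follows essentially the same approach as the paper. The paper's entire proof is the one-line remark preceding the lemma: ``We can show the following lemma in the same way except for Eq.(\ref{I52}), which follows from Eq.(\ref{Ir2}), (\ref{Ir3}), and (\ref{I51}),'' and you have unpacked exactly this---deriving Eq.(\ref{I51}) from Eq.(\ref{Ir8}) with $\omega_1\omega_2=\zeta_3^{\pm1}$, obtaining Eq.(\ref{I52}) by eliminating $\|\eta_1\|^2$ between (\ref{Ir2}) and (\ref{Ir3}) and then using (\ref{I51}), and reading off the values at $0$ by inverting $\mu_k(0)=\tfrac13\sum_j\zeta_3^{-jk}\cR^j\mu(0)$ from Lemma~\ref{CaseI2}, just as in Lemmas~\ref{CaseI3} and~\ref{CaseI4}.
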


\subsection{Case II}
We assume $\chi_1=\chi_2=1$, $Je_1=e_2$, $Je_2=-e_1$. 
Since $c_1=c_2$, we denote $\omega=c_r/c$. 
In this case the gauge group $\cG(A,C,J)$ is $SU(2)$. 

\begin{lemma}\label{CaseII1} Let the notation be as above. 
Let $\xi(g)=b^{1,1}_{2,2}(g)$,  $\eta(g)=b^{1,1}_{2,1}(g)$, $\mu (g)=b^{1,2}_{2,1}(g)$. 
Then Eq.(\ref{p1})-(\ref{p9}) are equivalent to the following:

\begin{itemize}
\item[(1)] The matrix $\cB(g)=(b^{r,s}_{t,u}(g))_{(r,t),(s,u)}$ is expressed as 
$$\cB(g)=\left(
\begin{array}{cccc}
-\omega\cR^2\mu(g) &\eta(g) &-\cJ\eta(g) &\overline{\omega}\cR \mu(g)  \\
\eta(g) &\xi(g) &\mu(g) &-\eta(g) \\
-\cJ\eta(g)&\mu(g)&-\cJ \xi(g)&\cJ\eta(g)  \\
\overline{\omega}\cR \mu(g)  &-\eta(g)&\cJ\eta(g)&-\omega\cR^2\mu(g) 
\end{array}
\right).
$$

\item[(2)] For any $g\in G$, 
\begin{equation}
\cR\xi=\omega \xi, 
\end{equation}
\begin{equation}
\cR\eta=\omega \eta,
\end{equation}
\begin{equation}
\cJ \mu=-\mu,\quad \cJ \cR\mu=-\cR^2\mu,
\end{equation}
(where the second one follows from the first one in the last equation though). 

\item[(3)] $\overline{\omega}\cR \mu(0)+\omega\overline{\cR\mu(0)}=-1/d$. 

\item[(4)] For any $g \in G$, 
\begin{equation}
|\cR\mu(g)|^2+|\cR\mu(-g)|^2+|\eta(g)|^2+|\eta(-g)|^2=\frac{1}{n}-\frac{\delta_{g,0}}{d},
\end{equation}
\begin{equation}
|\xi(g)|^2+|\mu(g)|^2+2|\eta(g)|^2=\frac{1}{n},
\end{equation}
\begin{equation}
\omega\cR\mu(g)\overline{\cR^2\mu(g)}+\overline{\omega}\cR^2\mu(g)\overline{\cR\mu(g)}+|\eta(g)|^2+|\eta(-g)|^2=\frac{\delta_{g,0}}{d},
\end{equation}
\begin{equation}
\eta(g)\overline{\xi(g)}-\cJ\eta(g)\overline{\mu(g)}-(\overline{\omega}\cR\mu(g)+\omega\cR^2\mu(g))\overline{\eta(g)}=0,
\end{equation}
\begin{equation}
\eta(g)\overline{\mu(g)}+\cJ\eta(g)\overline{\cJ\xi(g)}+(\overline{\omega}\cR\mu(g)+\omega\cR^2\mu(g))\overline{\cJ\eta(g)}=0,
\end{equation}
\begin{equation}
2\eta(g)\overline{\cJ\eta(g)}+\mu(g)\overline{\cJ \xi(g)}-\xi(g)\overline{\mu(g)}=0.
\end{equation}
\end{itemize}
\end{lemma}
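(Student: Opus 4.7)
The plan is to systematically unpack the equations Eq.(\ref{p1})--(\ref{p9}) under the Case II assumptions and show that, after accounting for all the symmetries, only three independent scalar functions $\xi,\eta,\mu:G\to\C$ remain, subject to the stated constraints. Throughout, we use $\bar 1=2$, $\bar 2=1$, $\epsilon_1=1$, $\epsilon_2=-1$, $\chi_r=1$, and $\omega_1=\omega_2=\omega$. With $\cR$ and $\cJ$ as defined after Lemma \ref{CaseI1} (noting $\cJ^2=1$ is fine since the anti-unitary $J$ on $\cK_0$ is squared by working in the $e_t$-basis), Eq.(\ref{p1}) reads $\cR b^{r,s}_{t,u}=\omega\,\epsilon_r\epsilon_t b^{s,\bar t}_{\bar r,u}$ and Eq.(\ref{p7}) reads $\cJ b^{r,s}_{t,u}=\epsilon_s\epsilon_u\,b^{t,\bar s}_{r,\bar u}$, while Eq.(\ref{p9}) gives a second involution using $c_r/c_t=1$.

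First, I would set $\xi=b^{1,1}_{2,2}$, $\eta=b^{1,1}_{2,1}$, $\mu=b^{1,2}_{2,1}$, and use the three involutions repeatedly to express every other $b^{r,s}_{t,u}$ in terms of these and their $\cR,\cJ$-images. For instance, Eq.(\ref{p1}) applied twice converts $\mu=b^{1,2}_{2,1}$ into the $(1,1)$ and $(2,2)$ corner entries via $\cR\mu$, $\cR^2\mu$, producing the corners $-\omega\cR^2\mu$ and $\bar\omega\cR\mu$; Eq.(\ref{p7}) then relates entries with a swap of the $s,u$-block (giving $\cJ\eta$ with a sign $-1$ because $\epsilon_1\epsilon_2=-1$); and Eq.(\ref{p9}) cross-checks these. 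The sign patterns in the stated matrix come exactly from tracking $\epsilon_r\epsilon_t$ products in each use of the symmetries. This step yields the form in (1), and at the same time gives the eigenvalue/symmetry conditions in (2): $\cR\xi=\omega\xi$, $\cR\eta=\omega\eta$ follow from applying Eq.(\ref{p1}) to $b^{1,1}_{2,2}$ and $b^{1,1}_{2,1}$ respectively (the indices produce the same $(r,t)$ after one rotation), while $\cJ\mu=-\mu$ follows from Eq.(\ref{p7}) applied to $b^{1,2}_{2,1}$ with $\epsilon_s\epsilon_u=-1$.

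Second, I would derive (3) from Eq.(\ref{p2}) (equivalently Eq.(\ref{p3})) applied at $g=0$: the two trace conditions $\sum_r b^{r,s}_{r,u}(0)=-\delta_{s,u}/d$ for $(s,u)=(1,1)$ and $(s,u)=(2,2)$ collapse, using the form in (1), to $\bar\omega\cR\mu(0)+\omega\overline{\cR\mu(0)}=-1/d$ (after noting the $\xi$-diagonal contributions vanish by the $\cJ$-antisymmetry of $\mu$ and the $\cR$-eigenvalue constraint on $\xi$), while the off-diagonal traces $(s,u)=(1,2),(2,1)$ force $\eta(0)-\cJ\eta(0)=0$ type conditions that are already implicit in the rotation structure. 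The main subtlety here is keeping track of which traces reduce to $0=0$ identities versus giving genuine constraints; this requires carefully exploiting the $\cR$-eigenvalue of $\xi$ to show $\xi(0)$ sits in the trivial part of the decomposition.

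Third, conditions (4) come from Eq.(\ref{p4}) and (\ref{p5}). For each pair $(s,s'), (u,u')$, the column inner product $\sum_{r,t}\overline{b^{r,s'}_{t,u'}(g)}b^{r,s}_{t,u}(g)$ expands into a sum of at most four products of the three basic functions (evaluated at $g$ or $-g$, since $\cR$ and $\cJ$ mix $g\leftrightarrow -g$). The diagonal cases $(s,u)=(s',u')\in\{(1,1),(2,2),(1,2),(2,1)\}$ give the four norm-square equations listed; the off-diagonal cases $(s,u)\neq(s',u')$ give the three bilinear orthogonality equations. The row inner product from Eq.(\ref{p5}) produces the same system by virtue of the $\cR$ and $\cJ$ symmetries established in step one, so nothing new appears. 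The verification that Eq.(\ref{p6}) is automatic (since $\chi_r=1$ for all $r$) and that Eq.(\ref{p8}) follows from Eq.(\ref{p7}) and Eq.(\ref{p9}) (as noted in the proof of Theorem \ref{irrpe}) completes the reduction.

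The main obstacle will be the bookkeeping in step one: the sixteen entries of $\cB(g)$ are constrained by three different involutions with sign factors, and one must verify that applying them in different orders yields a consistent assignment (no overdetermination). Concretely, one must check that the triple relation $\cJ\cR=\cR^2\cJ$ is compatible with the sign factors $\epsilon_r\epsilon_t$ and $\epsilon_s\epsilon_u$ appearing in Eq.(\ref{p1}) and Eq.(\ref{p7}) for each orbit of indices; once this coherence is established, the remaining work is routine linear algebra in $\ell^2(G)^{\oplus 16}$.
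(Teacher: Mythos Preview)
Your overall approach is correct and is exactly what the paper has in mind; the paper gives no proof of this lemma (just as it states Lemma~\ref{CaseI1} is ``straightforward''), and the only way to establish it is the systematic bookkeeping you describe: use the symmetries (\ref{p1}), (\ref{p7}), (\ref{p9}), (\ref{p11}) to reduce all sixteen entries to $\xi,\eta,\mu$ and their $\cR,\cJ$-images, then read off (\ref{p2})--(\ref{p5}) as the constraints in (3) and (4).

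Two concrete sign issues to fix in your execution. First, your transcription of Eq.~(\ref{p1}) omits the factor $\epsilon$: in Case~II we have $\epsilon=-1$, so the correct rewrite is
\[
\cR\,b^{r,s}_{t,u}=\epsilon\,\omega\,\epsilon_r\epsilon_t\,b^{s,\bar t}_{\bar r,u}
=-\omega\,\epsilon_r\epsilon_t\,b^{s,\bar t}_{\bar r,u}.
\]
This extra sign is precisely what produces the minus in $b^{1,1}_{1,1}=-\omega\cR^2\mu$ and $b^{2,2}_{2,2}=-\omega\cR^2\mu$; without it you will get the wrong corner entries. Second, your derivation of $\cJ\mu=-\mu$ from Eq.~(\ref{p7}) alone is incomplete: (\ref{p7}) gives $\cJ b^{1,2}_{2,1}=\epsilon_2\epsilon_1\,b^{2,1}_{1,2}=-b^{2,1}_{1,2}$, and you still need $b^{2,1}_{1,2}=b^{1,2}_{2,1}$, which comes from Eq.~(\ref{p11}) (equivalently, from combining (\ref{p1}) and (\ref{p9})). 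Once these two points are corrected, the rest of your plan goes through: the diagonal and off-diagonal pairings in (\ref{p4})--(\ref{p5}) yield exactly the six equations in (4), and the trace condition (\ref{p2}) at $(s,u)=(1,1)$ gives $-\omega\cR^2\mu(0)+\bar\omega\cR\mu(0)=-1/d$, which becomes (3) after using $\cR^2\mu(0)=-\overline{\cR\mu(0)}$ (a consequence of $\cJ\cR\mu=-\cR^2\mu$ evaluated at $0$).
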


\begin{remark}\label{rCaseII} The condition (4) above imply
\begin{equation}\label{IIr1}
2\|\mu\|^2+2\|\eta\|^2=1-\frac{1}{d},
\end{equation}
\begin{equation}\label{IIr2}
\|\xi\|^2+\|\mu\|^2+2\|\eta\|^2=1,
\end{equation}
\begin{equation}\label{IIr3}
\omega\inpr{\mu}{\cR\mu}_{\ell^2(G)}+\overline{\omega}\inpr{\cR\mu}{\mu}_{\ell^2(G)}+2\|\eta\|^2=\frac{1}{d},
\end{equation}
\begin{equation}\label{IIr4}
\inpr{\eta}{\xi}_{\ell^2(G)}=\inpr{\mu}{\eta}_{\ell^2(G)},
\end{equation}
\begin{equation}\label{IIr5}
\inpr{\eta}{\cJ\eta}_{\ell^2(G)}=\inpr{\xi}{\mu}_{\ell^2(G)}.
\end{equation}
Eq(\ref{IIr1}) and Eq(\ref{IIr3}) imply
\begin{equation}\label{IIr6}
2\|\mu\|^2-\overline{\omega}\inpr{\cR\mu}{\mu}_{\ell^2(G)}-\omega\inpr{\cR\mu}{\mu}_{\ell^2(G)}=1-\frac{2}{d}. 
\end{equation}
\end{remark}

\begin{lemma}\label{CaseII2}  
The solutions of the equations in Lemma \ref{CaseII1} satisfy either of the following two: 
\begin{itemize}
\item[(1)] $|\eta(0)|\leq 1/(2\sqrt{n})$, and there exist $\kappa_1,\kappa_2\in \{1,-1\}$ satisfying 
$$\xi(0)=\frac{\eta(0)}{\overline{\eta(0)}}\frac{\kappa_1+\kappa_2\sqrt{1-4n|\eta(0)|^2}}{2\sqrt{n}}\ii ,$$
$$\mu(0)=\frac{\kappa_1-\kappa_2\sqrt{1-4n|\eta(0)|^2}}{2\sqrt{n}}\ii ,$$
$$\cR\mu(0)=\omega\big(-\frac{1}{2d}-\frac{\kappa_2\sqrt{1-4n|\eta(0)|^2}}{2\sqrt{n}}\ii \big),$$
where $\eta(0)/\overline{\eta(0)}$ is interpreted as an arbitrary phase if $\eta(0)=1$. 
\item[(2)] $\eta(0)=\xi(0)=0$, and there exist $\kappa\in \{1,-1\}$ satisfying
$$\mu(0)=\frac{\kappa\ii }{\sqrt{n}},$$ 
$$\cR\mu(0)=\omega\big(-\frac{1}{2d}-\frac{\kappa\ii }{2\sqrt{n}}\big).$$
\end{itemize}
\end{lemma}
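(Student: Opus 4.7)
The strategy is to evaluate the relations in Lemma~\ref{CaseII1}(2)--(4) at $g = 0$, where the formulas collapse dramatically thanks to $a(0) = 1$ and $\cJ f(0) = \overline{f(0)}$. I will extract from these the four scalars $\xi(0), \eta(0), \mu(0)$ and $\alpha := \cR\mu(0)$, organizing the calculation so that the sixth relation of (4) at $g=0$ provides the key algebraic identity.

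First, the conditions $\cJ\mu = -\mu$ and $\cJ\cR\mu = -\cR^2\mu$ force $\mu(0)$ to be purely imaginary and $\cR^2\mu(0) = -\overline{\alpha}$. Next, the hypothesis $c_1 = c_2$ together with $c_1^3\widehat{a}(0) = 1 = c^3\widehat{a}(0)$ gives $\omega^3 = 1$, so the substitution $\beta := \overline{\omega}\alpha$ satisfies $\omega\alpha^2 = \omega^3\beta^2 = \beta^2$. Condition (3) of Lemma~\ref{CaseII1} then reads $\mathrm{Re}\,\beta = -1/(2d)$, while the first equation of (4) at $g=0$ gives $|\alpha|^2 = 1/(2n) - 1/(2d) - |\eta(0)|^2$. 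Writing $\beta = -1/(2d) + it$ and using the identity $d^2 = n + 2nd$ (equivalently $(2d+1)/(4d^2) = 1/(4n)$), one obtains
\begin{equation*}
t^2 = \frac{1}{4n} - |\eta(0)|^2,
\end{equation*}
which already yields the bound $|\eta(0)| \le 1/(2\sqrt{n})$.

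The central algebraic step uses the sixth equation of (4) at $g=0$. Writing $\mu(0) = ia$ with $a \in \R$, and using $\cJ\eta(0) = \overline{\eta(0)}$, $\cJ\xi(0) = \overline{\xi(0)}$, that equation reduces to $\eta(0)^2 = -\xi(0)\mu(0)$. Assuming $a \ne 0$, this forces $\xi(0) = ib\cdot\bigl(\eta(0)/\overline{\eta(0)}\bigr)$ for some $b \in \R$ with $ab = |\eta(0)|^2$, giving the phase factor appearing in the lemma. Substitution into the second equation of (4) at $g=0$ collapses to $(a+b)^2 = 1/n$; combined with $ab = |\eta(0)|^2$, the quadratic $x^2 - (\kappa_1/\sqrt{n})x + |\eta(0)|^2 = 0$ yields the explicit formulas for $\xi(0)$ and $\mu(0)$ in case (1), parametrized by $\kappa_1, \kappa_2 \in \{1,-1\}$. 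Finally, the fourth equation of (4) at $g=0$ simplifies (after cancelling the common factor $\overline{\eta(0)}$ and using $\overline{\omega}\alpha + \omega\cR^2\mu(0) = \overline{\omega}\alpha - \omega\overline{\alpha} = 2it$) to $a - b - 2t = 0$, which pins the sign of $t$ to $\kappa_2$ and yields the asserted value of $\cR\mu(0)$.

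The main obstacle is the degenerate branch $a = 0$, which is the source of case (2). Here $\eta(0) = 0$ forces $\xi(0)\mu(0) = 0$, producing two sub-branches: (A) $\xi(0) = 0,\ \mu(0) \ne 0$, and (B) $\xi(0) \ne 0,\ \mu(0) = 0$. The lemma records only (A), so the delicate point will be to show that (B) is gauge-equivalent to (A) under an element of $\cG(A,C,J) = SU(2)$ swapping the roles of $e_1$ and $e_2$; concretely, one must verify that an appropriate $SU(2)$-element (of the form $\tfrac{1}{\sqrt 2}\bigl(\begin{smallmatrix}1&1\\-1&1\end{smallmatrix}\bigr)$ or similar) carries $b^{1,1}_{2,2}$ to $b^{1,2}_{2,1}$ up to signs while preserving $\eta(0) = 0$. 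Once (B) is absorbed, the values in case (A) are read off from $|\xi(0)|^2 + |\mu(0)|^2 = 1/n$ (so $\mu(0) = \kappa i/\sqrt{n}$) and from $t^2 = 1/(4n)$; the two admissible pairs $(\mu(0), \cR\mu(0))$ are then exactly the specializations of the case (1) formulas in the limit $|\eta(0)| \to 0$ with the remaining sign parameter $\kappa$ corresponding to the unresolved $\kappa_2$-ambiguity after the degeneration.
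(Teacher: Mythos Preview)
Your computation in the non-degenerate regime $\eta(0)\neq 0$ follows essentially the same route as the paper's proof and is correct: evaluate the relations at $g=0$, use $\mu(0)\in\ii\R$ and $\cR^2\mu(0)=-\overline{\cR\mu(0)}$, extract $t^2=\tfrac{1}{4n}-|\eta(0)|^2$ from condition~(3) together with the first equation of~(4), and then solve the quadratic coming from $\eta(0)^2=-\xi(0)\mu(0)$ and $|\xi(0)|^2+|\mu(0)|^2+2|\eta(0)|^2=\tfrac1n$. Pinning the sign of $t$ via the fourth equation of~(4) is also exactly what the paper does.

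The gap is in your final paragraph. You have misread what case~(1) covers when $\eta(0)=0$. The clause in the lemma statement that ``$\eta(0)/\overline{\eta(0)}$ is interpreted as an arbitrary phase'' (the ``$\eta(0)=1$'' there is a typo for $\eta(0)=0$) means that case~(1) with $\kappa_1=\kappa_2$ already describes your sub-branch~(B): $\mu(0)=0$ and $\xi(0)$ an arbitrary point of modulus $1/\sqrt n$. No gauge move is needed; the paper disposes of the whole degenerate situation in one line, ``If $\eta(0)=0$, we get (2) or the $\eta(0)=0$ case of~(1).'' Moreover, your step ``$a-b-2t=0$'' is obtained only after cancelling $\overline{\eta(0)}$, so when $\eta(0)=0$ the fourth equation of~(4) is vacuous and both signs of $t$ survive in sub-branch~(A); one sign lands in case~(2), the other in case~(1) with $\kappa_1=-\kappa_2$. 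Finally, invoking an $SU(2)$ gauge transformation is conceptually misplaced here: the lemma pins down the values at $g=0$ for a \emph{given} solution, not for a gauge class, and a gauge element would change the entire family $\{B_g\}$, not just the value at $0$. Drop the gauge argument and simply observe that every $\eta(0)=0$ possibility already sits inside case~(1) or case~(2).
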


\begin{proof} Lemma \ref{CaseII1},(2) shows $\mu(0)\in \ii \R$ and $\cR^2\mu(0)=-\overline{\cR\mu(0)}$, and 
Lemma \ref{CaseII1},(4) with $g=0$ shows 
$$|\cR\mu(0)|^2+|\eta(0)|^2=\frac{1}{2n}-\frac{1}{2d},$$
$$-(\overline{\omega}\cR\mu(0))^2-(\omega\overline{\cR\mu(0)})^2+2|\eta(0)|^2=\frac{1}{d},$$
$$\eta(0)\overline{\xi(0)}+\overline{\eta(0)}\mu(0)-(\overline{\omega}\cR\mu(0)-\omega\overline{\cR\mu(0)})\overline{\eta(0)}=0,$$
$$-\eta(0)\mu(0)+\overline{\eta(0)}\xi(0)+(\overline{\omega}\cR\mu(0)-\omega\overline{\cR\mu(0)})\eta(0)=0,$$
$$\eta(0)^2+\mu(0) \xi(0)=0.$$
$$|\xi(0)|^2+|\mu(0)|^2+2|\eta(0)|^2=\frac{1}{n}.$$
The first two with Lemma \ref{CaseII1},(3) is equivalent to 
$$\cR\mu(0)=\omega\big(-\frac{1}{2d}\pm\frac{\sqrt{1-4n|\eta(0)|^2}}{2\sqrt{n}}\ii \big).$$
If $\eta(0)=0$, we get (2) or the $\eta(0)=0$ case of (1). 
Assume $\eta(0)\neq 0$ now. 
Then the third and fourth equalities implies 
$$\overline{\omega}\cR\mu(0)-\omega\overline{\cR\mu(0)}=\frac{\eta(0)}{\overline{\eta(0)}}\overline{\xi(0)}+\mu(0)
=-\frac{\overline{\eta(0)}}{\eta(0)}\xi(0)+\mu(0).$$
Thus we can introduce a real parameter $l$ satisfying 
$$\xi(0)=l\frac{\eta(0)}{\overline{\eta(0)}}\ii ,$$
$$\mu(0)=\frac{|\eta(0)|^2}{l}\ii .$$
Iterating these into the last equality, we get 
$$l^2+\frac{|\eta(0)|^4}{l^2}+2|\eta(0)|^2=\frac{1}{n},$$
and 
$$l+\frac{|\eta(0)|^2}{l}=\frac{\kappa_1}{\sqrt{n}},$$
with $\kappa_1^2=1$. 
Solving this, we get the statement. 
\end{proof}

We can expand $\mu\in \ell^2(G)$ satisfying $\cJ\mu=-\mu$ as $\mu=\mu_0+\mu_1+\mu_2$ with 
$\cR\mu_i=\zeta_3^i\mu_i$ and $\cJ\mu_i=-\mu_i$. 

\begin{lemma}\label{CaseII3} There exists a solution of the equations in Lemma \ref{CaseII1} only if 
$$\dim\ker(\cR-\omega)\geq 2.$$ 
The solutions with $\omega=1$ satisfy 
$$\|\mu_1\|^2+\|\mu_2\|^2=\frac{1}{3}-\frac{2}{3d},$$
and either of the following two: 
\begin{itemize}
\item[(1)] $|\eta(0)|\leq 1/(2\sqrt{n})$, and there exist $\kappa_1,\kappa_2\in \{1,-1\}$ satisfying 
$$\xi(0)=\frac{\eta(0)}{\overline{\eta(0)}}\frac{\kappa_1+\kappa_2\sqrt{1-4n|\eta(0)|^2}}{2\sqrt{n}}\ii ,$$
$$\mu_0(0)=(\frac{\kappa_1}{6\sqrt{n}}-\frac{\kappa_2\sqrt{1-4n|\eta(0)|^2}}{2\sqrt{n}})\ii ,$$
$$\mu_1(0)=(\frac{\kappa_1}{6\sqrt{n}}+\frac{1}{2\sqrt{3}d})\ii ,$$
$$\mu_2(0)=(\frac{\kappa_1}{6\sqrt{n}}-\frac{1}{2\sqrt{3}d})\ii .$$
where $\eta(0)/\overline{\eta(0)}$ is interpreted as an arbitrary phase if $\eta(0)=1$. 
\item[(2)] $\eta(0)=\xi(0)=0$, and there exist $\kappa\in \{1,-1\}$ satisfying
$$\mu_0(0)=0,$$ 
$$\mu_1(0)=(\frac{\kappa}{2\sqrt{n}}+\frac{1}{2\sqrt{3}d})\ii ,$$
$$\mu_2(0)=(\frac{\kappa}{2\sqrt{n}}-\frac{1}{2\sqrt{3}d})\ii .$$
\end{itemize}
The second case could occur only if $\dim \{f\in \ker(\cR-1);\; f(0)=0\}\geq 2$. 
\end{lemma}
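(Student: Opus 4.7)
The plan is to exploit that Lemma \ref{CaseII1}(2) places both $\xi$ and $\eta$ in the $\omega$-eigenspace $\ker(\cR - \omega)$, combined with the Fourier-like decomposition $\mu = \mu_0 + \mu_1 + \mu_2$ across the three $\cR$-eigenspaces and the boundary values at $g = 0$ supplied by Lemma \ref{CaseII2}. A prerequisite is that $\cJ\cR = \cR^2 \cJ$ together with $\cJ\mu = -\mu$ imply $\cJ\mu_0 = -\mu_0$ and $\cJ\mu_1 = -\mu_2$; in particular $\mu_0 \in \ker(\cR - 1)$ joins $\xi$ and $\eta$ in the relevant eigenspace when $\omega = 1$.

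First I would derive $\|\mu_1\|^2 + \|\mu_2\|^2 = \frac{1}{3} - \frac{2}{3d}$. Orthogonality of the three $\cR$-eigenspaces yields $\inpr{\cR\mu}{\mu}_{\ell^2(G)} + \inpr{\cR^2\mu}{\mu}_{\ell^2(G)} = 2\|\mu_0\|^2 - (\|\mu_1\|^2 + \|\mu_2\|^2)$, and inserting this into Eq.(\ref{IIr6}) with $\omega = 1$, together with $\|\mu\|^2 = \|\mu_0\|^2 + \|\mu_1\|^2 + \|\mu_2\|^2$, gives the claimed identity.

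Next I would recover the pointwise values $\mu_k(0)$ by inverting the order-three action of $\cR$ at $g = 0$: $\mu_k(0) = \frac{1}{3}\sum_{j=0}^2 \zeta_3^{-jk}(\cR^j \mu)(0)$. The inputs $\mu(0)$ and $\cR\mu(0)$ are furnished by Lemma \ref{CaseII2}, and the relation $\cR^2\mu(0) = -\overline{\cR\mu(0)}$ follows from $\cJ\mu = -\mu$ together with $\mu(0) \in i\R$. Direct substitution produces the tabulated values in cases (1) and (2), with the sign choices $\kappa_1, \kappa_2$ inherited from Lemma \ref{CaseII2}.

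The crux is the dimension inequality. Suppose toward contradiction that $\dim \ker(\cR - \omega) = 1$ with unit generator $f$. Then $\xi = \alpha f$ and $\eta = \beta f$; when $\omega = 1$ also $\mu_0 = \gamma f$. The bilinear identities Eq.(\ref{IIr2}), (\ref{IIr4}), (\ref{IIr5}) then reduce to a small algebraic system in $\alpha, \beta, \gamma$, which, once combined with the $g = 0$ values from Lemma \ref{CaseII2} and the norm identity above, I expect to overdetermine. In case (1) this should be settled by fixing the ratio $\alpha/\beta = \xi(0)/\eta(0)$ from Lemma \ref{CaseII2} and observing that Eq.(\ref{IIr5}) then forces an incompatible relation. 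In case (2), where $\xi(0) = \eta(0) = 0$, one first excludes $\xi = \eta = 0$ identically (otherwise Eq.(\ref{IIr2}) would give $\|\mu\|^2 = 1$, clashing with Eq.(\ref{IIr1}) since $d > 0$), hence $f(0) = 0$ and $\xi, \eta, \mu_0$ all lie in $\{h \in \ker(\cR - 1) : h(0) = 0\}$; the requirement that $\xi, \eta$ and the $\mu_0$-component do not collapse under Eq.(\ref{IIr4})--(\ref{IIr5}) then forces this subspace to be at least two-dimensional, which is precisely the stated refinement.

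The main obstacle is the last step, where carefully tracking the phases (notably the factor $\eta(0)/\overline{\eta(0)}$ appearing in Lemma \ref{CaseII2}(1)) through Eq.(\ref{IIr4}) and Eq.(\ref{IIr5}) is needed to rule out all subcases uniformly, and where isolating case (2) requires inspecting which linear combinations of $\xi, \eta, \mu_0$ remain admissible when all three share the same one-dimensional support.
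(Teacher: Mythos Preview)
Your overall strategy matches the paper's: reduce to $\omega=1$, derive the norm identity from Eq.~(\ref{IIr6}), extract $\mu_k(0)$ by inverting the $\cR$-action at $0$, and then argue by contradiction assuming $\dim\ker(\cR-1)=1$. Two points need correction.

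First, a slip: $\cJ$ does \emph{not} swap $\mu_1$ and $\mu_2$. Since $\cJ$ is antilinear and $\cJ\cR=\cR^2\cJ$, one has $\cR(\cJ\mu_i)=\cJ(\cR^2\mu_i)=\cJ(\zeta_3^{2i}\mu_i)=\zeta_3^{i}\cJ\mu_i$, so $\cJ$ preserves each eigenspace and $\cJ\mu_i=-\mu_i$ for every $i$. (This is exactly the decomposition the paper records just before the lemma.) Fortunately your later computations do not actually use the incorrect version.

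Second, the contradiction step is where your proposal is genuinely incomplete. Your plan for case~(1) is to feed the ratio $\xi(0)/\eta(0)$ from Lemma~\ref{CaseII2} into Eq.~(\ref{IIr5}); but this ratio depends on the free parameter $|\eta(0)|$, and it is not clear that a single equation over-determines it. The paper's argument is cleaner and avoids this: under the one-dimensional hypothesis, write $\xi=\xi(0)f$, $\eta=\eta(0)f$, $\mu_0=\mu_0(0)f$ with $\cJ f=f$, so that Eq.~(\ref{IIr4}) and Eq.~(\ref{IIr5}) become
\[
\eta(0)\overline{\xi(0)}=\mu_0(0)\overline{\eta(0)},\qquad \eta(0)^2=\xi(0)\overline{\mu_0(0)}.
\]
The key observation is that $\mu_0(0)\in i\R$ (immediate from $\cJ\mu_0=-\mu_0$), so $\mu_0(0)^2\le 0$; combining the two relations forces either $\xi=\eta=0$ or $\mu_0=\eta=0$. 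The first alternative contradicts Eq.~(\ref{IIr1}),(\ref{IIr2}) (exactly as you noted for case~(2)); the second gives $\|\mu\|^2=\tfrac12(1-\tfrac1d)$ from Eq.~(\ref{IIr1}), incompatible with $\|\mu\|^2=\|\mu_1\|^2+\|\mu_2\|^2=\tfrac13-\tfrac{2}{3d}$. For case~(2) the paper simply says the same mechanism works with $\ker(\cR-1)$ replaced by $\{f\in\ker(\cR-1):f(0)=0\}$; your observation that $\xi,\eta,\mu_0$ all vanish at $0$ and cannot all be identically zero is the right entry point, but the contradiction again comes from the pair (\ref{IIr4})--(\ref{IIr5}) and the purely imaginary value of $\mu_0(0)$, not from a vague ``collapse''.
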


\begin{proof} We assume $\omega=1$. 
The proof for the general case can be obtained by applying the same argument to $\overline{\omega}\cR$, and 
replacing $(\mu_0,\mu_1,\mu_2)$ with $(\mu_1,\mu_2,\mu_0)$ or $(\mu_2,\mu_0,\mu_1)$. 
The first equation follows from Eq.(\ref{IIr6}). 

Assume that (1) in Lemma \ref{CaseII2} occurs. 
Then we have 
$$\mu_0(0)+\mu_1(0)+\mu_2(0)=\frac{\kappa_1-\kappa_2\sqrt{1-4n|\eta(0)|^2}}{2\sqrt{n}}\ii ,$$
$$\mu_0(0)+\zeta_3\mu_1(0)+\overline{\zeta_3}\mu_2(0)=-\frac{1}{2d}-\frac{\kappa_2\sqrt{1-4n|\eta(0)|^2}}{2\sqrt{n}}\ii ,$$
$$\mu_0(0)+\overline{\zeta_3}\mu_1(0)+\zeta_3\mu_2(0)=\frac{1}{2d}-\frac{\kappa_2\sqrt{1-4n|\eta(0)|^2}}{2\sqrt{n}}\ii ,$$
which implies (1).

We further assume $\dim\ker(\cR-1)=1$ and get contradiction. 
Since $(\xi(0),\mu_0(0))\neq (0,0)$, we can find $f\in \ker(\cR-1)$ with $\cJ f=f$ and $f(0)=1$. 
Then we have $\xi=\xi(0)f$, $\eta=\eta(0)f$, $\mu_0=\mu_0(0)f$, and Eq.(\ref{IIr4}),(\ref{IIr5}) imply 
$$\eta(0)\overline{\xi(0)}=\mu_0(0)\overline{\eta(0)},$$
$$\eta(0)^2=\xi(0)\overline{\mu_0(0)}.$$
Since $\mu_0(0)\in \R\ii $,  this implies either $\xi=\eta=0$ or $\mu_0=\eta=0$. 
The first case contradicts Eq.(\ref{IIr1}),(\ref{IIr2}). 
The second case with Eq.(\ref{IIr1}) gives $\|\mu\|^2=(1-1/d)/2$ on one hand, and on the other hand we have 
$$\|\mu^2\|=\|\mu_1\|^2+\|\mu_2\|^2=\frac{1}{3}-\frac{2}{3d},$$
which is contradiction too. 

Now we assume (2) in Lemma \ref{CaseII2} occurs. 
Then we have 
$$\mu_0(0)+\mu_1(0)+\mu_2(0)=\frac{\kappa}{\sqrt{n}}\ii ,$$ 
$$\mu_0(0)+\zeta_3\mu_1(0)+\overline{\zeta_3}\mu_2(0)=-\frac{1}{2d}-\frac{\kappa}{2\sqrt{n}}\ii ,$$
$$\mu_0(0)+\overline{\zeta_3}\mu_1(0)+\zeta_3\mu_2(0)=\frac{1}{2d}-\frac{\kappa}{2\sqrt{n}}\ii ,$$
which implies (2). 

In a similar way as above, we can show that the condition 
$$\dim\{f\in \ker(\cR-1);\; f(0)=0\}\leq 1$$ 
does not allow any solution. 
\end{proof}

\begin{remark} If $\omega=\zeta_3^i$, we can apply the same argument to $\zeta_3^{-i}\cR$ instead of $\cR$, 
and the same statement replacing $(\mu_0,\mu_1,\mu_2)$ with $(\mu_i,\mu_{1+i},\mu_{2+i})$ holds. 
\end{remark}
\subsection{Case III}
We assume $\chi_1=1$, and $\chi_2\neq 1$, $\chi_2^2=1$, $Je_1=e_1$, $Je_2=e_2$. 
For simplicity we denote $\chi=\chi_2$. 
There exists unique $g_\chi\in G$ satisfying $\chi(g)=\inpr{g}{g_\chi}$ for all $g\in G$. 
Note that we have the following for $a_\chi(g)=\chi(g) a(g)$:
$$\widehat{a_\chi}(0)=\frac{1}{\sqrt{n}}\sum_{h\in G}\inpr{g_\chi}{h}a(h)=\hat{a}(-g_\chi)=\hat{a}(0)\overline{a(g_\chi)}.$$
Thus
$$c_2^3=\overline{\widehat{a_\chi}(0)}=\overline{\hat{a}(0)}a(g_\chi)=c_1^3a(g_\chi).$$
Since $g_\chi$ has order 2, we have $a(g_\chi)^2=\inpr{g_\chi}{g_\chi}=\chi(g_\chi)$, and $a(g_\chi)^4=1$, 
which shows $c_2^3=(c_1\overline{a(g_\chi)})^3$.  

\begin{lemma}\label{CaseIII1} For $r=1,2$, we introduce unitaries $\cR_r$, $r=1,2$, of period 3 on $\ell^2(G)$ by 
$$\cR_rf(g)=\frac{\overline{c_r\chi_r(g)a(g)}}{\sqrt{n}}\sum_{h\in G}\inpr{g}{h}f(h),$$
and anti-unitaries $\cJ_r$ of period 2 on $\ell^2(G)$ by 
$$\cJ_rf(g)=\overline{\chi_r(g)a(g)f(-g)}.$$
Then Eq.(\ref{p1}), (\ref{p6}), (\ref{p9}), (\ref{p11}) are equivalent to the following conditions: 
\begin{itemize}
\item[(1)] Let $\xi_1(g)=b^{1,1}_{1,1}(g)$, $\xi_2(g)=b^{2,2}_{2,2}(g)$, $\mu (g)=b^{1,2}_{1,2}(g)$. 
The matrix $\cB(g)=(b^{r,s}_{t,u}(g))_{(r,t),(s,u)}$ is expressed as 
$$\cB(g)=\left(
\begin{array}{cccc}
\xi_1(g) &0 &0 &\mu (g)  \\
0 &\cR_2^2\mu (g) &\cR_1 \mu (g) &0  \\
0&\cR_2 \mu (g) &\cR_1^2\mu (g) &0  \\
\mu (g) &0 &0 &\xi_2(g) 
\end{array}
\right),
$$
and the following hold: 
$$\cR_1 \xi_1(g)=\xi_1(g),\quad \cJ_1\xi_1(g)=\xi_1(g), $$
$$\cR_2 \xi_2(g)=\xi_2(g),\quad \cJ_2\xi_2(g)=\xi_2(g). $$
\item[(2)] $\chi(g)\mu(g)=\mu(g)$. 
\end{itemize}
\end{lemma}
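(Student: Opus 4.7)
The proof splits naturally into three structural stages, keyed to Eq.~(\ref{p6}), Eq.~(\ref{p1}), and the pair Eq.~(\ref{p9})--(\ref{p11}); I describe the forward direction, the converse being a routine entry-by-entry verification once the explicit shape of $\cB(g)$ is in hand. For the vanishing pattern, since $\chi_1=1$, $\chi_2=\chi$, and $\chi^2=1$, the product $\chi_r\chi_s$ equals the trivial character exactly when $r=s$ and equals $\chi$ otherwise. Thus Eq.~(\ref{p6}) forces $b^{r,s}_{t,u}(g)=0$ unless the biconditional $(r=s)\Longleftrightarrow(t=u)$ holds, producing precisely the checkerboard pattern in the displayed matrix, with a diagonal block on rows/columns $\{(1,1),(2,2)\}$ and an off-diagonal block on $\{(1,2),(2,1)\}$.

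Next I parametrize the eight surviving entries using Eq.~(\ref{p1}). In Case III we have $\epsilon=\epsilon_r=1$ and $\bar r=r$, so Eq.~(\ref{p1}) simplifies to the clean form $\cR_u b^{r,s}_{t,u}=b^{s,t}_{r,u}$. Applied to $\xi_r=b^{r,r}_{r,r}$ this gives $\cR_r\xi_r=\xi_r$. Starting from $\mu:=b^{1,2}_{1,2}$ the $\cR_2$-orbit produces $b^{2,1}_{1,2}=\cR_2\mu$ and $b^{1,1}_{2,2}=\cR_2^2\mu$, closing via $\cR_2^3=I$; symmetrically, starting from $\nu:=b^{2,1}_{2,1}$ the $\cR_1$-orbit produces $b^{1,2}_{2,1}=\cR_1\nu$ and $b^{2,2}_{1,1}=\cR_1^2\nu$.

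The third stage uses Eq.~(\ref{p9}), which in Case III reads $b^{r,s}_{t,u}(g)=c_t\overline{c_r}\,\overline{\chi_r(g)\chi_s(g)}\,b^{t,u}_{r,s}(g)$. Applied to $b^{1,2}_{1,2}$ this immediately yields $\mu(g)=\overline{\chi(g)}\mu(g)=\chi(g)\mu(g)$ (using $\chi^2=1$), which is condition (2). To equate $\nu$ with $\mu$, I use the direct identity $\cR_1 f = (c_2/c_1)\chi\cdot\cR_2 f$ for every $f\in\ell^2(G)$, which is immediate from the definitions of $\cR_1$ and $\cR_2$; Eq.~(\ref{p9}) applied to $b^{1,2}_{2,1}$ then reads $\cR_1\nu = (c_2/c_1)\chi\cdot\cR_2\mu = \cR_1\mu$, whence $\nu=\mu$. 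The remaining instance of Eq.~(\ref{p9}) at $b^{1,1}_{2,2}$ becomes $\cR_2^2\mu=(c_2/c_1)\cR_1^2\mu$, which follows automatically once $\chi\mu=\mu$ trivializes the $\chi$-twist between $\cR_1$ and $\cR_2$. Eq.~(\ref{p11}) is already redundant given Eq.~(\ref{p1}) and Eq.~(\ref{p9}) by the lemma preceding Theorem~\ref{irrpe}; one can also verify it directly, the shift by $g_\chi$ acting trivially on $b^{1,1}_{2,2}$ precisely because $\mu$ is supported on $\ker\chi$, using the identities $c_2^3=c_1^3 a(g_\chi)$ (from $\widehat{a_\chi}(0)=\hat{a}(0)\overline{a(g_\chi)}$) and $a(g_\chi)^2=\inpr{g_\chi}{g_\chi}$ (from $2g_\chi=0$). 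The invariance $\cJ_r\xi_r=\xi_r$ follows from Eq.~(\ref{p7}) at indices $(r,r,r,r)$, using $\epsilon_r=1$.

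The main obstacle is the bookkeeping required to ensure that the $c_t/c_r$-phases in Eq.~(\ref{p9}) and the $g_\chi$-shift in Eq.~(\ref{p11}) interact correctly with the twist relating $\cR_1$ and $\cR_2$; once one observes that on functions supported in $\ker\chi$ the powers $(c_2/c_1)^{-k}\cR_1^k$ and $\cR_2^k$ coincide, all remaining consistency checks collapse to the two normalization identities noted above, and the converse direction becomes a bookkeeping exercise.
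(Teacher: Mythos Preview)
Your argument is correct and follows essentially the same structure as the paper's proof: use Eq.~(\ref{p6}) for the vanishing pattern, Eq.~(\ref{p1}) to populate the matrix via the $\cR_u$-orbits, then Eq.~(\ref{p9})/(\ref{p11}) for the remaining identifications. The one substantive difference is in how you obtain $b^{2,1}_{2,1}=b^{1,2}_{1,2}$. The paper reads this off immediately from Eq.~(\ref{p11}) with $(r,s,t,u)=(1,2,1,2)$, where the phase $c_1c_2\overline{c_2c_1}$ is $1$ and the shift $g_2-g_2$ vanishes; you instead derive it from Eq.~(\ref{p9}) at $(1,2,2,1)$ together with the twist identity $\cR_1 f=(c_2/c_1)\chi\cdot\cR_2 f$. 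Both are valid; the paper's route is shorter, while yours has the virtue of making Eq.~(\ref{p11}) entirely dispensable (as you note, it follows from Eq.~(\ref{p1}) and (\ref{p9}) by the lemma preceding Theorem~\ref{irrpe}). Your explicit remark that $\cJ_r\xi_r=\xi_r$ really comes from Eq.~(\ref{p7}) rather than from the four equations named in the statement is a point the paper's proof leaves implicit.
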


\begin{proof} 
Eq.(\ref{p11}) implies $b^{12}_{12}(g)=b^{21}_{21}(g)$. 
Then Eq.(\ref{p1}), (\ref{p6}) are equivalent to (1). 

Eq.(\ref{p9}), (\ref{p11}) are equivalent to  
$$\mu(g)=\chi(g)\mu(g),$$
$$\cR_1\mu(g)=\overline{c_1}c_2\chi(g)\cR_2\mu(g)=c_1^2\overline{c_2^2}\cR_2\mu(g+g_\chi),$$
$$\cR_2\mu(g)=c_1\overline{c_2}\chi(g)\cR_1\mu(g)=\overline{c_1^2}c_2^2\cR_1\mu(g+g_\chi),$$
$$\cR_1^2\mu(g)=c_1\overline{c_2}\cR_2^2\mu(g)=\cR_1^2\mu(g+g_\chi),$$
$$\cR_2^2\mu(g)=\overline{c_1}c_2\cR_1^2\mu(g)=\cR^2_2\mu(g+g_\chi).$$
We show that all the conditions follow from the first one. 
Note that we have the relation $\cR_2f(g)=c_1\overline{c_2}\chi(g)\cR_1f(g)$. 
One the other hand, the unitary $\cR_r^2=\cR_r^{-1}$ is given by 
$$\cR_r^2f(g)=\frac{c_r}{\sqrt{n}}\sum_{h\in G}\overline{\inpr{g}{h}}a(h)\chi_r(h)f(h),$$
and in particular it implies that we have $c_1\cR_2^2f(g)=c_2\cR_1^2f(g)$ for any function $f$ satisfying $\chi(g)f(g)=f(g)$. 
Note that the first condition implies 
$$\cR_r\mu(g)=a_r(g+g_\chi)\overline{a_r(g)}\cR_r\mu(g+g_\chi)=a_r(g_\chi)\chi(g)\cR_r\mu(g+g_r),$$
$$\cR_r^2\mu(g)=\cR_r^2\mu(g+g_\chi).$$
Thus the remaining conditions are equivalent to $c_2^3=c_1^3a(g_\chi)=c_1^3\overline{a(g_\chi)}\inpr{g_\chi}{g_\chi}$, 
which holds in general. 
\end{proof}

We arbitrarily fix $c\in \T$ with $c^3\hat{a}(0)=1$, and define $\cR$ and $\cJ$ as in the previous subsections. 
We set $c_\chi=c\overline{a(g_\chi)}$, which satisfies  $c_\chi^3\widehat{a_\chi}(0)=1$. 
We set $\omega_1=c_1/c$ and $\omega_2=c_2/c_\chi$. 
Then $\omega_1^3=\omega_2^3=1$. 
Note that using notation in the proof of the above lemma, we have $\cR_1=\overline{\omega_1}\cR$.

\begin{lemma}\label{CaseIII2} Let the notation be as above. 
Let $\xi_1(g)=b^{1,1}_{1,1}(g)$, $\xi_2(g)=b^{2,2}_{2,2}(g)$,  $\mu (g)=b^{1,2}_{1,2}(g)$. 
Then Eq.(\ref{p1})-(\ref{p9}) are equivalent to the following:
\begin{itemize}
\item[(1)] The matrix $\cB(g)=(b^{r,s}_{t,u}(g))_{(r,t),(s,u)}$ is expressed as 
$$\cB(g)=\left(
\begin{array}{cccc}
\xi_1(g) &0 &0 &\mu (g)  \\
0 &\omega_2\overline{a(g_\chi)}\cR^2\mu(g) &\overline{\omega_1}\cR \mu (g) &0  \\
0&\overline{\omega_2}a(g_\chi)\chi(g)\cR\mu(g) &\omega_1\cR^2\mu (g) &0  \\
\mu (g) &0 &0 &\xi_2(g) 
\end{array}
\right).
$$

\item[(2)] For any $g\in G$, 
\begin{equation}
\cR \xi_1(g)=\omega_1\xi_1(g),\quad \cJ\xi_1(g)=\xi_1(g), 
\end{equation}
\begin{equation}
\cR \xi_2(g)=\omega_2\overline{a(g_\chi)}\chi(g)\xi_2(g),\quad \cJ\xi_2(g)=\chi(g)\xi_2(g), 
\end{equation}
\begin{equation}
\chi(g)\mu(g)=\mu(g),
\end{equation}
\begin{equation}
\cJ\mu (g)=\mu (g).
\end{equation}

\item[(3)] $\xi_1(0)+\mu(0)=\xi_2(0)+\mu(0)=-1/d$. 

\item[(4)] For any $g \in G$.
\begin{equation}
|\xi_1(g)|^2+|\mu(g)|^2=\frac{1}{n}-\frac{\delta_{g,0}}{d},
\end{equation}
\begin{equation}
|\xi_2(g)|^2+|\mu(g)|^2=\frac{1}{n}-\frac{\delta_{g,0}}{d},
\end{equation}
\begin{equation}
|\cR \mu(g)|^2+|\cR^2\mu(g)|^2=\frac{1}{n},
\end{equation}
\begin{equation}
\mu(g)\overline{\xi_1(g)}+\overline{\mu(g)}\xi_2(g)=-\frac{\delta_{g,0}}{d}, 
\end{equation}
\begin{equation}
a(g_\chi)\overline{\omega_1\omega_2}\cR \mu(g)\overline{\cR^2\mu(g)}
+\overline{a(g_\chi)}\omega_1\omega_2\chi(g)\cR^2\mu(g)\overline{\cR \mu(g)}=0. 
\end{equation}
\end{itemize}
\end{lemma}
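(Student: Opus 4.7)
The strategy is to start from Lemma \ref{CaseIII1}, which already captures Eq.(\ref{p1}), (\ref{p6}), (\ref{p9}), (\ref{p11}) in this setting and forces the matrix $\cB(g)$ to have the block pattern with three independent scalars $\xi_1, \xi_2, \mu$ on the anti-diagonal blocks and various $\cR_r$-translates of $\mu$ on the middle $2\times 2$ block. The first task is to rewrite that matrix in terms of the single unitary $\cR$ and the phases $\omega_1, \omega_2, a(g_\chi)$. Using the identifications $\cR_1 = \overline{\omega_1}\cR$ and, for $\chi$-invariant functions, $\cR_2 = \overline{\omega_2}a(g_\chi)\chi\cdot\cR$ together with $\cR_2^2 = \cR_2^{-1} = \omega_2\overline{a(g_\chi)}\cR^2$ (where one uses $c_\chi = c\overline{a(g_\chi)}$ and $c_2 = \omega_2 c_\chi$), one converts the matrix of Lemma \ref{CaseIII1} into the matrix displayed in (1). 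Applying Eq.(\ref{p1}) to the indices $(r,s,t,u)=(1,1,1,1)$ and $(2,2,2,2)$ then reads off $\cR\xi_1 = \omega_1\xi_1$ and $\cR\xi_2 = \omega_2\overline{a(g_\chi)}\chi\xi_2$, while Eq.(\ref{p7}) at the same indices, remembering that $\chi$ is real-valued and $\epsilon_r=1$, produces $\cJ\xi_1=\xi_1$ and $\cJ\xi_2=\chi\xi_2$; together with $\chi\mu=\mu$ and $\cJ\mu=\mu$ already in Lemma \ref{CaseIII1}, these establish (2).

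For (3), I specialize Eq.(\ref{p2}) to $g=0$. Only the diagonal $(s,u)$ values contribute nontrivially because of the zero entries in $\cB(0)$: $s=u=1$ gives $\xi_1(0)+\mu(0)=-1/d$, and $s=u=2$ gives $\mu(0)+\xi_2(0)=-1/d$; the off-diagonal $s\neq u$ instances are automatic from the block pattern. Eq.(\ref{p3}) yields the same identities by rows. For (4), I compute the column inner products demanded by Eq.(\ref{p4}). The norm of column $(1,1)$ gives $|\xi_1|^2+|\mu|^2 = 1/n-\delta_{g,0}/d$; column $(2,2)$ gives $|\xi_2|^2+|\mu|^2$; column $(1,2)$ (and $(2,1)$) gives $|\cR\mu|^2+|\cR^2\mu|^2=1/n$ since $|\omega_2\overline{a(g_\chi)}|=1$ and $|\chi(g)|=1$. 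The cross term between columns $(1,1)$ and $(2,2)$ yields $\mu\overline{\xi_1}+\overline{\mu}\xi_2=-\delta_{g,0}/d$, and the cross term between columns $(1,2)$ and $(2,1)$, after expanding
\[
\overline{\overline{\omega_1}\cR\mu}\cdot\omega_2\overline{a(g_\chi)}\cR^2\mu
+\overline{\omega_1\cR^2\mu}\cdot\overline{\omega_2}a(g_\chi)\chi(g)\cR\mu = 0,
\]
is, up to an overall complex conjugation, precisely the fifth equation in (4). The row inner products from Eq.(\ref{p5}) yield the same relations by symmetry of the matrix pattern, and Eq.(\ref{p3}) coincides with Eq.(\ref{p2}) on the nontrivial entries.

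It remains to check that Eq.(\ref{p7}) and (\ref{p8}) impose no further constraints beyond (2). Eq.(\ref{p7}) applied to the remaining index quadruples either reproduces a $\cJ$-invariance already listed or, applied to the middle-block entries, gives a relation between $\cR\mu$, $\cR^2\mu$ and their $\cJ$-transforms that follows from $\cJ\mu=\mu$ and $\cJ\cR=\cR^2\cJ$ together with $\chi\mu=\mu$ (noting $\cJ$ interchanges the two $\cR$-eigenspaces of $\mu$). As observed in the proof of Theorem \ref{irrpe}, under (\ref{p4}) and (\ref{p7}) the remaining Eq.(\ref{p8}) follows automatically from (\ref{p9}) via a Cauchy--Schwarz equality argument. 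Conversely, if one is handed functions $\xi_1,\xi_2,\mu$ satisfying (1)--(4), then unwinding each step above reconstructs the full system Eq.(\ref{p1})--(\ref{p9}).

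The main obstacle is purely the bookkeeping of phases: the factors $\omega_1,\omega_2,a(g_\chi),\chi(g)$ do not distribute symmetrically through $\cR$ and $\cJ$, and in particular the asymmetric $\chi(g)$ in the last equation of (4) and in the $(3,2)$ entry of the matrix arises from the non-triviality of the character of the $\cK_0$-basis element $e_2$, not from the bicharacter of $G$. Verifying that these factors are consistent across $\cR_2 = \overline{\omega_2}a(g_\chi)\chi\cR$, $\cR_2^{-1} = \omega_2\overline{a(g_\chi)}\cR^2$, and the Fourier transforms appearing in Eq.(\ref{p1}) is the only genuinely delicate part; once that is done correctly the derivation of each of (2), (3), (4) is a direct computation.
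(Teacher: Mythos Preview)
Your proposal is correct and follows the same route as the paper: the paper's proof simply records the two identities $\cR_2\mu(g)=\overline{\omega_2}a(g_\chi)\chi(g)\cR\mu(g)$ and $\cR_2^2\mu(g)=\omega_2\overline{a(g_\chi)}\cR^2\mu(g)$ (extracted from the proof of Lemma~\ref{CaseIII1}) and then declares the rest a ``straightforward argument,'' which is exactly the column-by-column unpacking of Eq.~(\ref{p2}) and (\ref{p4}) you carry out. One small correction: $\cJ\mu=\mu$ is not in Lemma~\ref{CaseIII1} but comes from Eq.~(\ref{p7}) at indices $(1,2,1,2)$, and the last equation in~(4) matches your cross-term computation after multiplying through by $\chi(g)$ (using $\chi(g)^2=1$) rather than by complex conjugation.
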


\begin{proof} The proof of the previous lemma shows that any solution satisfies 
$$\cR_2\mu(g)=\overline{\omega_2}a(g_\chi)\chi(g)\cR\mu(g),$$
$$\cR_2^2\mu(g)=\omega_2\overline{a(g_\chi)}\cR^2\mu(g).$$
Now the statement follows from the previous lemma and a straightforward argument. 
\end{proof}

\begin{remark}\label{rCaseIII} The condition (4) above imply
\begin{equation}\label{IIIr1}
\|\xi_1\|^2=\|\xi_2\|^2=\frac{1}{2}-\frac{1}{d},
\end{equation}
\begin{equation}\label{IIIr2}
\|\mu\|^2=\frac{1}{2},
\end{equation}
\end{remark}

\begin{lemma}\label{CaseIII3}  
Any solutions of the equations in Lemma \ref{CaseIII2} satisfy the following two conditions: 
\begin{itemize}
\item[(1)] There exists $\kappa\in \{1,-1\}$ satisfying  
$$\xi_1(0)=\xi_2(0)=-\frac{1}{2d}-\frac{\kappa}{2\sqrt{n}},$$
$$\mu(0)=-\frac{1}{2d}+\frac{\kappa}{2\sqrt{n}}.$$
\item[(2)] For every order 2 element $g\in G$, there exists $\kappa_g\in \{1,-1\}$ satisfying
$$(\cR\mu(g))^2=\frac{\omega_1\omega_2\overline{a(g_\chi)a(g)}\kappa_g\ii ^{(1+\chi(g))/2}}{2n},$$
and in particular 
$$(\sqrt{2n}\cR\mu(g))^{12}=\inpr{g_\chi}{g_\chi}\inpr{g}{g}(-1)^{\frac{1+\chi(g)}{2}}.$$ 
Consequently, when $a(g_\chi)=\pm 1$, we have $(\sqrt{2n}\cR\mu(0))^{12}=-1$, and 
when $a(g_\chi)=\pm \ii $ we have $(\sqrt{2n}\cR\mu(0))^{12}=1$. 
\end{itemize}
\end{lemma}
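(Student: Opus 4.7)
For part (1), I will specialize the equations in Lemma \ref{CaseIII2} to $g=0$. Since $\cJ f(0)=\overline{a(0)f(0)}=\overline{f(0)}$, the $\cJ$-invariance of $\xi_1$, $\xi_2$ (note $\chi(0)=1$) and $\mu$ forces $\xi_1(0), \xi_2(0), \mu(0)\in\R$. Writing $\mu=\mu(0)$ and $\xi_r=\xi_r(0)$, condition (3) gives $\xi_1=\xi_2=:\xi$ and $\mu=-1/d-\xi$, while $|\xi|^2+|\mu|^2=1/n-1/d$ yields the quadratic
\[
\xi^2+\xi/d+1/(2d^2)-1/(2n)+1/(2d)=0.
\]
The arithmetic identity $d^2=n+md=n(1+2d)$ (using $m=2n$) gives $1/n=1/d^2+2/d$, which collapses the quadratic to $\xi^2+\xi/d-1/(2d)=0$ with discriminant exactly $1/n$. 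Hence $\xi=-1/(2d)-\kappa/(2\sqrt{n})$ and $\mu=-1/(2d)+\kappa/(2\sqrt{n})$ for some $\kappa\in\{1,-1\}$; the remaining cross-term condition $2\mu\xi=-1/d$ is automatic from the reduced quadratic.

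For part (2), fix $g\in G$ of order two. The key observation is that $\cJ\cR=\cR^2\cJ$ and $\cJ\mu=\mu$ give $\cJ(\cR\mu)=\cR^2\mu$, so at the self-inverse point $g=-g$,
\[
\cR^2\mu(g)=\overline{a(g)}\,\overline{\cR\mu(g)}.
\]
Set $\alpha=\cR\mu(g)$. The norm equation $|\cR\mu(g)|^2+|\cR^2\mu(g)|^2=1/n$ collapses to $|\alpha|^2=1/(2n)$, and the final equation of Lemma \ref{CaseIII2}(4), after substituting $\cR^2\mu(g)=\overline{a(g)}\,\overline{\alpha}$, becomes
\[
a(g_\chi)a(g)\overline{\omega_1\omega_2}\,\alpha^2 + \overline{a(g_\chi)a(g)}\,\omega_1\omega_2\chi(g)\,\overline{\alpha}^2=0,
\]
i.e.\ $\alpha^4=-(\omega_1\omega_2)^2\chi(g)/\bigl(4n^2(a(g_\chi)a(g))^2\bigr)$. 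Squaring the candidate expression in the statement gives the same $\alpha^4$ provided $\ii^{1+\chi(g)}=-\chi(g)$, which is checked separately in each of the two cases $\chi(g)=\pm 1$. Taking a square root introduces the sign $\kappa_g\in\{1,-1\}$, yielding the claimed formula for $(\cR\mu(g))^2$.

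The consequence for $(\sqrt{2n}\cR\mu(g))^{12}$ is then a direct computation: $\bigl(\sqrt{2n}\alpha\bigr)^{12}=(2n)^6(\alpha^4)^3$, and using $(\omega_1\omega_2)^6=1$, $\chi(g)^3=\chi(g)$, together with $(a(g_\chi)a(g))^2=a(g_\chi)^2 a(g)^2=\inpr{g_\chi}{g_\chi}\inpr{g}{g}\in\{1,-1\}$ (so its cube equals itself), this reduces to $-\chi(g)\inpr{g_\chi}{g_\chi}\inpr{g}{g}$, which matches $\inpr{g_\chi}{g_\chi}\inpr{g}{g}(-1)^{(1+\chi(g))/2}$ in both cases $\chi(g)=\pm1$. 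Specializing to $g=0$ (where $\chi(0)=1$ and $\inpr{0}{0}=1$) gives $(\sqrt{2n}\cR\mu(0))^{12}=-a(g_\chi)^2$, so the two stated cases $a(g_\chi)=\pm 1$ and $a(g_\chi)=\pm\ii$ follow immediately.

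No step presents a genuine obstacle; the only slightly delicate point is recognizing that $\cJ$-equivariance of $\cR$ pins $\cR^2\mu(g)$ to $\overline{a(g)}\,\overline{\cR\mu(g)}$ for involutive $g$, which turns the two complex constraints of Lemma \ref{CaseIII2}(4) into a single phase equation on $\alpha^2$; the rest is bookkeeping with the identity $d^2=n(1+2d)$ and the third roots of unity $\omega_1,\omega_2$.
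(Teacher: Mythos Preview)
Your proof is correct and follows essentially the same approach as the paper's own proof: both use the key relation $\cJ(\cR\mu)=\cR^2\mu$ to deduce $\cR^2\mu(g)=\overline{a(g)\,\cR\mu(g)}$ at points with $2g=0$, and then feed this into the equations of Lemma~\ref{CaseIII2}(4). The paper's proof is very terse (it simply states that (1) ``follows from the $g=0$ case of Lemma~\ref{CaseIII2}'' and records the phase equation for (2)), whereas you fill in the explicit quadratic for part~(1) via the identity $d^2=n(1+2d)$ and spell out the bookkeeping for the twelfth power; but the underlying argument is the same.
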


\begin{proof} Since $J\cR\mu(g)=\cR^2\mu(g)$, we have $\cR^2\mu(g)=\overline{a(g)\cR\mu(-g)}$. 
(1) follows from the $g=0$ case of Lemma \ref{CaseIII2}. 

If $g\in G$ has order 2, we have $\cR^2\mu(g)=\overline{a(g)\cR\mu(g)}$. 
Thus Lemma \ref{CaseIII2},(4) implies $|\cR\mu(g)|^2=1/(2n)$ and 
$$\overline{\omega_1\omega_2}a(g_\chi)a(g)(\cR\mu(g))^2+\chi(g)\omega_1\omega_2\overline{a(g_\chi)a(g)}\overline{\cR\mu(g)}^2=0,$$
which implies the statement. 
\end{proof}

Note that Case III could occur only if $G$ is an even group because there is no character of order 2 for odd groups. 
For small even groups we have 

\begin{lemma} Case III could occur only if $\# G\geq 8$. 
\end{lemma}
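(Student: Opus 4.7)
The plan is to derive a contradiction directly from the norm constraints on $\xi_1$ established in Lemma \ref{CaseIII2}(4); in fact the same argument will rule out Case III for any abelian $G$, not just for small orders. To begin, since $\chi=\chi_2$ is a nontrivial character of order $2$, the subgroup $H:=\ker\chi\subset G$ has index $2$, so $|H|=n/2$ and $|G\setminus H|=n/2$ (forcing $n$ to be even in the first place). By Lemma \ref{CaseIII2}(2), the relation $\chi(g)\mu(g)=\mu(g)$ forces $\mu$ to vanish on $G\setminus H$, and note that $0\in H$.

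The heart of the argument is the pointwise identity from Lemma \ref{CaseIII2}(4),
\begin{equation*}
|\xi_1(g)|^2+|\mu(g)|^2=\frac{1}{n}-\frac{\delta_{g,0}}{d},\qquad g\in G,
\end{equation*}
together with $\|\mu\|^2=1/2$ coming from Remark \ref{rCaseIII}. Summing the identity over $g\in G$ yields $\|\xi_1\|^2+\|\mu\|^2=1-1/d$, hence $\|\xi_1\|^2=1/2-1/d$. On the other hand, for $g\in G\setminus H$ we have $\mu(g)=0$ and $g\neq 0$, so the identity forces $|\xi_1(g)|^2=1/n$; therefore
\begin{equation*}
\sum_{g\notin H}|\xi_1(g)|^2=\frac{n}{2}\cdot\frac{1}{n}=\frac{1}{2}.
\end{equation*}
Since this partial sum is bounded above by $\|\xi_1\|^2=1/2-1/d$, we obtain $1/2\leq 1/2-1/d$, i.e.\ $1/d\leq 0$, which is impossible. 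Thus no admissible tuple exists in Case III, and in particular Case III cannot occur when $\#G<8$.

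This argument is essentially a pigeonhole between the pointwise lower bounds on $|\xi_1|$ outside $H$ and the global $L^2$ budget $\|\xi_1\|^2=1/2-1/d<1/2$, both of which are directly read off from the orthonormality constraints on the columns of $\sqrt n\,\cB(g)$. There is no real obstacle: the only subtle point is to recognize that the two conclusions $\|\xi_1\|^2=1/2-1/d$ and ``$|\xi_1|^2=1/n$ on half of $G$'' together leave no room. Note that the conclusion is in fact stronger than the stated lemma, which could accordingly be sharpened to ``Case III never occurs''; but since the lemma only requires ruling out $\#G\in\{2,4,6\}$, the computation above is more than sufficient.
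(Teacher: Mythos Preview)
Your argument is correct and in fact proves much more than the lemma claims: Case III never occurs, for any $G$. The key observation---that the support condition $\chi(g)\mu(g)=\mu(g)$ forces $|\xi_1(g)|^2=1/n$ on the full coset $G\setminus\ker\chi$, which already contributes $1/2$ to $\|\xi_1\|^2$ and thus overshoots the budget $\|\xi_1\|^2=1/2-1/d$ from Remark~\ref{rCaseIII}---is a clean pigeonhole that the paper appears to have missed.

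The paper instead argues case by case for $|G|\in\{2,4,6\}$. For $|G|=2$ it uses $\|\mu\|^2=\mu(0)^2=1/2$ versus the explicit value of $\mu(0)$ from Lemma~\ref{CaseIII3}; for $|G|=4$ it computes $\cR\mu(0)$ explicitly and derives a contradiction with the $24$th-root-of-unity constraint on $(\sqrt{2n}\,\cR\mu(0))^{12}$; for $G=\Z_6$ it writes $\cR\mu(0)$ in terms of a real parameter and again contradicts $(\sqrt{12}\,\cR\mu(0))^{24}=1$. These computations are correct but ad hoc, and the paper explicitly leaves open whether Case III might occur for $|G|\geq 8$. Your argument resolves that question in the negative with essentially no computation, so it is both more elementary and strictly stronger than the paper's proof.
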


\begin{proof} Assume that we have a solution $(\xi_1,\xi_2,\mu)$ for the equations in Lemma \ref{CaseIII2}.  
Since $\chi(g)\mu(g)=\mu(g)$, the function $\mu$ is supported by 
$$\{\chi\}^\perp=\{g\in G;\; \chi(g)=1\}.$$

For $G=\Z_2$, we have 
$$\mu(0)=-\frac{1}{2d}+\frac{\kappa}{2\sqrt{n}}=-\frac{1}{2(2+\sqrt{6})}+\frac{\kappa_1}{2\sqrt{2}}.$$
On the other hand since $\{\chi\}^\perp=\{0\}$, we have 
$$\frac{1}{2}=\|\mu\|^2=\mu(0)^2,$$
which is impossible.

For $G$ with $\#G=4$, we have $\#\{\chi\}^\perp=2$, and we set $\{\chi\}^\perp=\{0,g_\perp\}$.  
Since $g_\perp$ has order 2 and $\cJ\mu(g_\perp)=\mu(g_\perp)$, we have $\overline{\mu(g_\perp)}=a(g_\perp)\mu(g_\perp)$. 
Thus  
$$\frac{1}{8}=|\cR\mu(0)|^2=|\frac{\mu(0)+\mu(g_\perp)}{2}|^2=\frac{\mu(0)^2+|\mu(g_\perp)|^2+(1+a(g_\perp))\mu(g_\perp)}{4}.$$
Since 
$$\mu(0)^2+|\mu(g_\perp)|^2=\|\mu\|^2=\frac{1}{2},$$
and $\mu(g_\perp)\neq 0$, we get $a(g_\perp)=-1$. 
This implies 
$$\cR\mu(0)=\overline{c}\frac{\mu(0)\pm\sqrt{\frac{1}{2}-\mu(0)^2}\ii }{2},$$ 
and on the other hand $(2\sqrt{2}\cR\mu(0))^{24}=1$. 
This is contradiction as we have $\overline{c}^{24}={\hat{a}(0)}^8=1$, and  
$$\mu(0)=\frac{2-\sqrt{5}+\kappa_1}{4}.$$

For $G=\Z_6$, we have $\chi(g)=(-1)^g$ and $\{\chi\}^\perp=\{0,2,4\}$. 
Up to complex conjugate, we may assume $\inpr{g}{h}=\zeta_6^{gh}$, where 
$\zeta_6=e^{\frac{2\pi\ii }{6}}$. 
There are only two possibilities of $a(g)$, and we can choose $a(g)=e^{-\frac{\pi\ii g^2}{6}}$ as the other one is $a(g)\chi(g)$. 
Since $\cJ\mu(g)=\mu(g)$, we have $\mu(4)=\overline{a(2)\mu(2)}=\zeta_3\overline{\mu(2)}$. 
Now we have 
$$\cR\mu(0)=\overline{c}\frac{\mu(0)+\zeta_6(\zeta_6^{-1}\mu(2)+\zeta_6\overline{\mu(2)})}{\sqrt{6}}.$$
Let $x=\zeta_6^{-1}\mu(2)+\zeta_6\overline{\mu(2)}$. 
Then 
$$\frac{1}{12}=|\cR\mu(0)|^2=\frac{\mu(0)^2+x^2+\mu(0)x}{6},$$
and 
$$x=\frac{-\mu(0)\pm \sqrt{2-3\mu(0)^2}}{2}.$$
This implies 
$$\cR\mu(0)=\overline{c}\frac{3\mu(0)\pm\sqrt{2-3\mu(0)^2}+(-\mu(0)\pm\sqrt{2-3\mu(0)^3})\sqrt{3}\ii}{4\sqrt{6}}.$$
On the other hand we have $(\sqrt{12}\cR\mu(0))^{24}=1$, which is a contradiction as we have $c^{24}=1$ and 
$$\mu(0)=\frac{\sqrt{6}-\sqrt{7}+\kappa_1}{2\sqrt{6}}.$$
\end{proof}

To see if Case III really occurs, the first test case is order 8 abelian groups $\Z_8$, $\Z_4\times \Z_2$, and 
$\Z_2\times \Z_2\times \Z_2$. 
\subsection{The case of $G=\Z_2$}
Ostrik \cite{O13} showed that there is no near-group category for $G=\Z_2$ with $m\geq 3$. 
We give a proof of this fact in the case of $m=4$.  

There is a unique non-degenerate symmetric bicharacter $\inpr{g}{h}=(-1)^{gh}$ for $\Z_2$, 
and there are exactly two $a(g)$.  
We may assume $a(0)=1$, $a(1)=\ii $ as the other one is its complex conjugate. 
Then $\hat{a}(0)=e^{\frac{\pi\ii }{4}}$, and we choose $c=e^{-\frac{\pi\ii }{12}}$. 
We identify $\ell^2(\Z_2)$ with $\C^2$, the set of column vectors with two components, and express $\cR$ as a matrix 
$$\cR=e^{\frac{\pi\ii }{12}}
\left(
\begin{array}{cc}
1 &0  \\
0 &-\ii  
\end{array}
\right)
\frac{1}{\sqrt{2}}
\left(
\begin{array}{cc}
1 &1  \\
1 &-1 
\end{array}
\right)
=\frac{\sqrt{3}+1+(\sqrt{3}-1)\ii }{4}
\left(
\begin{array}{cc}
1 &1  \\
-\ii  &\ii  
\end{array}
\right).
$$
$\cJ$ is given by 
$$\cJ\left(
\begin{array}{c}
f(0)  \\
f(1) 
\end{array}
\right)
=\left(
\begin{array}{c}
\overline{f(0)}  \\
-\ii\overline{f(1)} 
\end{array}
\right).
$$
Let 
$$f_0=\left(
\begin{array}{c}
1  \\
\frac{\sqrt{3}-1}{\sqrt{2}}e^{-\frac{\pi\ii }{4}} 
\end{array}
\right), 
\quad 
f_1=\left(
\begin{array}{c}
1  \\
-\frac{\sqrt{3}+1}{\sqrt{2}}e^{-\frac{\pi\ii }{4}}
\end{array}
\right).
$$
Then we have $\cR f_0=f_0$, $\cR f_1=\zeta_3 f_1$, $Jf_0=f_0$, $Jf_1=f_1$, 
$\|f_0\|^2=3-\sqrt{3}$, $\|f_1\|^2=3+\sqrt{3}$. 

Lemma \ref{CaseII3} shows that Case II never occurs, and Lemma \ref{CaseI3} and Lemma \ref{CaseI4} show 
that the only possible case is Case I with $(\omega_1,\omega_2)=(1,\zeta_3)$ as $\zeta_3^2$ is not 
an eigenvalue of $\cR$. 
Since $\mu_2=0$, the case (1) of Lemma \ref{CaseI4} is the only possibility. 
Note that we have $\mu_0=\mu_0(0)f_0$ and $\mu_1=\mu_1(0)f_1$. 
Eq.(\ref{I51}) implies
\begin{align*}
\lefteqn{\frac{1}{3}=\mu_0(0)^2\|f_0\|^2+\mu_1(0)^2\|f_1\|^2} \\
 &=(3+\sqrt{3})(\frac{1}{6d}-\frac{\kappa_2}{2\sqrt{6}})^2+(3-\sqrt{3})(\frac{1}{6d}+\frac{\kappa_2}{2\sqrt{6}})^2,
\end{align*}
which is contradiction as $d=2+\sqrt{6}$.

\subsection{The case of $G=\Z_3$}\label{Z3m=6} 
Larson \cite{L14} showed that there exists no near-group category for $G=\Z_3$ with $m\geq 7$. 
On the other hand, it is known that there exists at least one C$^*$ near-group category for $G=\Z_3$ with $m=6$, 
which was first observed by Zhengwei Liu and Noah Snyder (\cite[page 59]{L15}, see also \cite[page 14]{EP12}). 

\begin{theorem}\label{Z36} There exist exactly two C$^*$ near-group categories for $\Z_3$ with $m=6$, and they are complex conjugate to 
each other. 
\end{theorem}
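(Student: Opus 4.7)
The plan is to apply the framework of Theorem \ref{irrcl} and Theorem \ref{irrpe}, specialized to $G=\Z_3$ and $m=6$, and carry out an exhaustive case analysis using the structural lemmas of Section \ref{2n}. For these parameters one has $n=3$, $d=3+2\sqrt{3}$, and up to complex conjugation the unique non-degenerate symmetric bicharacter is $\inpr{g}{h}=\zeta_3^{gh}$; the unique symmetric solution $a$ of $a(g)a(h)=\inpr{g}{h}a(g+h)$ is then $a(1)=a(2)=\zeta_3$, giving $\hat{a}(0)=\ii$ and a natural normalization $c=e^{-\ii\pi/6}$. Since $\Z_3$ has no characters of order two, Case III of Section \ref{2n} is automatically excluded.

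Next, I would compute the spectrum of the period-three unitary $\cR$ on $\ell^2(\Z_3)$: a direct calculation yields $\Tr(\cR)=3/2+(\sqrt{3}/2)\ii$, from which the multiplicities of the eigenvalues $1,\zeta_3,\zeta_3^2$ are forced to be $2,1,0$ respectively. Combined with Lemmas \ref{CaseI3}, \ref{CaseI4}, \ref{CaseI5}, and \ref{CaseII3}, this excludes most subcases: in Case II only $\omega=1$ can occur, and since $\{f\in\ker(\cR-1):f(0)=0\}$ is one-dimensional the degenerate option (2) of Lemma \ref{CaseII3} is also eliminated; in Case I the admissible pairs $(\omega_1,\omega_2)$ are severely restricted because the eigenspace for $\zeta_3^2$ is trivial, leaving only a short list of subcases to examine.

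For each surviving subcase, I would parametrize the data $(\xi_r,\eta_r,\mu)$ (or $(\xi,\eta,\mu)$) via the small-dimensional eigendecomposition of $\cR$ and the involution $\cJ$, reducing everything to finitely many scalar unknowns. The discrete sign choices $\kappa_1,\kappa_2$ are then fixed by Lemma \ref{CaseI2} or Lemma \ref{CaseII2}, and the norm identities in Remarks \ref{rCaseI}, \ref{rCaseII}, together with the equivariance relations Eq.(\ref{p5})-(\ref{p9}), cut the list of candidates down to finitely many explicit tuples. The decisive final test is the cubic relation Eq.(\ref{p10}); I expect this step to be the main obstacle, and as anticipated in the opening of Section \ref{2n} it is most naturally verified by computer algebra, leaving exactly two surviving tuples.

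Finally, I would pass to equivalence classes by dividing out by the gauge group $\cG(A,C,J)$ (which is $SU(2)$ in Case II and either $O(2)$ or $\Z_2\times\Z_2$ in Case I) and by the non-trivial group automorphism $g\mapsto -g$ of $\Z_3$. This should reduce the two surviving tuples to two inequivalent admissible tuples, corresponding to the two C$^*$ near-group categories in the statement. To identify them as complex conjugates, I would check that the entrywise conjugation $(\inpr{\cdot}{\cdot},a,A,C,J,B_g)\mapsto(\overline{\inpr{\cdot}{\cdot}},\overline{a},\overline{A},\overline{C},J,\overline{B_g})$ is an involution on the set of admissible tuples that realizes complex conjugation of the underlying near-group category and exchanges the two classes. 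The most delicate point is this last step: verifying that the two solutions surviving Eq.(\ref{p10}) are genuinely inequivalent modulo the full gauge and automorphism action, yet related by the conjugation involution.
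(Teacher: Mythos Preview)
Your framework and the early reductions match the paper exactly: fixing $\inpr{g}{h}=\zeta_3^{gh}$, $a(1)=a(2)=\zeta_3$, $c=e^{-\ii\pi/6}$, computing that $\cR$ has eigenvalue multiplicities $2,1,0$ for $1,\zeta_3,\zeta_3^2$, and ruling out Case III and Case IV. However, two points diverge from what the paper actually does.

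First, your elimination of Case II is incomplete. You rule out option (2) of Lemma \ref{CaseII3} via $\dim\{f\in\ker(\cR-1):f(0)=0\}=1$, but you say nothing about option (1). The paper's argument is different and handles both at once: since the $\zeta_3^2$-eigenspace of $\cR$ is trivial one has $\mu_2=0$, in particular $\mu_2(0)=0$; but both options of Lemma \ref{CaseII3} force $\mu_2(0)\neq 0$, so Case II is impossible altogether.

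Second, and more significantly, your description of the endgame does not match the actual structure of the solution set. For the sole surviving case $(\omega_1,\omega_2)=(1,1)$ (the paper eliminates $(1,\zeta_3)$ by a direct numerical contradiction from Eq.~(\ref{I52})), the equations Eq.~(\ref{p1})--(\ref{p9}) do \emph{not} cut down to finitely many tuples: they yield a continuous one-parameter family parametrized by $(x,y)\in\R^2$ with $x^2+y^2=\sqrt{3}/24$. The key step you omit is that the gauge group $\cG(A,C,J)=O(2)$ acts on this circle, and the paper computes this action explicitly (the rotation $R(\theta)$ acts as rotation by $4\theta$ on $(x,y)$), showing it is transitive. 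Hence there is exactly \emph{one} equivalence class for the fixed bicharacter $\zeta_3^{gh}$, and the second category comes from the complex conjugate bicharacter. The paper then \emph{does not verify} Eq.~(\ref{p10}) at all: since existence of a C$^*$ near-group category for $(\Z_3,6)$ was already known, uniqueness up to conjugation is all that is needed, and the Remark following the lemma makes this explicit. Your plan to test Eq.~(\ref{p10}) by computer is unnecessary here, and your expectation of ``two surviving tuples'' before dividing by the gauge group is a misconception of where the count of two arises.
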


There are exactly two symmetric bicharacters of $\Z_3$ and they are complex conjugate to each other. 
We choose $\inpr{g}{h}=\zeta_3^{gh}$. 
For this, there is a unique $a(g)$, given by $a(1)=a(2)=\zeta_3$. 
Since $\hat{a}(0)=\ii $, we choose $c=e^{-\frac{\pi\ii }{6}}$. 
Then 
$$\cR =e^{\frac{\pi\ii }{6}}
\left(
\begin{array}{ccc}
1 &0 &0  \\
0 &\zeta_3^2 & 0 \\
0 &0         &\zeta_3^2 
\end{array}
\right)
\frac{1}{\sqrt{3}}
\left(
\begin{array}{ccc}
1 &1 &1  \\
1 &\zeta_3 &\zeta_3^2  \\
1 &\zeta_3^2 &\zeta_3 
\end{array}
\right)
=\frac{e^{\frac{\pi\ii }{6}}}{\sqrt{3}}
\left(
\begin{array}{ccc}
1 &1 &1  \\
\zeta_3^2 &1 &\zeta_3  \\
\zeta_3^2 &\zeta_3 &1 
\end{array}
\right),
$$
$$\cJ\left(
\begin{array}{c}
f(0)  \\
f(1)  \\
f(2) 
\end{array}
\right)
=\left(
\begin{array}{c}
\overline{f(0)}  \\
\zeta_3^{-1}\overline{f(2)}  \\
\zeta_3^{-1}\overline{f(1)}
\end{array}
\right).
$$
Let 
$$f_0=\left(
\begin{array}{c}
1  \\
-\frac{\zeta_3}{2} \\
-\frac{\zeta_3}{2}
\end{array}
\right),
\quad 
f_0'=
\left(
\begin{array}{c}
0  \\
\zeta_3\ii \\
-\zeta_3\ii
\end{array}
\right),
\quad 
f_1=
\left(
\begin{array}{c}
1  \\
\zeta_3  \\
\zeta_3 
\end{array}
\right). 
$$
Then $\cR f_0=f_0$, $\cR f_0'=f_0'$, $\cR f_1=\zeta_3f_1$, and  
$\cJ f_0=f_0$, $\cJ f_0'=f_0'$, $\cJ f_1=f_1$. 
They form a (non-normalized) 
orthogonal basis of $\ell^2(\Z_3)$ as well as the real vector space 
$$\{f\in \ell^2(\Z_3)\;\; \cJ f=f\}.$$

We prove the above theorem by showing the following lemma. 

\begin{lemma} For $\cR$ and $\cJ$ as above, there is no solution of the equations in Lemma \ref{CaseII1}, 
and there is a unique solutions of the equations in Lemma \ref{CaseI1} up to equivalence given as follows: 
$\omega_1=\omega_2=1$, and 
$$\xi_1(0)=-\frac{\sqrt{3-1}}{2},\quad \xi_1(1)=\zeta_3(\frac{\sqrt{3}-1}{4}+x\ii),\quad \xi_1(2)=\zeta_3(\frac{\sqrt{3}-1}{4}-x\ii),$$
$$\xi_2(0)=-\frac{\sqrt{3-1}}{2},\quad \xi_2(1)=\zeta_3(\frac{\sqrt{3}-1}{4}+x\ii),\quad \xi_2(2)=\zeta_3(\frac{\sqrt{3}-1}{4}-x\ii),$$
$$\eta_1(0)=0,\quad \eta_1(1)=y\zeta_3\ii,\quad \eta_1(2)=-y\zeta_3 \ii,$$
$$\eta_1(0)=0,\quad \eta_2(1)=-y\zeta_3\ii,\quad \eta_2(2)=y\zeta_3 \ii,$$
$$\mu(0)=\frac{\sqrt{3}-1}{2\sqrt{3}},\quad \mu(1)=\zeta_3(\frac{\sqrt{3}+1}{4\sqrt{3}}-x\ii),
\quad \mu(2)=\zeta_3(\frac{\sqrt{3}+1}{4\sqrt{3}}+x\ii),$$
$$\cR\mu(0)=\frac{-1+\ii}{2\sqrt{3}},\; \cR\mu(1)=\zeta_3(\frac{1-\sqrt{3}+2\ii}{4\sqrt{3}}-x\ii),
\; \cR\mu(2)=\zeta_3(\frac{1-\sqrt{3}+2\ii}{4\sqrt{3}}+x\ii),$$
$$\cR^2\mu(0)=-\frac{1+\ii}{2\sqrt{3}},\; \cR^2\mu(1)=\zeta_3(\frac{1-\sqrt{3}-2\ii}{4\sqrt{3}}-x\ii),
\; \cR^2\mu(2)=\zeta_3(\frac{1-\sqrt{3}-2\ii}{4\sqrt{3}}+x\ii),$$
where $x,y\in \R$ with 
$$x^2+y^2=\frac{\sqrt{3}}{24}.$$
\end{lemma}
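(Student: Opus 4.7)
The plan is to carry out the case analysis from the preceding subsection for the specific group $G=\Z_3$ and the bicharacter $\inpr{g}{h}=\zeta_3^{gh}$. Since $\Z_3$ has no nontrivial character of order $2$, Case III is vacuous, and Case IV was already ruled out in general, so I only need to treat Cases I and II. The starting point is the spectral decomposition of $\cR$ on $\ell^2(\Z_3)$: a direct computation of the matrix displayed in the excerpt (its trace is $\sqrt{3}\,e^{\pi\ii/6}$) together with $\cR^3=I$ forces the eigenvalues $(1,\zeta_3,\zeta_3^2)$ to occur with multiplicities $(2,1,0)$. The basis $\{f_0,f_0',f_1\}$ realizes this decomposition and each $f_i$ is fixed by $\cJ$. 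The two key numerical facts I will use are $\ker(\cR-\zeta_3^2)=0$ and $\dim\{f\in\ker(\cR-1):f(0)=0\}=1$.

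For Case II, Lemma \ref{CaseII3} requires $\dim\ker(\cR-\omega)\ge 2$, forcing $\omega=1$, and its case (2) branch is immediately excluded by the second fact above. In case (1) we must have $\mu_2=0$, so $\mu_1=\alpha f_1$ with $\cJ\mu_1=-\mu_1$ and $\cJ f_1=f_1$ forcing $\alpha\in\ii\R$. Then $\mu_1(0)=\alpha$, and the norm identity $\|\mu_1\|^2=1/3-2/(3d)$ gives $3|\alpha|^2=(d-2)/(3d)$. Substituting $d=3+2\sqrt{3}$ and comparing with the prescribed value $\mu_1(0)=(\kappa_1/(6\sqrt{3})+1/(2\sqrt{3}d))\ii$ yields a direct numerical mismatch for both signs of $\kappa_1$ (the two sides reduce to incommensurable linear combinations of $1$ and $\sqrt{3}$ over $\Q$), and this rules out Case II entirely.

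For Case I, the same spectral obstruction first restricts $(\omega_1,\omega_2)\in\{1,\zeta_3\}^2$: any $\omega_r=\zeta_3^2$ would force $\xi_r=0$, which is incompatible with the point constraint $\xi_r(0)+\mu(0)=-1/d$ and the $g=0$ norm identity $|\xi_r(0)|^2+|\mu(0)|^2=1/n-1/d$. The mixed subcases $(\zeta_3,\zeta_3)$, $(1,\zeta_3)$ and $(\zeta_3,1)$ are then eliminated via Lemmas \ref{CaseI4} and \ref{CaseI5}: in each subcase, the vanishing of the $\zeta_3^2$-components combined with the prescribed values and the norm identities from Remark \ref{rCaseI} leads to an analogous rational arithmetic contradiction in the quadratic extension $\Q(\sqrt{3})$.

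This leaves $(\omega_1,\omega_2)=(1,1)$, where I would expand $\xi_r$, $\eta_r$, $\mu_0$, $\mu_1$ in the basis $\{f_0,f_0',f_1\}$ (with $\mu_2=0$), using $\cJ$-invariance to reduce all coefficients to real numbers. Lemma \ref{CaseI3}(1) determines the values at $0$ in terms of $\eta_1(0),\kappa_1,\kappa_2$, and the remaining coefficients of $f_0'$ and $f_1$ are pinned down by the orthogonality and mixed-product relations from Lemma \ref{CaseI1}(4), producing precisely the two-parameter family with $x^2+y^2=\sqrt{3}/24$ displayed in the statement. Finally, the gauge group is $O(2)$ (since $c_1=c_2$ and $\epsilon=1$, so $W\in\cG(A,C,J)$ need only commute with $J$), and it acts on the $(x,y)$-circle by the natural rotation/reflection; together with the sign flips absorbed by $\kappa_1,\kappa_2$, this action is transitive and collapses the family to a single equivalence class. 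The main obstacle is this final step: one must solve the overdetermined system in Lemma \ref{CaseI1}(4) explicitly at $g=1,2$, reconcile it with the eigenvector constraints coming from $\cR$ and $\cJ$, and then track the $O(2)$-gauge action on the resulting parametrized family carefully enough to confirm transitivity.
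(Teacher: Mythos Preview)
Your overall plan matches the paper's, but there is a concrete slip in the $(\omega_1,\omega_2)=(1,1)$ analysis. You invoke case~(1) of Lemma~\ref{CaseI3}, keeping $\eta_1(0)$ as a free parameter; but in that branch $\mu_2(0)=-\tfrac{1}{6d}-\tfrac{\kappa_2}{6}$, and since $\ker(\cR-\zeta_3^2)=0$ forces $\mu_2(0)=0$, you get $\kappa_2=-1/d\notin\{\pm1\}$, a contradiction. The correct branch is case~(2): there $\eta_1(0)=\eta_2(0)=0$ from the outset, and $\mu_2(0)=0$ pins down $\kappa_1=\kappa_2=1$, yielding $\xi_1(0)=\xi_2(0)=3\mu_0(0)=-\tfrac{\sqrt{3}-1}{2}$ and $\mu_1(0)=\tfrac{1}{3}$. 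With this correction your expansion in $\{f_0,f_0',f_1\}$ is right, but the relations in Remark~\ref{rCaseI} do not immediately ``pin down'' the remaining coefficients: the paper runs a genuine case split on whether $(y_1,y_2)=(0,0)$, and in each case derives $x_1=x_2=-p$, $y_1=-y_2$, and the circle $x^2+y^2=\tfrac{\sqrt{3}}{24}$.

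Your elimination of $(1,\zeta_3)$ is also too quick. The paper does \emph{not} find an immediate $\Q(\sqrt{3})$-arithmetic obstruction: $\mu_2(0)=0$ is consistent with case~(1) of Lemma~\ref{CaseI5} and determines $\eta_1(0)^2=\tfrac{1+2\sqrt{3}}{54}$, $\xi_1(0)=-\tfrac{2(2-\sqrt{3})}{3\sqrt{3}}$. The contradiction comes only from the additional identity (\ref{I52}), using $\xi_2,\eta_2\in\Span\{f_1\}$ (since $\dim\ker(\cR-\zeta_3)=1$) to compute $\|\xi_2\|^2-2\|\eta_2\|^2-3\|\mu_2\|^2=3-2\sqrt{3}\neq -1/d$. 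Also, $(\zeta_3,\zeta_3)$ is not a ``mixed'' case handled by Lemma~\ref{CaseI4} or \ref{CaseI5}; it is eliminated directly by the remark after Lemma~\ref{CaseI3} (which needs $\dim\ker(\cR-\zeta_3)\ge 2$), and your argument excluding $\omega_r=\zeta_3^2$ via $\xi_r(0)=0$ alone is incomplete: you need to combine $\xi_2=0$ with the further constraints (e.g.\ $\xi_1(0)=\xi_2(0)$ from Lemma~\ref{CaseI2}) to reach a contradiction.
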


\begin{proof} Since $\zeta_3^2$ is not an eigenvalue of $\cR$, we have $\mu_2(0)=0$, and 
Lemma \ref{CaseII3} shows that Case II never occur. 
Lemma \ref{CaseI3} shows that the only possibilities in Case I are $(\omega_1,\omega_2)=(1,\zeta_3)$ and 
$(\omega_1,\omega_2)=(1,1)$. 

First we assume that $(\omega_1,\omega_2)=(1,\zeta_3)$ holds. 
Since $\mu_2(0)=0$, Lemma \ref{CaseI4} shows that only the case (1) of Lemma \ref{CaseI4} is possible,  
and we have 
$$\kappa_1\sqrt{1-12\eta_1(0)^2}=\frac{2-\sqrt{3}}{3},$$
$$\eta_1(0)^2=\frac{1+2\sqrt{3}}{54},$$
$$\xi_1(0)=\xi_2(0)=-\frac{2(2-\sqrt{3})}{3\sqrt{3}}.$$
Since $\xi_2=\xi_1(0)f_1$, $\eta_2=-\eta_1(0)f_1$, $\mu_2=0$, 
Eq.(\ref{I52}) implies 
$$-\frac{1}{d}=\xi_1(0)^2\|f_1\|^2-2\eta_1(0)^2\|f_1\|^2=3(\frac{2(2-\sqrt{3})}{3\sqrt{3}})^2-6\frac{1+2\sqrt{3}}{54}=3-2\sqrt{3},$$
which is contradiction as we have $d=3+2\sqrt{3}$. 

Now we solve the equations for $(\omega_1,\omega_2)=(1,1)$. 
Since $\mu_2(0)=0$, only the case (2) in Lemma \ref{CaseI3} with $\kappa_1=\kappa_2=1$ is possible, and 
$$\xi_1(0)=\xi_2(0)=3\mu_0(0)=-\frac{\sqrt{3}-1}{2},$$
$\eta_1(0)=\eta_2(0)=0$, $\mu_1(0)=1/3$. 
Since $\xi_1$, $\xi_2$, $\eta_1$, $\eta_2$, $\mu_0$ belong to the real linear space 
$$\{f\in \ker(\cR-1);\;\cJ f=f \},$$ 
there exist real numbers $x_1,x_2,y_1,y_2,p$ satisfying $\xi_r=\xi_r(0)f_0+x_rf_0'$, $\eta_r=y_rf_0'$ for $r=1,2$, 
and $\mu_0=\mu_0(0)f_0+pf_0'$. 
Since $\dim\ker(\cR-\zeta_3)=1$, we have $\mu_1=\frac{1}{3}f_1$. 

Eq.(\ref{Ir1}), (\ref{Ir2}), (\ref{Ir3}) imply 
$$\|\xi_1\|^2+\|\xi_2\|^2-2\|\mu\|^2=-\frac{2}{d}.$$
Thus 
$$2\xi_1(0)^2\|f_0\|^2+(x_1^2+x_2^2)\|f_0'\|^2-2(\mu_0(0)^2\|f_0\|^2+p^2\|f_0'\|^2+\frac{1}{3})=-\frac{2}{d},$$
and we get $x_1^2+x_2^2=2p^2$. 
Eq.(\ref{Ir5}), (\ref{Ir6}) imply 
$$py_1+(2p+x_1)p=0,$$
$$(2p+x_2)y_1+py_2=0,$$
and so either $y_1=y_2=0$ or $p^2=(2p+x_1)(2p+x_2)$. 
We claim that $x_1=x_2=-p$, $y_1=-y_2$, and 
$$x_1^2+y_1^2=\frac{1}{8\sqrt{3}}.$$

Indeed, assume first that $(y_1,y_2)\neq (0,0)$ holds. 
Then 
\begin{align*}
\lefteqn{0=3p^2+2(x_1+x_2)p+x_1x_2} \\
 &=3p^2+2(x_1+x_2)p+\frac{(x_1+x_2)^2-(x_1^2+x_2^2)}{2}\\
 &=\frac{6p^2+4(x_1+x_2)p+(x_1+x_2)^2-2p^2}{2}\\
 &=\frac{(2p+x_1+x_2)^2}{2},
\end{align*}
and we get $x_1+x_2=2p$. 
Therefore we have $x_1=x_2=-p$ and $p(y_1+y_2)=0$. 
Eq.(\ref{Ir1}), (\ref{Ir2}) imply 
$$\|\xi_1\|^2-\|\xi_2\|^2+2(\|\eta_2\|^2-\|\eta_1\|^2)=0,$$
which shows $y_1^2=y_2^2$. 
Eq.(\ref{Ir1}),(\ref{Ir4}) imply
$$4y_2^2+4p^2=-4y_1y_2+4p^2=\frac{1}{2\sqrt{3}},$$ 
which shows the claim. 

Assume now that $y_1=y_2=0$ holds. 
Then $\eta_1=\eta_2=0$, and the equations in Remark \ref{rCaseI} are equivalent to 
$$\|\mu\|^2=\frac{1}{2},$$
$$\|\xi_1\|^2=\|\xi_2\|^2=\frac{3}{2}-\frac{2}{\sqrt{3}},$$
$$\inpr{\mu}{\xi_1}_{\ell^2(\Z_3)}+\inpr{\xi_2}{\mu}_{\ell^2(\Z_3)}=1-\frac{2}{\sqrt{3}}.$$
In terms of $p$, $x_1$, $x_2$, these are equivalent to 
$$p^2=x_1^2=x_2^2=\frac{1}{8\sqrt{3}},$$
$$p(x_1+x_2)=-\frac{1}{4\sqrt{3}},$$
which shows the claim again. 

Let $x=x_1$ and $y=y_1$. 
Then $\xi_r$, $\eta_r$, $\mu$ are as in the statement. 
It is straightforward to show that they satisfy the conditions in Lemma \ref{CaseI1}. 

Since $(\omega_1,\omega_2)=(1,1)$, the gauge group is $O(2)$, and we show that they act on the solutions transitively.  
Indeed, it is easy to show that 
$$\left(
\begin{array}{cc}
1 &0  \\
0 &-1 
\end{array}
\right)
$$
acts on the solutions as $(x,y)\mapsto (x,-y)$. 
Let 
$$R(\theta)=\left(
\begin{array}{cc}
\cos \theta &-\sin \theta  \\
\sin\theta &\cos\theta 
\end{array}
\right).$$
Note that we have 
\begin{align*}
\lefteqn{\cB(g)} \\
 &=-\frac{\sqrt{3}-1}{6}f_0(g)\left(
\begin{array}{cccc}
3 &0 &0 &1  \\
0 &1 &1 &0  \\
0 &1 &1 &0  \\
1 &0 &0 &3 
\end{array}
\right)
+\frac{1}{3}f_1(g)\left(
\begin{array}{cccc}
0 &0 &0 &1  \\
0 &\zeta_3^2 &\zeta_3 &0  \\
0 &\zeta_3 &\zeta_3^2 &0  \\
1 &0 &0 &0 
\end{array}
\right)
 \\
 &+f_0'(g)[x(X\otimes X-Y\otimes Y)-y(X\otimes Y+Y\otimes X)],
\end{align*}
where 
$$X=\left(
\begin{array}{cc}
1 &0  \\
0 &-1 
\end{array}
\right), 
\quad Y=\left(
\begin{array}{cc}
0 &1  \\
1 &0 
\end{array}
\right). 
$$
The first two terms commute with $R(\theta)\otimes R(\theta)$. 
Since 
$$R(\theta)XR(\theta)^{-1}=\cos2\theta X+\sin2\theta Y,$$
$$R(\theta)YR(\theta)^{-1}=-\sin2\theta X+\cos2\theta Y,$$
we have 
\begin{align*}
\lefteqn{(R(\theta)\otimes R(\theta))(X\otimes X-Y\otimes Y)(R(\theta)^{-1}\otimes R(\theta)^{-1})} \\
 &=\cos 4\theta (X\otimes X-Y\otimes Y)+\sin4\theta(X\otimes Y+Y\otimes X),
\end{align*}
\begin{align*}
\lefteqn{(R(\theta)\otimes R(\theta))(X\otimes Y+Y\otimes X)(R(\theta)^{-1}\otimes R(\theta)^{-1})} \\
 &=-\sin 4\theta (X\otimes X-Y\otimes Y)+\cos4\theta(X\otimes Y+Y\otimes X).
\end{align*}
Therefore $R(\theta)$ acts on the solutions as 
$$(x,y)\mapsto (x\cos4\theta +y\sin4\theta ,-x\sin4\theta +y\cos4\theta).$$
\end{proof}

\begin{remark}
We already know that there is a C$^*$ near-group category for $\Z_3$ with $m=6$, and we do not need to verify 
Eq.(\ref{p10}). 
\end{remark}

Among the equivalence class of the solutions above, we can choose a representative with $y=0$. 
Such a solution is invariant under the subgroup of $O(2)$ generated by 
$$\left(
\begin{array}{cc}
1 &0  \\
0 &-1 
\end{array}
\right),\quad 
\left(
\begin{array}{cc}
0 &-1  \\
1 &0 
\end{array}
\right),
$$
which is the dihedral group $D_8$ of order 8. 
Thus there is a $D_8$-action $\gamma$ on $\cO_9$ commuting with $\alpha_g$ and $\rho$ corresponding 
such a solution. 
We can perform equivariantization with this $D_8$-action, and the resulting category includes a 
near-group category for $G=\Z_2\times \Z_2\times \Z_3$ with $m=12$ as a subcategory (see Example \ref{36equi}). 
\subsection{The case of $G=\Z_4$} 
\begin{theorem}\label{Z41} There is no C$^*$ near-group category for $\Z_4$ with $m=8$. 
\end{theorem}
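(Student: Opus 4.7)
The plan is to parallel the case-by-case analyses that closed the $G=\Z_2$ case and treated the $G=\Z_3$ case in the preceding two subsections, now applied to $G=\Z_4$ with $n=4$, $m=2n=8$, giving $d=4+2\sqrt{5}$. Case IV has been excluded in general, and Case III is ruled out here by the lemma in the analysis of Case III showing that it requires $|G|\geq 8$, which fails for $\Z_4$. Thus only Cases I and II remain.

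First I would set up the data explicitly. Up to complex conjugation one may take $\inpr{g}{h}=\ii^{gh}$; the cocycle identity $a(g+h)\inpr{g}{h}=a(g)a(h)$ together with $a(-g)=a(g)$ then forces $a(0)=1$, $a(2)=-1$, $a(1)=a(3)=\pm e^{-\pi\ii/4}$, the two sign choices being equivalent under $e\mapsto -e$, so I fix $a(1)=e^{-\pi\ii/4}$. This yields $\hat{a}(0)=e^{-\pi\ii/4}$, and I take $c=e^{\pi\ii/12}$. A direct trace computation gives $\Tr\cR=\zeta_3$, which combined with $\cR^3=I$ forces the eigenspace dimensions $(\dim\ker(\cR-1),\dim\ker(\cR-\zeta_3),\dim\ker(\cR-\zeta_3^2))=(1,2,1)$. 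I would then fix $\cJ$-compatible orthogonal bases of the three eigenspaces, normalized so that their values at $g=0$ are non-zero in the one-dimensional eigenspaces.

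For Case II, Lemma \ref{CaseII3} requires $\dim\ker(\cR-\omega)\geq 2$ for $\omega=c_1/c$, and only $\omega=\zeta_3$ qualifies; sub-case (2) of the same lemma additionally demands $\dim\{f\in\ker(\cR-\zeta_3):f(0)=0\}\geq 2$, which fails. In sub-case (1), the one-dimensionality of $\ker(\cR-1)$ and $\ker(\cR-\zeta_3^2)$ forces the components of $\xi,\eta,\mu_0,\mu_2$ in those eigenspaces to be scalar multiples of the fixed basis vectors with coefficient determined by the value at $0$; substituting the explicit formulae of Lemma \ref{CaseII3}(1) into the norm identity $\|\mu_1\|^2+\|\mu_2\|^2=\tfrac{1}{3}-\tfrac{2}{3d}$ from Remark \ref{rCaseII}, together with Eq.(\ref{IIr1})--(\ref{IIr5}) at $d=4+2\sqrt{5}$, should yield a numerical contradiction.

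For Case I, the dimension tests of Lemmas \ref{CaseI3}--\ref{CaseI5} leave as candidates only $\omega_1=\omega_2=\zeta_3$ together with the mixed choices $(\omega_1,\omega_2)\in\{(\zeta_3,\zeta_3^{-1}),(1,\zeta_3^{\pm 1})\}$. In each of these, the one-dimensionality of two of the three $\cR$-eigenspaces rigidly determines the components of $\xi_r,\eta_r$ and the appropriate $\mu_j$ from their values at $0$ as prescribed by the relevant lemma, while the only transverse freedom lives in the two-dimensional $\ker(\cR-\zeta_3)$, on which the gauge group acts as $O(2)$ when $\omega_1=\omega_2$ and as $\Z_2\times\Z_2$ otherwise. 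The main obstacle I expect is eliminating this transverse parameter: following the pattern of Subsection \ref{Z3m=6}, I anticipate combining the orthogonality relations Eq.(\ref{Ir4})--(\ref{Ir7}) with the cubic identity Eq.(\ref{Ir8}) modulo the gauge action, and showing that no value of this parameter is compatible with $d=4+2\sqrt{5}$. Combining these, both Case I and Case II fail to admit solutions, proving the theorem.
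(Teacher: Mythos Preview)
Your overall architecture---rule out Cases III and IV, then analyze Cases I and II via eigenspace dimensions and norm identities---matches the paper. There are, however, two concrete gaps.

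\textbf{The two choices of $a$ are not equivalent.} For fixed $\langle g,h\rangle=\ii^{gh}$ there are two even functions satisfying the cocycle identity, $a(g)=e^{-\pi\ii g^2/4}$ and $a(g)=(-1)^g e^{-\pi\ii g^2/4}$. No automorphism of $\Z_4$ preserving the bicharacter exchanges them (the only nontrivial automorphism is $g\mapsto -g$, and both $a$'s satisfy $a(-g)=a(g)$), and they yield different operators $\cR$ with different eigenvector norms $\|f_1\|^2,\|f_2\|^2$. Your claim that they are ``equivalent under $e\mapsto -e$'' is unjustified; the paper sets up both choices explicitly and runs the subsequent elimination for each. (Your choice $c=e^{\pi\ii/12}$ also differs from the paper's $c=e^{3\pi\ii/4}$ by a cube root of unity, which is harmless but shifts all eigenspace labels by one step relative to Lemmas~\ref{CaseI3}--\ref{CaseI5} as stated.)

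\textbf{The surviving Case I requires a pointwise argument you do not have.} The paper first proves a clean uniform lemma for $|G|=4$ (Lemma~\ref{Z42}): since $\mu_1=\mu_1(0)f_1$ and $\mu_2=\mu_2(0)f_2$ lie in one-dimensional eigenspaces, the identity $\mu_1(0)^2\|f_1\|^2+\mu_2(0)^2\|f_2\|^2=\tfrac{1}{3}$ (for Lemmas~\ref{CaseI3},~\ref{CaseI4}) or $=\tfrac{1}{3}(1-\tfrac{2}{d})$ (for Lemma~\ref{CaseII3}), together with $\sqrt{5}\notin\Q(\sqrt{3},\|f_1\|^2,\|f_2\|^2)$, kills all of those cases outright. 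This leaves only Case~I with $(\omega_1,\omega_2)=(1,\zeta_3^{\pm1})$, i.e.\ Lemma~\ref{CaseI5}---a single case rather than the four on your list. For this survivor the paper does \emph{not} use the summed relations Eq.~(\ref{Ir4})--(\ref{Ir8}) you propose; it evaluates Lemma~\ref{CaseI1}(4) pointwise at $g=2$. Because $f_0'(2)=0$ (for both choices of $a$) and $|f_0(2)|\neq|f_1(2)|$, one obtains $\xi_1(0)^2=2\eta_1(0)^2$. This immediately excludes sub-case~(2) of Lemma~\ref{CaseI5}, and then Eq.~(\ref{I52}) combined with the formulae of sub-case~(1) yields two explicit equations in $\eta_1(0)$ that are checked to be incompatible. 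Your plan of relying solely on the $\ell^2$-summed orthogonality relations does not give access to this pointwise constraint, and the crucial leverage from the vanishing $f_0'(2)=0$ is absent from your outline.
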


There are exactly two non-degenerate symmetric bicharacters for $\Z_4$, which are complex conjugate to each other, 
and we may assume $\inpr{g}{h}=\ii^{gh}$ to show the statement. 
For this, there are exactly two $a(g)$, and one of them is $a(g)=e^{-\frac{g^2\pi \ii}{4}}$. 
In this case we can choose $c=e^{\frac{3\pi \ii}{4}}$, and 
$$\cR=\frac{e^{-\frac{3\pi\ii}{4}}}{2}\left(
\begin{array}{cccc}
1 &0 &0 &0  \\
0 &e^{\frac{\pi\ii}{4}}&0 &0  \\
0 &0 &-1 &0  \\
0 &0 &0 &e^{\frac{\pi\ii}{4}} 
\end{array}
\right)
\left(
\begin{array}{cccc}
1 &1 &1 &1  \\
1 &\ii &-1 &-\ii  \\
1 &-1 &1 &-1  \\
1 &-\ii &-1 &\ii 
\end{array}
\right),
$$
$$\cJ \left(
\begin{array}{c}
f(0)  \\
f(1)  \\
f(2)  \\
f(3) 
\end{array}
\right)
=\left(
\begin{array}{c}
\overline{f(0)}  \\
e^{\frac{\pi\ii}{4}}\overline{f(3)}  \\
- \overline{f(2)}  \\
e^{\frac{\pi\ii}{4}}\overline{f(1)} 
\end{array}
\right).
$$
Let 
$$f_0=\left(
\begin{array}{c}
1  \\
-\frac{\zeta_{16}}{2\cos\frac{3\pi}{8}}  \\
\tan\frac{3\pi}{8}\ii  \\
 -\frac{\zeta_{16}}{2\cos\frac{3\pi}{8}}
\end{array}
\right),\quad 
f_0'=\left(
\begin{array}{c}
0  \\
\zeta_{16}\ii  \\
0  \\
-\zeta_{16}\ii 
\end{array}
\right),
\quad f_1=
\left(
\begin{array}{c}
1  \\
\frac{\zeta_{16}}{2\cos\frac{17\pi}{24}}  \\
\tan\frac{17\pi}{24}\ii  \\
\frac{\zeta_{16}}{2\cos\frac{17\pi}{24}}
\end{array}
\right), \quad f_2
=\left(
\begin{array}{c}
1  \\
\frac{\zeta_{16}}{2\cos\frac{\pi}{24}}  \\
\tan\frac{\pi}{24}\ii  \\
\frac{\zeta_{16}}{2\cos\frac{\pi}{24}}
\end{array}
\right).
$$
Then $\cR f_0=f_0$, $\cR f_0'=f_0'$, $\cR f_1=\zeta_3f_1$, $\cR f_2=\zeta_3^2f_2$, and 
they are invariant under $\cJ$. 
They form a (non-normalized) orthogonal basis of $\ell^2(\Z_4)$ as well as 
the real subspace of $\cJ$-invariant vectors. 
We have 
$$\|f_1\|^2=\frac{3}{2\cos^2\frac{17\pi}{24}}=\frac{3}{1+\cos\frac{7\pi}{12}}=\frac{6\sqrt{2}}{1+2\sqrt{2}-\sqrt{3}},$$
$$\|f_2\|^2=\frac{3}{2\cos^2\frac{\pi}{24}}=\frac{3}{1+\cos\frac{\pi}{12}}=\frac{6\sqrt{2}}{1+2\sqrt{2}+\sqrt{3}}.$$

The other choice of $a(g)$ is $a(g)=(-1)^ge^{-\frac{g^2\pi \ii}{4}}$. 
In this case we can choose $c=e^{-\frac{\pi \ii}{4}}$, and 
$$\cR=\frac{e^{\frac{\pi\ii}{4}}}{2}\left(
\begin{array}{cccc}
1 &0 &0 &0  \\
0 &-e^{\frac{\pi\ii}{4}}&0 &0  \\
0 &0 &-1 &0  \\
0 &0 &0 &-e^{\frac{\pi\ii}{4}} 
\end{array}
\right)
\left(
\begin{array}{cccc}
1 &1 &1 &1  \\
1 &\ii &-1 &-\ii  \\
1 &-1 &1 &-1  \\
1 &-\ii &-1 &\ii 
\end{array}
\right),
$$
$$\cJ \left(
\begin{array}{c}
f(0)  \\
f(1)  \\
f(2)  \\
f(3) 
\end{array}
\right)
=\left(
\begin{array}{c}
\overline{f(0)}  \\
-e^{\frac{\pi\ii}{4}}\overline{f(3)}  \\
- \overline{f(2)}  \\
-e^{\frac{\pi\ii}{4}}\overline{f(1)} 
\end{array}
\right).
$$
Let 
$$f_0=\left(
\begin{array}{c}
1  \\
\frac{\zeta_{16}^5}{2\cos\frac{7\pi}{8}}  \\
\tan\frac{7\pi}{8}\ii  \\
\frac{\zeta_{16}^5}{2\cos\frac{7\pi}{8}}
\end{array}
\right),\;
f_0'=\left(
\begin{array}{c}
0  \\
\zeta_{16}^5\ii  \\
0  \\
-\zeta_{16}^5\ii 
\end{array}
\right), \;
f_1=
\left(
\begin{array}{c}
1  \\
\frac{\zeta_{16}^5}{2\cos\frac{5\pi}{24}}  \\
\tan\frac{5\pi}{24}\ii  \\
\frac{\zeta_{16}^5}{2\cos\frac{5\pi}{24}}
\end{array}
\right), \;
f_2
=\left(
\begin{array}{c}
1  \\
\frac{\zeta_{16}^5}{2\cos\frac{11\pi}{24}}  \\
-\tan\frac{11\pi}{24}\ii  \\
\frac{\zeta_{16}^5}{2\cos\frac{11\pi}{24}}
\end{array}
\right).
$$
Then $\cR f_0=f_0$, $\cR f_0'=f_0'$, $\cR f_1=\zeta_3f_1$, $\cR f_2=\zeta_3^2f_2$, and 
they are invariant under $\cJ$. 
They form a (non-normalized) orthogonal basis of $\ell^2(\Z_4)$ as well as 
the real subspace of $\cJ$-invariant vectors. 
$$\|f_1\|^2=\frac{3}{2\cos^2\frac{5\pi}{24}}=\frac{3}{1+\cos\frac{5\pi}{12}}=\frac{6\sqrt{2}}{-1+2\sqrt{2}+\sqrt{3}},$$
$$\|f_2\|^2=\frac{3}{2\cos^2\frac{11\pi}{24}}=\frac{3}{1+\cos\frac{11\pi}{12}}=\frac{6\sqrt{2}}{-1+2\sqrt{2}-\sqrt{3}}.$$

We give a unified argument for $\Z_4$ and $\Z_2\times \Z_2$ to show that certain cases never occur. 
Note that we have $d=4+2\sqrt{5}$ in the both cases. 

\begin{lemma}\label{Z42} Assume that $\# G=4$, $\dim\ker(\cR-\zeta_3^r)=1$ for $r=1,2$, and there exist 
$f_r\in \ker(\cR-\zeta_3^r)$ satisfying $Jf_r=f_r$ and $f_r(0)=1$ for $r=1,2$. 
We assume $\sqrt{5}\notin \Q(\sqrt{3},\|f_1\|^2,\|f\|_2^2)$. 
\begin{itemize}
\item[(1)] Neither the case (1) of Lemma \ref{CaseI3} nor the case (1) of Lemma \ref{CaseI4} occurs. 
\item[(2)] The case (2) of Lemma \ref{CaseI3} or the case (2) of Lemma \ref{CaseI4} could occur only if 
$\kappa_1=-1$ and $\|f_1\|^2=\|f_2\|^2=3$. 
\item[(3)] The case (1) of Lemma \ref{CaseII3} never occurs. 
\item[(4)] The case (2) of Lemma \ref{CaseII3} could occur only if $\|f_1\|^2=\|f_2\|^2=2$. 
\end{itemize}
\end{lemma}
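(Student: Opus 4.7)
The plan is to exploit, in each of the four cases, the constraint on $\|\mu_1\|^2+\|\mu_2\|^2$ furnished by Lemmas \ref{CaseI3}, \ref{CaseI4} and \ref{CaseII3}, combined with the key structural consequence of the hypotheses: since $\dim\ker(\cR-\zeta_3^r)=1$ is spanned by the $\cJ$-invariant vector $f_r$ with $f_r(0)=1$, the $\zeta_3^r$-eigencomponent $\mu_r$ of $\mu$ is forced to equal $\mu_r(0)f_r$, and consequently $\|\mu_r\|^2 = |\mu_r(0)|^2\,\|f_r\|^2$. Because $\#G=4$ and $m=2n=8$, we have $d = 4+2\sqrt{5}$, so $1/d=(\sqrt{5}-2)/2$ and $1/d^2=(9-4\sqrt{5})/4$ are $\Q$-linear in $\sqrt{5}$. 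Substituting the explicit formulas for $\mu_r(0)$ (all of which involve only $1/d$, $1/\sqrt{n}=1/2$, and $1/\sqrt{3n}=1/(2\sqrt{3})$) into the relevant constraint yields a single equation of the form
\begin{equation*}
 A(\|f_1\|^2,\|f_2\|^2) + B(\|f_1\|^2,\|f_2\|^2)\sqrt{5} \;=\; 0,
\end{equation*}
with $A,B \in \Q(\sqrt{3},\|f_1\|^2,\|f_2\|^2)$.

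The assumption $\sqrt{5}\notin \Q(\sqrt{3},\|f_1\|^2,\|f_2\|^2)$ then forces $A=B=0$ separately, giving a two-equation linear system in $\|f_1\|^2$ and $\|f_2\|^2$. For part (1), the system (applied identically to case~(1) of Lemma \ref{CaseI3} and of Lemma \ref{CaseI4}) reduces after elimination to $\kappa_2(\|f_2\|^2-\|f_1\|^2)=8\sqrt{3}$ together with $\|f_1\|^2+\|f_2\|^2=12$, whence one of $\|f_r\|^2$ equals $6-4\sqrt{3}<0$, which is absurd. For part (3), the same procedure applied to case (1) of Lemma \ref{CaseII3} yields $\|f_1\|^2+\|f_2\|^2=12$ and $\kappa_1(\|f_1\|^2-\|f_2\|^2)=16\sqrt{3}$, giving the even sharper contradiction $6-8\sqrt{3}<0$.

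For part (2), the $\sqrt{5}$-vanishing equation coming from case (2) of Lemma \ref{CaseI3} is
\begin{equation*}
 \tfrac{\kappa_1+1}{36}(\|f_1\|^2+\|f_2\|^2) + \tfrac{\kappa_2}{24\sqrt{3}}(\|f_1\|^2-\|f_2\|^2)=0.
\end{equation*}
If $\kappa_1=1$, combining this with the rational-part equation forces $\kappa_2(\|f_1\|^2-\|f_2\|^2)=8\sqrt{3}$ while at the same time $\|f_1\|^2+\|f_2\|^2 = -(\kappa_2\sqrt{3}/4)(\|f_1\|^2-\|f_2\|^2)<0$, a contradiction; thus $\kappa_1=-1$, the $\sqrt{5}$-equation collapses to $\|f_1\|^2=\|f_2\|^2$, and the rational-part equation then yields $\|f_1\|^2=\|f_2\|^2=3$. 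The identical argument, verbatim, handles case (2) of Lemma \ref{CaseI4}. For part (4), subtracting the two resulting equations from case (2) of Lemma \ref{CaseII3} gives $\kappa(\|f_1\|^2-\|f_2\|^2)(\sqrt{3}/2-1/\sqrt{3})=0$, hence $\|f_1\|^2=\|f_2\|^2$, and back-substitution fixes $\|f_1\|^2=\|f_2\|^2=2$.

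There is no conceptual obstacle; the argument is essentially a Galois-theoretic separation of $\sqrt{5}$ from the field $\Q(\sqrt{3},\|f_1\|^2,\|f_2\|^2)$, followed by elementary elimination. The only thing to be careful about is the accurate expansion of $1/d^2=(9-4\sqrt{5})/4$ and the correct bookkeeping of which $\kappa_i$ appears where; having done this, the four computations are parallel and yield the four stated conclusions.
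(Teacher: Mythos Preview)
Your proof is correct and follows exactly the same approach as the paper's: since $\dim\ker(\cR-\zeta_3^r)=1$, one has $\mu_r=\mu_r(0)f_r$, and then the norm constraints $\mu_1(0)^2\|f_1\|^2+\mu_2(0)^2\|f_2\|^2=\tfrac{1}{3}$ (Cases I3/I4) and $\mu_1(0)^2\|f_1\|^2+\mu_2(0)^2\|f_2\|^2=\tfrac{1}{3}(1-\tfrac{2}{d})$ (Case II3) from Lemmas~\ref{CaseI3}, \ref{CaseI4}, \ref{CaseII3} do all the work. The paper states only these two equations and leaves the remaining elimination to the reader; you have carried out explicitly the separation of the $\sqrt{5}$-part (using $\sqrt{5}\notin\Q(\sqrt{3},\|f_1\|^2,\|f_2\|^2)$) and the resulting linear algebra, which is precisely the intended computation.
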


\begin{proof} Note that we have $\mu_1=\mu_1(0)f_1$ and $\mu_2=\mu_2(0)f_2$. 
The statements follow from 
$$\mu_1(0)^2\|f_1\|^2+\mu_2(0)^2\|f_2\|^2=\frac{1}{3},$$
for the cases of Lemma \ref{CaseI3} and Lemma \ref{CaseI4}, and 
$$\mu_1(0)^2\|f_1\|^2+\mu_2(0)^2\|f_2\|^2=\frac{1}{3}(1-\frac{2}{d}),$$
for the case of Lemma \ref{CaseII3}. 
\end{proof}

\begin{proof}[Proof of Theorem \ref{Z41}] Thanks to the previous lemma, the only possibilities are 
Case I with $(\omega_1,\omega_2)=(1,\zeta_3^{\pm 1})$.  
We assume $(\omega_1,\omega_2)=(1,\zeta_3)$ as the other case can be treated in the same way. 
Lemma \ref{CaseI1},(4) with $g=2$ implies 
$$|\xi_1(2)|^2+2|\eta_2(2)|^2=|\xi_2(2)|^2+2|\eta_1(2)|^2.$$
Since $\dim\ker(\cR-\zeta_3)=1$, we have $\xi_2=\xi_2(0)f_1=\xi_1(0)f_1$ and $\eta_2=\eta_2(0)f_1=-\eta_1(0)f_1$.  
Since $f_0'(2)=0$, we have $\xi_1(2)=\xi_1(0)f_0(2)$ and $\eta_1(2)=\eta_1(0)f_0(2)$. 
Thus 
$$(\xi_0(0)^2-2\eta_1(0)^2)(|f_0(2)|-|f_1(2)|)=0,$$
and $\xi_0(0)=\pm \sqrt{2}\eta_1(0)$. 
This shows that the case (2) of Lemma \ref{CaseI5} never occurs. 
Since $\xi_2=\xi_1(0)f_1$ and $\eta_2=-\eta_1(0)f_1$, we have $\xi_2=\mp\sqrt{2}\eta_2$, 
and Eq.(\ref{I52}) implies 
$$\mu_2(0)^2\|f_1\|^2=\|\mu_2\|^2=\frac{1}{3d}=\frac{\sqrt{5}-2}{6}.$$
Now the equations in (1) of Lemma \ref{CaseI5} imply  
$$-\frac{\sqrt{5}-2}{4}-\frac{\kappa_1\sqrt{1-16\eta_1(0)^2}}{4}=\pm \sqrt{2}\eta_1(0),$$
$$(-\frac{\sqrt{5}-2}{12}+\frac{\kappa_1\sqrt{1-16\eta_1(0)^2}}{4})^2\|f_1\|^2=\frac{\sqrt{5}-2}{6}.$$
Direct computation shows that for $\|f_1\|^2$ as above, these is no $\eta_1(0)$ satisfying these two equations. 
\end{proof}

\subsection{The case of $G=\Z_2\times \Z_2$}

\begin{theorem} There is no C$^*$ near-group category for $\Z_2\times \Z_2$ with $m=8$. 
\end{theorem}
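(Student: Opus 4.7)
The plan is to adapt closely the strategy used in the proof of Theorem \ref{Z41} for $\Z_4$. Since $n=|G|=4$ and $m=8$, the dimension is $d=4+2\sqrt{5}$ as in the $\Z_4$ case, so many of the numerical obstructions will take the same form. First I would enumerate, up to group automorphism of $G=\Z_2\times\Z_2$, the non-degenerate symmetric bicharacters; Example \ref{Z2Z2} shows there are exactly two, namely $\inpr{\cdot}{\cdot}_1$ and $\inpr{\cdot}{\cdot}_2$. For each such bicharacter I would list the phase functions $a:G\to\T$ satisfying $a(g+h)\inpr{g}{h}=a(g)a(h)$. Because every non-identity element of $G$ is of order $2$, the constraint $a(-g)=a(g)$ is automatic, and the set of admissible $a$ is a torsor over $\Hom(G,\T)\cong\Z_2\times\Z_2$; modulo group automorphisms of $G$ and the action of characters (which merely reshuffles the role of $\chi_t$), this leaves a handful of normal forms to treat.

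For each normal form $(\inpr{\cdot}{\cdot},a)$ I would compute $\hat a(0)$, fix a cube root $c$ with $c^3\hat a(0)=1$, and write down explicitly the period-three unitary $\cR$ and the anti-unitary $\cJ$ on $\ell^2(G)=\C^4$. Since all non-identity elements of $G$ are of order two, $\cJ$ takes the simple form $\cJ f(g)=\overline{a(g)f(g)}$, and the eigenspaces of $\cR$ reduce to a spectral problem for a single $4\times 4$ matrix. I would diagonalize $\cR$, determine an orthogonal basis of $\cJ$-invariant eigenvectors (labelled $f_0$, possibly $f_0'$, $f_1$, $f_2$ with $\cR f_1=\zeta_3 f_1$, $\cR f_2=\zeta_3^{\,2}f_2$), and record the squared norms $\|f_1\|^2$, $\|f_2\|^2$.

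Case~IV has already been ruled out globally, and Case~III requires $|G|\ge 8$ by the lemma preceding Theorem \ref{Z36}, so only Cases I and II survive. I would then apply an analogue of Lemma \ref{Z42}: compute $\|f_1\|^2$ and $\|f_2\|^2$ explicitly for each normal form and observe that they do not attain the critical values $3$ (which is necessary for Case (2) of Lemma \ref{CaseI3}/\ref{CaseI5}) nor $2$ (which is necessary for Case (2) of Lemma \ref{CaseII3}), while the Galois-theoretic position of $\sqrt{5}$ relative to $\Q(\sqrt{3},\|f_1\|^2,\|f_2\|^2)$ excludes the Case (1)'s by the same arithmetic obstruction as in the $\Z_4$ proof. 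This should collapse the whole analysis down to the sub-cases $(\omega_1,\omega_2)=(1,\zeta_3^{\pm 1})$ of Case~I with $(\xi_1,\xi_2,\mu)$ of the degenerate form, exactly as for $\Z_4$.

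For those last remaining sub-cases I would mimic verbatim the closing calculation of Theorem \ref{Z41}: evaluate the identity (4) of Lemma \ref{CaseI1} at a non-zero element $g\in G$ at which the extra $1$-eigenvector $f_0'$ of $\cR$ vanishes (or, failing that, at each of the three non-zero elements in turn), use $\xi_2=\xi_1(0)f_1$ and $\eta_2=-\eta_1(0)f_1$ coming from $\dim\ker(\cR-\zeta_3)=1$ to derive $\xi_1(0)=\pm\sqrt{2}\,\eta_1(0)$, then invoke Eq.(\ref{I52}) to force $\mu_2(0)^2\|f_1\|^2=\tfrac{1}{3d}=\tfrac{\sqrt{5}-2}{6}$, and finally combine with the equations in Lemma \ref{CaseI5}(1) to obtain a numerical inconsistency with the explicit value of $\|f_1\|^2$. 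The main technical obstacle is simply bookkeeping: one must run this final computation for each choice of $(\inpr{\cdot}{\cdot},a)$ and in particular handle carefully the bicharacter $\inpr{\cdot}{\cdot}_2$, where the high symmetry may give $\dim\ker(\cR-1)=2$ with a degeneracy not seen in $\Z_4$, forcing a separate algebraic argument to dispose of Case~I(2) of Lemma \ref{CaseI3} rather than the norm comparison used elsewhere.
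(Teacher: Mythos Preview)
Your overall strategy matches the paper's: split into the four normal forms $(\inpr{\cdot}{\cdot}_i,a)$, compute $\cR$, $\cJ$ and an eigenbasis $f_0,f_0',f_1,f_2$, apply Lemma~\ref{Z42} to reduce to Case~I with $(\omega_1,\omega_2)=(1,\zeta_3^{\pm1})$, and finish with the $\Z_4$-style numerics. Two of the four cases do go through exactly as you describe. But there are two genuine gaps.

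First, your claim that $\|f_1\|^2,\|f_2\|^2$ never equal $2$ is false. For $\inpr{\cdot}{\cdot}_2$ with $a(g_0)=a(g_1)=a(g_2)=-1$ one finds $\|f_1\|^2=\|f_2\|^2=2$, so Lemma~\ref{Z42}(4) does \emph{not} exclude case~(2) of Lemma~\ref{CaseII3}. The surviving scenario is therefore a Case~II problem, not Case~I(2) as you anticipate. Here $f_0(0)=f_0'(0)=0$ (the $1$-eigenspace is two-dimensional but both eigenvectors vanish at $0$), which forces the only possibility to be case~(2) of Lemma~\ref{CaseII3}; the paper then parametrizes $\mu_0=pf_0+\overline{p}f_0'$, evaluates the four identities of Lemma~\ref{CaseII1}(4) at each non-zero $g_i$, and combines them algebraically to force $\eta(g_i)=0$ and then a contradiction between Eq.(\ref{Z2Z21}) and Eq.(\ref{Z2Z22}). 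None of this resembles the Case~I endgame you outline.

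Second, even within the Case~I reduction your ``verbatim'' copy of the $\Z_4$ argument requires a non-zero group element $g$ at which $f_0'(g)=0$. For $\inpr{\cdot}{\cdot}_1$ with $a(g_0)=-a(g_1)=\ii$, $a(g_2)=1$ one has $f_0'=(0,\zeta_8^{-1}/\sqrt2,\zeta_8/\sqrt2,-1)^T$, which is non-vanishing at every non-zero element, so the pointwise identity $(\xi_1(0)^2-2\eta_1(0)^2)(|f_0(g)|-|f_1(g)|)=0$ is unavailable. The paper instead works directly with Eq.(\ref{I52}), obtaining $3\mu_2(0)^2-\xi_1(0)^2+2\eta_1(0)^2=\tfrac{\sqrt5-2}{12}$ from $\|f_1\|^2=\|f_2\|^2=6$; this immediately kills case~(2) of Lemma~\ref{CaseI5} by a sign, and in case~(1) it forces $\kappa_1\sqrt{1-16\eta_1(0)^2}=(3+\sqrt5)/3>1$, a contradiction. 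So in two of the four normal forms the concrete computations you need are different from what you propose.
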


We use the notation in Example \ref{Z2Z2}. 
Up to complex conjugate, there are exactly two $a(g)$ for each of $\inpr{\cdot}{\cdot}_1$ and $\inpr{\cdot}{\cdot}_2$, 
and we show the statement for these 4 cases separately. 

\begin{lemma} There is no C$^*$ near-group category for $\inpr{\cdot}{\cdot}_1$ and $a(g_0)=a(g_1)=\ii$, $a(g_2)=-1$. 
\end{lemma}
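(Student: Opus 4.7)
The plan is to follow the strategy used in the proof of Theorem \ref{Z41}. Since $|G|=4<8$, Case III is automatically excluded by the order-obstruction lemma, and Case IV has already been ruled out in full generality; only Cases I and II remain to be considered. The argument will consist of computing the relevant spectral data ($\hat{a}(0)$, $c$, $\cR$, $\cJ$) explicitly, using the eigenstructure of $\cR$ together with Lemma \ref{Z42} to narrow the list of surviving subcases down to a small finite collection, and then eliminating the remaining possibilities by a direct computation mirroring the tail of the proof of Theorem \ref{Z41}.

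Concretely, the first step is to evaluate $\hat{a}(0)$ using the table for $\inpr{\cdot}{\cdot}_1$ with $a(g_0)=a(g_1)=\ii$ and $a(g_2)=-1$, pick $c\in\T$ with $c^3\hat{a}(0)=1$, and write down $\cR$ as an explicit $4\times 4$ matrix together with the anti-unitary $\cJ f(g)=\overline{a(g)f(-g)}$. Second, I will find a $\cJ$-invariant orthogonal basis of $\cR$-eigenvectors $f_0,f_0'\in\ker(\cR-1)$, $f_1\in\ker(\cR-\zeta_3)$, $f_2\in\ker(\cR-\zeta_3^2)$ normalized so that $f_1(0)=f_2(0)=1$, and record the numerical values of $\|f_1\|^2$ and $\|f_2\|^2$. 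The crucial arithmetic fact is that these norms will lie in $\Q(\sqrt{3})$ (or a comparably small real subfield), whereas $d=4+2\sqrt{5}$, so that $\sqrt{5}\notin\Q(\sqrt{3},\|f_1\|^2,\|f_2\|^2)$ and the hypotheses of Lemma \ref{Z42} are in force.

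Invoking Lemma \ref{Z42} then eliminates most branches of the remaining cases: the case (1) branches of Lemmas \ref{CaseI3}, \ref{CaseI4}, and \ref{CaseII3} are excluded outright, while the case (2) branches can survive only under $\|f_1\|^2=\|f_2\|^2\in\{2,3\}$. Checking the explicit norms computed above should show that these exceptional values are not realized, dispatching Case II entirely and reducing Case I to the two possibilities $(\omega_1,\omega_2)=(1,\zeta_3^{\pm 1})$ covered by Lemma \ref{CaseI5}. For those two possibilities I will evaluate Lemma \ref{CaseI1}(4) at the unique non-trivial order-two element $g\in G$ where $f_0'$ vanishes, obtaining $\xi_1(0)=\pm\sqrt{2}\,\eta_1(0)$ just as in the $\Z_4$ argument; combining this with Eq.(\ref{I52}) and the explicit value of $\|f_1\|^2$ yields a single scalar equation in $\eta_1(0)$ whose discriminant can be checked to have the wrong sign, producing the desired contradiction.

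The main obstacle will be the final arithmetic verification: one must confirm that for this specific pair $(\inpr{\cdot}{\cdot}_1,a)$ the resulting scalar equation really admits no real solution, because the surviving two-parameter family from Lemma \ref{CaseI5} is flexible enough that a sign slip in computing $f_1,f_2$ or a miscount of a bicharacter value could manufacture a spurious root. Once the non-existence of an admissible tuple in the sense of Definition \ref{tuple} is established, Theorem \ref{unique} (together with Theorem \ref{irrcl}) immediately translates this into the claimed non-existence of a C$^*$ near-group category.
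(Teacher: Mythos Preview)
Your plan is correct and follows essentially the same route as the paper: compute the spectral data for $\cR$ and $\cJ$, use Lemma \ref{Z42} (with $\|f_1\|^2=6(2+\sqrt{3})$, $\|f_2\|^2=6(2-\sqrt{3})\in\Q(\sqrt{3})$, neither equal and neither equal to $2$ or $3$) to reduce to Case I with $(\omega_1,\omega_2)=(1,\zeta_3^{\pm1})$, and then exploit the vanishing $f_0'(g_2)=0$ exactly as in the $\Z_4$ argument to force $\xi_1(0)=\pm\sqrt{2}\,\eta_1(0)$ and derive a contradiction via Eq.(\ref{I52}). The paper's own proof is in fact just the one-line remark ``since $f_0'(g_2)=0$, the statement follows exactly as in the $\Z_4$ case,'' so your write-up is a faithful unpacking of that.
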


\begin{proof}
In this case we can choose $c=\ii$, and 
$$\cR=\frac{-\ii}{2}\left(
\begin{array}{cccc}
1 &0 &0 &0  \\
0 &-\ii &0 &0  \\
0 &0 &-\ii &0  \\
0 &0 &0 &-1 
\end{array}
\right)
\left(
\begin{array}{cccc}
1 &1 &1 &1  \\
1 &-1&1 &-1  \\
1 &1 &-1&-1  \\
1 &-1 &-1 &1 
\end{array}
\right)
=\frac{1}{2}
\left(
\begin{array}{cccc}
-\ii &-\ii &-\ii &-\ii  \\
-1 &1 &-1 &1  \\
-1 &-1 &1 &1  \\
\ii &-\ii &-\ii &\ii 
\end{array}
\right),
$$
$$J\left(
\begin{array}{c}
f(0)  \\
f(g_0)  \\
f(g_1)  \\
f(g_2) 
\end{array}
\right)
=\left(
\begin{array}{c}
\overline{f(0)}  \\
-\ii\overline{f(g_0)}  \\
-\ii\overline{f(g_1)} \\
-\overline{f(g_2)} 
\end{array}
\right).
$$
Let 
$$f_0=\left(
\begin{array}{c}
1  \\
-\frac{\zeta_8^{-1}}{\sqrt{2}}  \\
-\frac{\zeta_8^{-1}}{\sqrt{2}}  \\
\ii 
\end{array}
\right),\quad
f_0'=
\left(
\begin{array}{c}
0  \\
\zeta_8^{-1}\ii  \\
-\zeta_8^{-1}\ii  \\
0 
\end{array}
\right),\quad 
f_1=
\left(
\begin{array}{c}
1  \\
\frac{\zeta_8^{-1}}{2\cos\frac{7\pi}{12}}  \\
\frac{\zeta_8^{-1}}{2\cos\frac{7\pi}{12}}  \\
\tan\frac{7\pi}{12}\ii
\end{array}
\right),\quad 
f_2=
\left(
\begin{array}{c}
1  \\
\frac{\zeta_8^{-1}}{2\cos\frac{\pi}{12}}  \\
\frac{\zeta_8^{-1}}{2\cos\frac{\pi}{12}}  \\
-\tan\frac{\pi}{12}\ii
\end{array}
\right).
$$
Then $\cR f_0=f_0$, $\cR f_0'=f_0'$, $\cR f_1=\zeta_3f_1$, $\cR f_2=\zeta_3^2f_2$, and 
they are invariant under $\cJ$. 
They form a (non-normalized) orthogonal basis of $\ell^2(\Z_2\times \Z_2)$ as well as 
the real subspace of $\cJ$-invariant vectors. 
We have 
$$\|f_1\|^2=\frac{3}{2\cos^2\frac{7\pi}{12}}=6(2+\sqrt{3}),$$
$$\|f_2\|^2=\frac{3}{2\cos^2\frac{\pi}{12}}=6(2-\sqrt{3}).$$

Since $f_0'(g_2)=0$, we can show the statement in exactly the same way as in the case of $\Z_4$. 
\end{proof}

\begin{lemma} There is no C$^*$ near-group category for $\inpr{\cdot}{\cdot}_1$ and $a(g_0)=-a(g_1)=\ii$, $a(g_2)=1$. 
\end{lemma}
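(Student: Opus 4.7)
The proof follows the same architecture as the three preceding case analyses. First, I would compute the Fourier transform $\hat{a}(0) = \tfrac{1}{2}(1 + \ii - \ii + 1) = 1$ for the given cocycle, which singles out $c$ as a cube root of unity (one may as well take $c = 1$, since the other choices simply permute the $\omega_i$). With this $c$ fixed I would write down the $4 \times 4$ matrices of $\cR$ and $\cJ$ explicitly in the basis $\{\delta_{0}, \delta_{g_0}, \delta_{g_1}, \delta_{g_2}\}$; the structure of $\inpr{\cdot}{\cdot}_1$ combined with the current $a$ produces a different diagonal correction than in the previous lemma, but the basic shape is parallel.

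Second, I would diagonalize $\cR$ and select a $\cJ$-invariant orthogonal eigenbasis $f_0, f_0', f_1, f_2$ with $\cR f_0 = f_0$, $\cR f_0' = f_0'$ (normalized by $f_0(0)=1$, $f_0'(0)=0$), $\cR f_i = \zeta_3^i f_i$ (normalized by $f_i(0)=1$) for $i = 1, 2$. A direct computation then yields $\|f_1\|^2$ and $\|f_2\|^2$ as explicit rational expressions in $\cos$ of a rational multiple of $\pi$, which lie in $\Q(\sqrt{3})$ and in particular are disjoint from the values $\{2,3\}$. Thus $\sqrt 5 \notin \Q(\sqrt{3}, \|f_1\|^2, \|f_2\|^2)$, and Lemma \ref{Z42} eliminates Case II outright, as well as both variants of the $(\omega_1,\omega_2) = (1,1)$ and $(1,\zeta_3)\cdot\zeta_3^{\pm1}$ subcases where $\mu_1(0)^2 \|f_1\|^2 + \mu_2(0)^2\|f_2\|^2$ would have to equal $1/3$ or $(1-2/d)/3$. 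This leaves only the subcases $(\omega_1,\omega_2) = (1,\zeta_3^{\pm 1})$ of Case I governed by Lemma \ref{CaseI5}.

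To close these remaining subcases I would mimic the endgame of Theorem \ref{Z41}: because $\dim \ker(\cR - \zeta_3) = 1$, the functions $\xi_2$ and $\eta_2$ must be proportional to $f_1$, so $\xi_2 = \xi_1(0) f_1$ and $\eta_2 = -\eta_1(0) f_1$ at every point. Identifying an element $g_\star \in \Z_2 \times \Z_2$ with $f_0'(g_\star) = 0$ (which exists because $f_0'$ spans the $\cJ$-fixed part of $\ker(\cR - 1) \cap \ker(\mathrm{ev}_0)$, hence vanishes on a coset determined by the cocycle $a$), evaluation of the orthogonality relations Lemma \ref{CaseI1}(4) at $g_\star$ forces $\xi_1(0)^2 = 2\eta_1(0)^2$. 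Feeding this identity into Eq.(\ref{I52}) determines $\|\mu_2\|^2$, hence $\mu_2(0)^2 \|f_2\|^2 = 1/(3d)$, while case (1) of Lemma \ref{CaseI5} prescribes $\mu_2(0)$ in terms of $\kappa_1, \eta_1(0)$ and the other choices. Combining these two expressions yields a polynomial equation in $\eta_1(0)$ with coefficients in $\Q(\sqrt{3},\sqrt{5})$ which, after substituting the explicit $\|f_2\|^2 \in \Q(\sqrt{3})$, has no real solution; the case (2) with $\eta_1(0) = 0$ is ruled out by the same dimensional obstruction that appeared in the proof of Lemma \ref{CaseI3}.

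The principal obstacle is arithmetic rather than conceptual: one must verify that for this particular $a$ the vector $f_0'$ indeed vanishes on a nontrivial element, and that the resulting quadratic in $\eta_1(0)^2$ has discriminant whose positivity is incompatible with $d = 4 + 2\sqrt{5}$. If in some unlucky normalization $f_0'$ has full support, an alternative argument, comparing the equations of Lemma \ref{CaseI1}(4) at the two nontrivial elements of $\{\chi\}^\perp$ against the rigid constraint $\|\mu_1\|^2 + \|\mu_2\|^2 = 1/3$, produces the same contradiction; either way the computation is finite and explicit.
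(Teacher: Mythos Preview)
Your setup and reduction are correct in spirit: for this cocycle one has $\hat a(0)=1$, one takes $c=1$, diagonalises $\cR$, and Lemma~\ref{Z42} leaves only Case~I with $(\omega_1,\omega_2)=(1,\zeta_3^{\pm1})$. But your endgame has a genuine gap. The key step you borrow from Theorem~\ref{Z41} is the existence of a nonzero $g_\star$ with $f_0'(g_\star)=0$; for the present $a$ this is simply false. The paper's explicit eigenvector is
\[
f_0'=\Big(0,\ \tfrac{\zeta_8^{-1}}{\sqrt 2},\ \tfrac{\zeta_8}{\sqrt 2},\ -1\Big)^{T},
\]
which has full support on $G\setminus\{0\}$. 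So the identity $\xi_1(0)^2=2\eta_1(0)^2$ cannot be extracted by your route, and your fallback (``comparing the equations at the two nontrivial elements of $\{\chi\}^\perp$'') invokes Case~III notation that has no meaning in Case~I; it is not a proof.

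What you missed is the special feature of this cocycle that the paper exploits instead: here $\|f_1\|^2=\|f_2\|^2=6$ (they are rational, not irrational expressions in cosines). With equal norms, Eq.~(\ref{I52}) already reads $6(\xi_1(0)^2-2\eta_1(0)^2)-18\mu_2(0)^2=-1/d$, i.e.
\[
3\mu_2(0)^2-\xi_1(0)^2+2\eta_1(0)^2=\tfrac{\sqrt 5-2}{12},
\]
with no need to pin down $\xi_1(0)^2$ against $\eta_1(0)^2$. This relation alone kills case~(2) of Lemma~\ref{CaseI5} (the left side would be negative), and in case~(1) substituting the formulae for $\xi_1(0)$ and $\mu_2(0)=\mu_{\mp1}(0)$ collapses to $\kappa_1\sqrt{1-16\eta_1(0)^2}=(3+\sqrt 5)/3>1$, a contradiction. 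The moral is that each cocycle on $\Z_2\times\Z_2$ has its own arithmetic accident, and the $\Z_4$ template does not transfer uniformly.
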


\begin{proof}
In this case we can choose $c=1$, and 
$$\cR=\frac{1}{2}\left(
\begin{array}{cccc}
1 &0 &0 &0  \\
0 &-\ii &0 &0  \\
0 &0 &\ii &0  \\
0 &0 &0 &1 
\end{array}
\right)
\left(
\begin{array}{cccc}
1 &1 &1 &1  \\
1 &-1&1 &-1  \\
1 &1 &-1&-1  \\
1 &-1 &-1 &1 
\end{array}
\right)
=\frac{1}{2}
\left(
\begin{array}{cccc}
1 &1 &1 &1  \\
-\ii &\ii &-\ii &\ii  \\
\ii &\ii &-\ii &-\ii  \\
1 &-1 &-1 &1 
\end{array}
\right),
$$
$$J\left(
\begin{array}{c}
f(0)  \\
f(g_0)  \\
f(g_1)  \\
f(g_2) 
\end{array}
\right)
=\left(
\begin{array}{c}
\overline{f(0)}  \\
-\ii\overline{f(g_0)}  \\
\ii\overline{f(g_1)} \\
\overline{f(g_2)} 
\end{array}
\right).
$$
Let 
$$f_0=\left(
\begin{array}{c}
1  \\ 
\frac{\zeta_8^{-1}}{2\sqrt{2}}  \\
\frac{\zeta_8}{2\sqrt{2}}  \\
\frac{1}{2}
\end{array}
\right),\;
f_0'=
\left(
\begin{array}{c}
0  \\
\frac{\zeta_8^{-1}}{\sqrt{2}}  \\
\frac{\zeta_8}{\sqrt{2}}  \\
-1
\end{array}
\right),\;
f_1=
\left(
\begin{array}{c}
1  \\
-2\cos\frac{\pi}{12}\zeta_8^{-1}  \\
2\sin\frac{\pi}{12}\zeta_8  \\
-1
\end{array}
\right),\; 
f_2=
\left(
\begin{array}{c}
1  \\
2\sin\frac{\pi}{12}\zeta_8^{-1}  \\  
-2\cos\frac{\pi}{12}\zeta_8 \\
-1
\end{array}
\right).
$$
Then $\cR f_0=f_0$, $\cR f_0'=f_0'$, $\cR f_1=\zeta_3f_1$, $\cR f_2=\zeta_3^2f_2$, and 
they are invariant under $\cJ$. 
They form a (non-normalized) orthogonal basis of $\ell^2(\Z_2\times \Z_2)$ as well as 
the real subspace of $\cJ$-invariant vectors. 
We have $\|f_1\|^2=\|f_2\|^2=6$.

Thanks to Lemma \ref{Z42}, the only remaining case is Case I with $(\omega_1,\omega_2)=(1,\zeta_3^{\pm1})$. 
We assume $(\omega_1,\omega_2)=(1,\zeta_3)$ because the other case can be treated in the same way. 
Eq.(\ref{I52}) implies 
$$(\xi_1(0)^2-2\eta_1(0)^2)\|f_1\|^2-3\mu_2(0)^2\|f_2\|^2=-\frac{1}{d},$$
and so 
$$3\mu_2(0)^2-\xi_1(0)^2+2\eta_1(0)^2=\frac{\sqrt{5}-2}{12}.$$
This shows that case (2) in Lemma \ref{CaseI5} never occurs because the left-hand side would be negative 
in that case. 
We assume that the case (1) in Lemma \ref{CaseI5} holds. 
Then we would get\begin{align*}
\lefteqn{\frac{\sqrt{5}-2}{12}=3\mu_2(0)^2-\xi_1(0)^2+2\eta_1(0)^2} \\
 &=3(-\frac{\sqrt{5}-2}{12}+\frac{\kappa_1\sqrt{1-16\eta_1(0)^2}}{4})^2-
 (\frac{\sqrt{5}-2}{4}+\frac{\kappa_1\sqrt{1-16\eta_1(0)^2}}{4})^2+2\eta_1(0)^2 \\
 &=\frac{-3+2\sqrt{5}}{12}-\frac{(\sqrt{5}-2)\kappa_1\sqrt{1-16\eta_1(0)^2}}{4},
\end{align*}
and 
$$\kappa_1\sqrt{1-16\eta_1(0)^2}=\frac{3+\sqrt{5}}{3}>1.$$
This is contradiction. 
\end{proof}

Up to group automorphism there are exactly two $a(g)$ for $\inpr{\cdot}{\cdot}_2$, the one given by 
$a(g_0)=a(g_1)=1,a(g_2)=-1$, and the one given by $a(g_0)=a(g_1)=a(g_2)=-1$. 

\begin{lemma} There is no C$^*$ near-group category for $\inpr{\cdot}{\cdot}_2$ and $a(g_0)=a(g_1)=1$, $a(a_2)=-1$. 
\end{lemma}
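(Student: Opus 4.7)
The plan is to follow the template established in the preceding two lemmas of this subsection, adapting it to the new choice of $(\inpr{\cdot}{\cdot}_2, a)$. First, I would fix the normalization: since $a(0)=a(g_0)=a(g_1)=1$ and $a(g_2)=-1$, we get $\hat a(0)=\frac{1}{2}(1+1+1-1)=1$, so the condition $c^3\hat a(0)=1$ allows the choice $c=1$. Writing out the defining formulas, $\cR$ becomes the explicit $4\times 4$ matrix whose rows are $\tfrac{1}{2}(1,1,1,1)$, $\tfrac{1}{2}(1,1,-1,-1)$, $\tfrac{1}{2}(1,-1,1,-1)$, $\tfrac{1}{2}(-1,1,1,-1)$, and $\cJ$ acts by $\cJ f(g)=a(g)\overline{f(g)}$ (using that every element of $\Z_2\times\Z_2$ has order two).

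Next, I would read off the spectral decomposition of $\cR$. Since $\mathrm{tr}\,\cR=1$ and $\cR^3=I$, the multiplicities of the eigenvalues $1,\zeta_3,\zeta_3^2$ are $2,1,1$. Solving $\cR f=\zeta_3 f$ normalized by $f(0)=1$ yields $f_1=(1,-1,-1,1+2\zeta_3)^T$, which one checks is $\cJ$-invariant; its complex conjugate $f_2=\cJ f_1$ generates $\ker(\cR-\zeta_3^2)$. A direct computation gives $\|f_1\|^2=\|f_2\|^2=3+(1+2\zeta_3)(1+2\zeta_3^2)=3+3=6$. Since $\sqrt5\notin\Q(\sqrt3,6,6)=\Q(\sqrt3)$, Lemma \ref{Z42} applies, ruling out Case II (both subcases), Case III (by the earlier even-order lemma), Case IV, the subcases (1) of Lemmas \ref{CaseI3} and \ref{CaseI4}, and the subcases (2) of those lemmas (which require $\|f_r\|^2=3$). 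By Lemma \ref{CaseI3} and the fact that $\zeta_3^2\in\mathrm{spec}(\cR)$, the only remaining possibility is Case I with $(\omega_1,\omega_2)=(1,\zeta_3^{\pm1})$.

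For this surviving case I would use Eq.(\ref{I52}) exactly as in the previous two lemmas. By the symmetry $\omega_2\leftrightarrow\omega_2^{-1}$ (obtained by complex conjugation) it suffices to treat $(\omega_1,\omega_2)=(1,\zeta_3)$. Since $\dim\ker(\cR-\zeta_3)=1$, we have $\xi_2=\xi_1(0)f_1$, $\eta_2=-\eta_1(0)f_1$, $\mu_2=\mu_2(0)f_2$, and using $\|f_1\|^2=\|f_2\|^2=6$ together with $1/d=(\sqrt5-2)/2$, Eq.(\ref{I52}) rewrites as
\begin{equation*}
\xi_1(0)^2-2\eta_1(0)^2-3\mu_2(0)^2=\tfrac{2-\sqrt5}{12}.
\end{equation*}
Substituting the parametric forms from case (1) of Lemma \ref{CaseI5} with $n=4$, setting $t=\kappa_1\sqrt{1-16\eta_1(0)^2}$, and expanding, the coefficient of $t^2$ cancels and the equation reduces to $3(\sqrt5-2)t=\sqrt5-1$, i.e.\ $t=1+\sqrt5/3>1$, contradicting $|t|\le 1$. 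Substituting instead the parametric forms from case (2) of Lemma \ref{CaseI5} yields $\kappa_1(\sqrt5-2)=\sqrt5-3$, forcing $\kappa_1=-1-\sqrt5\notin\{\pm1\}$, again a contradiction. This exhausts all possibilities.

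The main obstacle I anticipate is none of the individual steps but rather bookkeeping: the sign conventions $(\omega_1,\omega_2)$ versus $(\pm 1,\pm)$ in Lemma \ref{CaseI5}, and the correspondence between which of $\mu_{\pm 1}$ lies in $\ker(\cR-\zeta_3)$ versus $\ker(\cR-\zeta_3^2)$, must be tracked carefully so that the right $\|f_r\|^2$ enters Eq.(\ref{I52}). Everything else is a finite algebraic verification of the same shape as the two preceding lemmas.
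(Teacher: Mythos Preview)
Your proof is correct. The paper's own proof is a one-line pointer: it exhibits the eigenvectors $f_0=(1,\tfrac12,\tfrac12,0)^T$, $f_0'=(0,1,-1,0)^T$, $f_1=(1,-1,-1,\sqrt3\,\ii)^T$, $f_2=(1,-1,-1,-\sqrt3\,\ii)^T$ (your $1+2\zeta_3=\sqrt3\,\ii$, so these agree), notes $\|f_1\|^2=\|f_2\|^2=6$, and then says ``Since $f_0'(g_2)=0$, we can show the statement in exactly the same way as in the case of $\Z_4$.'' That $\Z_4$ argument uses the pointwise equation from Lemma~\ref{CaseI1}(4) at the special group element where $f_0'$ vanishes to extract the intermediate constraint $\xi_1(0)^2=2\eta_1(0)^2$, which immediately kills case~(2) of Lemma~\ref{CaseI5} and simplifies Eq.~(\ref{I52}) before the final numerical contradiction.

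Your route is slightly different: you skip the pointwise evaluation entirely and substitute the parametric forms of Lemma~\ref{CaseI5} directly into Eq.~(\ref{I52}), using $\kappa_1^2=1$ to reduce case~(2) to a linear equation in $\kappa_1$. This is exactly the approach the paper uses for the \emph{preceding} $\Z_2\times\Z_2$ lemma (the one with $\inpr{\cdot}{\cdot}_1$ and $a(g_0)=-a(g_1)=\ii$), which also has $\|f_1\|^2=\|f_2\|^2=6$; indeed your case~(1) contradiction $t=(3+\sqrt5)/3>1$ is literally the same number obtained there. So both templates from earlier lemmas work here; the paper happened to cite one and you used the other. One small expository point: your sentence ``By Lemma~\ref{CaseI3} and the fact that $\zeta_3^2\in\mathrm{spec}(\cR)$'' is not quite the right reason for eliminating the diagonal variants $(\omega_1,\omega_2)=(\zeta_3^i,\zeta_3^i)$ with $i\neq 0$; what actually rules them out is the dimension condition $\dim\ker(\cR-\zeta_3^i)\geq 2$ from (the remark after) Lemma~\ref{CaseI3}, which fails since those eigenspaces are one-dimensional.
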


\begin{proof}
In this case, we can choose $c=1$, and 
$$\cR=\frac{1}{2}\left(
\begin{array}{cccc}
1 &0 &0 &0  \\
0 &1 &0 &0  \\
0 &0 &1 &0  \\
0 &0 &0 &-1 
\end{array}
\right)
\left(
\begin{array}{cccc}
1 &1 &1 &1  \\
1 &1 &-1 &-1  \\
1 &-1 &1 &-1  \\
1 &-1 &-1 &1 
\end{array}
\right)
=\frac{1}{2}\left(
\begin{array}{cccc}
1 &1 &1 &1  \\
1 &1 &-1 &-1  \\
1 &-1 &1 &-1  \\
-1 &1 &1 & -1
\end{array}
\right),
$$
$$\cJ \left(
\begin{array}{c}
f(0)  \\
f(g_0)  \\
f(g_1)  \\
f(g_2) 
\end{array}
\right)
=\left(
\begin{array}{c}
\overline{f(0)}  \\
\overline{f(g_0)}  \\
\overline{f(g_1)}  \\
-\overline{f(g_2)} 
\end{array}
\right). 
$$
Let 
$$f_0=\left(
\begin{array}{c}
1  \\ 
\frac{1}{2}  \\  
\frac{1}{2}  \\
0
\end{array}
\right),\;
f_0'=
\left(
\begin{array}{c}
0  \\
1  \\
-1 \\
0
\end{array}
\right),\;
f_1=
\left(
\begin{array}{c}
1  \\
-1  \\
-1  \\
\sqrt{3}\ii
\end{array}
\right),\; 
f_2=
\left(
\begin{array}{c}
1  \\
-1  \\  
-1\\
-\sqrt{3}\ii
\end{array}
\right).
$$
Then $\cR f_0=f_0$, $\cR f_0'=f_0'$, $\cR f_1=\zeta_3f_1$, $\cR f_2=\zeta_3^2f_2$, and 
they are invariant under $\cJ$. 
They form a (non-normalized) orthogonal basis of $\ell^2(\Z_2\times \Z_2)$ as well as 
the real subspace of $\cJ$-invariant vectors. 
We have $\|f_1\|^2=\|f_2\|^2=6$. 

Since $f_0'(g_3)=0$, we can show the statement in exactly the same way as in the case of $\Z_4$. 
\end{proof}

\begin{lemma} There is no C$^*$ near-group category for $\inpr{\cdot}{\cdot}_2$ and $a(g_0)=a(g_1)=a(g_2)=-1$. 
\end{lemma}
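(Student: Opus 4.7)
The plan is to follow the same four-step pattern as the four preceding lemmas. First I would compute $\hat a(0) = \tfrac{1}{2}(1 - 1 - 1 - 1) = -1$, take $c = -1$, and write the period-three unitary $\cR$ explicitly, together with $\cJ f(g) = \overline{a(g) f(g)}$ (which simplifies because $-g = g$ for all $g \in \Z_2 \times \Z_2$). A direct calculation yields $\ker(\cR - 1) = \{(0, a, b, c) : a + b + c = 0\}$, which is two-dimensional with the crucial property that every $f$ in it satisfies $f(0) = 0$; while $\ker(\cR - \zeta_3^r)$ is one-dimensional for $r = 1, 2$, spanned by $\cJ$-invariant vectors $f_1 = (1, -\ii/\sqrt{3}, -\ii/\sqrt{3}, -\ii/\sqrt{3})$ and $f_2 = \cJ f_1$, with $\|f_1\|^2 = \|f_2\|^2 = 2$.

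Since $\sqrt{5} \notin \Q(\sqrt{3})$, Lemma \ref{Z42} immediately rules out part (1) of Lemmas \ref{CaseI3}, \ref{CaseI4}, \ref{CaseII3}, and also part (2) of Lemmas \ref{CaseI3} and \ref{CaseI4} (which would require $\|f_r\|^2 = 3$); Case II with $\omega \in \{\zeta_3, \zeta_3^{-1}\}$ is excluded by the dimension hypothesis of Lemma \ref{CaseII3}. For the remaining Case I configurations in Lemma \ref{CaseI5} (where $\omega_1 = 1$ or $\omega_2 = 1$), the eigenfunction $\xi_r$ with $\omega_r = 1$ lies in $\ker(\cR - 1)$ and therefore satisfies $\xi_r(0) = 0$, contradicting the nonzero value prescribed by Lemma \ref{CaseI5}. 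Similarly, part (2) of Lemma \ref{CaseI4} requires $\mu_0(0) = -\tfrac{1}{6d} - \tfrac{\kappa_1}{6\sqrt{n}} \neq 0$, but $\mu_0 \in \ker(\cR - 1)$ forces $\mu_0(0) = 0$.

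Only part (2) of Lemma \ref{CaseII3} with $\omega = 1$ survives, precisely the configuration flagged as plausible by Lemma \ref{Z42}(4). Here $\xi = \eta = 0$, $\mu_0(0) = 0$, and $\mu_r = \mu_r(0) f_r$ for $r = 1, 2$. Substituting the prescribed values of $\mu_r(0)$ and using $f_1(g) = -\ii/\sqrt{3}$, $f_2(g) = \ii/\sqrt{3}$ for $g \neq 0$, one readily finds $\mu(g) = \mu_0(g) + \tfrac{1}{3d}$ for $g \neq 0$, where $\mu_0(g) \in \R$. The constraint $|\mu(g)|^2 = 1/n = 1/4$ from Lemma \ref{CaseII1}(4) then forces $\mu_0(g) \in \{-\tfrac{1}{3d} + \tfrac{1}{2},\, -\tfrac{1}{3d} - \tfrac{1}{2}\}$ at each $g \neq 0$; summing over $g \in \{g_0, g_1, g_2\}$ and using $\mu_0(g_0) + \mu_0(g_1) + \mu_0(g_2) = 0$ yields a sum of three $\pm \tfrac{1}{2}$'s equal to $\tfrac{1}{d} = \tfrac{\sqrt{5} - 2}{2}$, which is impossible since such sums lie in $\{-\tfrac{3}{2}, -\tfrac{1}{2}, \tfrac{1}{2}, \tfrac{3}{2}\}$.

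The main obstacle is bookkeeping: correctly tracking which subcases of Lemmas \ref{CaseI3}--\ref{CaseII3} remain after applying Lemma \ref{Z42} with $\|f_r\|^2 = 2$. Once the structural observation $f(0) = 0$ on $\ker(\cR - 1)$ is exploited to dispatch all of Case I uniformly, the final Case II subcase is killed by a very short arithmetic computation; no analogue of the norm-ratio estimates from the $\Z_4$ argument is needed.
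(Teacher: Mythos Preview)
Your eigenspace computations and the overall case subdivision are correct and match the paper's. The invocation of Lemma~\ref{Z42} to eliminate cases (1) and (2) of Lemmas~\ref{CaseI3}, \ref{CaseI4} and case~(1) of Lemma~\ref{CaseII3} is fine, and your use of the observation $f(0)=0$ for all $f\in\ker(\cR-1)$ to dispatch Lemma~\ref{CaseI5} is exactly what the paper does. (You omit the subcases $\omega_1=\omega_2\in\{\zeta_3,\zeta_3^{-1}\}$, but these are killed by the requirement $\dim\ker(\cR-\zeta_3^{\pm1})\geq 2$ from the remark after Lemma~\ref{CaseI3}.)

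The genuine gap is in your treatment of case~(2) of Lemma~\ref{CaseII3}. You assert that ``here $\xi=\eta=0$'', but Lemma~\ref{CaseII3}(2) only gives $\xi(0)=\eta(0)=0$. Since $\xi,\eta\in\ker(\cR-\omega)=\ker(\cR-1)$ and \emph{every} element of that eigenspace vanishes at $0$, this condition is automatic and tells you nothing about $\xi(g_i)$ or $\eta(g_i)$. Your arithmetic conclusion rests on $|\mu(g_i)|^2=1/4$, which follows from the second equation in Lemma~\ref{CaseII1}(4) only if $\xi(g_i)=\eta(g_i)=0$; that is precisely what has to be proved. The paper does not bypass this: it writes out $\mu(g_i)$, $\cR\mu(g_i)$, $\cR^2\mu(g_i)$ in terms of $p_i=p\zeta_3^i+\bar p\zeta_3^{-i}$, substitutes $g=g_i$ into \emph{all} the equations of Lemma~\ref{CaseII1}(4), combines the last two to force $\eta(g_i)=0$, and then obtains a contradiction between the first two and the constraint coming from $\eta(g_i)^2+\xi(g_i)(p_i+\tfrac{\sqrt5-2}{6})=0$. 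Your sum-of-$\pm\tfrac12$'s trick is a clean way to finish \emph{after} one has shown $\xi(g_i)=\eta(g_i)=0$, but that step is the substance of the argument and is missing from your proposal.
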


\begin{proof}
In this case, we can choose $c=-1$, and 
$$\cR=\frac{-1}{2}\left(
\begin{array}{cccc}
1 &0 &0 &0  \\
0 &-1 &0 &0  \\
0 &0 &-1 &0  \\
0 &0 &0 &-1 
\end{array}
\right)
\left(
\begin{array}{cccc}
1 &1 &1 &1  \\
1 &1 &-1 &-1  \\
1 &-1 &1 &-1  \\
1 &-1 &-1 &1 
\end{array}
\right)
=\frac{1}{2}\left(
\begin{array}{cccc}
-1 &-1 &-1 &-1  \\
1 &1 &-1 &-1  \\
1 &-1 &1 &-1  \\
1 &-1 &-1 & 1
\end{array}
\right),
$$
$$\cJ \left(
\begin{array}{c}
f(0)  \\
f(g_0)  \\
f(g_1)  \\
f(g_2) 
\end{array}
\right)
=\left(
\begin{array}{c}
\overline{f(0)}  \\
-\overline{f(g_0)}  \\
-\overline{f(g_1)}  \\
-\overline{f(g_2)} 
\end{array}
\right). 
$$
Let 
$$f_0=\left(
\begin{array}{c}
0  \\ 
1  \\  
\zeta_3  \\
\zeta_3^2
\end{array}
\right),\;
f_0'=
\left(
\begin{array}{c}
0  \\
1  \\
\zeta_3^2 \\
\zeta_3
\end{array}
\right),\;
f_1=
\left(
\begin{array}{c}
1  \\
-\frac{\ii}{\sqrt{3}}  \\
-\frac{\ii}{\sqrt{3}}  \\
-\frac{\ii}{\sqrt{3}}
\end{array}
\right),\; 
f_2=
\left(
\begin{array}{c}
1  \\
\frac{\ii}{\sqrt{3}}  \\  
\frac{\ii}{\sqrt{3}}\\
\frac{\ii}{\sqrt{3}}
\end{array}
\right).
$$
Then $\cR f_0=f_0$, $\cR f_0'=f_0'$, $\cR f_1=\zeta_3f_1$, $\cR f_2=\zeta_3^2f_2$, and 
$\cJ f_0=-f_0'$, $\cJ f_0'=-f_0$, $\cJ f_1=f_1$, $\cJ f_2=f_2$. 
They form a (non-normalized) orthogonal basis of $\ell^2(\Z_2\times \Z_2)$ as well as 
the real subspace of $\cJ$-invariant vectors. 
We have $\|f_1\|^2=\|f_2\|^2=2$. 

Lemma \ref{Z42} and $f_0(0)=f_0'(0)=0$ imply that the only possible case is the case (2) of Lemma \ref{CaseII3}. 
Thus there exist $p\in \C$ and $\kappa\in \{1,-1\}$ such that $\mu_0=pf_0+\overline{p}f_0'$, and 
$$\mu=pf_0+\overline{p}f_0'+ (\frac{\kappa}{4}+\frac{\sqrt{5}-2}{4\sqrt{3}})\ii f_1+(\frac{\kappa}{4}-\frac{\sqrt{5}-2}{4\sqrt{3}})\ii f_2.$$
Let $p_i=p\zeta_3^i+\overline{p}\zeta_3^{-i}\in \R$. 
Then
$$\mu(0)=\frac{\kappa}{2}\ii,\quad \mu(g_i)=p_i+\frac{\sqrt{5}-2}{6},$$
$$\cR\mu(0)=-\frac{\sqrt{5}-2}{4}-\frac{\kappa}{4}\ii,\quad 
\cR\mu(g_i)=p_i-\frac{\sqrt{5}-2}{12}+\frac{\kappa}{4}\ii,$$
$$\cR^2\mu(0)=\frac{\sqrt{5}-2}{4}-\frac{\kappa}{4}\ii,\quad 
\cR^2\mu(g_i)=p_i-\frac{\sqrt{5}-2}{12}-\frac{\kappa}{4}\ii.$$

Now Lemma \ref{CaseI1},(4) with $g=g_i$ is equivalent to 
\begin{equation}\label{Z2Z21}
p_i^2-\frac{\sqrt{5}-2}{6}p_i+|\eta(g_i)|^2=\frac{\sqrt{5}}{36},
\end{equation}
\begin{equation}\label{Z2Z22}
|\xi(g_i)|^2+2|\eta(g_i)|^2+p_i^2+\frac{\sqrt{5}-2}{3}p_i=\frac{\sqrt{5}}{9},
\end{equation}
\begin{equation}\label{Z2Z23}
\xi(g_i)\overline{\eta(g_i)}=(p_i-\frac{\sqrt{5}-2}{3})\eta(g_i),
\end{equation}
\begin{equation}\label{Z2Z24}
\eta(g_i)^2+\xi(g_i)(p_i+\frac{\sqrt{5}-2}{6})=0. 
\end{equation}
From Eq.(\ref{Z2Z23}) and (\ref{Z2Z24}), we have 
\begin{align*}
\lefteqn{0=\eta(g_i)^2\overline{\eta(g_i)}+\xi(g_i)\overline{\eta(g_i)}(p_i+\frac{\sqrt{5}-2}{6})} \\
 &=\eta(g_i)^2\overline{\eta(g_i)}+\eta(g_i)(p_i-\frac{\sqrt{5}-2}{3})(p_i+\frac{\sqrt{5}-2}{6}) \\
 &=\eta(g_i)(|\eta(g_i)|^2+p_i^2-\frac{\sqrt{5}-2}{6}p_i-\frac{9-4\sqrt{5}}{18}). 
\end{align*}
and $\eta(g_i)=0$. 
Eq.(\ref{Z2Z24}) implies either $\xi(g_i)=0$ or $p_i=-(\sqrt{5}-2)/6$, 
which in either case is not compatible with Eq.(\ref{Z2Z21}), (\ref{Z2Z22}). 
\end{proof}
\section{$2^G_l$1 subfactors}
Let $n$ and $l$ be natural numbers. 
A bipartite graph is said to be $2^n_l1$ if the following conditions hold: 
\begin{itemize}
\item[(1)] the set of even vertices is $\{v_i\}_{i=0}^{n-1}\cup \{v_\rho\}$,
\item[(2)] the set of odd vertices is $\{w\}_{i=0}^{n-1}\cup\{w_\pi\}$,  
\item[(3)] the only non-zero entries of the incidence matrix $\Gamma$ are 
$$\Gamma_{v_i,w_i}=\Gamma_{w_i,v_i}=1,$$
$$\Gamma_{w_i,v_\rho}=\Gamma_{v_\rho,w_i}=l,$$
$$\Gamma_{v_\rho,w_\pi}=\Gamma_{w_\pi,v}=1.$$
\end{itemize}

Let $N\subset M$ be a subfactor whose principal graph is $2^n_l1$. 
More precisely, by the principal graph, we mean the induction reduction graph for $M-M$ and $M-N$ sectors. 
Then the vertices $\{v_i\}_{i=0}^{n-1}$ correspond to automorphisms, forming a group $G$ in $\Out(M)$, and 
we denote them by $\{\alpha_g\}_{g\in G}$. 
We call such a subfactor $2^G_l1$.  
When $l=1$, we just call it a $2^G1$ subfactor. 
For example, the $E_6$ subfactor is $2^{\Z_2}1$. 
Let $\rho$ be the endomorphism of $M$ corresponding to $v_\rho$. 
Then $\{\alpha_g\}_{g\in G}\cup\{\rho\}$ generate a C$^*$ near-group category for $G$ with $m=ln$. 
In particular $G$ is always abelian. 
In this section, we determine the structure of $2^G_l1$ subfactors in terms of the corresponding 
C$^*$ near group categories.

\begin{figure}[htbp]
\begin{center}
\hspace{10pt}
\begin{xy}\ar@{-}(0,0)*{\mathrm{id}};(10,0)*{\iota}="A",
\ar@{=}"A";(20,00)*{\rho}="B",
\ar@{=}"B";(30,00)*{\alpha^2\iota}="C",
\ar@{-}"C";(40,00)*{\alpha^2},
\ar@{-}"B";(20,10)*{\pi},
\ar@{=}"B";(20,-10)*{\alpha\iota}="D",
\ar@{-}"D";(20,-20)*{\alpha}
\end{xy}
\end{center}
\caption{$2^{\Z_3}_21$ subfactor}
\end{figure}

Let $N\subset M$ be a $2^G_l1$ subfactor and let $\{\alpha_g\}_{g\in G}$ and $\rho$ be as above. 
We may and do assume $\alpha_g\circ \rho=\rho$, and we use the same notation as in the previous sections 
for the restriction of $\alpha_g$ and $\rho$ to the intertwiner spaces $(\alpha_g,\rho^2)$, 
and $(\rho,\rho^2)$. 
Let $\iota :N\hookrightarrow M$ be the inclusion map. 
Then odd vertices $\{w_i\}_{i=0}^{n-1}$ correspond to $\{\alpha_g\circ\iota\}_{g\in G}$. 
Let $\pi:N\rightarrow M$ be the homomorphism corresponding to $w_\pi$. 
Direct computation shows $d(\iota)=d/\sqrt{n}$, $d(\pi)=\sqrt{n}$, and $[\rho]=[\pi\biota]$. 
Since $\rho$ is self-conjugate, we have $[\rho]=[\iota\bpi]$ too. 
Since $[\alpha_g\circ\pi]=[\pi]$ and $d(\pi)=\sqrt{n}$, we have 
$$[\pi\bpi]=\bigoplus_{g\in G}[\alpha_g].$$

From $[\rho]=[\iota\bpi]$, we may assume $\rho=\iota\bpi$ by replacing $\rho$ with $\Ad v\circ \rho$ 
and $\alpha_g$ by $\Ad v\circ \alpha_g\circ \Ad v^*$ for a unitary $v\in M$ if necessary.  
This means that we have $M\supset N\supset \rho(M)$. 

\begin{lemma} Let the notation be as above. 
\begin{itemize}
\item[(1)] $N=\rho(M)\vee\{U(g)\}_{g\in G}''$. 
\item[(2)] The dual inclusion of $M\supset N$ is isomorphic to $M^\alpha\supset \rho(M)$, 
where $M^\alpha$ is the fixed point subalgebra 
$$M^\alpha=\{x\in M;\;\alpha_g(x)=x,\;\forall g\in G\}.$$ 
\end{itemize}
\end{lemma}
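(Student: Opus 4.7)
My plan is to establish (1) by an intertwiner-counting argument via Frobenius reciprocity, and then leverage (1) to prove (2) through a non-degenerate commuting square.

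For (1), I would first show $U(g) \in N$. Writing $\rho = \iota\bpi$, the intertwiner space $(\rho, \rho\alpha_g)_M = (\iota\bpi, \iota\bpi\alpha_g)_M$ contains
\[
(\bpi, \bpi\alpha_g)_{M \to N} = \{S \in N : S\bpi(x) = \bpi\alpha_g(x) S,\ \forall x\in M\}
\]
as a subspace. By Frobenius reciprocity and the fusion rule $[\pi\bpi] = \bigoplus_{h \in G}[\alpha_h]$,
\[
\dim(\bpi, \bpi\alpha_g)_{M \to N} = \dim(\id_M, \pi\bpi\alpha_g) = 1,
\]
picking out the $[\alpha_{g^{-1}}]$ summand; this matches $\dim(\rho, \rho\alpha_g) = 1$, so the two $1$-dimensional spaces coincide and $U(g) \in N$. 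Since $\{U(g)\}_{g\in G}$ is a genuine unitary representation implementing the outer action $\alpha'_g = \rho\alpha_g\rho^{-1}$ on $\rho(M)$, the algebra $\rho(M) \vee \{U(g)\}''$ is the crossed product $\rho(M) \rtimes_{\alpha'} G$, of index $n$ over $\rho(M)$; this matches $[N:\rho(M)] = n$, forcing equality.

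For (2), the key tool is the diagram inside $M$:
\[
\begin{array}{ccc}
N & \subset & M \\
\cup & & \cup \\
\rho(M) & \subset & M^\alpha
\end{array}
\]
Using (1) together with $\alpha_g(U(h)) = \overline{\chi_h(g)}\,U(h)$ and the non-degeneracy of the bicharacter $\chi$ (automatic in the irrational-$d$ setting by Theorem \ref{representation}), I would verify $N \cap M^\alpha = \rho(M)$. The commuting-square identity $E_{M^\alpha} \circ E_N = E_{\rho(M)}$ then follows from a direct calculation on $N$, using $E_{M^\alpha} = \frac{1}{n}\sum_g \alpha_g$ and the crossed-product expectation $E_{\rho(M)}^N(\sum_h U(h)\rho(y_h)) = \rho(y_e)$. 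Combined with
\[
[N:\rho(M)]\cdot [M^\alpha:\rho(M)] = n\cdot \tfrac{d^2}{n} = d^2 = [M:\rho(M)],
\]
this gives $N \vee M^\alpha = M$, so the square is non-degenerate. A standard consequence is that the horizontal inclusions are isomorphic as subfactors, $\rho(M) \subset M^\alpha \cong N \subset M$. Coupled with the self-duality of $2^G_l 1$ subfactors (asserted in the introduction of this section and established later in the section), this yields the claimed isomorphism with the dual inclusion.

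The main obstacle will be verifying the commuting-square relation without direct access to an explicit Pimsner--Popa basis for $N \subset M$, since $[M:N] = d^2/n$ is typically irrational; I expect to handle this by working intrinsically with the crossed-product description of $N$ given by (1) and exploiting the $\alpha$-equivariance of $E_N$.
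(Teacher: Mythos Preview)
Your argument for (1) is correct and essentially the same as the paper's: both show $U(g)\in N$ by exhibiting it as an element of $(\bpi,\bpi\alpha_g)\subset N$ (you via a Frobenius dimension count, the paper via the explicit unitary $u_g$ implementing $[\bpi\alpha_g]=[\bpi]$), and then match indices to upgrade $\rho(M)\vee\{U(g)\}''\subset N$ to equality.

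Your argument for (2) has a genuine gap. The claim that in a non-degenerate commuting square the two horizontal inclusions are isomorphic as subfactors is not a standard consequence; it is false in general (orbifold constructions provide many examples where passing to fixed points changes the principal graph). The specific structure here---one vertical a crossed product by $G$, the other a fixed-point inclusion by $G$---does not by itself force $\rho(M)\subset M^\alpha$ and $N\subset M$ to be isomorphic. So step 5 is unjustified. Even granting it, your step 6 appeals to the self-duality theorem proved later in the section; that result both comes after the present lemma and carries the extra hypothesis that $A(g)$ is a scalar for all $g$, which is not assumed here. So the route via self-duality is both circular and too weak.

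The paper's proof of (2) avoids all of this by using the \emph{other} factorization $[\rho]=[\pi\biota]$. After adjusting $\pi$ so that $\rho=\pi\circ\biota$, one shows $\alpha_g\circ\pi=\pi$ by the same scalar-intertwiner trick as in (1); since $d(\pi)=\sqrt{n}$, this forces $\pi(N)=M^\alpha$, so $\pi$ is an isomorphism of $N$ onto $M^\alpha$. This isomorphism carries the dual inclusion $\biota(M)\subset N$ directly to $\pi\biota(M)=\rho(M)\subset M^\alpha$, with no commuting-square machinery and no forward reference needed.
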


\begin{proof} (1) Since $[\bpi\circ \alpha_g]=[\bpi]$, there exist unitaries $u_g\in N$ satisfying 
$\bpi\circ \alpha_g=\Ad u_g\circ \bpi$. 
On the other hand, we have 
$$\Ad U(g)\circ \rho=\rho\circ\alpha_g=\iota\circ \bpi\circ \alpha_g=\iota\circ \Ad u_g\circ \bpi,$$
and $u_g$ is a multiple of $U(g)$. 
This means that $U(g)\in N$, and we have the inclusion relation 
$N\supset \rho(M)\vee \{U(g)\}_{g\in G}$. 
Let $\rho_0$ be $\rho$ regarded as an isomorphism from $M$ onto $\rho(M)$, and let $\beta_g=\rho_0\circ \alpha_g\circ \rho_0^{-1}$, 
which is an outer action of $G$ on $\rho(M)$. 
Since $\rho(M)\vee \{U(g)\}_{g\in G}$ is identified with the crossed product $\rho(M)\rtimes_\beta G$, 
its index in $M$ is $d^2/n$, which coincides with $[M:N]$. 
Thus we have  the equality $N=\rho(M)\vee \{U(g)\}_{g\in G}$. 

(2) Since $[\rho]=[\pi\biota]$, we may assume $\rho=\pi\circ \biota$ by replacing $\pi$ with an 
equivalent sector if necessary. 
Since $[\alpha_g\circ \pi]=[\pi]$, there exist unitaries $v_g\in M$ such $\alpha_g\circ \pi=\Ad v_g\circ \pi$. 
On the other hand, we have $\alpha_g\circ \rho=\rho$, and so $v_g$ is a scalar and $\alpha_g\circ \pi=\pi$. 
Since $d(\pi)=\sqrt{n}$, this implies that the image of $\pi$ coincides with $M^\alpha$. 
Thus the dual inclusion $N\supset \biota(N)$ is isomorphic to $M^\alpha\supset \rho(M)$. 
\end{proof}

\begin{theorem}\label{classify2G1} For an abelian group $G$, there is a one-to-one correspondence between 
the isomorphism classes of $2^G_l1$ subfactors and the set of equivalence classes of 
a C$^*$ near-group category for $G$ with $m=ln$. 
\end{theorem}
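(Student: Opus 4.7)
The plan is to split the correspondence into the two directions, relying on Theorem \ref{embedding} and Theorem \ref{uniqueness} to pass between subfactors and their generating categories. The forward direction has essentially been established: for a $2^G_l 1$ subfactor $N\subset M$, the preceding discussion shows that $\rho=\iota\bpi$ together with $\{\alpha_g\}_{g\in G}$ generates a $C^*$ near-group category inside $\End(M)$ with $m=ln$, and furnishes the identification $N=\rho(M)\vee\{U(g)\}''$.

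For the reverse direction, starting from a $C^*$ near-group category $\cC$ with group $G$ and $m=ln$, I would embed $\cC$ in $\End(M)$ for $M$ the hyperfinite type III$_1$ factor (Theorem \ref{embedding}), choose representatives $\rho,\alpha_g$ with $\alpha_g\rho=\rho$, choose $U(g)\in(\rho,\rho\alpha_g)$ normalized by $U(g)S_e=S_e$, and define $N:=\rho(M)\vee\{U(g)\}_{g\in G}''$. Since $U(g)\rho(x)U(g)^*=\rho(\alpha_g(x))$, the $U(g)$ implement the outer action $\beta_g=\rho\alpha_g\rho^{-1}$ on $\rho(M)$, so $N\cong\rho(M)\rtimes_\beta G$ is a factor of index $n$ over $\rho(M)$; combined with $[M:\rho(M)]=d^2$ this yields $[M:N]=d^2/n=1+ld$. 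Irreducibility $N'\cap M=\C$ follows by noting that any $x\in N'\cap M$ centralizes $\rho(M)$ hence lies in $\rho(M)'\cap M$, whose structure is governed by $(\rho^2,\rho^2)=\bigoplus_g\C S_gS_g^*\oplus\B(\cK)$; the further commutation with $U(G)$ combined with the identity $\alpha_g(U(h))=\overline{\chi_h(g)}U(h)$ then kills all non-scalar pieces.

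The core step is identifying the principal graph. I would compute $[\iota\biota]$ as an $M$-$M$ sector of dimension $1+ld$; irreducibility supplies the summand $[\id]$ with multiplicity $1$, and by Frobenius reciprocity
$$\dim\Hom_M(\iota\biota,\rho)=\dim\Hom_N(\biota,\biota\rho).$$
Since $\alpha_g\in\Aut(M)$ restricts to an automorphism of $N$ (because $\alpha_g\rho=\rho$ and $\alpha_g(U(h))=\overline{\chi_h(g)}U(h)\in N$), the Galois correspondence for $\rho(M)\subset N$ lets me evaluate the right-hand side through the near-group fusion structure of $\cC$, and a direct count against the $(\rho,\rho^2)$-space of dimension $m=ln$ yields multiplicity $l$. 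Hence $[\iota\biota]=[\id]\oplus l[\rho]$. The remaining odd vertex $\pi$ arises as the unique irreducible $M$-$N$ sector in $[\rho\iota]$ beyond the $\{[\alpha_g\iota]\}$: dimension counting forces $d(\pi)=\sqrt{n}$ and the relations $[\rho]=[\iota\bpi]$ and $[\pi\bpi]=\bigoplus_g[\alpha_g]$, giving precisely the adjacency data of the $2^G_l 1$ graph.

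For bijectivity, an isomorphism $\Phi:M_1\to M_2$ carrying $N_1$ to $N_2$ transports the generating categorical data, giving an equivalence $\cC_1\simeq\cC_2$; conversely, an equivalence $\cC_1\simeq\cC_2$ is implemented by some isomorphism $\Phi:M_1\to M_2$ via Theorem \ref{uniqueness}, and since $N_i=\rho_i(M_i)\vee\{U_i(g)\}''$ is intrinsically reconstructed from $\cC_i$ inside $\End(M_i)$, $\Phi$ automatically sends $N_1$ to $N_2$. The principal technical obstacle is the multiplicity computation for $[\iota\biota]$: showing it contains exactly $l$ copies of $[\rho]$ requires combining Frobenius reciprocity with a careful use of the Galois tower $\rho(M)\subset N\subset M$ and the near-group fusion rule $[\rho^2]=\sum_g[\alpha_g]\oplus m[\rho]$ with $m=ln$.
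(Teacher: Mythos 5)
Your overall route coincides with the paper's (reconstruct $N=\rho(M)\vee\{U(g)\}_{g\in G}''$, compute $[M:N]=d^2/n=1+ld$, get irreducibility, and reduce everything to $[\iota\biota]=[\id]\oplus l[\rho]$), but the one step on which the whole theorem turns --- $\dim(\iota,\rho\iota)=l$ --- is only asserted, and you yourself flag it as unresolved. "The Galois correspondence for $\rho(M)\subset N$ \ldots and a direct count against the $(\rho,\rho^2)$-space of dimension $m=ln$" is not an argument: since $\rho(M)\subset N$, one does have $(\iota,\rho\iota)\subset(\rho,\rho^2)=\cK$, and $T\in\cK$ lies in $(\iota,\rho\iota)$ precisely when $TU(g)=\rho(U(g))T$ for all $g$; but the dimension of this space depends on how $\rho(U(g))$ acts on $\cK$, which is not visible from the fusion rules or from $\dim\cK=ln$ alone. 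The paper settles it with the explicit formula Eq.(\ref{rhoU}), which says that on $\cK$ the operator $\rho(U(g))$ acts as $(j_2\circ j_1^{-1})U_\cK(g)(j_2\circ j_1^{-1})^*\otimes U(g)$, so that $(\iota,\rho\iota)$ is the fixed-point space of the unitary representation $(j_1\circ j_2)U_\cK(g)(j_1\circ j_2)^*$; this is unitarily equivalent to $U_\cK\cong\frac{m}{n}\lambda=l\lambda$ by Theorem \ref{representation} (note $m=ln$ forces $d$ to be irrational), whence the dimension is exactly $l$. Neither Eq.(\ref{rhoU}) nor the input $U_\cK\cong l\lambda$ appears in your outline, and without them the "count" could a priori give anything from $0$ to $ln$.

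If you want to avoid the Cuntz-algebra formula, a purely fusion-theoretic argument does close the gap, but only if you invoke the irrationality of $d$ explicitly, which you never do: from the tower $\rho(M)\subset N\subset M$ every irreducible of $\iota\biota$ occurs in $\rho\brho=\rho^2$, so $[\iota\biota]=\bigoplus_g a_g[\alpha_g]\oplus b[\rho]$ with $a_0=1$ by irreducibility; comparing dimensions, $\sum_g a_g+bd=1+ld$ with $d$ irrational gives $a_g=\delta_{g,0}$ and $b=l$. Two smaller points: irreducibility of $N\subset M$ is immediate from $N'\cap M\subset\rho(M)'\cap M=(\rho,\rho)=\C$, whereas your detour conflates $\rho(M)'\cap M$ with the space $(\rho^2,\rho^2)=\rho^2(M)'\cap M$ and the appeal to commutation with $U(G)$ is unnecessary; and in the bijectivity step, Theorem \ref{uniqueness} produces $\Phi$ only up to the unitaries $U_\rho$, so one should note that $\Phi$ carries $N_1$ to a subfactor inner conjugate to $N_2$, which still gives an isomorphism of the inclusions, as in the paper's preceding lemma. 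Your identification of the remaining odd vertex $\pi$ and of the graph, once $[\iota\biota]=[\id]\oplus l[\rho]$ is granted, is fine.
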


\begin{proof} It is obvious that isomorphic $2^G_l1$ subfactors give rise to equivalent C$^*$ near-group categories. 
Moreover the previous lemma shows that if the two resulting C$^*$ near-group categories are equivalent, the two 
$2^G_l1$ subfactors are isomorphic. 

It remains to show that every C$^*$ near-group category for $G$ with $m=ln$ gives rise to a $2^G_l1$ subfactor. 
We may assume that such a category is realized by $\{\alpha_g\}_{g\in G}$ and $\rho$ acting on a type III factor $M$, 
and we use the same notation as before. 
We set $N=\rho(M)\vee\{U(g)\}_{g\in G}''$. 
Then it is identified with the crossed product $\rho(M)\rtimes G$, and its index in $M$ is $d^2/n=1+ld$. 
Since $\rho(M)$ is irreducible in $M$, so is $N$ too. 
We denote by $\iota$ the inclusion map $\iota: N\hookrightarrow M$. 
All we have to show is $\dim (\rho,\iota\biota)=l$, or equivalently $\dim (\iota,\rho\iota)=l$, 
which will show 
$$[\iota\biota]=[\id]\oplus l[\rho].$$
Since $(\iota,\rho\iota)\subset (\rho,\rho^2)$, we have 
$$(\iota,\rho\iota)=\{T\in \cK;\;\forall g\in G,\; TU(g)=\rho(U(g))T\},$$
and in view of Eq.(\ref{rhoU}), we have 
$$(\iota,\rho\iota)=\{T\in \cK;\;\forall g\in G,\; (j_1\circ j_2)U(g)(j_1\circ j_2)^*T=T\},$$
which shows $\dim (\iota,\rho\iota)=l$. 
\end{proof}

Grossman-Jordan-Snyder \cite[Section 4]{GJS15} discussed $\Z_2$-graded extensions of near-group categories $\cC$ in the case of prime $m=n$.  
They constructed a non-trivial $\Z_2$-graded extension of $\cC$ under the assumption that the outer automorphism group $\Out(\cC)$ is trivial and 
the corresponding $2^G1$ subfactor is self-dual. 
We will determine the structure of $\Out(\cC)$ in Section \ref{Equi}. 
The second condition turns out to always hold. 

\begin{theorem}\label{self-dual} Let $N\subset M$ be a $2^G_l1$ subfactor whose associated C$^*$ near-group category satisfies the following condition: 
the operator $A(g)$ in Lemma \ref{ACJ} is a scalar for any $g\in G$. 
Then $N\subset M$ is self-dual. 
In particular, every $2^G1$ subfactor is self-dual. 
\end{theorem}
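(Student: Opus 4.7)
The plan is to show that the dual inclusion $\rho(M)\subset M^\alpha$ is itself a $2^G_l1$ subfactor whose associated admissible tuple, in the sense of Definition \ref{tuple}, is equivalent to that of $N\subset M$; Theorem \ref{classify2G1} will then yield an isomorphism of subfactor pairs $N\subset M\cong\rho(M)\subset M^\alpha$, which is exactly self-duality. The ``in particular'' statement is immediate once the main statement is proved, because when $l=1$ we have $\dim\cK_0=1$ and $A(g)$ is automatically a scalar.

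First I will verify that $\rho(M)\subset M^\alpha$ is again a $2^G_l1$ subfactor. Since $\alpha_g\circ\rho=\rho$ we have $\rho(M)\subset M^\alpha$ with index $d^2/n=1+ld$, and the dual action of $\hG$ on $M^\alpha$ coming from $M=M^\alpha\rtimes\hG$ supplies $n$ invertible automorphisms. Applying Frobenius reciprocity to $[\iota\biota]=[\id_M]\oplus l[\rho]$ forces $[\biota\iota]=[\id_N]\oplus l[\sigma]$ for a unique irreducible $\sigma\in\End(N)$ of dimension $d$. Analogous fusion-rule computations identify the principal graph of $\rho(M)\subset M^\alpha$ as $2^G_l1$, with $\sigma$ playing the role of $\rho$ and the $\hG$-automorphisms playing the role of $\{\alpha_g\}_{g\in G}$.

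Second, using the Cuntz-algebra reconstruction of Section \ref{RCS}, I will compute the admissible tuple $(\cK',j_1',j_2',V',U_{\cK'}',\chi',l')$ of the dual near-group category and relate it to the original. Under the decomposition $\cK\cong\ell^2(G)\otimes\cK_0$ of Lemma \ref{ACJ}, the Fourier transform on $\ell^2(G)$ swaps the multiplication operator $V$ with the translation operator $U_\cK$, and correspondingly interchanges the ``local'' anti-unitary $j_1$ of Eq.(\ref{defj_1}) with the ``Fourier-type'' anti-unitary $j_2$ of Eq.(\ref{defj_2}). This Fourier interchange is the natural candidate for a unitary $W:\cK\to\cK'$ intertwining the dual tuple with the original one.

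This is where the hypothesis $A(g)\in\mathbb{C}\cdot I$ enters decisively. When $A(g)$ is a scalar, the explicit formulas of Lemma \ref{ACJ} show that both $j_1$ and $j_2$ act as scalar multipliers on the $\ell^2(G)$-factor tensored with fixed operators on $\cK_0$, so the Fourier swap on $\ell^2(G)$ combined with the identity on $\cK_0$, up to a reassignment of $C$, implements a genuine equivalence of tuples in the sense of Definition \ref{tuple}. If $A(g)$ were non-scalar, the Fourier swap would require a nontrivial unitary on $\cK_0$ intertwining $J$ with $J^*$-conjugated variables, a condition that generally fails. The main obstacle is verifying that the Fourier-conjugated map $l$, equivalently the family $\{B_g\}$ from Section \ref{PEIC}, agrees with the dual map; this reduces to showing that the polynomial equations Eq.(\ref{p1})--(\ref{p9}) are preserved under the Fourier swap, which I expect to follow from the symmetry between left and right variables in these equations together with Eq.(\ref{p11}). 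Once the equivalence of tuples is established, Theorem \ref{classify2G1} produces the required isomorphism of subfactor pairs.
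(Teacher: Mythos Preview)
Your overall strategy --- identify the dual inclusion with $\rho(M)\subset M^\alpha$, show it is again a $2^G_l1$ subfactor, and prove that its admissible tuple is equivalent to the original one --- is exactly the paper's strategy. The difference, and the gap, lies in how the dual tuple is computed. The paper does not work with an abstract Fourier transform on $\cK\cong\ell^2(G)\otimes\cK_0$; instead it constructs the dual Cuntz generators explicitly inside $M^\alpha$: one sets $S'_0=\tfrac{1}{\sqrt{n}}\sum_gS_g$, $S'_g=\alpha'_g(S'_0)$ with $\alpha'_g=\Ad U(-g)|_{M^\alpha}$, $U'(-g)=\rho(U(g))$, and, crucially,
\[
T'_g(\xi)=\frac{\hat{a}(0)^{1/2}\overline{a(g)}}{\sqrt{n}}\sum_{h\in G}\overline{\inpr{g}{h}}\,T_h(\xi)U(h)\in M^\alpha.
\]
These are not merely Fourier-transformed $T_h(\xi)$'s: the factor $U(h)$ is essential, and it is precisely here that the hypothesis $A(g)=a(g)I$ is used (otherwise the construction of $T'_g(\xi)$ and the verification $U'(-g)T'_0(\xi)=T'_g(\xi)$ break down). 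With these generators in hand, the paper verifies by direct computation that $\alpha'$, $U'$, $j_1'$, $j_2'$ and the coefficients $b^{r,s}_{t,u}(g)$ for $\rho'$ have \emph{literally the same form} as for $\rho$ --- not swapped, not related by an extra Fourier conjugation.

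Your proposal skips this construction and instead asserts that ``the Fourier transform on $\ell^2(G)$ swaps $V$ with $U_\cK$ and interchanges $j_1$ with $j_2$.'' This is not what happens: in the paper's realization one finds $V'(g)T'_h(\xi)=\inpr{g}{h}T'_h(\xi)$, $U'_\cK(g)T'_h(\xi)=T'_{h-g}(\xi)$, and $j_1'$, $j_2'$ reproduce $j_1$, $j_2$ in their original roles, not interchanged. Without first exhibiting the dual generators in $M^\alpha$, there is no way to ``read off'' the dual tuple, so your appeal to an abstract Fourier symmetry and to the left--right symmetry of Eq.~(\ref{p1})--(\ref{p9}) and Eq.~(\ref{p11}) is not yet a proof: those symmetries relate coefficients of a single solution to each other, but do not by themselves identify which solution arises from $\rho'$. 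The missing idea is precisely the explicit choice of $T'_g(\xi)$ above; once you have it, the verification that the $b$-coefficients coincide is a direct (if somewhat lengthy) computation, and no separate equivalence $W$ is needed.
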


\begin{proof} We use the same notation for a $2^G_l1$ subfactor $N\subset M$ as before. 
Let $\iota':\rho(M)\hookrightarrow M^\alpha$ and $\kappa:M^\alpha \hookrightarrow M$ be the inclusion maps. 
Since $\rho(M)\subset M^\alpha$, the restriction of $\rho$ to $M^\alpha$ makes sense as an endomorphism of 
$M^\alpha$, which will be denoted by $\rho'$. 
Since $\alpha_g(U(h))=\overline{\inpr{g}{h}}U(h)$, the restriction of $\Ad U(-g)$ to $M^\alpha$ makes sense too 
as a $G$-action on $M^\alpha$, which will be denoted by $\alpha'$. 
By definition, we have $\rho\circ\kappa=\kappa\circ \rho'$ and $\kappa\circ \alpha'_g=\Ad U(-g)\circ \kappa$. 
We also have $\alpha'_g\circ\rho'=\rho'$ as $\Ad U(-g)\circ \rho=\rho\circ \alpha_g$. 

We first claim that that $\rho'$ is irreducible. 
Indeed, we have 
$$(\rho',\rho')=(\kappa\circ \rho',\kappa\circ\rho')\cap M^\alpha=(\rho\circ\kappa,\rho\circ\kappa)\cap M^\alpha,$$
and we first determine $(\rho\circ\kappa,\rho\circ\kappa)$. 
Since $\Ad U(g)\circ \rho=\rho\circ \alpha_g$, we have $U(g)\in (\rho\circ\kappa,\rho\circ\kappa)$. 
On the other hand, 
$$\dim(\rho\circ\kappa,\rho\circ\kappa)=\dim (\kappa\overline{\kappa},\rho^2)=\dim(\bigoplus_{g\in G}\alpha_g,\rho^2)=n,$$
and so $(\rho\circ\kappa,\rho\circ\kappa)=\Span\{U(g)\}_{g\in G}$. 
This shows that $\rho'$ is irreducible. 

Next we observe 
$$[\iota'\overline{\iota'}]=[\id]\oplus l[\rho'],$$
or equivalently $\dim(\iota',\rho'\circ\iota')=l$. 
This follows from 
$$(\iota',\rho'\circ\iota')=(\rho,\rho^2)\cap M^\alpha=\{T\in \cK;\;\forall g\in G,\; V(g)T=T\}.$$

Now it suffices to show that the two categories generated by $\{\alpha_g\}_{g\in G}\cup \{\rho\}$ and by 
$\{\alpha_g'\}_{g\in G}\cup\{\rho'\}$ are mutually equivalent, and for this 
we show that the their Cuntz algebra models are the same. 

Let 
$$S'_0=\frac{1}{\sqrt{n}}\sum_{g\in G}S_g\in M^\alpha.$$
Then for any $x\in M^\alpha$, we have 
$$\rho'^2(x)S'_0=\frac{1}{\sqrt{n}}\sum_{g\in G}S_g\alpha_g(x)=\frac{1}{\sqrt{n}}\sum_{g\in G}S_gx=S'_0x,$$
and $S'_0\in (\id,\rho'^2)$. 
We set 
$$S'_g=\alpha'_g(S'_0)=\frac{1}{\sqrt{n}}\sum_{h\in G}U(-g)S_hU(-g)^*=\frac{1}{\sqrt{n}}\sum_{h\in G}\overline{\inpr{g}{h}}S_hU(g),$$
which is in $(\alpha'_g,\rho'^2)$. 
Note that we have 
$$\sum_{g\in G}S'_g\alpha'_g(x)S_g'^*=\frac{1}{n}\sum_{g,h,k\in G}\inpr{g}{k-h}S_hxS_k^*=\sum_{h\in G}S_hxS_h^*.$$
On the other hand, 
\begin{align*}
\lefteqn{\rho'^2(x)=\sum_g S_g\alpha_g(x)S_g^*+\sum_{g,r}{T_g(e_r)}\rho(x)T_g(e_r)^*,} \\
 &=\sum_gS_gxS_g^*+\sum_{g,r} T_g(e_r)U(g)\rho(x)U(g)^*T_g(e_r)^*\\
 &=\sum_gS'_g\alpha'_g(x){S'_g}^*+\sum_{g,r} T_g(e_r)U(g)\rho'(x)U(g)^*T_g(e_r)^*.
\end{align*}
Since $T_g(\xi)U(g)\in M^\alpha$, we see that $\{T_g(e_r)U(g)\}_{g,r}$ form an orthonormal basis of $(\rho',\rho'^2)$. 
Since $\rho(U(-g))\in M^\alpha$, and $\Ad \rho(U(-g))\circ \rho'=\rho'\circ \alpha'_g$, we set $U'(-g)=\rho(U(g))$. 
Then thanks to Eq.(\ref{rhoU}), we have 
$$U'(g)S'_h=\frac{1}{\sqrt{n}}\sum_k\overline{\inpr{h}{k}}\rho(U(-g))S_kU(h)
=\frac{1}{\sqrt{n}}\sum_k\overline{\inpr{h}{k}}S_{k+g}U(h)=\inpr{h}{g}S'_h.$$

By assumption, the operator $A(g)$ in Lemma \ref{ACJ} is a scalar $a(g)$. 
We choose a square root of $\hat{a}(0)\in \T$ and fix it, and set 
$$T'_0(\xi)=\frac{\hat{a}(0)^{1/2}}{\sqrt{n}}\sum_{h\in G}T_h(\xi)U(h)\in (\rho',\rho'^2)\cap (M^\alpha)^{\alpha'},$$
\begin{align*}
\lefteqn{T'_g(\xi)=U'(-g)T'_{0}(\xi)} \\
 &=\frac{\hat{a}(0)^{1/2}}{\sqrt{n}}\sum_{h\in G}\rho(U(g))T_h(\xi)U(h) \\
 &=\frac{\hat{a}(0)^{1/2}}{\sqrt{n}}\sum_{h\in G}(j_1\circ j_2)^*U(g)(j_1\circ j_2)T_h(\xi)U(h+g) \\
 &=\frac{\hat{a}(0)^{1/2}}{\sqrt{n}}\sum_{h\in G}T_{h+g}(A(h+g)A(h)^*\xi)U(h+g)\\
 &=\frac{\hat{a}(0)^{1/2}\overline{a(g)}}{\sqrt{n}}\sum_{h\in G}\overline{\inpr{g}{h}}T_h(\xi)U(h).
\end{align*}
Then $\{T'_g(e_r)\}_{g,r}$ is an orthonormal basis of $(\rho',\rho'^2)$, and we have $\alpha'_g(T'_h(e_r))=\inpr{g}{h}T'_g(e_r)$ 
and $U'(g)T'_h(e_r)=T'_{h-g}(e_r)$. 

For $T'\in (\rho',\rho'^2)$, we defined $j'_1(T')$ and $j'_2(T')$ as in the definition of $j_1$ and $j_2$ 
by replacing $\rho$ and $S_0$ with $\rho'$ and $S'_0$. 
Then \begin{align*}
\lefteqn{j'_1(T'_g(\xi))=\sqrt{d}T'_g(\xi)^*\rho'(S_0)} \\
 &=\frac{\hat{a}(0)^{-1/2}a(g)\sqrt{d}}{n}\sum_{h,k\in G}\inpr{g}{h}U(h)^*T_h(\xi)^*\rho(S_k)\\
 &=\frac{\hat{a}(0)^{-1/2}a(g)\sqrt{d}}{n}\sum_{h,k\in G}\inpr{g}{h}U(h)^*T_h(\xi)^*U(k)\rho(S_0)U(k)^*\\
 &=\frac{\hat{a}(0)^{-1/2}a(g)}{n}\sum_{h,k\in G}\inpr{g}{h}U(h)^*j_1(T_{h+k}(\xi))U(k)^*\\
 &=\frac{\hat{a}(0)^{-1/2}a(g)}{n}\sum_{h,k\in G}a(k+h)\inpr{g}{h}T_{-k}(J\xi)U(k)^*\\
 \end{align*}
Since
$$\frac{1}{\sqrt{n}}\sum_{h}\inpr{g}{h}a(k+h)=\overline{\inpr{g}{k}}\hat{a}(-g)=\overline{\inpr{g}{k}}\hat{a}(0)\overline{a(-g)},$$
we get  
$$j'_1(T'_g(\xi))=\frac{\hat{a}(0)^{1/2}}{\sqrt{n}}\sum_{k\in G}\overline{\inpr{g}{k}}
T_{-k}(J\xi)U(k)^*=T'_{-g}(A(-g)J\xi),$$
which shows that $j_1'$ has the same form as $j_1$. 

For $j_2'$, we have \begin{align*}
\lefteqn{j_2'(T'_g(\xi))=\sqrt{d}\rho(T'_g(\xi)S'_0)} \\
&=\frac{\hat{a}(0)^{-1/2}a(g)\sqrt{d}}{n}\sum_{h,k}\inpr{g}{h}\rho(U(h)^*T_h(\xi)^*)S_k\\
 &=\frac{\hat{a}(0)^{-1/2}a(g)}{n}\sum_{h,k}\inpr{g}{h}\rho(U(h)^*)\alpha_k(j_2(T_h(\xi))) \\
 &=\frac{\epsilon\hat{a}(0)^{-1/2}a(g)}{n\sqrt{n}}\sum_{h,k,p}\inpr{g-p}{h}\inpr{k}{p}\rho(U(h)^*)T_p(C^*J\xi)\\
 &=\frac{\epsilon\hat{a}(0)^{-1/2}a(g)}{\sqrt{n}}\sum_h\inpr{g}{h}\rho(U(h)^*)T_0(C^*J\xi)\\
 &=\frac{\epsilon\hat{a}(0)^{-1/2}a(g)}{\sqrt{n}}\sum_h\inpr{g}{h}a(h)T_{-h}(C^*J\xi)U(h)^*\\
 &=\frac{\epsilon\hat{a}(0)^{-1/2}}{\sqrt{n}}\sum_ha(g-h)T_{-h}(C^*J\xi)U(h)^*.
\end{align*}
On the other hand, 
\begin{align*}
\lefteqn{\frac{\epsilon}{\sqrt{n}}\sum_{k}\overline{\inpr{g}{k}}T'_k(C^*J\xi)} \\
 &=\frac{\epsilon\hat{a}(0)^{1/2}}{n}\sum_{h,k}\overline{\inpr{g+h}{k}}\overline{a(k)}T_h(C^*J\xi)U(h) \\
 &= \frac{\epsilon\hat{a}(0)^{1/2}}{\sqrt{n}}\sum_{h}\overline{\hat{a}(-g-h)}T_h(C^*J\xi)U(h)\\
 &= \frac{\epsilon\hat{a}(0)^{-1/2}}{\sqrt{n}}\sum_{h}a(g-h)T_h(C^*J\xi)U(h).
\end{align*}
Thus $j'_2$ has the same form as $j_2$. 

Now it suffices to show that $B_h(\xi)$ for $\rho'$ takes the same form as the original one. 
Note that we have 
$$T_g^*(e_r)\rho(T_0(e_u))T_0(e_t)=a(g)\sum_{s}b^{r,s}_{t,u}(g)T_{-g}(e_s).$$
On the other hand, 
\begin{align*}
\lefteqn{T'_g(e_r)^*\rho(T'_0(e_u))T'_0(e_t)
=\frac{\hat{a}(0)^{1/2}}{\sqrt{n}}\sum_hT'_g(e_r)^*\rho(T_h(e_u)U(h))T'_0(e_t)} \\
 &=\frac{\hat{a}(0)^{1/2}}{\sqrt{n}}\sum_hT'_g(e_r)^*U'(h)\rho(T_0(e_u))U'(-h)T'_0(e_t)\\
 &=\frac{\hat{a}(0)^{1/2}}{\sqrt{n}}\sum_hT'_{g+h}(e_r)^*\rho(T_0(e_u))T'_h(e_t)\\
  &=\frac{\hat{a}(0)^{1/2}}{n\sqrt{n}}\sum_{h,p,q}a(g+h)\overline{a(h)}\inpr{q}{g+h}\overline{\inpr{p}{h}}U(q)^*T_q(e_r)^*
  \rho(T_0(e_u))T_p(e_t)U(p)\\
  &=\frac{\hat{a}(0)^{1/2}a(g)}{n\sqrt{n}}
  \sum_{h,p,q}\inpr{q-p-g}{h}\inpr{q}{g} U(q)^*T_{q-p}(e_r)^*\rho(T_0(e_u))T_0(e_t)U(p),
\end{align*}
where we used \begin{align*}
\lefteqn{T_q(\zeta)^*\rho(T_0(\xi))T_p(\eta)=T_q(\zeta)^*\rho(T_0(\xi))U(-p)T_0(\eta)} \\
 &=T_q(\zeta)^*U(-p)\rho(\alpha_{p}(T_0(\xi)))T_0(\eta)=T_{q-p}(\zeta)^*\rho(T_0(\xi))T_0(\eta).
\end{align*}
Thus we get 
\begin{align*}
\lefteqn{T'_g(e_r)^*\rho(T'_0(e_u))T'_0(e_t)} \\
  &=\frac{\hat{a}(0)^{1/2}a(g)}{\sqrt{n}}
  \sum_{p}\inpr{p+g}{g} U(p+g)^*T_g(e_r)^*\rho(T_0(e_u))T_0(e_t)U(p)\\
  &=\frac{\hat{a}(0)^{1/2}a(g)^2}{\sqrt{n}}
  \sum_{p,s}b^{r,s}_{t,u}(g)\inpr{p+g}{g} U(p+g)^*T_{-g}(e_s)U(p)\\
  &=\frac{\hat{a}(0)^{1/2}}{\sqrt{n}}
  \sum_{p,s}b^{r,s}_{t,u}(g)\inpr{p}{g} T_p(e_s)U(p)\\
  &=a(g)\sum_sb^{r,s}_{t,u}(g)T'_{-g}(e_s),
\end{align*}
which finishes the proof. 
\end{proof}

\section{Orbifold construction I (de-equivariantization)}\label{De-equi}
Orbifold construction for subfactors was first introduced in \cite{EK94} 
to construct new subfactors from given ones with group actions (see \cite{K94} too). 
Purely categorical versions (see \cite{B00}, \cite{EGNO15}, \cite{Y02} for example) of it are now called equivariantization and 
de-equivariantization, which are dual operations to each other via Takesaki duality. 
In the final two sections, we systematically investigate these operations for $2^G_l1$ subfactors and 
near-group categories in the irrational case. 

Victor Ostrik is the first to observe that a near-group category for 
$\Z_3\times \Z_3$ with $m=9$ produces the Haagerup category via de-equivariantization (see Example \ref{Z3Z3} below). 
In this section, we systematically pursue this kind of phenomena. 
We concentrate on the case with $m=|G|$, though our argument makes sense for the case of 
$m>|G|$ under a mild assumption. 
A C$^*$ near group category in this class is completely described by the data 
$(\inpr{\cdot}{\cdot},a,b,c)$ as in Section \ref{m=n}.  
The pair $(\inpr{\cdot}{\cdot},\overline{a})$ is often called a quadratic form on $G$ in the literature, 
where $\overline{a}$ is the complex conjugate of $a$. 

Let $N\subset M$ be a $2^G1$ subfactor whose even part is a C$^*$ near-group category given by $\alpha$ and $\rho$ with 
$\alpha_g\circ \rho=\rho$. 
We assume that it has an invariant $(\inpr{\cdot}{\cdot},a,b,c)$, and we use the same notation as before for the Cuntz algebra model.  
Now Eq.(\ref{rhoU}) takes the form 
$$\rho(U(g))=\sum_{h\in G}S_{h-g}S_{h}^*+\sum_{h\in G}a(h)\overline{a(h-g)}T_{h}U(g)T_{h-g}^*.$$

\subsection{Untwisted case} 
We fix a subgroup $H\subset G$, and consider the crossed product $M\rtimes_\alpha H$, which 
is a factor generated by $M$ and a unitary representation $\{\lambda_h\}$ of $H$ with the relation 
$\lambda_hx=\alpha_h(x)\lambda_h$ for $x\in M$. 
Then we can extend $\rho$ and $\alpha$ to $M\rtimes H$ by setting $\trho(\lambda_h)=a(h)U(h)\lambda_h$, 
and $\talpha_g(\lambda_h)=\inpr{g}{h}\lambda_h$. 
Note that we have $\Ad (a(h)U(h)\lambda_h)(\rho(x))=\rho(\alpha_h(x))$ for $x\in M$, and 
\begin{align*}
\lefteqn{(a(h)U(h)\lambda_h)(a(k)U(k)\lambda_k)=a(h)a(k)U(h)\alpha_h(U(k))\lambda_{h+k}} \\
 &=a(h)a(k)\overline{\inpr{h}{k}}U(h+k)\lambda_{h+k}=a(h+k)U(h+k)\lambda_{h+k}.
\end{align*}

\begin{lemma}\label{deL1} With the notation as above, we have the following for all $x\in M\rtimes_\alpha H$. 
$$\trho^2(x)=\sum_{g\in G}S_g\talpha_g(x)S_g^*+\sum_{g\in G}T_g\trho(x)T_g^*,$$
$$\trho\circ\talpha_g=\Ad U(g)\circ\trho.$$
\end{lemma}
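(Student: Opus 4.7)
The plan is to reduce both identities to computations on the canonical generators $\lambda_h$ of the crossed product $M \rtimes_\alpha H$. Both assertions of the lemma are endomorphism identities on $M \rtimes_\alpha H$, so it suffices to verify them on $M$ and on $\{\lambda_h\}_{h \in H}$. The restriction of $\trho$ to $M$ is just $\rho$, so the first identity on $M$ is Lemma \ref{technical}/\ref{classification} applied to $\rho$, while the second identity on $M$ is the defining relation $\Ad U(g) \circ \rho = \rho \circ \alpha_g$. Everything therefore reduces to direct verification on $\lambda_h$.

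For the intertwining relation $\trho \circ \talpha_g = \Ad U(g) \circ \trho$ on $\lambda_h$: the left side is $\trho(\inpr{g}{h}\lambda_h) = \inpr{g}{h}\, a(h)\, U(h)\lambda_h$, while the right side is $U(g)\, a(h) U(h)\lambda_h \, U(g)^* = a(h)\, U(g) U(h)\, \alpha_h(U(g)^*)\lambda_h$. Using Eq.(\ref{alphaU}) in the form $\alpha_h(U(g)^*) = \inpr{h}{g}\, U(g)^*$ together with the commutativity of $U(g)$ and $U(h)$ (since $G$ is abelian and $\{U(g)\}$ is a genuine representation), this simplifies to $a(h)\inpr{h}{g}U(h)\lambda_h$, and the symmetry $\inpr{g}{h} = \inpr{h}{g}$ finishes the step.

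For the multiplication formula on $\lambda_h$, the left side is $\trho^2(\lambda_h) = a(h)^2 \rho(U(h))\, U(h)\lambda_h$. On the right side, I would push every $\lambda_h$ to the right via $\lambda_h y = \alpha_h(y)\lambda_h$, obtaining
\begin{equation*}
S_g \talpha_g(\lambda_h) S_g^* = \inpr{g}{h}\, S_g S_{g+h}^* \lambda_h, \qquad
T_g \trho(\lambda_h) T_g^* = a(h)\,\overline{\inpr{h}{g}}\, T_g U(h) T_g^* \lambda_h,
\end{equation*}
where I use $\alpha_h(S_g) = S_{g+h}$ and $\alpha_h(T_g) = \inpr{h}{g}T_g$. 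On the other hand, inserting Eq.(\ref{rhoU}) for $\rho(U(h))$ and simplifying $S_{k+h}^* U(h) = \inpr{k+h}{h}S_{k+h}^*$ and $T_k^* U(h) = T_{k+h}^*$ turns $a(h)^2\rho(U(h))U(h)$ into the same sum, provided the scalar identity $a(h)^2\inpr{h}{h} = 1$ holds. The latter follows from the quadratic form relations in Lemma \ref{ACJ}: setting $h \mapsto -g$ in $a(g)a(h) = \inpr{g}{h}a(g+h)$ gives $a(g)^2 = a(g)a(-g) = \inpr{g}{-g} = \overline{\inpr{g}{g}}$.

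The computations are entirely mechanical; the only subtle point is bookkeeping the bicharacter factors, and the crucial cancellation all comes down to the single identity $a(g)^2 \inpr{g}{g} = 1$. I expect no serious obstacle.
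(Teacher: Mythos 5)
Your proposal is correct and follows essentially the same route as the paper: reduce to the generators, use $\trho(\lambda_h)=a(h)U(h)\lambda_h$ together with Eq.(\ref{rhoU}), Eq.(\ref{alphaU}), the symmetry of $\inpr{\cdot}{\cdot}$, and the quadratic relation for $a$, with the whole cancellation coming down to $a(h)^2\inpr{h}{h}=1$. The only cosmetic difference is that the paper verifies the first identity by compressing $\trho^2(\lambda_h)$ with $S_g$ and $T_g$, whereas you match the two full expansions after pushing $\lambda_h$ to the right — the computations are the same.
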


\begin{proof} It suffices to show the statement for $x=\lambda_h$. 
For $x=\lambda_h$ the left-hand side is $a(h)^2\rho(U(h))U(h)\lambda_h$, and 
\begin{align*}
\lefteqn{S_g^*\trho^2(\lambda_h)S_g=a(h)^2S_g^*\rho(U(h))U(h)S_{g+h}\lambda_h} \\
 &=a(h)^2\inpr{h}{g+h}S_g^*\rho(U(h))S_{g+h}\lambda_h=\inpr{h}{g}\lambda_g \\
 &=\talpha_g(\lambda_h),
\end{align*}
\begin{align*}
\lefteqn{T_g^*\trho^2(\lambda_h)T_g=a(h)^2T_g^*\rho(U(h))U(h)\lambda_hT_g} \\
 &=a(h)^2\inpr{h}{g}T_g^*\rho(U(h))U(h)T_g \lambda_h=a(h)^2\inpr{h}{g}T_g^*\rho(U(h))T_{g-h} \lambda_h\\
 &=a(h)^2a(g)\overline{a(g-h)}\inpr{h}{g}U(h) \lambda_h=a(h)U(h)\lambda_h\\
 &=\trho(\lambda_h),
\end{align*}
which show the first statement. 

For the second statement, we have 
$$\trho\talpha_g(\lambda_h)=\inpr{g}{h}a(h)U(h)\lambda_h,$$
and on the other hand, 
\begin{align*}
\lefteqn{\Ad U(g)\rho(\lambda_h)=a(h)U(g)U(h)\lambda_hU(g)^*} \\
 &=a(h)U(g+h)\alpha_h(U(g)^*)\lambda_h=a(h)\inpr{h}{g}U(h)\lambda_h.
\end{align*}
\end{proof}

For a subgroup $H\subset G$, we set $H^\perp=\{g\in G;\; \inpr{g}{h}=1\}$. 
Since $\Ad \lambda_h^*\circ\talpha_h$ is trivial on $M$, it comes from the dual action of $\alpha$. 
If moreover $H\subset H^\perp$, it is trivial, that is $\talpha_h=\Ad \lambda_h$.

\begin{lemma}\label{deL2} If $H\subset H^\perp$, then $(\trho,\trho)=\{\lambda_h\}_{h\in H}''$. 
\end{lemma}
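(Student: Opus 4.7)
The plan is to prove the two inclusions $\{\lambda_h\}_{h\in H}''\subset (\trho,\trho)$ and $(\trho,\trho)\subset \{\lambda_h\}_{h\in H}''$ separately, using as main inputs Lemma \ref{deL1}, the observation preceding the statement that $H\subset H^\perp$ forces $\talpha_h=\Ad\lambda_h$ on all of $M\rtimes_\alpha H$, and the relation $\alpha_h(U(g))=\overline{\inpr{h}{g}}U(g)$ from Eq.(\ref{alphaU}).

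For the inclusion $\{\lambda_h\}_{h\in H}''\subset (\trho,\trho)$, the idea is as follows. Given $h\in H$, Lemma \ref{deL1} yields $\trho\circ\talpha_h=\Ad U(h)\circ\trho$ on $M\rtimes_\alpha H$. The hypothesis $H\subset H^\perp$ means $\inpr{h}{k}=1$ for all $h,k\in H$, so $\talpha_h(\lambda_k)=\lambda_k$ and hence $\talpha_h=\Ad\lambda_h$ globally on $M\rtimes_\alpha H$. Combining these two identities, I get $\Ad\trho(\lambda_h)=\trho\circ\Ad\lambda_h\circ\trho^{-1}=\Ad U(h)$ on $\trho(M\rtimes_\alpha H)$. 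Since $\trho(\lambda_h)=a(h)U(h)\lambda_h$ by definition, this translates into $\lambda_h$ commuting with every element of $\trho(M\rtimes_\alpha H)$, which is exactly $\lambda_h\in(\trho,\trho)$.

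For the reverse inclusion, I use the Fourier decomposition. Any $T\in (\trho,\trho)\subset M\rtimes_\alpha H$ has a unique expansion $T=\sum_{h\in H}x_h\lambda_h$ with $x_h\in M$. Testing the relation $T\trho(y)=\trho(y)T$ against $y\in M$, where $\trho(y)=\rho(y)$ and $\lambda_h\rho(y)\lambda_h^*=\alpha_h(\rho(y))=\rho(y)$ (using $\alpha_h\circ\rho=\rho$), gives $x_h\rho(y)=\rho(y)x_h$ for every $y\in M$. Since $\rho\in\End(M)$ is irreducible, each $x_h$ is a scalar. Next, testing against $y=\lambda_k$ for $k\in H$, using $\trho(\lambda_k)=a(k)U(k)\lambda_k$ together with the scalar nature of $x_h$ and Eq.(\ref{alphaU}) in the form $\alpha_h(U(k))=\overline{\inpr{h}{k}}U(k)$, I obtain the constraint $x_h\bigl(1-\overline{\inpr{h}{k}}\bigr)=0$ for every $h\in H$ and every $k\in H$. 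Under the standing assumption $H\subset H^\perp$ this constraint is automatically satisfied, so no further restriction on the $x_h$ arises, and $T\in\{\lambda_h\}_{h\in H}''$.

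I do not anticipate a serious obstacle: once Lemma \ref{deL1} and the identity $\talpha_h=\Ad\lambda_h$ are in hand, both inclusions are essentially forced by the irreducibility of $\rho$ and the explicit form of $\trho$ on $\lambda_h$. The only point requiring care is the computation with $\alpha_h(U(k))$ in the second inclusion, which must be tracked carefully to see that the $\inpr{h}{k}$ factor is killed precisely when $H\subset H^\perp$; this is also what makes the statement sharp, since without that hypothesis one would get a proper subalgebra of $\{\lambda_h\}_{h\in H}''$ indexed by $H\cap H^\perp$.
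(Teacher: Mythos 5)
Your proof is correct and follows essentially the same route as the paper: both directions come down to the Fourier expansion in $M\rtimes_\alpha H$, the irreducibility of $\rho$ together with $\alpha_h\circ\rho=\rho$ (giving scalar coefficients, i.e. $(\trho,\trho)\subset\{\lambda_h\}_{h\in H}''$), and the vanishing of the pairing on $H$, which the paper uses in the form $\alpha_h(U(k))=\overline{\inpr{h}{k}}U(k)=U(k)$ while you package it via $\talpha_h=\Ad\lambda_h$ and Lemma \ref{deL1}. The only cosmetic differences are that your final check against $y=\lambda_k$ is redundant once the coefficients are known to be scalars, and that $\trho^{-1}$ should be read as the relation $\trho\circ\talpha_h=\Ad U(h)\circ\trho$ on the image, since $\trho$ is only an endomorphism.
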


\begin{proof} Note that we have $(\trho,\trho)\subset M\rtimes_\alpha H\cap \rho(M)'$. 
Since $\alpha_h\circ\rho=\rho$, and $\rho$ is irreducible, we get $M\rtimes_\alpha H\cap \rho(M)'=\{\lambda_h\}_{h\in H}''$. 
Now we have 
$$(\trho,\trho)=\{\lambda_h\}_{h\in H}''\cap \{\trho(\lambda_k)\}_{k\in H}'=\{\lambda_h\}_{h\in H}''\cap \{a(k)U(k)\lambda_k\}_{k\in H}',$$ 
which is again $\{\lambda_h\}_{h\in H}''$ because $\lambda_h$ commutes with $U(k)$ thanks to
$$\alpha_h(U(k))=\overline{\inpr{h}{k}}U(k)=U(k).$$  
\end{proof} 

For the character group $\hH$ of $H$, we use the additive notation, that is, 
for $\chi_1,\chi_2\in \hH$, we denote $(\chi_1+\chi_2)(h)=\chi_1(h)\chi_2(h)$.  
For $\chi\in \hH$, we  denote by $e_\chi$ the corresponding minimal projection in $\{\lambda_h\}_{h\in H}''$, 
that is 
$$e_\chi=\frac{1}{\sqrt{|H|}}\sum_{h\in H}\overline{\chi(h)}\lambda_h.$$ 
Assume $H\subset H^\perp$. 
Then since 
$$(\trho,\trho)=\bigoplus_{\chi\in \hH}\C e_\chi,$$
the endomorphism $\trho$ is decomposed into irreducible components $\sigma_\chi$ parametrized by $\chi\in \hH$. 
More precisely, we choose an isometry $V_\chi\in M\rtimes_\alpha H$ for each $\chi\in \hH$ satisfying $V_\chi V_\chi^*=e_\chi$, 
and set $\sigma_\chi(x)=V_\chi^*\trho(x)V_\chi$. 
For simplicity, we denote $\sigma=\sigma_0$. 
Then we have 
$$[\trho]=\bigoplus_{\chi\in \hH}[\sigma_\chi].$$
Note that $\{\sigma_\chi\}_{\chi\in \hH}$ are all inequivalent. 

Recall that we may assume $N=\rho(M)\vee\{U(g)\}_{g\in G}''$, and 
$N$ is regarded as the crossed product of $\rho(M)$ by the $G$-action $\rho_0\circ \alpha_g\circ \rho_0^{-1}$, 
where $\rho_0$ is $\rho$ regarded as an isomorphism from $M$ onto $\rho(M)$. 
Since $\alpha_h(U(h))=\overline{\inpr{g}{h}}U(g)$, the restriction of $\alpha$ to $N$ is identified with the dual action 
for this crossed product, and in consequence $\alpha$ on $N$ is outer too. 
Thus $N\rtimes_\alpha H$ is a subfactor of $M\rtimes_\alpha H$. 

\begin{theorem}\label{deT1} Let $\kappa:N\rtimes_\alpha H\hookrightarrow M\rtimes_\alpha H$ be the inclusion map. 
Then 
$$[\kappa\overline{\kappa}]=[\id]\oplus \bigoplus_{\chi\in \hH}[\sigma_\chi].$$
\end{theorem}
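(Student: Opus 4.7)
The plan is to verify the decomposition by combining a dimension count with an explicit Cuntz algebra computation of intertwiners.

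First, since $\alpha$ is outer on both $M$ and $N$ (the outerness on $N$ coming from the identification of $\alpha|_N$ with the dual action on $\rho(M)\rtimes G$, as explained before the theorem), both $N\rtimes_\alpha H$ and $M\rtimes_\alpha H$ are factors and $[M\rtimes H:N\rtimes H]=[M:N]=1+d$. Hence $d(\kappa\overline\kappa)=1+d$, which matches $d(\id)+\sum_{\chi\in\hH}d(\sigma_\chi)=1+d(\trho)=1+d$. It therefore suffices to show that $[\id]$ and each $[\sigma_\chi]$ appear in $[\kappa\overline\kappa]$ with multiplicity one.

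For $[\id]$, I would verify irreducibility by computing $(N\rtimes H)'\cap(M\rtimes H)$. Writing any element of the commutant as $x=\sum_{h\in H}x_h\lambda_h$ with $x_h\in M$, commutation with $N$ forces $x_h$ to intertwine $\id_N$ with $\alpha_h|_N$ as homomorphisms $N\to M$, and Frobenius reciprocity gives $\dim(\iota,\iota\alpha_h|_N)=\dim(\alpha_{-h},\iota\overline\iota)=\dim(\alpha_{-h},[\id]\oplus[\rho])=\delta_{h,0}$. Together with $N'\cap M=\C$, this forces the commutant to be $\C$.

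For each $[\sigma_\chi]$, Frobenius reciprocity gives $\dim(\trho,\kappa\overline\kappa)=\dim(\kappa,\trho\kappa)$, the dimension of the space of $W=\sum_{h\in H}W_h\lambda_h\in M\rtimes H$ satisfying $Wy=\trho(y)W$ for all $y\in N\rtimes H$. Testing against $\rho(M)$, $\{U(g)\}_{g\in G}$ and $\{\lambda_k\}_{k\in H}$ via Lemma \ref{deL1} and Eq.(\ref{rhoU}) produces three constraints: $W_h$ lies in $\cK$; the coefficient of $T_q$ in the expansion of $W_h$ equals $c_h\,a(q)\inpr{h}{q}$ for some scalar $c_h$; and the $\lambda_k$-intertwining condition becomes tautological thanks to the hypothesis $H\subset H^\perp$ (so $\inpr{h}{k}=1$ for $h,k\in H$) together with the cocycle identity $a(q)a(k)=\inpr{q}{k}a(q+k)$. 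Hence $\dim(\kappa,\trho\kappa)=|H|$, so if $m_\chi$ denotes the multiplicity of $[\sigma_\chi]$ in $[\kappa\overline\kappa]$, then $\sum_\chi m_\chi=|H|$. To conclude that each $m_\chi=1$, I would invoke the dual $\hH$-action: it preserves the inclusion $N\rtimes H\subset M\rtimes H$ and thus fixes $[\kappa\overline\kappa]$, while on sectors it permutes $\{[\sigma_\chi]\}_{\chi\in\hH}$ simply transitively, because $\hat\alpha_{\chi'}$ carries the minimal projection $e_\chi\in(\trho,\trho)$ to $e_{\chi-\chi'}$. The multiplicities $m_\chi$ are therefore $\hH$-invariant and constant, hence all equal to one.

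The main obstacle is the intertwiner calculation in the third paragraph: the signs and shifts in $U(k)^*T_q=T_{q+k}$, $\alpha_k(T_q)=\inpr{k}{q}T_q$ and $\rho(U(g))T_p=a(g)\overline{\inpr{p}{g}}T_{p+g}U(g)$ must combine so that $H\subset H^\perp$ turns the $\lambda_k$-intertwining into a tautology; without this hypothesis the same equation would force $c_h=0$ whenever $2h\neq 0$ in $G$, yielding a strictly smaller decomposition.
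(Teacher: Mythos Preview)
Your proof is correct, but it follows a different path from the paper's.  The paper exploits the explicit generator of $(\iota,\rho\iota)$ already identified in the proof of Theorem~\ref{classify2G1}, namely $(j_1\circ j_2)^*T_0$; it verifies directly that this single element also satisfies $(j_1\circ j_2)^*T_0\,\lambda_h=\trho(\lambda_h)\,(j_1\circ j_2)^*T_0$ for $h\in H$, so that $V_\chi^*(j_1\circ j_2)^*T_0\in(\kappa,\sigma_\chi\kappa)$, and a short norm computation shows each of these is nonzero.  Combined with the same index count you use, this immediately gives multiplicity one for every $\sigma_\chi$ without any appeal to the dual action.  Your route instead parameterises the whole space $(\kappa,\trho\kappa)$ from scratch via the Cuntz relations, obtains $\dim(\kappa,\trho\kappa)=|H|$, and then distributes the multiplicities evenly using the $\hH$-symmetry coming from the dual action.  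The paper's argument is shorter because it reuses the specific intertwiner from the $2^G1$-subfactor analysis, while yours is more self-contained and gives an explicit basis $W_h=c_h\sum_q a(q)\inpr{h}{q}T_q\,\lambda_h$ of the full intertwiner space; the dual-action argument you use at the end is a nice structural explanation of why the multiplicities coincide, which the paper's direct computation does not provide.
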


\begin{proof} Since $[M\rtimes_\alpha H:N\rtimes_\alpha H]=[M:N]$, it suffices to show that $\sigma_\chi$ is contained in $\kappa\overline{\kappa}$, 
or equivalently $(\kappa,\sigma_\chi\kappa)\neq \{0\}$. 

Let $\iota:N \hookrightarrow M$ be the inclusion map. 
In the proof of Theorem \ref{classify2G1}, we showed 
$$(\iota,\rho\iota)=\{T\in \cK;\;\forall g\in G,\; (j_1\circ j_2)U(g)(j_1\circ j_2)^*T=T\}=\C(j_1\circ j_2)^*T_0.$$
Note that we have 
$$(j_1\circ j_2)U(g)(j_1\circ j_2)^*=a(g)U(-g)V(-g). $$
We claim that $(j_1\circ j_2)^*T_0\lambda_h=\trho(\lambda_h)(j_1\circ j_2)^*T_0$ holds. 
Indeed, the right-hand side is 
$$a(h)U(h)\lambda_h(j_1\circ j_2)^*T_0=a(h)U(h)\alpha_h((j_1\circ j_2)^*T_0)\lambda_h=a(h)U(h)V(h)(j_1\circ j_2)^*T_0\lambda_h,$$
and the claim follows. 
Now we have $V_\chi^* (j_1\circ j_2)^*T_0\in (\kappa,\sigma_\chi\kappa)$. 
It remains to show $V_\chi^* (j_1\circ j_2)^*T_0\neq 0$, which follows from 
\begin{align*}
(V_\chi^* (j_1\circ j_2)^*T_0)^*V_\chi^* (j_1\circ j_2)^*T_0
 &=\frac{1}{|H|}\sum_{h\in H}\overline{\chi(h)}T_0^*j_1\circ j_2\lambda_h(j_1\circ j_2)^*T_0 \\
 &=\frac{1}{|H|}\sum_{h\in H}\overline{\chi(h)}T_0^*j_1\circ j_2\alpha_h((j_1\circ j_2)^*T_0)\lambda_h\\
 &=\frac{1}{|H|}\sum_{h\in H}\overline{\chi(h)}T_0^*j_1\circ j_2V(h)(j_1\circ j_2)^*T_0\lambda_h\\ 
 &=\frac{1}{|H|}. 
\end{align*}
\end{proof}

\begin{remark} From Theorem \ref{self-dual} and its proof, we can see that the subfactor $ N\rtimes_\alpha H\subset M\rtimes_\alpha H$ 
is self-dual too. 
\end{remark}

To determine the principal graph of $N\rtimes_\alpha H\subset M\rtimes_\alpha H$, it suffices to determine the fusion rules for 
the categories generated by $\{\sigma_\chi\}_{\chi\in \hH}$. 

For $g\in G$, we denote by $\chi_g\in \hH$ the character of $H$ determined by $\chi_g(h)=\inpr{h}{g}$. 
Since every character of $H$ extends to a character of $G$ and the bicharacter $\inpr{\cdot}{\cdot}$ 
is non-degenerate, the map $G\ni g\mapsto \chi_g\in \hat{H}$ is a surjection, giving an isomorphism 
from $G/H^\perp$ onto $\hat{H}$. 

Direct computation shows that $\talpha_g(e_\chi)=e_{\chi-\chi_g}$, and $e_\chi U(g)=U(g)e_{\chi+\chi_g}$. 
This implies $[\talpha_g\sigma_\chi]=[\sigma_{\chi-\chi_g}]$ and $[\sigma_{\chi}\talpha_g]=[\sigma_{\chi+\chi_g}].$ 
In particular, when $k\in H^\perp$, we have 
$$[\talpha_k\sigma_\chi]=[\sigma_\chi\talpha_k]=[\sigma_\chi].$$

\begin{theorem}\label{deT2} Assume that $H\subset H^\perp$. 
Then there exists $g_a\in G$ with $a(h)=\inpr{h}{g_a}$ for all $h\in H$, and 
$$[\sigma][\sigma]=\bigoplus_{k\in H^\perp/H}[\talpha_{k-g_a}]\oplus |H^\perp/H|\bigoplus_{g\in G/H^\perp} [\talpha_{g}\sigma],$$
$$[\talpha_g][\sigma]=[\sigma][\talpha_{-g}].$$
Note that since $[\talpha_{g+h}]=[\talpha_{g}]$ for any $h\in H$ and $[\talpha_k\sigma]=[\sigma]$ for any $k\in H^\perp$, 
the above expression makes sense. 
\end{theorem}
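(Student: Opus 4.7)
First, I would establish that $g_a$ exists. Because $H\subset H^\perp$, the restriction of $\inpr{\cdot}{\cdot}$ to $H\times H$ is trivial, so the cocycle relation $a(g+h)\inpr{g}{h}=a(g)a(h)$ exhibits $a|_H:H\to\T$ as a character. Non-degeneracy of $\inpr{\cdot}{\cdot}$ on $G$ lifts this character to the restriction of $\inpr{\cdot}{g_a}$ for some $g_a\in G$. The same ingredients yield the useful identity $a(h)^2=1$ for $h\in H$: from $a(h)a(-h)=a(0)\inpr{h}{-h}=1$ and $a(-h)=a(h)$, so that $2g_a\in H^\perp$.

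Second, for the identity $[\talpha_g\sigma]=[\sigma\talpha_{-g}]$, I would combine two consequences of $\alpha_g\circ\rho=\rho$: its direct extension $\talpha_g\circ\trho=\trho$, and Lemma~\ref{deL1}'s $\trho\circ\talpha_g=\Ad U(g)\circ\trho$. Both give $[\talpha_g\trho]=[\trho]=[\trho\talpha_g]$, inducing permutations of the irreducible constituents $\{[\sigma_\chi]\}_{\chi\in\hat H}$ of $[\trho]$. To identify these permutations I would track range projections of the isometric embeddings $V_\chi\in(\sigma_\chi,\trho)$ with $V_\chi V_\chi^*=e_\chi$: applying $\talpha_g$ to $V_\chi$ gives $\talpha_g(V_\chi)\in(\talpha_g\sigma_\chi,\trho)$ with range $\talpha_g(e_\chi)=e_{\chi-\chi_g}$ (using $\talpha_g(\lambda_h)=\inpr{g}{h}\lambda_h$), while $U(g)^*V_\chi\in(\sigma_\chi\talpha_g,\trho)$ has range $U(g)^*e_\chi U(g)=e_{\chi+\chi_g}$ (using $U(g)\lambda_h=\inpr{h}{g}\lambda_h U(g)$, a restatement of Eq.(\ref{alphaU})). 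Hence $[\talpha_g\sigma_\chi]=[\sigma_{\chi-\chi_g}]$ and $[\sigma_\chi\talpha_g]=[\sigma_{\chi+\chi_g}]$, and setting $\chi=0$ yields $[\talpha_g\sigma]=[\sigma_{-\chi_g}]=[\sigma\talpha_{-g}]$.

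Third, the heart of the proof is the fusion rule for $[\sigma^2]$. My plan is first to identify the conjugate: I claim $[\bar\sigma]=[\sigma_{\chi_{g_a}}]$. To establish this, I would decompose the canonical intertwiner $R_\trho=\sqrt d\,S_0\in(\id,\trho^2)$ against the orthogonal isometric embeddings $\{V_{\chi_1}\trho(V_{\chi_2})\}$, writing $S_0=\sum_{\chi_1,\chi_2}V_{\chi_1}\trho(V_{\chi_2})R_{\chi_1,\chi_2}$ with $R_{\chi_1,\chi_2}\in(\id,\sigma_{\chi_1}\sigma_{\chi_2})$; the support of $(\chi_1,\chi_2)\mapsto R_{\chi_1,\chi_2}$ pins down the conjugation pairing $\bar\sigma_{\chi_1}=\sigma_{\chi_2}$. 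The key computation, using $\trho(\lambda_h)=a(h)U(h)\lambda_h$ and the Cuntz relation $U(h)S_h=\inpr{h}{h}S_h=S_h$ valid for $h\in H\subset H^\perp$, yields
\begin{equation*}
\trho(e_\chi)S_0=\frac{1}{|H|}\sum_{h\in H}\inpr{h}{g_a-g_\chi}\,S_h\lambda_h,
\end{equation*}
and a straightforward Fourier analysis of $R_{\chi_1,\chi_2}^*R_{\chi_1,\chi_2}$ (equivalently, of $V_{\chi_2}^*V_{\chi_1}^*S_0$) shows the support is precisely the pairs with $\chi_1+\chi_2=\chi_{g_a}$, giving $\bar\sigma_{\chi_1}=\sigma_{\chi_{g_a}-\chi_1}$ and in particular $[\bar\sigma]=[\sigma\talpha_{g_a}]$.

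With $[\bar\sigma]$ in hand, Frobenius reciprocity gives $\dim(\sigma^2,\talpha_g)=\dim(\sigma,\talpha_g\bar\sigma)=\dim(\sigma,\sigma\talpha_{g_a-g})=[g_a-g\in H^\perp]$, and since $2g_a\in H^\perp$ this coincides with $[g\in -g_a+H^\perp]$, yielding the invertible summand $\bigoplus_{k\in H^\perp/H}[\talpha_{k-g_a}]$. For the non-invertible part, $[\talpha_h\sigma^2]=[\sigma^2]$ for $h\in H^\perp$ (immediate from $[\talpha_h\sigma]=[\sigma]$) shows the multiplicity $n_\chi$ of $[\sigma_\chi]$ in $[\sigma^2]$ descends to $\hat H$, and summing $[\sigma_{\chi_1}\sigma_{\chi_2}]=[\talpha_{g_{\chi_2}-g_{\chi_1}}\sigma^2]$ over $(\chi_1,\chi_2)$ and equating with $[\trho^2]=\bigoplus_g[\talpha_g]\oplus m[\trho]$ gives $\sum_\chi n_\chi=|H^\perp|$. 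The main obstacle I foresee is the final step, establishing that $n_\chi=|H^\perp/H|$ is \emph{constant} in $\chi$: the obvious Frobenius-reciprocity symmetries yield invariance of $n_\chi$ only under the subgroup $\langle 2\hat H,\chi_{g_a}\rangle\subset\hat H$, and pinning down full constancy will likely require a direct construction of $|H^\perp/H|$ orthogonal intertwiners in $(\sigma^2,\sigma_\chi)$ uniform in $\chi$, built from the isometries $V_\chi$ together with the Cuntz-algebra expansion $\sum_hS_hS_h^*+\sum_{h,\xi}T_h(\xi)T_h(\xi)^*=1$.
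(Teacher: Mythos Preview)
Your approach is sound but genuinely different from the paper's, and you have correctly located where it falls short.

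The paper does not pass through the conjugate $[\bar\sigma]$ or Frobenius reciprocity at all. Instead it computes $\sigma^2(x)=V_0^*\trho(V_0^*)\trho^2(x)\trho(V_0)V_0$ directly, expanding $\trho^2$ via Lemma~\ref{deL1}. The key device is the range projection $\trho(e_0)e_0=\frac{1}{|H|}e_0\sum_{h\in H}a(h)U(h)$, which acts on $S_g$ as the indicator $1_{H^\perp}(g+g_a)e_0S_g$; this immediately isolates the invertible summand. For the $T$--part, the paper chooses transversals $\{k_i\}\subset H^\perp$ for $H$ and $\{g_j\}\subset G$ for $H^\perp$ and explicitly exhibits the orthogonal isometries
\[
\Big\{\frac{1}{\sqrt{|H|}}\sum_{h\in H}a(h)\,V_0^*\trho(V_0^*)T_{g_j+k_i+h}V_{-\chi_{g_j}}\Big\}_{j\in G/H^\perp,\;i\in H^\perp/H}
\]
in $(\sigma_{-\chi_{g_j}},\sigma^2)$, giving the uniform multiplicity $|H^\perp/H|$ in one stroke.

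Your Frobenius route is elegant for the invertible summand and for the global count $\sum_\chi n_\chi=|H^\perp|$, and your analysis that Frobenius symmetries alone yield only invariance of $n_\chi$ under $\langle 2\hat H,\chi_{g_a}\rangle$ is exactly right; for $H$ with $2$--torsion this subgroup can be proper, so the argument genuinely stalls there. The ``direct construction of $|H^\perp/H|$ orthogonal intertwiners uniform in $\chi$'' you anticipate is precisely the displayed family above. So the proposal is correct in spirit, but to close it you must carry out that construction, and at that point you are essentially reproducing the paper's proof for the non-invertible part.
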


\begin{proof} For $h_1,h_2\in H$, we have 
$$a(h_1)a(h_2)=\inpr{h_1}{h_2}a(h_1+h_2)=a(h_1+h_2),$$
and the restriction of $a$ to $H$ is a character. 
Thus there exists $g_a\in G$ satisfying $a(h)=\inpr{h}{g_a}$ for any $h\in H$. 

We choose a transversal $\{k_i\}_{i\in H^\perp/H}\subset H^\perp$ for $H$. 
Note that we have 
\begin{align*}
\lefteqn{\sigma^2(x)=V_0^*\trho(V_0^*)\trho^2(x)\trho(V_0)V_0} \\
 &=\sum_{g\in G}V_0^*\trho(V_0^*)S_g\talpha_g(x)S_g^*\trho(V_0)V_0
+\sum_{g\in G}V_0^*\trho(V_0^*)T_g\trho(x)T_g^*\trho(V_0)V_0,
\end{align*}
and the range projection of $\trho(V_0)V_0$ is 
\begin{align*}
\lefteqn{\trho(V_0)e_0\trho(V_0)^*=\trho(V_0V_0^*)e_0=\trho(e_0)e_0=\frac{1}{|H|}\sum_{h\in H}a(h)U(h)\lambda_he_0} \\
 &=\frac{1}{|H|}\sum_{h\in H}a(h)U(h)e_0=\frac{1}{|H|}\sum_{h\in H}a(h)e_{-\chi_h}U(h)
 =\frac{1}{|H|}e_{0}\sum_{h\in H}a(h)U(h). 
\end{align*}
We have 
$$\frac{1}{|H|}e_{0}\sum_{h\in H}a(h)U(h)S_g=\frac{1}{|H|}e_{0}\sum_{h\in H}\inpr{h}{g_a+g}S_g
=1_{H^\perp}(g_a+g)e_0S_g,$$
where $1_{H^\perp}$ is the indicator function of $H^\perp$. 
Now \begin{align*}
\lefteqn{\sum_{g\in G}V_0^*\trho(V_0^*)S_g\talpha_g(x)S_g^*\trho(V_0)V_0} \\
 &=\sum_{k\in H^\perp}V_0^*\trho(V_0^*)e_0S_{k-g_a}\talpha_{k-g_a}(x)S_{k-g_a}^*e_0\trho(V_0)V_0\\
 &=\sum_{i\in H^\perp/H}\sum_{h\in H}V_0^*\trho(V_0^*)e_0S_{h+k_i-g_a}\talpha_{h+k_i-g_a}(x)S_{h+k_i-g_a}^*e_0\trho(V_0)V_0 \\
 &=\sum_{i\in H^\perp/H}\sum_{h\in H}V_0^*\trho(V_0^*)e_0\lambda_hS_{k_i-g_a}\talpha_{k_i-g_a}(x)
 S_{k_i-g_a}^*\lambda_h^*e_0\trho(V_0)V_0 \\
 &=|H|\sum_{i\in H^\perp/H}V_0^*\trho(V_0^*)S_{k_i-g_a}\talpha_{k_i-g_a}(x)
 S_{k_i-g_a}^*\trho(V_0)V_0.
\end{align*}
It is straightforward to see that $\{\sqrt{|H|}V_0^*\trho(V_0^*)S_{k_i-g_a}\}_{i\in H^\perp/H}$ are isometries 
with mutually orthogonal ranges. 

For the second term we have 
\begin{align*}
\lefteqn{\sum_{g\in G}V_0^*\trho(V_0^*)T_g\trho(x)T_g^*\trho(V_0)V_0} \\
 &=\frac{1}{|H|^2}\sum_{g\in G}\sum_{h_1,h_2\in H}a(h_1)\overline{a(h_2)}
 V_0^*\trho(V_0^*)e_0U(h_1)T_g\trho(x)T_g^*U(h_2)^*e_0\trho(V_0)V_0 \\
 &=\frac{1}{|H|^4}\sum_{g\in G}\sum_{h_1,h_2,h_3,h_4\in H}a(h_1)\overline{a(h_2)}
 V_0^*\trho(V_0^*)\lambda_{h_3}T_{g-h_1}\trho(x)T_{g-h_2}^*\lambda_{h_4}\trho(V_0)V_0 \\
 &=\frac{1}{|H|^4}\sum_{g\in G}\sum_{h_1,h_2,h_3,h_4\in H}a(h_1)\overline{a(h_2)}\inpr{h_3+h_4}{g}
 V_0^*\trho(V_0^*)T_{g-h_1}\lambda_{h_3+h_4}\trho(x)T_{g-h_2}^*\trho(V_0)V_0 \\
 &=\frac{1}{|H|^2}\sum_{g\in G}\sum_{h_1,h_2\in H}a(h_1)\overline{a(h_2)}
 V_0^*\trho(V_0^*)T_{g-h_1}e_{-\chi_g}\trho(x)T_{g-h_2}^*\trho(V_0)V_0 \\
 &=\frac{1}{|H|^2}\sum_{g\in G}\sum_{h_1,h_2\in H}a(h_1)\overline{a(h_2)}
 V_0^*\trho(V_0^*)T_{g-h_1}V_{-\chi_g}\sigma_{-\chi_g}(x)V_{-\chi_g}^*T_{g-h_2}^*\trho(V_0)V_0.
 \end{align*}
We choose a transversal $\{g_j\}_{j\in G/H^\perp}\subset G$ for $H^\perp$. 
Then we get 
\begin{align*}
\lefteqn{\sum_{g\in G}V_0^*\trho(V_0^*)T_g\trho(x)T_g^*\trho(V_0)V_0} \\
 &=\frac{1}{|H|^2}\sum_{j\in G/H^\perp}\sum_{i\in H^\perp/H}\sum_{h_0, h_1,h_2\in H}a(h_1)\overline{a(h_2)}\\
 &\times V_0^*\trho(V_0^*)T_{g_j+k_i+h_0-h_1}V_{-\chi_{g_j}}\sigma_{-\chi_{g_j}}(x)V_{-\chi_{g_j}}^*T_{g_j+k_i+h_0-h_2}^*\trho(V_0)V_0.\\
 &=\frac{1}{|H|}\sum_{j\in G/H^\perp}\sum_{i\in H^\perp/H}\sum_{h_1,h_2\in H}a(h_1)\overline{a(h_2)}\\
 &\times V_0^*\trho(V_0^*)T_{g_j+k_i-h_1}V_{-\chi_{g_j}}\sigma_{-\chi_{g_j}}(x)V_{-\chi_{g_j}}^*T_{g_j+k_i-h_2}^*\trho(V_0)V_0.
\end{align*}
Direct computation shows that 
$$\{\frac{1}{\sqrt{|H|}}\sum_{h\in H}a(h)V_0^*\trho(V_0^*)T_{g_j+k_i+h}V_{-\chi_{g_j}}\}_{j\in G/H^\perp,\;i\in H^\perp/H},$$
is a family of isometries with mutually orthogonal ranges. 
Therefore the statement is proved. 
\end{proof}

\begin{remark} 
(1) The above theorem in particular shows that $\sigma$ is self-conjugate if and only if $a(h)=1$ for any $h\in H$. 
Since $a(h)a(h)=\overline{\inpr{h}{h}}=1$ for $h\in H$, we have $2g_a=0$, and non-trivial $g_a$ could occur only if 
$H$ is an even group. \\
(2) The original near-group category is Morita equivalent to the fusion category generated by $\sigma$ and the dual 
action $\hat{\alpha}$ of $\alpha$. 
\end{remark}

A subgroup $H\subset G$ is called Lagrangian if $H=H^\perp$ and the restriction of $a$ to $H$ is 1 (see \cite{ENO10}). 

\begin{cor}\label{deCor} If $H$ is a Lagrangian, we have 
$$[\sigma][\sigma]=[\id]\oplus \bigoplus_{g\in G/H} [\talpha_{g}\sigma],$$
$$[\talpha_g][\sigma]=[\sigma][\talpha_{-g}].$$
\end{cor}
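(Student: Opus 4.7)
The plan is to derive the corollary by specializing Theorem \ref{deT2} to the Lagrangian case, so no new computation is required beyond interpreting each ingredient of the theorem under the hypotheses $H = H^\perp$ and $a|_H = 1$.

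First I would unpack what ``Lagrangian'' forces on the element $g_a \in G$ supplied by Theorem \ref{deT2}. By definition of $g_a$, we have $a(h) = \inpr{h}{g_a}$ for all $h \in H$. The condition $a|_H = 1$ therefore says $\inpr{h}{g_a} = 1$ for every $h \in H$, i.e. $g_a \in H^\perp$. Since $H$ is Lagrangian, $H^\perp = H$, so $g_a \in H$. In particular $-g_a \in H$ as well.

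Next I would observe that the two index sets appearing on the right-hand side of the first formula of Theorem \ref{deT2} collapse: $H^\perp/H = \{0\}$ because $H = H^\perp$, and $G/H^\perp = G/H$. The $|H^\perp/H|$-fold multiplicity therefore becomes $1$, and the ``scalar'' summand $\bigoplus_{k \in H^\perp/H}[\talpha_{k-g_a}]$ reduces to the single class $[\talpha_{-g_a}]$. To finish the identification with $[\id]$, I would invoke the observation made just before Lemma \ref{deL2}: for any $h \in H \subset H^\perp$ the automorphism $\talpha_h$ equals $\Ad \lambda_h$ (an inner automorphism of $M \rtimes_\alpha H$), so $[\talpha_h] = [\id]$ in the fusion ring. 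Since $-g_a \in H$, this yields $[\talpha_{-g_a}] = [\id]$, producing the stated decomposition
\[
[\sigma][\sigma] = [\id] \oplus \bigoplus_{g \in G/H}[\talpha_g \sigma].
\]
The second formula $[\talpha_g][\sigma] = [\sigma][\talpha_{-g}]$ is carried over verbatim from Theorem \ref{deT2}, since its statement never depended on $H^\perp/H$ or on $a|_H$.

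There is essentially no hard step here; the only point requiring a bit of care is the identification $[\talpha_{-g_a}] = [\id]$, which depends on having interpreted ``$\talpha_h$ inner for $h \in H$'' correctly in the sector ring. Once this bookkeeping is in place the corollary drops out as a direct specialization.
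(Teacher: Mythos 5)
Your proposal is correct and is exactly the intended argument: the paper states the corollary without proof as an immediate specialization of Theorem \ref{deT2}, and your bookkeeping ($g_a\in H^\perp=H$ from $a|_H=1$, hence $[\talpha_{-g_a}]=[\id]$ since $\talpha_h=\Ad\lambda_h$ for $h\in H$, together with $H^\perp/H$ trivial and $G/H^\perp=G/H$) is precisely what is needed. The second identity indeed carries over unchanged from the theorem.
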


In \cite{I01}, we gave a Cuntz algebra construction of the fusion categories with the above type of fusion rules, 
other than near-group categories, which was further pursued in \cite{EG11}. 
In a forthcoming paper, we systematically investigate such fusion categories for arbitrary finitely abelian groups. 
In this note, we just give a converse construction of the above corollary in the case of odd groups. 

Let $R$ be a type III factor and $K$ be an odd abelian group, which will play the role of $\hH$ as well as $G/H$. 
Assume that we are given a map $\beta:K\rightarrow \Aut(R)$ inducing an injective homomorphism from $K$ 
into $\Out(R)$, and irreducible $\sigma\in \End(R)$ satisfying 
$$[\beta_k][\sigma]=[\sigma][\beta_{-k}],$$
$$[\sigma][\sigma]=[\id]\oplus l\bigoplus_{k\in K}[\beta_k][\sigma].$$
Note that the Frobenius reciprocity implies that $\{\beta_k\sigma\}_{k\in K}$ are all inequivalent. 

Since $K\ni k\mapsto [\beta_k]\in \Out(R)$ is a homomorphism, there exist $u(k_1,k_2)\in U(R)$ satisfying 
$\beta_{k_1}\circ \beta_{k_2}=\Ad u(k_1,k_2)\circ \beta_{k_1+k_2}$. 
The associativity 
$$\beta_{k_1}\circ(\beta_{k_2}\circ \beta_{k_3})=(\beta_{k_1}\circ \beta_{k_2})\circ \beta_{k_3}$$ 
implies that there exists a 
3-cocycle $\omega\in Z^3(K,\T)$ satisfying 
$$\beta_{k_1}(u(k_2,k_3))u(k_1,k_2+k_3)=\omega(k_1,k_2,k_3)u(k_1,k_2)u(k_1+k_2,k_3).$$

Recall that the opposite algebra $R^{\mathrm{opp}}$ of $R$ is a linear space $R$ equipped with a new product $x\cdot y=yx$. 
To realize $R^{\mathrm{opp}}$ as a von Neumann algebra, we identify it with the commutant $R'$ of $R$ under 
identification of $x\in R$ and $J_Rx^*J_R\in R'$ where $J_R$ is the modular conjugation of $R$. 
We denote by $j$ the anti-linear multiplicative map $:R\ni x\mapsto J_RxJ_R\in R^{\mathrm{opp}}$, 
and define $\beta^{\mathrm{opp}}_k(x)=j\circ \beta_k\circ j^{-1}\in \Aut(R^{\mathrm{opp}})$. 
Let $\cR=R\otimes R^{\mathrm{opp}}$, let $\gamma_k=\beta_k\otimes \beta^{\mathrm{opp}}_k\in \cR$, and let 
$v(k_1,k_2)=u(k_1,k_2)\otimes j(u(k_1,k_2))$. 
Then $\gamma$ and $v$ satisfy the 2-cocycle relation  
$$\gamma_{k_1}(v(k_2,k_3))v(k_1,k_2+k_3)=v(k_1,k_2)v(k_1+k_2,k_3),$$
and we can construct the twisted crossed product $M=\cR\rtimes_{\gamma,v} K$, 
that is a factor generated by $\cR$ and unitaries $\{\lambda^v_k\}_{k\in K}$ satisfying 
$\lambda^v_k x=\gamma_k(x)\lambda^v_k$ for $x\in \cR$ and $\lambda^v_{k_1}\lambda^v_{k_2}=v(k_1,k_2)\lambda^v_{k_1+k_2}$. 
Let $\kappa:\cR\hookrightarrow M$ be the inclusion map. 
Then the fusion categories generated by $\{\kappa(\beta_k\otimes \id)\overline{\kappa}\}_{k\in K}$ is nothing but the 
quantum double $\cD^\omega(K)$ (see \cite{I00}). 

Let 
$$\theta_{k_1}(k_2,k_3)=\frac{\omega(k_2,k_1,k_3)}{\omega(k_1,k_2,k_3)\omega(k_2,k_3,k_1)}.$$
Then it is known that $\theta_k$ is a 2-cocycle for any fixed $k\in K$ (see \cite{DPR90}, \cite{DW90}). 
We assume that $\theta_k$ is a coboundary for any $k\in K$, which automatically holds for cyclic groups. 
This is equivalent to the condition that $\cD^\omega(D)$ is pointed, that is, every simple object of 
$\cD^\omega(K)$ is invertible, which in our situation means that $\beta_k\otimes \id$ extends to $M$ as we see now. 
Thanks to this assumption, there exists $\nu_{k_1}(k_2)$ satisfying 
$$\theta_k(k',k'')=\nu_k(k'+k'')\overline{\nu_k(k')\nu_k(k'')}.$$

\begin{lemma} Let the notation be as above. 
The automorphism $\beta_k\otimes \id$ on $\cR$ extends to $M$ by 
$$\tbeta_k(\lambda^v_{k'})=\nu_k(k')(u(k,k')u(k',k)^*\otimes 1)\lambda^v_{k'}.$$
\end{lemma}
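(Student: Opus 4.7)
The plan is to verify that the proposed formula, together with $\tbeta_k|_{\cR} = \beta_k \otimes \id$, defines a unital $*$-endomorphism of the twisted crossed product $M = \cR \rtimes_{\gamma,v} K$ by checking compatibility with its two defining relations, and then to conclude surjectivity cheaply. Writing $W(k,k') := u(k,k')u(k',k)^* \otimes 1$ so that $\tbeta_k(\lambda^v_{k'}) = \nu_k(k') W(k,k') \lambda^v_{k'}$, I must check the commutation relation $\lambda^v_{k'} x = \gamma_{k'}(x) \lambda^v_{k'}$ (for $x \in \cR$) and the twisted cocycle relation $\lambda^v_{k_1} \lambda^v_{k_2} = v(k_1,k_2) \lambda^v_{k_1+k_2}$ both go over under $\tbeta_k$.

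The commutation relation reduces to showing that $W(k,k')$ intertwines $(\beta_k \otimes \id) \circ \gamma_{k'}$ and $\gamma_{k'} \circ (\beta_k \otimes \id)$, which is immediate from $\beta_k \circ \beta_{k'} = \Ad(u(k,k')u(k',k)^*) \circ \beta_{k'} \circ \beta_k$ on the first tensor factor and the triviality of the second tensor factor under $\beta_k \otimes \id$; notably, this step is insensitive to the scalar $\nu_k(k')$. The cocycle relation is the substantive part: comparing
\begin{equation*}
\tbeta_k(\lambda^v_{k_1}) \tbeta_k(\lambda^v_{k_2}) = \nu_k(k_1)\nu_k(k_2) W(k,k_1) \gamma_{k_1}(W(k,k_2)) v(k_1,k_2) \lambda^v_{k_1+k_2}
\end{equation*}
with $\tbeta_k(v(k_1,k_2) \lambda^v_{k_1+k_2}) = \nu_k(k_1+k_2)(\beta_k \otimes \id)(v(k_1,k_2)) W(k,k_1+k_2) \lambda^v_{k_1+k_2}$, the opposite tensor factors agree trivially, while on the first factor the required identity is
\begin{equation*}
u(k,k_1)u(k_1,k)^* \beta_{k_1}\bigl(u(k,k_2)u(k_2,k)^*\bigr) u(k_1,k_2) = \theta_k(k_1,k_2) \beta_k(u(k_1,k_2)) u(k,k_1+k_2) u(k_1+k_2,k)^*.
\end{equation*}
Four applications of the 3-cocycle identity $\beta_a(u(b,c)) u(a,b+c) = \omega(a,b,c) u(a,b) u(a+b,c)$ --- to rewrite $\beta_{k_1}(u(k,k_2))$ and $\beta_{k_1}(u(k_2,k))^*$, and to recombine $u(k,k_1) u(k_1+k,k_2)$ via $\beta_k(u(k_1,k_2))$ --- followed by the cancellations $u(k_1, k+k_2)^* u(k_1, k_2+k) = 1$ (by abelianness of $K$) and $u(k_1,k_2)^* u(k_1,k_2) = 1$, collapse the surviving $\omega$-coefficients to precisely $\omega(k_1,k,k_2)/(\omega(k,k_1,k_2)\omega(k_1,k_2,k)) = \theta_k(k_1,k_2)$. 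The defining property $\theta_k(k_1,k_2) = \nu_k(k_1+k_2)\overline{\nu_k(k_1)\nu_k(k_2)}$ then forces the desired cocycle identity.

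Automorphicity is then a short consequence: $\tbeta_k$ is a nonzero unital $*$-homomorphism between factors (hence injective), and its image contains both $\cR$ (since $\beta_k \otimes \id$ is an automorphism of $\cR$) and every $\lambda^v_{k'}$ (since $\tbeta_k(\lambda^v_{k'})$ is a unitary multiple of $\lambda^v_{k'}$ by a scalar times an element of $\cU(\cR)$), so $\tbeta_k$ is onto $M$. The main obstacle is entirely bookkeeping in the central computation: organizing the four $\omega$-substitutions so that the extraneous $u(k_1,\cdot)$ factors cancel and precisely the expression defining $\theta_k$ emerges. This is where the specific choice of $W(k,k') = u(k,k')u(k',k)^*$ and the hypothesis that $\theta_k$ is a coboundary both prove essential; indeed, without trivializability of $\theta_k$ no choice of prefactor $\nu_k(k')$ could produce an automorphism.
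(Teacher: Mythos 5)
Your proposal is correct and follows essentially the same route as the paper: you check the covariance relation (where the scalar $\nu_k(k')$ plays no role) and the twisted cocycle relation, reduce the latter to the operator identity whose $\omega$-coefficients collapse to $\theta_k(k_1,k_2)=\omega(k_1,k,k_2)/(\omega(k,k_1,k_2)\omega(k_1,k_2,k))$, and then invoke $\theta_k(k_1,k_2)=\nu_k(k_1+k_2)\overline{\nu_k(k_1)\nu_k(k_2)}$ exactly as the paper does. Your added remark on injectivity and surjectivity of $\tbeta_k$ is a harmless refinement the paper leaves implicit.
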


\begin{proof} It suffices to verify the following two relations: 
\begin{align*}
\lefteqn{\nu_k(k')(u(k,k')u(k',k)^*\otimes 1)\lambda^v_{k'}(\beta_k\otimes \id)(x)} \\
 &=(\beta_k\otimes \id)\circ\gamma_{k'}(x) \nu_k(k')(u(k,k')u(k',k)^*\otimes 1)\lambda^v_{k'},\quad \forall x\in \cR,
\end{align*}
\begin{align*}
\lefteqn{\nu_k(k')(u(k,k')u(k',k)^*\otimes 1)\lambda^v_{k'}\nu_k(k'')(u(k,k'')u(k'',k)^*\otimes 1)\lambda^v_{k''}} \\
 &=\nu_k(k'+k'') (\beta_{k}\otimes 1)(w(k',k''))(u(k,k'+k'')u(k'+k'',k)^*\otimes 1)\lambda^v_{k'+k''}. 
\end{align*}
The first one is easy to verify and the second one is equivalent to 
\begin{align*}
\lefteqn{ \nu_k(k')\nu_k(k'')u(k,k')u(k',k)^*\beta_{k'}(u(k,k'')u(k'',k)^*)u(k',k'')} \\
 &=\nu_k(k'+k'')\beta_{k}(u(k'+k''))u(k,k'+k'')u(k'+k'',k)^*. 
\end{align*}
Using the defining relation of $\omega$, we get 
\begin{align*}
\lefteqn{\beta_k(u(k'+k''))u(k,k'+k'')u(k'+k'',k)^*} \\
 &=\omega(k,k',k'')u(k,k')u(k+k',k'')u(k'+k'',k)^*\\
 &=\omega(k,k',k'')u(k,k')u(k',k)^*(u(k',k)u(k+k',k''))u(k'+k'',k)^* \\
 &=\omega(k,k',k'')\overline{\omega(k',k,k'')}u(k,k')u(k',k)^*
 \beta_{k'}(u(k,k''))u(k',k+k'')u(k'+k'',k)^*  \\
 &=\omega(k,k',k'')\overline{\omega(k',k,k'')}\omega(k',k'',k)
 u(k,k')u(k',k)^*\beta_{k'}(u(k,k'')u(k'',k)^*)u(k'k'')\\
 &=\overline{\theta_k(k',k'')} u(k,k')u(k',k)^*\beta_{k'}(u(k,k'')u(k'',k)^*)u(k'k''),\\
\end{align*}
which shows the statement. 
\end{proof}

Note that the group of the equivalence classes of the simple objects of $\cD^\omega(K)$ is 
$\{[\tbeta_k\circ\hat{\gamma}_\chi]\}_{k\in K,\;\chi\in \hat{K}}$, 
where $\hat{\gamma}$ is the dual action of $\gamma$.  
We denote this group by $G$. 
Thus $G$ is an extension:
$$0\to\hat{K}\to G\to K\to 0.$$
 
\begin{theorem}\label{KtoG} Let the notation be as above. 
The fusion category generated by $\kappa(\sigma\otimes \id)\overline{\kappa}$ is a C$^*$ near-group category 
for $G$ with $m=l|K|^2$. 
\end{theorem}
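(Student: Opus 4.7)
The strategy is to realize $\rho := \kappa(\sigma\otimes\id)\overline\kappa \in \End(M)$ as an irreducible endomorphism and to compute $[\rho^2]$ by repeated use of Frobenius reciprocity. The key tools are Takesaki duality for the twisted crossed product $M = \cR\rtimes_{\gamma,v}K$, which gives $[\overline\kappa\kappa] = \bigoplus_{k\in K}[\gamma_k]$ and $[\kappa\overline\kappa] = \bigoplus_{\chi\in\hat K}[\hat\gamma_\chi]$, together with the hypothesized fusion $[\sigma\beta_k\sigma] = [\beta_{-k}]\oplus l\bigoplus_{k'}[\beta_{k'}\sigma]$ on $R$. Write $\tilde\sigma = \sigma\otimes\id$.

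First I would settle dimensions and irreducibility. Since $d(\kappa)=\sqrt{|K|}$, one has $d(\rho) = |K|d(\sigma)$, and combined with $d(\sigma)^2 = 1 + l|K|d(\sigma)$ this gives $d(\rho)^2 = |K|^2 + l|K|^2\,d(\rho)$, the near-group dimension equation with $|G|=|K|^2$ and $m = l|K|^2$. A double application of Frobenius reciprocity produces
$$\dim(\rho,\rho) = \sum_{k_1,k_2\in K}\dim\bigl(\sigma\otimes\id,\;\beta_{k_1}\sigma\beta_{k_2}\otimes\beta_{k_1+k_2}^{\mathrm{opp}}\bigr) = \sum_{k_1,k_2}\delta_{k_1,k_2}\,\delta_{k_1+k_2,0}=1,$$
where $[\beta_{k_1}\sigma\beta_{k_2}]=[\beta_{k_1-k_2}\sigma]$ (from the given $[\sigma\beta_{k_2}]=[\beta_{-k_2}\sigma]$) and oddness of $K$ forces $k_1=k_2=0$. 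An analogous calculation yields $\dim(\rho,\tbeta_k\hat\gamma_\chi) = \dim(\beta_{-k}\sigma,\id)\cdot\delta_{0,\cdot} = 0$, so $\rho$ is irreducible and non-invertible.

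Next I would decompose $[\rho^2]$. Using $\overline\kappa\kappa\simeq\bigoplus_k\gamma_k$ and the fusion rule,
$$[\rho^2] = \bigoplus_{k}\bigl[\kappa(\beta_{-k}\otimes\beta_k^{\mathrm{opp}})\overline\kappa\bigr]\;\oplus\;l\bigoplus_{k,k'}\bigl[\kappa(\beta_{k'}\sigma\otimes\beta_k^{\mathrm{opp}})\overline\kappa\bigr].$$
For the group part, the key identity is $\beta_{-k}\otimes\beta_k^{\mathrm{opp}} = (\beta_{-2k}\otimes\id)\circ\gamma_k$ (up to inner perturbation by $u(-2k,k)\otimes 1$); since $\gamma_k$ becomes inner on $M$ via $\lambda_k^v$ and $\tbeta_{-2k}\kappa = \kappa(\beta_{-2k}\otimes\id)$, I get
$$[\kappa(\beta_{-k}\otimes\beta_k^{\mathrm{opp}})\overline\kappa] = [\tbeta_{-2k}\kappa\overline\kappa] = \bigoplus_{\chi\in\hat K}[\tbeta_{-2k}\hat\gamma_\chi].$$
Oddness of $K$ makes $k\mapsto -2k$ a bijection on $K$, so summing over $k$ produces each element of $G=\{[\tbeta_j\hat\gamma_\chi]\}$ exactly once. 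For the simple part, each $\beta_{k'}\sigma\otimes\beta_k^{\mathrm{opp}}$ is irreducible of dimension $d(\sigma)$, hence $\kappa(\beta_{k'}\sigma\otimes\beta_k^{\mathrm{opp}})\overline\kappa$ has dimension $|K|d(\sigma) = d(\rho)$. A Frobenius computation of the same flavor gives
$$\dim\bigl(\kappa(\beta_{k'}\sigma\otimes\beta_k^{\mathrm{opp}})\overline\kappa,\;\rho\bigr) = \sum_{k_1,k_2}\delta_{k_1+k_2,k}\,\delta_{k_1-k_2,k'}=1,$$
the unique solution $(k_1,k_2)=\bigl((k+k')/2,(k-k')/2\bigr)$ again requiring $K$ odd. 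Since each such piece has the same dimension as $\rho$ and contains it, each equals $[\rho]$. Assembling,
$$[\rho^2] = \bigoplus_{g\in G}[g]\;\oplus\;l|K|^2[\rho].$$
The remaining constraints $[g\rho]=[\rho g]=[\rho]$ are then automatic: $[g\rho]\cdot[\rho] = [g]\cdot[\rho^2]$ contains $[\id]$ exactly once, which, together with the irreducibility of $[g\rho]$ and $[\rho]$, forces $[g\rho]=[\overline{\rho}]$, and applied with $g=e$ this also yields self-conjugacy of $\rho$.

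The principal obstacle is the careful tracking of outer-equivalence classes when extracting the extendible factor $\beta_{-2k}\otimes\id$ from $\beta_{-k}\otimes\beta_k^{\mathrm{opp}}$: this rewrite is what produces the shift $k\mapsto -2k$ and hence the need for $K$ to be odd, and the same parity is what makes the simple-part Frobenius computation give exactly one solution $(k_1,k_2)$. Once these bookkeeping points are controlled, the remainder of the argument is a systematic exercise in Frobenius reciprocity applied to the two duality identities above.
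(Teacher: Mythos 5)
Your proposal is correct and follows essentially the same route as the paper's proof: irreducibility and the decomposition of $[\rho^2]$ are obtained by Frobenius reciprocity from $[\overline{\kappa}\kappa]=\bigoplus_k[\gamma_k]$, the relation $[\sigma\beta_k]=[\beta_{-k}\sigma]$, absorption of $\gamma_k$ into $\kappa$ (where it becomes inner via $\lambda^v_k$), and oddness of $K$ to make $k\mapsto -2k$ bijective and the linear constraints uniquely solvable. The only differences are cosmetic bookkeeping: you split $\sigma^2$ before removing the $\beta_k^{\mathrm{opp}}$ factor and spell out the dimension count and the derivation of $[g\otimes\rho]=[\rho\otimes g]=[\rho]$ and self-conjugacy, which the paper leaves implicit.
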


\begin{proof} Let $\rho=\kappa(\sigma\otimes \id)\overline{\kappa}$. 
Then $\rho$ is irreducible because
$$\dim(\rho,\rho)=\dim(\overline{\kappa}\kappa(\sigma\otimes \id),(\sigma\otimes \id)\overline{\kappa}\kappa)
=\sum_{k,k'\in K}\dim(\beta_k\sigma\otimes \beta^{\mathrm{opp}}_k,\sigma\beta_{k'}\otimes \beta^{\mathrm{opp}}_{k'})=1.$$
For $\rho^2$, we have 
\begin{align*}
[\rho^2]&=[\kappa(\sigma\otimes \id)\overline{\kappa}\kappa(\sigma\otimes \id) \overline{\kappa}]
=\bigoplus_{k\in K}[\kappa(\sigma\beta_k\sigma\otimes \beta^{\mathrm{opp}}_k)\overline{\kappa}]
=\bigoplus_{k\in K} [\kappa(\beta_{-k}\sigma^2\otimes\beta^{\mathrm{opp}}_k) \overline{\kappa}] \\
 &=\bigoplus_{k\in K} [\kappa\gamma_k(\beta_{-2k}\sigma^2\otimes\id) \overline{\kappa}]
 =\bigoplus_{k\in K} [\kappa(\beta_{-2k}\otimes \id)\overline{\kappa}]
 \oplus l\bigoplus_{k,k'\in K}[\kappa(\beta_{-2k}\beta_{k'}\sigma\otimes \id)\overline{\kappa}]\\
 &=\bigoplus_{k\in K} [\tbeta_{-2k}\kappa\overline{\kappa}]
 \oplus l|K|\bigoplus_{k'\in K}[\kappa(\beta_{k'}\sigma\otimes \id)\overline{\kappa}].
\end{align*}
where we used $\kappa\circ\beta_k=\tbeta_k\circ\kappa$. 
Since $K$ is an odd group, the first term is 
$$\bigoplus_{k\in K,\;\chi\in \hat{K}}[\tbeta_k\hat{\gamma}_\chi].$$

For the second term,
\begin{align*}
\lefteqn{\dim(\kappa(\beta_k\sigma\otimes \id)\overline{\kappa},\kappa(\sigma\otimes \id)\overline{\kappa})
=\dim(\overline{\kappa}\kappa(\beta_k\sigma\otimes \id),(\sigma\otimes \id)\overline{\kappa}\kappa)} \\
&=\sum_{k',k''\in K}\dim(\beta_{k+k'}\sigma\otimes \beta^{\mathrm{opp}}_{k'}), ((\sigma\beta_{k''}\otimes \beta^{\mathrm{opp}}_{k''}))
=\sum_{k'\in K}\dim(\beta_{k+k'}\sigma, \beta_{-k'}\sigma)\\
&=\sum_{k'\in K}\dim(\beta_{k+2k'}\sigma, \sigma)=1.
\end{align*}
This shows 
$$[\rho^2]=\bigoplus_{k\in K,\;\chi\in \hat{K}}[\tbeta_k\hat{\gamma}_\chi]+l|K|^2[\rho].$$
\end{proof}

\begin{remark} 
Let $\cC_\sigma$ be the fusion category generated by $\sigma$, and let $\cC_\rho$ be the near-group category generated by $\rho$ 
in Theorem \ref{KtoG}. 
Then the above construction shows that $\cC_\rho$ is categorically Morita equivalent to 
$\cC_\sigma\boxtimes \mathrm{Vec}^{\overline{\omega}}_K$, 
where $\mathrm{Vec}^{\overline{\omega}}_K$ is the category of $K$-graded vector spaces with twist $\overline{\omega}$. 
In consequence, we have equivalence $\cZ(\cC_\rho)\cong \cZ(\cC_\sigma)\boxtimes \cZ(\mathrm{Vec}^{\overline{\omega}}_K)$, 
where $\cZ(\cC)$ denotes the Drinfeld center of $\cC$.   
This explains why the modular data of $\cZ(\cC_\rho)$ factors as the product of those of $\cZ(\cC_\sigma)$ 
and $\cD^{\overline{\omega}}(K)$ in Example \ref{Z3Z3} and Example \ref{Z9} below (see \cite[Section2 and Section 3]{EG11} 
and \cite[p.636]{EG14}). 
\end{remark}

\begin{example} For $G=\Z_2\times \Z_2$, there is a unique C$^*$ near-group category with $m=4$, 
and a non-trivial subgroup $H$ with $H\subset H^\perp$ is unique, which is $H=\{0,g_3\}$ in Example \ref{Z2Z2}. 
The subgroup $H$ is Lagrangian, and by de-equivariantization, we get the even part of the $A_7$-subfactor.  
\end{example}

\begin{example} There are two C$^*$ near-group categories for $G=\Z_4$, which are mutually complex conjugate, and 
$H=\Z_2$ satisfies $H=H^\perp$, though it is not Lagrangian. 
In this case, de-equivariantization by $H$ gives two 1-supertransitive subfactors constructed Liu-Morrison-Penneys \cite{LMP15}. 
In fact they already observed that the two subfactors they constructed give rise to the $2^{\Z_4}1$ subfactors by equivariantization 
(see \cite[Section 4.3]{LMP15}).
\end{example}

\begin{example}\label{Z3Z3} Let $G=\Z_3\times \Z_3$. Evans-Gannon \cite[Table 2]{EG14} showed that there is a unique C$^*$ near-group 
category for $G=\Z_3\times \Z_3$, and the quadratic form in the solution is given as $\inpr{(x_1,x_2)}{(y_1,y_2)}=\zeta_3^{x_1x_2-y_1y_2}$, 
$a(x_1,x_2)=\zeta_3^{x_1^2-y_1^2}.$ 
There are exactly two Lagrangians: $H_1=\{(0,0),(1,1),(2,2)\}$ and $H_2=\{(0,0), (1,2),(2,1)\}$, and 
they produce two different fusion categories by de-equivariantization. 
A priori, two different subgroups could produce equivalent categories, but this is not the case now because of the following reason. 
In the case of $H=H_1$, we have 
$$[\sigma^2]=[\id]\oplus \bigoplus_{h\in H_2} [\talpha_h\sigma],$$
and $\talpha$ restricted to $H_2$ is an action, and therefore there is no third cohomology obstruction for the $\Z_3$ part. 
When $H=H_2$, the situation is the same. 
In fact, Grossman-Snyder \cite{GS12} showed that there are exactly two C$^*$-fusion categories with the same fusion rules 
as the Haagerup category (the even part of the Haagerup subfactor) and with trivial third cohomology obstruction for the $\Z_3$ part. 
Thanks to Theorem \ref{KtoG}, which is the converse construction of Corollary \ref{deCor}, 
the two groups $H_1$ and $H_2$ indeed produce two different fusion categories. 
\end{example}

\begin{example}\label{Z9} Evans-Gannon \cite{EG11} showed that there are exactly two C$^*$ near-group categories for $\Z_9$, 
and they are complex conjugate to each other. 
In this case, the subgroup $H=<3>\cong \Z_3$ is a Lagrangian, which produces two more fusion categories with the same fusion rules 
as the Haagerup category. 
The $\Z_3$ part of the resulting fusion categories is generated by $\talpha_1$, and its third cohomology obstruction, known as 
Connes obstruction \cite{C77}, can be easily computed as follows. 
Indeed, we have $\talpha_1^3=\lambda_3$ and $\talpha_1(\lambda_3)=\inpr{1}{1}^3\lambda_3$, which shows that the Connes obstruction is 
either $\zeta_3$ or $\zeta_3^{-1}$. 
The existence of these two categories has been expected since Evans-Gannon \cite[Question 5]{EG11} obtained the corresponding hypothetical 
modular data of the Drinfeld center. 
\end{example}

\begin{example} It is known that there exist fusion categories as in Theorem \ref{KtoG} for $G=\Z_n$, $n=5,7,9$, 
with $l=1$ and trivial third cohomology obstruction on the group part (see \cite{I01}, \cite{EG11}).  
It follows that there exist C$^*$ near-group categories for $G=\Z_n\times \Z_n$ with $m=n^2$ for $n=5,7,9$. 
\end{example}

\subsection{Twisted case} 
In the tensor category setting (see \cite{EGNO15}), de-equivariantization with respect to a subgroup $H\subset G$ 
can be defined for an algebra structure of the object
$$\bigoplus_{h\in H}\alpha_h,$$
because it is known that $\alpha_h$ has a unique half-braiding (see \cite[Lemma 6.1]{I01}), and hence 
$\alpha_h$ uniquely lifts to the Drinfeld center. 
In operator algebra setting, an algebra structure of the above object corresponds to a lifting of 
$\{[\alpha_h]\}_{h\in H}\subset \Out(M)$ to a group action. 
In the case of near-group categories, we have a privileged lifting determined by $\alpha_h\circ \rho=\rho$, 
and we essentially discussed de-equivariantization with this algebra structure in the previous subsection. 
The other algebra structures are in one-to-one correspondence with $H^2(H,\T)\setminus \{0\}$, 
and the corresponding operation in terms of operator algebras is cocycle twisted crossed product. 

Let $G$, $M$, $\rho$, and $\alpha$ be as in the previous subsection. 
Now we do not assume that $H$ satisfies the condition $H\subset H^\perp$. 
Instead, we assume that $H\cong \Z_2^{2s}$ for a natural number $s$, and that the restriction of 
$\inpr{\cdot}{\cdot}$ to $H$ is non-degenerate. 
We further assume that there exists a 2-cocycle $\omega\in Z^2(H,\T)$ satisfying 
$\omega(h,k)\overline{\omega(k,h)}=\inpr{h}{k}\in \R$. 

Let $M\rtimes_{\alpha,\omega} H$ be the twisted crossed product, that is, the von Neumann algebra generated by $M$ and 
unitaries $\{\lambda^\omega_h\}_{h\in H}$ satisfying $\lambda^\omega_hx=\alpha_h(x)\lambda^\omega_h$ for any $x\in M$ and 
$\lambda^\omega_{h_1}\lambda^\omega_{h_2}=\omega(h_1,h_2)\lambda^\omega_{h_1+h_2}$. 
As before we can extend $\rho$ and $\alpha$ to $M\rtimes_{\alpha,\omega}H$ by setting 
$\trho(\lambda^\omega_h)=a(h)U(h)\lambda^\omega_h$, and $\talpha_g(\lambda^\omega_h)=\inpr{g}{h}\lambda^\omega_h$. 
As in Lemma \ref{deL1}, we have the following for all $x\in M\rtimes_\alpha H$. 
$$\trho^2(x)=\sum_{g\in G}S_g\talpha_g(x)S_g^*+\sum_{g\in G}T_g\trho(x)T_g^*,$$
$$\trho\circ\talpha_g=\Ad U(g)\circ\trho.$$

We claim that $\lambda^\omega_h$ commutes with $\trho(\lambda^\omega_k)$ for any $h,k\in H$.
Indeed, we have 
\begin{align*}
\lefteqn{\lambda^\omega_h\trho(\lambda^\omega_k)=\lambda^\omega_ha(k)U(k)\lambda^\omega_k
=a(k)\alpha_h(U(k))\lambda^\omega_h\lambda^\omega_k}\\
&=a(k)\overline{\inpr{h}{k}}\omega(h,k)U(k)\lambda^\omega_{h+k} 
=a(k)\overline{\inpr{h}{k}} \omega(h,k)\overline{\omega(k,h)}U(k)\lambda^\omega_k\lambda^\omega_h\\
 &=\trho(\lambda^\omega_k)\lambda^\omega_h=\trho(\lambda^\omega_k)\lambda^\omega_h. 
\end{align*}

\begin{lemma} Let the notation be as above. 
\begin{itemize}
\item[(1)] $\talpha_h=\Ad \lambda^\omega_h$ for any $h\in H$. 
\item[(2)] $(\trho,\trho)=\{\lambda^\omega_h\}_{h\in H}''$. 
\end{itemize}
\end{lemma}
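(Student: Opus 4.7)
The plan is to prove the two parts along the same lines as the untwisted Lemma \ref{deL2}, using the fact that the cocycle hypothesis $\omega(h,k)\overline{\omega(k,h)}=\inpr{h}{k}$ has been designed precisely to make everything fit.

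For part (1), both $\talpha_h$ and $\Ad\lambda^\omega_h$ are automorphisms of $M\rtimes_{\alpha,\omega}H$, so it suffices to check equality on the two kinds of generators. On $x\in M$ both sides act as $\alpha_h$ by the very definition of the twisted crossed product. On $\lambda^\omega_k$ I compute directly
\begin{equation*}
\lambda^\omega_h\lambda^\omega_k(\lambda^\omega_h)^{-1}=\frac{\omega(h,k)}{\omega(k,h)}\lambda^\omega_k,
\end{equation*}
so agreement with $\talpha_h(\lambda^\omega_k)=\inpr{h}{k}\lambda^\omega_k$ is nothing more than the hypothesis $\omega(h,k)\overline{\omega(k,h)}=\inpr{h}{k}$.

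For part (2), I argue by two inclusions. For $\supset$, I must show each $\lambda^\omega_h$ intertwines $\trho$ with itself. Since $\alpha_h\circ\rho=\rho$, the element $\lambda^\omega_h$ commutes with $\trho(y)=\rho(y)$ for every $y\in M$; and the computation preceding the lemma already verified that $\lambda^\omega_h$ commutes with $\trho(\lambda^\omega_k)$, again using the cocycle hypothesis. Since $M$ and the $\lambda^\omega_k$ generate $M\rtimes_{\alpha,\omega}H$, this yields $\{\lambda^\omega_h\}_{h\in H}''\subset(\trho,\trho)$. For $\subset$, I note that $(\trho,\trho)\subset M\rtimes_{\alpha,\omega}H\cap \rho(M)'$ since $\trho|_M=\rho$, and I compute this relative commutant: every element of $M\rtimes_{\alpha,\omega}H$ has a unique expansion $x=\sum_{h\in H}x_h\lambda^\omega_h$ with $x_h\in M$, and commuting with $\rho(M)$ combined with $\alpha_h\circ\rho=\rho$ and uniqueness of the expansion forces $x_h\in \rho(M)'\cap M=\C I$ for each $h$. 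Hence the relative commutant equals $\{\lambda^\omega_h\}_{h\in H}''$, and equality in (2) follows.

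There is no real obstacle beyond cocycle bookkeeping: one must keep track of the ordering of $\omega$-factors and, in several places, of $(\lambda^\omega_h)^{-1}$, but everything is forced by the single identity $\omega(h,k)\overline{\omega(k,h)}=\inpr{h}{k}$. The only conceptual point worth flagging is that the algebra $\{\lambda^\omega_h\}_{h\in H}''$ is now genuinely non-commutative (it is a twisted group algebra rather than the abelian algebra appearing in Lemma \ref{deL2}), which will matter later when decomposing $\trho$ into irreducibles; at this stage, however, it enters only through dimension counting.
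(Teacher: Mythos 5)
Your proposal is correct and follows essentially the same route as the paper: part (1) by checking $\talpha_h$ and $\Ad\lambda^\omega_h$ on the generators via the identity $\omega(h,k)\overline{\omega(k,h)}=\inpr{h}{k}$, and part (2) by combining the commutation of $\lambda^\omega_h$ with $\trho$ on generators with the computation $(M\rtimes_{\alpha,\omega}H)\cap\rho(M)'=\{\lambda^\omega_h\}_{h\in H}''$, which uses $\alpha_h\circ\rho=\rho$ and the irreducibility of $\rho$ exactly as in Lemma \ref{deL2}. The only cosmetic difference is that in (1) you conjugate by $\lambda^\omega_h$ directly rather than checking that $\Ad(\lambda^\omega_h)^*\circ\talpha_h$ is trivial, so you never need to invoke $\inpr{h}{k}^2=1$; this changes nothing of substance.
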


\begin{proof} (1) 
Note that the restriction of $\inpr{\cdot}{\cdot}$ to $H$ is real-valued. 
Since $\Ad {\lambda^\omega_h}^*\circ\talpha_h$ is trivial on $M$, it suffices to show 
\begin{align*}
\lefteqn{\Ad {\lambda^\omega_h}^*\circ\talpha_h(\lambda^\omega_k)=\inpr{h}{k} \lambda^\omega_h\lambda^\omega_k{\lambda^\omega_h}^*
=\inpr{h}{k}\omega(h,k)\lambda^\omega_{h+k}{\lambda^\omega_h}^*}\\
 &=\inpr{h}{k}\omega(h,k)\overline{\omega(k,h)}
 \lambda^\omega_{k}\lambda^\omega_h{\lambda^\omega_h}^* =\inpr{h}{k}^2\lambda^\omega_k=\lambda^\omega_k. 
\end{align*}

(2) As in the proof of Lemma \ref{deL2}, we have $(M\rtimes_{\alpha,\omega} H)\cap \rho(M)'=\{\lambda^\omega_h\}_{h\in H}''$, and  
$$(\trho,\trho)=\{\lambda^\omega_h\}_{h\in H}''\cap \{\trho(\lambda^\omega_k)\}_{k\in H}'=\{\lambda^\omega_h\}_{h\in H}''.$$
\end{proof} 

Since $\inpr{\cdot}{\cdot}$ restricted to $H$ is non-degenerate, the twisted group algebra $\{\lambda^\omega_h\}_{h\in H}''$ is isomorphic 
to the full matrix algebra $M_{2^s}(\C)$. 
Thus (2) above shows that there exists an irreducible $\sigma\in \End(M\rtimes_{\alpha,\omega}H)$ satisfying 
$[\trho]=2^s[\sigma]$. 
On the other hand we have 
$$[\trho^2]=\bigoplus_{g}[\talpha_g]\oplus n[\trho]
=|H|\bigoplus_{g\in G/H}[\talpha_g]\oplus n2^s[\sigma].$$
This implies 

\begin{theorem}\label{twisted} Let the notation be as above. Then 
$$[\sigma^2]=\bigoplus_{g\in G/H}[\talpha_g]\oplus 2^s|G/H|[\sigma].$$
In particular $\sigma$ generates a C$^*$ near-group category for $G/H$ with $m=2^s|G/H|$. 
\end{theorem}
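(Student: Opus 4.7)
The plan is to read off $[\sigma^2]$ by computing $[\trho^2]$ in two different ways and equating them. The lemma immediately preceding the theorem establishes $(\trho,\trho)=\{\lambda^\omega_h\}_{h\in H}''$, and since the restriction of $\inpr{\cdot}{\cdot}$ to $H\cong\Z_2^{2s}$ is non-degenerate, the twisted group algebra $\{\lambda^\omega_h\}_{h\in H}''$ is isomorphic to the full matrix algebra $M_{2^s}(\C)$. Hence $[\trho]=2^s[\sigma]$ for some irreducible $\sigma$, and composing this equivalence with itself yields the first expression $[\trho^2]=2^{2s}[\sigma^2]=|H|[\sigma^2]$.

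For the second expression I would invoke the decomposition $\trho^2(x)=\sum_{g\in G}S_g\talpha_g(x)S_g^*+\sum_{g\in G}T_g\trho(x)T_g^*$ established for the twisted extension just as in Lemma \ref{deL1}, together with the Cuntz relation $\sum_gS_gS_g^*+\sum_gT_gT_g^*=I$. These isometries exhibit $[\trho^2]=\bigoplus_{g\in G}[\talpha_g]\oplus n[\trho]=\bigoplus_{g\in G}[\talpha_g]\oplus n\cdot 2^s[\sigma]$. The preceding lemma gives $\talpha_h=\Ad\lambda^\omega_h$ for $h\in H$, so $[\talpha_g]$ depends only on the coset $gH\in G/H$. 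To confirm that distinct cosets give inequivalent sectors, I would suppose $\talpha_{g_0}=\Ad u$ for some $g_0\notin H$ and some unitary $u\in M\rtimes_{\alpha,\omega}H$; Fourier-expanding $u=\sum_{h\in H}x_h\lambda^\omega_h$ and imposing $uy=\alpha_{g_0}(y)u$ for all $y\in M$ forces $x_h\alpha_h(y)=\alpha_{g_0}(y)x_h$ for each $h$, so that each $x_h$ intertwines $\alpha_h$ with $\alpha_{g_0}$ as automorphisms of the factor $M$. Outerness of $\alpha_{h-g_0}$ on $M$ (valid because $h-g_0\ne 0$ for every $h\in H$ once $g_0\notin H$) then forces every $x_h=0$, contradicting $u\ne 0$. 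Consequently $\bigoplus_{g\in G}[\talpha_g]=|H|\bigoplus_{g\in G/H}[\talpha_g]$ as a direct sum of mutually inequivalent irreducibles.

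Equating the two expressions gives $|H|[\sigma^2]=|H|\bigoplus_{g\in G/H}[\talpha_g]\oplus n\cdot 2^s[\sigma]$. Since $[\sigma]$ has dimension $d/2^s>1$ (because $d>n\ge|H|\ge 2^s$ in the irrational regime) while each $[\talpha_g]$ is an automorphism class of dimension one, $[\sigma]$ is distinct from every $[\talpha_g]$, and unique decomposition into irreducibles in the semisimple sector ring lets me cancel the common multiplicity $|H|=2^{2s}$. This produces $[\sigma^2]=\bigoplus_{g\in G/H}[\talpha_g]\oplus(n/2^s)[\sigma]$, and the elementary identity $n/2^s=2^s|G/H|$ (using $n=|H|\cdot|G/H|$ and $|H|=2^{2s}$) yields the claimed fusion rule; the ``in particular'' clause then follows by definition of a C$^*$ near-group category. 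The only genuinely non-routine step is the pairwise inequivalence of $\{[\talpha_g]\}_{g\in G/H}$, everything else being bookkeeping and a dimension check against $d^2=n+nd$ that holds automatically since $m=n$.
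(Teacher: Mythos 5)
Your proposal is correct and follows essentially the same route as the paper: identify $(\trho,\trho)\cong M_{2^s}(\C)$ to get $[\trho]=2^s[\sigma]$, compute $[\trho^2]=\bigoplus_{g\in G}[\talpha_g]\oplus n[\trho]$ from the Cuntz-algebra decomposition, use $\talpha_h=\Ad\lambda^\omega_h$ to collapse the group part to cosets, and compare multiplicities. Your extra verifications (pairwise inequivalence of $\talpha_g$ across distinct cosets via outerness of $\alpha$ on $M$, and the dimension check separating $[\sigma]$ from the automorphism classes) are correct routine details that the paper leaves implicit.
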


We have already seen in Subsection \ref{Z3m=6} that there are exactly two C$^*$ near-group categories for $\Z_3$ with $m=6$  
(resp. $2^{\Z_3}_21$ subfactors), and there is a $D_8$-actions on each of them, where $D_8$ is the dihedral group of order 8. 
We will see in Example \ref{36equi} that equivariantization by this action produces a fusion category containing a 
C$^*$ near-group category for $\Z_2\times \Z_2\times \Z_3$ with $m=12$ (resp. a $2^{\Z_2\times \Z_2\times \Z_3}1$ subfactor). 
Theorem \ref{twisted} should be considered as a converse construction of this equivariantization in spite of a superficial discrepancy: 
twisted crossed product is used instead of ordinary crossed product, and $\Z_2\times \Z_2$ is not the dual group of $D_8$.    
The first point can be easily fixed because it is well-known that every outer cocycle action of a finite group is known to be 
equivalent to an ordinary action. 
It means that there is a family of unitaries $\{w_h\}_{h\in H}$ in $M$ satisfying $w_h\alpha_h(w_k)=\omega(h,k)w_{h+k}$. 
Now the twisted crossed product in Theorem \ref{twisted} is identified with the ordinary crossed product by the new action 
$\Ad w_h\circ \alpha_h$.  
For the second point, let $N\subset M$ be one of the $2^{\Z_3}_21$ subfactors, and let $\gamma$ be the $D_8$-action, 
which we will see in Section \ref{Equi}. 
Let $Z(D_8)\cong \Z_2$ be the center of $D_8$. 
Then it is easy to show that the inclusion  $ N\rtimes_\gamma Z(D_8)\subset M\rtimes_\gamma Z(D_8)$ is still $2^{\Z_3}_21$, 
and it is the fixed point algebra pair of $N\rtimes_\gamma D_8\subset M\rtimes_\gamma D_8$ under the dual action 
$\hat{\gamma}$ restricted to the group part $\Hom(D_8,\T)\cong \Z_2\times \Z_2$ of the unitary dual $\hat{D_8}$. 

This reasoning indicates that there must be two missing solutions of the polynomial equations for 
$G=\Z_2\times \Z_2\times \Z_3$ with $m=12$ in Evans-Gannon's list \cite[Table 2]{EG14}, for which the restriction of 
$\inpr{\cdot}{\cdot}$ to $\Z_2\times \Z_2$ should be $\inpr{\cdot}{\cdot}_2$ in Example \ref{Z2Z2}. 
Since they are complex conjugate to each other, we give one of them now. 

\begin{example}\label{Z2Z2Z3} Let $G=H\times K$ with $H=\Z_2\times \Z_2$ and $K=\Z_3$, and let 
$$\inpr{(h,k)}{(h',k')}=\inpr{h}{h'}_2\zeta_3^{kk'},$$
$$a(h,k)=a_H(h)\zeta_3^{k^2},$$
with $a_H(0)=a_H(g_1)=a_H(g_2)=1$, $a_H(g_3)=-1$, where we use the parametrization $H=\{0,g_0,g_1,g_2\}$ as in Example \ref{Z2Z2}. 
Then there is a unique solution for the equations in Theorem \ref{m=nTh}, which is given by $c=e^{-\pi\ii/6}$, $d=6+4\sqrt{3}$ and 
\begin{align*}
b &=\frac{1-\sqrt{3}}{3}\left(
\begin{array}{c}
1  \\
\frac{1}{2}  \\
\frac{1}{2}  \\
0 
\end{array}
\right)\otimes 
\left(
\begin{array}{c}
1  \\
-\frac{\zeta_3}{2}  \\
-\frac{\zeta_3}{2} 
\end{array}
\right)
+\frac{1}{2\sqrt{2\sqrt{3}}}\left(
\begin{array}{c}
0  \\
1  \\
-1  \\
0 
\end{array}
\right)\otimes 
\left(
\begin{array}{c}
0  \\
\zeta_3\ii  \\
-\zeta_3\ii 
\end{array}
\right)\\
&+\frac{1}{6}\left(
\begin{array}{c}
1  \\
-1  \\
-1  \\
-\sqrt{3}\ii 
\end{array}
\right)\otimes 
\left(
\begin{array}{c}
1  \\
\zeta_3\\
\zeta_3 
\end{array}
\right),
\end{align*}
where we identify $\ell^2(H\times K)$ with $\ell^2(H)\otimes \ell^2(K)$, and $\ell^2(H)$ and $\ell^2(K)$ with 
$\C^4$ and $\C^3$ via orthonormal bases $<\delta_0,\delta_{g_0},\delta_{g_1},\delta_{g_2}>$ and $<\delta_0, \delta_1,\delta_2>$ 
respectively. 
We can apply Theorem \ref{twisted} to this solution with $\omega$ given by following table, 
\begin{table}[ht]
\begin{tabular}{|c|c|c|c|c|}\hline
&0&$g_0$&$g_1$&$g_2$\\ \hline
0    &1&1&1&1\\ \hline
$g_0$&1&$1$&$\ii$&-$\ii$\\ \hline
$g_1$&1&-$\ii$&$1$&$\ii$\\ \hline
$g_2$&1&$\ii$&-$\ii$&$1$\\ \hline
\end{tabular}
\end{table}
and obtain the C$^*$ near-group category for $\Z_3$ with $m=6$ discussed in Subsection \ref{Z3m=6}. 
\end{example}

Note that the above solution is invariant under $\theta\in \Aut(G)$ given by $\theta(0,x)=(0,x)$, 
$\theta(g_0,x)=(g_1,-x)$, $(g_1,x)=(g_0,-x)$, $(g_2,x)=(g_2,x)$. 

\section{Orbifold construction II (equivariantization)}\label{Equi}
\subsection{Automorphism groups} 
In this subsection, we investigate the structure of the outer automorphism group of a 
C$^*$ near-group category $\cC$ in the irrational case. 
To realize group actions on $\cC$ as those on a concrete operator algebra, 
we use the Cuntz algebra model discussed in Section \ref{RCS}.  

We fix a solution $(\epsilon, \inpr{\cdot}{\cdot},A,C,J,B_g)$ of the equations in Theorem \ref{irrpe}, 
and define a $G$-action $\alpha$ on $\cO_{n+m}$ and an endomorphism $\rho$ as before. 
We introduce a subgroup $\Aut_\rho(\cO_{n+m})$ of $\Aut(\cO_{n+m})$ by 
$$\Aut_\rho(\cO_{n+m})=\{\gamma\in \Aut(\cO_{n+m});\; \gamma\circ \rho=\rho\circ\gamma\}.$$

\begin{lemma} For any $\gamma\in \Aut_\rho(\cO_{n+m})$, there exist $\omega\in \T$, $\theta\in \Aut(G)$, and 
$u\in \cU(\cK_0)$ satisfying $\inpr{\theta(g)}{\theta(h)}=\inpr{g}{h}$, $A(\theta(g))=uA(g)u^*$, $uC=Cu$, $uJ=Ju$, and 
$B_{\theta(g)}(\xi)=(u\otimes u)B_g(u^*\xi)u^*,$ 
such that 
$$\gamma(S_g)=\omega^2S_{\theta(g)},\quad \gamma(T_g(\xi))=\omega T_{\theta(g)}(u\xi).$$

On the other hand, for any triple $(\omega,\theta,u)\in \T\times \Aut(G)\times \cU(\cK_0)$ satisfying 
the above condition, there exists $\gamma_{(\omega,\theta,u)}\in \Aut_\rho(\cO_{n+m})$ satisfying 
$$\gamma_{(\omega,\theta,u)}(S_g)=\omega^2 S_{\theta(g)},\quad \gamma_{(\omega,\theta,u)}(T_g(\xi))=\omega T_{\theta(g)}(u\xi).$$ 
Moreover two different $(\omega,\theta,u)$ and $(\omega',\theta',u')$ give the same automorphism 
if and only if $\omega'=- \omega$, $\theta'=\theta$, and $u'=-u$. 
\end{lemma}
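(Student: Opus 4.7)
The plan is to extract from an arbitrary $\gamma\in\Aut_\rho(\cO_{n+m})$ a canonical triple $(\omega,\theta,u)$ in stages, reading off the categorical data piece by piece. First, since $\gamma$ commutes with $\rho$ it preserves $(\id,\rho^2)=\C S_e$, so $\gamma(S_e)=c\,S_e$ for some $c\in\T$; fix a square root $\omega^2=c$. The automorphism $\gamma\alpha_g\gamma^{-1}$ of $\cO_{n+m}$ still satisfies $\beta\circ\rho=\rho$ (because $\alpha_h\circ\rho=\rho$ and $\gamma\circ\rho=\rho\circ\gamma$), and since $\{[\alpha_g]\}_{g\in G}$ is the set of invertible objects of the fusion category, the categorical auto-equivalence induced by $\gamma$ permutes them via some $\theta\in\Aut(G)$; irreducibility of $\rho$ eliminates the inner scalar ambiguity, giving the equality $\gamma\alpha_g\gamma^{-1}=\alpha_{\theta(g)}$ on the nose. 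Then $\gamma(S_g)=\gamma(\alpha_g(S_e))=\alpha_{\theta(g)}(\omega^2 S_e)=\omega^2 S_{\theta(g)}$.

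Next, because $\gamma$ preserves the decomposition $I=P+Q$, it restricts to a unitary of $\cK=(\rho,\rho^2)$. Applying $\gamma$ to the distinguished unitary $U(g)$, characterized by $U(g)\in(\rho,\rho\circ\alpha_g)$ and $U(g)S_e=S_e$, gives $\gamma(U(g))=U(\theta(g))$. Comparing the expansion $U(g)=\sum_h\inpr{g}{h}S_hS_h^*+U_\cK(g)$ before and after $\gamma$ forces both $\inpr{\theta(g)}{\theta(h)}=\inpr{g}{h}$ and $\gamma(U_\cK(g))=U_\cK(\theta(g))$. The relation $\gamma\circ\alpha_g=\alpha_{\theta(g)}\circ\gamma$ then shows that $\gamma$ maps the $V$-eigenspace $\cK_h$ to $\cK_{\theta(h)}$; writing $\gamma(T_h(\xi))=\omega\,T_{\theta(h)}(u_h\xi)$ for some family of unitaries $u_h$ on $\cK_0$, the identity $U_\cK(g)T_h(\xi)=T_{h-g}(\xi)$ combined with $\gamma(U_\cK(g))=U_\cK(\theta(g))$ forces $u_h$ to be independent of $h$, producing a single unitary $u$.

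The remaining constraints on $(\omega,\theta,u)$ come from pushing $\gamma$ through $j_1$, $j_2$, and $l$. Applying $\gamma$ to $j_1(T)=\sqrt{d}\,T^*\rho(S_e)$ with $\gamma(\rho(S_e))=\omega^2\rho(S_e)$ and exploiting anti-linearity yields $\gamma\circ j_1=\omega^2 j_1\circ\gamma$; plugging in the explicit form $j_1(T_h(\xi))=T_{-h}(A(-h)J\xi)$ from Lemma \ref{ACJ} gives $uA(h)J=A(\theta(h))Ju$. Setting $h=0$ (where $A(0)=I$) produces $uJ=Ju$, and then $uA(h)=A(\theta(h))u$. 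The parallel computation with $j_2$ yields $uC=Cu$, and applying $\gamma$ to Eq.(\ref{lB}) (after stripping off the now-known $A,C,J,\theta$ identifications) gives the stated transformation $B_{\theta(g)}(u\xi)=(u\otimes u)B_g(\xi)u^*$. For the converse, given an admissible triple $(\omega,\theta,u)$, $\{\omega^2 S_{\theta(g)}\}\cup\{\omega T_{\theta(g)}(ue_t)\}$ is another orthonormal Cuntz system, so universality of $\cO_{n+m}$ produces an endomorphism $\gamma_{(\omega,\theta,u)}$; that it commutes with $\rho$ is a direct check on generators using the defining formulas of $\rho(S_e),\rho(S_g),\rho(T_g(\xi))$ and the intertwining properties of $(\theta,u)$, and the existence of the inverse triple $(\omega^{-1},\theta^{-1},u^*)$ makes it an automorphism. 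Uniqueness is immediate: the action on $S_g$ pins down $\omega^2$ and $\theta$, whence $\omega'=\pm\omega$, and the action on $T_h(\xi)$ then forces $u'=\pm u$ with the same sign. The main obstacle is the bookkeeping in the $B_g$-step: Eq.(\ref{lB}) is a double sum with several occurrences of $A(\cdot)$, $U_\cK(\cdot)$, and the pairing, and disentangling the resulting identity into the single condition on $B$ requires careful tracking of how the $\omega$-phase factors cancel and how $\theta$-reindexing interacts with the $A(h)$ weights, even though the conceptual content is already forced by the rigidity of the admissible-tuple encoding.
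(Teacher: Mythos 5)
Your proposal is correct and follows essentially the same route as the paper: read off $\omega^2$ from $\gamma(S_e)\in(\id,\rho^2)$, force $\gamma\alpha_g\gamma^{-1}=\alpha_{\theta(g)}$ on the nose via the fact that the implementing unitary lies in $(\rho,\rho)=\C$, deduce $\gamma(U(g))=U(\theta(g))$ and hence invariance of the bicharacter, extract a single $u\in\cU(\cK_0)$ from the action on $\cK$, and obtain the $A,C,J,B$ compatibilities by pushing $\gamma$ through $j_1$, $j_2$ and Eq.(\ref{lB}). The only cosmetic differences are that the paper first normalizes $\gamma(S_0)=S_0$ by composing with a gauge automorphism instead of carrying the phase $\omega$ through, and produces the permutation $\theta$ from the minimal projections $S_gS_g^*$ in $(\rho^2,\rho^2)$ rather than from the permutation of invertible objects; your phase bookkeeping at the $j_1$, $j_2$ and $B_g$ steps does cancel as you claim.
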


\begin{proof} It is straightforward to show the second part, and we show only the first part here. 
Let $\gamma\in \Aut_\rho(\cO_{n+m})$. 
Since $(\id,\rho^2)=\C S_0$ and $\gamma$ commutes with $\rho$, we have $\gamma(S_0)\in (\id,\rho^2)$, and 
$\gamma(S_0)$ is a multiple of $S_0$. 
Replacing $\gamma$ with $\gamma_{(e^{\ii t},\id,I)}\circ \gamma$ for appropriate $t$, we may assume $\gamma(S_0)=S_0$. 

We claim that there exists $\theta\in \Aut(G)$ satisfying $\gamma\circ \alpha_g\circ \gamma^{-1}=\alpha_{\theta(g)}$ and 
$\gamma(S_g)=S_{\theta(g)}$. 
Since $\gamma(\rho^2,\rho^2)=(\rho^2,\rho^2)$, and 
$$(\rho^2,\rho^2)=\bigoplus_{g\in G}\C S_gS_g^*\oplus \cK\cK^*,$$
there exists a permutation $\theta$ of $G$ fixing $0$ such that $\gamma(S_gS_g^*)=S_{\theta(g)}S_{\theta(g)}^*$. 
Thus $S_{\theta(g)}^*\gamma(S_g)$ is a unitary belonging to $(\gamma\circ\alpha_g\circ\gamma^{-1},\alpha_{\theta(g)})$. 
Since 
$$\gamma\circ\alpha_g\circ\gamma^{-1}\circ \rho=\rho=\alpha_g\circ \rho,$$
we have $(\gamma\circ\alpha_g\circ\gamma^{-1},\alpha_{\theta(g)})\subset (\rho,\rho)=\C$, which implies 
that $\gamma\circ\alpha_g\circ\gamma^{-1}=\alpha_{\theta(g)}$ and $\theta$ is a group automorphism of $G$. 
Moreover, 
$$\gamma(S_g)=\gamma(\alpha_g(S_0))=\alpha_{\theta(g)}(\gamma(S_0))=\alpha_{\theta(g)}(S_0)=S_{\theta(g)}.$$
Since 
$$\Ad \gamma(U(g))\circ \rho=\gamma\circ \Ad U(g)\circ \rho\circ \gamma^{-1}=\gamma\circ\rho\circ \alpha_g\circ \gamma^{-1}
=\rho\circ\alpha_{\theta(g)},$$
$$\gamma(U(g))S_0=\gamma(U(g)S_0)=\gamma(S_0)=S_0,$$
We have $\gamma(U(g))=U(\theta(g))$, which together with $\alpha_g(U(h))=\overline{\inpr{g}{h}}U(h)$ implies 
$\inpr{\theta(g)}{\theta(h)}=\inpr{g}{h}$. 

Since $\gamma$ commutes with $\rho$, it induces a unitary transformation of $(\rho,\rho^2)=\cK$. 
Since 
$$\alpha_g(\gamma(T_0(\xi)))=\gamma(\alpha_{\theta^{-1}(g)}(T_0(\xi)))=\gamma(T_0(\xi)),$$ 
we have $\gamma(T_0(\xi))\in \cK_0$, and there exists a unitary 
$u\in \cU(\cK_0)$ satisfying $\gamma(T_0(\xi))=T_0(u\xi)$. 
For $T_g(\xi)$, we have
$$\gamma(T_g(\xi))=\gamma(U(-g)T_0(\xi))=U(-\theta(g))T_0(u\xi)=T_{\theta(g)}(u\xi).$$ 
Since $\gamma(S_0)=S_0$, we get $\gamma\circ j_1=j_1\circ \gamma$ and $\gamma\circ j_2\circ \gamma$, and in consequence 
Lemma \ref{ACJ} implies that $A(\theta(g))=uA(g)u^*$, $uC=Cu$, and $uJ=Ju$. 

It remains to show that the condition $\gamma(l(T))=l(\gamma(T))$ is equivalent to $B_{\theta(g)}(\xi)=(u\otimes u)B_g(u^*\xi)u^*$ 
under the other conditions we have verified so far. 
Thanks to Lemma \ref{lB}, the former is equivalent to $\gamma(B_g(\xi))=B_{\theta(g)}(u\xi)$. 
Since $B_g(\xi)\in \cK_0^2\cK_0^*$, we get $\gamma(B_g(\xi))=(u\otimes u)B_g(\xi)u^*$, and we are done. 
\end{proof}

Recall that the periodic 1-parameter automorphism group $\{\gamma_{(e^{\ii t},\id,I)}\}_{t\in \R}$, 
which is a central subgroup of $\Aut_\rho(\cO_{n+m})$, has a unique KMS state, 
whose GNS construction produces a type III factor $\cM$ extending $\cO_{n+m}$ (see Appendix and \cite{I93}). 
We use the same symbols $\alpha$, $\rho$, $\gamma_{(\omega,\theta,u)}$ for their extensions to $\cM$ as before. 
We denote by $\cC\subset \End(\cM)$ the fusion category generated by $\rho$. 
To avoid possible confusion, we make the only exception in this note here, and use the tensor symbol $\otimes$ for 
the monoidal product of $\cC$ instead of composition, and do not omit the symbol $\Hom_\cC$ for the spaces of intertwiners in the following arguments. 
To emphasize that $X\in \cM$ is regarded as an element in 
$$\Hom_\cC(\mu,\nu)=\{X\in \cM;\;X\mu(x)=\nu(x)X,\; \forall x\in \cM\},$$ 
we use the notation $(\nu|X|\mu)$ following \cite[Section IV]{DHR71}. 
(Caution: the order of $\mu$ and $\nu$ are switched in $(\nu|X|\mu)$ and $\Hom_\cC(\mu,\nu)$.) 
Note that we have 
$$I_\sigma\otimes (\nu|X|\mu)=(\sigma\otimes\nu|\sigma(X)|\sigma\otimes\mu)\in \Hom_\cC(\sigma\otimes \mu,\sigma\otimes \nu),$$
$$(\nu|X|\mu)\otimes I_\sigma=(\nu\otimes \sigma|X|\mu\otimes \sigma)\in \Hom_\cC(\mu\otimes \sigma,\nu\otimes \sigma).$$

An automorphism of $\cC$ is a tensor auto-equivalence of $\cC$ as a C$^*$-category,  
which is a pair $(F,L)$ consisting of an auto-equivalence $F$ and natural isomorphisms 
$L_{\sigma_1,\sigma_2}:F(\sigma_1)\otimes (\sigma_2)\to F(\sigma_1\otimes \sigma_2)$ satisfying 
$$L_{\sigma_1\otimes \sigma_2,\sigma_3}\circ (L_{\sigma_1,\sigma_2}\otimes I_{F(\sigma_3)})
=L_{\sigma_1,\sigma_2\otimes \sigma_3}\circ (I_{\sigma_1}\otimes L_{\sigma_2,\sigma_3}).$$
The automorphism group $\Aut(\cC)$ of $\cC$ is the collection of automorphisms of $\cC$ modulo tensor natural isomorphisms. 
Since we are working on C$^*$ categories, an isomorphism between two objects are always assumed to be unitary. 
An automorphism is inner if it is equivalent to the one given by $\beta\otimes \cdot \otimes \beta^{-1}$ 
for an invertible object $\beta$ in $\cC$, which is always equivalent to $\alpha_g\otimes \cdot \otimes \alpha_g^{-1}$ 
for some $g\in G$ in our case. 
The outer automorphism group $\Out(\cC)$ of $\cC$ is $\Aut(\cC)$ modulo the normal subgroup of inner automorphisms. 

Every $\gamma\in \Aut_\rho(\cO_{n+m})$ extends to $\cM$ and induces an automorphism of $\cC$, which is denoted by 
$F_\gamma$ (we omit the natural isomorphisms $L:F_\gamma(\sigma_1)\otimes F_\gamma(\sigma_2)\to 
F_\gamma(\sigma_1\otimes \sigma_2)$ as it is trivial now). 
More precisely, we define $F_\gamma(\sigma)=\gamma\circ\sigma\circ \gamma^{-1}$ for an object $\sigma\in \cC$, and 
define $F_\gamma$ on the $\mathrm{Hom}$-spaces by the restriction of $\gamma$. 
For $\gamma=\gamma_{(\omega,\theta,u)}$, we have $F_\gamma(\alpha_g)=\alpha_{\theta(g)}$, $F_\gamma(\rho)=\rho$, 

\begin{lemma} An automorphism $\gamma\in \Aut_\rho(\cO_{n+m})$ induces a trivial automorphism of $\cC$ if and only if 
$\gamma=\gamma_{(\omega,\id,I_{\cK_0})}$ for $\omega\in \T$. 
\end{lemma}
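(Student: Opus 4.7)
The plan is to exploit the strictness of the Cuntz algebra model. Since $F_\gamma$ is induced by a genuine endomorphism $\gamma$ of $\cO_{n+m}$, and $F_\gamma(\sigma\otimes\tau)=\gamma\sigma\tau\gamma^{-1}=F_\gamma(\sigma)\otimes F_\gamma(\tau)$, both $F_\gamma$ and $\id$ come with trivial monoidal constraints, so a tensor natural isomorphism $\eta:\id\Rightarrow F_\gamma$ is just a family of unitaries $\eta_\sigma\in\Hom_\cC(\sigma,\sigma)$ satisfying the naturality condition $\eta_\tau X=\gamma(X)\eta_\sigma$ for $X\in\Hom_\cC(\sigma,\tau)$ together with $\eta_{\sigma\otimes\tau}=\eta_\sigma\otimes\eta_\tau$. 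Since every object of $\cC$ is a direct sum of copies of the simple objects $\{\alpha_g\}_{g\in G}\cup\{\rho\}$ and $\Hom_\cC(\sigma,\sigma)=\C$ for simple $\sigma$, the data of $\eta$ reduces to a pair of scalars $c_{\alpha_g},c_\rho\in\T$ with $\eta_{\alpha_g}=c_{\alpha_g}I$ and $\eta_\rho=c_\rho I$.

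For the ``if'' direction, I would take $\gamma=\gamma_{(\omega,\id,I_{\cK_0})}$ and set $c_{\alpha_g}=1$, $c_\rho=\omega$. The tensor compatibility identities $c_{\alpha_g}c_{\alpha_h}=c_{\alpha_{g+h}}$, $c_{\alpha_g}c_\rho=c_\rho$, and $\eta_{\rho^2}=\omega^2 I$ are then trivial, while naturality on the generating family of intertwiners $S_g\in\Hom_\cC(\alpha_g,\rho^2)$ and $T_g(\xi)\in\Hom_\cC(\rho,\rho^2)$ reduces to $\omega^2 S_g=\omega^2 S_g$ and $\omega^2 T_g(\xi)=\omega\,T_g(\xi)\cdot\omega$, both immediate from $\gamma(S_g)=\omega^2 S_g$ and $\gamma(T_g(\xi))=\omega T_g(\xi)$.

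For the converse, suppose $\gamma=\gamma_{(\omega,\theta,u)}$ and $F_\gamma$ is naturally isomorphic to the identity. I would proceed in three steps. First, since $F_\gamma(\alpha_g)=\alpha_{\theta(g)}$ must be isomorphic to $\alpha_g$ in $\cC$ and the map $g\mapsto[\alpha_g]$ is injective into $\Out(\cM)$, we obtain $\theta=\id$. Next, applying the tensor compatibility at the equality $\alpha_g\otimes\rho=\rho$ yields $c_{\alpha_g}c_\rho=c_\rho$, hence $c_{\alpha_g}=1$ for every $g$. The key step is naturality at $T_0(\xi)\in\Hom_\cC(\rho,\rho^2)$: it gives $c_\rho^2\,T_0(\xi)=\gamma(T_0(\xi))c_\rho=\omega c_\rho T_0(u\xi)$, so $u\xi=(c_\rho/\omega)\xi$ for every $\xi\in\cK_0$, forcing $u=\lambda I_{\cK_0}$ for some $\lambda\in\T$. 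Naturality at $S_g$ then yields $c_\rho^2=\omega^2$, so $\lambda=\pm 1$; if $\lambda=-1$, the redundancy $\gamma_{(\omega,\id,-I_{\cK_0})}=\gamma_{(-\omega,\id,I_{\cK_0})}$ from the previous lemma brings $\gamma$ into the asserted form.

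The part I expect to require the most care is verifying that naturality on the short list of generators $\{S_g,T_g(\xi)\}$, together with the tensor-compatibility constraint, really captures the whole tensor natural isomorphism. This is where the reconstruction framework of Section \ref{RCS} is essential: every intertwiner in $\cC$ is a noncommutative polynomial in $\{S_g,T_g(\xi),U(g)\}$ and their adjoints, so naturality propagates from the chosen generators to all of $\cC$ and no hidden obstruction can appear at higher tensor powers.
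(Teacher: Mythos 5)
Your proposal is correct and follows essentially the same route as the paper's proof: both reduce a monoidal natural isomorphism to scalars on the simple objects using the one-dimensional morphism spaces and the strictness of the Cuntz algebra model, evaluate naturality on $S_g$ and $T_g(\xi)$ to force $u$ scalar and $\eta_\rho^2\omega^2=1$ (equivalently $c_\rho^2=\omega^2$), and then absorb the sign via the redundancy $\gamma_{(\omega,\id,-I_{\cK_0})}=\gamma_{(-\omega,\id,I_{\cK_0})}$, with the converse obtained by running the same checks backwards. The only differences are cosmetic (the direction of $\eta$ and whether $\eta_{\alpha_g}=1$ is extracted from monoidality at $\alpha_g\otimes\rho=\rho$ or from $\gamma$ acting trivially on the one-dimensional spaces), so no further comment is needed.
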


\begin{proof}  
Since $F_\gamma$ is a trivial automorphism only if it fixes the equivalence class of each simple object, we assume 
$\gamma=\gamma_{(\omega,\id,u)}$. 
Assume that $\eta$ is a natural tensor isomorphism between $F_\gamma$ and $\id$. 
We identify $\eta_{\alpha_g}\in \Hom_\cC(\alpha_g,\alpha_g)$ and $\eta_\rho\in \Hom_\cC(\rho,\rho)$ with complex numbers of modulus 1. 
Note that we have $\Hom_\cC(\alpha_{g+h},\alpha_g\otimes \alpha_h)=\C I_{\cM}$, 
$\Hom_\cC(\rho,\alpha_g\otimes \rho)=\C I_{\cM}$, $\Hom_\cC(\rho,\rho\otimes \alpha_g)=\C U(g)$, 
$\Hom_\cC(\alpha_g,\rho\otimes\rho)=\C S_g$, and 
$\Hom_\cC(\rho,\rho\otimes \rho)=\cK$. 
Since $\gamma$ acts on the first three trivially, we get $\eta_{\alpha_g}=I_{\alpha_g}$. 
For $S_g\in \Hom_\cC(\alpha_g,\rho\otimes \rho)$ and $T_g(\xi)\in \Hom_\cC(\rho,\rho\otimes \rho)$, we have 
$$(\eta_\rho\otimes \eta_\rho)\circ F_\gamma((\rho\otimes \rho|S_g|\alpha_g))=(\rho\otimes \rho|S_g|\alpha_g)\circ \eta_{\alpha_g},$$
$$(\eta_\rho\otimes \eta_\rho)\circ F_\gamma((\rho\otimes \rho|T_g(\xi)|\rho))
=(\rho\otimes \rho|T_g(\xi)|\rho)\circ \eta_\rho,$$
which are equivalent to $\eta_{\rho}^2\omega^2=1$ and $\eta_\rho\omega u=I_{\cK_0}$. 
Since $\gamma_{(\omega,\id,u)}=\gamma_{(-\omega,\id,-u)}$, we may assume $u=I_{\cK_0}$, and $\gamma=\gamma_{(\eta_\rho^{-1},\id,I_{\cK_0})}$. 

Tracing back the above argument, we can see that $\gamma_{(\omega,\id,I_{\cK_0})}$ induces a trivial automorphism of $\cC$. 
\end{proof}

We set
$$\Aut_{\rho,0}(\cO_{n+m})=\{\gamma\in \Aut_\rho(\cO_{n+m});\; \gamma(S_0)=S_0\} 
=\{\gamma_{(1,\theta,u)}\in \Aut_\rho(\cO_{n+m}) \}.$$
Then we have 
$$\Aut_\rho(\cO_{n+m})/\T\cong \Aut_{\rho,0}(\cO_{n+m})/\Z_2,$$
where $\T$ is identified with $\{\gamma_{(e^{\ii t},\id,I)}\}_{t\in \R/2\pi\Z}$ and $\Z_2$ is identified with 
$\{\gamma_{(1,\id,\pm I_{\cK_0})}\}$. 
The above lemma shows that there is an injective homomorphism from $\Aut_{\rho,0}(\cO_{n+m})/\Z_2$ into $\Aut(\cC)$. 

Recall that the gauge group $\cG(A,C,J)$ is the set of all unitaries in $\cU(\cK_0)$ commuting with 
$A(g)$, $C$, and $J$. 

\begin{theorem} With the above notation, we have 
\begin{align*}
\lefteqn{\Aut(\cC)\cong \Aut_\rho(\cO_{n+m})/\T\cong \Aut_{\rho,0}(\cO_{n+m})/\Z_2} \\
 &\cong\{(\theta,u)\in \Aut(G)\times \cU(\cK_0);\; \inpr{\theta(g)}{\theta(h)}=\inpr{g}{h},\; uC=Cu,\; uJ=Ju,\\
 & A(\theta(g))=uA(g)u^*,\; 
 B_{\theta(g)}(\xi)=(u\otimes u)B_g(u^*\xi)u^*\}/\{(\id,\pm I_{\cK_0})\}. 
\end{align*}
In particular, the kernel of the canonical homomorphism from $\Aut(\cC)$ to $\Aut(G)$ is isomorphic to 
$$\{u\in \cG(A,C,J);\; B_{g}(\xi)=(u\otimes u)B_g(u^*\xi)u^*\}/\{\pm I_{\cK_0}\}.$$  
\end{theorem}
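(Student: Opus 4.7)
The plan is to combine the previous two lemmas (which already give an injection $\Aut_{\rho,0}(\cO_{n+m})/\Z_2 \hookrightarrow \Aut(\cC)$ and the explicit $(\theta,u)$-parametrization of $\Aut_{\rho,0}(\cO_{n+m})$) with a surjectivity argument based on the uniqueness Theorem \ref{uniqueness} and its corollary. The once-for-all reduction is that $\cO_{n+m}$ embeds in $\cM$ with its generators $S_g, T_g(\xi)$ realized as morphisms of $\cC$, so any automorphism of $\cM$ that preserves $\rho$ and the distinguished morphism spaces automatically restricts to an element of $\Aut_\rho(\cO_{n+m})$.

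First I would take an abstract $(F,L)\in \Aut(\cC)$. Since $\rho$ is the unique non-invertible simple object, $F$ must fix $[\rho]$ and permute the invertible classes $\{[\alpha_g]\}_{g\in G}$; monoidality of $F$ promotes this permutation to a group automorphism $\theta\in\Aut(G)$, and the bicharacter must be preserved by $\theta$ because $\inpr{g}{h}$ is the scalar by which $\alpha_g(U(h))=\overline{\inpr{g}{h}}U(h)$ (Eq.~(\ref{alphaU})) and this must be respected by $F$. Applying the Corollary after Theorem \ref{uniqueness}, there is an automorphism $\Phi$ of $\cM$ inducing $F$ up to natural isomorphism. By composing with $\Ad U^*$ for a unitary $U$ intertwining $\Phi\rho\Phi^{-1}$ and $\rho$, we may assume $\Phi\circ\rho=\rho\circ\Phi$ and, after a further $\Ad V$ with $V\in \cU(\cM)$ chosen so that $\Ad V\circ\Phi\circ\alpha_g\circ\Phi^{-1}\circ \Ad V^*=\alpha_{\theta(g)}$ (possible because both sides are automorphisms in the same outer class and $\rho$ is fixed, so the unitary $V$ may be arranged to lie in $(\rho,\rho)=\C$ on $\rho$; here one uses $\alpha_g\circ\rho=\rho$ to check consistency), we may additionally assume $\Phi\circ\alpha_g=\alpha_{\theta(g)}\circ\Phi$ strictly.

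At this point $\Phi(S_0)\in \Hom_\cC(\id,\rho^2)=\C S_0$ is a scalar multiple of $S_0$, and by absorbing a phase into $\Phi$ via $\gamma_{(e^{\ii t},\id,I)}$ we may normalize $\Phi(S_0)=S_0$ up to a sign that will be absorbed into the $\Z_2$ quotient. Then $\Phi(S_g)=\Phi(\alpha_g(S_0))=\alpha_{\theta(g)}(S_0)=S_{\theta(g)}$, and $\Phi(T_0(\xi))$ lies in $\Hom_\cC(\rho,\rho^2)\cap \cM^\alpha=\cK_0$, so $\Phi(T_0(\xi))=T_0(u\xi)$ for a uniquely determined unitary $u\in\cU(\cK_0)$. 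Extending using $\Phi(T_g(\xi))=\Phi(U(-g))T_0(u\xi)=U(-\theta(g))T_0(u\xi)=T_{\theta(g)}(u\xi)$ shows that $\Phi$ preserves $\cO_{n+m}$ and equals $\gamma_{(1,\theta,u)}\in\Aut_{\rho,0}(\cO_{n+m})$. The compatibility conditions on $(\theta,u)$ with $\inpr{\cdot}{\cdot}$, $A(g)$, $C$, $J$, and $B_g$ then follow from the same computations as in the preceding lemma (applied in reverse), because $\Phi$ commutes with $\rho$ and hence with $j_1, j_2, l$ by Lemma \ref{ACJ} and Eq.~(\ref{lB}).

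The main obstacle I anticipate is bookkeeping in the reduction step: the unitaries $U_\sigma$ appearing in Theorem \ref{uniqueness} need to be chosen so that, after passing to a naturally isomorphic $F$, the induced $\Phi$ truly satisfies $\Phi\circ\rho=\rho\circ\Phi$ and $\Phi\circ\alpha_g=\alpha_{\theta(g)}\circ\Phi$ on the nose rather than up to inner perturbation; one must verify that the remaining freedom (changing $\Phi$ by $\Ad V$ for $V\in \Hom_\cC(\rho,\rho)\cap \Hom_\cC(\alpha_g,\alpha_{\theta(g)})$ for all $g$) is precisely the scalar $\T$, matching the quotient $\Aut_\rho(\cO_{n+m})/\T$. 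Once surjectivity is in hand, the last statement about the kernel of $\Aut(\cC)\to\Aut(G)$ is immediate: setting $\theta=\id$ forces $u$ to commute with $A(g),C,J$, i.e.\ $u\in\cG(A,C,J)$, and the equivariance condition on $B_g$ becomes $B_g(\xi)=(u\otimes u)B_g(u^*\xi)u^*$, yielding the stated quotient by $\{\pm I_{\cK_0}\}$.
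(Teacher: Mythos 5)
Your route is genuinely different from the paper's: you want to realize the abstract tensor autoequivalence $(F,L)$ by an automorphism $\Phi$ of the ambient factor via the corollary to Theorem \ref{uniqueness}, strictify so that $\Phi\circ\rho=\rho\circ\Phi$, and then read off $(\theta,u)$ from the action of $\Phi$ on $(\alpha_g,\rho^2)$ and $(\rho,\rho^2)$. The paper never invokes Theorem \ref{uniqueness} here; instead it normalizes the monoidal structure maps $L$ to be trivial on the relevant objects and proves $F=F_\gamma$ by direct morphism-space computations (normalizing $F(S_0)$, showing $\gamma(U(g))=U(\theta(g))$, and recovering $Q\rho(T)Q$ from the scalars ${T''}^*{T'}^*\rho(T)T'''$). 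Most of your later steps (the normalization of $\Phi(S_0)$, the deduction $\Phi(U(g))=U(\theta(g))$ from $U(g)S_0=S_0$, the extraction of $u$ on $\cK_0$, and the kernel description) do parallel the paper's computations, and your worry about the residual freedom is already settled by the lemma identifying which $\gamma_{(\omega,\id,I_{\cK_0})}$ induce the trivial automorphism; also, once $\Phi\circ\rho=\rho\circ\Phi$ is arranged, the relation $\Phi\circ\alpha_g\circ\Phi^{-1}=\alpha_{\theta(g)}$ is automatic (the intertwining unitary lies in $(\rho,\rho)=\C$ because both sides fix $\rho$ under composition), so no second inner correction is needed.

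The genuine gap is the very first step: the corollary to Theorem \ref{uniqueness} is stated, and proved via Popa's classification together with Haagerup's uniqueness theorem, only for C$^*$ fusion categories realized in a hyperfinite type III$_1$ factor, whereas the factor $\cM$ to which you apply it is the weak closure of $\cO_{n+m}$ in the GNS representation of the KMS state, i.e.\ the Powers factor of type III$_{1/d}$ (see the Appendix). Nothing proved in the paper guarantees that an abstract automorphism of $\cC$ is implemented by an automorphism of this $\cM$, so the existence of your $\Phi$ is unjustified as written. One could attempt a repair by first transporting $\cC$ to a realization in the hyperfinite III$_1$ factor (Theorem \ref{embedding}), applying the corollary there, and then arguing that the intertwiner spaces $(\alpha_g,\rho^2)$ and $(\rho,\rho^2)$ in that realization again generate a copy of $\cO_{n+m}$ carrying the same data $(A,C,J,B_g)$, so that $\Aut_{\rho,0}$ of that copy is canonically the group in the statement; but this transfer is an additional argument you would have to supply, and the paper's purely categorical proof is designed precisely to avoid this machinery.
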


\begin{proof} It suffices to show that the association 
$\Aut_{\rho,0}(\cO_{n+m})\ni \gamma \mapsto F_\gamma$ induces a map onto $\Aut(\cC)$. 
Let $(F,L)$ be an automorphism of $\cC$. 
Up to a tensor natural isomorphism, we may assume $F(\alpha_g)=\alpha_{\theta(g)}$ and $F(\rho)=\rho$ 
with a group automorphism $\theta\in \Aut(G)$. 
Then we have 
$$F(\alpha_g)\otimes F(\alpha_h)=\alpha_{\theta(g)}\otimes \alpha_{\theta(h)}=\alpha_{\theta(g+h)}
=F(\alpha_{g+h})=F(\alpha_g\otimes \alpha_h),$$  
$$F(\alpha_g)\otimes F(\rho)=\alpha_{\theta(g)}\otimes \rho=\rho=F(\rho)=F(\alpha_g\otimes \rho).$$
We claim that we may further assume  
$$F(\rho\otimes \alpha_g)=F(\rho)\otimes F(\alpha_g),$$
$$F(\rho\otimes \rho)=F(\rho)\otimes F(\rho),$$
and $L_{\alpha_g,\alpha_h}$, $L_{\alpha_g,\rho}$, $L_{\rho,\alpha_g}$, and $L_{\rho,\rho}$ are trivial.  
Indeed, there exist $x(g,h)\in \T$ and $y(g)\in \T$ satisfying 
$L_{\alpha_g,\alpha_h}=(\alpha_{\theta(g+h)}|x(g,h)|\alpha_{\theta(g+h)})$ and 
$L_{\alpha_g,\rho}=(\rho|y(g)|\rho)$, 
and the relation 
$$L_{\alpha_g\otimes\alpha_h,\rho}\circ(L_{\alpha_g,\alpha_h}\otimes I_{F(\rho)})
=L_{\alpha_g,\alpha_h\otimes \rho}\circ(I_{F(\alpha_g)}\otimes L_{\alpha_h,\rho})$$ 
implies $x(g,h)y(g+h)=y(g)y(h)$. 
Now passing from $(F,L)$ to $(F',L')$ by a tensor natural isomorphism $\eta:F\to F'$ with 
$\eta_{\alpha_g}=(\alpha_{\theta(g)}|y(g)|\alpha_{\theta(g)})$, 
$\eta_\rho=(\rho|I_\cM|\rho)$, $\eta_{\rho\otimes\alpha_g}=y(g)L_{\rho,\alpha_g}^{-1}$, and 
$\eta_{\rho\otimes \rho}=L_{\rho,\rho}^{-1}$, 
we get the claim. 

Note that we have \begin{align*}
\lefteqn{F((\alpha_g\otimes \alpha_h |I_{\cM}|\alpha_{g+h}))=F(\alpha_{g+h}|I_\cM|\alpha_{g+h})} \\
 &=(\alpha_{\theta(g+h)}|I_\cM|\alpha_{\theta(g+h)})
=(\alpha_{\theta(g)}\otimes \alpha_{\theta(h)}|I_{\cM}|\alpha_{\theta(g+h)}), 
\end{align*}
$$F((\alpha_g\otimes \rho |I_{\cM}|\rho))=F(\rho|I_\cM|\rho)=(\rho|I_\cM|\rho)
=(\alpha_{\theta(g)}\otimes \rho|I_{\cM}|\rho).$$
We will freely use these relations and their adjoints in what follows. 

Since $F((\rho\otimes\rho|S_0|\id))$ is an isometry belonging to $\Hom_\cC(\id,\rho\otimes\rho)$, 
it is of the form $\omega(\rho\otimes\rho|S_0|\id)$ with $\omega\in \T$. 
Thus replacing $F$ with $F_{\gamma_{(\omega^{-1},\id,I_\cK)}}\circ F$, we may further assume 
$F((\rho\otimes\rho|S_0|\id))=(\rho\otimes\rho|S_0|\id)$. 
Since $S_g=\alpha_g(S_0)$, we have 
$$(\rho\otimes \rho|S_g|\alpha_g)=((\rho|1|\alpha_g\otimes \rho)\otimes I_\rho)\circ (I_{\alpha_g}\otimes (\rho\otimes \rho|S_0|\id)).$$
Applying $F$ to this, we get $F((\rho\otimes\rho|S_g|\alpha_g))=(\rho\otimes \rho|S_{\theta(g)}|\alpha_{\theta(g)})$ and its adjoint. 

Since $F$ restricted to $\Hom_\cC(\rho,\rho\otimes \rho)$ is a unitary transformation of $\Hom_\cC(\rho,\rho\otimes \rho)$, 
we introduce an automorphism $\gamma\in \Aut(\cO_{n+m})$ by 
$$F(((\rho\otimes\rho|S_g|\alpha_g)))=(\rho\otimes \rho|\gamma(S_g)|\alpha_{\theta(g)}),$$ 
$$F((\rho\otimes \rho|T|\rho))=(\rho\otimes \rho|\gamma(T)|\rho).$$
The above computation shows $\gamma(S_g)=S_{\theta(g)}$. 

Recall that we have $\Hom_\cC(\rho,\rho\otimes \alpha_g)=\C U(g)$. 
Thus $F((\rho\otimes \alpha_g|U(g)|\rho))$ is a multiple of $(\rho\otimes \alpha_{\theta(g)}|U(\theta(g))|\rho)$. 
Since $U(g)S_0=S_0$, we have 
\begin{align*}
\lefteqn{(\rho\otimes \rho|S_0|\id)} \\
 &=(I_\rho\otimes (\rho|I_{\cM}|\alpha_g\otimes \rho))\circ((\rho\otimes \alpha_g|U(g)|\rho)\otimes I_\rho)
 \circ (\rho\otimes \rho|S_0|\id).
\end{align*}
Applying $F$ to this, we get $F((\rho\otimes \alpha_g|U(g)|\rho))=(\rho\otimes \alpha_{\theta(g)}|U(\theta(g))|\rho)$ and 
its adjoint. 
We claim $F((\rho\otimes \alpha_g|U(g)|\rho))=(\rho\otimes \alpha_{\theta(g)}|\gamma(U(g))|\rho)$, or equivalently 
$\gamma(U(g))=U(\theta(g))$. 
Indeed, applying $F$ to the two equalities 
\begin{align*}
\lefteqn{(\rho\otimes \rho|U(g)S_h|\alpha_h)} \\
 &=(I_\rho\otimes (\rho|I_{\cM}|\alpha_g\otimes \rho))\circ ((\rho\otimes \alpha_g|U(g)|\rho)\otimes I_\rho)\circ(\rho\otimes \rho|S_h|\alpha_h),
\end{align*}
\begin{align*}
\lefteqn{(\rho\otimes \rho|U(g)T|\rho)} \\
 &=(I_\rho\otimes (\rho|I_{\cM}|\alpha_g\otimes \rho))\circ ((\rho\otimes \alpha_g|U(g)|\rho)\otimes I_\rho)
 \circ(\rho\otimes \rho|T|\rho),
\end{align*}
we get $\gamma(U(g)S_h)=U(\theta(g))\gamma(S_h)$ and $\gamma(U(g)T)=U(\theta(g))\gamma(T)$. 
Choosing an orthonormal basis $\{T_i\}_{i=1}^m$ of $\Hom_\cC(\rho,\rho\otimes \rho)$,  we get
\begin{align*}
\lefteqn{\gamma(U(g))=\sum_{h\in G}\gamma(U(g)S_h)\gamma(S_h)^*+\sum_{i=1}^m\gamma(U(g)T_i)\gamma(T_i)^*} \\
 &=\sum_{h\in G}U(\theta(g))\gamma(S_hS_h^*)+\sum_{i=1}^mU(\theta(g))\gamma(T_iT_i^*)=U(\theta(g)),
\end{align*}
which shows the claim. 

We have already seen that $F$ coincides with $\gamma$ on the following morphism spaces:  
$$\Hom_\cC(\alpha_{g+h},\alpha_g\otimes \alpha_h),\quad \Hom_\cC(\rho,\alpha_g\otimes \rho),\quad 
\Hom_\cC(\rho,\rho\otimes \alpha_g),$$ 
$$\Hom_\cC(\alpha_g,\rho\otimes\rho),\quad \Hom_\cC(\rho,\rho\otimes \rho),$$ 
and their adjoints.  
To finish the proof, it suffices to show $\gamma\in \Aut_{\rho,0}(\cO_{n+m})$.  
Before showing it, we claim $\gamma\circ \alpha_g=\alpha_{\theta(g)}\circ \gamma$. 
This is easily shown for $S_h$. 
Applying $F$ to 
$$(\rho\otimes \rho|\alpha_g(T)|\rho)=((\rho|I_{\cM}|\alpha_g\otimes \rho)\otimes I_\rho)\circ ((I_{\alpha_g}\otimes (\rho\otimes\rho|T|\rho))
\circ (\alpha_g\otimes \rho|I_{\cM}|\rho),$$
we get $\gamma(\alpha_g(T))=\alpha_{\theta(g)}(\gamma(T))$. 

Applying $F$ to the two equations 
$$(\rho\otimes \rho|j_1(T)|\rho)=\sqrt{d}((\rho\otimes \rho|T|\rho)^*\otimes I_\rho)\circ (I_\rho\otimes (\rho\otimes \rho|S_0|\id)),$$
$$(\rho\otimes \rho|j_2(T)|\rho)=\sqrt{d}(I_\rho\otimes (\rho\otimes \rho|T|\rho)^*)\circ ((\rho\otimes \rho|S_0|\id)\otimes I_\rho),$$
we see that $\gamma$ commutes with $j_1$ and $j_2$. 
Thus Eq.(\ref{rhoS}) implies 
$$\gamma(\rho(S_0))=\rho(S_0)=\rho(\gamma(S_0)),$$ 
and 
$$\gamma(\rho(S_g))=\gamma(U(g)\rho(S_0)U(g)^*)=U(\theta(g))\rho(S_0)U(\theta(g))^*=\rho(S_{\theta(g)})=\rho(\gamma(S_g)).$$

It remains to show $\gamma(\rho(T))=\rho(\gamma(T))$ for $T\in \Hom_\cC(\rho,\rho\otimes \rho)$. 
We recall the two projections $P=\sum_{g\in G}S_gS_g^*$ and $Q=\sum_{i=1}^mT_iT_i^*$, which satisfies  
$\gamma(P)=P$, $\gamma(Q)=Q$, and $P+Q=I_{\cM}$. 
Then Eq.(\ref{rhoT}) implies 
$\gamma(\rho(T))P=\rho(\gamma(T))P$ and $P\gamma(\rho(T))=P\rho(\gamma(T))$. 
Note that $Q\rho(T)Q=l(T)\in \cK^2\cK^*$. 
Let $T',T'',T'''\in \Hom_\cC(\rho,\rho\otimes\rho)$. 
Then ${T''}^*{T'}^*\rho(T)T'''$ is a scalar. 
Denoting it by $\xi$, we get 
$$\xi I_\rho
=(\rho|T''|\rho\otimes \rho)^*\circ ((\rho|T'|\rho\otimes \rho)^*\otimes I_\rho)\circ(I_\rho\otimes (\rho|T|\rho\otimes \rho))
 \circ (\rho|T'''|\rho\otimes \rho).$$
Applying $F$ to it, we get ${\gamma(T'')}^*{\gamma(T')}^*\rho(\gamma(T))\gamma(T''')=\xi$, and so 
$${T''}^*{T'}^*\gamma^{-1}(\rho(\gamma(T)))T'''=\xi={T''}^*{T'}^*\rho(T)T'''.$$
This shows  $Q\gamma^{-1}(\rho(\gamma(T)))Q=Q\rho(T)Q$ and equivalently $Q\rho(\gamma(T)))Q=Q\gamma(\rho(T))Q$, 
which finishes the proof. 
\end{proof}

\begin{theorem} When $A(g)$ is a scalar $a(g)I$ for any $g\in G$, the inner automorphism group of $\cC$ is trivial. 
In consequence, 
\begin{align*}
\lefteqn{\Out(\cC)=\Aut(\cC)\cong \Aut_\rho(\cO_{n+m})/\T\cong \Aut_{\rho,0}(\cO_{n+m})/\Z_2} \\
 &\cong\{(\theta,u)\in \Aut(G)\times \cG(A,C,J);\; \\
 &a(\theta(g))=a(g),\; B_{\theta(g)}(\xi)=(u\otimes u)B_g(u^*\xi)u^*\}/\{(\id,\pm I)\}. 
\end{align*}
\end{theorem}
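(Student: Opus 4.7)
The plan is to show that every conjugation $F_g(\sigma):=\alpha_g\otimes\sigma\otimes\alpha_g^{-1}$ by an invertible simple object is monoidally naturally isomorphic to $\mathrm{id}_{\cC}$; once this is done, the equality $\Out(\cC)=\Aut(\cC)$ together with the concrete description follows from the previous theorem specialised to $A(g)=a(g)I$ (the condition $A(\theta(g))=uA(g)u^*$ degenerates to $a(\theta(g))=a(g)$, and $u\in\cG(A,C,J)$ is automatic).

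I will exhibit the natural transformation explicitly. Using $\alpha_g\circ\rho=\rho$, one sees $F_g(\alpha_h)=\alpha_h$ and $F_g(\rho)=\rho\alpha_{-g}=\Ad U(-g)\circ\rho$, so the candidates
$$\eta_{\alpha_h}:=\overline{\inpr{h}{g}}\,I_{\alpha_h},\qquad \eta_\rho:=\overline{a(g)}\,U(-g)\in\Hom_\cC(\rho,\rho\alpha_{-g})$$
live in the correct morphism spaces. Tensor compatibility then forces $\eta_{\alpha_h\otimes\alpha_k}=\eta_{\alpha_h}\otimes\eta_{\alpha_k}$ (since $h\mapsto\overline{\inpr{h}{g}}$ is a character), $\eta_{\alpha_h\otimes\rho}=\eta_{\rho\otimes\alpha_h}=\overline{\inpr{h}{g}}\,\overline{a(g)}U(-g)$ (using Eq.(\ref{alphaU})), and $\eta_{\rho\otimes\rho}=\overline{a(g)}^2U(-g)\rho(U(-g))$.

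It then suffices to verify naturality on the generating morphisms $S_h$, $T_h(\xi)$, and $U(h)$ of $\cC$. The check on $U(h)$ reduces to commutativity of $G$; the check on $S_h$ uses $\rho(U(-g))S_h=S_{h+g}$ (as computed in the proof of Eq.(\ref{rhoU})) and $U(-g)S_k=\inpr{-g}{k}S_k$ to give $\overline{\inpr{h}{g}}S_{h+g}$ on both sides once $\overline{a(g)}^2\inpr{g}{g}=1$. The scalar hypothesis on $A$ enters decisively in the check on $T_h(\xi)$: from the remark following Eq.(\ref{rhoU}) together with the cocycle relation $a(g+h)\overline{a(h)}=\overline{\inpr{g}{h}}a(g)$, one gets the collapsing identity
$$(j_2j_1^{-1})U_\cK(g)(j_2j_1^{-1})^*=a(g)\,U_\cK(-g)V(-g),$$
whence Eq.(\ref{rhoU}) simplifies to $\rho(U(-g))T_h(\xi)=a(-g)\inpr{g}{h}T_{h-g}(\xi)U(-g)$, and both sides of the naturality equation reduce to $\overline{a(g)}\inpr{g}{h}T_h(\xi)U(-g)$ once the key identity $a(g)^2=\overline{\inpr{g}{g}}$ (which follows from $a(-g)=a(g)$ and $a(g)a(-g)=\overline{\inpr{g}{g}}a(0)=\overline{\inpr{g}{g}}$ in Lemma \ref{ACJ}) is invoked.

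The main obstacle is precisely the verification at $T_h(\xi)$: the assumption that $A(g)$ is scalar is what allows $(j_2j_1^{-1})U_\cK(g)(j_2j_1^{-1})^*$ to be a scalar multiple of the regular-representation operator $U_\cK(-g)V(-g)$, so that the entire naturality equation can be satisfied by a \emph{scalar} multiple $\overline{a(g)}$ of $U(-g)$. In the general case, the non-scalar operator $A(h+g)A(h)^*$ obstructs any such choice of $\eta_\rho$. Once naturality is established on $S_h$, $T_h(\xi)$, and $U(h)$, it extends automatically to all of $\cC$ because these morphisms generate the category, so $F_g\cong\mathrm{id}$ and the inner automorphism group is trivial.
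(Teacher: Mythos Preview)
Your approach is the same as the paper's: both exhibit an explicit monoidal natural isomorphism between $\alpha_g\otimes\cdot\otimes\alpha_g^{-1}$ and $\id_\cC$, with the key ingredient being the scalar $a(g)$ (or $\overline{a(g)}$) attached to $U(\pm g)$, and both reduce the verification to the identity $a(g)^2\inpr{g}{g}=1$ together with the scalar form of Eq.(\ref{rhoU}). Your direction is opposite to the paper's ($\eta:\id\Rightarrow F_g$ versus $\eta:F\Rightarrow\id$), but the checks are identical after taking adjoints.

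One slip worth correcting: you write $\eta_{\alpha_h\otimes\rho}=\eta_{\rho\otimes\alpha_h}=\overline{\inpr{h}{g}}\,\overline{a(g)}U(-g)$, but these two are \emph{not} equal. Since $\alpha_h\circ\rho=\rho$ as objects, tensor compatibility forces $\eta_{\alpha_h}\otimes\eta_\rho=\eta_\rho$; computing $\eta_{\alpha_h}\otimes\eta_\rho=\overline{\inpr{h}{g}}\,\alpha_h(\overline{a(g)}U(-g))=\overline{\inpr{h}{g}}\inpr{h}{g}\,\overline{a(g)}U(-g)=\overline{a(g)}U(-g)$ (via Eq.(\ref{alphaU})) confirms this and is in fact the well-definedness constraint. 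Your formula is correct only for $\eta_{\rho\otimes\alpha_h}=\eta_\rho\otimes\eta_{\alpha_h}$, and that is what you actually use in the $U(h)$ check, so the error does not propagate.
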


\begin{proof} Let $F=\alpha_h\otimes \cdot \otimes \alpha_h^{-1}$. 
Then $F(\alpha_g)=\alpha_g$ and $F(\rho)=\Ad U(h)^{-1}\circ \rho$. 
We show that $\eta$ determined by 
$$\eta_{\alpha_g}=\inpr{g}{h}I_{\alpha_g}\in \Hom_\cC(\alpha_h\otimes \alpha_g\otimes \alpha_h^{-1},\alpha_g),$$
$$\eta_\rho=(\rho|a(h)U(h)|\Ad U(h)^{-1}\circ \rho)\in \Hom_\cC(\alpha_h\otimes \rho\otimes \alpha_h^{-1},\rho),$$ 
is a tensor natural isomorphism from $F$ to $\id$. 
Indeed, it suffices to verify
$$(\eta_{\alpha_{g_1}}\otimes \eta_{\alpha_{g_2}})\circ F((\alpha_{g_1}\otimes\alpha_{g_2}|I_{\cM}|\alpha_{g_1+g_2}))
=(\alpha_{g_1}\otimes\alpha_{g_2}|I_{\cM}|\alpha_{g_1+g_2})\circ \eta_{\alpha_{g_1+g_2}},$$
$$(\eta_{\alpha_g}\otimes \eta_\rho)\circ F((\alpha_g\otimes\rho|I_{\cM}|\rho))
=(\alpha_g\otimes\rho|I_{\cM}|\rho)\circ \eta_\rho,$$
$$(\eta_\rho\otimes \eta_{\alpha_g})\circ F((\rho\otimes\alpha_g|U(g)|\rho))
=(\rho\otimes\alpha_g|U(g)|\rho)\circ \eta_\rho,$$
$$(\eta_\rho\otimes \eta_\rho)\circ F((\rho\otimes \rho|S_g|\alpha_g))=(\rho\otimes \rho|S_g|\alpha_g)\circ \eta_{\alpha_g},$$
$$(\eta_\rho\otimes \eta_\rho)\circ F((\rho\otimes \rho|T|\rho))=(\rho\otimes \rho|T|\rho)\circ \eta_\rho,$$
which are equivalent to 
$$\inpr{g_1}{h}\inpr{g_2}{h}=\inpr{g_1+g_2}{h},$$
$$\alpha_g(a(h)U(h))\inpr{g}{h}=a(h)U(h),$$
$$\inpr{h}{g}a(h)U(h)\alpha_h(U(g))=U(g)a(h)U(h),$$
$$\rho(a(h)U(h))a(h)U(h)\alpha_h(S_g)=\inpr{g}{h}S_g,$$
$$\rho(a(h)U(h))a(h)U(h)\alpha_h(T)=Ta(h)U(h).$$
It is straightforward to show these equalities. 
\end{proof}

\begin{cor} When $m=n$, 
\begin{align*}
\lefteqn{\Out(\cC)=\Aut(\cC)} \\
 &\cong \{\theta\in \Aut(G);\; \inpr{\theta(g)}{\theta(h)}=\inpr{g}{h},\;a(\theta(g))=a(g),\; b(\theta(g))=b(g)\}.
\end{align*}
\end{cor}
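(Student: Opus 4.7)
The plan is to derive the corollary as a direct specialization of the preceding theorem to the one-dimensional case. First I would observe that, as set up in Section~\ref{m=n}, when $m=n$ we have $\dim \cK_0 = 1$, so we fix a unit vector $e \in \cK_0$ with $Je = e$ (forcing $\epsilon = 1$), and all of $A(g)$, $C$, as well as $B_g$ reduce to scalar data: $A(g) = a(g)I$ (so the hypothesis of the preceding theorem is automatic), $C = cI$, and $B_g(e) = b(e) T_0 T_0 T_0^*$ for the scalar $b(g)$. In particular the preceding theorem applies and gives
\begin{align*}
\Out(\cC) = \Aut(\cC) &\cong \{(\theta,u)\in \Aut(G)\times \cG(A,C,J); \\
&\quad a(\theta(g))=a(g),\; B_{\theta(g)}(\xi)=(u\otimes u)B_g(u^*\xi)u^*\}/\{(\id,\pm I)\}.
\end{align*}

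Next I would compute the gauge group $\cG(A,C,J)$ explicitly. By definition it consists of unitaries on $\cK_0 \cong \C$ that commute with $A(g)$, $C$, $J$. Since $A(g)$ and $C$ are scalars, commutation with them is automatic, and the only constraint comes from $J$: since $J$ is an anti-linear involution with $Je = e$, a scalar unitary $u = e^{i\psi}$ commutes with $J$ iff $e^{i\psi} = e^{-i\psi}$, i.e.\ $u \in \{\pm 1\}$. Hence $\cG(A,C,J) = \{\pm 1\}$.

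The remaining step is to translate the two constraints defining the pair $(\theta,u)$ under this identification. Since $u = \pm 1$, the equation $A(\theta(g)) = uA(g)u^*$ simplifies to $a(\theta(g)) = a(g)$, and the equation $B_{\theta(g)}(\xi) = (u\otimes u)B_g(u^*\xi)u^*$ simplifies to $B_{\theta(g)} = B_g$, which in scalar form reads $b(\theta(g)) = b(g)$. Moreover, the implicit requirement $\inpr{\theta(g)}{\theta(h)} = \inpr{g}{h}$ (inherited from the description of $\Aut_\rho(\cO_{n+m})$) must be recorded. Finally, quotienting the pair space by $\{(\id, \pm I)\}$ exactly kills the $u$-coordinate since both $\pm 1$ lie over $\id$, and we are left with $\theta$ alone subject to the three listed conditions. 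This yields the stated isomorphism.

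I do not expect a genuine obstacle here; the entire content is a bookkeeping specialization of the preceding theorem. The only point that requires care is the determination of $\cG(A,C,J)$: one must use that $J$ is anti-linear (so commutation with a scalar $u$ forces $u = \bar u$) rather than merely unitary, otherwise one would erroneously obtain the full circle $\T$ as the gauge group and fail to collapse the $u$ variable.
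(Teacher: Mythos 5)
Your proposal is correct and follows the same route the paper intends: the corollary is a direct specialization of the preceding theorem, with the key observation that in the one-dimensional case $\cG(A,C,J)=\{\pm 1\}$ (because $J$ is anti-linear), so the quotient by $\{(\id,\pm I)\}$ eliminates the gauge variable and leaves exactly the conditions $\inpr{\theta(g)}{\theta(h)}=\inpr{g}{h}$, $a(\theta(g))=a(g)$, $b(\theta(g))=b(g)$. The only blemish is the typo $B_g(e)=b(e)T_0T_0T_0^*$, which should read $B_g(e)=b(g)T_0T_0T_0^*$.
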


Let 
$$\Gamma_\cC=\{(\theta,u)\in \Aut(G)\times \cG(A,C,J);\; B_{\theta(g)}(\xi)=(u\otimes u)B_g(u^*\xi)u^*\},$$
which is isomorphic to $\Aut_{\rho,0}(\cO_{n+m})$.  
For $x\in \Gamma_\cC$, we denote $\gamma_x$ instead of $\gamma_{(1,x)}$ for simplicity. 

\begin{remark} Assume that $A(g)$ is a scalar for any $g\in G$. 
Then there exists a $\Out(\cC)$-graded extension $\cC'$ of $\cC$ such that $\Out(\cC)$ 
on $\cC\subset \cC'$ is implemented by inner automorphisms of $\cC'$.  
Indeed, let $\mu=\gamma_{(\id,-I)}$, and consider the crossed product $\cM\rtimes_\mu \Z_2$, 
which is the von Neumann algebra generated by $\cM$ and a period two unitary $\lambda$ satisfying 
$\lambda x=\mu(x)\lambda $ for any $x\in \cM$. 
We can extend $\rho$, $\alpha$, and $\gamma$ to $\cM\rtimes_\mu \Z_2$ by $\trho(\lambda)=\lambda$, $\talpha_g(\lambda)=\lambda$, and 
$\tgamma(\lambda)=\lambda$. 
We have $\tgamma_{(\id,-I)}=\Ad \lambda$. 
Let $\hat{\mu}$ be the dual action of $\mu$, which is a period two automorphism of $\cM\rtimes_\mu \Z_2$ given by 
$$\hat{\mu}(x)=\left\{
\begin{array}{ll}
x , &\quad x\in \cM \\
-\lambda , &\quad x=\lambda
\end{array}
\right.. 
$$
Then it is straightforward to show 
$$\trho^2(x)=\sum_{g\in G}S_g\talpha_g(x)S_g^*+\sum_{g,r}T_g(e_r)\hat{\mu}\circ \trho(x)T_g(e_r)^*.$$
Now we can see that the category generated by $\hat{\mu}\circ \trho$ is equivalent to $\cC$, and we can set $\cC'$ to be 
the category generated by $\hat{\mu}\circ \trho$ and $\tgamma_{\Gamma_\cC}$. 
\end{remark}

\begin{example} Any solutions of Eq.(\ref{m=n1})-(\ref{m=n5}) given in Section \ref{m=n} have trivial symmetry, 
and the corresponding C$^*$ near-group has trivial automorphism group. 
\end{example}

\begin{example} \label{55}
For $G=\Z_5$ and $\inpr{g}{h}=\zeta_5^{gh}$, there is a unique $a(g)$, which is given by $a(g)=\zeta_5^{2g^2}$. 
There is a unique solution of Eq.(\ref{m=n1})-(\ref{m=n5}) as follows (see \cite[Example A 4]{I01} and \cite[Table 2]{EG14}): 
$c=-1$, $d=(3+3\sqrt{5})/2$, $b(0)=-1/d$, 
$$b(1)=b(4)=\frac{\zeta_5^{-1}}{\sqrt{5}},\quad b(2)=b(3)=\frac{\zeta_5}{\sqrt{5}}.$$
In particular, the solution has a symmetry $a(g)=a(-g)$ and $b(g))=b(-g)$, and therefore we have $\Out(\cC)\cong \Z_2$ 
for the corresponding near-group category $\cC$ for $G=\Z_5$ and $m=5$. 
\end{example} 

\begin{example}\label{22312} 
The solution for $G=\Z_2\times \Z_2\times \Z_3$ with $m=12$ given in Example \ref{Z2Z2Z3} has a $\Z_2$-symmetry, and 
$\Out(\cC)\cong \Z_2$ for the corresponding near-group category $\cC$. 
\end{example}

\begin{example}\label{36aut} As we have seen in Section \ref{2n}, there are exactly two C$^*$ near-group categories for $G=\Z_3$ with $m=6$. 
For each of the two categories, the solutions of Eq.(\ref{p1})-(\ref{p11}) are parametrized by $\{(x,y)\in \R^2;\; x^2+y^2=\sqrt{3}/24\}$. 
The group automorphism $\Z_3\ni g\mapsto-g\in \Z_3$ takes $(x,y)$ to $(-x,-y)$. 
The gauge group $\cG(A,C,J)$ is $O(2)$, and 
$$L=\left(
\begin{array}{cc}
1 &0  \\
0 &-1 
\end{array}
\right)
$$ acts on the solutions as $(x,y)\mapsto (x,-y)$, and the rotation 
$$R(\theta)= \left(
\begin{array}{cc}
\cos\theta &-\sin\theta  \\
\sin\theta &\cos\theta 
\end{array}
\right)$$ 
acts on the solutions as $(x,y)\mapsto (\cos4\theta x+\sin 4\theta y,-\sin 4\theta x+\cos4\theta y)$. 
We fix a solution $B_g$ corresponding to $(\sqrt{\sqrt{3}/24},0)$. 
Identifying $\Aut_{\rho,0}(\cO_9)$ with 
$$\{(\theta,u)\in \Aut(\Z_3)\times O(2);\; B_{\theta(g)}(\xi)=(u\otimes u)B_g(u^*\xi)u^*\},$$ 
we see that it is generated by the two elements 
$$(-1,R(\frac{\pi}{4})),\quad (\id, L).$$
Thus $\Aut_{\rho,0}(\cO_9)$ is isomorphic to the dihedral group $D_{16}$ of order 16, 
and the automorphism group $\Aut(\cC)$, as well as the outer automorphism group $\Out(\cC)$, 
of the corresponding near-group category $\cC$ are isomorphic to $D_8$. 
\end{example}

\subsection{The case of $m>|G|$} 
In this section, we assume that $\Gamma$ is a finite subgroup of the group 
$$\{u\in \cG(A,C,J);\; B_{g}(\xi)=(u\otimes u)B_g(u^*\xi)u^*\}.$$ 
We regard $\gamma$ as an outer action of $\Gamma$ on $\cM$ via $\gamma_u=\gamma_{(1,\id,u)}$. 
Recall that 
$$\cN=\rho(\cM)\vee\{U(g)\}_{g\in G}\subset \cM$$ is a $2^G_l1$ subfactor with $l=m/n$. 
The purpose of this subsection is to determine the structure of the subfactor $\cN\rtimes_\gamma \Gamma\subset \cM\rtimes_\gamma \Gamma$. 
Let $\{\lambda_u\}_{u\in \Gamma}$ be the implementing unitary representation in $\cM\rtimes_\gamma \Gamma$. 
Then we can extend $\alpha$ and $\rho$ to $\cM\rtimes_\gamma \Gamma$ by $\talpha_g(\lambda_u)=\lambda_u$ and $\trho(\lambda_u)=\lambda_u$. 
We would like to describe the structure of the fusion category generated by $\trho$. 

\begin{lemma} Let the notation be as above. 
\begin{itemize}
\item[(1)] The two commuting actions $\alpha$ and $\gamma$ give an outer action of $G\times \Gamma$ on $\cM$. 
\item[(2)] $\trho$ is irreducible. 
\item[(3)] $\talpha$ is an outer action of $G$. 
\end{itemize}
\end{lemma}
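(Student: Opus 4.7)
I would treat (1) first, as it supplies the key ingredient for both (2) and (3). Commutation of $\alpha$ and $\gamma$ is immediate on the Cuntz algebra generators: $\gamma_u = \gamma_{(1,\id,u)}$ fixes every $S_h$ and sends $T_h(\xi) \mapsto T_h(u\xi)$, while $\alpha_g$ sends $S_h \mapsto S_{g+h}$ and $T_h(\xi) \mapsto \inpr{g}{h}T_h(\xi)$, and these two operations manifestly commute.

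For the outerness half of (1), I proceed by contradiction. If $\alpha_g \circ \gamma_u = \Ad W$ for some $W \in \cU(\cM)$, applying both sides to $\rho(x)$ and using $\alpha_g \circ \rho = \rho$ together with $\gamma_u \circ \rho = \rho \circ \gamma_u$ yields
\[
W\rho(x)W^* = \rho(\gamma_u(x)), \qquad x \in \cM,
\]
so $W$ is a unitary element of $\Hom_{\End(\cM)}(\rho\circ\gamma_u,\rho)$. Since $d(\rho\circ\gamma_u) = d(\rho)$ and $\rho$ is irreducible, this forces $[\rho\circ\gamma_u] = [\rho]$, and Frobenius reciprocity identifies $[\gamma_u]$ with an invertible summand of $[\rho][\brho] = \bigoplus_{h\in G}[\alpha_h] \oplus m[\rho]$; as $[\gamma_u]$ is invertible we conclude $[\gamma_u] = [\alpha_h]$ for some $h\in G$. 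The induced autoequivalence $F_{\gamma_u}$ therefore coincides with $F_{\alpha_h}$ in $\Aut(\cC)$. Under the structure theorem for $\Aut(\cC)$ from the previous subsection and the hypothesis that $\gamma$ is outer---so $\Gamma$ injects into $\Out(\cC)$ through the elements $(\id,u)$, which fix the group part and meet the inner autoequivalences $\{F_{\alpha_h}\}$ only in the identity---this forces $u = I$; then $\alpha_g = \Ad W$ and outerness of $\alpha$ on $\cM$ give $g = 0$.

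Item (2) then follows with minimal extra work: writing $X \in (\trho,\trho)$ as $X = \sum_{u\in\Gamma} X_u\lambda_u$ and matching Fourier components of $X\rho(x) = \rho(x)X$ for $x \in \cM$ forces each $X_u$ to lie in $\Hom(\rho\circ\gamma_u,\rho)$, which vanishes for $u \neq I$ and equals $\C I_{\cM}$ for $u = I$ by the argument above; the further relation $X\lambda_v = \lambda_v X$ imposes no new constraint on the scalar $X_e$, so $(\trho,\trho) = \C I$. For (3), I would use a standard Fourier argument in the crossed product: if $\talpha_g = \Ad V$ with $V = \sum_u V_u\lambda_u$, then $V\lambda_v V^* = \lambda_v$ imposes a $\gamma$-equivariance on the coefficients $V_u$, while $VxV^* = \alpha_g(x)$ for $x \in \cM$ together with the vanishing of the $\lambda_v$-components of $VxV^*$ for $v \neq e$ produces matrix identities expressing $\alpha_g$ as a sum of conjugates $\Ad V_u \circ \gamma_u$. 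Combined with the outerness of the joint $G\times\Gamma$ action from (1) and a separating choice of $x$, these identities force a single coefficient $V_{u_0}$ to survive, hence $\alpha_g = \Ad V_{u_0}\circ\gamma_{u_0}$ and (1) gives $g = 0$.

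The main obstacle is the outerness step in (1), specifically the deduction $u = I$ from the intertwiner condition $W \in \Hom(\rho\circ\gamma_u,\rho)$: it requires a careful distinction between innerness of $\gamma_u$ as an automorphism of $\cM$ and triviality of $F_{\gamma_u}$ modulo the inner autoequivalences of $\cC$, and leans squarely on the structure theorem for $\Aut(\cC)$ from the previous subsection.
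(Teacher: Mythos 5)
Your parts (2) and (3) are essentially the paper's own argument (expand in Fourier coefficients along $\lambda_u$ and note that the coefficients lie in $(\gamma_u\circ\rho,\rho)$, resp.\ $(\gamma_u,\alpha_g)$, which vanish for $(u,g)\neq(I,0)$ once (1) is available), so the whole weight rests on your outerness argument for (1), and there the proof has a genuine gap. From $\alpha_g\circ\gamma_u=\Ad W$ you correctly deduce $[\gamma_u]=[\alpha_h]$ in $\Out(\cM)$ for some $h$, but your way of excluding this, via $\Aut(\cC)$, does not work. The inference ``$\gamma$ is outer on $\cM$, hence $\Gamma$ injects into $\Out(\cC)$'' is a non sequitur: outerness is injectivity into $\Out(\cM)$, while by the structure theorem of the previous subsection the homomorphism $u\mapsto F_{\gamma_u}$ into $\Aut(\cC)$ kills $-I_{\cK_0}$ (indeed $\gamma_{(1,\id,-I)}=\gamma_{(-1,\id,I)}$ induces the trivial automorphism of $\cC$). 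Since $-I_{\cK_0}$ always satisfies the condition $B_g(\xi)=(u\otimes u)B_g(u^*\xi)u^*$, it may well lie in $\Gamma$ --- and it does in the paper's main application, Example \ref{36equi}, where $\Gamma=D_8\subset O(2)$ contains $-I$ and its image in $\Out(\cC)$ is only $\Z_2\times\Z_2$. So ``$F_{\gamma_u}$ is inner in $\Aut(\cC)$'' can at best force $u=\pm I$ (and that only when the inner autoequivalences are known to be trivial, i.e.\ when $A(g)$ is scalar), and it gives no way to rule out $[\gamma_{-I}]=[\alpha_h]$ with $h\neq 0$, which is exactly what must be excluded. Your proposed separation criterion --- that the classes coming from $\Gamma$ ``fix the group part'' while the inner ones do not --- is vacuous, because $G$ is abelian and $\alpha_h\otimes\cdot\otimes\alpha_h^{-1}$ fixes every $\alpha_g$ as well, so both families lie in the kernel of $\Aut(\cC)\to\Aut(G)$. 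There is also a circularity worry: the outerness of each individual $\gamma_u$ on $\cM$ is itself part of what the lemma (together with the setup remark it justifies) establishes, not an independent hypothesis.

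The paper avoids all of this by working at the level of $\cM$ rather than of the category: Theorem \ref{Appouter} in the Appendix shows, via the KMS state for the weighted gauge action, its centralizer, and a martingale computation on the Fourier coefficients of a putative implementing unitary, that every automorphism of $\cO_{n+m}$ preserving the two spans $\Span\{S_g\}_{g\in G}$ and $\cK$ extends to $\cM$ and is inner there only if it is the identity. Since $\alpha_g\circ\gamma_u$ is such an automorphism and equals $\id$ only for $(g,u)=(0,I)$, part (1) is immediate, and it in turn yields the vanishing of $(\gamma_u\circ\rho,\rho)$ and $(\gamma_u,\alpha_g)$ used in (2) and (3). This analytic ingredient (or some substitute capable of distinguishing, say, $\gamma_{-I}$ and $\gamma_{-I}\circ\alpha_h$ from inner automorphisms of $\cM$) is what your proposal is missing; the passage through $\Out(\cC)$ cannot supply it.
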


\begin{proof} (1) The statement follows from Theorem \ref{Appouter} below. 

(2) Assume $X\in (\trho,\trho)$. 
Then $X$ is uniquely expanded as $X=\sum_{u\in \Gamma}X_u\lambda_u$ with $X_u\in \cM$. 
For any $x\in \cM$, we have 
$$\sum_{u\in \Gamma}\rho(x)X_u\lambda_u=\sum_{u\in \Gamma}X_u\lambda_u\rho(x)=\sum_{u\in \Gamma}X_u\gamma_u\circ \rho(x)\lambda_u,$$
and so $\rho(x)X_u=X_u\gamma_u\circ \rho(x)$. 
The Frobenius reciprocity and (1) show that 
$(\gamma_u\rho,\rho)\cong(\gamma_u,\rho^2)$ is trivial except for $u=I$, and $X\in (\rho,\rho)=\C$, which 
means that $\trho$ is irreducible. 

(3) Assume that $Y\in \cM\rtimes_\gamma \Gamma$ satisfies $Yx=\talpha_g(x)Y$ for any $x\in \cM\rtimes_\gamma \Gamma$. 
Then $Y$ is expanded as $Y=\sum_{u\in \Gamma}Y_u\lambda_u$ with $Y_u\in \cM$, and we can show that 
$Y_u\in (\gamma_u,\alpha_g)=\C\delta_{u,I}\delta_{g,0}$ as before. 
This implies that $Y=0$ unless $g=0$, and so $\talpha$ is an outer action. 
\end{proof}

Since we are working on a non-commutative group $\Gamma$, we recall the dual action of $\gamma$ as a Roberts action of 
the dual $\hat{\Gamma}$ of $\Gamma$. 
Recall that a Hilbert space $\cH$ in $\cM$ is a closed subspace of $\cM$ in the weak operator topology such that $W^*V$ is a scalar 
for any $V,W \in \cH$. 
The space $\cH$ is equipped with the inner product $\inpr{V}{W}$ for $V,W\in \cH$ given by $W^*V$. 
Let $\{V_i\}_{i}$ be an orthonormal basis of a Hilbert space $\cH$ in $\cM$. 
The support of $\cH$ is defined by 
$$\sum_{i}V_iV_i^*,$$
which does not depend on the choice of the orthonormal basis. 
When the support is $I$, we say that $\cH$ has full support. 
For a full support Hilbert space $\cH$ in $\cM$, we can define a unital endomorphism $\rho_\cH$ of $\cM$ by 
$$\rho_\cH(x)=\sum_{i}V_ixV_i^*,$$
whose definition does not depend on the choice of the orthonormal basis either. 
When $\cH$ is globally invariant under the $\Gamma$-action $\gamma$, the endomorphism 
$\rho_\cH$ preserves the fixed point algebra $\cM^\Gamma=\{x\in \cM;\; \gamma_u(x)=x,\;\forall u\in \Gamma\}$, 
and we denote by $\check{\gamma}_\cH$ the restriction of $\rho_{\cH}$ to $\cM^\Gamma$. 
The equivalence class of $\check{\gamma}_{\cH}$ depends only on the unitary equivalence class of $\cH$ as a representation space of $\Gamma$. 
Since $\gamma$ is outer, if the representation $\cH$ is irreducible, so is $\check{\gamma}_{\cH}$. 
For any finite dimensional (not necessary irreducible) unitary representation $\pi$ of $\Gamma$, there exists a full support Hilbert space 
$\cH_\pi$ in $\cM$ so that the $\Gamma$-action on $\cH$ is equivalent to $\pi$. 
We use the notation $\check{\gamma}_{\pi}$ instead of $\check{\gamma}_{\cH_\pi}$ for simplicity. 
Let $\nu:\cM^\Gamma\hookrightarrow \cM$ be the inclusion map. 
Then we have $[\nu\check{\gamma}_{\pi}]=\dim \pi [\nu]$ by definition, and so 
$$[\overline{\nu}\nu]=\bigoplus_{\pi\in \hat{\Gamma}}\dim \pi[\check{\gamma}_{\pi}],$$
by the Frobenius reciprocity. 
We call $\{\check{\gamma}_{\pi}\}_{\pi\in \hat{\Gamma}}$ the pre-dual action of $\gamma$. 

Let $\{V(\pi)_i\}_{i=1}^{\dim \pi}$ be an orthonormal basis of $\cH_\pi$. 
Applying the above argument to the second dual action $\gamma_u\otimes \Ad \varrho_u$ on $\cM\otimes B(\ell^2(\Gamma))$, 
where $\{\varrho_{u}\}_{u\in \Gamma}$ is the right regular representation of $\Gamma$, 
we can define the dual action $\hat{\gamma}_{\pi}\in \End(\cM\rtimes_\gamma \Gamma)$, which is explicitly given by the following formula: 
$$\hat{\gamma}_\pi(x)=\left\{
\begin{array}{ll}
\sum_{i=1}^{\dim \pi}V(\pi)_ixV(\pi)_i^* , &\quad x\in \cM \\
\lambda_u , &\quad x=\lambda_u
\end{array}
\right.. 
$$ 
This can been seen by identifying $\lambda_u\in \cM\rtimes_\gamma \Gamma$ with $I\otimes \lambda_u\in \cM\otimes B(\ell^2(\Gamma))$, 
where $\{\lambda_u\}$ in the latter is the left regular representation of $\Gamma$,  and 
$\cM$ with $\pi_\gamma(\cM)\subset \cM\otimes B(\ell^2(\Gamma))$,  
where 
$$\pi_\gamma(x)\xi(u)=\gamma_{u^{-1}}(x)\xi(u),$$
and by using the Hilbert space $\{V\otimes I;\; V\in \cH_\pi\}$ in $\cM\otimes B(\ell^2)$. 
In fact $V(\pi)_i\otimes I$ commutes with $I\otimes \lambda_u$ and we have
$$\sum_{i=1}^{\dim \pi}(V(\pi)_i\otimes I)\pi_{\gamma}(x)(V(\pi)_i^*\otimes I)=\pi_{\gamma}(\sum_{i=1}^{\dim \pi}V(\pi)_ixV(\pi)_i^*).$$

\begin{remark}\label{dualaction} When $\tau\in \Hom(\Gamma,\T)$ is a 1-dimensional representation, the Hilbert space $\cH_\tau$ is spanned by a single 
unitary $V_\tau$ with $\gamma_u(V_\tau)=\tau(u)V_u$. 
Thus we have 
$$\Ad V_\tau^*\circ \hat{\gamma}_\tau(x)=\left\{
\begin{array}{ll}
x , &\quad x\in \cM \\
\tau(u)\lambda_u , &\quad x=\lambda_u
\end{array}
\right.,
$$
which recovers the usual dual action in the commutative case.  
\end{remark}

\begin{lemma} Let the notation be as above. 
\begin{itemize} 
\item[(1)] $[\hgamma_\pi\trho]=[\trho\hgamma_\pi]$. 
\item[(2)] $[\hgamma_\pi\talpha_g]=[\talpha_g\hgamma_\pi]$. 
\item[(3)] For $\pi_1,\pi_2\in \hat{\Gamma}$ and $g_1,g_2\in G$, we have $\dim(\hgamma_{\pi_1}\talpha_{g_1},\hgamma_{\pi_2}\talpha_{g_2})
=\delta_{\pi_1,\pi_2}\delta_{g_1,g_2}$. 
\end{itemize}
\end{lemma}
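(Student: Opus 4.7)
The plan is to prove (1) and (2) by exhibiting explicit unitary intertwiners lying in $\cM$, and to handle (3) by expanding a generic intertwiner against the basis $\{\lambda_v\}_{v\in \Gamma}$ and reducing everything to outerness of the combined $G\times \Gamma$-action established in the preceding lemma. For (1), I would set $W=\sum_{i=1}^{\dim\pi}\rho(V(\pi)_i)V(\pi)_i^*\in \cM$; both $\{V(\pi)_i\}$ and $\{\rho(V(\pi)_i)\}$ are orthonormal bases of full-support Hilbert spaces in $\cM$ (the latter because $\rho$ is a unital endomorphism), so $W$ is a unitary with $WV(\pi)_j=\rho(V(\pi)_j)$, which at once yields $W\hgamma_\pi\trho(x)W^*=\trho\hgamma_\pi(x)$ on $\cM$. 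On $\lambda_u$ both compositions act as the identity, so only $W\lambda_u=\lambda_u W$ remains; this follows by pushing $\lambda_u$ through each factor via $V(\pi)_i^*\lambda_u=\lambda_u\gamma_{u^{-1}}(V(\pi)_i^*)$, invoking the defining commutation $\gamma_u\rho=\rho\gamma_u$ for $\gamma_u\in \Aut_\rho(\cO_{n+m})$, and using unitarity of $\pi(u)$ to reassemble the sum as $\lambda_u W$. Part (2) follows identically with $W'=\sum_i\alpha_g(V(\pi)_i)V(\pi)_i^*$, invoking the commutativity of $\alpha$ and $\gamma$ (part (1) of the preceding lemma) in place of $\gamma_u\rho=\rho\gamma_u$.

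For (3), I expand an arbitrary $T\in \Hom(\hgamma_{\pi_1}\talpha_{g_1},\hgamma_{\pi_2}\talpha_{g_2})$ as $T=\sum_{v\in \Gamma}T_v\lambda_v$ with $T_v\in \cM$. The condition $T\lambda_u=\lambda_u T$ forces the equivariance $T_w=\gamma_u(T_{u^{-1}wu})$, while the intertwining applied to $x\in \cM$, together with $\gamma_v\rho_{\cH_{\pi_1}}=\rho_{\cH_{\pi_1}}\gamma_v$, yields for each $v$ the $\cM$-level relation
\[
T_v\,\rho_{\cH_{\pi_1}}(y)=\alpha_{g_2-g_1}\gamma_{v^{-1}}\rho_{\cH_{\pi_2}}(y)\,T_v,\qquad y\in\cM.
\]
Right-multiplying by $V(\pi_1)_k$ places $R_k:=T_vV(\pi_1)_k$ in the $\cM$-level intertwiner space $\Hom(\alpha_{g_2-g_1}\gamma_{v^{-1}},\rho_{\cH_{\pi_2}})$.

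The main structural obstacle is to convert the preceding into a vanishing statement. The key observation is that $\rho_{\cH_{\pi_2}}$ decomposes as $\dim\pi_2$ copies of the identity sector of $\cM$: each isometry $V(\pi_2)_j$ satisfies $\rho_{\cH_{\pi_2}}(x)V(\pi_2)_j=V(\pi_2)_jx$, hence implements an equivalence $\id\cong\rho_{\cH_{\pi_2}}$. Therefore $\Hom(\beta,\rho_{\cH_{\pi_2}})=0$ for any outer automorphism $\beta$ of $\cM$, and applied with $\beta=\alpha_{g_2-g_1}\gamma_{v^{-1}}$ the outerness of the $G\times \Gamma$-action (preceding lemma (1)) forces $g_1=g_2$ and $v=e$, so $T=T_e\in \cH_{\pi_2}\cdot\cH_{\pi_1}^*$, i.e.\ $T_e=\sum_{j,k}c_{kj}V(\pi_2)_jV(\pi_1)_k^*$ for scalars $c_{kj}$. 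The residual equivariance $T_e=\gamma_u(T_e)$ for all $u\in \Gamma$, after inserting $\gamma_u(V(\pi_i)_k)=\sum_l\pi_i(u)_{lk}V(\pi_i)_l$, becomes the condition that the scalar matrix $(c_{kj})$ intertwines $\pi_1$ and $\pi_2$ as $\Gamma$-representations; Schur's lemma then supplies the factor $\delta_{\pi_1,\pi_2}$, completing (3).
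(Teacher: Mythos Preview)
Your treatment of (1) and (2) is the paper's own argument, with the additional care of verifying that the intertwining unitary commutes with $\lambda_u$ (which the paper leaves implicit); your $W$ is simply the adjoint of the paper's unitary $\sum_i V(\pi)_i\rho(V(\pi)_i^*)$.

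For (3) your route is correct but genuinely different from the paper's. The paper applies Frobenius reciprocity at the crossed-product level to obtain
\[
\dim(\hgamma_{\pi_1}\talpha_{g_1},\hgamma_{\pi_2}\talpha_{g_2})=\dim(\hgamma_{\overline{\pi_2}\otimes\pi_1},\talpha_{g_2-g_1}),
\]
then uses that $\hgamma_\sigma$ is irreducible of dimension $\dim\sigma$ to discard all summands of $\overline{\pi_2}\otimes\pi_1$ of dimension $>1$, reducing to $\dim(\hgamma_\tau,\talpha_g)=\delta_{\tau,1}\delta_{g,0}$ for one-dimensional $\tau$, which is then checked by a single crossed-product expansion against outerness of $G\times\Gamma$. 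Your argument instead expands immediately and exploits the elementary fact $\rho_{\cH_\pi}\cong(\dim\pi)\cdot\id_\cM$ to turn each coefficient $T_v$ into a matrix of $\cM$-level intertwiners from an automorphism to $\id$; outerness kills everything except $(v,g_2-g_1)=(e,0)$, and Schur's lemma finishes the count. The paper's Frobenius shortcut is slicker; your approach avoids invoking irreducibility of $\hgamma_\sigma$ and stays entirely inside $\cM$ after the expansion.

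One cosmetic slip to fix: the displayed relation should read
\[
T_v\,\rho_{\cH_{\pi_1}}(y)=\gamma_{v^{-1}}\rho_{\cH_{\pi_2}}\alpha_{g_2-g_1}(y)\,T_v,
\]
because $\alpha_g$ need not commute with $\rho_{\cH_{\pi_2}}$ (the Hilbert space $\cH_{\pi_2}$ was chosen $\gamma$-invariant, not $\alpha$-invariant). Consequently $R_k=T_vV(\pi_1)_k$ lies in $(\alpha_{g_1-g_2}\gamma_v,\rho_{\cH_{\pi_2}})$, the inverse of the automorphism you wrote. This is harmless for the conclusion, since an automorphism is outer if and only if its inverse is.
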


\begin{proof} (1) Choosing a basis $\{V(\pi)_i\}_{i=1}^{\dim \pi}$ of $\cH_\pi$, we set 
$$\sum_{i=1}^{\dim \pi}V(\pi)_i\rho(V(\pi)_i^*),$$ 
which is a unitary belonging to $(\trho\hgamma_\pi,\hgamma_\pi\trho)$. (2) can be shown in a similar way. 

(3) By the Frobenius reciprocity, we have 
$$\dim(\hgamma_{\pi_1}\talpha_{g_1},\hgamma_{\pi_2}\talpha_{g_2})=\dim(\hgamma_{\overline{\pi_2}\otimes \pi_1},\talpha_{g_2-g_1}).$$
Thus it suffices to show $\dim(\hgamma_{\tau},\talpha_g)=\delta_{\tau,1}\delta_{g,0}$ for $\tau\in \Hom(\Gamma,\T)$ and $g\in G$. 
Thanks to Remark \ref{dualaction}, we can further replace $\hgamma_\tau$ with $\Ad V_\tau^*\circ \hgamma_\tau$. 
Assume that $X\in \cM\rtimes_\gamma\Gamma$ satisfies $X\Ad V_\tau^*\circ \hgamma_\tau(x)=\talpha_g(x)X$ for any $x\in \cM\rtimes_\gamma\Gamma$. 
Then $X$ is expanded as $X=\sum_{u\in \Gamma}X_u\lambda_u$ with $X_u\in \cM$, and $X_u\in (\gamma_u,\alpha_g)=\C\delta_{u,I}\delta_{g,0}$. 
Therefore $X$ is a scalar multiple of $\delta_{g,e}$.  
Setting $x=\lambda_u$, we get $\tau=1$ if $X\neq 0$. 
\end{proof}

We denote by $\pi_0$ the defining representation of $\Gamma$ on $\cK_0$, which is not necessarily irreducible. 

\begin{theorem} Let the notation be as above. 
\begin{itemize}
\item [(1)] $\trho^2$ is decomposed as 
$$[\trho^2]=\bigoplus_{g\in G}[\talpha_g]\oplus n[\hgamma_{\pi_0}\trho].$$
\item [(2)] Let $\kappa :\cN\rtimes_\gamma \Gamma\hookrightarrow \cM\rtimes_\gamma \Gamma$ be the inclusion map. 
Then 
$$[\kappa\overline{\kappa}]=[\id]\oplus[\hgamma_{\pi_0}\trho].$$
\item [(3)]
For any $\pi_1,\pi_2\in \hat{\Gamma}$, we have $\dim (\hgamma_{\pi_1}\trho,\hgamma_{\pi_2}\trho)=\delta_{\pi_1,\pi_2}$. 
\end{itemize}
\end{theorem}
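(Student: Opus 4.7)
For part (1), I will construct two orthogonal families of isometries in $\cM$ that jointly decompose $\trho^2$. The operators $S_g \in (\talpha_g, \trho^2)$ from the Cuntz algebra model persist after passing to $\cM\rtimes_\gamma\Gamma$: since $\gamma_u = \gamma_{(1,\id,u)}$ acts trivially on $S_g$, we have $S_g\lambda_u = \lambda_u S_g$, so the intertwining $S_g\talpha_g(y) = \trho^2(y)S_g$ extends from $y \in \cM$ to $y = \lambda_u$ automatically. I will then fix a full-support Hilbert space $\cH_{\pi_0} \subset \cM$ with $\gamma$-action $\pi_0$ and orthonormal basis $\{V_r\}$ satisfying $\gamma_u(V_r) = \sum_s \pi_0(u)_{sr}V_s$, and define
\begin{equation*}
Y_g = \sum_r T_g(e_r)V_r^* \in \cM, \qquad g\in G,
\end{equation*}
and verify (a) $Y_g\hgamma_{\pi_0}(\trho(y)) = \trho^2(y)Y_g$ for $y\in \cM$, directly from $T_g(e_r)\in(\rho,\rho^2)$ and $\hgamma_{\pi_0}(z) = \sum_s V_szV_s^*$; (b) $Y_g\lambda_u = \lambda_u Y_g$, by combining $T_g(e_r)\lambda_u = \lambda_u\sum_s\overline{\pi_0(u)_{rs}}T_g(e_s)$ with $V_r^*\lambda_u = \lambda_u\sum_s\pi_0(u)_{rs}V_s^*$ and summing over $r$, where the unitarity $\sum_r\overline{\pi_0(u)_{rs}}\pi_0(u)_{rt} = \delta_{s,t}$ collapses the double sum; (c) $Y_g^*Y_{g'} = \delta_{g,g'}I$ and $S_h^*Y_g = 0$; and (d) $\sum_g S_gS_g^* + \sum_g Y_gY_g^* = P + Q = I$.

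For part (3), I will apply Frobenius reciprocity and self-conjugacy of $\trho$ to rewrite
\begin{equation*}
\dim(\hgamma_{\pi_1}\trho, \hgamma_{\pi_2}\trho) = \dim(\hgamma_{\overline{\pi_2}\otimes\pi_1}, \trho^2),
\end{equation*}
then expand the right side via (1). The contribution $\sum_g\dim(\hgamma_{\overline{\pi_2}\otimes\pi_1},\talpha_g)$ will equal the multiplicity of the trivial representation in $\overline{\pi_2}\otimes\pi_1$ (by the preceding lemma's orthogonality of $\{\hgamma_\pi\talpha_g\}$), which is $\delta_{\pi_1,\pi_2}$ by Schur. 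The remaining term $n\dim(\hgamma_{\overline{\pi_2}\otimes\pi_1},\hgamma_{\pi_0}\trho)$ vanishes because any irreducible component of $\hgamma_{\pi_0}\trho$ must have irrational dimension: otherwise by Frobenius $\trho$ would appear as a component of some $\hgamma_\sigma$, contradicting that $\hgamma_\sigma$ decomposes into integer-dimensional $\hgamma_\pi$ while $d$ is irrational; hence $\hgamma_{\overline{\pi_2}\otimes\pi_1}$ and $\hgamma_{\pi_0}\trho$ share no irreducible components.

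For part (2), $\dim\kappa = d/\sqrt{n}$ yields $\dim\kappa\overline{\kappa} = d^2/n = 1 + ld$, matching $\dim([\id]\oplus [\hgamma_{\pi_0}\trho])$. I will first verify irreducibility of the subfactor by expanding a relative commutant element $x = \sum_u x_u\lambda_u$ and using $x_u \in (\rho\gamma_u,\rho)\cong(\gamma_u,\rho^2)$, which vanishes for $u\neq I$ by outerness of $\alpha\times\gamma$ and irrationality of $d$; this gives $\dim(\id,\kappa\overline{\kappa}) = 1$. To identify the remaining $ld$-dimensional piece as $[\hgamma_{\pi_0}\trho]$, I will construct intertwiners in $(\kappa,\hgamma_{\pi_0}\trho\kappa)$ by combining the $\Gamma$-equivariant space $(\iota,\rho\iota) = \{X_\xi := \sum_{h\in G}T_h(A(h)\xi) : \xi\in\cK_0\}$ (where $\gamma_u(X_\xi) = X_{u\xi}$ because $u$ commutes with each $A(h)$) with the Hilbert space $\cH_{\pi_0}$; the candidate $Z = \sum_r X_{e_r}V_r^*$ satisfies $Z\lambda_u = \lambda_u Z$ by the same matrix collapse as in (1), and dimension equality will then force the desired decomposition. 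The main obstacle will be completing the verification $Zy = \hgamma_{\pi_0}\trho(y)Z$ for $y\in\cN$: while $X_{e_r}y = \rho(y)X_{e_r}$ holds in $\cM$, the factor $V_r^*\in\cH_{\pi_0}\subset\cM$ does not commute with $\cN$, so the naive computation fails; resolving this will likely require either placing $\cH_{\pi_0}$ inside a suitable relative commutant, or reformulating via the factorization $\trho = \kappa\trho_0$ with a Galois-theoretic lift of $[\iota\overline{\iota}] = [\id]\oplus l[\rho]$ through the crossed product.
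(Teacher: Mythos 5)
Parts (1) and (3) of your proposal are correct and essentially coincide with the paper's argument: your $Y_g=\sum_r T_g(e_r)V_r^*$ is exactly the paper's isometry $R_g=e_{g,0}R$ written for the particular choice $V_r=R^*T_0(e_r)$ of $\cH_{\pi_0}$, and the verifications (a)--(d) are the same computations; in (3) the paper performs the identical Frobenius expansion and silently drops the term $n\dim(\hgamma_{\overline{\pi_0}\otimes\overline{\pi_2}\otimes\pi_1},\trho)$, which you justify correctly by comparing the irrational dimension $d(\trho)=d$ with the integer dimensions of the irreducible sectors $\hgamma_\pi$.

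The genuine gap is in part (2), and you have flagged it yourself without closing it. Two things go wrong with the candidate $Z=\sum_r X_{e_r}V_r^*$. First, the factors are in the wrong order: the element you need lies in $(\kappa,\hgamma_{\pi_0}\trho\,\kappa)$, where $\hgamma_{\pi_0}$ sits on the target side, so the Hilbert-space isometries must go on the left. For $Z=\sum_r V_rX_r$ with $X_r\in(\iota,\rho\iota)$ and $y\in\cN$ one gets $\hgamma_{\pi_0}(\rho(y))Z=\sum_{i,r}V_i\rho(y)V_i^*V_rX_r=\sum_i V_i\rho(y)X_i=\sum_i V_iX_iy=Zy$, so no commutation of $V_r^*$ with $\cN$ is ever needed --- it is the same collapse $V_i^*V_r=\delta_{i,r}$ you already used in (1). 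Second, after this reordering the remaining condition $Z\lambda_u=\lambda_u Z$ forces the family $\{X_r\}$ to transform under $\overline{\pi_0}$, whereas your (correct) identification $(\iota,\rho\iota)=\{X_\xi=\sum_h T_h(A(h)\xi):\xi\in\cK_0\}$ transforms under $\pi_0$, since $\gamma_u(X_\xi)=X_{u\xi}$ linearly in $\xi$; with $X_r=X_{e_r}$ one gets $\gamma_u(Z)=\sum_{s,t}(\pi_0(u)\pi_0(u)^T)_{st}V_sX_{e_t}\neq Z$ in general. The missing idea is to use the antiunitary $J$: every $u\in\Gamma\subset\cG(A,C,J)$ commutes with $J$, so $\overline{\pi_0}\cong\pi_0$, and concretely $Z=\sum_r V_rX_{Je_r}$ satisfies $\gamma_u(Z)=\sum_{r,s,t}\pi_0(u)_{sr}\overline{\pi_0(u)_{tr}}\,V_sX_{Je_t}=Z$, is nonzero (indeed $Z^*Z=nl\,I$), and intertwines on $\cN$ as above; hence $[\hgamma_{\pi_0}\trho]\prec[\kappa\overline{\kappa}]$, and your dimension count ($d(\kappa\overline{\kappa})=1+ld$, multiplicity one of $\id$ by irreducibility of the inclusion, irreducibility of $\hgamma_{\pi_0}\trho$ from (3)) then finishes (2). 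This is in substance what the paper's one-line proof of (2) appeals to (the $\Gamma$-equivariant space $(\iota,\rho\iota)$ combined with the construction from (1)), but as written your proposal neither produces a working intertwiner nor resolves the equivariance, so (2) is not proved.
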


\begin{proof} (1) 
Since $\gamma_u$ acts on $\{S_g\}_{g\in G}$ trivially, we can easily see $S_g\in (\talpha_g,\trho^2)$. 
As in Section \ref{PEIC}, we choose an orthonormal basis $\{T_g(e_i)\}_{i=1}^l$ of $\cK_0$, and set 
$$e_{h,g}=\sum_{i=1}^l T_g(e_i)T_h(e_i)^*.$$
Then $\{e_{g,h}\}_{g,h\in G}$ is a system of matrix units in $\cM^\gamma$. 
We choose an isometry $R\in \cM^\gamma$ whose range projection is $e_{0,0}$, and set $V(\pi_0)_i=R^*T_0(e_i)$. 
Let $\cH_{\pi_0}$ be the linear span of $\{V(\pi_0)_i\}_{i=1}^l$. 
Then $\cH_{\pi_0}$ is a Hilbert space in $\cM$ with full support such that the restriction of $\gamma$ to $\cH_{\pi_0}$ 
is equivalent to $\pi_0$. 
Therefore we may assume that $\hgamma_{\pi_0}$, which is defined up to equivalence in any case, is given by  
$$\hat{\gamma}_{\pi_0}(x)=\left\{
\begin{array}{ll}
\sum_{i=1}^{\dim \pi}V(\pi_0)_ixV(\pi_0)_i^* , &\quad x\in \cM \\
\lambda_u , &\quad x=\lambda_u
\end{array}
\right.. 
$$ 
We set $R_g=e_{g,0}R$, which is an isometry in $\cM^\gamma$. 
We claim $R_g\in (\hgamma_{\pi_0}\trho,\trho^2)$. 
Indeed, for $x\in \cM$, we have 
\begin{align*}
\lefteqn{\trho^2(x)R_g=\rho^2(x)e_{g,0}R_0=\sum_{i=1}^lT_g(e_i)\trho(x)T_0(e_i)^*R_0} \\
 &=\sum_{i=1}^l e_{g,0}RR^*T_0(e_i)\trho(x)T_0(e_i)^*R_0=R_g\hgamma_{\pi_0}\circ \trho(x),\\
\end{align*}
and 
$$\trho^2(\lambda_u)R_g=\lambda_uR_g=R_g\lambda_u=R_g\hgamma_{\pi_0}\circ \trho(\lambda_u),$$
which shows the claim. 
Since 
$$\sum_{g\in G}R_gR_g^*=\sum_{g\in G}e_{g,0}e_{0,0}e_{0,g}=\sum_{g\in G}e_{g,g}=\sum_{g,i}T_g(e_i)T_g(e_i)^*,$$
we get the statement. 

(2) Let $\iota:\cN \hookrightarrow \cM$ be the inclusion map. Then the proof of Theorem \ref{classify2G1} shows 
$(\iota,\rho\iota)=(j_1\circ j_2)^*\cK_0$. 
This and the above argument shows the statement. 

(3) The statement follows from (1) because 
\begin{align*}
\lefteqn{\dim(\hgamma_{\pi_1}\trho,\hgamma_{\pi_2}\trho)=\dim(\hgamma_{\overline{\pi_2}\otimes \pi_1},\trho^2)} \\
 &=\sum_{g\in G}\dim(\hgamma_{\overline{\pi_2}\otimes \pi_1},\talpha_g)+n\dim(\hgamma_{\overline{\pi_0}\otimes \overline{\pi_2}\otimes \pi_1},\trho)
 =\sum_{g\in G}\dim(\hgamma_{\pi_1},\hgamma_{\pi_2}\talpha_g)=\delta_{\pi_1,\pi_2}.
\end{align*}
\end{proof}

\begin{example}\label{36equi} We apply the above theorem to Example \ref{36aut}. As $\Gamma$, we take the group generated by 
$$\left(
\begin{array}{cc}
1 &0  \\
0 &-1 
\end{array}
\right),\quad 
\left(
\begin{array}{cc}
0 &-1  \\
1 &0 
\end{array}
\right),
$$
which is the dihedral group $D_8$. 
The defining representation $\pi_0$ is real and irreducible, and the tensor product $\pi_0\otimes\pi_0$ is decomposed into 
1-dimensional representations, 
$$\pi_0\otimes \pi_0\cong\bigoplus_{\tau\in \Hom(D_8,\T)}\tau.$$
Since $\tau\otimes \pi_0\cong \pi_0$ for any $\tau\in \Hom(D_8,\T)$, we get 
\begin{align*}
[(\hgamma_{\pi_0}\trho)^2]&=[\hgamma_{\pi_0\otimes \pi_0}\trho^2]
=\sum_{g\in \Z_3,\;\tau\in\Hom(D_8,\T)}[\hgamma_\tau\talpha_g]
+3\sum_{\tau\in \Hom(D_8,\T)}[\hgamma_{\tau\otimes \pi_0}\trho] \\
 &=\sum_{g\in \Z_3,\;\tau\in\Hom(D_8,\T)}[\hgamma_\tau\talpha_g]+12[\hgamma_{\pi_0}\trho].
\end{align*}
This means that the fusion category generated by $\hgamma_{\pi_0}\trho$ is a near-group category with the group $\Z_2\times \Z_2\times \Z_3$ and 
the multiplicity parameter $m=12$. 
\end{example}

\subsection{The case of $m=|G|$}
In this subsection, we assume that $\cC$ is a C$^*$ near-group category with a finite abelian group $G$ and 
$m=n$, and we use the same notation as in Section \ref{m=n}. 
In this case, the group $\Aut(\cC)=\Out(\cC)$ is isomorphic to the set of group automorphisms of $G$ leaving  
the triple $(\inpr{\cdot}{\cdot},a,b)$ invariant. 
For simplicity, we assume that we have such an automorphism $\theta$ of order two, 
which covers all the known examples (see Example \ref{55}, Example \ref{22312}). 
Then $\gamma=\gamma_{(\theta,I)}$ is an outer automorphism of the factor $\cM$ of order two 
commuting with $\rho$, and it satisfies the relation $\gamma\circ \alpha_g=\alpha_{\theta(g)}\circ \gamma$. 
Thus Theorem \ref{Appouter} implies that $\alpha$ and $\gamma$ give an outer action of 
$G\rtimes \Z_2$ on $\cM$. 

We denote by $\lambda$ the implementing unitary in the crossed product $\cM\rtimes_\gamma \Z_2$.  
As before $\rho$ extends to $\trho\in \End(\cM\rtimes_\gamma\Z_2)$ by $\trho(\lambda)=\lambda$. 
Let $G^\theta$ be the set of automorphisms of $G$ fixed by $\gamma$.  
Then $\alpha_g$ with $g\in G^\theta$ also extends to $\talpha_g\in \Aut(\cM\rtimes_\gamma\Z_2)$ by 
$\talpha_g(\lambda)=\lambda$. 
We denote by $\hgamma$ the dual action of $\gamma$, which is identified with the generator of the dual action too. 
As in the previous subsection, we can show that $\trho$ is irreducible with $[\hgamma\trho]=[\trho\hgamma]\neq[\trho]$, 
and that $\talpha$ and $\hgamma$ give an outer action of $G^\theta\times \Z_2$. 
 
\begin{lemma} $(\trho^2,\trho^2)=(\rho^2,\rho^2)^\gamma$. 
\end{lemma}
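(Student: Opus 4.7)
The plan is to exploit the Fourier decomposition $\cM\rtimes_\gamma\Z_2 = \cM \oplus \cM\lambda$ and reduce the identity to a vanishing statement about intertwiner spaces between irreducible sectors that are distinguished by the outer action of $G\rtimes \Z_2$.

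First I would take $X\in(\trho^2,\trho^2)$ and write it uniquely as $X=X_0+X_1\lambda$ with $X_0,X_1\in\cM$. Testing the intertwining relation $X\trho^2(y)=\trho^2(y)X$ against $y\in\cM$, using $\trho^2|_\cM=\rho^2$ together with $\lambda\rho^2(y)=\gamma\rho^2(y)\lambda=\rho^2\gamma(y)\lambda$ (the second equality by $\gamma\rho=\rho\gamma$), and separating the $\cM$-component from the $\cM\lambda$-component, gives
\[
X_0\in(\rho^2,\rho^2),\qquad X_1\in(\gamma\rho^2,\rho^2).
\]
Then testing against $y=\lambda$ yields $X_0=\gamma(X_0)$ and $X_1=\gamma(X_1)$, so $X_0\in(\rho^2,\rho^2)^\gamma$.

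The key step is to show the off-diagonal block is forced to vanish, i.e.\ $(\gamma\rho^2,\rho^2)=0$. Using the near-group fusion rule $[\rho^2]=\bigoplus_{g\in G}[\alpha_g]\oplus m[\rho]$ and Frobenius reciprocity, this reduces to showing $\dim(\gamma\alpha_g,\alpha_h)=\dim(\gamma\alpha_g,\rho)=\dim(\gamma\rho,\alpha_h)=\dim(\gamma\rho,\rho)=0$ for all $g,h\in G$. The first three are immediate: $\gamma\alpha_g$ is an automorphism that cannot equal $\alpha_h$ because $G\rtimes\Z_2$ acts outerly on $\cM$ by Theorem~\ref{Appouter}, so $[\gamma]\notin\{[\alpha_g]\}_{g\in G}$; and dimension count kills the two mixed pairings between invertible and non-invertible sectors. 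For the last one, Frobenius reciprocity combined with self-conjugacy of $\rho$ gives
\[
\dim(\gamma\rho,\rho)=\dim(\gamma,\rho\brho)=\dim(\gamma,\rho^2)=\sum_{g\in G}\delta_{[\gamma],[\alpha_g]}+m\,\delta_{[\gamma],[\rho]},
\]
and both terms are zero since $[\gamma]$ is neither equal to any $[\alpha_g]$ (by outerness) nor equal to $[\rho]$ (dimension). Hence $X_1=0$ and $X=X_0\in(\rho^2,\rho^2)^\gamma$.

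For the reverse inclusion, given $X_0\in(\rho^2,\rho^2)^\gamma$, the relation $X_0\lambda=\lambda\gamma^{-1}(X_0)=\lambda X_0$ gives $X_0\trho^2(\lambda)=\trho^2(\lambda)X_0$, and the intertwining with $\rho^2(y)$ for $y\in\cM$ is immediate; this shows $X_0\in(\trho^2,\trho^2)$, completing the equality. The main obstacle in this argument is the step $\dim(\gamma\rho,\rho)=0$, since it is the only place where the genuine outerness of $\gamma$ (as opposed to the abstract categorical datum of an automorphism of $\cC$) enters, and it is crucial that $\theta$ is realized by the concrete outer automorphism $\gamma_{(\theta,I)}$ of the Cuntz-algebra model, so that Theorem~\ref{Appouter} is available.
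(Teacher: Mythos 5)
Your argument is correct and is essentially the paper's proof: you expand an element of $(\trho^2,\trho^2)$ as $X_0+X_1\lambda$, observe that $X_0\in(\rho^2,\rho^2)^\gamma$ and that $X_1$ lies in $(\rho^2\gamma,\rho^2)$, and kill the latter using the fusion rules together with the outerness of the $G\rtimes\Z_2$-action from Theorem~\ref{Appouter}. The paper merely compresses your case-by-case vanishing into the single Frobenius-reciprocity identity $(\rho^2\gamma,\rho^2)\cong(\gamma,\rho^4)=\{0\}$, so the two arguments coincide in substance.
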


\begin{proof} Assume $X+Y\lambda\in(\trho^2,\trho^2)$ with $X,Y\in \cM$. 
Then $X\in (\rho^2,\rho^2)^\gamma$ and 
$$Y\in (\rho^2\gamma,\rho^2)\cong (\gamma,\rho^4)=\{0\}.$$ 
\end{proof}

Recall that we have 
$$(\rho^2,\rho^2)=\bigoplus_{g\in G}\C S_gS_g^*\oplus \B(\cK).$$
We choose a subset $\Lambda\subset G\setminus G^\theta$ satisfying $\Lambda\cap \theta(\Lambda)=\emptyset$ 
and $\Lambda\cup \theta(\Lambda)=G\setminus G^\theta$. 
Then the above lemma shows 
$$(\trho^2,\trho^2)=\bigoplus_{g\in G^\theta}\C S_gS_g^*\oplus 
\bigoplus_{g\in \Gamma}\C (S_gS_g^*+S_{\theta(g)}S_{\theta(g)}^*)
\oplus \B(\cK)^\gamma.$$
Thus for $g\in \Lambda$, the projection $S_gS_g^*+S_{\theta(g)}S_{\theta(g)}^*$ 
is minimal in $(\trho^2,\trho^2)$, and it corresponds to an irreducible component of $\trho^2$. 
We choose an isometry $R_g\in \cM^\gamma$ whose range projection is $S_gS_g^*+S_{\theta(g)}S_{\theta(g)}^*$, 
and defined $\pi_g\in \End (\cM\rtimes_\gamma\Z_2)$ by $\pi_g(x)=R_g^*\trho^2(x)R_g$.  
Then  
$$\pi_g(x)=\left\{
\begin{array}{ll}
R_g^*(S_g\alpha_g(x)S_g^*+S_{\theta(g)}\alpha_{\theta(g)}(x)S_{\theta(g)}^*)R_g , &\quad x\in \cM \\
\lambda , &\quad x=\lambda
\end{array}
\right..
$$ 

For $g\in G^\theta$, we have $T_g\in (\trho,\trho^2)$. 
For $g\in G\setminus G^\theta$, we have 
$$\frac{1}{\sqrt{2}}(T_g+T_{\theta(g)})\in (\trho,\trho^2),$$
$$\frac{1}{\sqrt{2}}(T_g-T_{\theta(g)})\in (\hgamma\trho,\trho^2).$$
Thus it is straightforward to show the following. 

\begin{theorem} Let the notation be as above. 
\begin{itemize}
\item[(1)] $\trho^2$ is decomposed as 
$$[\trho^2]=\sum_{g\in G^\theta}[\talpha_g]+\sum_{g\in \Lambda} [\pi_g]+\frac{|G|+|G^\theta|}{2}[\trho]
+\frac{|G|-|G^\theta|}{2}[\hgamma\trho].$$
\item[(2)] Let $\kappa: \cN\rtimes_\gamma \Z_2\hookrightarrow \cM\rtimes_\gamma\Z_2$ be the inclusion map. 
Then 
$$[\kappa\overline{\kappa}]=[\id]+[\trho].$$ 
\end{itemize}
\end{theorem}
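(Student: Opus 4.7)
The plan is to derive (1) directly from the already-established identity $(\trho^2,\trho^2)=(\rho^2,\rho^2)^\gamma$ by reading off its minimal projections, and to derive (2) by pulling back the intertwiner that already witnessed $[\rho]\subset[\iota\overline{\iota}]$ in the proof of Theorem \ref{classify2G1}.

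For (1), I will decompose the fixed-point algebra $(\rho^2,\rho^2)^\gamma$ using the splitting $(\rho^2,\rho^2)=\bigoplus_{g\in G}\C S_gS_g^*\oplus \B(\cK)$. Since $\gamma(S_g)=S_{\theta(g)}$, the diagonal part contributes minimal central projections $S_gS_g^*$ for $g\in G^\theta$ and $S_gS_g^*+S_{\theta(g)}S_{\theta(g)}^*$ for each orbit $\{g,\theta(g)\}$, $g\in\Lambda$. The first kind is precisely the range of an isometry in $(\talpha_g,\trho^2)$ (since $S_g$ already intertwines $\alpha_g$ and $\rho^2$ and is $\gamma$-fixed for $g\in G^\theta$), and the second kind yields the isometry $R_g$ defining $\pi_g$. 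For the $\B(\cK)$-part, I will use that $\gamma$ permutes the basis $\{T_g\}$ by $\theta$, whence $\cK$ splits as $\cK^\gamma\oplus\cK^{-\gamma}$ of dimensions $(|G|+|G^\theta|)/2$ and $(|G|-|G^\theta|)/2$ respectively, with explicit bases $\{T_g\}_{g\in G^\theta}\cup\{(T_g\pm T_{\theta(g)})/\sqrt2\}_{g\in\Lambda}$. Then $\B(\cK)^\gamma=\B(\cK^\gamma)\oplus \B(\cK^{-\gamma})$. An element $T\in\cK$ satisfies $T\trho(x)=\trho^2(x)T$ for $x\in\cM$ automatically (as $T\in(\rho,\rho^2)$); the condition $T\lambda=\trho^2(\lambda)T=\lambda T$ amounts to $\gamma(T)=T$, giving $T\in(\trho,\trho^2)$, while $\gamma(T)=-T$ gives $T\in(\hgamma\trho,\trho^2)$ (since $\hgamma\trho(\lambda)=-\lambda$). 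This accounts for the two multiplicities; a dimension/count check against $d(\trho)^2=d^2=n+md$ confirms nothing is missed.

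For (2), I first observe that $[\cM\rtimes_\gamma\Z_2:\cN\rtimes_\gamma\Z_2]=[\cM:\cN]=1+d$, so $[\kappa\overline{\kappa}]$ has total dimension $1+d$ and contains $[\id]$. It therefore suffices to exhibit a nonzero element of $(\kappa,\trho\kappa)$. From the proof of Theorem \ref{classify2G1} the vector $V:=(j_1\circ j_2)^*T_0\in\cK$ lies in $(\iota,\rho\iota)$. The key observation is that $\gamma(V)=V$: indeed $\gamma(T_0)=T_0$ since $\theta(0)=0$, and $\gamma$ commutes with both $j_1$ and $j_2$ because it fixes $S_0$ and commutes with $\rho$ (from the defining formulas $j_1(T)=\sqrt d\,T^*\rho(S_0)$, $j_2(T)=\sqrt d\,\rho(T)^*S_0$). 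Thus $V\in\cK^\gamma\subset\cM$, and for $x\in\cN$ we still have $Vx=\rho(x)V=\trho(x)V$, while for $\lambda$ we get $V\lambda=\lambda V=\gamma(V)\lambda=V\lambda=\trho(\lambda)V$. Hence $V\in(\kappa,\trho\kappa)$, so $[\trho]$ is contained in $[\kappa\overline{\kappa}]$, and the index equality forces $[\kappa\overline{\kappa}]=[\id]\oplus[\trho]$.

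The only real subtlety is bookkeeping in (1): one must verify that the $R_g$ one chooses for $g\in\Lambda$ (an isometry in $\cM^\gamma$ with the prescribed range) genuinely yields an irreducible $\pi_g$ and that distinct choices ($g$ versus $g'\in\Lambda$, and $g$ versus $\theta(g)$) produce inequivalent or equivalent sectors as expected, which follows from the minimality of the central projections $S_gS_g^*+S_{\theta(g)}S_{\theta(g)}^*$ in $(\trho^2,\trho^2)$. Everything else is routine once the fixed-point description of $(\trho^2,\trho^2)$ and the $\gamma$-invariance of the canonical intertwiners are in hand.
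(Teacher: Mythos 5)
Your proposal is correct and follows essentially the same route as the paper: part (1) is exactly the paper's reading of the fixed-point algebra $(\trho^2,\trho^2)=(\rho^2,\rho^2)^\gamma$ via the intertwiners $S_g$ ($g\in G^\theta$), $R_g$ ($g\in\Lambda$), $T_g$ and $(T_g\pm T_{\theta(g)})/\sqrt2$, and part (2) is the same $\gamma$-invariance argument for $(j_1\circ j_2)^*T_0\in(\iota,\rho\iota)$ combined with the index count, as in Theorem \ref{deT1} and the $m>|G|$ case. The only point stated rather than argued is that the crossed-product inclusion has index $1+d$, which is standard (or can be replaced by the one-sided bound from extending the expectation together with the lower bound $d(\kappa)^2\geq 1+d(\trho)$).
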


About the fusion rules, we have the following. 

\begin{theorem} Let the notation be as above. 
\begin{itemize}
\item[(1)] The tensor category generated by 
$\{\hgamma\}\cup\{\talpha_g\}_{g\in G^\theta}\cup\{\pi_g\}_{g\in \Lambda}$ 
is equivalent to the representation category of $\hG\rtimes_{\theta}\Z_2$. 
\item[(2)] For $g\in G^\theta$, we have 
$$[\talpha_g][\trho]=[\trho][\talpha_g]=[\trho].$$
\item[(3)] For $g\in \Lambda$, we have 
$$[\pi_g][\trho]=[\trho][\pi_g]=[\trho]+[\hgamma\trho].$$
\end{itemize}
\end{theorem}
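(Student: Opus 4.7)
My plan is to dispatch parts (2) and (3) by direct Cuntz-algebra computation, and then establish part (1) by matching fusion rules and associators with those of $\mathrm{Rep}(\hG \rtimes_\theta \Z_2)$.

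For part (2), the relation $\alpha_g \circ \rho = \rho$ holds for every $g \in G$ by the initial choice of $\rho$, so $\talpha_g \circ \trho = \trho$ on both $\cM$ and $\lambda$, giving $[\talpha_g\trho] = [\trho]$ immediately. For the other composition, $\rho \circ \alpha_g = \Ad U(g) \circ \rho$; since $\alpha_h(U(g)) = \overline{\inpr{h}{g}}U(g)$ implies $\gamma(U(g)) = U(\theta(g)) = U(g)$ whenever $g \in G^\theta$, the unitary $U(g)$ lies in $\cM \rtimes_\gamma \Z_2$ and implements the equivalence $\trho\talpha_g \sim \trho$.

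For part (3), the dimension identity $d(\pi_g)d(\trho) = 2d = d(\trho) + d(\hgamma\trho)$ reduces the problem to producing one nonzero intertwiner in each of $(\trho, \pi_g\trho)$ and $(\hgamma\trho, \pi_g\trho)$. Under the substitution $Y \mapsto R_g Y$, elements of $(\trho, \pi_g\trho)$ correspond to $Z \in (\trho, \trho^3)$ with $R_gR_g^*Z = Z$; such a $Z$ can be built from $\trho(T_g)T_g + \trho(T_{\theta(g)})T_{\theta(g)}$ (suitably normalised) using Lemma \ref{rhoS}. For $(\hgamma\trho, \pi_g\trho)$, the same construction twisted by $\lambda$ (for instance $\trho(T_g)T_g - \trho(T_{\theta(g)})T_{\theta(g)}$ multiplied by $\lambda$) exploits the $\Z_2$-grading to produce an intertwiner whose sign is flipped by $\hgamma$. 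The symmetric equality $[\trho][\pi_g] = [\trho] + [\hgamma\trho]$ is entirely analogous.

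For part (1), the tensor subcategory $\cD$ generated by the given objects has $2|G^\theta|$ invertibles (namely $\talpha_g$ and $\hgamma\talpha_g$) together with the $|\Lambda|$ simples $\pi_g$ of dimension $2$, for total quantum dimension squared $2|G^\theta| + 4|\Lambda| = 2|G| = |\hG \rtimes_\theta \Z_2|$. The pointed subcategory generated by $\{\talpha_g\}_{g \in G^\theta}$ and $\hgamma$ is equivalent to $\mathrm{Vec}_{G^\theta \times \Z_2}$ with trivial associator, since $\talpha$ is a genuine action of $G^\theta$ commuting with the order-two $\hgamma$, matching the one-dimensional characters of $\hG \rtimes_\theta \Z_2$. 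The mixed fusion rules $[\talpha_h][\pi_g] = [\pi_{g+h}]$ follow from $\alpha_h(R_gR_g^*) = R_{g+h}R_{g+h}^*$, while $[\hgamma][\pi_g] = [\pi_g]$ is shown by the argument of part (3). The remaining rule $[\pi_{g_1}][\pi_{g_2}]$ is deduced by decomposing $R_{g_1}^*\trho^2(R_{g_2}^*\trho^2(\cdot)R_{g_2})R_{g_1}$ and identifying its components with the range projections of $R_{g_1+g_2}$ and $R_{g_1+\theta(g_2)}$, or---when these sums fall in $G^\theta$---with the pointed objects $\talpha_{g_1+g_2}$ and $\hgamma\talpha_{g_1+g_2}$, reproducing Mackey's tensor-product formula for induced representations.

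The main obstacle will be the fusion rule $[\pi_{g_1}][\pi_{g_2}]$: carrying out the orbit-wise decomposition requires tracking how $\trho^2(S_h)$ for $h \in \{g_2, \theta(g_2)\}$ expands in the Cuntz generators via Eq.~(\ref{rhoU}) and Lemma \ref{rhoS}, and patiently separating the four cases according to whether $g_1 \pm g_2$ and $g_1 \pm \theta(g_2)$ lie in $G^\theta$ or not. Once all fusion rules match those of $\mathrm{Rep}(\hG \rtimes_\theta \Z_2)$, equivalence of categories follows because $\cD$ arises from the genuine outer action of $G \rtimes_\theta \Z_2$ on $\cM$ (not a projective one), so no $H^3$-associator obstruction appears, and $\mathrm{Rep}(\hG \rtimes_\theta \Z_2)$ is the canonical realisation of this fusion ring.
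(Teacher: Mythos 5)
Your parts (2) and (3) follow essentially the same route as the paper: (2) is correct (the paper simply takes the adjoint of $\talpha_g\circ\trho=\trho$; your observation that $\gamma(U(g))=U(\theta(g))=U(g)$ for $g\in G^\theta$ is the relevant point for the other order), and for (3) the paper likewise combines the dimension count $d(\pi_g)d(\trho)=2d$ with one explicit intertwiner in each of $(\trho,\pi_g\trho)$ and $(\hgamma\trho,\pi_g\trho)$. However, your candidate intertwiner is off: for $Z\in(\trho,\trho^3)$ one has $S_h^*Z\in(\trho,\trho)=\C$, so the support condition $R_gR_g^*Z=Z$ that you yourself impose forces $Z\in\C S_g+\C S_{\theta(g)}$; the element $\trho(T_g)T_g+\trho(T_{\theta(g)})T_{\theta(g)}$ has nonvanishing components under $Q$ (through $l$) and does not lie in this space. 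The correct intertwiners are simply $\frac{1}{\sqrt{2}}R_g^*(S_g\pm S_{\theta(g)})$, the sign separating $\trho$ from $\hgamma\trho$ because $\gamma(S_g)=S_{\theta(g)}$ while $\lambda$ anticommutes with the odd combination. This is a fixable slip, not a conceptual problem.

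The genuine gap is in part (1). You propose to verify that the Grothendieck ring of the subcategory agrees with that of $\mathrm{Rep}(\hG\rtimes_\theta\Z_2)$ and then conclude equivalence because ``no $H^3$-associator obstruction appears.'' This inference is invalid: an isomorphism of fusion rings with that of a finite group does not determine the tensor category, since such rings typically admit several inequivalent categorifications (already $\mathrm{Rep}(D_8)$, $\mathrm{Rep}(Q_8)$ and their twisted relatives share one fusion ring), and triviality of the associator on the invertible objects $\talpha_g,\hgamma$ says nothing about the associativity data involving the two-dimensional objects $\pi_g$. What actually pins the category down is a duality argument you never invoke, and it is how the paper proceeds: since $\alpha$ and $\gamma$ give a genuine outer action of $G\rtimes_\theta\Z_2$, the inclusion $\cM^G\subset\cM\rtimes_\gamma\Z_2$ is a crossed product inclusion by an outer action of $\hG\rtimes_\theta\Z_2$, so with $\iota_1:\cM^G\hookrightarrow\cM$ and $\iota_2:\cM\hookrightarrow\cM\rtimes_\gamma\Z_2$ the category generated by $\iota_2\iota_1\overline{(\iota_2\iota_1)}$ is equivalent to $\mathrm{Rep}(\hG\rtimes_\theta\Z_2)$ on the nose; Frobenius reciprocity gives $[\pi_g]=[\iota_2\alpha_g\overline{\iota_2}]=[\iota_2\alpha_{\theta(g)}\overline{\iota_2}]$, whence
\begin{equation*}
[\iota_2\iota_1\overline{(\iota_2\iota_1)}]=\sum_{g\in G^\theta}[\talpha_g]+\sum_{g\in G^\theta}[\talpha_g\hgamma]+2\sum_{g\in\Lambda}[\pi_g],
\end{equation*}
so this dual category has exactly the listed simple objects. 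Besides closing the gap, this bypasses the Mackey-type computation of $[\pi_{g_1}][\pi_{g_2}]$ that you identify as the main obstacle; if you insist on your route, you would have to supplement the fusion-ring computation with an argument identifying the full associativity data, which the crossed-product duality provides for free.
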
 

\begin{proof} (1) Let $\cM^G$ be the fixed point algebra 
$$\cM^G=\{x\in \cM;\;\alpha_g(x)=x,\;\forall g\in G\},$$
and let $\iota_1:\cM^G \hookrightarrow \cM$ and $\iota_2:\cM\hookrightarrow \cM\rtimes_\gamma\Z_2$ 
be the inclusion maps. 
Then it is easy to show that $\cM^G\subset \cM\rtimes_\gamma \Z_2$ is a crossed product inclusion 
by a $\hG\rtimes_\theta \Z_2$-action, and 
the tensor category generated by $\iota_2\iota_1\overline{(\iota_2\iota_1)}$ is equivalent to 
the representation category of $\hG\rtimes_\theta \Z_2$. 

By construction, we have $[\talpha_g][\iota_2]=[\iota_2][\alpha_g]$ for $g\in G^\theta$, and 
$$[\pi_g][\iota_2]=[\iota_2][\alpha_g]+[\iota_2][\alpha_{\theta(g)}],$$
for $g\in \Lambda$. 
Thus we get the following for $g\in \Lambda$ by the Frobenius reciprocity: 
$$[\pi_g]=[\iota_2\alpha_g\overline{\iota_2}]=[\iota_2\alpha_{\theta(g)}\overline{\iota_2}].$$ 
Now we get 
\begin{align*}
\lefteqn{[\iota_2\iota_1\overline{(\iota_2\iota_1)}]=\sum_{g\in G}[\iota_2\alpha_g\overline{\iota_2}]
=\sum_{g\in G^\theta}[\talpha_g][\iota_2\overline{\iota_2}]+\sum_{g\in \Lambda}2[\pi_g]} \\
 &=\sum_{g\in G^\theta}[\talpha_g]+\sum_{g\in G^\theta}[\talpha_g\hgamma]+\sum_{g\in \Lambda}2[\pi_g],
\end{align*}
which shows the statement.

(2) We have $\talpha_g\circ \trho=\trho$, and taking its adjoint, we get the statement. 

(3) By construction, we have   
$$\frac{1}{\sqrt{2}}R_g^*(S_g+S_{\theta(g)})\in (\trho,\pi_g\trho),$$
$$\frac{1}{\sqrt{2}}R_g^*(S_g-S_{\theta(g)})\in (\hgamma\trho,\pi_g\trho),$$
which shows $[\pi_g\trho]=[\trho]+[\hgamma\trho]$. 
Taking its adjoint, we get the statement.
\end{proof}

\begin{remark} The endomorphism $\iota_2\alpha_g\overline{\iota_2}$ corresponds to the induced representation 
$$\mathrm{Ind}_{\hG}^{\hG\rtimes_\theta \Z_2}g,$$
where $g\in G=\hat{\hG}$ is regarded as a one-dimensional representation of $\hG$. 
\end{remark}

From the above two theorems, it is easy to determine the principal graphs of the subfactor 
$\cN\rtimes_\gamma \Z_2\subset \cM\rtimes_\gamma\Z_2$, in particular, for Example \ref{55} and Example \ref{22312}.
We leave it for the reader.   

\section{Appendix} Let $\cO_{n+m}$ be the Cuntz algebra with canonical generators $\{S_i\}_{i\in J}$, where 
$J=\{1,2,\ldots,n+m\}$.  
Let $J_1=\{1,2,\ldots,m\}$ and $J_2=\{m+1,m+2,\ldots,n+m\}$. 
We consider a weighted gauge action $\gamma :\T \rightarrow \Aut(\cO_{n+m})$ defined by
$$\gamma_{t}(S_i)=\left\{
\begin{array}{ll}
e^{\ii t}S_i , &\quad i\in J_1  \\
e^{2\ii t}S_i , &\quad i\in J_2
\end{array}
\right.
.$$
Then $\gamma$ has a unique KMS-state $\varphi$ with the inverse temperature $\log d$, where $d=\frac{m+\sqrt{m^2+4n}}{2}$ (see \cite{E80}). 
The weak closure $\cM$ of $\cO_{n+m}$ in the GNS representation of $\varphi$ is the Powers factor 
of type III$_{1/d}$. 
With identification $\{S_g\}=\{S_i\}_{i\in I_2}$ and $\cK=\Span\{S_i\}_{i\in I_1}$, 
the Cuntz algebra model $(\alpha,\rho)$ of a C$^*$ near-group category $\cC$ can extend to $\cM$, which 
gives a realization of $\cC$ in $\End(\cM)$ (see \cite{I93}). 
The purpose of this appendix is to show that the symmetry group $\Aut_\rho(\cO_{n+m})$ is also realized in 
$\Aut(\cM)\subset \End(\cM)$, and is even faithfully realized in $\Out(\cM)$. 

Let $\beta \in \Aut(\cO_{n+m})$ be an automorphism globally preserving $\Span\{S_i\}_{i\in I_1}$ and 
$\Span\{S_i\}_{i\in I_2}$. 
Since $\beta$ commutes with $\gamma_t$, it preserves $\varphi$ and extends to $\cM$, which is still denoted by $\beta$. 

\begin{theorem}\label{Appouter} Let $\beta\in \Aut(\cM)$ be as above. 
Then $\beta$ is inner if and only if $\beta=\id$. 
\end{theorem}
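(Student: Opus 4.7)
The plan is to show that if $\beta = \Ad u$ for some unitary $u \in \cM$, then $u$ must be a scalar, whence $\beta = \mathrm{id}$.

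First, I observe that $\beta$ commutes with the gauge action $\gamma$: the subspaces $\Span\{S_i\}_{i \in I_1}$ and $\Span\{S_i\}_{i \in I_2}$ are the two nontrivial $\gamma$-eigenspaces of $\cO_{n+m}$ (with eigenvalues $e^{it}$ and $e^{2it}$), and $\beta$ preserves them by hypothesis. Since $\varphi$ is the unique KMS state, $\beta$ preserves $\varphi$ and hence commutes with the modular group $\sigma_t^\varphi = \gamma_{-t\log d}$ on $\cM$. The standard centrality argument --- $\sigma_t^\varphi(u)^* u$ commutes with all of $\cM$ and is therefore scalar --- yields a continuous character $\chi(t) \in \T$ with $\sigma_t^\varphi(u) = \chi(t) u$. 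As $\cM$ is of type $\mathrm{III}_{1/d}$ the only possible eigenvalues are $\chi(t) = e^{ikt\log d}$ for some $k \in \Z$, so $u$ is a $\gamma$-eigenvector of integer weight $k$.

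Second, I use the KMS condition to pin down $k = 0$. Since $u$ is entire analytic for $\sigma^\varphi$ with $\sigma_z^\varphi(u) = e^{ikz \log d} u$, evaluation at $z = -i$ gives $\sigma_{-i}^\varphi(u) = d^k u$. The KMS identity $\varphi(ab) = \varphi(b \sigma_{-i}^\varphi(a))$ applied to $(a,b) = (u, u^*)$ yields
\[
1 = \varphi(u u^*) = \varphi(u^* \sigma_{-i}^\varphi(u)) = d^k \varphi(u^* u) = d^k,
\]
and since $d > 1$ we conclude $k = 0$. Thus $u$ lies in the centralizer $\cM^\varphi$, which is the hyperfinite $\mathrm{II}_1$ factor with trace $\tau = \varphi|_{\cM^\varphi}$, weakly containing the gauge-invariant AF subalgebra $\cO_{n+m}^\gamma$.

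Third, with $u \in \cM^\varphi$ I reduce to the trivial automorphism. Writing $\beta(S_i) = T_i = \sum_j v_{ji} S_j$ for $i \in I_1$ with $V = (v_{ji})$ unitary (analogously for $I_2$ with $W$), the intertwining $u S_i = T_i u$ together with the $\beta$-invariance of $P_r = \sum_{i \in I_r} S_i S_i^*$ shows $u$ commutes with both $P_1$ and $P_2$. Multiplying $uS_i = T_iu$ on the left by $S_j^*$ and applying $\tau$ after using the KMS trace formula $\varphi(S_i S_j^*) = d^{-w(i)} \delta_{ij}$ produces, after iterating through Cuntz words of increasing length, enough linear constraints on $v_{ji}$ and $w_{ji}$ to force $V = W = I$. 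Hence $\beta$ acts trivially on every generator, $u$ commutes with all of $\cO_{n+m}$, and by factoriality $u$ is a scalar.

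The main obstacle is the third step: proving that the linear system arising from tracing the intertwining against arbitrary Cuntz monomials genuinely forces $V = W = I$ rather than allowing a family of compatible deformations. The heart of the matter is that such a deformation, if it existed, would correspond to a nontrivial inner automorphism of the hyperfinite $\mathrm{II}_1$ factor $\cM^\varphi$ that also intertwines the Cuntz canonical endomorphism $x \mapsto \sum_i S_i x S_i^*$ in the weighted sense dictated by $V$ and $W$ --- and one must rule this out by showing that the only simultaneous solution of all these compatibility conditions is the identity.
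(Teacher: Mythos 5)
Your first two steps are correct and give a clean alternative to the paper's opening move: instead of expanding $u$ into Fourier components with respect to the gauge action and killing the nonzero modes by the KMS scaling (as the paper does), you show directly that $u$ is a $\sigma^\varphi$-eigenvector and then use the KMS identity to force the weight to be zero. Either way one lands at $u\in\cM_\varphi$, so up to this point the proposal is sound.

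The third step, however, is exactly the content of the theorem, and you have not proved it --- you say so yourself. Worse, the scheme you sketch (trace the relation $uS_i=\beta(S_i)u$ against Cuntz words and hope the resulting linear system forces $V=W=I$) cannot work as stated: a nontrivial gauge automorphism $\gamma_s$ (i.e.\ $V=zI$, $W=z^2I$ with $z=e^{\ii s}\neq 1$) acts trivially on every element $S_\xi S_\eta^*$ of the centralizer, hence satisfies all such finite word-constraints on $\cM_\varphi$, yet is not inner; so no purely algebraic system of matrix-coefficient identities can single out $V=W=I$. One must genuinely use the existence of a nonzero bounded $u\in\cM_\varphi$ implementing $\beta$. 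The paper does this with a martingale argument: it introduces the increasing finite-dimensional algebras $A_k\subset\cM_\varphi$ spanned by weighted words, sets $v_k=E_k(u)$, observes that after diagonalizing $\beta(S_i)=\omega_iS_i$ the intertwining relation leaves only two free coefficients $c_{k,1},c_{k,2}$ in $v_k$, and shows that the martingale identity $E_k(v_{k+1})=v_k$ gives a $2\times 2$ transfer matrix $\Gamma$ (built from the averages $z=\frac1m\sum_{i\in J_1}\omega_i$, $w=\frac1n\sum_{i\in J_2}\omega_i$) whose eigenvalues have modulus strictly less than $1$ unless $|z|=1$ and $w=z^2$. Boundedness of the coefficients then forces $v_k=0$ for all $k$, contradicting $v_k\to u$ in $L^2$. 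In the exceptional case $\omega_i=z$ on $J_1$ and $\omega_i=z^2$ on $J_2$, so $\beta$ is trivial on $\cM_\varphi$; then $u\in\cM_\varphi\cap\cM_\varphi'=\C$ because $\cM_\varphi$ is a factor, whence $\beta=\Ad u=\id$. This quantitative contraction estimate, together with the final factoriality-of-the-centralizer step, is the missing core of your argument.
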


\begin{proof} We use the same symbol $\varphi$ and $\gamma$ for their normal extension to $\cM$. 
Then we have $\sigma^\varphi_t=\gamma_{-t\log d}$, and the centralized $\cM_\varphi$ is the hyperfinite 
II$_1$ factor with a unique trace $\tau=\varphi|{\cM_\varphi}$. 
We set 
$$w(i)=\left\{
\begin{array}{ll}
1 , &\quad i\in J_1 \\
2 , &\quad i\in J_2
\end{array}
\right.. 
$$ 
For a $k$-tuple $\xi=(\xi_1,\xi_2,\ldots,\xi_k)\in J^k$, we set $S_\xi=S_{\xi_1}S_{\xi_2}\cdots S_{\xi_k}$, 
$r(\xi)=\xi_k$, and 
$$w(\xi)=\sum_{i=1}^k w(\xi_i).$$
Then we have $\gamma_t(S_\xi)=e^{\ii w(\xi)t}S_\xi$. 
We introduce finite dimensional $*$-subalgebras of $\cM_\varphi$ by 
$$A_{k,1}=\Span\{S_\xi S_\eta^*;\;w(\xi)=w(\eta)=k\},$$
$$A_{k,2}=\Span\{S_\xi S_\eta^*;\; w(\xi)=w(\eta)=k+1,\; r(\xi),r(\eta)\in I_2\},$$
$$A_k=A_{k,1}\oplus A_{k,2}.$$
Then $\{A_k\}_{k=1}^\infty$ is an increasing sequence of finite dimensional $*$-subalgebras of $\cM_\varphi$ 
whose union is dense in $\cM_\varphi$ in the strong $*$-topology (see \cite{I93}). 
We denote by $E_k$ the $\tau$-preserving conditional expectation from $\cM_\varphi$ onto $A_k$. 

Assume that $\beta$ is inner, that is, there exists a unitary $u\in \cM$ satisfying $ux=\beta(x)u$ for any $x\in \cM$. 
Since $\beta$ commutes with $\gamma_t$, its Fourier coefficient 
$$u_k=\frac{1}{2\pi}\int_0^{2\pi}e^{-\ii kt}\gamma_t(u)dt$$
satisfies $u_kx=\beta(x)u_k$ too. 
This implies that $u_n$ is a multiple of a unitary. 
On the other hand, the KMS condition implies 
$$\varphi(u_ku_k^*)=\frac{1}{d^k}\varphi(u_ku_k^*),$$
which implies that $u_k=0$ for any $k\neq 0$. 
Thus $u$ belongs to $\cM_\varphi$. 

Let $v_k=E_k(u)$. 
Then we have $v_kx=\beta(x)v_k$ for any $x\in A_k$, and 
the sequence $\{v_k\}_{k=1}^\infty$ of contractions converges to $u$ in the $L^2$-topology with respect to $\tau$. 
We may assume $\beta(S_i)=\omega_iS_i$ for any $i\in J$ by choosing appropriate orthonormal bases of $\Span\{S_i\}_{i\in J_1}$ and 
$\Span\{S_i\}_{i\in J_2}$. 
For $\xi=(\xi_1,\xi_2,\ldots,\xi_k)\in J^k$, we set 
$$\omega_{\xi}=\omega_{\xi_1}\omega_{\xi_2}\cdots \omega_{\xi_k}.$$
Then there exists two numbers $c_{k,1},c_{k,2}\in \C$ satisfying 
$$v_k=c_{k,1}\sum_{w(\xi)=k}\omega_\xi S_{\xi}S_{\xi}^*+c_{k,2}\sum_{w(\xi)=k-1,\;i\in J_2}\omega_\xi\omega_i S_\xi S_iS_i^*S_\xi^*. $$
On the other hand, we have the martingale condition $E_k(v_{k+1})=v_k$. 
Simple computation shows  
$$E_k(S_{\xi}S_iS_i^*S_\xi^*)=\left\{
\begin{array}{ll}
\frac{1}{d}S_\xi S_xi^* , &\quad w(\xi)=k,\;i\in I_1 \\
S_\xi S_iS_i^*S_\xi^* , &\quad w(\xi)=k-1,\; i\in I_2  \\
\frac{1}{d^2}S_\xi S_{\xi}^* , &\quad w(\xi)=k,\;i\in I_2
\end{array}
\right.
$$
Thus 
\begin{align*}
\lefteqn{v_k=E_k(v_{k+1})} \\
 &=c_{k+1,1}\sum_{w(\xi)=k+1}\omega_\xi E_k(S_{\xi}S_{\xi}^*)+c_{k+1,2}\sum_{w(\xi)=k,\;i\in J_2}\omega_\xi\omega_i E_k(S_\xi S_iS_i^*S_\xi^*), \\
 &=c_{k+1,1}\sum_{w(\xi)=k,\;i\in J_1}\omega_\xi \omega_i E_k(S_{\xi}S_iS_i^*S_{\xi}^*)
 +c_{k+1,1}\sum_{w(\xi)=k-1,\;i\in J_2}\omega_\xi \omega_i E_k(S_{\xi}S_iS_i^*S_{\xi}^*)\\
 &+\frac{c_{k+1,2}}{d^2}\sum_{w(\xi)=k,\;i\in J_2}\omega_\xi\omega_i S_\xi S_\xi^*\\
 &=\frac{c_{k+1,1}}{d}\sum_{w(\xi)=k,\;i\in J_1}\omega_\xi \omega_i S_{\xi}S_{\xi}^*
 +c_{k+1,1}\sum_{w(\xi)=k-1,\;i\in J_2}\omega_\xi \omega_i S_{\xi}S_iS_i^*S_{\xi}^*\\
 &+\frac{c_{k+1,2}}{d^2}\sum_{w(\xi)=k,\;i\in J_2}\omega_\xi\omega_i S_\xi S_\xi^*,
\end{align*}
and so 
$$\left(
\begin{array}{c}
c_{k,1}  \\
c_{k,2} 
\end{array}
\right)
=\left(
\begin{array}{cc}
\frac{mz}{d} &\frac{nw}{d^2}  \\
1 &0 
\end{array}
\right)
\left(
\begin{array}{c}
c_{k+1,1}  \\
c_{k+1,2} 
\end{array}
\right),
$$
where 
$$z=\frac{1}{m}\sum_{i\in J_1}\omega_i,$$
$$w=\frac{1}{n}\sum_{i\in J_2}\omega_i.$$
Let $\Gamma$ be the above matrix. 
Then the two eigenvalues of $\Gamma$ are 
$$\lambda_{\pm}=\frac{mz\pm \sqrt{m^2z^2+4nw}}{2d}.$$
Unless $|z|=1$ and $w=z^2$, we have $|\lambda_{\pm}|<1$, and 
$$\left(
\begin{array}{c}
c_{k,1}  \\
c_{k,2} 
\end{array}
\right)
=\Gamma^l
\left(
\begin{array}{c}
c_{k+l,1}  \\
c_{k+l,2} 
\end{array}
\right)
\to
\left(
\begin{array}{c}
0  \\
0 
\end{array}
\right)\quad (l\to\infty),
$$
for we have $|c_{k+l,1}|\leq 1$ and $|c_{k+l,2}|\leq 1$, which is contradiction. 

Assume $|z|=1$ and $w=z^2$ now. 
Then we have $\omega_i=z$ for any $i\in J_1$ and $\omega_i=z^2$ for any $i\in J_2$, and 
the restriction of $\beta$ to $\cM_\varphi$ is trivial. 
This implies that $u\in \cM_\varphi\cap\cM_\varphi'=\C$, and $\beta=\id$. 
\end{proof} 


\begin{thebibliography}{99}

\bibitem{AMP15} Afzaly N.; Morrison S.; Penneys D. 
\textit{The classification of subfactors with index at most $5\frac{1}{2}$.} 
preprint, arXiv:1509.00038. 

\bibitem{BPMS12} Bigelow, S.; Peters, E.; Morrison, S.; Snyder, N. 
\textit{Constructing the extended Haagerup planar algebra.} 
Acta Math. \textbf{209} (2012), no. 1, 29--82.

\bibitem{BKLR15} Bischoff, M.; Kawahigashi, Y.; Longo, R.; Rehren, K.-H. 
\textit{Tensor categories and endomorphisms of von Neumann algebras (with applications to Quantum Field Theory.} 
Springer Briefs in Mathematical Physics Vol. 3, 2015. 
 
\bibitem{B00} Brugui{\`e}res, A. 
\textit{Cat{\'e}gories pr{\'e}modulaires, modularisations et invariants des vari{\'e}t{\'e}s de dimension 3.} 
Math. Ann. \textbf{316} (2000), 215--236. 

\bibitem{C77} Connes, A. 
\textit{Periodic automorphisms of the hyperfinite factor of type II$_1$.} 
Acta Sci. Math. (Szeged) \textbf{39} (1977), 39--66.

\bibitem{Cu77} Cuntz, J. 
\textit{Simple C$^*$-algebras generated by isometries.}
Comm. Math. Phys. \textbf{57} (1977), no. 2, 173--185. 

\bibitem{DPR90} Dijkgraaf, R.; Pasquier, V.; Roche, P. 
\textit{Quasi Hopf algebras, group cohomology and orbifold models.} 
Recent advances in field theory (Annecy-le-Vieux, 1990). 
Nuclear Phys. B Proc. Suppl. \textbf{18B} (1990), 60--72 (1991). 

\bibitem{DW90}  Dijkgraaf, R.; Witten, E. 
\textit{Topological gauge theories and group cohomology.} 
Comm. Math. Phys. \textbf{129} (1990), 393--429. 

\bibitem{DHR71} Doplicher, S.; Haag, R.; Roberts, J. E. 
\textit{Local observables and particle statistics. I.} 
Comm. Math. Phys. \textbf{23} (1971), 199--230.

\bibitem{EGO04} Etingof, P.; Gelaki, S.; Ostrik, V. 
\textit{Classification of fusion categories of dimension $pq$.} 
Int. Math. Res. Not. 2004, no. 57, 3041--3056.

\bibitem{EGNO15}  Etingof, P.; Gelaki, S.; Nikshych, D.; Ostrik, V. 
\textit{Tensor categories.} Mathematical Surveys and Monographs, 205. 
American Mathematical Society, Providence, RI, 2015. 

\bibitem{ENO05} Etingof, P.; Nikshych, D.; Ostrik, V. 
\textit{On fusion categories.} 
Ann. of Math. \textbf{162} (2005), 581--642. 

\bibitem{ENO10} Etingof, P.; Nikshych, D.; Ostrik, V. 
\textit{Fusion categories and homotopy theory.} 
With an appendix by Ehud Meir. Quantum Topol. \textbf{1} (2010), 209--273.

\bibitem{E80} Evans, D. E. 
\textit{On $O_n$}. Publ. Res. Inst. Math. Sci. \textbf{16} (1980), 915--927. 

\bibitem{EG11} Evans, D. E.; Gannon, T. 
\textit{The exoticness and realisability of twisted Haagerup-Izumi modular data.} 
Comm. Math. Phys. \textbf{307} (2011), 463--512. 

\bibitem{EG14} Evans, D. E.; Gannon, T. 
\textit{Near-group fusion categories and their doubles.} 
Adv. Math. \textbf{255} (2014), 586--640. 

\bibitem{EK94} Evans, D. E.; Kawahigashi, Y. 
\textit{Orbifold subfactors from Hecke algebras.} 
Comm. Math. Phys. \textbf{165} (1994), 445--484. 

\bibitem{EK98} Evans, D. E.; Kawahigashi, Y. 
\textit{Quantum symmetries on operator algebras.} 
Oxford Mathematical Monographs. Oxford Science Publications. T
he Clarendon Press, Oxford University Press, New York, 1998.

\bibitem{EP12} Evans D. E.; Pugh, M. 
\textit{Braided subfactors, spectral measures, planar algebras, and Calabi-Yau algebras 
associated to SU(3) modular invariants.} 
Progress in operator algebras, noncommutative geometry, and their applications, 
17--60, Theta Ser. Adv. Math., 15, Theta, Bucharest, 2012. 

\bibitem{GIS15} Grossman P.; Izumi, M.; Snyder, N. 
\textit{The Asaeda-Haagerup fusion categories.}
to appear in J. Reine Angew. Math. arXiv:1501.07324. 

\bibitem{GS12} Grossman, P.; Snyder, N. 
\textit{Quantum subgroups of the Haagerup fusion categories.} 
Comm. Math. Phys. \textbf{311} (2012), 617--643.

\bibitem{GJS15} Grossman, P.; Jordan, D.; Snyder, N. 
\textit{Cyclic extensions of fusion categories via the Brauer-Picard groupoid.} 
Quantum Topol. \textbf{6} (2015), no. 2, 313--331. 

\bibitem{HH09} Hagge, T.; Hong, Seung-Moon.  
\textit{Some non-braided fusion categories of rank three.} 
Commun. Contemp. Math. \textbf{11} (2009), no. 4, 615--637. 

\bibitem{H} Haagerup, U. 
\textit{Connes' bicentralizer problem and uniqueness of the injective factor of type III$_1$.} 
Acta Math. \textbf{158} (1987), no. 1-2, 95--148. 

\bibitem{HY00} Hayashi, T.; Yamagami, S. 
\textit{Amenable tensor categories and their realizations as AFD bimodules.} 
J. Funct. Anal. \textbf{172} (2000), 19--75. 

\bibitem{HB82} Huppert, B.; Blackburn, N. 
\textit{Finite groups. III.}
Grundlehren der Mathematischen Wissenschaften, \textbf{243}. 
Springer-Verlag, Berlin-New York, 1982. 

\bibitem{Is94} Isaacs, I. M. 
\textit{Character theory of finite groups.} 
Dover Publications, Inc., New York, 1994. 


\bibitem{I93} Izumi, M. 
\textit{Subalgebras of infinite C$^*$-algebras with finite Watatani indices. I. Cuntz algebras.} 
Comm. Math. Phys. \textbf{155} (1993), 157--182. 

\bibitem{I97} Izumi, M. 
\textit{Goldman's type theorems in index theory.} 
Operator algebras and quantum field theory (Rome, 1996), 249--269, Int. Press, Cambridge, MA, 1997. 

\bibitem{I98} Izumi, M. 
\textit{Subalgebras of infinite C$^*$-algebras with finite Watatani indices. II. Cuntz-Krieger algebras.} 
Duke Math. J. \textbf{91} (1998), no. 3, 409--461.

\bibitem{I00} Izumi, M. 
\textit{The structure of sectors associated with Longo-Rehren inclusions. I. General theory.} 
Comm. Math. Phys. \textbf{213} (2000), 127--179.

\bibitem{I01} Izumi, M. 
\textit{The structure of sectors associated with Longo-Rehren inclusions. II. Examples.} 
Rev. Math. Phys. \textbf{13} (2001), 603--674.

\bibitem{IK02} Izumi, M.; Kosaki, H. 
\textit{Kac algebras arising from composition of subfactors: general theory and classification.} 
Mem. Amer. Math. Soc. \textbf{158} (2002), no.750.  

\bibitem{J80}  Jones, V. F. R. 
\textit{Actions of finite groups on the hyperfinite type II$_1$ factor.} 
Mem. Amer. Math. Soc. \textbf{28} (1980), no. 237. 

\bibitem{J83} Jones, V. F. R. 
\textit{Index for subfactors.} 
Invent. Math. \textbf{72} (1983), no. 1, 1--25.

\bibitem{JMS14} Jones, V. F. R.; Morrison, S.; Snyder, N. 
\textit{The classification of subfactors of index at most 5.} 
Bull. Amer. Math. Soc. (N.S.) \textbf{51} (2014), no. 2, 277--327.

\bibitem{KT07} Katayama, Y.; Takesaki, M. 
\textit{Outer actions of a discrete amenable group on approximately finite dimensional factors. II. The III$_{\lambda}$-case, 
$\lambda\neq 0$.} 
Math. Scand. \textbf{100} (2007), no. 1, 75--129. 

\bibitem{K94} Kawahigashi, Y. 
\textit{On flatness of Ocneanu's connections on the Dynkin diagrams and classification of subfactors.} 
J. Funct. Anal. \textbf{127} (1995), 63--107. 

\bibitem{L14} Larson, H. 
\textit{Pseudo-unitary non-self-dual fusion categories of rank 4.} 
J. Algebra \textbf{415} (2014), 184--213.

\bibitem{L15} Liu, Zhengwei. 
\textit{Singly generated planar algebras of small dimension, Part IV.}
preprint, arXiv:1507.06030. 

\bibitem{LMP15} Liu, Zhengwei; Morrison, S.; Penneys, D. 
\textit{1-supertransitive subfactors with index at most $6\frac{1}{5}$.}
Commun. Math. Phys. \textbf{334}, (2015), 889--922.  


\bibitem{L90} Longo, R. 
\textit{Index of subfactors and statistics of quantum fields. II. Correspondences, braid group statistics and Jones polynomial.} 
Comm. Math. Phys. \textbf{130} (1990), 285--309. 

\bibitem{M05} Masuda, Toshihiko,  
\textit{An analogue of Connes-Haagerup approach for classification of subfactors of type III$_1$.} 
J. Math. Soc. Japan \textbf{57} (2005), 959--1001. 

\bibitem{NS07} Ng, Siu-Hung; Schauenburg, P. 
\textit{Higher Frobenius-Schur indicators for pivotal categories.} 
Hopf algebras and generalizations, 63--90, Contemp. Math., 441, Amer. Math. Soc., Providence, RI, 2007. 

\bibitem{NO} Nikshych, D.; Ostrik, V. 
\textit{On the structure of near-group categories.} 
in preparation. 

\bibitem{O03} Ostrik, V. 
\textit{Fusion categories of rank 2.} 
Math. Res. Lett. \textbf{10} (2003), 177--183. 

\bibitem{O13} Ostrik, V. 
\textit{Pivotal fusion categories of rank 3.} 
to appear in Moscow Math J. arXiv:1309.4822. 

\bibitem{P95}  Popa, S. 
\textit{Classification of subfactors and their endomorphisms.} 
CBMS Regional Conference Series in Mathematics, 
86. Published for the Conference Board of the Mathematical Sciences, 
Washington, DC; by the American Mathematical Society, Providence, RI, 1995.

\bibitem{R93} Robinson, D. J. S.
\textit{A course in the theory of groups.} 
Graduate Texts in Mathematics, \textbf{80}. Springer-Verlag, New York, 1993.

\bibitem{S03} Siehler, J. 
\textit{Near-group categories.} 
Algebr. Geom. Topol. \textbf{3} (2003), 719--775. 

\bibitem{S80} Sutherland, C. E. 
\textit{Cohomology and extensions of von Neumann algebras. I, II.} 
Publ. Res. Inst. Math. Sci. \textbf{16} (1980), no. 1, 105--133, 135--174.

\bibitem{TY98} Tambara, D.; and Yamagami, S.
\textit{Tensor categories with fusion rules of self-duality for finite abelian groups.} 
J. Algebra \textbf{209} (1998),  692--707. 

\bibitem{Y02}  Yamagami, S. 
\textit{Group symmetry in tensor categories and duality for orbifolds.} 
J. Pure Appl. Algebra \textbf{167} (2002), 83--128.
\end{thebibliography}
\end{document}